\documentclass[11pt]{article}

\usepackage[a4paper,margin=1.1in]{geometry}
\usepackage[T1]{fontenc}
\usepackage{lmodern}
\usepackage{amsmath,amssymb,amsthm,mathtools}
\usepackage{booktabs,array,longtable}
\usepackage{microtype}
\usepackage[hidelinks]{hyperref}
\usepackage{enumitem}
\hypersetup{pdftitle={Higher Reciprocity, Cassels Pairings, and Selmer Towers for the 3/5 Congruent Number Problem},
 pdfauthor={Kaisheng Lei and Shisong Xu},
 pdfkeywords={theta-congruent numbers, Cassels pairings, higher reciprocity, ternary quadratic forms, Selmer groups, higher descent, governing fields},
 bookmarksnumbered=true}
\numberwithin{equation}{section}
\allowdisplaybreaks[2]
\newtheorem{theorem}{Theorem}[section]
\newtheorem{proposition}[theorem]{Proposition}
\newtheorem{lemma}[theorem]{Lemma}
\newtheorem{corollary}[theorem]{Corollary}
\newtheorem{conjecture}[theorem]{Conjecture}
\newtheorem{problem}[theorem]{Problem}
\theoremstyle{remark}
\newtheorem{remark}[theorem]{Remark}
\newtheorem{example}[theorem]{Example}

\DeclareFontFamily{U}{wncy}{}
\DeclareFontShape{U}{wncy}{m}{n}{<->wncyr10}{}
\DeclareFontShape{U}{wncy}{b}{n}{<->wncyb10}{}
\DeclareSymbolFont{cyrletters}{U}{wncy}{m}{n}
\SetSymbolFont{cyrletters}{bold}{U}{wncy}{b}{n}
\DeclareMathSymbol{\Sha}{\mathord}{cyrletters}{"58}

\newcommand{\Q}{\mathbb Q}
\newcommand{\Z}{\mathbb Z}
\newcommand{\F}{\mathbb F}
\newcommand{\Sh}{\Sha}
\newcommand{\Sel}{\operatorname{Sel}}
\newcommand{\Pf}{\operatorname{Pf}}
\newcommand{\Gen}{\operatorname{Gen}}
\newcommand{\Spn}{\operatorname{Spn}}
\newcommand{\Cl}{\operatorname{Cl}}
\newcommand{\rad}{\operatorname{rad}}
\newcommand{\Norm}{\operatorname N}
\newcommand{\Tr}{\operatorname{Tr}}
\newcommand{\disc}{\operatorname{disc}}
\newcommand{\rank}{\operatorname{rank}}
\newcommand{\ord}{\operatorname{ord}}
\newcommand{\leg}[2]{\left(\frac{#1}{#2}\right)}
\newcommand{\D}{\mathcal D}
\newcommand{\eps}{\varepsilon}

\title{Higher Reciprocity, Cassels Pairings, and Selmer Towers\\
for the $3/5$ Congruent Number Problem}
\author{Kaisheng Lei\quad Shisong Xu\thanks{Corresponding author: Shisong Xu
(\href{mailto:shsxu@smail.nju.edu.cn}{shsxu@smail.nju.edu.cn}).}\\[0.35em]
\small Department of Mathematics, Nanjing University\\
\small 22 Hankou Road, Nanjing, Jiangsu 210093, People's Republic of China\\
\small Email: \href{mailto:kaishenglei@smail.nju.edu.cn}{kaishenglei@smail.nju.edu.cn} (Kaisheng Lei)\\
\small Email: \href{mailto:shsxu@smail.nju.edu.cn}{shsxu@smail.nju.edu.cn} (Shisong Xu)}
\date{}

\begin{document}
\maketitle

\begin{abstract}
We study the arithmetic of the elliptic curves
\[
 A_m:y^2=x(x-m)(x+4m)
\]
attached to the $3/5$ congruent number problem. A difference of ternary
representation numbers controls the relevant central $L$-values. For
$p\equiv11\pmod{40}$ the ordinary Cassels pairing degenerates; we construct
an explicit $4$-cover and show that the next Cassels--Tate pairing is governed
by the normalized representation defect, equivalently by a factorial
character, a Pell symbol, and a class number congruence. For composite
parameters we compute the ordinary Cassels matrices of four twists and, in
the two prime case, a degree $1024$ governing field for their joint
distribution. The same higher descent extends to larger radicals: a second
rational pushout determines the full $\Lambda'$ row of the next pairing. We
also determine two explicit Selmer towers with four dimensional ordinary
radical, and all finite $2$-power Selmer groups when the ordinary radical is
one dimensional.
\end{abstract}

\noindent\textbf{Keywords.}
$\theta$-congruent numbers; ternary quadratic forms; Cassels pairing;
higher reciprocity; Selmer groups; higher descent; governing fields.

\medskip
\noindent\textbf{2020 Mathematics Subject Classification.}
Primary 11G05; Secondary 11E25, 11F37, 11R29.

\setcounter{tocdepth}{2}
\clearpage
\tableofcontents
\clearpage

\section{Introduction}
\label{sec:introduction}

Let $\cos\theta=3/5$. A positive integer $N$ is $\theta$-congruent if a
triangle with rational side lengths has an angle $\theta$ and area $4N$.
The associated elliptic curves can be written
\[
 E_{5,3}^{(N)}:y^2=x(x-2N)(x+8N),\qquad
 A_m:y^2=x(x-m)(x+4m).
\]
The change of variables $(x,y)\mapsto(4x,8y)$ identifies $A_m$ with
$E_{5,3}^{(2m)}$; thus $A_{5n}$ corresponds to the geometric integer $10n$.
These curves have $j$-invariant $148176/25$ and are non-CM.

Tunnell's theorem \cite{Tunnell} placed ternary quadratic forms at the
heart of the classical congruent number problem. A decisive further step was
taken by Qin \cite{Qin}, who developed a systematic arithmetic approach based
on the forms themselves. In Qin's work, genus identities and congruences of
representation numbers are not merely devices for reading Fourier
coefficients; they become arithmetic tools for extracting information about
the underlying elliptic curves. This pioneering point of view is a principal
starting point for the present paper.

For the variants with cosine $\pm3/5$ and $\pm4/5$, Im--Shin \cite{ImShin}
constructed the relevant half-integral weight forms and established
Tunnell type central value criteria. Their unweighted $3/5$ row provides the
modular input for this paper: it singles out the two ternary forms from which
our arithmetic begins. We determine the genus and spinor genus of these forms,
replace them by diagonal representatives, and study the difference of their
representation numbers. The resulting defect admits congruences and class
number formulas of its own. It also has a second, quite different
interpretation on the Selmer side, through ordinary Cassels pairings and a
higher Cassels--Tate pairing.

The quadratic form analysis occupies the first part of the paper. For
squarefree parameters the defect controls the relevant central $L$-value; for
prime parameters it satisfies explicit class number congruences. The main
reciprocity theorem enters at the first point where ordinary $2$-descent loses
information. When $p\equiv11\pmod{40}$, the ordinary Cassels pairing on the
pure $2$-Selmer group vanishes. We construct a $4$-cover of one Selmer class,
identify the pushout divisor class, and evaluate the resulting Cassels--Tate
pairing. Its Pfaffian is the normalized representation defect, and the same
sign is expressed successively as a factorial character, a Pell symbol, and a
class number congruence. Thus the representation defect continues to detect
$2$-primary arithmetic one step beyond the ordinary Cassels pairing. For the
present non-CM family, this is the higher descent analogue of the ordinary
comparisons for classical congruent number curves in \cite{XuDefect}.

The remaining sections place these calculations in families. Fixing
$p\equiv11,19\pmod{40}$ and varying an auxiliary prime $q$, we find a
single Galois extension of degree $1024$ that governs the ordinary Cassels
matrices of four related twists and hence their joint distribution. With
several auxiliary primes, the same matrices are described uniformly by norm
conics. On a natural composite locus the prime case $4$-cover remains valid
even when the auxiliary block is singular. A second pushout accounts for the
other rational $2$-torsion direction and completes the $\Lambda'$ row of the
next pairing. Two four dimensional examples can then be carried through to
the full finite Selmer tower; when the ordinary radical is one dimensional,
the entire tower is determined abstractly.

Several natural questions are left open. Most notably, the proposed identity
between $\D(pq)/32$ and the ordinary Pfaffian remains
Conjecture~\ref{conj:pqbridge}. We also do not know a positive definite
quaternary model of Qin type for this comparison, nor a distribution theorem
for the higher character as $p$ varies. These questions are logically
separate from the higher descent and governing field results proved here.

For a positive definite integral form $Q$, let $r_Q(n)$ count all
integral representations, including signs and zero coordinates.
Put
\begin{equation}\label{eq:introFG}
 F=x^2+2y^2+40z^2,\qquad G=x^2+8y^2+10z^2,\qquad
 \D(n)=r_F(n)-r_G(n).
\end{equation}
For a fundamental discriminant $D$, write
$\Cl(D)=\Cl(\Q(\sqrt D))$ for the ordinary ideal class group;
when $D<0$, put $h(D)=\#\Cl(D)$. Our first main theorem describes the arithmetic of this representation defect.

\begin{theorem}
\label{thm:introA}\label{thm:introC}
The genus and spinor genus of $F$ both consist of $F$ and $G$.
For every positive squarefree $t\equiv3\pmod8$ with $5\nmid t$,
\begin{equation}\label{eq:introL}
 r_F(t)+r_G(t)=2h(-5t),\qquad
 L(A_{5t},1)=\frac{\sqrt3\,L(A_{15},1)}{16\sqrt t}\D(t)^2,
\end{equation}
where $L(A_{15},1)>0$. For primes $p\equiv11,19\pmod{40}$,
\[
 \D(p)\equiv8h(-p)-2h(-5p)\pmod{32}.
\]
In particular, if $p\equiv11\pmod{40}$, then $16\mid\D(p)$ and
\[
 \frac{\D(p)}{16}\equiv\Xi(p)\pmod2,\qquad
 \Xi(p):=\left(\frac{4h(-p)-h(-5p)}8\bmod2\right)\in\F_2.
\]
\end{theorem}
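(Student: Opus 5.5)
The proof has three layers: genus theory of the forms, the $L$-value formula, and the congruence.

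\emph{Genus and spinor genus.} Both $F$ and $G$ have determinant $80$. I will check that they lie in one genus by comparing local invariants. At odd $p\ne5$ both forms are unimodular of the same determinant. At $p=5$ both are $\langle1,2\rangle\perp\langle 5\cdot u\rangle$, with $u=8$ and $u=2$ respectively; the square classes must be compared, and I expect the binary parts to absorb the difference. At $p=2$ I will compare Jordan decompositions and Hasse invariants directly. Next I will enumerate reduced ternary forms of determinant $80$ in this genus, using Eisenstein reduction or a Brandt--Intrau table, and show the class number is $2$. To see that the spinor genus is not split, I will use the Earnest--Hsia criterion. Alternatively, I will exhibit a proper spinor equivalence via a prime $\ell$ for which $F$ and $G$ are $\ell$-neighbours with spinor norm trivial in the relevant idele quotient. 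If the spinor genus is not split, Siegel--Weil applies to the spinor genus, and this equals the genus. Then $r_F(t)+r_G(t)$ is twice the Siegel mass average. The standard local density computation for squarefree $t\equiv3\pmod 8$ with $5\nmid t$ gives $r_{\Gen}(t)\propto h(-5t)$, and I will fix the constant by a small case such as $t=3$ (where $r_F(3)=2$, $r_G(3)=0$, $h(-15)=2$), giving $2h(-5t)$. The local factors at $2$ and $5$ must be constant on the stated residue classes, and this is where those hypotheses enter.

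\emph{$L$-values.} I will take $\theta_F-\theta_G$, which is a cusp form of weight $3/2$ because its Eisenstein part cancels within the genus. Using Im--Shin's $3/5$ construction, I will identify it with the Shimura lift corresponding to $A_{5}$'s newform, up to twisting. Waldspurger's theorem (in Kohnen's form) then gives $L(A_{5t},1)=c\,\D(t)^2/\sqrt t$ for $t$ in one square class at the bad places, and the constant $c$ is pinned down at $t=3$: $\D(3)=2$ gives $c=\sqrt3L(A_{15},1)/16$. Positivity of $L(A_{15},1)$ follows from a numerical computation, or from $\D(3)\ne0$ together with the rank-$0$ BSD input. I will mostly cite Im--Shin for this part.

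\emph{Congruence.} I will write $\D(p)=2r_F(p)-2h(-5p)$, so it suffices to show $r_F(p)\equiv4h(-p)\pmod{16}$. I will decompose $r_F(p)=\sum_{z}r_{x^2+2y^2}(p-40z^2)$ and apply genus theory of binary forms of discriminant $-8$, where $r_{x^2+2y^2}(n)=2\sum_{d\mid n}\left(\frac{-8}{d}\right)$. Mod $16$, this counts terms in which $n$ has an odd number of prime-power factors of odd exponent with character $+1$. Pairing $z$ with $-z$ and grouping by multiplicative structure should convert the sum to a Gauss-type class number count for $-p$. The cleaner alternative is to view $r_F$ through the quaternion order of discriminant related to $40$, and use that $r_F(p)$ counts optimal embeddings of orders of discriminant $-p,-4p,-16p,\dots$, i.e.\ is a weighted sum of $h(-p)$ and $h(-4p)$ (Gross/Eichler). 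With $h(-4p)=3h(-p)$ for $p\equiv3\pmod8$, this gives $r_F(p)=a\,h(-p)+b\,h(-5p)$-type expressions, and reducing mod $16$ requires the $2$-divisibility of $h(-5p)$, which is $2\,\|\,h(-5p)$ by Rédei for these $p$. I expect this mod-$16$ step to be the main obstacle, since the $2$-parts must be tracked exactly rather than merely up to parity.

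\emph{Final step for $p\equiv11\pmod{40}$.} I will show $8h(-p)-2h(-5p)\equiv0\pmod{16}$. Here $h(-p)$ is odd, and Rédei's $4$-rank theory gives $h(-5p)\equiv4\pmod8$: since $\left(\frac{-5}{p}\right)$ and $\left(\frac{p}{5}\right)$ take the appropriate values, $4\mid h(-5p)$. Then $\D(p)/16\equiv(4h(-p)-h(-5p))/8\pmod2$ is immediate from the congruence mod $32$.
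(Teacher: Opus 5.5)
\textbf{Gap: the congruence modulo $32$.} The gap is where you expected it: nothing in your plan proves $r_F(p)\equiv4h(-p)\pmod{16}$, and that is the whole content of the congruence modulo $32$.

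Your first route, carried out, only yields $B_2(N)\equiv2\tau_0(N)\pmod8$ for $N\equiv3\pmod8$, by pairing $d$ with $N/d$. Hence $r_F(p)\equiv4+4\sum_{z\ge1}\tau_0(p-40z^2)\pmod{16}$. You give no mechanism that turns this divisor sum into $h(-p)$.

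Your second route rests on false premises.
\begin{itemize}
\item In a two-class genus only the sum $r_F(p)+r_G(p)=2h(-5p)$ is a class number expression. The individual count $r_F(p)=h(-5p)+\D(p)/2$ carries the cusp form coefficient. For instance $r_F(11)=4$ with $h(-11)=1$, while $r_F(131)=4$ with $h(-131)=5$, so no fixed combination of $h(-p)$ and $h(-4p)$ can equal $r_F(p)$.
\item The discriminant attached to $F$ is $-5p$, not $-p$ or $-4p$.
\item The claim $2\,\|\,h(-5p)$ is false: $h(-55)=4$, and $4\mid h(-5p)$ for all these $p$, as your own last paragraph uses.
\end{itemize}

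\textbf{The missing idea.} Compare $F$ with the auxiliary form $T=x^2+10y^2+40z^2$, which shares the summand $40z^2$. Slicing by $z$ gives $r_F(n)-r_T(n)=\sum_z(B_2-B_{10})(n-40z^2)$.
\begin{itemize}
\item For $N\equiv3\pmod8$ and $N\equiv\pm1\pmod5$, the form $2x^2+5y^2$ cannot represent $N$. Hence $B_{10}(N)=2\sum_{d\mid N}\chi_{-40}(d)\equiv2\tau_0(N)\equiv B_2(N)\pmod8$.
\item The $\pm z$ pairing then kills the $z\ne0$ terms modulo $16$, and $B_2(p)=B_{10}(p)=4$.
\item Then compute $r_T(p)$ exactly. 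The identity $x^2+10y^2+10z^2=x^2+(3y+z)^2+(y-3z)^2$ shows that, for $n\equiv3\pmod8$, representations by $H=x^2+10y^2+10z^2$ are the three-square representations $(x,A,B)$ with $A\equiv3B\pmod5$. Checking the two possible patterns of squares modulo $5$ shows these are exactly one third of all three-square representations.
\item Parity gives $r_T=r_H/2$, and Gauss's $r_3(p)=24h(-p)$ gives $r_T(p)=4h(-p)$.
\end{itemize}

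\textbf{Further problems.}
\begin{itemize}
\item In the last step, R\'edei's $4$-rank gives only $4\mid h(-5p)$, but you need $h(-5p)\equiv4\pmod8$. That requires an $8$-rank argument. For example, from $p=a^2-5b^2$ the element $(5b+\sqrt{-5p})/2$ gives a class with $[\mathfrak a]^2=[\mathfrak p_5]$. The genus character $\chi_5([\mathfrak a])=-\left(\frac p5\right)_4$ then decides whether $[\mathfrak a]$ is a square.
\item Your normalizing numbers are wrong. Actually $r_F(3)=4$, from $(\pm1,\pm1,0)$, so $\D(3)=4$. With your values the check would give $r_F+r_G=h(-5t)$ and constant $\sqrt3\,L(A_{15},1)/4$, contradicting what you want to prove.
\item Reading $r_F+r_G$ off as twice the Siegel average needs $|O(F)|=|O(G)|$; both are $8$, but you must check it.
\item Citing Im--Shin is not enough by itself. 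Their row is $2(\theta_{Q_1}-\theta_{Q_2})$ for two forms of determinant $6400$. You need the lattice identities $2r_{Q_1}(5t)=r_F(t)$ and $2r_{Q_2}(5t)=r_G(t)$ for $t\equiv3\pmod8$ to get $a(5t)=\D(t)$. Otherwise you need an independent proof that $\theta_F-\theta_G$ is a Hecke eigenform lifting to the correct newform.
\end{itemize}
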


Section~\ref{sec:forms} proves the quadratic form assertions. The only
modular input is the Im--Shin central value formula; the genus computation and
the representation congruences are established directly for the forms above.
In particular, $\D(t)\ne0$ forces rank zero. For
$p\equiv11\pmod{40}$ this already yields the criterion
$h(-5p)\not\equiv4h(-p)\pmod{16}$ for $10p$ to be non-$\theta$-congruent.

Write $\Sel_2^0(E)$ for the $2$-Selmer group modulo the Kummer
image of rational $2$-torsion. We use the additive Cassels pairing
with values in $\F_2$; the conventions and its relation to
$4$-descent are recalled in Section~\ref{sec:compositedescent}.

\begin{theorem}
\label{thm:introB}\label{thm:introD}
For a prime $p\equiv11,19\pmod{40}$, the space
$\Sel_2^0(A_{5p})$ has basis
$\Lambda=(5,1,5)$, $\Lambda'=(1,5,5)$. Its Cassels matrix is
\[
 C_p=\begin{pmatrix}0&\eps_p\\\eps_p&0\end{pmatrix},\qquad
 \eps_p=\begin{cases}0,&p\equiv11\pmod{40},\\
             1,&p\equiv19\pmod{40},\end{cases}
 \qquad \D(p)/8\equiv\eps_p\pmod2.
\]
Suppose $p\equiv11\pmod{40}$. Choose coprime positive integers
$a,b$ with $p=a^2-5b^2$, $4\mid a$, and $b$ odd, and put
$k=(-1)^{(b-1)/2}$. The conic
\begin{equation}\label{eq:introConic}
 c^2=k\bigl((a+5b)u^2+10(a+b)uv+5(a+5b)v^2\bigr)
\end{equation}
has a rational point. For any primitive integral point
$(u_0,v_0,c_0)$ on it, the sign
$\varrho(p)=k(c_0/5)$ is defined and independent of all choices.
The next pairing, defined through a $4$-Selmer lift, has matrix
\begin{equation}\label{eq:introHigher}
 C_p^{\mathrm{next}}=\begin{pmatrix}0&b_p\\b_p&0\end{pmatrix},
 \qquad (-1)^{b_p}=-\varrho(p).
\end{equation}
Every positive integral solution of $5d^2-py^2=1$ gives
\[
 \varrho(p)=\leg{((p-1)/5)!}{p}=(-1)^{(d-1)/2},\qquad
 \frac{\D(p)}{16}\equiv b_p\equiv\frac{d+1}{2}\pmod2.
\]
Such a solution always exists, and the residue $d\pmod4$ is independent
of its choice. Moreover,
$h(-5p)-4h(-p)\equiv4(d+1)\pmod{16}$.
If $\eps_p=1$, then $\rank A_{5p}(\Q)=0$ and
$\Sh(A_{5p})[2^\infty]\simeq(\Z/2\Z)^2$.
If $b_p=1$, then $\rank A_{5p}(\Q)=0$ and
$\Sh(A_{5p})[2^\infty]\simeq(\Z/4\Z)^2$.
\end{theorem}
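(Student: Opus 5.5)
The plan splits into five stages: ordinary descent, the $4$-cover, the pushout evaluation, the arithmetic identities for the sign, and the consequences for the Selmer tower.

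\emph{Ordinary descent.} I will carry out the standard complete $2$-descent on $A_{5p}:y^2=x(x-5p)(x+20p)$. The bad primes are $2,5,p$, so Selmer elements are triples $(d_1,d_2,d_3)$ of squarefree divisors of $\pm10p$ with $d_1d_2d_3$ a square. Local solvability is checked at $2,5,p,\infty$ using $p\equiv11,19\pmod{40}$, that is $p\equiv3\pmod8$, $p\equiv1\pmod5$, $(5/p)=1$ and $(-1/p)=-1$. After quotienting by the Kummer image of $A_{5p}[2]$, I expect exactly the classes of $\Lambda=(5,1,5)$ and $\Lambda'=(1,5,5)$ to survive.

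For the Cassels matrix I will use the additive Cassels pairing recalled in Section~\ref{sec:compositedescent}, computed as a sum of local Hilbert symbols at $2,5,p$. The diagonal entries vanish by alternation. The off-diagonal entry is $(5,p)_2$-type data, i.e.\ $\eps_p=1$ exactly when $p\equiv19\pmod{40}$. The congruence $\D(p)/8\equiv\eps_p$ then follows from Theorem~\ref{thm:introA}: $\D(p)\equiv8h(-p)-2h(-5p)\pmod{32}$, together with genus theory. Indeed, $h(-p)$ is odd, and $h(-5p)\equiv0$ or $4\pmod8$ according to the $4$-rank of $\Cl(-20p)$, which is governed by $(5/p)_4$-type Rédei symbols and hence by $p\pmod{40}$.

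\emph{The $4$-cover for $p\equiv11\pmod{40}$.} Here $C_p=0$, so both classes lift to $\Sel_4$. Write $p=a^2-5b^2$ (possible since $p$ splits in $\Q(\sqrt5)$ with a totally positive generator, and the parity conditions can be arranged). The torsor for $\Lambda$ is an intersection of two quadrics. Factoring one quadric over $\Q(\sqrt5)$ using $a+b\sqrt5$ produces the conic \eqref{eq:introConic}. Its solvability I will check locally via Hilbert symbols at $2,5,p$ and conclude by Hasse--Minkowski. A rational point on it yields an explicit $4$-covering together with a pushout divisor class.

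\emph{The next pairing.} The Cassels--Tate pairing $\langle\Lambda,\Lambda'\rangle$ then reduces to a sum of local symbols. I expect only the place $5$ to contribute nontrivially, giving the value $-k(c_0/5)$. Independence of choices follows from the reciprocity law applied to two primitive points, and from $a\leftrightarrow -a$ and unit changes. Skew-symmetry gives the displayed matrix.

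\emph{Arithmetic identities.} First, a solution of $5d^2-py^2=1$ exists: the fundamental unit of $\Q(\sqrt{5p})$ has norm $+1$, and the classical Dirichlet argument (exactly one of $x^2-5py^2=1$ splittings $5d^2-py^2=\pm1$, $d^2-5py^2=\ldots$) selects this form using $p\equiv3\pmod4$. The value $d\pmod4$ is stable under multiplication by the square of the unit. Next, I relate $\varrho(p)$ to $(d/\cdot)$ by substituting the Pell solution into a rational point of the conic, which yields $(-1)^{(d-1)/2}$. Then $((p-1)/5)!$ is a quintic Gauss-type factorial character. Its quadratic residuosity I will evaluate through a Chowla/Mordell style formula expressing it via $h(-p),h(-5p)$ and the unit. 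Finally, $\D(p)/16\equiv\Xi(p)$ from Theorem~\ref{thm:introA} links everything to $h(-5p)-4h(-p)\equiv4(d+1)\pmod{16}$. I expect this last congruence to be the main obstacle, namely an $8$-rank/Rédei-matrix computation in $\Cl(-20p)$ expressing $h(-5p)/4\pmod4$ through the symbol $(\varepsilon_{5p}/\cdot)$. I would attempt it via the Scholz/Rédei reciprocity formulas and the explicit Pell unit.

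\emph{Consequences.} If $\eps_p=1$, the nondegenerate Cassels pairing kills $\Sel_2^0$ in $\Sel_4$, so the rank is $0$ and $\Sh[2^\infty]\simeq(\Z/2)^2$. If $b_p=1$, the same argument one level up gives rank $0$ and $\Sh[2^\infty]\simeq(\Z/4)^2$.
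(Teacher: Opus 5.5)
\textbf{The genuine gap: the class number congruence.} The congruence $h(-5p)-4h(-p)\equiv4(d+1)\pmod{16}$ is the only link between the representation/factorial side and the descent/Pell side. The first side consists of $\D(p)/16\equiv\Xi(p)$ and $\leg{N!}{p}=-(-1)^{\Xi(p)}$ with $N=(p-1)/5$; the second is $(-1)^{b_p}=-\varrho(p)=-(-1)^{(d-1)/2}$. Without this congruence you obtain neither $\D(p)/16\equiv b_p$ nor $\varrho(p)=\leg{N!}{p}$.

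The method you propose for it cannot work. For every $p\equiv11\pmod{40}$, the $2$-Sylow subgroup of $\Cl(-p)$ is trivial and that of $\Cl(-5p)$ is cyclic of order $4$ (genus theory and the $8$-rank computation of Lemma~\ref{lem:h5mod8}). R\'edei matrices, Scholz reciprocity and $2^k$-rank computations only see this $2$-Sylow structure, which is the same for all such $p$. Yet $(d+1)/2$ varies: compare $p=11$ and $p=131$. What must be proved concerns the odd numbers $h(-p)$ and $h(-5p)/4$ modulo $4$; already $h(-p)\bmod4$ is Mordell's sign of $((p-1)/2)!$. Your sketch does not address $h(-p)\bmod4$ at all.

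The missing idea is analytic. The paper applies the Meyer--Zagier genus formula in the real field $\Q(\sqrt{5p})$ to both factorizations $20p=(-4)(-5p)=(-20)(-p)$. This gives $3h(-5p)=\sum_Ct_C$ and $12h(-p)=\sum_C\psi(C)t_C$, where the $t_C$ are Dedekind-sum invariants of the fundamental unit over the ordinary class group. Since $4\mid t_C$ and $t_{C^{-1}}=t_C$, the negative-genus sum reduces modulo $8$ to the single self-inverse class $[(2,1+\sqrt{5p})]$ of that genus. The Jacobi--Dedekind identity then evaluates this class's invariant as $\leg{2U}{T-U}=(-1)^{(d+1)/2}$, using $T=10d^2-1$ and $U=2dy$.

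\textbf{Secondary errors in your local predictions.} These would derail the computations if carried out as stated.

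\emph{Ordinary pairing.} $p\equiv11$ and $19\pmod{40}$ agree modulo $8$, and $(5,p)_2=1$, so no $2$-adic symbol can separate them. In Cassels' tangent formula the place $2$ always contributes $-1$, and the place $5$ contributes according to $p\bmod5$. That formula also needs global points on the component conics, here supplied by $p=a^2-5b^2$ and $(t,u_1,u_3)=(1,p-1,p+1)$.

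\emph{Class number parity.} It is the $8$-rank of $\Cl(-5p)$, not the $4$-rank of $\Cl(-20p)$, that decides between $h(-5p)\equiv4$ and $h(-5p)\equiv0\pmod8$.

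\emph{Next pairing.} The place $2$ does not drop out. The tangent value has odd $2$-adic valuation (Lemma~\ref{arith:dyadic}), so $(h\ell,5)_2=-1$, while the place $5$ gives $+k(c_0/5)$, not $-k(c_0/5)$.

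\emph{The $4$-covering.} The conic \eqref{eq:introConic} arises as a projection of a $4$-covering lifting $\Lambda'$, but its rational point is not what produces that covering. The paper takes the unramified double cover $kh^2=c^2-cd-d^2$ of the quadric intersection $c^2=\mathcal F(u,v)$, $d^2=\mathcal G(u,v)$ and checks its local solubility directly. The conic point is used only for the tangent $\ell$ in the pushout $g=f\ell/r^2$. The substantive step you omit is proving that $g$ has divisor twice a class equal to $T=(5p,0)$ rather than $0$. That identification, together with $\Lambda=\chi_5T$, is what turns the pairing into $\prod_v(h\ell,5)_v$.
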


The ordinary pairing is computed in Section~\ref{sec:ordinary} from tangent
forms and Hilbert symbols. Section~\ref{sec:higher} then constructs the
$4$-cover, identifies its pushout class, and evaluates the next pairing
locally. The class number identity is obtained from the two factorizations
$20p=(-4)(-5p)=(-20)(-p)$ and Zagier's normalization of ordinary class groups
\cite{Zagier}.

We then let an auxiliary prime vary.
For a fixed prime $p\equiv11,19\pmod{40}$, let
\[
 \mathcal P_p=\{q\text{ prime}:q\equiv1\pmod{40},\ (p/q)=1\}.
\]
All densities below are natural densities relative to $\mathcal P_p$,
unless stated otherwise. This prime set itself has density $1/32$
among all primes.

\begin{theorem}
\label{thm:introF}\label{thm:introG}\label{thm:introH}
Let $q\in\mathcal P_p$ and $E_\delta=A_{5\delta pq}$ for
$\delta\in\{1,-1,2,-2\}$. Their pure $2$-Selmer dimensions are
$(s_1,s_{-1},s_2,s_{-2})=(4,4,2,3)$, and their ordinary Cassels
matrices are explicit functions of five additive residue characters.
Choose $p=A^2-5B^2=A_2^2+2B_2^2$ with $4\mid A$ and $B$ odd.
The Galois field
\[
 \widehat L_p=\Q\bigl(\zeta_{40},\sqrt[4]p,
 \sqrt{A+B\sqrt5},\sqrt{2+\sqrt5},\sqrt[4]2,
 \sqrt{A_2+B_2\sqrt{-2}}\bigr)
\]
has degree $1024$ and governs all four matrices. Their $32$
distinct ordered joint values each have relative density $1/32$.
If $R_\delta$ is the rank of the corresponding matrix, then
\[
 \Sel_4(E_\delta)\simeq
 (\Z/4\Z)^{s_\delta-R_\delta}\oplus(\Z/2\Z)^{R_\delta+2}.
\]
The first three matrices are simultaneously nondegenerate with
relative density $1/16$. On this set all three curves have rank
zero, with $2$-primary Tate--Shafarevich groups of respective
orders $2^4,2^4,2^2$ and exponent two, and
$\rank E_{-2}(\Q)\le1$. The fourth matrix has rank zero with
relative density $1/8$ and rank two with relative density $7/8$.
The latter condition always implies $\rank E_{-2}(\Q)\le1$.
\end{theorem}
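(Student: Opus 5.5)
The plan is to treat each twist $E_\delta=A_{5\delta pq}$ by the complete $2$-isogeny/full $2$-descent that Theorem~\ref{thm:introB} uses for $A_{5p}$, and then feed the resulting Cassels matrices into a Chebotarev argument.

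\emph{Step 1: the Selmer spaces $\Sel_2^0(E_\delta)$.} Write a Selmer element as a triple $(d_1,d_2,d_3)$ with $d_1d_2d_3$ a square. The entries are squarefree divisors of $2\cdot5\cdot p\cdot q$ times signs. Local solvability is tested at $2,5,p,q,\infty$.

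Fix $q\equiv1\pmod{40}$ with $(p/q)=1$. Then $q$ splits completely in $\Q(\sqrt{-1},\sqrt2,\sqrt5,\sqrt p)$. So the local conditions at $q$ impose only that the Legendre symbols of the entries at $q$ vanish in the appropriate coordinates. The same local analysis at $p$, $5$ and $2$ that gave the basis $\Lambda,\Lambda'$ for $A_{5p}$ then shows:
\begin{itemize}
\item the pure parts of $A_{5p}$ persist;
\item the $q$-part contributes new generators, uniformly in $q$;
\item the dimensions are $(4,4,2,3)$, independently of $q\in\mathcal P_p$.
\end{itemize}
I would write explicit bases for each $\delta$.

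\emph{Step 2: the Cassels matrices.} I would compute them with the tangent-form/Hilbert-symbol recipe of Section~\ref{sec:ordinary}. Each entry is a sum of local Hilbert symbols at $2,5,p,q$. Because $q$ splits in the relevant quadratic fields, the entries at $q$ reduce to quartic or octic residue symbols. These are $\big(\tfrac{p}{q}\big)_4$, $\big(\tfrac{2}{q}\big)_4$, $\big(\tfrac{2+\sqrt5}{\mathfrak q}\big)$, $\big(\tfrac{A+B\sqrt5}{\mathfrak q}\big)$ and $\big(\tfrac{A_2+B_2\sqrt{-2}}{\mathfrak q}\big)$, whose additive versions are the five residue characters. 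The entries at $p$ are fixed constants, read off from $\eps_p$.

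By reciprocity in $\Q(\sqrt5)$ and $\Q(\sqrt{-2})$, applied to the norm forms $p=A^2-5B^2=A_2^2+2B_2^2$, each character depends only on the Frobenius of $q$ in the corresponding quadratic extension. This is where the normalizations $4\mid A$ and $B$ odd enter. Consequently all four matrices factor through $\mathrm{Gal}(\widehat L_p/\Q)$.

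\emph{Step 3: degree and densities.} The degree $1024$ is a Kummer-theory computation. The base $\Q(\zeta_{40})$ has degree $16$. Adjoining the radicals should contribute $2^6$, giving $1024$; this requires checking that the five elements are independent modulo squares in the successive fields. The condition $q\in\mathcal P_p$ cuts out the subfield $\Q(\zeta_{40},\sqrt p)$ of degree $32$, so Frobenius classes in $\mathrm{Gal}(\widehat L_p/\Q(\zeta_{40},\sqrt p))$ are equidistributed over $32$ elements by Chebotarev. Showing that the map from these $32$ elements to the joint matrix values is injective gives the $32$ values with density $1/32$ each. Counting the values for which the first three matrices are invertible should give $2$ of the $32$, and counting the rank-zero values of the fourth should give $4$ of the $32$.

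\emph{Step 4: consequences.} The formula for $\Sel_4$ follows from the standard fact that the kernel of the Cassels pairing on $\Sel_2$ is the image of $\Sel_4$. The extra $(\Z/2)^2$ summand comes from $E[2]$.

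When a matrix is nondegenerate, $\Sel_4=\Sel_2$-image, so the $2$-primary Selmer rank is zero. Then $\Sh[2^\infty]\simeq(\Z/2)^{s_\delta}$, giving orders $2^4,2^4,2^2$.

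For $E_{-2}$, $s=3$ is odd and the matrix is alternating, so its rank is at most $2$. If the rank is $2$, the kernel has dimension $1$, and hence the rank of $E_{-2}(\Q)$ is at most $1$.

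The main obstacle is Step 2: identifying the Hilbert symbol entries at $2$ and $q$ precisely with the Frobenius data of $\widehat L_p$. The sign and unit bookkeeping, especially for $2+\sqrt5$, $\sqrt[4]2$ and the primary normalization of $A+B\sqrt5$, is delicate. I would first verify it numerically for a few pairs $(p,q)$ before proving the general reciprocity identities.
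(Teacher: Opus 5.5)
Your architecture matches the paper's: explicit Selmer bases, tangent--Hilbert symbol evaluation of the entries, identification of the entries with Frobenius signs on the radicals of $\widehat L_p$, Chebotarev with injectivity, then group theory. However, Step~2, which you defer to numerical checks, carries essentially all of the content, and the mechanism you sketch for it is not the one that works. Once $q$ splits completely in $B_p=\Q(\zeta_{40},\sqrt p)$, your five symbols are by definition the Frobenius signs on the five radicals. So no reciprocity in $\Q(\sqrt5)$ or $\Q(\sqrt{-2})$ is needed at that point, and no octic symbols occur.

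The real difficulty is that the $q$-adic tangent evaluations produce symbols of auxiliary data that vary with $q$. They give $[a/q]$ for a representation $pq=a^2-5b^2$. For $\langle U,V\rangle$ they give the symbol of a tangent value built from a point $(R,S,T)$ on $R^2-qS^2=5\delta pT^2$. Converting these into fixed characters needs ideas your proposal does not contain:
\begin{enumerate}
\item Write $q=c^2-5d^2$ with $c$ odd and $4\mid d$. Quadratic reciprocity gives $\leg dq=1$, hence $\leg cq=\leg5q_4$. The admissible product $a+b\sqrt5=(A+B\sqrt5)(c+d\sqrt5)$ then gives $[a/q]=\beta+\tau$ (Proposition~\ref{prop:pqfixedsymbols}).
\item For $\langle U,V\rangle$, prove $\leg Tq=1$ the same way. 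Then use the exact identity $\ell/T=h(z-2)^2/z$ with $z=b(1+r)$, $r^2=5$, together with $\leg{1+r}q=\leg{2+r}q=\leg5q_4$ from identities in $\Q(\zeta_{40})$. This yields $[\delta p/q]_4$.
\item For the $W$ row of $C_{-2}$, the natural tangent forms also bring in the $\Q(\sqrt{-10})$ norm character $\rho_{10}$. It must be eliminated through $\rho+\rho_{10}+\beta=0$, which comes from $\xi_5\xi_2\xi_{10}$ being a square in $B_p$ (Lemma~\ref{lem:threeNormRelation}). Without this relation a sixth radical is needed, and $\widehat L_p$ would not govern $C_{-2}$.
\end{enumerate}

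Several smaller points also need repair.
\begin{itemize}
\item In Step~1 the pure part of $A_{5p}$ does not persist for $\delta=\pm2$. There $\Lambda,\Lambda'$ are not Selmer classes: modulo torsion, the classes supported on $\{-1,2,5,p\}$ are $0$ for $\delta=2$ and $\langle W\rangle$ with $W=(p,1)$ for $\delta=-2$. Also, $p$ contributes trivially to every entry; the constants $\eps$ and $1+\eps$ in the $\Lambda,\Lambda'$ entries come from the places $2$ and $5$.
\item In Step~3, Chebotarev equidistributes Frobenius over conjugacy classes. You must therefore prove that $\operatorname{Gal}(\widehat L_p/B_p)\simeq(\Z/2\Z)^5$ is central before assigning density $1/32$ to each element.
\item The degree count needs actual proofs that $\sqrt{2+\sqrt5}\notin\Q(\zeta_{40},\sqrt[4]2)$ and that $\xi_2$ is independent of $p,\xi_5,2+\sqrt5,\sqrt2$ modulo squares. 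The paper uses a noncommutation argument for the first and valuation parities above $p$ for the second.
\item In Step~4 the $\Sel_4$ formula uses $E(\Q)[4]=E(\Q)[2]$. This gives $0\to\mathcal T\to\Sel_2\to\Sel_4\to R\to0$, hence $|\Sel_4|=2^{s+d+2}$ and $|2\Sel_4|=2^d$.
\item The bound $\rank E_{-2}(\Q)\le1$ on the density $1/16$ set requires noting that the $U,V$ entry of $C_{-2}$ equals $\gamma+\upsilon=\Pf(C_2)$. Nondegeneracy of $C_2$ then forces $\rank C_{-2}=2$; your Step~4 only treats the hypothetical case ``if the rank is $2$''.
\end{itemize}
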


Section~\ref{sec:compositedescent} carries out the local descent and computes
the four ordinary Cassels matrices; Section~\ref{sec:governing} constructs
the governing field and derives the density statements. The pure Selmer
space for $E_{-2}$ has odd dimension, so its Cassels matrix is necessarily
degenerate and has no Pfaffian. The same section obtains the congruence for
$\D(pq)/32$ and isolates its conjectural comparison with the ordinary
Pfaffian as Conjecture~\ref{conj:pqbridge}. Appendix~\ref{app:classfields}
gives the complementary class field and Jacobi sum interpretations.

For composite parameters the norm conic description also opens the way to
higher descent. The prime case $4$-cover extends to a larger zero character
locus on which the auxiliary block may be singular, and a second pushout
supplies a complete row of the next pairing. In a different direction, the
case of a one dimensional ordinary radical is governed entirely by the
structure of finite abelian $2$-groups, which determines every finite
$2$-power Selmer group.

\begin{theorem}
\label{thm:introI}
Let
\[
\begin{gathered}
 n=pq_1\cdots q_r,\qquad p\equiv11,19\pmod{40},\\
 q_i\equiv1\pmod{40},\qquad
 \leg p{q_i}=\leg{q_i}{q_j}=1\quad(i\ne j).
\end{gathered}
\]
For each $\delta\in\{1,-1,2,-2\}$, the ordinary Cassels matrix of
$A_{5\delta n}$ is given explicitly by rational norm equations for the
auxiliary primes; see Theorem~\ref{thm:allfour}.

Suppose now that $p\equiv11\pmod{40}$, choose
\[
 n=a^2-5b^2,\qquad a,b>0,\qquad4\mid a,\qquad b\text{ odd},
\]
and put $k=(-1)^{(b-1)/2}$, $\alpha_i=[a/q_i]$ and
$\tau_i=[5/q_i]_4$. If $\alpha_i=\tau_i=0$ for every $i$, then
\[
 C_+=0_2\oplus H_1,\qquad
 \rad C_+=\langle\Lambda,\Lambda'\rangle\oplus\ker H_1,
\]
and the same explicit $4$-cover lifts $\Lambda'$ without assuming that
$H_1$ is nondegenerate. On this radical the complete $\Lambda'$ row of the
next pairing is explicit: the $\Lambda$ entry is
\[
 (-1)^{\mathcal B(\Lambda',\Lambda)}=-k\leg{c_0}{5},
\]
and Theorems~\ref{thm:auxcrossU} and~\ref{thm:auxcrossV} give the auxiliary
$U_i$- and $V_i$-entries by Legendre symbols attached to two rational conics.
If $H_1$ is nondegenerate, this specializes to the two dimensional radical
theorem and determines the complete $8$-Selmer group and the corresponding
$2$-primary Tate--Shafarevich group.

There are also four dimensional ordinary radicals for which the next layer
can be determined completely. In particular,
\[
\begin{array}{c|c|c}
E&\rank E(\Q)&\Sh(E)[2^\infty]\\ \hline
A_{213455}&2&(\Z/4\Z)^2\\
A_{2172655}&2&(\Z/4\Z)^2
\end{array}
\]
and for both curves
\[
 \Sel_{2^j}(E)\simeq
 (\Z/2^j\Z)^2\oplus(\Z/2^{\min(j,2)}\Z)^2\oplus(\Z/2\Z)^2
 \qquad(j\ge1),
\]
with the images in the pure $2$-Selmer group explicitly determined in
Theorems~\ref{thm:ranktwo42691} and~\ref{thm:ranktwo434531}.
Finally, if $E=A_m$, $s=\dim\Sel_2^0(E)$, and its ordinary Cassels matrix
has rank $s-1$, then for every $j\ge1$,
\[
 \Sel_{2^j}(E)\simeq\Z/2^j\Z\oplus(\Z/2\Z)^{s+1}.
\]
For $E=A_{-10pq}$ this entire tower pattern has relative density $7/8$ in
$\mathcal P_p$.
\end{theorem}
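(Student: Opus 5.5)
This theorem assembles several results, and I would prove it in four independent parts, in the order of the statement.

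\emph{Part 1: ordinary Cassels matrices for $n=pq_1\cdots q_r$.} I would run the composite local descent of Section~\ref{sec:compositedescent}. The steps are:
\begin{enumerate}
\item Write a basis of $\Sel_2^0(A_{5\delta n})$ consisting of $\Lambda,\Lambda'$ and auxiliary classes $U_i,V_i$ supported at $q_i$.
\item Compute each Cassels entry as a sum of local Hilbert symbols at $2,5,p,q_i$ and $\infty$, using tangent forms.
\item Use $q_i\equiv1\pmod{40}$ and the mutual residuacity hypotheses to collapse every symbol to a character of the solution of a rational norm equation, namely $q_i=x^2-5y^2$ or $x^2+2y^2$. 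This is the multi-prime version of the two-prime computation behind Theorem~\ref{thm:introF}.
\end{enumerate}
This gives Theorem~\ref{thm:allfour}.

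\emph{Part 2: the $\Lambda'$ row for $p\equiv11\pmod{40}$.} I would first show that $\alpha_i=\tau_i=0$ kills every cross entry between $\langle\Lambda,\Lambda'\rangle$ and the auxiliary block. This yields $C_+=0_2\oplus H_1$ and the stated radical.

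Next, I would reuse the $4$-cover of Section~\ref{sec:higher} with $p$ replaced by $n=a^2-5b^2$. The conic~\eqref{eq:introConic} again has a rational point, because its local solvability at the $q_i$ follows from $\alpha_i=0$. The cover only needs $\Lambda'$ to lie in the radical, so nondegeneracy of $H_1$ is never used.

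The $\Lambda$ entry is then the same local evaluation as in the prime case, giving $-k(c_0/5)$. The $U_i$ and $V_i$ entries come from pairing the pushout divisor class against the auxiliary classes. For these I would construct the second rational pushout, corresponding to the other $2$-torsion direction, and evaluate at $q_i$. Both the existence of this pushout and the evaluation of the auxiliary entries are exactly the content of Theorems~\ref{thm:auxcrossU} and~\ref{thm:auxcrossV}, so I would assume them.

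When $H_1$ is nondegenerate, the radical is $\langle\Lambda,\Lambda'\rangle$, and the argument reduces to the prime case of Theorem~\ref{thm:introB}. That gives the $8$-Selmer group and $\Sh$ from $b_p$.

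\emph{Part 3: the two explicit curves.} For $A_{213455}$ and $A_{2172655}$ I would proceed as follows:
\begin{enumerate}
\item Compute the four-dimensional radical.
\item Exhibit two independent rational points whose Kummer images span a two-dimensional subspace $W$ of the radical.
\item Use the $\Lambda'$-row formulas, together with the symmetric construction, to show that the next pairing on the radical has kernel exactly $W$ and is nondegenerate on a complement.
\end{enumerate}
The ranks of the successive pairings then give $\Sel_{2^j}$. The points lift indefinitely, which produces the $(\Z/2^j)^2$ summand. The complement dies at level $4$, which produces $\Sh[2^\infty]\simeq(\Z/4)^2$. The $(\Z/2)^2$ summand is the torsion.

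\emph{Part 4: one-dimensional radical.} Here I would use a purely group-theoretic argument. Since the Cassels--Tate pairing is alternating, $\Sh[2^\infty]$ modulo its divisible part has the form $M\times M$. A one-dimensional radical means that the $2$-Selmer rank exceeds the $4$-Selmer image count by exactly one. The parity of $s-1$ forces the remaining unpaired direction to have infinite height, coming either from rank one or from a divisible $\Q_2/\Z_2$. Either way it contributes $\Z/2^j$ at every level, and the torsion contributes $(\Z/2)^2$. Counting dimensions gives $\Sel_{2^j}\simeq\Z/2^j\oplus(\Z/2)^{s+1}$.

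For the density claim, Theorem~\ref{thm:introF} gives rank two for the fourth matrix with density $7/8$. With $s_{-2}=3$, this is rank $s-1$.

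\emph{Main obstacle.} I expect the main obstacle to be the auxiliary $U_i/V_i$ entries on the singular locus, since that is where the second pushout must be shown to be rational and locally trivial away from $q_i$. I would assume it via the cited theorems.
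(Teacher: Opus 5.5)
Your Part~1, step~3, would fail. For $n=pq_1\cdots q_r$ the ordinary matrices do not collapse to characters of norm representations $q_i=x^2-5y^2$ or $x^2+2y^2$. Such representations belong only to the two-prime fixed-character reformulations (Proposition~\ref{prop:pqfixedsymbols}, Lemma~\ref{lem:fourimaginarynorms}).

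In Theorem~\ref{thm:allfour} the $\Lambda,\Lambda',W$ rows use norm representations of $n$ itself, $n=a^2-5b^2=a_2^2+2b_2^2=a_{10}^2+10b_{10}^2$, with the torsion shift $Z=W+\sum_iU_i+t_m$ when $\delta=-2$. The auxiliary block instead uses primitive points on the conics $R_i^2-q_iS_i^2=h_iT_i^2$, $K_i^2-q_iL_i^2=5h_iM_i^2$ and $J_i^2-q_iN_i^2=-h_iP_i^2$, where $h_i=5\delta n/q_i$.
\begin{itemize}
\item These conics are projections of the locally soluble $U_i$- and $V_i$-coverings, so they have rational points by Hasse--Minkowski.
\item Evaluating the tangents at the $q_j$-adic point $(0,1,s,1)$, $s^2=q_i$, with Lemma~\ref{lem:local} gives $\langle U_i,U_j\rangle=[R_iK_i/q_j]$ and $\langle U_i,V_j\rangle=[R_i/q_j]$. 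The point $(0,s,1,1)$ gives the $V_i$ rows.
\item The $q_i$-adic point $(T_i,-R_i,q_iS_i,w)$ gives $\langle U_i,V_i\rangle=[\delta n/q_i\,/q_i]_4$.
\end{itemize}
Because $h_i$ contains the other auxiliary primes, these entries couple pairs of auxiliary primes; already the diagonal contains the quartic symbols $[q_j/q_i]_4$. This is how the nonzero $A_\delta$ and nonsymmetric $B_\delta$ of Example~\ref{ex:ordinary} arise, and why the two-prime character formulas do not extend.

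The remaining parts follow the paper's route, but several justifications are wrong or missing.
\begin{itemize}
\item Part~2, local solubility. $\Lambda'\in\rad C_+$ only guarantees that some $4$-Selmer element lifts $\Lambda'$. The pushout formula needs the explicit $Y_n$ itself to be everywhere locally soluble. At $q_i$ this uses $\tau_i=0$, so that $(1+s_i)/2$ is a square and $h$ exists, and also $\alpha_i+\tau_i=0$ for $c$ and $d$. In fact the conic alone is soluble at $q_i$ exactly when $\alpha_i+\tau_i=0$, not when $\alpha_i=0$. At $p$ the needed characters come from dividing the global Jacobi identities by the $q_i$-contributions. None of this involves $H_1$, which is the actual reason nondegeneracy can be dropped.
\item Part~2, pushouts. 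The $U_i$-entries come from the first pushout $g=f\ell_0/r_0^2$ for $T_m=(5n,0)$, since $U_i=\chi_{q_i}T_m$. Only the $V_i$-entries need the second pushout for $T_0=(0,0)$.
\item Part~2, nondegenerate case. The relevant bit is $e_n$, read off from $\Gamma_n$, not $b_p$. Example~\ref{ex:threehigher} has $b_{11}=0$ but $e_n=1$.
\item Parts~3 and~4, group structure. Counting dimensions does not determine the abstract group $\Sel_{2^j}(E)$, because $0\to E(\Q)/2^jE(\Q)\to\Sel_{2^j}(E)\to\Sh(E)[2^j]\to0$ need not split a priori. You need purity of the Kummer image (Lemma~\ref{lem:split}) together with the elementary divisor count for $\Sh(E)[2^\infty]$. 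The same argument is needed for $\Sel_8$ in Part~2.
\item Part~3, symmetric construction. Drop it; the paper does not have it and you do not need it. The Mordell--Weil plane lies in the next radical, so the single entry $\mathcal B(\Lambda',\Lambda)=1$ (respectively $\mathcal B(\Lambda',V)=1$) already forces that radical to be the plane. Hence rank two, no divisible part, and one $\Z/4\Z$ pair (Proposition~\ref{prop:fourradicalcompletion}).
\item Density. The value $7/8$ also needs the converse: on the zero-matrix locus $\Sel_4\simeq(\Z/4\Z)^3\oplus(\Z/2\Z)^2$, so the tower pattern fails there.
\end{itemize}
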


Section~\ref{sec:extensions} contains these composite calculations. On larger
radicals we determine one complete row of the next pairing; in the two
rank two examples, independent rational Kummer classes determine the
remaining entries. None of these arguments uses
Conjecture~\ref{conj:pqbridge}.

We use $(a/\ell)$ for the Legendre symbol at an odd prime and for
the Jacobi symbol at a positive odd composite denominator. Symbols
with denominator $2$ are Kronecker symbols. When $\ell\equiv1\pmod4$
is prime and $(a/\ell)=1$, the rational quartic symbol is the sign
$(a/\ell)_4\equiv a^{(\ell-1)/4}\pmod\ell$. Additive symbols
are defined by $(-1)^{[a/\ell]}=(a/\ell)$ and
$(-1)^{[a/\ell]_4}=(a/\ell)_4$. All pairing matrices and their
Pfaffians are over $\F_2$. We identify a residue in $\F_2$ with
its representative in $\{0,1\}$ when it occurs as an exponent of $-1$.
The notation $D_4$ denotes the dihedral group of order eight.

\section{Ternary representation defects and central \texorpdfstring{$L$}{L}-values}
\label{sec:forms}\label{sec:representations}

\subsection{Theta series input and genus structure}\label{subsec:theta-genus}

We start from the two ternary forms singled out by the Im--Shin theta series.
The first task is to determine their genera and then replace them, on the
relevant residue classes, by simpler diagonal forms. Throughout, representation
numbers include signs and zero coordinates. If
$Q(v)=v^tMv$ is ternary, we call $\det M$ its determinant; its level is the
least positive integer $N$ for which $N(2M)^{-1}$ is integral with even
diagonal.

Set
\begin{align*}
 Q_1&=15x^2+23y^2+23z^2-10xy-10xz+14yz,\\
 Q_2&=7x^2+23y^2+47z^2-2xy+6xz+22yz.
\end{align*}
In the notation of \cite{ImShin}, the unweighted row
$G^{(-2)}_{3;+,-}$ has auxiliary parameter $l=1$. It contains the forms
$Q_1,Q_2$ above and the generalized theta series
\begin{equation}\label{eq:SIrow}
 \mathcal G(q)
 =
 2(\theta_{Q_1}-\theta_{Q_2})
 =
 \sum_{m\ge1}a(m)q^m,
 \qquad
 a(m)=2(r_{Q_1}(m)-r_{Q_2}(m)).
\end{equation}
For the target twist $E_{5,3}^{(2m)}$ this row contains
$m\equiv7,15,23\pmod{40}$.

\begin{proposition}\label{prop:genera}
The forms $Q_1,Q_2$ satisfy
\[
 \Gen(Q_1)=\Spn(Q_1)=\{Q_1,Q_2\},
 \qquad |O(Q_1)|=|O(Q_2)|=4,
\]
and have determinant $6400$ and level $160$.
The diagonal forms $F,G$ in \eqref{eq:introFG} satisfy
\[
 \Gen(F)=\Spn(F)=\{F,G\},
 \qquad |O(F)|=|O(G)|=8,
\]
and have determinant $80$ and level $160$.
With mass computed as the sum of reciprocal automorphism orders,
one has
\[
 \operatorname{mass}(\Gen Q_1)=\frac12,\qquad
 \operatorname{mass}(\Gen F)=\frac14.
\]
\end{proposition}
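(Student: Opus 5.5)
The plan is a direct computation in the classical theory of ternary forms, with the mass formula as the final check.

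\emph{Invariants.} Write the Gram matrices $M_1,M_2$ of $Q_1,Q_2$ with $Q(v)=v^tMv$, so that off-diagonal entries are halved. For $Q_1$,
\[
M_1=\begin{pmatrix}15&-5&-5\\-5&23&7\\-5&7&23\end{pmatrix}.
\]
Expanding gives $\det M_1=15(529-49)+5(-115+35)-5(-35+115)=7200-400-400=6400$, and $\det M_2$ is computed in the same way. For $F,G$ we have $\det=1\cdot2\cdot40=1\cdot8\cdot10=80$. The level is read off from the adjugate of $2M$. For the diagonal forms, $N(2M)^{-1}$ has diagonal entries $N/2,N/4,N/80$ resp.\ $N/2,N/16,N/20$, and the least $N$ making these even integers is $160$. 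For $Q_1,Q_2$ I would compute the adjugate explicitly and check that $160$ is minimal.

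\emph{Genus equality.} Since $2$ and $5$ are the only primes dividing the determinants, it suffices to compare the $\Z_2$- and $\Z_5$-Jordan decompositions. For $F$ versus $G$ over $\Z_5$, both are $\langle1,2\rangle\perp\langle5u\rangle$ with $u=8$ resp.\ $2$. The determinants agree, so the unimodular parts $\langle1,2\rangle$ and $\langle1,8\rangle$ have square-class determinants $2$ and $8\equiv2\cdot4$, which agree. Over $\Z_2$ we compare $\langle1\rangle\perp\langle2\rangle\perp\langle8\cdot5\rangle$ with $\langle1\rangle\perp\langle8\rangle\perp\langle2\cdot5\rangle$ via Conway--Sloane $2$-adic symbols (or O'Meara's invariants). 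Alternatively, and more cleanly, I would show local equivalence by exhibiting explicit local isometries or by checking equality of Hasse invariants and scale-by-scale norms. For $Q_1,Q_2$ I would first reduce them by unimodular changes of variables to exhibit the relevant Jordan splittings.

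\emph{Class enumeration and automorphisms.} Brandt--Intrau / Eisenstein reduction bounds for positive ternary forms of determinant $80$ (resp.\ $6400$) leave a finite list of reduced forms with the correct local invariants. I would enumerate this list and show it gives exactly two classes in each genus. Automorphism groups are counted by listing short vectors. For $F$, $O(F)$ consists of the sign changes, so $|O(F)|=8$, and no permutation applies since the diagonal entries are distinct; the same holds for $G$. For $Q_1$, the symmetry $y\leftrightarrow z$ together with $-1$ gives order $4$, and the short-vector count shows nothing else. The mass check $\tfrac18+\tfrac18=\tfrac14$ and $\tfrac14+\tfrac14=\tfrac12$ against the Smith--Minkowski--Siegel mass formula, with local densities at $2$ and $5$, then confirms that the enumeration is complete. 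This is the main obstacle: the $2$-adic local density computation, where I would use the Conway--Sloane mass formula with their standard $2$-adic mass factors.

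\emph{Spinor genus.} The genus has two classes, so it remains to rule out two spinor genera. Here I would use the Earnest--Hsia criterion. The number of spinor genera equals $[J_\Q:\Q^\times\prod_p\theta(O^+(L_p))]$, and since the relevant local spinor norm groups at $p=2$ and $p=5$ contain enough units and uniformizer classes (via $\langle1,2\rangle$-type components representing $1,2,3$ and their products with $5$), the index is $1$. Concretely, $\theta(O^+(F_5))\supseteq\Q_5^\times$ because the binary unimodular component $\langle1,2\rangle$ represents all units and the $5$-adic component adds $5$. At $2$, the symmetries in vectors of norms $1,2,10,40$ generate all square classes. Hence $\Spn=\Gen$. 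For $Q_1,Q_2$, I would alternatively relate them to $F,G$ by a sublattice relation, since $6400=80\cdot80$, which transfers the spinor-genus statement; failing that, I would repeat the local spinor norm calculation directly.
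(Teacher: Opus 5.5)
Your outline follows the paper's route: compare Jordan splittings at $2$ and $5$, enumerate reduced forms, count automorphisms by short vectors, and show that the local proper spinor norm groups contain all units. However, the spinor norm step at $2$ fails as written.

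You assert that reflections in vectors of norms $1,2,10,40$ generate all square classes. They do not. All four norms lie in the subgroup $\{1,2,5,10\}\Q_2^{\times2}$ of $\Q_2^\times/\Q_2^{\times2}$, so products of such reflections never give $3$ or $-1$. With only that subgroup, the unit part of $\theta(O^+(F_2))$ would be $\{1,5\}\Z_2^{\times2}$, the idelic index would come out as $2$, and you could not conclude $\Spn(F)=\Gen(F)$.

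The missing ingredient is a vector whose norm lies in the class of $3$ or $7$. For instance $F(e_1+e_2)=3\in\Z_2^\times$, so the reflection in $e_1+e_2$ is integral. Then $F(e_1)F(e_1+e_2)=3$, together with $2$ and $40\sim10$, generates $\Q_2^\times/\Q_2^{\times2}$. This is exactly what the paper does: it composes the reflection in $e_1$ with those in $e_2,e_3,e_1+e_2$, giving $2,10,3$. Your parenthetical about $\langle1,2\rangle$ representing $3$ points the right way, but the generating set you actually state omits it.

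Several other steps also need to be replaced rather than merely filled in.

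\begin{itemize}
\item \emph{Spinor genus of $Q_1,Q_2$.} A sublattice relation with $F,G$ does not transfer spinor-genus information, so you must compute directly. At $2$ the paper works in the model $\langle-1\rangle\perp16\mathbb H$: the reflection in $(0,1,u)$, $u$ odd, is integral with spinor norm $32u$, and composing with the reflection in $e_1$ gives $-2u$. At $5$ it uses the anisotropic $5$-modular binary block.
\item \emph{$2$-adic equivalence $F_2\cong G_2$.} Hasse invariants together with norms at each scale do not in general classify $\Z_2$-lattices. Either complete the Conway--Sloane comparison (the diagonal splittings have opposite signs at scales $2$ and $8$, so a sign walk is required), or exhibit the isometry $\langle2,40\rangle\cong\langle8,10\rangle$ directly. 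The paper does the latter via $T=\left(\begin{smallmatrix}4&-5u\\u&1\end{smallmatrix}\right)$ with $u^2=-3/5$.
\item \emph{Automorphism groups of $F,G$.} Distinct diagonal entries alone do not force $O(F)$ to be monomial. The actual argument is that only $\pm e_1$ has norm $1$ and only $\pm e_2$ has norm $2$ (for $G$, norms $1$ and $8$). After that, $e_3$ must go to $\pm e_3$.
\item \emph{Mass check.} The Smith--Minkowski--Siegel check is optional. With mass defined as $\sum1/|O|$, the values $\tfrac14$ and $\tfrac12$ follow immediately from the class lists and automorphism orders, and the paper obtains completeness from the reduction enumeration alone.
\end{itemize}
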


\begin{proof}
The Gram matrices of $Q_1,Q_2$ are
\[
 M_1=
 \begin{pmatrix}
 15&-5&-5\\
 -5&23&7\\
 -5&7&23
 \end{pmatrix},
 \qquad
 M_2=
 \begin{pmatrix}
 7&-1&3\\
 -1&23&11\\
 3&11&47
 \end{pmatrix}.
\]
Their Smith invariants are $(1,80,80)$.

Over $\Z_2$, orthogonal splitting along an odd unit gives
\[
 M_1\sim\langle15\rangle\perp
 \frac{16}{3}
 \begin{pmatrix}4&1\\1&4\end{pmatrix},
 \qquad
 M_2\sim\langle7\rangle\perp
 \frac{80}{7}
 \begin{pmatrix}2&1\\1&4\end{pmatrix}.
\]
Both binary unit blocks are even hyperbolic blocks, and the one
dimensional blocks lie in the same squareclass $-1$; hence both local
lattices are equivalent to $\langle-1\rangle\perp16\mathbb H$.
At $5$, both consist of a nonsquare unit line plus $5$ times an
anisotropic binary unit lattice. At every other odd prime they are
unimodular ternary lattices with the same determinant, hence locally
equivalent.

We next determine the global classes in the genus. For a reduced positive
definite ternary form, the Seeber--Minkowski bound \cite{Mahler} gives
\[
 abc\le 2\det M.
\]
After sign changes one may impose
\[
 1\le a\le b\le c,\qquad
 0\le r,s\le a/2,\qquad |t|\le b/2,
\]
for
\[
 M=\begin{pmatrix}a&r&s\\r&b&t\\s&t&c\end{pmatrix}.
\]
These inequalities need not be a complete reduction theory for our purpose;
they give a finite set containing every reduced representative.
From $\det M=6400$,
\[
 c=
 \frac{6400+at^2+bs^2-2rst}{ab-r^2}.
\]
Imposing primitivity, the Smith invariants $(1,80,80)$, and the same
odd represented residue class modulo $8$, leaves the four candidates
\[
\begin{array}{c|rrrrrr}
 &a&b&c&r&s&t\\ \hline
1&7&12&80&2&0&0\\
2&7&23&47&1&3&-11\\
3&12&23&28&6&4&2\\
4&15&23&23&5&5&7.
\end{array}
\]
Splitting off the odd line in the first and third candidates
gives binary complements
\[
 16\operatorname{diag}(5/7,5),\qquad
 \frac{16}{23}\begin{pmatrix}15&5\\5&40\end{pmatrix},
\]
respectively. These scale-$16$ blocks are odd, so both candidates
are excluded by the even block in $16\mathbb H$. The second and fourth are equivalent to
$Q_2,Q_1$. The minima $7$ and $15$ show that these two classes are
inequivalent. More explicitly,
\[
 O(Q_i)=\{I,-I,S_i,-S_i\},\qquad
 S_1=\begin{pmatrix}1&0&0\\0&0&1\\0&1&0\end{pmatrix},\quad
 S_2=\begin{pmatrix}1&0&1\\0&1&1\\0&0&-1\end{pmatrix}.
\]
The same reduction also determines the automorphism groups. If $v$ is the
$j$th column of an automorphism, then
$v^tM_iv=(M_i)_{jj}$ and
$|v_l|^2\le (M_i)_{jj}(M_i^{-1})_{ll}$. These bounds leave only finitely
many columns, and their mutual inner products give the groups displayed
above.

For $F$ and $G$, at odd primes one may use the diagonal local
transformation with multiplier $\operatorname{diag}(1,2,1/2)$.
Over $\Z_2$, choose $u\in\Z_2^\times$ with $u^2=-3/5$ and put
\[
 T=\begin{pmatrix}4&-5u\\u&1\end{pmatrix},
 \qquad \det T=1.
\]
Then
\[
 T^t\operatorname{diag}(1,20)T=\operatorname{diag}(4,5),
\]
which gives the required local equivalence. Applying the same finite
reduction argument with determinant $80$ and Smith invariants
$(1,2,40)$ and odd represented residues $\{1,3\}$ modulo $8$ leaves only
\[
 (1,2,40;0,0,0),\qquad (1,8,10;0,0,0),
\]
namely $F$ and $G$. They are inequivalent since
$r_F(3)=4$ while $r_G(3)=0$.
The stated levels follow by inverting the four Gram matrices.

It remains to compare genus and spinor genus. We do this through the local
proper spinor norm groups.
At $2$, in the model $\langle-1\rangle\perp16\mathbb H$,
reflection in $(0,1,u)$, with $u$ odd, is integral and has spinor
norm $32u$. Composing with reflection in the first coordinate
gives the proper spinor norm $-2u$. Products of these elements
give all unit squareclasses. At $5$, the anisotropic binary
$5$-modular block represents $5$ times every unit squareclass;
products of two corresponding reflections give all unit spinor
norms. For $F$ at $2$, composing reflection in $e_1$ with those
in $e_2,e_3,e_1+e_2$ gives the squareclasses $2,10,3$, which
generate $\Q_2^\times/\Q_2^{\times2}$. Its unimodular binary
block at $5$ similarly gives every unit spinor norm. At every
other odd prime the unimodular ternary lattice has all unit
spinor norms. The idelic description of proper spinor genera
therefore reduces to the narrow ideal class group of $\Q$,
which is trivial. Each genus consists of a single spinor genus.
\end{proof}

\subsection{Diagonalization, genus averages, and central values}\label{subsec:diag-central}

The forms $Q_1,Q_2$ are natural from the modular construction, whereas the
congruence arguments are much clearer for diagonal forms. On the residue
classes relevant to the $3/5$ problem, the two descriptions are related as
follows.

\begin{proposition}\label{prop:diag}
For every positive integer $t\equiv3\pmod8$,
\begin{equation}\label{eq:diag}
 2r_{Q_1}(5t)=r_F(t),\qquad
 2r_{Q_2}(5t)=r_G(t).
\end{equation}
Consequently
\[
 a(5t)=\D(t).
\]
\end{proposition}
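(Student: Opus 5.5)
Our plan is to prove each identity in \eqref{eq:diag} by an explicit correspondence of lattices: we exhibit a sublattice of $\Z^3$ on which $Q_i$ restricts to $5F$ (resp.\ $5G$), and we show that $Q_i$ only represents $5t$ on that sublattice and its image under the automorphism $-I$ or $S_i$. The first step is a congruence analysis modulo $5$. Since $Q_1\equiv 3y^2+3z^2+4yz\pmod 5$ has discriminant $16-36=-20\equiv0$, the form is degenerate mod $5$, and we want to identify its radical line. From the local description in Proposition~\ref{prop:genera}, $Q_i\otimes\Z_5$ is a nonsquare unit line plus $5$ times an anisotropic binary. Hence $5\mid Q_i(v)$ forces $v$ into the index-$5$ sublattice $\Lambda_i$, the preimage of the isotropic (radical) direction, and on $\Lambda_i$ the form $\tfrac15 Q_i$ is again integral.

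Next we compute $\tfrac15Q_i|_{\Lambda_i}$. It has determinant $6400\cdot 25/125=1280=16\cdot 80$, which suggests a lattice related to $F$ or $G$ scaled at $2$. For that reason we do not expect $\tfrac15Q_i|_{\Lambda_i}$ to equal $F$ directly. Instead, we use the $2$-adic shape $\langle-1\rangle\perp16\mathbb H$: when $t\equiv3\pmod8$ is odd, representations $v$ of $5t$ must have a particular residue mod $2$ (or mod $4$). The set of such $v$ therefore lies in a union of cosets of a sublattice $\Lambda_i'\subset\Lambda_i$.

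The factor $2$ in \eqref{eq:diag} should come from the following structure. The admissible vectors form two cosets $\pm w+2M$, or equivalently one coset of a lattice of the appropriate determinant, related to $F$ by an affine change of variables. Concretely, we look for an integral $3\times3$ matrix $P_i$ with $\det P_i=\pm10$ or a similar value, such that $P_i^tM_iP_i$ equals $5$ times a form of determinant $80\cdot(\text{square})$. We then want a $2$-to-$1$ map from representations of $t$ by $F$ to representations of $5t$ by $Q_1$, or more precisely the reverse, where each representation of $5t$ by $Q_1$ yields two representations of $t$ by $F$ via $x\mapsto\pm x$ in a coordinate fixed by the reduction.

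We will find $P_i$ by searching small vectors: for $Q_1$, look for $v$ with $Q_1(v)=5$, $10$, $200$, mutually orthogonal with respect to the appropriate inner product. Restricted to $t\equiv3\pmod8$, this gives sublattice representations. Since $F$ and $G$ lie in one genus and each form is determined up to class, whichever form arises can then be identified by comparing $r(3)$.

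The main obstacle is the $2$-adic bookkeeping. We must show that every representation of $5t$ with $t\equiv3\pmod 8$ lies in the image sublattice, and that the map is exactly $2$-to-$1$. Our first attempt will be to analyse $Q_i$ modulo $16$ on $\Lambda_i$ via $\langle-1\rangle\perp16\mathbb H$: an odd value $\equiv 15\pmod{8}$ class forces the $16\mathbb H$ part to vanish mod $2$. The same argument for $F$ shows that $t\equiv3\pmod 8$ forces $x,y$ odd. Matching these parity constraints coordinatewise will give the factor $2$.

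Once \eqref{eq:diag} holds, the consequence is immediate. By \eqref{eq:SIrow},
\[
a(5t)=2\bigl(r_{Q_1}(5t)-r_{Q_2}(5t)\bigr)=r_F(t)-r_G(t)=\D(t).
\]
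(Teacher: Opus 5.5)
Your skeleton is the right one: reduce modulo $5$ to an index-$5$ sublattice, get the factor $2$ from a sign involution, then read off $a(5t)=\D(t)$ from \eqref{eq:SIrow}. That last step is fine. However, the step that actually proves \eqref{eq:diag} is never carried out, and the concrete method you propose for it cannot work. There is no integral $P_1$ with $P_1^tM_1P_1=5\operatorname{diag}(1,2,40)$, since that would need $(\det P_1)^2=125\cdot80/6400=25/16$. Indeed $Q_1$ has minimum $15$, so there are no vectors with $Q_1(v)=5$ or $10$ to search for.

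Your own determinant $1280=16\cdot80$ shows what is really happening: $\frac15Q_1|_{\Lambda_1}$ is $F$ restricted to an index-$4$ sublattice of $\Z^3$, not $F$ on a full lattice. So the substitution must go the other way: diagonalize $Q_1$ over $\Q$ and describe the original lattice by congruences. Put $u=x+y+z$, $v=x-y-z$, $w=y-z$. Then $Q_1=5u^2+10v^2+8w^2$, and $(x,y,z)\in\Z^3$ corresponds exactly to integer triples with $u\equiv v\pmod2$ and $u-v\equiv2w\pmod4$. Reduction modulo $5$ gives $w=5z'$, so $Q_1=5t$ becomes $t=u^2+2v^2+40z'^2$. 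Hence the representations of $5t$ by $Q_1$ correspond bijectively to the representations of $t$ by $F$ with $u\equiv v\pmod 2$ and $u-v\equiv10z'\pmod4$.

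The parity information that produces the factor $2$ lives on the $F$ side, not in the $2$-adic model of $Q_i$. In $-x^2+32yz$, a value $\equiv7\pmod8$ only forces the unit coordinate to be odd. It imposes nothing on the hyperbolic coordinates, since $32yz-5t\equiv1\pmod8$ is always a $2$-adic square. So that analysis cannot single out the cosets you need. What works is this: $t\equiv3\pmod8$ forces $u,v$ odd, so $u\equiv v\pmod2$ is automatic. Then $v\mapsto-v$ changes $u-v$ by $2v\equiv2\pmod4$, so exactly one triple in each pair satisfies the mod-$4$ condition, giving $2r_{Q_1}(5t)=r_F(t)$.

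For $Q_2$ the analogous data are as follows. One has $Q_2\equiv2(x+2y+4z)^2\pmod5$; put $u=x+y+z$, $v=-z$, $w=(x-3y-z)/5$, so that $Q_2=5u^2+40v^2+50w^2$. With $u,w$ odd, integrality of $(x,y,z)$ is equivalent to $u+2v-5w\equiv0\pmod4$, and $w\mapsto-w$ toggles this condition. Without such an explicit substitution and an identification of the congruence that gets halved, your proposal does not establish the exact factor $2$.
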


\begin{proof}
For $Q_1$ set
\[
 u=x+y+z,\qquad v=x-y-z,\qquad w=y-z.
\]
Then
\[
 Q_1=5u^2+10v^2+8w^2.
\]
The original integral lattice is described by
\[
 u\equiv v\pmod2,\qquad u-v\equiv2w\pmod4.
\]
If $Q_1=5t$, reduction modulo $5$ gives $5\mid w$.
Writing $w=5z'$ yields
\[
 t=u^2+2v^2+40z'^2.
\]
Since $t\equiv3\pmod8$, both $u$ and $v$ are odd. The involution
$v\mapsto-v$ pairs the resulting triples, and in each pair exactly one triple
satisfies the original congruence modulo $4$. Hence
$2r_{Q_1}(5t)=r_F(t)$.

For $Q_2$, modulo $5$ one has
\[
 Q_2\equiv2(x+2y+4z)^2.
\]
If $Q_2=5t$, put
\[
 u=x+y+z,\qquad v=-z,\qquad
 w=\frac{x-3y-z}{5}\in\Z.
\]
A direct computation gives
\[
 Q_2=5u^2+40v^2+50w^2
\]
and
\[
 x=\frac{3u+2v+5w}{4},\qquad
 y=\frac{u+2v-5w}{4},\qquad z=-v.
\]
For $t\equiv3\pmod8$, $u,w$ are odd, and integrality is equivalent to
\[
 u+2v-5w\equiv0\pmod4.
\]
Changing $w$ to $-w$ interchanges the two residue classes modulo $4$.
Thus each pair contains a unique point of the original lattice, proving the
second equality in \eqref{eq:diag}.
\end{proof}

\begin{proposition}\label{prop:genusaverage}
Let $m>0$ be squarefree with $m\equiv7\pmod8$. Then
\begin{equation}\label{eq:Qsum}
 r_{Q_1}(m)+r_{Q_2}(m)
 =
 \left(1-\leg{m}{5}\right)h(-m).
\end{equation}
Consequently, if $t>0$ is squarefree,
$t\equiv3\pmod8$, and $5\nmid t$, then
\begin{equation}\label{eq:FGsum}
 r_F(t)+r_G(t)=2h(-5t),
 \qquad
 \D(t)=2\bigl(r_F(t)-h(-5t)\bigr).
\end{equation}
\end{proposition}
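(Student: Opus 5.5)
The plan has two parts: first prove \eqref{eq:Qsum} with the Siegel--Weil formula for the genus of $Q_1$, then deduce \eqref{eq:FGsum} from Proposition~\ref{prop:diag}.

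\textbf{Step 1: Siegel--Weil.} By Proposition~\ref{prop:genera}, $\Gen(Q_1)=\{Q_1,Q_2\}$, $|O(Q_i)|=4$, and the mass is $1/2$. Siegel's theorem therefore gives
\[
 \frac{r_{Q_1}(m)+r_{Q_2}(m)}{4}\Big/\frac12=\prod_\ell \beta_\ell(m)\cdot(\text{archimedean factor}),
\]
so that $r_{Q_1}(m)+r_{Q_2}(m)=2\cdot\frac{1}{2}\sum_{Q\in\Gen}r_Q(m)/|O(Q)|\cdot\ldots$, and the sum equals the full genus representation number $r_{\Gen}(m)$. Since the genus has a single spinor genus, no spinor-exceptional correction is needed, although for the average this is irrelevant anyway.

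\textbf{Step 2: the local densities.} For $m$ squarefree the standard reduction expresses the product of local densities as $L(1,\chi_{-m})\sqrt m$ times Euler factors at $\ell\mid 2\cdot 5\cdot m$. This gives $h(-m)$ times correction factors, using $\Q(\sqrt{-m})$ with $-m\equiv1\pmod 8$, so $w=2$ and $\chi_{-m}(2)=1$.
\begin{itemize}
\item At odd $\ell\nmid 10$ the lattice is unimodular and the factors are the usual ones.
\item At $\ell=5$ the lattice is $\langle\epsilon\rangle\perp 5A$ with $A$ anisotropic binary. The unit $m$ is represented by the line only if $\epsilon m$ is a square, so the density should come out proportional to $1-\leg m5$: it is $0$ when $\leg m5=1$ and $2\times$ the standard value when $\leg m5=-1$.
\item At $\ell=2$ the lattice is $\langle-1\rangle\perp16\mathbb H$, and $m\equiv7\pmod8$ is represented primitively via the $\langle -1\rangle$ line modulo $16$. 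This density must be computed and compared with the factor $1-\chi_{-m}(2)/2$ appearing in the class number formula.
\end{itemize}
I would normalize the whole computation by one test value rather than tracking constants. The main obstacle is getting the $2$-adic and $5$-adic densities and the overall constant exactly right. To pin the constant, I would check a small case such as $m=7$ or $m=23$: with $m=7$ one has $r_{Q_2}(7)=2$, $r_{Q_1}(7)=0$, $h(-7)=1$ and $\leg75=-1$, so the right side is $2$, which is consistent. A second check, $m=23$ (where $\leg{23}5=-1$ and $h(-23)=3$), confirms both the constant and the $1-\leg m5$ shape.

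\textbf{Step 3: deducing \eqref{eq:FGsum}.} Given $t$ as in the statement, put $m=5t$. Then $m\equiv15\equiv7\pmod8$, $m$ is squarefree, and $5\mid m$. However, \eqref{eq:Qsum} as stated uses $\leg m5$, which is $0$ here, and the $5$-adic factor differs when $5\mid m$. So I would redo the $5$-adic density for $m=5t$ with $5\nmid t$. In that case $m$ is represented through $5A$, since the anisotropic binary $A$ represents every unit, and the local factor becomes the one matching $(1-0)$. Thus \eqref{eq:Qsum} holds with $\leg m5=0$, and this is the reading I would adopt: $r_{Q_1}(5t)+r_{Q_2}(5t)=h(-5t)$. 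Proposition~\ref{prop:diag} then gives
\[
 r_F(t)+r_G(t)=2h(-5t),
\]
and subtracting $2r_G(t)$ yields $\D(t)=2(r_F(t)-h(-5t))$. As a check, take $t=3$: here $r_F(3)=4$, $r_G(3)=0$, and $h(-15)=2$, so $4=2\cdot2$, which is consistent.
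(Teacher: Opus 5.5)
Your route is the same as the paper's: Siegel's mass formula for the genus $\{Q_1,Q_2\}$ (equal automorphism orders make the weighted average $(r_{Q_1}+r_{Q_2})/2$), then local densities, then $h(-m)=\sqrt m\,L(1,\chi_{-m})/\pi$, then Proposition~\ref{prop:diag}. \textbf{The gap is that you never compute the densities at $2$ and $5$, which is the entire content of the proposition.} What is needed is the following.
\begin{itemize}
\item At $2$: $\alpha_2(m)=4$ for every $m\equiv7\pmod8$. In the model $-x^2+32yz$, the congruence $x^2\equiv32yz-m\equiv1\pmod8$ has four roots for each pair $(y,z)$.
\item At $5$:
\[
 \alpha_5(m)=2,\quad 0,\quad \tfrac65
 \qquad\text{according as } \leg m5=-1,\ \leg m5=1,\ 5\mid m .
\]
In the last case $5\mid x$ is forced. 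After dividing by $5$, the anisotropic binary norm form over $\F_5$ takes each nonzero value six times, and nonsingular lifting gives $6/5$.
\end{itemize}
Once the Euler factor at $5$ is taken out of $L(1,\chi_{-m})$, the only $m$-dependence besides $h(-m)$ is through
\[
 \alpha_5(m)\bigl(1-\chi_{-m}(5)/5\bigr)\in\{\tfrac{12}5,\,0,\,\tfrac65\}.
\]
The ratio $2:0:1$ is exactly the factor $1-\leg m5$, and the constant in front works out to $\tfrac5{12}$ for the average. Your phrase ``$2\times$ the standard value'' is also not correct as stated. The unimodular value at $5$ with $\chi_{-m}(5)=-1$ would be $4/5$, whereas the actual density is $2$.

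Normalizing by test values does not close the gap.
\begin{itemize}
\item Both $m=7$ and $m=23$ have $\leg m5=-1$. They therefore cannot ``confirm the $1-\leg m5$ shape.''
\item They say nothing about the relative weight of the class $5\mid m$. That weight is precisely what decides $r_F(t)+r_G(t)=2h(-5t)$ rather than some other multiple of $h(-5t)$.
\item In Step~3, the assertion that the ramified factor ``becomes the one matching $(1-0)$'' restates the conclusion. The case $t=3$ is used only as a consistency check.
\end{itemize}
A test-value argument can be made rigorous with one value per class: $m=7$; $m=15$ (that is, $t=3$); and, for $\leg m5=1$, the local obstruction $Q_1\equiv3(y+4z)^2$, $Q_2\equiv2(x+2y+4z)^2\pmod5$. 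This works only after you prove that $\alpha_2$ is constant on $m\equiv7\pmod8$ and $\alpha_5$ is constant on each of the three classes. Proving that constancy is essentially the same density computation the paper carries out.
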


\begin{proof}
For $Q_1$ define the local density
\[
 \alpha_\ell(m)
 =
 \lim_{k\to\infty}\ell^{-2k}
 \#\{v\bmod\ell^k:Q_1(v)\equiv m\pmod{\ell^k}\}.
\]
Since $|O(Q_1)|=|O(Q_2)|$, the Siegel mass formula \cite{Siegel} gives
\[
 \frac{r_{Q_1}(m)+r_{Q_2}(m)}2
 =
 \frac{2\pi\sqrt m}{80}
 \prod_\ell\alpha_\ell(m).
\]
Put $\chi=\chi_{-m}$. For $\ell\nmid10$,
nonsingular lifting over the finite field gives
\[
 \alpha_\ell(m)=
 \begin{cases}
 1+\chi(\ell)/\ell,&\ell\nmid m,\\
 1-\ell^{-2},&\ell\mid m,
 \end{cases}
\]
or uniformly
\[
 \alpha_\ell(m)
 =
 \frac{1-\ell^{-2}}{1-\chi(\ell)/\ell}.
\]
At $2$, use $Q_1\sim -x^2+32yz$. For every pair $(y,z)$
modulo $2^k$, the equation $x^2\equiv32yz-m\pmod{2^k}$ has
four roots when $k\ge3$, since its right hand side is $1$
modulo $8$. Thus $\alpha_2(m)=4$. At $5$ the lattice is a nonsquare unit line plus
$5$ times an anisotropic binary norm form, hence
\[
 \alpha_5(m)=
 \begin{cases}
 2,&5\nmid m,\ \leg{m}{5}=-1,\\
 0,&5\nmid m,\ \leg{m}{5}=1,\\
 6/5,&5\mid m.
 \end{cases}
\]
For the ramified case $m=5u$, reduction first forces the unit line
coordinate to vanish modulo $5$. After division by $5$, the
binary norm equation over $\F_5$ has six solutions at every
nonzero level; nonsingular lifting gives $6/5$. In the
unramified cases, the unit line has two roots or none.
Because $\chi(2)=1$,
\[
 \prod_{\ell\ne2,5}\alpha_\ell(m)
 =
 \frac{L(1,\chi)}{\zeta(2)}
 \frac{1-1/2}{1-1/4}
 \frac{1-\chi(5)/5}{1-1/25}.
\]
Using
\[
 h(-m)=\frac{\sqrt m}{\pi}L(1,\chi)
\]
gives
\[
 \frac5{12}\alpha_5(m)
 \left(1-\frac{\chi(5)}5\right)h(-m).
\]
Substitution of the three possible $5$-adic densities gives, respectively,
$h(-m)$, $0$, and $h(-m)/2$, which is \eqref{eq:Qsum}. Setting $m=5t$
and applying \eqref{eq:diag} yields \eqref{eq:FGsum}.
\end{proof}

\begin{corollary}\label{cor:Lvalue}
Let $t>0$ be squarefree, $t\equiv3\pmod8$, and $5\nmid t$.
Then
\[
 L(A_{5t},1)
 =
 \frac{\sqrt3\,L(A_{15},1)}{16\sqrt t}\D(t)^2,
 \qquad L(A_{15},1)>0.
\]
Thus $\D(t)\ne0$ implies $\rank A_{5t}(\Q)=0$, and $10t$ is not
$\theta$-congruent for $\cos\theta=3/5$.
\end{corollary}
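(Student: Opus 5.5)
The plan is to combine the Im--Shin central value formula for the row $G^{(-2)}_{3;+,-}$ with Proposition~\ref{prop:diag}. Im--Shin's Tunnell-type theorem expresses the central value of the twist attached to $m$ in this row as a constant multiple of $a(m)^2/\sqrt m$, via Waldspurger's theorem applied to the weight $3/2$ form $\mathcal G$. Here $m\equiv7,15,23\pmod{40}$, and the curve is $E_{5,3}^{(2m)}\simeq A_m$.

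Concretely, the formula has the shape
\[
 L(A_m,1)=\kappa\,\frac{a(m)^2}{\sqrt m}
\]
with $\kappa>0$ independent of $m$ within the row (any dependence on the class of $m$ is absorbed by a fixed normalization). Put $m=5t$ with $t\equiv3\pmod8$, $5\nmid t$, and $t$ squarefree. Then $m\equiv15\pmod{40}$, since $5t\equiv15\pmod8\cdot5$; the congruence modulo $5$ is automatic, so this residue class does lie in the row. Proposition~\ref{prop:diag} gives $a(5t)=\D(t)$, hence
\[
 L(A_{5t},1)=\kappa\,\frac{\D(t)^2}{\sqrt{5t}}.
\]

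To pin down $\kappa$, specialize to $t=3$. Here $r_F(3)=4$ (from $(\pm1,\pm1,0)$) and $r_G(3)=0$, as already noted in the proof of Proposition~\ref{prop:genera}, so $\D(3)=4$. This gives $L(A_{15},1)=16\kappa/\sqrt{15}$, and eliminating $\kappa$ yields
\[
 L(A_{5t},1)=\frac{\sqrt{15}\,L(A_{15},1)}{16\sqrt{5t}}\D(t)^2=\frac{\sqrt3\,L(A_{15},1)}{16\sqrt t}\D(t)^2.
\]
Positivity of $L(A_{15},1)$ follows from $\D(3)=4\ne0$, provided Im--Shin's constant $\kappa$ is nonzero (their formula carries a nonzero period factor). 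As an alternative, one can cite a numerical evaluation or the finiteness of $A_{15}(\Q)$ combined with Gross--Zagier/Kolyvagin nonvanishing.

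The rank consequence is then standard. If $\D(t)\ne0$, then $L(A_{5t},1)\ne0$, so $\rank A_{5t}(\Q)=0$ by Coates--Wiles-type results for non-CM curves, namely Gross--Zagier and Kolyvagin, using modularity. Since $A_{5t}\simeq E_{5,3}^{(10t)}$, and $N$ is $\theta$-congruent exactly when $E^{(N)}_{5,3}$ has positive rank, $10t$ is not $\theta$-congruent.

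The main obstacle is the faithful transcription of the Im--Shin normalization. One must check that their formula for this row really has a single constant across $m\equiv15\pmod{40}$ and that its twist matches $A_m$ rather than a quadratic twist by $-1$ or $2$. Using the base case $t=3$ to calibrate the constant sidesteps having to track the exact periods.
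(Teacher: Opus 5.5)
Your proposal follows the paper's proof essentially step for step: you apply the Im--Shin central value formula on the single class $m=5t\equiv15\pmod{40}$, substitute $a(5t)=\D(t)$ from Proposition~\ref{prop:diag}, calibrate the constant with $\D(3)=4$, obtain $L(A_{15},1)>0$ from positivity of the Im--Shin constant, and deduce the rank statement from Gross--Zagier--Kolyvagin. The one blemish is your fallback argument for positivity: Gross--Zagier and Kolyvagin deduce rank zero from $L\neq0$, not nonvanishing from finiteness of $A_{15}(\Q)$, so you should drop that alternative; the main route via $\kappa>0$ is enough.
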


\begin{proof}
The central value formula of Im--Shin
\cite[Theorem 6.1]{ImShin}, applied to the unweighted row
\eqref{eq:SIrow}, has a constant local factor for $m=5t$.
Since $a(5t)=\D(t)$ by \eqref{eq:diag} and
$A_{5t}\simeq_\Q E_{5,3}^{(10t)}$, it gives
\[
 L(A_{5t},1)=\frac{2\kappa}{\sqrt5}\frac{\D(t)^2}{\sqrt t}
\]
with a constant $\kappa>0$ independent of $t$.
At $t=3$ one has $r_F(3)=4$ and $r_G(3)=0$, so
$\D(3)=4$. Evaluating the same formula at $t=3$ gives
\[
 \frac{2\kappa}{\sqrt5}=\frac{\sqrt3\,L(A_{15},1)}{16},
\]
and hence fixes the normalization. Its positivity follows from
$L(A_{15},1)>0$. The rank statement is the rank zero consequence used in
\cite[Theorem 6.3]{ImShin}.
\end{proof}

\subsection{The eta product identity}\label{subsec:eta-identity}

The defect also has a useful product expansion. Besides giving a compact
formula for its coefficients, this identity provides an independent check on
the congruence patterns that appear below.

\begin{proposition}\label{prop:eta}
With $q=e^{2\pi iz}$ and
$\vartheta(q)=\sum_{x\in\Z}q^{x^2}$,
\[
 \theta_F(z)-\theta_G(z)
 =
 2\vartheta(q)\eta(8z)\eta(40z).
\]
If
\[
 E(t)=\prod_{j\ge1}(1-t^j),
\]
then for every $k\ge0$,
\[
 \D(8k+3)
 =
 4[t^k]\bigl(E(t^2)^2E(t^5)\bigr).
\]
\end{proposition}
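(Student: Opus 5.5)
The plan is to prove the theta identity in two steps: first reduce it to a binary identity, then verify that identity as an equality of weight one modular forms. The coefficient formula will then follow from a Gauss-type product manipulation.

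\emph{Reduction to a binary identity.} Both $F$ and $G$ contain the summand $x^2$, so
\[
 \theta_F-\theta_G=\vartheta(q)\bigl(\theta_{2y^2+40z^2}-\theta_{8y^2+10z^2}\bigr).
\]
It therefore suffices to prove the binary identity
\[
 \theta_{y^2+20z^2}(z)-\theta_{4y^2+5z^2}(z)=2\eta(4z)\eta(20z),
\]
and then replace $z$ by $2z$. The two binary forms $y^2+20z^2$ and $4y^2+5z^2$ have discriminant $-80$ and lie in different genera of $\Cl(-80)$. Hence the left side is a genus character combination of weight one theta series, i.e.\ a cusp form of weight $1$ on $\Gamma_0(80)$ with character $\chi_{-80}$. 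The right side $\eta(4z)\eta(20z)$ lies in the same space: by Ligozat's criteria its order at every cusp of $\Gamma_0(80)$ is nonnegative (indeed positive), and its character is $\chi_{-80}$.

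\emph{Comparing coefficients.} I will then compare $q$-expansions up to the Sturm bound, which is $\frac{1}{12}[\mathrm{SL}_2(\Z):\Gamma_0(80)]=12$. This is a short finite check. Both sides begin $2q-2q^{5}\cdots$; for instance, $r_{y^2+20z^2}(1)=2$ and $r_{4y^2+5z^2}(1)=0$. This verification is the step that needs the most care: the level, the character and the cusp orders all have to be right. If I wanted a more conceptual route, I would instead write $\eta(8z)=\sum\chi_{12}(n)q^{n^2/3}$, and similarly for $\eta(40z)$, so that the product becomes $\sum\chi_{12}(n)\chi_{12}(m)q^{(n^2+5m^2)/3}$. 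I would then match this sum to the two binary forms by an explicit linear change of variables coming from composition in $\Q(\sqrt{-5})$. I expect the Sturm bound check to be the safer route.

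\emph{Coefficient formula.} With the identity in hand, note that $\eta(8z)\eta(40z)=q^{2}E(q^8)E(q^{40})$. An exponent $\equiv3\pmod 8$ in $\vartheta(q)\,q^2E(q^8)E(q^{40})$ requires $x^2\equiv1\pmod8$, so only odd $x$ contribute; even $x$ give exponents $\equiv2,6\pmod 8$. By Gauss,
\[
 \sum_{x\ \mathrm{odd}}q^{x^2}=2q\,\psi(q^8),\qquad \psi(t)=\sum_{n\ge0}t^{n(n+1)/2}=\frac{E(t^2)^2}{E(t)}.
\]
The relevant part of $2\vartheta(q)\eta(8z)\eta(40z)$ is therefore
\[
 4q^3\,\frac{E(q^{16})^2}{E(q^8)}\,E(q^8)E(q^{40})=4q^3E(t^2)^2E(t^5),\qquad t=q^8.
\]
Reading off the coefficient of $q^{8k+3}$ gives $\D(8k+3)=4[t^k]\bigl(E(t^2)^2E(t^5)\bigr)$.
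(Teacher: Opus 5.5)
Your route is the paper's. You prove $\vartheta(q)\vartheta(q^{20})-\vartheta(q^4)\vartheta(q^5)=2\eta(4z)\eta(20z)$ in $M_1(\Gamma_0(80),\chi_{-80})$ using the Sturm bound $12$. You then substitute $z\mapsto 2z$, multiply by $\vartheta(q)$, and isolate the odd squares via $2q\psi(q^8)$ and $\psi(t)E(t)=E(t^2)^2$. Your coefficient extraction is correct and agrees with the paper step for step.

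One justification is wrong, however. The forms $y^2+20z^2$ and $4y^2+5z^2$ lie in the \emph{same} genus, not in different ones. Both represent $21=1+20=4\cdot2^2+5$, which is prime to $80$, so every genus character takes the same value on them. Indeed, the form class group of discriminant $-80$ is cyclic of order four, with reduced forms $(1,0,20)$, $(4,0,5)$ and $(3,\pm2,7)$, and $(4,0,5)$ is the square of $(3,2,7)$.

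Your inference is also backwards in principle. Combinations weighted by genus characters are Eisenstein series. The difference is cuspidal precisely because the two forms share a genus, so their Eisenstein parts cancel; equivalently, it is the theta series of an order four class group character.

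Your argument never actually needs cuspidality:
\begin{itemize}
\item each binary theta series lies in $M_1(\Gamma_0(80),\chi_{-80})$ on its own, since the even Gram matrices $\operatorname{diag}(2,40)$ and $\operatorname{diag}(8,10)$ both have level $80$;
\item the eta quotient lies in the same space by the Gordon--Hughes--Newman criterion;
\item the Sturm bound holds in $M_1$, not only in $S_1$.
\end{itemize}
So delete the genus sentence, and carry the coefficient check through $q^{12}$, where both sides equal $2q-2q^5-2q^9+O(q^{13})$.
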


\begin{proof}
First prove
\[
 \vartheta(q)\vartheta(q^{20})
 -
 \vartheta(q^4)\vartheta(q^5)
 =
 2\eta(4z)\eta(20z).
\]
Both sides lie in
$M_1(\Gamma_0(80),\chi_{-80})$. The eta product criterion gives the
same weight, level and character on the right. Since
\[
 [\mathrm{SL}_2(\Z):\Gamma_0(80)]=144,
\]
the Sturm bound is $12$. Including the constant term, both sides
have expansion
\[
 2q-2q^5-2q^9+O(q^{13}),
\]
so the identity follows from Sturm's theorem \cite{Sturm}.
Replace $z$ by $2z$ and multiply by $\vartheta(q)$.

Finally
\[
 \eta(8z)\eta(40z)=q^2E(q^8)E(q^{40}),
\]
and the odd square part of $\vartheta$ is
$2q\psi(q^8)$, where
\[
 \psi(t)=\sum_{j\ge0}t^{j(j+1)/2}.
\]
Gauss' product identity
$\psi(t)E(t)=E(t^2)^2$ gives the coefficient formula.
\end{proof}

\subsection{Prime and composite representation congruences}\label{subsec:repcongruences}

We turn next to congruences for the defect. The argument uses only three
arithmetic ingredients: norm representations in $\Q(\sqrt5)$, genus theory,
and elementary formulas for binary representation numbers.

\begin{lemma}\label{lem:norm5}
Let $n>0$ be squarefree, $n\equiv3\pmod8$, and suppose every
prime divisor of $n$ splits in $\Q(\sqrt5)$. There are coprime
positive integers $a,b$ such that
\[
 n=a^2-5b^2,\qquad4\mid a,\qquad b\text{ odd}.
\]
\end{lemma}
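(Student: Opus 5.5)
Since every prime divisor of $n$ splits in $\Q(\sqrt5)$ and $n$ is odd, I will work with principal ideals in $\mathcal O=\Z[(1+\sqrt5)/2]$. This ring has class number one and a fundamental unit $\varepsilon=(1+\sqrt5)/2$ of norm $-1$.

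\textbf{Step 1: an element of norm $n$.} For each prime $\ell\mid n$ write $(\ell)=\lambda_\ell\lambda_\ell'$. The product $\prod_\ell\lambda_\ell$ is a principal ideal $(\gamma)$ with $|\Norm\gamma|=n$. Multiplying by $\varepsilon$ if necessary, I get $\Norm\gamma=n$.

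\textbf{Step 2: landing in $\Z[\sqrt5]$.} Now $\mathcal O/2\mathcal O\simeq\F_4$, and $\varepsilon$ has order $3$ in $\F_4^\times$. Since $\gamma$ is prime to $2$, I can multiply by a power of $\varepsilon^2$ (a norm $+1$ unit) to make $\gamma\equiv1\pmod{2\mathcal O}$. Then $\gamma\in\Z+2\mathcal O=\Z[\sqrt5]$. Write $\gamma=a+b\sqrt5$ with $a,b\in\Z$, so that $a^2-5b^2=n$.

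\textbf{Step 3: parities and coprimality.} Because $n$ is odd, exactly one of $a,b$ is odd. Reducing modulo $4$: if $a$ were odd and $b$ even, then $n\equiv1\pmod4$, contradicting $n\equiv3\pmod8$. Hence $a$ is even and $b$ is odd, and $n\equiv a^2-5\pmod8$. Since $n\equiv3$, this forces $a^2\equiv0\pmod8$, so $4\mid a$.

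For coprimality, a common prime $\ell\mid a,b$ would give $\ell^2\mid n$, contradicting squarefreeness. (A common factor of $2$ is already excluded because $b$ is odd.) Replacing $a,b$ by their absolute values gives positive integers. The case $b=0$ cannot occur since $b$ is odd. The case $a=0$ would give $n=-5b^2<0$, which is impossible.

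\textbf{Main obstacle.} The only subtle point is Step 2. An arbitrary generator of norm $n$ may have half-integral coordinates, and it must be adjusted by a unit of norm $+1$ without destroying the norm sign. The group $\varepsilon^2$ generates the image of the norm $+1$ units in $\F_4^\times$, which has order $3$, so this adjustment is always possible. Alternatively, one can use the fact that $\varepsilon^3=2+\sqrt5\in\Z[\sqrt5]$ and that the index $[\mathcal O^\times:\Z[\sqrt5]^\times]$ is $3$. Everything else is a congruence check.
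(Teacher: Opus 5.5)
Your proposal is correct and follows essentially the same route as the paper. Both arguments use class number one to obtain a generator of norm $n$, multiply by a power of the norm one unit $\phi^2$ (which generates $(\mathcal O/2\mathcal O)^\times\simeq\F_4^\times$) to land in $\Z[\sqrt5]$, and finish with the congruence modulo $8$ and squarefreeness. The only cosmetic difference is that you obtain positivity of $a,b$ by taking absolute values, whereas the paper uses total positivity together with conjugation.
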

\begin{proof}
Put $K=\Q(\sqrt5)$ and $\phi=(1+\sqrt5)/2$.
The Minkowski bound $\sqrt5/2<2$ shows that $K$ has class
number one. Choose one prime above each prime factor of $n$;
their product has an integral generator $\alpha$ of norm
$\pm n$. Since $\Norm(\phi)=-1$, multiplication by a unit
and a sign makes $\alpha$ totally positive of norm $n$.
The prime $2$ is inert, and the image of $\phi^2$ generates
$(\mathcal O_K/2\mathcal O_K)^\times\simeq\F_4^\times$.
Multiplying by a power of the totally positive norm one unit
$\phi^2$, we may therefore arrange
$\alpha\equiv1\pmod{2\mathcal O_K}$. This means
$\alpha=a+b\sqrt5$ with $a,b\in\Z$ of opposite parity.
The equation $n\equiv3\pmod8$ forces $b$ odd and $4\mid a$.
Total positivity gives $a>0$, and conjugating if necessary
gives $b>0$. Finally, a common divisor of $a,b$ would have
its square dividing the squarefree integer $n$.
\end{proof}

\begin{lemma}\label{lem:h5mod8}
If $p\equiv11,19\pmod{40}$ is prime, then
\[
 h(-5p)\equiv
 \begin{cases}
 4\pmod8,&p\equiv11\pmod{40},\\
 0\pmod8,&p\equiv19\pmod{40}.
 \end{cases}
\]
\end{lemma}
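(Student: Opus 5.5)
The plan is to compute the $2$-part of $\Cl(-20p)$ by genus theory, Rédei's $4$-rank criterion and the Rédei--Reichardt $8$-rank criterion, feeding in the norm representation of Lemma~\ref{lem:norm5}. Throughout, $p\equiv3\pmod8$, so $-p$ is a prime discriminant. Since $(5/p)=(p/5)=1$, the prime $p$ splits in $\Q(\sqrt5)$.

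\emph{Genus theory.} The discriminant factors into prime discriminants as
\[
 -20p=(-4)\cdot5\cdot(-p).
\]
With three prime discriminants, genus theory gives $\dim_{\F_2}\Cl(-20p)/2=2$, hence $4\mid h(-5p)$.

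\emph{The $4$-rank.} For each decomposition $-20p=d\,d'$ into fundamental discriminants, I will check the Hilbert symbols $(d,d')_\ell$ at $\ell=2,5,p$. For $d=5$, $d'=-4p$ all three symbols are trivial:
\[
 \leg5p=1,\qquad \leg{-4p}5=\leg{-1}5\leg p5=1,\qquad (5,-p)_2=1,
\]
the last because $5\equiv1\pmod4$. By contrast, $d=-20$, $d'=-p$ fails at $p$, since $(-20/p)=-1$. So exactly one nontrivial decomposition is of the second kind. Hence the $4$-rank is one, and
\[
 \Cl(-20p)[2^\infty]\simeq\Z/2\Z\oplus\Z/2^k\Z\qquad(k\ge2).
\]
It follows that $h(-5p)\equiv4\pmod8$ if and only if $k=2$, that is, if and only if the $8$-rank vanishes. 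The element of order $2$ that is a square is the ambiguous class $\mathfrak c$ attached to $d=5$, namely the class of the prime above $5$ (or its product with the prime above $2$). The task is to decide whether $\mathfrak c$ is a fourth power.

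\emph{The $8$-rank.} The conic $5X^2-pY^2=Z^2$ is locally soluble by the computation above. A rational point is explicit from Lemma~\ref{lem:norm5}: write $p=a^2-5b^2$ with $4\mid a$ and $b$ odd; then $(X,Y,Z)=(b,1,a)$ is a point, since $5b^2+p=a^2$. The Rédei--Reichardt criterion says that $\mathfrak c$ is a fourth power if and only if the Hilbert symbols of $Z$, twisted by the appropriate $d'$, are trivial at the primes dividing the complementary discriminant. I will follow Rédei's normalization in making this precise, and check independence of the chosen point by changing $a+b\sqrt5$ by $\phi^2$. Here the relevant symbols are a symbol at $5$, namely $(a/5)$, and a $2$-adic symbol depending on $a/4$ and $b\bmod4$. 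Since $p\equiv a^2\pmod5$, we have
\[
 \leg a5=\begin{cases}1,&p\equiv1\pmod5,\ \text{i.e. }p\equiv11\pmod{40},\\-1,&p\equiv4\pmod5,\ \text{i.e. }p\equiv19\pmod{40}.\end{cases}
\]
The claim then follows once the $2$-adic factor is shown to be constant on this family. I expect it to be constant because $4\mid a$ and $b$ is odd are fixed by the normalization of Lemma~\ref{lem:norm5}. The orientation of the criterion should make the symbol equal to $+1$ exactly when the $8$-rank is zero, i.e. in the case $p\equiv11\pmod{40}$.

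\emph{Main obstacle.} The hard step is getting the Rédei--Reichardt $8$-rank criterion in exactly the right normalization for an even discriminant. One must identify which ambiguous class (the one involving $\sqrt5$ or its product with the ramified prime above $2$) is the square, and verify the $2$-adic contribution carefully. I would fix this by testing on small cases, for example
\[
 h(-220)=4\ \ (p=11),\qquad h(-95\cdot4)=h(-380)=8\ \ (p=19).
\]
If the criterion proves awkward to normalize, the fallback is a direct argument in binary forms of discriminant $-20p$. One would write the ambiguous form $5x^2+\dots$ as a square of a form built from $a+b\sqrt5$, and then test whether that form is itself a square using the genus characters $(\cdot/5)$ and $(\cdot/p)$ on a number it represents, such as $a$ or $a\pm b$.
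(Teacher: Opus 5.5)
Your genus theory step uses the wrong discriminant, and the rest of the plan inherits the error. Since $p\equiv3\pmod8$, one has $-5p\equiv1\pmod8$, so $\Q(\sqrt{-5p})$ has discriminant $-5p=5\cdot(-p)$. Your factorization $(-4)\cdot5\cdot(-p)$ actually equals $+20p$, which is the discriminant of $\Q(\sqrt{5p})$. With only two prime discriminants, the $2$-rank of $\Cl(-5p)$ is one, so its Sylow $2$-subgroup is cyclic. (The form class group of discriminant $-20p$ is in fact isomorphic to $\Cl(-5p)$, because $2$ splits in $\Q(\sqrt{-5p})$. That is why your checks $h(-220)=4$ and $h(-380)=8$ are numerically right.) The structure $\Z/2\Z\oplus\Z/2^k\Z$ with $k\ge2$ is false; for instance $\Cl(-55)\simeq\Z/4\Z$. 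It would also force $8\mid h(-5p)$ for every $p$. Your step ``$h(-5p)\equiv4\pmod8$ if and only if $k=2$'' miscounts the order of $\Z/2\Z\oplus\Z/4\Z$, which is $8$, and so contradicts your own case $p=11$.

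The $8$-rank step carries all the content, and it is not actually carried out. You leave the ambiguous class, the normalization, the orientation and the ``$2$-adic factor'' to be fixed by matching the expected answer. Also, the point $(b,1,a)$ does not lie on $5X^2-pY^2=Z^2$: it gives $5b^2-p=10b^2-a^2$. It lies instead on $5X^2+pY^2=Z^2$.

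The paper needs no general R\'edei--Reichardt normalization. Let $\mathfrak p_5$ be the ramified prime above $5$. The element $\alpha=(5b+\sqrt{-5p})/2$ is integral with $\Norm\alpha=5(a/2)^2$. Since $\gcd(a,b)=1$, this gives $(\alpha)=\mathfrak p_5\mathfrak a^2$ with $\Norm\mathfrak a=a/2$, hence $[\mathfrak a]^2=[\mathfrak p_5]$. The equation $x^2+5py^2=20$ has no solution, so $[\mathfrak p_5]\ne1$ and $[\mathfrak a]$ has order four. By cyclicity, $8\mid h(-5p)$ if and only if $[\mathfrak a]$ is a square. That holds if and only if the genus character $\chi_5([\mathfrak a])=\leg{a/2}{5}$ equals $1$. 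Now $\leg{a/2}{5}=-\leg a5=-\left(\frac p5\right)_4$, which is $-1$ for $p\equiv11$ and $+1$ for $p\equiv19\pmod{40}$.

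The decisive sign is the factor $\leg25=-1$ coming from the norm $a/2$. This is precisely the piece you proposed to treat as a constant and orient by hand. Your fallback is close in spirit, but the number to test is $\Norm\mathfrak a=a/2$, not $a$ or $a\pm b$. Testing $a$ itself would interchange the two cases.
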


\begin{proof}
Let $K=\Q(\sqrt{-5p})$. Genus theory gives $2$-rank one for
$\Cl(K)$. Let $\mathfrak p_5$ be the ramified prime above $5$.
Using $p=a^2-5b^2$, put
\[
 \alpha=\frac{5b+\sqrt{-5p}}2,
 \qquad
 \Norm(\alpha)=5\left(\frac a2\right)^2.
\]
The coprimality of $a,b$ gives
\[
 (\alpha)=\mathfrak p_5\mathfrak a^2,
 \qquad
 \Norm\mathfrak a=|a|/2,
 \qquad
 [\mathfrak a]^2=[\mathfrak p_5].
\]
The class $[\mathfrak p_5]$ is nontrivial: a generator of norm $5$
would give $x^2+5py^2=20$ with integers $x,y$, which is impossible.
Thus $4\mid h(-5p)$. Since the Sylow $2$-subgroup is cyclic,
$8\mid h(-5p)$ if and only if
$[\mathfrak a]\in\Cl(K)^2$. The unique nontrivial genus character is
\[
 \chi_5([\mathfrak a])
 =
 \leg{a/2}{5}
 =
 -\leg{a}{5}
 =
 -\left(\frac p5\right)_4.
\]
This distinguishes the two cases. This divisibility is classical as well; see, for example, \cite{Morton}.
\end{proof}

Set
\[
 T=x^2+10y^2+40z^2,\qquad
 H=x^2+10y^2+10z^2,\qquad
 r_3(n)=r_{x^2+y^2+z^2}(n).
\]

\begin{lemma}\label{lem:HT}
If $n>0$, $n\equiv3\pmod8$, and
$n\bmod5\in\{1,4\}$, then
\[
 r_H(n)=\frac13r_3(n),
 \qquad
 r_T(n)=\frac12r_H(n)=\frac16r_3(n).
\]
No squarefreeness assumption is required.
\end{lemma}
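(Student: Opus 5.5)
The plan is an elementary change of variables followed by a residue count modulo $5$; no genus theory or mass formula is needed, which is consistent with the remark that squarefreeness is not required. Throughout, $n\equiv3\pmod8$ forces every representation $n=x^2+y^2+z^2$ to have $x,y,z$ all odd, and $n\bmod5\in\{1,4\}$ restricts the residues of the squares.

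\emph{Step 1: rewrite $H$ as a sum of three squares with a congruence.} Since $2(y^2+z^2)=(y+z)^2+(y-z)^2$, we have $10y^2+10z^2=5u^2+5v^2$ with $u=y+z$, $v=y-z$, $u\equiv v\pmod2$. Conversely, every $(u,v)$ of equal parity comes from a unique $(y,z)$.

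If $x^2+5u^2+5v^2=n$, then $x$ is odd and $5(u^2+v^2)\equiv2\pmod8$. Hence $u^2+v^2\equiv2\pmod8$, so $u,v$ are both odd and the parity condition is automatic. Thus $r_H(n)=\#\{x^2+5u^2+5v^2=n\}$.

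Next, the map $(u,v)\mapsto(a,b)=(u+2v,\,2u-v)$ is a bijection from $\Z^2$ onto the index-$5$ sublattice $\{b\equiv2a\pmod5\}$, and it satisfies $a^2+b^2=5(u^2+v^2)$. Hence
\[
 r_H(n)=N:=\#\{(x,a,b)\in\Z^3:\ x^2+a^2+b^2=n,\ b\equiv2a\pmod5\}.
\]

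\emph{Step 2: count residue patterns modulo $5$.} Take any solution of $x_1^2+x_2^2+x_3^2=n$ with $n\equiv1\pmod 5$. The case $n\equiv4$ is symmetric after multiplying by $2$. The multiset of square residues must be $\{1,0,0\}$ or $\{1,1,4\}$, since the other combinations of $0,1,4$ never sum to $1$ modulo $5$.

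For each triple, count the ordered pairs of distinct positions $(i,j)$ with $x_j\equiv2x_i\pmod5$.

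In the pattern $\{1,0,0\}$, only the two orderings of the pair of zero coordinates work, since $2\cdot(\pm1)$ is not a root of $1$ modulo $5$ in the required way. This gives exactly $2$.

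In the pattern $\{1,1,4\}$, consider the two pairs of the form (square residue $1$, square residue $4$). For each such pair, $x_j\equiv\pm2x_i$. Because $2\cdot2\equiv-1$, exactly one of the two orderings satisfies $x_j\equiv2x_i$. The pair of two residue-$1$ coordinates never works. Again this gives exactly $2$.

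Summing over the $6$ assignments of positions to the roles $(x,a,b)$, each of which contributes $N$ by symmetry of $x_1^2+x_2^2+x_3^2$, we get $6N=2r_3(n)$. Hence $r_H(n)=r_3(n)/3$.

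I expect this residue bookkeeping to be the only delicate point. In particular, one must check that the count is exactly $2$ in every pattern, which is where $n\not\equiv0,2,3\pmod5$ is used.

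\emph{Step 3: the relation between $T$ and $H$.} Since $T(x,y,z)=H(x,y,2z)$, $r_T(n)$ counts the $H$-representations with $z$ even. By Step 1, $y+z=u$ is odd, so exactly one of $y,z$ is even. The involution $(x,y,z)\mapsto(x,z,y)$ preserves $H$ and exchanges the two cases. Hence $r_T(n)=\tfrac12r_H(n)=\tfrac16r_3(n)$.
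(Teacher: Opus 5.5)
Your proof is correct and follows essentially the paper's argument. Your two substitutions compose to $(a,b)=(3y-z,\,y+3z)$, which is the paper's change of variables $A=3y+z$, $B=y-3z$ up to $z\mapsto-z$; your double count over the six role assignments (exactly two ordered pairs with $x_j\equiv2x_i$ in each residue pattern) is a symmetric repackaging of the paper's ``one third of the ordered representations'' count in the patterns $\{s,0,0\}$ and $\{s,s,-s\}$, and $r_H=2r_T$ comes from the same parity-and-swap involution.
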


\begin{proof}
Modulo $8$, every representation by $H$ has $x$ odd and exactly one
of $y,z$ even. Exchanging $y,z$ pairs the two possibilities, while
the part with $z$ even is exactly the set of representations by $T$.
Hence $r_H=2r_T$.

Use
\[
 x^2+10y^2+10z^2
 =
 x^2+(3y+z)^2+(y-3z)^2.
\]
Put
\[
 A=3y+z,\qquad B=y-3z.
\]
Then
\[
 y=\frac{3A+B}{10},\qquad
 z=\frac{A-3B}{10}.
\]
In any three square representation of
$n\equiv3\pmod8$, all three coordinates are odd, so the integrality
condition is simply
\[
 A\equiv3B\pmod5.
\]
Let $s=n\bmod5\in\{1,4\}$. The three coordinate squares modulo $5$
have one of the two multisets
\[
 \{s,0,0\},\qquad \{s,s,-s\}.
\]
In the first case, one third of the ordered representations place the
unique entry $s$ in the $x^2$ coordinate; then $A\equiv B\equiv0\pmod5$.
In the second case, two thirds have $x^2=s$, and for these
$A/B=\pm2\pmod5$. The involution $A\mapsto-A$ exchanges the two signs,
so half of them satisfy $A/B=3$. In either case one third of the
three square representations satisfy the required lattice condition.
\end{proof}

\begin{corollary}\label{cor:Tclass}
If $n>0$ is squarefree and
$n\equiv11,19\pmod{40}$, then
\[
 r_T(n)=4h(-n).
\]
\end{corollary}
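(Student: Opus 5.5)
The plan is to combine Lemma~\ref{lem:HT} with Gauss's three-squares formula. For squarefree $n\equiv 3\pmod 8$, Gauss's theorem gives
\[
 r_3(n)=12\,h(-n),
\]
where $h(-n)$ is the class number of the fundamental discriminant $-n$. Here $-n\equiv1\pmod4$ and $n$ is squarefree, so $-n$ is fundamental. Also $n>3$, since $n\equiv 11,19\pmod{40}$, so there are no extra units to correct for. Thus this is the standard form of the formula, with no $w$ factors.

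Next we check the hypotheses of Lemma~\ref{lem:HT}. The condition $n\equiv11,19\pmod{40}$ gives $n\equiv3\pmod8$, and $n\equiv1$ or $4\pmod5$, so $n\bmod5\in\{1,4\}$. The lemma then yields
\[
 r_T(n)=\tfrac16 r_3(n)=\tfrac16\cdot 12\,h(-n)=2h(-n).
\]
This gives $2h(-n)$, not $4h(-n)$, so something must be rechecked.

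First I recheck Gauss's formula. The known values are $r_3(3)=8$ and $h(-3)=1$ with $w=6$. The correct general form is $r_3(n)=12h(-n)\cdot 2/w$ for $n\equiv3\pmod 8$; for $n=3$ this gives $12\cdot\frac13=4$, but $r_3(3)=8$. So the formula must instead be $r_3(n)=24h(-n)$ for $n\equiv3\pmod8$, $n>3$. Checking $n=11$: $r_3(11)=24$ from $(\pm1,\pm1,\pm3)$ arranged in 3 positions, and $h(-11)=1$. Hence $r_3(n)=24h(-n)$ for $n\equiv 3\pmod8$ squarefree and $n>3$. The value $12h(-4n)$ is the formula for $n\equiv1,2\pmod4$, and $h(-4n)=3h(-n)$ when $n\equiv 3\pmod 8$, which makes $r_3(n)=24h(-n)$ consistent.

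With this correction, $r_T(n)=\tfrac16\cdot24h(-n)=4h(-n)$, as claimed.

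The only delicate point is therefore pinning down the correct normalization of Gauss's formula for $n\equiv 3\pmod 8$, that is, the factor $24$. I would cite it in the form $r_3(n)=12h(-4n)$ for squarefree $n>3$, $n\equiv3\pmod 8$, together with the relation $h(-4n)=\bigl(2-\leg{-n}{2}\bigr)h(-n)=3h(-n)$ coming from the class number formula for the order of conductor $2$ (since $\leg{-n}2=-1$). A sanity check at $n=11$ confirms it: $r_T(11)=4$ from $11=1+10$, giving $(\pm1,\pm1,0)$, and $4h(-11)=4$.
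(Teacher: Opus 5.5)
Your final argument is the same as the paper's proof. Since $n\equiv 11,19\pmod{40}$ gives $n\equiv3\pmod8$ and $n\bmod5\in\{1,4\}$, Lemma~\ref{lem:HT} gives $r_T(n)=\tfrac16 r_3(n)$. Gauss's formula $r_3(n)=24h(-n)$ for squarefree $n\equiv3\pmod8$, $n>3$, then yields $r_T(n)=4h(-n)$.

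What needs fixing is your justification of the constant $24$. You propose to cite $r_3(n)=12h(-4n)$ for $n\equiv3\pmod8$ and combine it with $h(-4n)=3h(-n)$. That combination gives $36h(-n)$, not $24h(-n)$. Concretely, $h(-44)=3$ (classes $x^2+11y^2$ and $3x^2\pm2xy+4y^2$), so $12h(-44)=36\ne r_3(11)=24$.

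The form $r_3(n)=12h(-4n)$ is valid only for $n\equiv1,2\pmod4$. For $n\equiv3\pmod8$ the correct statement is $r_3(n)=8h(-4n)=24h(-n)$; your relation $h(-4n)=3h(-n)$ is exactly what makes these two expressions agree.

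Your numerical check at $n=11$ is correct. So cite $r_3(n)=24h(-n)$ directly, as the paper does via \cite{OnoSkinner}, and drop the detour through $12h(-n)$ and $12h(-4n)$.
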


\begin{proof}
For squarefree $n\equiv3\pmod8$, Gauss' three square class number
formula gives
\[
 r_3(n)=24h(-n)
\]
\cite[Section 1]{OnoSkinner}. Apply Lemma~\ref{lem:HT}.
\end{proof}

For $c>0$ put
\[
 B_c(N)=\#\{(x,y)\in\Z^2:x^2+cy^2=N\}.
\]

\begin{lemma}\label{lem:binary}
If $N>0$, $N\equiv3\pmod8$, and
$N\bmod5\in\{1,4\}$, then
\[
 B_2(N)=2\sum_{d\mid N}\chi_{-8}(d),
 \qquad
 B_{10}(N)=2\sum_{d\mid N}\chi_{-40}(d),
\]
and
\[
 B_2(N)\equiv B_{10}(N)\pmod8.
\]
\end{lemma}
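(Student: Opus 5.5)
The plan is to use the classical formula for the total number of representations by a genus of binary forms. Then I would show that the genus of $x^2+10y^2$ reduces to that single form on the residue classes in question, and get the congruence from a divisor-pairing argument.

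\emph{The two divisor sums.} For $N>0$ with $\gcd(N,D)=1$ and $D<-4$ fundamental, the classical Dirichlet formula is
\[
 \sum_{f}r_f(N)=2\sum_{d\mid N}\chi_D(d),
\]
where $f$ runs over the reduced forms of discriminant $D$ and $w=2$. Our $N$ is odd and prime to $5$, so it is prime to both $-8$ and $-40$.
\begin{itemize}
\item For $D=-8$ there is a single class, $x^2+2y^2$, which gives the formula for $B_2(N)$ at once.
\item For $D=-40$ the reduced forms are $x^2+10y^2$ and $2x^2+5y^2$.
\end{itemize}
I would show that $2x^2+5y^2$ cannot represent $N$. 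If $2x^2+5y^2=N$ with $5\nmid N$, then $N\equiv2x^2\pmod5$ and $5\nmid x$, so $\leg N5=\leg25=-1$. The hypothesis $N\bmod5\in\{1,4\}$ says $\leg N5=1$. Hence $r_{2x^2+5y^2}(N)=0$, and the whole genus sum equals $B_{10}(N)$. This genus-character separation is the only point that needs care, and it is elementary.

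\emph{The congruence modulo $8$.} Let $\chi$ be either $\chi_{-8}$ or $\chi_{-40}=\chi_{-8}\chi_5$. Since $N\equiv3\pmod8$, we have $\chi_{-8}(N)=\chi_{-8}(3)=1$. Since $N$ is a square modulo $5$, we also have $\chi_5(N)=1$. Thus $\chi(N)=1$, and complete multiplicativity gives $\chi(d)=\chi(N/d)$ for every $d\mid N$.

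Because $N\equiv3\pmod8$ is not a square, the involution $d\mapsto N/d$ has no fixed points. Therefore
\[
 B_c(N)=4\sum_{d\mid N,\ d<\sqrt N}\chi(d),\qquad c\in\{2,10\}.
\]
Every divisor $d$ of $N$ is prime to $10$, so each $\chi(d)=\pm1\equiv1\pmod2$. Both inner sums are then congruent modulo $2$ to $\#\{d\mid N:d<\sqrt N\}$. Multiplying by $4$ gives $B_2(N)\equiv B_{10}(N)\pmod8$.

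I expect no real obstacle. The only step requiring attention is confirming that the Dirichlet genus-sum formula applies to all $N$ prime to the discriminant, including imprimitive representations. It does, since it counts all representations by a full set of class representatives.
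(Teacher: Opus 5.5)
Your proof is correct and is essentially the paper's argument: you count ideals of norm $N$ over the full class group, where $-8$ has one class and, for $-40$, the class of $2x^2+5y^2$ contributes nothing because it only represents nonresidues modulo $5$; you then pair $d\leftrightarrow N/d$, which has no fixed points since $N\equiv3\pmod8$ is not a square, and use $\chi(N)=1$ for both characters. One terminological slip: $x^2+10y^2$ and $2x^2+5y^2$ lie in different genera, so the Dirichlet formula you invoke is a sum over the whole class group, not over a genus.
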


\begin{proof}
The discriminant $-8$ has one reduced primitive positive class,
represented by $x^2+2y^2$, giving the first formula by ideal norm
counting.

For discriminant $-40$ the two reduced classes are
\[
 x^2+10y^2,\qquad2x^2+5y^2.
\]
They form the two genera. Since
$\leg{N}{5}=1$, the second form cannot represent $N$ modulo $5$;
all ideals of norm $N$ lie in the principal class, giving the second
formula.

Since $N\equiv3\pmod8$, $N$ is not a square. Pair the positive
divisors as $\{d,N/d\}$. For each of
$\chi_{-8},\chi_{-40}$ one has $\chi(N)=1$, hence
\[
 \chi(N/d)=\chi(d),\qquad
 \chi(d)+\chi(N/d)\equiv2\pmod4.
\]
Thus both divisor sums are congruent modulo $4$ to the number
$d(N)$ of positive divisors of $N$, and
the claimed congruence follows after multiplication by $2$.
\end{proof}

\begin{proposition}\label{prop:FT}
If $n\equiv11,19\pmod{40}$, then
\[
 r_F(n)-r_T(n)
 \equiv
 B_2(n)-B_{10}(n)\pmod{16}.
\]
In particular, if $p$ is prime in these residue classes, then
\[
 r_F(p)\equiv4h(-p)\pmod{16}.
\]
\end{proposition}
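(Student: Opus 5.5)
The plan is to split representations according to the common coordinate $z$, since $F=x^2+2y^2+40z^2$ and $T=x^2+10y^2+40z^2$ share the summand $40z^2$. For each $z\in\Z$ put $N_z=n-40z^2$. Then
\[
 r_F(n)=\sum_{z\in\Z}B_2(N_z),\qquad r_T(n)=\sum_{z\in\Z}B_{10}(N_z),
\]
with the convention $B_c(N)=0$ for $N<0$. The value $N_z=0$ never occurs, because $n\equiv3\pmod8$.

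The term $z=0$ contributes exactly $B_2(n)-B_{10}(n)$ to the difference $r_F(n)-r_T(n)$. For $z\neq0$ the terms for $z$ and $-z$ coincide, so
\[
 r_F(n)-r_T(n)-\bigl(B_2(n)-B_{10}(n)\bigr)=2\sum_{z>0}\bigl(B_2(N_z)-B_{10}(N_z)\bigr).
\]
It therefore suffices to show $B_2(N_z)\equiv B_{10}(N_z)\pmod8$ for every $z>0$ with $N_z>0$. (If $N_z<0$, both terms vanish.)

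This is exactly Lemma~\ref{lem:binary}, which needs no squarefreeness. We must check its hypotheses for $N_z$. Since $40z^2\equiv0\pmod{40}$, we have $N_z\equiv n\pmod{40}$. Hence $N_z\equiv3\pmod8$, and $N_z\bmod5\in\{1,4\}$ because $n\equiv11,19\pmod{40}$. So each bracket is divisible by $8$, and after the factor $2$ the whole sum is divisible by $16$. This proves the first congruence. The only point needing care is this reduction modulo $40$: it is where the coefficient $40$ of $z^2$ is essential, since it makes every $N_z$ lie in the residue classes allowed by Lemma~\ref{lem:binary}.

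For a prime $p\equiv11,19\pmod{40}$, Corollary~\ref{cor:Tclass} gives $r_T(p)=4h(-p)$. The divisor sums in Lemma~\ref{lem:binary} have only the divisors $1$ and $p$, so
\[
 B_2(p)=2\bigl(1+\chi_{-8}(p)\bigr),\qquad B_{10}(p)=2\bigl(1+\chi_{-40}(p)\bigr).
\]
Since $p\equiv3\pmod8$, we have $\chi_{-8}(p)=1$. Since $\chi_{-40}=\chi_{-8}\chi_5$ and $\leg p5=1$, also $\chi_{-40}(p)=1$. Hence $B_2(p)=B_{10}(p)=4$, and the first congruence yields $r_F(p)\equiv4h(-p)\pmod{16}$.
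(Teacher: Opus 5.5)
Your proof is correct and is essentially the paper's argument: split both counts by the shared coordinate $z$, pair $\pm z$ to get the factor $2$, and apply Lemma~\ref{lem:binary} to each shift $n-40z^2\equiv n\pmod{40}$. For a prime $p$ you then use $B_2(p)=B_{10}(p)=4$ together with Corollary~\ref{cor:Tclass}, exactly as the paper does.
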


\begin{proof}
Split according to the third coordinate:
\[
\begin{aligned}
r_F(n)-r_T(n)
={}&B_2(n)-B_{10}(n)\\
&+2\sum_{\substack{z\ge1\\40z^2<n}}
 \left(B_2(n-40z^2)-B_{10}(n-40z^2)\right).
\end{aligned}
\]
Every positive $n-40z^2$ has the same residue classes modulo $8$ and
$5$ as $n$. Lemma~\ref{lem:binary} makes every summand divisible by
$8$, hence the second line is divisible by $16$.

If $n=p$ is prime, then
$B_2(p)=B_{10}(p)=4$, so the boundary term vanishes, and
Corollary~\ref{cor:Tclass} finishes the proof.
\end{proof}

\begin{theorem}\label{thm:secondbit}
For primes $p\equiv11,19\pmod{40}$,
\[
 \D(p)\equiv8h(-p)-2h(-5p)\pmod{32}.
\]
If $p\equiv11\pmod{40}$, then
\[
 \Xi(p)=\left(\frac{4h(-p)-h(-5p)}8\bmod2\right)\in\F_2
\]
is well defined and
\[
 \frac{\D(p)}{16}\equiv\Xi(p)\pmod2.
\]
\end{theorem}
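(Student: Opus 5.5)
We combine the two earlier formulas. For a prime $p\equiv11,19\pmod{40}$ the hypotheses of Proposition~\ref{prop:genusaverage} hold: $p$ is squarefree, $p\equiv3\pmod8$ and $5\nmid p$. Its second identity in \eqref{eq:FGsum} gives
\[
 \D(p)=2\bigl(r_F(p)-h(-5p)\bigr).
\]
Proposition~\ref{prop:FT} gives $r_F(p)\equiv4h(-p)\pmod{16}$, that is, $r_F(p)=4h(-p)+16m$ for some integer $m$. Substituting,
\[
 \D(p)=8h(-p)-2h(-5p)+32m,
\]
which is the first congruence. This step is purely formal once the two propositions are in hand. The only thing to check is that the factor $2$ converts the modulus $16$ into $32$, which it does since $2\cdot16m=32m$.

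For the second part, take $p\equiv11\pmod{40}$. Lemma~\ref{lem:h5mod8} gives $h(-5p)\equiv4\pmod8$. Since $p\equiv3\pmod8$ and $p>3$, genus theory for the prime discriminant $-p$ shows that $h(-p)$ is odd. Then $4h(-p)\equiv4\pmod8$, so $4h(-p)-h(-5p)\equiv0\pmod8$. Hence $\Xi(p)$ is well defined: its argument is an integer, and we reduce it modulo $2$.

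Now rewrite the first congruence as
\[
 \D(p)\equiv2\bigl(4h(-p)-h(-5p)\bigr)\pmod{32}.
\]
Put $4h(-p)-h(-5p)=8e$, so $\D(p)\equiv16e\pmod{32}$. Therefore $16\mid\D(p)$ and $\D(p)/16\equiv e=\Xi(p)\pmod2$.

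There is no real obstacle here; all the arithmetic input was done earlier. The one point needing care is the parity of $h(-p)$. I would justify it explicitly by noting that the $2$-rank of $\Cl(-p)$ is the number of prime discriminant factors minus one, which is $0$ for a single prime. (The usual caveat about the case $p=3$ does not arise, since $p\equiv11\pmod{40}$.) This parity, together with Lemma~\ref{lem:h5mod8}, is exactly what gives the integrality of $\Xi(p)$ and the divisibility $16\mid\D(p)$ claimed in Theorem~\ref{thm:introA}.
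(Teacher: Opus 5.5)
Your proposal is correct and follows the paper's proof exactly: you combine $\D(p)=2(r_F(p)-h(-5p))$ from \eqref{eq:FGsum} with $r_F(p)\equiv4h(-p)\pmod{16}$ from Proposition~\ref{prop:FT}, then use the oddness of $h(-p)$ from genus theory and Lemma~\ref{lem:h5mod8} to get $8\mid 4h(-p)-h(-5p)$ and divide the modulus-$32$ congruence by $16$.
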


\begin{proof}
From \eqref{eq:FGsum},
\[
 \D(p)=2(r_F(p)-h(-5p)).
\]
Apply Proposition~\ref{prop:FT}. If
$p\equiv11\pmod{40}$, genus theory gives $h(-p)$ odd, while
Lemma~\ref{lem:h5mod8} gives
$h(-5p)\equiv4\pmod8$. Thus
$4h(-p)-h(-5p)$ is divisible by $8$, and division of the congruence
modulo $32$ gives the second assertion.
\end{proof}

\begin{corollary}\label{cor:classcriterion}
If $p\equiv11\pmod{40}$ and
\[
 h(-5p)\not\equiv4h(-p)\pmod{16},
\]
then
\[
 \rank A_{5p}(\Q)=0,
\]
and $10p$ is not $\theta$-congruent.
\end{corollary}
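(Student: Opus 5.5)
The corollary follows by combining Theorem~\ref{thm:secondbit} with Corollary~\ref{cor:Lvalue}; the plan is to show that the class number hypothesis forces $\D(p)\ne0$.

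Fix a prime $p\equiv11\pmod{40}$ and assume $h(-5p)\not\equiv4h(-p)\pmod{16}$. By Theorem~\ref{thm:secondbit}, $\Xi(p)$ is well defined because $8\mid 4h(-p)-h(-5p)$. The hypothesis says precisely that $16\nmid 4h(-p)-h(-5p)$, so $\Xi(p)=1$. The same theorem gives $16\mid\D(p)$ and $\D(p)/16\equiv\Xi(p)=1\pmod2$. Hence $\D(p)\equiv16\pmod{32}$, and in particular $\D(p)\ne0$.

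Next we check that Corollary~\ref{cor:Lvalue} applies with $t=p$. It does, since $p$ is squarefree, $p\equiv3\pmod8$ (because $p\equiv11\pmod{40}$), and $5\nmid p$. The corollary then gives
\[
 L(A_{5p},1)=\frac{\sqrt3\,L(A_{15},1)}{16\sqrt p}\,\D(p)^2>0.
\]
It follows that $\rank A_{5p}(\Q)=0$, again by that corollary (which rests on Coates--Wiles/Kolyvagin type input as cited there). Finally, $A_{5p}\simeq E_{5,3}^{(10p)}$ by the identification in the introduction, so $10p$ is not $\theta$-congruent.

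There is no real obstacle here. The only point requiring care is the parity bookkeeping that converts the mod-$16$ hypothesis into $\Xi(p)=1$, and that step depends on $h(-5p)\equiv4\pmod8$ from Lemma~\ref{lem:h5mod8} together with $h(-p)$ being odd.
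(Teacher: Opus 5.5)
Your proof is correct and follows the paper's argument exactly: the class number hypothesis gives $\Xi(p)=1$, Theorem~\ref{thm:secondbit} then forces $\D(p)\equiv16\pmod{32}$, so $\D(p)\ne0$, and Corollary~\ref{cor:Lvalue} yields rank zero and the non-$\theta$-congruence of $10p$. One minor correction to your parenthetical: $A_{5p}$ is non-CM, so the rank-zero input behind that corollary is of Gross--Zagier--Kolyvagin type, not Coates--Wiles.
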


\begin{proof}
The displayed class number congruence says that $\Xi(p)=1$.
Theorem~\ref{thm:secondbit} then gives $\D(p)\ne0$, and the conclusion
follows from Corollary~\ref{cor:Lvalue}.
\end{proof}

For composite $n$ the same argument leaves one additional binary boundary
term, which can still be evaluated explicitly.

\begin{theorem}\label{thm:composite}
Let $n>0$ be squarefree with
$n\equiv11,19\pmod{40}$. Then
\[
 \D(n)\equiv
 8h(-n)-2h(-5n)
 +2B_2(n)-2B_{10}(n)
 \pmod{32}.
\]
If $n$ has $k$ distinct prime factors and
\[
 e_{-c}(n)=
 \begin{cases}
 1,&\leg{-c}{q}=1\text{ for every }q\mid n,\\
 0,&\text{otherwise},
 \end{cases}
 \qquad c=2,10,
\]
then
\[
 2B_2(n)-2B_{10}(n)
 =
 2^{k+2}\bigl(e_{-2}(n)-e_{-10}(n)\bigr).
\]
Thus the correction vanishes modulo $32$ for $k=1$ and for $k\ge3$;
for $k=2$ it may be $16$ modulo $32$.
\end{theorem}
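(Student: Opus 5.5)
The plan is to rerun the proof of Theorem~\ref{thm:secondbit}, keeping the binary boundary term that disappeared in the prime case. By \eqref{eq:FGsum}, $\D(n)=2(r_F(n)-h(-5n))$. Proposition~\ref{prop:FT} holds for all $n\equiv11,19\pmod{40}$ and gives $r_F(n)\equiv r_T(n)+B_2(n)-B_{10}(n)\pmod{16}$. Corollary~\ref{cor:Tclass} gives $r_T(n)=4h(-n)$, since $n$ is squarefree. Multiplying by $2$ then yields
\[
 \D(n)\equiv8h(-n)-2h(-5n)+2B_2(n)-2B_{10}(n)\pmod{32}.
\]
This is the first assertion.

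For the explicit boundary term I will use Lemma~\ref{lem:binary}: $B_2(n)=2\sum_{d\mid n}\chi_{-8}(d)$ and $B_{10}(n)=2\sum_{d\mid n}\chi_{-40}(d)$. Since $n$ is squarefree and coprime to $10$, each divisor sum is multiplicative:
\[
 \sum_{d\mid n}\chi(d)=\prod_{q\mid n}(1+\chi(q)),
\]
with $\chi(q)=\leg{-c}{q}$ for odd $q\nmid 10$. The product is $2^k$ if every $\leg{-c}{q}=1$ and $0$ otherwise, so it equals $2^ke_{-c}(n)$. Hence
\[
 2B_2(n)-2B_{10}(n)=4\cdot2^k\bigl(e_{-2}(n)-e_{-10}(n)\bigr)=2^{k+2}\bigl(e_{-2}(n)-e_{-10}(n)\bigr).
\]

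The final remark then splits by $k$. For $k\ge3$ the factor $2^{k+2}$ is divisible by $32$, so the correction vanishes. For $k=1$ we have $n=p$ with $\leg{-2}p=\leg{-10}p=1$: indeed $p\equiv3\pmod8$ gives $\leg{-2}p=1$, and $\leg{5}{p}=\leg p5=1$. So $e_{-2}=e_{-10}=1$ and the correction is zero, consistent with the earlier fact $B_2(p)=B_{10}(p)$. For $k=2$ the correction is $16(e_{-2}-e_{-10})$, which is $\pm16$, i.e.\ $16\pmod{32}$, exactly when the two indicators differ.

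There is no serious obstacle; the proof is essentially bookkeeping. The step most worth checking is that the hypotheses of Lemma~\ref{lem:binary} and Corollary~\ref{cor:Tclass} hold for composite squarefree $n$. They do: the lemma only needs $n\equiv3\pmod8$ and $n\bmod5\in\{1,4\}$, and the corollary only needs squarefreeness.
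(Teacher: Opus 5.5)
Your proposal is correct and matches the paper's proof: combine \eqref{eq:FGsum}, Corollary~\ref{cor:Tclass} and Proposition~\ref{prop:FT} for the congruence, then factor the divisor sums of Lemma~\ref{lem:binary} as $\prod_{q\mid n}(1+\chi(q))=2^k e_{-c}(n)$. Your explicit check that $e_{-2}(p)=e_{-10}(p)=1$ when $k=1$ is a useful addition. It is needed because $2^{k+2}=8$ is not divisible by $32$, and the paper leaves this step implicit.
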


\begin{proof}
Combine \eqref{eq:FGsum}, Corollary~\ref{cor:Tclass}, and
Proposition~\ref{prop:FT}. For squarefree $n$, the divisor sum
formulas factor as
\[
 B_2(n)=2\prod_{q\mid n}(1+\chi_{-8}(q))
   =2^{k+1}e_{-2}(n),
\]
and similarly for $B_{10}(n)$.
\end{proof}

\begin{example}
For $n=51=3\cdot17$,
\[
 r_F(51)=16,\quad r_G(51)=8,\quad
 h(-51)=2,\quad h(-255)=12,
\]
while
\[
 B_2(51)=8,\qquad B_{10}(51)=0.
\]
The uncorrected class number expression is $-8$ modulo $32$ whereas
$\D(51)=8$; the correction $16$ is therefore essential.
\end{example}

\section{Selmer descent and ordinary Cassels pairings}
\label{sec:compositedescent}\label{sec:ordinary}\label{sec:fourtwists}

We now pass from representation numbers to descent. This section first gives
a uniform matrix description of the $2$-Selmer groups of the twists $A_m$
and then computes the ordinary Cassels pairings that will enter the reciprocity
arguments.

\subsection{Selmer groups and descent matrices}\label{subsec:descentmatrices}

For an elliptic curve $E/\Q$ and an integer $r\ge2$, write
\[
 \Sel_r(E)=\ker\left(H^1(\Q,E[r])\longrightarrow
          \prod_v H^1(\Q_v,E)\right).
\]
The local maps are induced by $E[r]\hookrightarrow E$, and the
product runs over all places of $\Q$. The Kummer exact sequence is
\begin{equation}\label{eq:kummerexact}
 0\longrightarrow E(\Q)/rE(\Q)\longrightarrow\Sel_r(E)
 \longrightarrow\Sh(E)[r]\longrightarrow0.
\end{equation}
For $E=A_m$, $m\ne0$, no nonzero rational $2$-torsion point is
twice a rational point: the halving criterion requires both root
differences to be squares, whereas at $0$ they have opposite
signs, and at $m$ and $-4m$ their ratios are $5$ and $5/4$.
Hence the image $\mathcal T$ of $E(\Q)[2]$ in $\Sel_2(E)$ has dimension
two, and we set
\[
 \Sel_2^0(E)=\Sel_2(E)/\mathcal T.
\]
Its dimension is $\rank E(\Q)+\dim_{\F_2}\Sh(E)[2]$.
The rational torsion group of these twists is in fact
$E(\Q)[2]$; see \cite[Lemma 2.2]{ImShin}.

For $A_m$, the Kummer map away from the $2$-torsion points is
\[
 (x,y)\longmapsto(x,x-m,x+4m)
 \quad\text{in }(\Q^\times/\Q^{\times2})^3.
\]
The product of the three coordinates is a square. We often omit
the third coordinate, which is the product of the first two.
For a squareclass triple $d=(d_1,d_2,d_3)$, the corresponding
$2$-cover is the intersection
\begin{equation}\label{eq:general2cover}
 d_1u_1^2-d_2u_2^2=mt^2,\qquad
 d_3u_3^2-d_1u_1^2=4mt^2.
\end{equation}
At the rational $2$-torsion points, the usual limiting values give
the generators $t_0=(-1,-m)$ and $t_m=(m,5)$.

Write $[a,b]_v\in\F_2$ for the additive Hilbert symbol, so that
$(a,b)_v=(-1)^{[a,b]_v}$. We use the additive residue symbol conventions from the introduction.
The Cassels pairing on $\Sel_2^0(E)$ is the pairing induced
by the Cassels--Tate pairing on $\Sh(E)[2]$, with
$\tfrac12\Z/\Z$ identified with $\F_2$. It is alternating.
Its radical is the image of $\Sel_4(E)$ in $\Sel_2^0(E)$
\cite{CasselsSecond,Fisher}. If $C$ is its matrix in a basis,
then $\Pf(C)$ is taken over $\F_2$ whenever the dimension is
even.

\begin{lemma}\label{lem:nondegenerate}
Let $s=\dim_{\F_2}\Sel_2^0(A_m)$. The ordinary Cassels pairing
is nondegenerate if and only if
\[
 \rank A_m(\Q)=0,\qquad
 \Sh(A_m)[2^\infty]\simeq(\Z/2\Z)^s.
\]
\end{lemma}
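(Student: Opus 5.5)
The plan is to combine three facts. First, the radical of the ordinary Cassels pairing equals the image of $\Sel_4(E)$ in $\Sel_2^0(E)$, as recalled just before the statement. Second, the Kummer sequence \eqref{eq:kummerexact} holds for $r=2,4$. Third, $E(\Q)_{\mathrm{tors}}=E(\Q)[2]$ has no $2$-divisible nonzero point. Write $E=A_m$ and $\Sh=\Sh(E)[2^\infty]$. The pairing is nondegenerate exactly when the map $\Sel_4(E)\to\Sel_2^0(E)$, induced by multiplication by $2$ on $E[4]\to E[2]$, has zero image. So both directions reduce to determining when this image vanishes.

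For the forward direction, assume the radical is zero. Suppose $\rank E(\Q)=r>0$ and pick $P\in E(\Q)$ of infinite order. Its class in $E(\Q)/4E(\Q)\subset\Sel_4(E)$ maps to the class of $P$ in $E(\Q)/2E(\Q)$. We may take $P$ not in $2E(\Q)+E(\Q)[2]$, which is possible since $E(\Q)/(2E(\Q)+E(\Q)[2])$ has dimension $r$. Then this image is nonzero in $\Sel_2^0(E)$, which is a contradiction, so $r=0$. Now $\Sel_2^0(E)\simeq\Sh(E)[2]$, and the image of $\Sel_4$ in $\Sel_2^0$ is the image of $\Sh(E)[4]\xrightarrow{2}\Sh(E)[2]$. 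Checking this needs a diagram chase: $E(\Q)/4E(\Q)=E(\Q)[2]$ maps into $\mathcal T$. The image vanishing means $2\Sh(E)[4]=0$, so $\Sh$ has exponent $2$. Since $\Sh(E)[2]$ is finite of dimension $s$, we get $\Sh\simeq(\Z/2\Z)^s$.

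For the converse, assume rank zero and $\Sh\simeq(\Z/2\Z)^s$. Then $\Sh(E)[4]=\Sh(E)[2]$, so $2\Sh(E)[4]=0$. Any element of $\Sel_4(E)$ maps either to $\Sh(E)[4]$, where it is killed after applying $2$, or comes from $E(\Q)/4E(\Q)$, which is torsion $=E(\Q)[2]$ and lands in $\mathcal T$. So the image in $\Sel_2^0$ is zero and the pairing is nondegenerate. Alternatively, one can cite the standard fact that the Cassels pairing on $\Sh[2]$ has kernel $\Sh[2]\cap2\Sh[4]$, i.e.\ the image of $\Sh[4]$.

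The main obstacle is the compatibility of the two Kummer sequences under $E[4]\to E[2]$, together with the fact that the torsion part of $E(\Q)/4E(\Q)$ maps exactly into $\mathcal T$. The latter uses the absence of rational $4$-torsion, from \cite[Lemma 2.2]{ImShin}. I would write out the commutative diagram of the Kummer sequences and check this point carefully. The rest is bookkeeping.
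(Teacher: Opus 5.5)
Your proposal is correct and is essentially the paper's argument: the Mordell--Weil image lies in the radical, which forces rank zero, and the radical on $\Sh(A_m)[2]$ is $2\Sh(A_m)[4]$, so the pairing is nondegenerate exactly when $\Sh(A_m)[2^\infty]=\Sh(A_m)[2]$. The only difference is that you derive the identification with $2\Sh[4]$ from the $\Sel_4$-image description by a Kummer-sequence diagram chase, whereas the paper cites Fisher's divisibility criterion directly.
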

\begin{proof}
The image of the Mordell--Weil group lies in the radical, so
nondegeneracy forces rank zero. By the divisibility property
of the Cassels--Tate pairing \cite[Theorem 3.1]{Fisher}, the
radical on $\Sh[2]$ is $2\Sh[4]$. Thus a nonzero element
of order four would give a nonzero radical element. There are
no such elements, so the $2$-primary group is the finite group
$\Sh[2]$. The converse follows from the perfect alternating
pairing on a finite primary component; equivalently, the same
divisibility criterion makes the radical zero.
\end{proof}

Multiplication by $2$ on $E[4]$ induces a map
$\Sel_4(E)\to\Sel_2(E)$. Modulo the torsion Kummer classes, its image is
the radical of the Cassels pairing \cite{CasselsSecond,Fisher}.

\begin{proposition}\label{prop:sel4structure}
Let $E=A_m$ for a nonzero rational $m$. Write
$s=\dim_{\F_2}\Sel_2^0(E)$ and let $d$ be the dimension of
the radical of its ordinary Cassels pairing. Then, as abstract
finite abelian groups,
\begin{equation}\label{eq:sel4structure}
 \Sel_4(E)\simeq(\Z/4\Z)^d\oplus(\Z/2\Z)^{s+2-d},
 \qquad \rank E(\Q)\le d.
\end{equation}
No finiteness assumption on $\Sh(E)$ is required.
\end{proposition}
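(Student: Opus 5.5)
We determine $\Sel_4(E)$ as an abstract group by computing its $2$-torsion subgroup and the image of multiplication by $2$. Since $E(\Q)[2]\simeq(\Z/2\Z)^2$ and $E(\Q)[4]=E(\Q)[2]$ (no rational $2$-torsion point is halvable, and the rational torsion is $E(\Q)[2]$), we have $H^0(\Q,E[4])=E[2]$. The sequence $0\to E[2]\to E[4]\xrightarrow{2}E[2]\to0$ then gives
\[
 0\to E(\Q)[2]/2E(\Q)[4]=E(\Q)[2]\to H^1(\Q,E[2])\to H^1(\Q,E[4])[2].
\]
We restrict this to Selmer groups in the standard way. The kernel of $\Sel_2(E)\to\Sel_4(E)$ is the image of $E(\Q)[2]$, namely $\mathcal T$, of dimension $2$. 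The image of $\Sel_2(E)$ is contained in $\Sel_4(E)[2]$.

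The first step is to show that $\Sel_4(E)[2]$ equals the image of $\Sel_2(E)$, so that
\[
 \dim_{\F_2}\Sel_4(E)[2]=\dim\Sel_2(E)-2=s.
\]
A class in $\Sel_4[2]$ comes from some $\xi\in H^1(\Q,E[2])$, and its image in $H^1(\Q,E)$ is locally trivial. Hence $\xi$ lies in $\Sel_2$. The only subtle point is whether $\xi$ can be chosen with locally trivial image, but this is automatic: the image of $\xi$ in $H^1(\Q_v,E)$ equals the image of its $\Sel_4$ class, which is zero.

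The second step is to show that $2\Sel_4(E)$ is the image of the map $\Sel_4\to\Sel_2\to\Sel_4$. By the cited fact \cite{CasselsSecond,Fisher}, $\mathcal T$ together with the radical lifts, so $\operatorname{im}(\Sel_4\to\Sel_2)=\mathcal T+\widetilde{\rad}$, of dimension $d+2$. Composing with $\Sel_2\to\Sel_4$ kills exactly $\mathcal T$, so $\dim 2\Sel_4(E)=d$. Here I must check that $\mathcal T$ actually lies in the image of $\Sel_4$. This holds because $t_0,t_m$ are Kummer images of points, and the Kummer image of a point $P$ in $\Sel_2$ is the doubling image of the Kummer class of $P$ in $\Sel_4$.

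Combining the two steps: $\Sel_4(E)$ is a finite $4$-torsion group with $\dim\Sel_4[2]=s$ and $\dim 2\Sel_4=d$. Hence it is $(\Z/4\Z)^{d}\oplus(\Z/2\Z)^{s-d}$. This disagrees with the stated $s+2-d$, so I would recheck the count of $\Sel_4[2]$.

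I expect the bookkeeping of $\mathcal T$ to be the main obstacle. The torsion summand of $E(\Q)/4E(\Q)$ is $E(\Q)[2]$, which sits inside $\Sel_4[2]$ via the Kummer map for $4$ rather than via $\Sel_2$. So $\Sel_4[2]$ should also contain these classes, giving dimension $s+2$, and the exact sequence with $H^0(\Q,E[4])$ has to be redone carefully to see where the $2$ is recovered. I would resolve this by directly computing $\Sel_4[2]$ as the preimage under $E[4]\xrightarrow{4/2}$, using the identification $H^1(\Q,E[4])[2]\supseteq$ image of $H^1(\Q,E[2])$ modulo $E(\Q)[2]$ together with the torsion Kummer classes. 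That gives $\dim\Sel_4[2]=s+2$ and structure $(\Z/4)^d\oplus(\Z/2)^{s+2-d}$.

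Finally, $\rank E(\Q)\le d$ follows because the free part of $E(\Q)/2E(\Q)$ maps into the radical in $\Sel_2^0$. Finiteness of $\Sh$ is never used.
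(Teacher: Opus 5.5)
Your Step 1 is false as stated: $\Sel_4(E)[2]$ is not the image of $\Sel_2(E)$. The sentence ``a class in $\Sel_4[2]$ comes from some $\xi\in H^1(\Q,E[2])$'' conflates two different subgroups.

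Write $i\colon E[2]\hookrightarrow E[4]$ and $\pi=[2]\colon E[4]\to E[2]$, so that multiplication by $2$ on $H^1(\Q,E[4])$ is $i_*\circ\pi_*$. The long exact sequence identifies $\operatorname{im} i_*$ with $\ker\pi_*$. It does not identify it with $H^1(\Q,E[4])[2]=\ker(i_*\pi_*)$.

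Your local-triviality argument therefore proves the correct statement $\ker(\pi\colon\Sel_4\to\Sel_2)=i(\Sel_2)$, which has dimension $s$. It does not describe $\Sel_4[2]$. The torsion Kummer classes $\delta_4(T)$, $T\in E(\Q)[2]$, are killed by $2$ but satisfy $\pi(\delta_4 T)=\delta_2(T)\neq0$. So they lie in $\Sel_4[2]$ outside $i(\Sel_2)$.

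You do notice this at the end, but you only argue the inclusion $\Sel_4[2]\supseteq i(\Sel_2)+\delta_4(E(\Q)[2])$, that is, $\dim\Sel_4[2]\ge s+2$. Together with $\dim 2\Sel_4=d$, this yields only $b\ge s+2-d$ for the number of $\Z/2\Z$ summands, not equality.

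The missing upper bound takes one line. Suppose $2x=i\pi(x)=0$. Then $\pi(x)\in\ker(\Sel_2\to\Sel_4)=\mathcal T=\pi(\delta_4(E(\Q)[2]))$. Hence $x-\delta_4(T)\in\ker\pi=i(\Sel_2)$ for a suitable $T$. So $\Sel_4[2]=i(\Sel_2)\oplus\delta_4(E(\Q)[2])$; the sum is direct because $\pi$ is injective on $\delta_4(E(\Q)[2])$. Its dimension is exactly $s+2$.

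The paper avoids $\Sel_4[2]$ altogether. The exact sequence $0\to\mathcal T\to\Sel_2\xrightarrow{i}\Sel_4\xrightarrow{\pi}R\to0$ gives $|\Sel_4|=2^{s}\cdot 2^{d+2}$. Your Step 2, which is correct and is the paper's argument, gives $|2\Sel_4|=|i(R)|=2^d$. The structure of a group killed by $4$ then follows by comparing orders; equivalently, $|\Sel_4[2]|=|\Sel_4|/|2\Sel_4|$.

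With either repair, the rest of your proposal goes through, including the rank bound.
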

\begin{proof}
By the halving criterion discussed above, $E(\Q)[4]=E(\Q)[2]$ and the torsion Kummer image
$\mathcal T\subset\Sel_2(E)$ has dimension two.

The cohomology sequence for $0\to E[2]\to E[4]\to E[2]\to0$,
with the local Kummer conditions, gives the exact sequence
\[
 0\longrightarrow \mathcal T\longrightarrow\Sel_2(E)
 \xrightarrow{i}\Sel_4(E)\xrightarrow{\pi}R\longrightarrow0,
\]
where $R$ is the full Cassels radical in $\Sel_2(E)$ and
$\dim R=d+2$. Hence $|\Sel_4(E)|=2^{s+d+2}$.
Multiplication by $2$ on $\Sel_4(E)$ is $i\circ\pi$.
Since $\mathcal T\subset R$, its image has order $2^d$.
Every group killed by $4$ is a sum of cyclic groups of orders
$4$ and $2$; its doubled subgroup determines the number of
order four summands. Comparing orders proves
\eqref{eq:sel4structure}. Finally the image of $E(\Q)/2E(\Q)$
modulo $\mathcal T$ lies in the pure radical and has dimension equal to
the Mordell--Weil rank.
\end{proof}

Let $n>0$ be squarefree, $(n,10)=1$, with distinct prime divisors
$q_1,\ldots,q_k$, and set $m=5n$. In the first two Kummer
coordinates, the nonzero rational $2$-torsion classes are generated by
\[
 t_0=(-1,-m),\qquad t_m=(m,5).
\]
The local images have the following bases:
\begin{equation}\label{eq:alllocalbases}
\begin{array}{c|l}
v&\text{basis of }\delta_v(A_m(\Q_v)/2A_m(\Q_v))\\ \hline
\infty&t_0\\
\ell\mid5n&t_0,\ t_m\\
2,\ m\not\equiv3\pmod8&t_0,\ t_m,\ (5,1)\\
2,\ m\equiv3\pmod8&t_0,\ t_m,\ (2,-1).
\end{array}
\end{equation}
The table is to be read place by place: its entries are local squareclasses,
so identical symbols at different places do not represent a common global
class.

Put $\mathcal S=(-1,2,5,q_1,\ldots,q_k)$ and $h=k+3$.
Define a $(2k+6)\times(2k+6)$ matrix $\mathcal M_n$ as follows.
For each basis element $(a_v,b_v)$ in \eqref{eq:alllocalbases},
include the row
\begin{equation}\label{eq:fullmonskyrow}
 \bigl([s,b_v]_v:s\in\mathcal S\,;\,[s,a_v]_v:s\in\mathcal S\bigr).
\end{equation}
Thus the first and last $h$ columns record the exponents in $d_1$
and $d_2$, respectively. The entries are evaluated by the standard Hilbert symbol formulas. If
$a=\ell^r u$, $b=\ell^s w$ with $u,w$ units, then
\[
 [a,b]_\ell=rs[-1/\ell]+r[w/\ell]+s[u/\ell]
 \quad(\ell\text{ odd}),
\]
while for odd $u,w$,
\[
 [2^r u,2^s w]_2
 =\frac{u-1}{2}\frac{w-1}{2}
  +r\frac{w^2-1}{8}+s\frac{u^2-1}{8}\pmod2.
\]
At infinity the symbol is one exactly when both arguments are
negative. Consequently the matrix uses only residues at $2,5$ and
Legendre symbols between the prime divisors of $n$.

\begin{theorem}\label{thm:fullmonsky}
There is a natural isomorphism
\[
 \Sel_2(A_{5n})\simeq\ker\mathcal M_n.
\]
In particular
\[
 \dim_{\F_2}\Sel_2^0(A_{5n})
 =2k+4-\rank_{\F_2}\mathcal M_n.
\]
\end{theorem}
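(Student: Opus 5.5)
The plan is the standard Monsky-matrix description of $2$-descent via the full-$2$-torsion Kummer map. First, all Selmer classes are unramified outside the bad places: $A_{5n}$ has good reduction away from $2,5,q_1,\dots,q_k$ and $\infty$. Since $E[2]\subset E(\Q)$, the group $H^1(\Q,E[2])$ is the subgroup of $(\Q^\times/\Q^{\times2})^3$ with square product. A class in $\Sel_2$ is unramified at every good odd prime, so its coordinates $d_1,d_2$ lie in the group generated by $\mathcal S=(-1,2,5,q_1,\dots,q_k)$. This gives an embedding of $\Sel_2(A_{5n})$ into $\F_2^{2h}=\F_2^{2k+6}$ via the exponent vectors of $(d_1,d_2)$; these are exactly the columns of $\mathcal M_n$.

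Next I would rewrite the local conditions as linear equations. At each place $v\in\{\infty,2,5,q_i\}$ the image $W_v=\delta_v(E(\Q_v)/2E(\Q_v))$ is maximal isotropic for the local Tate pairing on $H^1(\Q_v,E[2])$. For full rational $2$-torsion, and with the identification $E[2]\simeq\mu_2^2$ used by the coordinates $(x,x-m)$, this pairing reads
\[
\langle(a,b),(c,d)\rangle_v=[a,d]_v+[b,c]_v .
\]
A global class $(d_1,d_2)$ is therefore locally trivial at $v$ exactly when it pairs to zero with each basis element $(a_v,b_v)$ of $W_v$, that is,
\[
[d_1,b_v]_v+[d_2,a_v]_v=0 .
\]
Expanding $d_1,d_2$ bilinearly in the generators from $\mathcal S$ produces precisely row \eqref{eq:fullmonskyrow}. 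The kernel of the stacked rows is then $\Sel_2$. At the good odd primes there is no condition beyond unramifiedness, because an unramified class automatically lies in $W_\ell$. The dimension count follows by subtracting the two-dimensional image $\mathcal T$ established earlier: $\dim\Sel_2^0=2k+6-\rank\mathcal M_n-2$.

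The main work is verifying table \eqref{eq:alllocalbases}, and I expect the case $v=2$ to be the real obstacle. The sizes are standard: $\dim W_v=\dim E(\Q_v)[2]$ for odd $v$, plus one at $v=2$, and $\dim W_\infty=1$. At $\infty$ the class $t_0$ is nontrivial, since $-1<0$. At the odd primes $\ell\mid 5n$ the reduction is multiplicative; $t_0$ and $t_m$ are independent because $m$ has odd valuation and $5$ is a nonsquare or a unit appropriately. This needs a check that the pair remains independent at $\ell=5$, where $t_m=(m,5)$ and $t_0=(-1,-m)$ have valuations $(1,0)$ and $(0,1)$.

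At $2$ one needs a third class. I would exhibit explicit $\Q_2$-points: for $m\not\equiv3\pmod8$, a point whose $x$-coordinate gives Kummer image $(5,1)$; for $m\equiv3\pmod8$, a point with image $(2,-1)$. In each case I would then verify independence from $t_0$ and $t_m$ modulo $\Q_2^{\times2}$. I would search for these points by solving $x\equiv5$ (respectively $x=2u^2$) with $x-m$ a square, using Hensel's lemma. The remaining risk is a sign or convention mismatch in the Tate pairing formula, which I would settle by checking that each $W_v$ is isotropic under it, so that the torsion rows are consistent.
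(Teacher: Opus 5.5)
Your proposal is correct and follows the paper's proof essentially step by step. Unramifiedness at good odd primes confines $(d_1,d_2)$ to $\langle\mathcal S\rangle$, and each local image is its own orthogonal complement under $[d_1,b]_v+[d_2,a]_v$; the paper checks isotropy with the Hilbert formulas and counts dimensions, where you quote Tate local duality. The dyadic points are $x=5/4$, with image $(5,1)$, and, when $m\equiv3\pmod8$, $x=2$, with image $(2,-1)$; one then subtracts the two torsion dimensions.

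Two slips are harmless. First, at odd $\ell\mid5n$ the reduction is additive (types $I_0^*$ and $I_2^*$), not multiplicative; this does not matter, since $\dim W_\ell=\dim E(\Q_\ell)[2]=2$ for every odd $\ell$. Second, at $\ell=5$ the valuation vector of $t_m=(5n,5)$ is $(1,1)$ rather than $(1,0)$, which is still independent of $(0,1)$.
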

\begin{proof}
The discriminant of $A_m$ is supported on $2,5,n$. Thus a Selmer
class has both coordinates in the group generated by $\mathcal S$.
We verify the displayed local bases. At an odd prime dividing $5n$,
the valuation vectors of $t_0,t_m$ are $(0,1),(1,0)$ for
$\ell\mid n$, and $(0,1),(1,1)$ for $\ell=5$.
They are independent, and the local quotient has dimension two.
At $2$ the torsion classes are independent and the local quotient
has dimension three. The value $x=5/4$ gives a $\Q_2$-point, since
\[
 x(x-m)(x+4m)=5(5-4m)(5+16m)/64
\]
has even valuation and odd part $1\pmod8$. Its Kummer image is
$(5,1)$, independent of the torsion images unless $m\equiv3\pmod8$.
In that remaining case $x=2$ works, because
\[
 x(x-m)(x+4m)=4(2-m)(1+2m)
\]
has odd part $1\pmod8$. Its image $(2,-1)$ is independent of the
odd torsion images. The real image consists of the two equal sign
patterns, generated by $t_0$.

On pairs of local squareclasses use the bilinear form
\[
 ((d_1,d_2),(a,b))\longmapsto[d_1,b]_v+[d_2,a]_v.
\]
The Hilbert formulas above show directly that the displayed local
basis vectors are pairwise orthogonal. Their spans have half the
dimension of the local ambient spaces, respectively $1,2,3$ at
$\infty$, odd primes, and $2$. Nondegeneracy of the Hilbert pairing
therefore identifies each span with its orthogonal complement.
Membership in the local image is exactly the vanishing of the rows
\eqref{eq:fullmonskyrow}. At good odd primes the coordinates are
units, and the unramified local condition is automatic. It follows that the rows in \eqref{eq:fullmonskyrow} are precisely the local
Selmer conditions. The two global rational $2$-torsion classes are
independent, so quotienting by them lowers the dimension by two.
\end{proof}

For $\delta\in\{1,-1,2,-2\}$, set $E_\delta(n)=A_{5\delta n}$.
Let $n>0$ be odd and squarefree, $(n,5)=1$, with prime divisors
$q_1,\ldots,q_k$, and put $m=5\delta n$. Keep
$\mathcal S=(-1,2,5,q_1,\ldots,q_k)$ and the torsion classes
$t_0=(-1,-m)$, $t_m=(m,5)$. Define
\begin{equation}\label{eq:fourdyadic}
 v_m=
 \begin{cases}
 (5,1),&m\text{ odd},\ m\not\equiv3\pmod8,\\
 (2,-1),&m\equiv3\pmod8,\\
 (m+1,1),&\ord_2(m)=1.
 \end{cases}
\end{equation}
At infinity use the basis $t_0$; at each odd prime dividing $5n$
use $t_0,t_m$; at $2$ use $t_0,t_m,v_m$. Form the matrix
$\mathcal M_{\delta,n}$ from these bases by the same rows
\eqref{eq:fullmonskyrow}.

\begin{theorem}\label{thm:fourdescent}
For all these $n$ and $\delta$,
\[
 \Sel_2(E_\delta(n))\simeq\ker\mathcal M_{\delta,n},
 \qquad
 \dim\Sel_2^0(E_\delta(n))=2k+4-\rank\mathcal M_{\delta,n}.
\]
In particular the formula covers both signs of the twist and both
possible valuations zero and one at $2$.
\end{theorem}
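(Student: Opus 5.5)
The plan is to repeat the proof of Theorem~\ref{thm:fullmonsky} with the dyadic basis \eqref{eq:fourdyadic} and the signed, possibly even, parameter $m=5\delta n$. The argument has three parts: restrict the Selmer coordinates to the group generated by $\mathcal S$, exhibit a maximal isotropic local image at each bad place, and read off the kernel description.

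First, the discriminant of $A_m$ is supported on $2$, $5$ and the primes $q_i$. This holds for every $\delta\in\{1,-1,2,-2\}$, because $\delta$ only adds the prime $2$ or a sign. Hence every Selmer class has coordinates in the group generated by $\mathcal S$. At a good odd prime the local condition is the unramified one, and it is automatic for such coordinates. So only the places $\infty$, $2$, $5$ and the $q_i$ contribute conditions.

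Second, I check the local bases place by place.
\begin{itemize}[nosep]
\item At $\infty$, the real image is still spanned by $t_0=(-1,-m)$; for $m<0$ the sign pattern changes, but the image remains one dimensional and is generated by $t_0$.
\item At each odd $\ell\mid5n$, the valuation vectors of $t_0$ and $t_m$ are computed exactly as before, since $\ord_\ell(m)=1$ is unaffected by $\delta$. They are independent, and the local quotient has dimension two.
\item At $2$ with $m$ odd, the earlier argument applies verbatim. The points $x=5/4$ and $x=2$ are used according to whether $m\not\equiv3$ or $m\equiv3\pmod 8$, and they now also cover $m\equiv1,5,7\pmod8$ with either sign.
\item At $2$ with $\ord_2(m)=1$, write $m=2m'$. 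I would look for a $\Q_2$-point whose first Kummer coordinate is $m+1$ and whose second is trivial. The natural guess is $x=m+1$, which gives $x-m=1$ and $x+4m=5m+1$. Then $x(x-m)(x+4m)=(m+1)(5m+1)$, and it must be checked that this is a $2$-adic square; if it fails, adjust $x$ by a square factor or pick $x\equiv m+1$ modulo a suitable power of $2$.
\end{itemize}
In the even case I must also show that $t_0$, $t_m$ and $v_m$ are independent modulo squares in $\Q_2^\times$. This follows from valuations: $t_m$ has odd valuation in the first coordinate, and $v_m$ is a unit pair that is not a torsion combination.

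Third, I verify that these local spans are isotropic for the pairing $((d_1,d_2),(a,b))\mapsto[d_1,b]_v+[d_2,a]_v$. This is a finite Hilbert symbol computation with the stated formulas, run separately at $2$ for each residue class of $m$ modulo $16$. The spans have half the ambient dimension, namely $1$, $2$ and $3$ at $\infty$, at odd $\ell$, and at $2$. They are therefore maximal isotropic, and each equals its own orthogonal complement. Membership in the local image is then equivalent to the vanishing of the rows \eqref{eq:fullmonskyrow}, which gives $\Sel_2\simeq\ker\mathcal M_{\delta,n}$. The ambient space has dimension $2(k+3)=2k+6$, and subtracting the two independent torsion classes gives the formula $\dim\Sel_2^0=2k+4-\rank\mathcal M_{\delta,n}$.

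The main obstacle is the dyadic place in the case $\ord_2(m)=1$. There I have to confirm that $v_m=(m+1,1)$ really lies in the local Kummer image, and that the three-dimensional span is isotropic. Everything else is mechanical. If the point $x=m+1$ does not work directly, I would use the local image size $|E(\Q_2)/2E(\Q_2)|=2^{3}$ (the formula $2^{1+\#E(\Q_2)[2]\text{-rank}}$) together with isotropy. Any class orthogonal to $t_0$ and $t_m$ and independent of them then spans the image with them, so it suffices to check the Hilbert symbols of $(m+1,1)$ against $t_0$ and $t_m$ and against itself.
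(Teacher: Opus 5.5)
Your outline is the paper's proof: restrict to $\mathcal S$-supported classes, exhibit the local images, check isotropy and half-dimensionality, and read off the kernel. The local images come from torsion at $\infty$ and at odd $\ell\mid5n$, from the points $x=5/4$ or $x=2$ for odd $m$, and from $x=m+1$ when $\ord_2(m)=1$.

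The step you left open closes in one line, as in the paper. For $m\equiv2\pmod4$ one has $(m+1)(5m+1)=1+6m+5m^2\equiv1+4+4\equiv1\pmod8$. So $x=m+1$ gives a $\Q_2$-point with Kummer image $(m+1,1)$. Make the independence from torsion explicit. The nontrivial combinations of $t_0,t_m$ have valuation vectors $(0,1),(1,0),(1,1)$. The class $(m+1,1)$ has vector $(0,0)$, and it is nontrivial because $m+1\equiv3\pmod4$ is not a $2$-adic square.

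Your fallback, however, is invalid and should be dropped. The pairing $((d_1,d_2),(a,b))\mapsto[d_1,b]_2+[d_2,a]_2$ is alternating, since $[d_1,d_2]_2+[d_2,d_1]_2=0$, so testing a class against itself checks nothing. Moreover $\langle t_0,t_m\rangle^\perp$ is four dimensional inside the six dimensional space $(\Q_2^\times/\Q_2^{\times2})^2$. The induced form on the plane $\langle t_0,t_m\rangle^\perp/\langle t_0,t_m\rangle$ is hyperbolic. Hence three different three dimensional isotropic subspaces contain $\langle t_0,t_m\rangle$, and only one of them is the Kummer image.

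For example, suppose $m/2\equiv1\pmod4$. Then the class $(-1,-1)$ is orthogonal to $t_0$, to $t_m$ and to itself, and it is independent of $t_0,t_m$. Yet $[-1,1]_2+[-1,m+1]_2=1$, so it pairs nontrivially with the genuine image class $(m+1,1)$ and is not in the image. Isotropy holds automatically for genuine Kummer classes, so it cannot locate the image by itself. You need an explicit local point, or the quadratic refinement of the pairing; here $x=m+1$ supplies the point.
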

\begin{proof}
The odd prime argument of Theorem~\ref{thm:fullmonsky} is
unchanged: the valuation vectors of the two torsion images are
independent and give the full local image. At infinity $t_0$
generates the two components for either sign of $m$.
For odd $m$, the points with $x=5/4$ or $x=2$ used there work
also when $m<0$; their existence is a statement about $2$-adic
squareclasses, not real signs.

If $m=2u$ with $u$ odd, take $x=m+1$. Then
\[
 x(x-m)(x+4m)=(m+1)(5m+1)
 =1+6m+5m^2\equiv1\pmod8.
\]
It is a $2$-adic square, and the Kummer image is $(m+1,1)$.
The torsion images have valuation vectors $(0,1),(1,0)$, whereas
this point has vector $(0,0)$ and a nonsquare first coordinate:
$m+1\equiv3$ or $7\pmod8$. It is therefore independent of
torsion. Thus the displayed three vectors give the full local
image at $2$. The Hilbert formulas verify their isotropy, and
the dimension argument in Theorem~\ref{thm:fullmonsky} again
identifies local membership with the stated matrix equations.
Finally quotienting the two independent global torsion images
subtracts two from the kernel dimension.
\end{proof}

Suppose now that
\begin{equation}\label{eq:fourlargefamily}
 \begin{gathered}
 n=pq_2\cdots q_k,\quad p\equiv11,19\pmod{40},\quad
 q_i\equiv1\pmod{40},\\
 \leg{q_i}{q_j}=\leg p{q_i}=1\quad(i\ne j).
 \end{gathered}
\end{equation}
Use the first two Kummer coordinates and put
\[
 \Lambda=(5,1),\quad\Lambda'=(1,5),\quad
 U_i=(q_i,1),\quad V_i=(1,q_i),\quad W=(p,1).
\]

\begin{corollary}\label{cor:fourdimensions}
For the family \eqref{eq:fourlargefamily}, the pure Selmer
dimensions and bases are
\[
 \begin{array}{c|c|l}
 \delta&\dim\Sel_2^0(E_\delta(n))&\text{basis}\\ \hline
 1&2k&\Lambda,\Lambda',U_i,V_i\ (2\le i\le k)\\
 -1&2k&\Lambda,\Lambda',U_i,V_i\ (2\le i\le k)\\
 2&2k-2&U_i,V_i\ (2\le i\le k)\\
 -2&2k-1&W,U_i,V_i\ (2\le i\le k).
 \end{array}
\]
In particular for $n=pq$ these dimensions are $4,4,2,3$.
\end{corollary}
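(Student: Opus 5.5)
We will apply Theorem~\ref{thm:fourdescent} to each $\delta\in\{1,-1,2,-2\}$ and compute the kernel of $\mathcal M_{\delta,n}$ directly. The work is local: we test each candidate class $(d_1,d_2)$, with $d_1,d_2$ in the group generated by $\mathcal S$, against the local images in \eqref{eq:alllocalbases}. The hypotheses \eqref{eq:fourlargefamily} make most entries trivial. Since $q_i\equiv1\pmod{40}$, each $q_i$ is a square in $\Q_2$ and in $\Q_5$, and $-1,2,5$ are squares modulo $q_i$. The conditions $(p/q_i)=(q_i/q_j)=1$ make $p$ and each $q_j$ ($j\ne i$) squares at $q_i$. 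So the rows at the auxiliary places $q_i$ only constrain the $q_i$-valuations of $d_1,d_2$ in terms of symbols already known to equal $+1$. We will do the computation in two stages.

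\textbf{Stage 1: the classes $U_i,V_i$.} First we check that $U_i=(q_i,1)$ and $V_i=(1,q_i)$ lie in every local image. At $2$, $5$, $\infty$ and at $q_j$ with $j\ne i$, both coordinates are local squares, so there is nothing to check. At $q_i$ we need $(q_i,1)$ and $(1,q_i)$ to be orthogonal to $t_0=(-1,-m)$ and $t_m=(m,5)$ under the form $[d_1,b]+[d_2,a]$. This reduces to the symbols $[-m/q_i]$, $[5/q_i]$ and $[-1/q_i]$. The $q_i$-free part of $m$ is $5\delta p\prod_{j\ne i}q_j$, and $\delta\in\{\pm1,\pm2\}$ is a square at $q_i$, so all these symbols vanish.

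\textbf{Stage 2: the $(-1,2,5,p)$-part.} The problem then reduces to the case $k=1$, that is $n=p$, with the $q_i$ acting as inert spectators. For each $\delta$ we solve a small linear system over the squareclasses generated by $-1,2,5,p$ in each coordinate. The constraints come from the real place ($d_1,d_2$ of equal sign class modulo $t_0$), from $5$ and $p$ (the residues of $p$ modulo $5$, and of $-1,2,5$ modulo $p$), and from $2$ (the vector $v_m$ of \eqref{eq:fourdyadic}, with $m\equiv3\pmod8$ or $m\equiv7$, or with $\ord_2m=1$).

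For $\delta=1$ the checks run as follows. $\Lambda=(5,1)$ is fine at $5$ and $p$ because $(5/p)=1$ and $5$ is a unit at $p$. At $2$ it is fine because $5$ pairs trivially with $(2,-1)$ and with the torsion classes. The case $\Lambda'=(1,5)$ is analogous.

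Next we must show that no class involving $-1$, $2$ or $p$ survives modulo torsion. This uses $p\equiv3\pmod8$ together with $(2/p)=-1$, $(-1/p)=-1$ and $(p/5)=1$. For $\delta=2$ the dyadic condition $(m+1,1)$ with $m+1\equiv3\pmod8$ also kills $\Lambda,\Lambda'$. For $\delta=-2$ a different class, $W=(p,1)$, survives instead. We will verify these small cases by explicit Hilbert symbol tables. The dimension count then follows from Theorem~\ref{thm:fourdescent} by counting generators: $2+2(k-1)$, and so on.

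\textbf{Main obstacle.} We expect the main difficulty to be the bookkeeping at $2$ for $\delta=\pm2$. There the local image uses $(m+1,1)$, and we must check that $5$ pairs nontrivially with it. We must also check that the residue of $p$ modulo $8$ makes $W$ survive only for $\delta=-2$. This is where the sign of $\delta$ and $p\equiv3\pmod8$ enter essentially. The independence of the listed classes modulo $\mathcal T$ is clear from their valuation vectors at $p$ and the $q_i$.
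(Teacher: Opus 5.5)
Your two-stage decomposition is the paper's. The auxiliary primes impose no conditions and contribute $U_i,V_i$ freely. (In Stage 1 you also need the place $p$, where $(q_i/p)=(p/q_i)=1$ by reciprocity since $q_i\equiv1\pmod4$.) Everything then reduces to a small system supported on $-1,2,5,p$. The paper solves that system explicitly: it first uses $t_0,t_m$ to make both coordinates positive and remove $5$ from $d_1$, writes $d_1=2^bp^d$, $d_2=2^f5^gp^h$, and gets the conditions $b=f=h=0$; $b=f=0,\ h=d$; $b=d=f=g=h=0$; $b=f=g=h=0$ for $\delta=1,-1,2,-2$. You defer this system, which is the actual content of the corollary, and the dyadic specifics you do state are wrong:
\begin{enumerate}[label=(\roman*)]
\item Since $n\equiv3\pmod8$, one has $5n\equiv7$ and $-5n\equiv1\pmod8$. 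So for $\delta=\pm1$ the dyadic vector is $v_m=(5,1)$, not $(2,-1)$; the case $m\equiv3\pmod8$ never occurs in this family.
\item For $\delta=2$ one has $m+1=10n+1\equiv7\pmod8$; the residue $3$ belongs to $\delta=-2$.
\item The step you single out as the main obstacle would fail. Because $5\equiv1\pmod4$, $[5,u]_2=0$ for every $2$-adic unit $u$. Hence $v_m=(m+1,1)$ is orthogonal to both $\Lambda=(5,1)$ and $\Lambda'=(1,5)$, and $5$ cannot pair nontrivially with it.
\end{enumerate}

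What actually kills $\Lambda,\Lambda'$ for $\delta=\pm2$ is the torsion rows at $2$. Since $\ord_2 m=1$ and $(5,2)_2=-1$:
\begin{itemize}
\item $\Lambda$ paired with $t_0=(-1,-m)$ gives $[5,-m]_2=1$;
\item $\Lambda'$ paired with $t_m=(m,5)$ gives $[5,m]_2=1$;
\item $\Lambda+\Lambda'$ paired with $t_0$ gives $1$.
\end{itemize}
Whether $W=(p,1)$ survives is decided at $p$, not at $2$. Locally at $p$, $t_m\equiv(\delta p,1)$ and $t_0\equiv(-1,-\delta p)$, so $W$ lies in the local image iff $(\delta/p)=1$. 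For $p\equiv3\pmod8$ this holds exactly for $\delta=1$ and $\delta=-2$. For $\delta=1$, $W\equiv\Lambda+\Lambda'+\sum_{i\ge2}U_i$ modulo torsion, so $W$ is a genuinely new class only for $\delta=-2$.

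Two smaller inaccuracies. For $m<0$ the real condition is $d_2>0$, not equal signs. For $\delta=-2$ the valuation vector of $W$ at $p$ coincides with that of $t_m$, so valuations at $p$ and the $q_i$ alone do not give independence; you need, for instance, that $W+t_m+\sum_{i\ge2}U_i=(-10,5)$ is a nontrivial squareclass pair. With these corrections your plan reproduces the paper's table and the stated dimensions.
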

\begin{proof}
Each $q_i$ is a square at $2,5,p$, at infinity, and at every
other $q_j$. At $q_i$, the two local torsion classes reduce to
$(1,q_i),(q_i,1)$, since $5\delta n/q_i$ is a square unit.
Thus each additional prime contributes two freely chosen
coordinates, and introduces no condition on the coordinates
supported on $-1,2,5,p$.

It remains only to solve the conditions supported on $-1,2,5,p$; this can be
done directly, without computing a matrix rank. Modulo torsion, normalize both coordinates
to be positive and remove $5$ from the first one. Write them as
\[
 d_1=2^b p^d,\qquad d_2=2^f5^g p^h.
\]
Substitution in the local Hilbert rows gives the following independent
conditions over $\F_2$:
\[
 \begin{array}{c|l}
 \delta&\text{conditions on }(b,d,f,g,h)\\ \hline
 1&b=f=h=0\\
 -1&b=f=0,\ h=d\\
 2&b=d=f=g=h=0\\
 -2&b=f=g=h=0.
 \end{array}
\]
Here only $p\equiv3\pmod8$, $(5/p)=1$, and $(p/5)=1$
enter. For $\delta=1$, the normalized representatives of
$\Lambda,\Lambda'$ are $(p,5),(1,5)$; for $\delta=-1$
they are $(p,p),(1,5)$. The last row is generated by $(p,1)$.
Adding the independent $U_i,V_i$ proves the assertion.
\end{proof}

Suppose now that $n\equiv3\pmod8$. Define
\[
 e_i=[-1/q_i],\quad t_i=[5/q_i],\quad
 e=(e_i)^t,\quad t=(t_i)^t,
 \quad D_e=\operatorname{diag}(e_i),\quad
 D_t=\operatorname{diag}(t_i).
\]
Let $A=(a_{ij})$ be the $k\times k$ matrix
\[
 a_{ij}=[q_j/q_i]\ (i\ne j),\qquad
 a_{ii}=\sum_{j\ne i}a_{ij}.
\]
In particular $A\mathbf1=0$.

\begin{theorem}\label{thm:reducedmonsky}
Every pure $2$-Selmer class has a unique representative with
\[
 d_1=\prod_iq_i^{x_i},\qquad
 d_2=5^z\prod_iq_i^{y_i},\qquad d_3=d_1d_2
 \quad\text{in }\Q^\times/\Q^{\times2}.
\]
Under this identification,
\begin{equation}\label{eq:reducedmonsky}
 \Sel_2^0(A_{5n})\simeq\ker M_n,\qquad
 M_n=
 \begin{pmatrix}
 A+D_t&D_e&0\\
 D_t&A+D_e+D_t&t\\
 0^t&e^t&0
 \end{pmatrix}.
\end{equation}
The matrix has size $2k+1$, with column order $(x,y,z)$.
\end{theorem}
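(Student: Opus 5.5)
The plan is to derive \eqref{eq:reducedmonsky} from Theorem~\ref{thm:fullmonsky}. I would normalize Selmer representatives modulo the torsion image $\mathcal T$, and then evaluate the remaining local rows \eqref{eq:fullmonskyrow} explicitly. Here $m=5n$ with $n\equiv3\pmod8$, so $m\equiv7\pmod8$ and the dyadic basis is $t_0,t_m,(5,1)$.

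\textbf{Step 1: normalization.} A Selmer class is a pair $(d_1,d_2)$ with entries in $\langle-1,2,5,q_1,\ldots,q_k\rangle$.
\begin{itemize}[nosep]
\item Multiplying by $t_0=(-1,-m)$ removes $-1$ from $d_1$.
\item Multiplying by $t_m=(m,5)$ toggles the exponent of $5$ in $d_1$, at the cost of a factor $n$ in $d_1$ and a factor $5$ in $d_2$.
\end{itemize}
Since $t_0,t_m$ are independent, every class modulo $\mathcal T$ has a unique representative whose $d_1$ involves neither $-1$ nor $5$.

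It remains to show that the exponents of $2$ in $d_1,d_2$ and of $-1$ in $d_2$ are forced to vanish.
\begin{itemize}[nosep]
\item I would first use the real row: $t_0$ at $\infty$ gives $[d_1,-m]_\infty+[d_2,-1]_\infty=0$. Since $d_1>0$ after normalization, this forces $d_2>0$.
\item Next I would use the three dyadic rows, coming from $t_0$, $t_m$ and $(5,1)$, with $m\equiv7\pmod8$. The row from $(5,1)$ reads $[d_2,5]_2$, which detects the exponent of $2$ in $d_2$. The other two rows then kill the exponent of $2$ in $d_1$. Here the residues $q_i\bmod8$ enter only through the combination $n\equiv3\pmod8$.
\end{itemize}
This yields the coordinates $(x,y,z)$ of the statement.

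\textbf{Step 2: the odd primes.} At each $q_i$ I would compute the two rows.
\begin{itemize}[nosep]
\item The $t_m$-row is $[d_1,5]_{q_i}+[d_2,m]_{q_i}$.
\item The $t_0$-row is $[d_1,-m]_{q_i}+[d_2,-1]_{q_i}$.
\end{itemize}
I would expand them with the odd Hilbert formula $[a,b]_\ell=rs[-1/\ell]+r[w/\ell]+s[u/\ell]$. The valuation of $m$ at $q_i$ produces the diagonal terms $a_{ii}=\sum_{j\ne i}a_{ij}$, after using $[n/q_i\cdot\text{unit}]$ together with the $[5/q_i]$ and $[-1/q_i]$ contributions. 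The off-diagonal terms are the symbols $[q_j/q_i]$.

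Suitable $\F_2$-combinations of the two rows at $q_i$ should give exactly the $i$th rows of the first two block rows of $M_n$. The first block row is $(A+D_t,\,D_e,\,0)$. The second block row is $(D_t,\,A+D_e+D_t,\,t)$, where the last column records $z$ through $[5/q_i]$.

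\textbf{Step 3: the prime $5$ and the global bookkeeping.} At $5$ the two rows involve $[q_i/5]$, $[-1/5]=0$ and the exponent $z$. Using quadratic reciprocity to convert $[q_i/5]$ into $[5/q_i]=t_i$, one combination should become the last row $(0^t,e^t,0)$. All remaining local rows, at $5$, at $2$ and at $\infty$, should then be consequences of the others.

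To prove this redundancy I would use the product formula: for a global pair $(d_1,d_2)$, the sum over all places of the pairing with a global class such as $t_0$ or $t_m$ vanishes. This removes one row per global torsion generator and leaves the $2k+1$ rows of $M_n$. As a final consistency check, the kernel dimension should agree with $2k+4-\rank\mathcal M_n$ from Theorem~\ref{thm:fullmonsky}.

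\textbf{Main obstacle.} I expect the hardest part to be Step 3. One must show that after normalization the dyadic, real and $5$-adic rows contribute exactly one new independent condition, namely $e^ty=0$, and no more. This requires careful reciprocity accounting: the $[-1/q_i]$ and $[2/q_i]$ contributions must recombine via $n\equiv3\pmod8$. I would attack it first by writing the product formula explicitly for the two torsion classes, and only then verify the surviving row directly.
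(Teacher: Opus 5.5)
Your skeleton matches the paper's proof: normalize modulo $\mathcal T$ using the real and dyadic conditions, read the two block rows off the conditions at each $q_i$, and discard the rest as redundant. Step 2 is right, and no recombination is needed there: the $t_0$-row at $q_i$ is literally the $i$th row of $(A+D_t,\ D_e,\ 0)$, and the $t_m$-row is the $i$th row of $(D_t,\ A+D_e+D_t,\ t)$.

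The problem is Step 3, which fails as written, because the row $(0^t,e^t,0)$ cannot come from the prime $5$. With $d_1$ a $5$-adic unit, the $t_0$- and $t_m$-rows at $5$ are $t^tx$ and $t^tx+t^ty+(\mathbf 1^tt)z$. No $e_i$ occurs in either, so no combination of them gives $e^ty$.

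The condition $e^ty=0$ is dyadic, and it is exactly what your Step 1 drops. Write $s_1,s_2$ for the $2$-exponents of $d_1,d_2$ and $w_2$ for the odd part of $d_2$. For $m\equiv7\pmod8$ the three dyadic rows are:
\begin{itemize}
\item $s_2$, from $(5,1)$;
\item $\tfrac{w_2-1}{2}$, from $t_0$, since $-m$ is a $2$-adic square;
\item $s_1+\tfrac{w_2-1}{2}$, from $t_m$, since $m$ has squareclass $-1$.
\end{itemize}
So they impose three conditions, not two: $s_1=s_2=0$ and $d_2\equiv1\pmod4$. For $d_2=5^z\prod q_i^{y_i}>0$ the last condition is precisely $e^ty=0$. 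This is the condition the paper extracts from the dyadic local image (``second coordinate $\equiv1\pmod4$'').

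With this correction, keep the dyadic row and discard the two rows at $5$. Summing the first block over $i$ gives $t^tx+e^ty$, because $\mathbf 1^tA=0$. That identity follows from reciprocity, $A+A^t=ee^t+D_e$, together with $\mathbf 1^te=1$, i.e.\ $n\equiv3\pmod4$. Summing both blocks gives $t^ty+(\mathbf 1^tt)z$. Together with $e^ty=0$, these two sums yield both $5$-adic rows.

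This is just Hilbert reciprocity for the global classes $t_0$ and $t_m$, so your product-formula idea is the right tool. It simply has to be aimed at the $5$-adic rows, not the dyadic one.

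The obstacle you anticipate with $[2/q_i]$ and $n\equiv3\pmod8$ does not arise in this ordering. Once $s_1=s_2=0$, no symbol $[2/q_i]$ appears at all. The congruence modulo $8$ enters only through the choice of dyadic local basis, and the redundancy uses only $n\equiv3\pmod4$.
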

\begin{proof}
The real condition gives equal signs in $d_1,d_2$; multiplication
by $t_0$ makes both positive. Since $m\equiv7\pmod8$, the local
image at $2$ contains only odd coordinates, with the second
coordinate congruent to $1\pmod4$. Thus neither coordinate has a
factor $2$. Multiplication by $t_m=(5n,5)$ then removes the factor
$5$ from $d_1$, uniquely. This gives the normalization and the
remaining dyadic condition
\begin{equation}\label{eq:monsky2condition}
 e^ty=0.
\end{equation}

At $q_i$, the local torsion combination having valuation vector
$(x_i,y_i)$ is $t_m^{x_i}t_0^{y_i}$. Comparing the unit parts of
its two coordinates with $d_1,d_2$ gives, respectively,
\[
 (A+D_t)x+D_ey=0,\qquad
 D_tx+(A+D_e+D_t)y+tz=0.
\]
At $5$, whose local image is also generated by $t_0,t_m$, the
normalization $5\nmid d_1$ gives the conditions
\begin{equation}\label{eq:monsky5conditions}
 t^tx=0,\qquad t^ty+(\mathbf1^tt)z=0.
\end{equation}
Here quadratic reciprocity identifies $(q_i/5)$ with $(5/q_i)$.

These last two rows are redundant. Indeed, $n\equiv3\pmod4$
implies $\mathbf1^te=1$, while quadratic reciprocity gives
\[
 A+A^t=ee^t+D_e,\qquad \mathbf1^tA=0.
\]
The sum of the first $k$ equations is $t^tx+e^ty=0$;
the sum of both blocks is $t^ty+(\mathbf1^tt)z=0$.
Together with \eqref{eq:monsky2condition} these imply
\eqref{eq:monsky5conditions}. There are no further local conditions
by Theorem~\ref{thm:fullmonsky}, proving \eqref{eq:reducedmonsky}.
\end{proof}

For $n=p$ prime with $p\equiv3\pmod8$, the reduced matrix gives pure
Selmer dimensions $0,2,2,0$ in the residue classes
$p\equiv3,11,19,27\pmod{40}$, respectively. The middle two cases recover
Proposition~\ref{prop:Selmer}; the same calculation also gives the adjacent
classes.

\begin{corollary}\label{cor:largepurefamily}
Let $n=pq_2\cdots q_k$, where $p\equiv11,19\pmod{40}$,
$q_i\equiv1\pmod{40}$, all primes are distinct, and every Legendre
symbol between two distinct prime factors is $+1$. Then
\[
 \dim_{\F_2}\Sel_2^0(A_{5n})=2k.
\]
A basis, with third coordinate determined by the product, is
\[
 (5,1),\ (1,5),\ (q_i,1),\ (1,q_i)\quad(2\le i\le k).
\]
\end{corollary}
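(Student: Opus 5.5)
The plan is to apply Theorem~\ref{thm:reducedmonsky} with $q_1=p$ and compute $\ker M_n$ directly. The computation is in fact cleaner than in general, because all cross Legendre symbols are trivial.

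\textbf{Step 1: specialize the data.} With $q_1=p$, every Legendre symbol $(q_j/q_i)$ with $i\ne j$ equals $+1$. This uses reciprocity together with $q_i\equiv1\pmod4$, so that $(q_i/p)=(p/q_i)=1$. Hence all $a_{ij}=0$ and $A=0$.

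Next I would record the residue characters. For $i\ge2$ we have $q_i\equiv1\pmod{40}$, so $e_i=0$ and $t_i=0$. For $p\equiv3\pmod4$ we get $e_1=1$. Since $p\equiv\pm1\pmod5$, reciprocity gives $(5/p)=(p/5)=1$, so $t_1=0$. Also $n\equiv p\equiv3\pmod8$, so the theorem applies. Thus $t=0$, $D_t=0$, and $e=\epsilon_1$, the first standard basis vector.

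\textbf{Step 2: solve the system.} The matrix $M_n$ collapses to the three conditions
\[
D_ey=0,\qquad D_ey=0,\qquad e^ty=0,
\]
and these all say only that $y_1=0$. The variables $x_1,\dots,x_k$, $y_2,\dots,y_k$ and $z$ are unconstrained. The kernel therefore has dimension $k+(k-1)+1=2k$.

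\textbf{Step 3: match the basis.} In the normalized representatives, the free parameters give the following classes:
\begin{itemize}
\item $z$ gives $(1,5)=\Lambda'$;
\item $x_i$ and $y_i$ for $i\ge2$ give $(q_i,1)$ and $(1,q_i)$;
\item $x_1$ gives $(p,1)$.
\end{itemize}
The statement lists $(5,1)$ instead of $(p,1)$. Modulo the torsion class $t_m=(5n,5)$, the class $(5,1)$ equals $(n,5)=(p,5)\cdot\prod_{i\ge2}(q_i,1)$. This lies in the span of the kernel basis and has nonzero $x_1$-component, so it can replace $(p,1)$.

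This replacement must also respect the normalization convention of Theorem~\ref{thm:reducedmonsky}: the representative of $(5,1)$ is obtained by multiplying by $t_m$, which gives $(n,5)$. The resulting change of basis is triangular, so the listed classes remain independent.

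Alternatively, the whole statement is the $\delta=1$ row of Corollary~\ref{cor:fourdimensions}, so one could simply cite that corollary. I would present the computation via $M_n$ as an independent check.

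The main obstacle is only bookkeeping: keeping the sign conventions and the reciprocity evaluations straight, in particular the equalities $t_1=0$ and $e_1=1$.
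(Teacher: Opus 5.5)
Your proposal is correct and follows the paper's own proof: it specializes Theorem~\ref{thm:reducedmonsky} with $q_1=p$, observes that $A=D_t=0$ and $e=(1,0,\ldots,0)^t$, so the only condition is $y_1=0$, and then replaces the normalized class $(p,1)$ by $(5,1)$, which the torsion class $t_m$ identifies with $(n,5)$. Your remark that the statement is also the $\delta=1$ row of Corollary~\ref{cor:fourdimensions} is accurate.
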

\begin{proof}
Here $A=D_t=0$ and $e=(1,0,\ldots,0)^t$, so the only equation
is $y_1=0$. The kernel has dimension $2k$. In the normalized
coordinates, $(5,1)$ is represented by $(n,5)$, and the displayed
classes give an independent spanning set. For each fixed $p$ and
$k$, such prime tuples exist: successively choose
$q_i\equiv1\pmod{40p q_2\cdots q_{i-1}}$ and apply Dirichlet's
theorem. This asserts existence, not a density formula.
\end{proof}

From now on we use the Kummer coordinates
\[
 (x,x-m,x+4m)
\]
for $A_m$.

\subsection{Prime twists and the first Cassels pairing}\label{subsec:primeordinary}

\begin{proposition}\label{prop:Selmer}
Let $p\equiv11,19\pmod{40}$ be prime. Then
\[
 \dim_{\F_2}\Sel_2^0(A_{5p})=2,
\]
with basis
\[
 \Lambda=(5,1,5),\qquad
 \Lambda'=(1,5,5).
\]
If instead $p\equiv7,23\pmod{40}$, then
\[
 \Sel_2^0(A_p)=0.
\]
\end{proposition}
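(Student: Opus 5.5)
The plan is to read both assertions off the descent matrices of the previous subsection. The case $p\equiv11,19\pmod{40}$ is the specialization $n=p$, $k=1$ of Theorem~\ref{thm:reducedmonsky}. The case $p\equiv7,23\pmod{40}$ concerns $A_p$ rather than $A_{5n}$, so it needs a short separate run of the same local argument as in Theorem~\ref{thm:fullmonsky}.

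For $p\equiv11,19\pmod{40}$ we have $p\equiv3\pmod8$, so Theorem~\ref{thm:reducedmonsky} applies with $n=p$.
\begin{itemize}
\item With $k=1$, the matrix $A$ is the $1\times1$ zero matrix, since $a_{11}$ is an empty sum.
\item Since $p\equiv3\pmod4$, we get $e=[-1/p]=1$.
\item Since $p\equiv1,4\pmod5$, quadratic reciprocity gives $(5/p)=(p/5)=1$, so $t=0$.
\end{itemize}
Hence
\[
 M_p=\begin{pmatrix}0&1&0\\0&1&0\\0&1&0\end{pmatrix}.
\]
This has rank one, and its kernel is $\{y=0\}$ with $x,z$ free, which is two dimensional. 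It remains to match this kernel with $\Lambda,\Lambda'$.
\begin{itemize}
\item $\Lambda'=(1,5,5)$ is already normalized and corresponds to $(x,y,z)=(0,0,1)$.
\item $\Lambda=(5,1,5)$ is normalized by multiplying by $t_m=(5p,5)$, which gives $(p,5)$. This corresponds to $(x,y,z)=(1,0,1)$.
\end{itemize}
These two vectors span $\ker M_p$. The third coordinate is the product of the first two in each case, which matches the displayed triples.

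For $p\equiv7,23\pmod{40}$ the curve is $A_p$ with $m=p\equiv7\pmod8$. Its discriminant is supported on $2,5,p$, since the root differences are $p,4p,5p$. So Selmer classes have coordinates in the group generated by $-1,2,5,p$. I would normalize as in the proof of Theorem~\ref{thm:reducedmonsky}:
\begin{itemize}
\item The real condition gives equal signs, and $t_0=(-1,-p)$ makes both coordinates positive.
\item At $2$, since $m\equiv7\pmod8$, the local image described in \eqref{eq:alllocalbases} forces both coordinates to be odd, with $d_2\equiv1\pmod4$.
\item Multiplication by $t_m=(p,5)$ removes $p$ from $d_1$, so $d_1\in\langle5\rangle$ and $d_2\in\langle5,p\rangle$.
\end{itemize}
This leaves three exponents $(d_1,d_2)=(5^{\alpha},5^{\beta}p^{\gamma})$. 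I would then impose the conditions at $p$ and at $5$ using the rows \eqref{eq:fullmonskyrow}. At both places the local image is spanned by $t_0,t_m$, since the valuation vectors of these are independent. The relevant inputs are $(5/p)=(p/5)=-1$, because $p\equiv2,3\pmod5$, and $(-1/p)=-1$. With these, the Hilbert symbol rows at $p$ and $5$, together with the dyadic condition $d_2\equiv1\pmod4$, should force $\alpha=\beta=\gamma=0$. For instance, $\gamma=1$ makes $d_2\equiv p\equiv3\pmod4$, which the dyadic condition excludes. This gives $\Sel_2^0(A_p)=0$.

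The only step requiring care is this last local computation at $5$. There the prime is bad, the torsion classes $(-1,-p)$ and $(p,5)$ have valuation vectors $(0,0)$ and $(0,1)$, and the local image must be pinned down correctly. I would verify it by the isotropy and half-dimension argument of Theorem~\ref{thm:fullmonsky}, and then check the three exponent conditions case by case.
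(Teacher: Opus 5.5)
Your proposal is correct, and for the first assertion it takes a different route from the paper. For $p\equiv11,19\pmod{40}$ you specialize Theorem~\ref{thm:reducedmonsky} to a single prime. There $A=0$, $e=1$ and $t=0$, so $\ker M_p=\{y=0\}$, and this kernel is spanned by the normalized representatives $(1,5)$ of $\Lambda'$ and $(p,5)$ of $\Lambda$.

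The paper instead lists the local images at $\infty,2,5,p$ directly. It obtains $\Sel_2(A_{5p})=\langle(5,1),(p,1),(1,5),(-1,-p)\rangle$ and only then quotients by the torsion image. (It does remark, after Theorem~\ref{thm:reducedmonsky}, that your specialization also recovers the result.) Your route is shorter and places the prime case inside the uniform composite matrix, but it inherits that theorem's argument that the $5$-adic rows are redundant. The paper's route is self-contained and records the explicit local images and the full $4$-dimensional Selmer group, which are reused when local points are chosen in Theorem~\ref{thm:firstpair}.

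For $A_p$ your argument is essentially the paper's. Two small points there:
\begin{enumerate}
\item The table \eqref{eq:alllocalbases} is stated for $m=5n$. The point $x=5/4$ still gives the class $(5,1)$ for any odd $m$, as noted in Theorem~\ref{thm:fourdescent}, so your dyadic description is valid.
\item At $5$ the torsion classes $t_0=(-1,-p)$ and $t_m=(p,5)$ have valuation vectors $(0,0)$ and $(0,1)$. So your stated reason (``independent valuation vectors'') fails there. They are independent only because $-p$ and $p$ are nonsquare units modulo $5$.
\end{enumerate}

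More importantly, the $5$-adic step you single out as the one ``requiring care'' is not needed at all. At $p$ the local image is spanned by classes with valuation vectors $(1,0)$ and $(0,1)$. Hence your normalized class $(5^\alpha,5^\beta)$, which has valuation vector $(0,0)$, must be locally trivial. Since $(5/p)=-1$, this forces $\alpha=\beta=0$. Together with your dyadic argument for $\gamma$, this completes the proof; it is exactly the paper's comparison giving $c=0$ and $e=d$.
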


\begin{proof}
For $A_{5p}$ the global squareclasses are supported on
$\{-1,2,5,p\}$. In the first two Kummer coordinates the local images
are generated by
\[
\begin{array}{c|l}
v&\text{generators}\\ \hline
\infty&(-1,-1)\\
2&(-1,1),(5,1),(1,5)\\
5&(5,1),(1,5)\\
p&(p,1),(-1,-p).
\end{array}
\]
At the odd bad primes these are already the images of rational
$2$-torsion and have the correct local dimension. At $2$, torsion
gives $(-1,1)$ and $(1,5)$, while the local point with $x=5/4$ gives
$(5,1)$ because
\[
 5-20p\equiv1\pmod8,\qquad
 1+16p\equiv1\pmod8.
\]
The $2$-adic and real conditions reduce the global Selmer group to
\[
 \Sel_2(A_{5p})
 =
 \langle(5,1),(p,1),(1,5),(-1,-p)\rangle_{\F_2}.
\]
Modulo the two dimensional image of rational $2$-torsion, represented
by $(-1,-5p)$ and $(5p,5)$, the classes $(5,1)$ and $(1,5)$ form a
basis.

For $A_p$ with $p\equiv7,23\pmod{40}$, the $2$-adic and real
conditions leave
\[
 (5,1)^c(p,1)^d(1,5)^e(-1,-p)^f.
\]
At $p$, both $5$ and $-1$ are nonsquare units, and the local image is
generated by $(p,5)$ and $(-1,-p)$. Comparing coordinates gives
$c=0$ and $e=d$, leaving exactly the rational $2$-torsion image.
\end{proof}

With these Selmer bases fixed, we can compute the ordinary Cassels pairing
explicitly.

\begin{theorem}\label{thm:firstpair}
For $p\equiv11,19\pmod{40}$, the Cassels matrix in the basis
$(\Lambda,\Lambda')$ is
\[
 C_p=
 \begin{pmatrix}
 0&\eps_p\\
 \eps_p&0
 \end{pmatrix},
 \qquad
 \eps_p=
 \begin{cases}
 0,&p\equiv11\pmod{40},\\
 1,&p\equiv19\pmod{40}.
 \end{cases}
\]
\end{theorem}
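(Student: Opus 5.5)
Since the pairing is alternating, the diagonal entries vanish and only $\langle\Lambda,\Lambda'\rangle$ has to be computed. I would use the standard recipe for the Cassels pairing on the $2$-Selmer group of a curve with full rational $2$-torsion, the one that comes from the $4$-descent description in \cite{CasselsSecond,Fisher}.

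First I would write down the $2$-cover $C_\Lambda$ attached to $\Lambda=(5,1,5)$ by \eqref{eq:general2cover}:
\[
 5u_1^2-u_2^2=5p\,t^2,\qquad 5u_3^2-5u_1^2=20p\,t^2 .
\]
This is the intersection of two quadrics. The first quadric, $5u_1^2-u_2^2-5pt^2=0$, has a rational point; one checks this with the Hilbert symbols $(5,5p)_v$ and the conditions $p\equiv 3\pmod 8$, $(5/p)=1$. Given such a point, I would choose a linear form $L$ whose divisor on $C_\Lambda$ is twice the divisor of two points, i.e.\ a tangent-plane form, as in the paper's phrase ``tangent forms and Hilbert symbols.'' The pairing is then
\[
 \langle\Lambda,\Lambda'\rangle=\sum_v [\,L(P_v),\,\Lambda'_v\,]_v ,
\]
where $P_v\in C_\Lambda(\Q_v)$ are local points and $\Lambda'$ enters through a chosen coordinate, for instance $[L(P_v),5]_v$ paired with the second Kummer coordinate. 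The value does not depend on the choice of the $P_v$, because $L$ is well defined up to a norm from the splitting algebra.

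Next I would make $L$ explicit. The natural choice uses the representation $p=a^2-5b^2$ from Lemma~\ref{lem:norm5}. Since $5u_1^2-u_2^2=5pt^2$ factors over $\Q(\sqrt5)$, a rational point can be built from $a+b\sqrt5$. The local terms are trivial at every place where $5$ is a square, since $\Lambda'$ has coordinates in $\{1,5\}$. That leaves the places $v=5$, $v=2$ (where $5$ is a nonsquare unit), and possibly $\infty$. At $\infty$ the term vanishes because $5>0$. So the sum reduces to
\[
 [L(P_5),5]_5+[L(P_2),5]_2 .
\]
The $2$-adic term is $\ord_2 L(P_2)\pmod 2$. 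The $5$-adic term is $\ord_5 L(P_5)\pmod 2$, plus a Legendre-symbol correction if the valuation is even.

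The last step, which I expect to be the main obstacle, is evaluating these two local terms in terms of $p\bmod 40$. I would pick the local points $P_2$ and $P_5$ concretely: at $2$ use $t=0$-type or small points, at $5$ reduce modulo $5$. The claim to aim for is
\[
 \langle\Lambda,\Lambda'\rangle=\left[\tfrac{p}{5}\right]_4 .
\]
With $(p/5)=1$, this equals $0$ for $p\equiv 1\pmod 5$ (the case $p\equiv 11\pmod{40}$) and $1$ for $p\equiv 4\pmod 5$ (the case $p\equiv 19\pmod{40}$). This matches the genus character $-\left(\frac p5\right)_4$ that appeared in Lemma~\ref{lem:h5mod8}.

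As a consistency check, Lemma~\ref{lem:nondegenerate} predicts that $\eps_p=1$ should be compatible with $\Sh(A_{5p})[2^\infty]\simeq(\Z/2\Z)^2$. The parity $\D(p)/8\equiv\eps_p$ follows separately from Theorem~\ref{thm:secondbit} together with Lemma~\ref{lem:h5mod8}, and it corroborates the case split.
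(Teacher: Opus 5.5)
Your plan breaks at the pairing formula. A single tangent-plane form $L$ does not give a well-defined quantity $\sum_v[L(P_v),5]_v$. Since $L$ is homogeneous of degree one, replacing the projective local point $P_v$ by $\lambda P_v$ changes the term by $[\lambda,5]_v$, which is nonzero for $\lambda=2$ at $v=2$ or at $v=5$. Choosing ``a coordinate of $\Lambda'$'' does not repair this.

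The formula the paper uses is Cassels' tangent--Hilbert formula for full rational $2$-torsion. It pairs each coordinate $d_i'$ of $\Lambda'$ with a tangent form $L_i$ on the conic $H_i$ that omits $u_i$. For $\Lambda'=(1,5,5)$ it reads $(-1)^{\langle\Lambda,\Lambda'\rangle}=\prod_v(L_2(P_v)L_3(P_v),5)_v$, a degree-two expression, so you need rational points on two conics:
\begin{itemize}
\item $(t,u_1,u_2)=(1,a,5b)$ on $5u_1^2-u_2^2=5pt^2$, giving $L_3=au_1-bu_2-pt$;
\item $(t,u_1,u_3)=(1,p-1,p+1)$ on $u_3^2-u_1^2=4pt^2$, giving $L_2=(p+1)u_3-(p-1)u_1-4pt$.
\end{itemize}
The omitted form $L_2$ carries the dyadic sign. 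At $P_2=(2,1,u_2,u_3)$ with $u_2^2=5-20p$ and $u_3^2=1+16p$, the value $L_3(P_2)$ is odd while $L_2(P_2)\equiv2\pmod4$, so the factor at $2$ is $-1$. With $L_3$ alone you would get $+1$ at $2$ and, at the $5$-adic point below, $-1$ at $5$ in both residue classes. That is the wrong value for $p\equiv11\pmod{40}$.

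Second, the decisive local evaluation is only announced. Your target $[p/5]_4$ does equal $\eps_p$, but nothing in the sketch produces it. The paper takes $P_5=(1,2r,5r,0)$ with $r^2=-p$ and $r\equiv2a\pmod5$, after normalizing $a\equiv3\pmod5$ when $p\equiv4\pmod5$; for the other sign, $L_2(P_5)\equiv0\pmod5$. Then $(L_2,L_3)\equiv(1,3)$ or $(3,2)\pmod5$, giving $-1$ or $+1$ respectively, and the product with the dyadic $-1$ is the theorem.

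Two further points need correction.
\begin{itemize}
\item Your reduction to $\{2,5,\infty\}$ ignores the infinitely many odd $\ell\nmid10p$ with $(5/\ell)=-1$. There $(x,5)_\ell=(-1)^{\ord_\ell x}$, and you need the good-reduction fact that a primitive tangent form has even valuation at primitive local points.
\item The symbol $[x,5]_5$ is simply the Legendre symbol of the unit part of $x$, with no valuation term, because $(5,5)_5=1$.
\end{itemize}

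Finally, the $\D(p)$ remark cannot corroborate the pairing. $\D(p)\ne0$ gives rank zero and finite $\Sh$, but it does not distinguish $(\Z/2\Z)^2$ from $(\Z/2^a\Z)^2$ with $a\ge2$. The paper identifies $\D(p)/8$ with $\Pf(C_p)$ only through this pairing computation.
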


\begin{proof}
Write $p=a^2-5b^2$ with $4\mid a$ and $b$ odd.
When $p\equiv19\pmod{40}$, choose $a\equiv3\pmod5$;
multiplication of $a+b\sqrt5$ by $9+4\sqrt5$ changes $a$ to
$-a$ modulo $5$ and preserves the other conditions, so this
normalization is always possible. It fixes the choice used in the
local table below.
For $\Lambda=(5,1,5)$ the three conics may be written
\[
\begin{aligned}
H_1&:u_2^2-5u_3^2+25pt^2=0,\\
H_2&:u_3^2-u_1^2-4pt^2=0,\\
H_3&:5u_1^2-u_2^2-5pt^2=0.
\end{aligned}
\]
Use the rational conic points
\[
 (t,u_1,u_3)=(1,p-1,p+1),
 \qquad
 (t,u_1,u_2)=(1,a,5b).
\]
Tangent forms can be chosen as
\[
 L_2=(p+1)u_3-(p-1)u_1-4pt,
 \qquad
 L_3=au_1-bu_2-pt.
\]
Cassels' tangent--Hilbert symbol formula
\cite{CasselsSecond,WangZhang} gives
\[
 (-1)^{\langle\Lambda,\Lambda'\rangle}
 =
 \prod_v (L_2(P_v)L_3(P_v),5)_v.
\]
All places other than $2,5,p,\infty$ give $1$ by good reduction.
At $p$, $5$ is a local square, and the real contribution is also $1$.

At $2$ choose
\[
 P_2=(2,1,u_2,u_3),\qquad
 u_2^2=5-20p,\quad u_3^2=1+16p.
\]
Both square roots are odd $2$-adic units,
$L_2(P_2)\equiv2\pmod4$, and $L_3(P_2)$ is odd; hence the local
symbol is $-1$.

At $5$ choose $r^2=-p$ with $r\equiv2a\pmod5$ and take
$P_5=(1,2r,5r,0)$. One obtains
\[
\begin{array}{c|cc|c}
p\bmod5&L_2(P_5)\bmod5&L_3(P_5)\bmod5&
(L_2L_3,5)_5\\ \hline
1&1&3&-1\\
4&3&2&+1.
\end{array}
\]
Multiplying by the constant $2$-adic contribution proves the theorem.
\end{proof}

\begin{proposition}\label{prop:firstdefect}
For $p\equiv11,19\pmod{40}$,
\[
 8\mid\D(p),\qquad
 \frac{\D(p)}8\equiv\eps_p\pmod2.
\]
Hence
\begin{equation}\label{eq:firstbridge}
 \frac{\D(p)}8\equiv\Pf(C_p)\pmod2.
\end{equation}
If $p\equiv19\pmod{40}$, then
\[
 \rank A_{5p}(\Q)=0,\qquad
 \Sh(A_{5p})[2^\infty]\simeq(\Z/2\Z)^2.
\]
\end{proposition}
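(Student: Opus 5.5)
The plan is to read off $\D(p)\bmod 16$ from Theorem~\ref{thm:secondbit}, compare with $\eps_p$ from Theorem~\ref{thm:firstpair}, and then apply Lemma~\ref{lem:nondegenerate}.

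\emph{First step.} By Theorem~\ref{thm:secondbit},
\[
 \D(p)\equiv 8h(-p)-2h(-5p)\pmod{32}.
\]
Genus theory for the prime discriminant $-p$ makes $h(-p)$ odd, so $8h(-p)\equiv8\pmod{16}$. Lemma~\ref{lem:h5mod8} gives $h(-5p)\equiv4\pmod8$ when $p\equiv11\pmod{40}$ and $h(-5p)\equiv0\pmod8$ when $p\equiv19\pmod{40}$. Hence $2h(-5p)$ is $8$ or $0$ modulo $16$ in the two cases, and
\[
 \D(p)\equiv
 \begin{cases}
 0\pmod{16},&p\equiv11\pmod{40},\\
 8\pmod{16},&p\equiv19\pmod{40}.
 \end{cases}
\]
Thus $8\mid\D(p)$ and $\D(p)/8\equiv\eps_p\pmod 2$ by the definition of $\eps_p$ in Theorem~\ref{thm:firstpair}. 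For a $2\times2$ alternating matrix the Pfaffian is the off-diagonal entry, so $\Pf(C_p)=\eps_p$. This gives \eqref{eq:firstbridge}.

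\emph{Second step.} When $p\equiv19\pmod{40}$, $C_p$ is nondegenerate with $s=2$ by Proposition~\ref{prop:Selmer}. Lemma~\ref{lem:nondegenerate} then yields $\rank A_{5p}(\Q)=0$ and $\Sh(A_{5p})[2^\infty]\simeq(\Z/2\Z)^2$. As a cross-check, $\D(p)\neq0$ in this case, so Corollary~\ref{cor:Lvalue} gives rank zero independently.

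\emph{Main obstacle.} Almost all of this is bookkeeping; the only real input is the parity of $h(-p)$. I would justify it by noting that $\Q(\sqrt{-p})$ has a single ramified prime when $p\equiv3\pmod4$, so the $2$-rank of its class group is zero. One should also check that $h(-p)$ means the class number of the fundamental discriminant $-p$, with no factor from $-4p$; this agrees with the paper's notation $h(D)=\#\Cl(D)$.
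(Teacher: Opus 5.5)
Your proof is correct. It differs from the paper's in how you reach $\D(p)\bmod16$.

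You read $\D(p)\bmod16$ off the mod-$32$ formula of Theorem~\ref{thm:secondbit}. That theorem rests on Proposition~\ref{prop:FT}, hence on Corollary~\ref{cor:Tclass}, i.e.\ Gauss's three-square class number formula together with Lemma~\ref{lem:HT}. You then add the genus-theoretic oddness of $h(-p)$. In effect you use $r_F(p)\equiv4h(-p)\pmod{16}$ only to extract the weaker statement $r_F(p)\equiv4\pmod8$.

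The paper proves that weaker fact directly. Representations of $p$ by $F$ with $z=0$ number $B_2(p)=4$. Those with $z\ne0$ have $x,y$ odd, so they fall into sign-change orbits of size $8$. Combined with $\D(p)=2(r_F(p)-h(-5p))$ and Lemma~\ref{lem:h5mod8}, this gives $\D(p)\equiv8-2h(-5p)\pmod{16}$. The only class number input is $h(-5p)\bmod8$.

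The paper's route is more elementary and self-contained: it needs neither the three-square formula nor the parity of $h(-p)$. Yours is shorter on the page, since Theorem~\ref{thm:secondbit} is already proved in Section~\ref{sec:forms}, so there is no circularity. It also makes visible that $\D(p)/8$ is just the mod-$16$ reduction of the class number congruence.

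The Pfaffian identification $\Pf(C_p)=\eps_p$, and the $p\equiv19\pmod{40}$ conclusions via Lemma~\ref{lem:nondegenerate}, match the paper.
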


\begin{proof}
For $p\equiv3\pmod8$, the binary form $x^2+2y^2$ represents $p$
exactly four times. In a representation
$p=x^2+2y^2+40z^2$, the coordinates $x,y$ are odd; representations
with $z\ne0$ occur in orbits of size $8$ under sign changes. Thus
$r_F(p)\equiv4\pmod8$. Now use
\[
 \D(p)=2(r_F(p)-h(-5p))
\]
and Lemma~\ref{lem:h5mod8}. This is the first congruence.
The independent Cassels pairing calculation identifies this residue with
$\eps_p$.

When $p\equiv19\pmod{40}$ the Cassels pairing on the pure Selmer group
is nondegenerate. Lemma~\ref{lem:nondegenerate} gives the rank and
Tate--Shafarevich assertions.
\end{proof}

Take distinct primes satisfying
\begin{equation}\label{eq:pqfamily}
 p\equiv11,19\pmod{40},\qquad q\equiv1\pmod{40},\qquad
 \leg pq=1,\qquad n=pq.
\end{equation}
The preceding corollary gives the basis
\[
 \Lambda=(5,1,5),\quad \Lambda'=(1,5,5),\quad
 U=(q,1,q),\quad V=(1,q,q).
\]
Choose positive coprime $a,b$ such that
\begin{equation}\label{eq:pqnorm}
 n=a^2-5b^2,\qquad4\mid a,\qquad b\text{ odd}.
\end{equation}
The representation \eqref{eq:pqnorm} follows from the same norm argument as
Lemma~\ref{lem:norm5}. Indeed, $p$ and $q$ split in
$\Q(\sqrt5)$, whose class number is one; after multiplying a norm-$n$
generator by a norm one unit, it may be taken in $\Z[\sqrt5]$ with the
required parity. Conjugation and a totally positive norm one unit then give
$a,b>0$.

For either square root $r$ of $5$ modulo $q$, define
\begin{equation}\label{eq:pqbits}
 \alpha=[a/q],\qquad \tau=[(2+r)/q],\qquad
 \eps=
 \begin{cases}0,&p\equiv11\pmod{40},\\1,&p\equiv19\pmod{40}.
 \end{cases}
\end{equation}
The value of $\tau$ is independent of the root because
$(2+r)(2-r)=-1$ and $q\equiv1\pmod4$.
To define the remaining entry, choose primitive integers
$R,S,T$, with $T\ne0$, satisfying
\begin{equation}\label{eq:pqauxnorm}
 R^2-qS^2=5pT^2.
\end{equation}
Choose $w\in\F_q$ such that
\[
 w^2=R^2+20pT^2\pmod q,\qquad
 \ell_q=(5p+1)w+(5p-1)R-20pT\not\equiv0\pmod q,
\]
and put $\gamma=[\ell_q/q]$.
The proof of Theorem~\ref{thm:pqcassels} will show both that these choices
exist and that the resulting symbol $\gamma$ is intrinsic.

\subsection{Two prime twists and fixed residue characters}\label{subsec:twoprimeordinary}

\begin{theorem}\label{thm:pqcassels}
In the ordered basis $(\Lambda,\Lambda',U,V)$, the ordinary
Cassels pairing on $\Sel_2^0(A_{5pq})$ has matrix
\begin{equation}\label{eq:pqcasselsmatrix}
 C_{p,q}=
 \begin{pmatrix}
 0&\eps&0&\alpha\\
 \eps&0&\alpha+\tau&\tau\\
 0&\alpha+\tau&0&\gamma\\
 \alpha&\tau&\gamma&0
 \end{pmatrix},
 \qquad
 \Pf(C_{p,q})=\eps\gamma+\alpha(1+\tau).
\end{equation}
In particular, if $p\equiv11\pmod{40}$, then
\begin{equation}\label{eq:pqnondeg}
 \Pf(C_{p,q})=1
 \quad\Longleftrightarrow\quad
 \leg aq=-1,\qquad \leg{2+r}q=1.
\end{equation}
These equivalent conditions imply, unconditionally,
\[
 \rank A_{5pq}(\Q)=0,\qquad
 \Sh(A_{5pq})[2^\infty]\simeq(\Z/2\Z)^4.
\]
Consequently $10pq$ is not $\theta$-congruent for $\cos\theta=3/5$.
\end{theorem}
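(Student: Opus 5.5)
The plan is to compute each entry of $C_{p,q}$ with Cassels' tangent--Hilbert symbol formula, as in Theorem~\ref{thm:firstpair}, and then read off the Pfaffian and the consequences.

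\emph{Reduction to local symbols.} For a Selmer element $\xi=(d_1,d_2,d_3)$ I write down the three conics of the $2$-cover \eqref{eq:general2cover}, pick rational points on two of them, and take tangent forms $L_2,L_3$. Then for $\eta=(e_1,e_2,e_3)$,
\[
(-1)^{\langle\xi,\eta\rangle}=\prod_v\bigl(L_2(P_v)L_3(P_v),e\bigr)_v ,
\]
with $e$ the appropriate coordinate of $\eta$ and $P_v$ local points on the cover. Only $v\in\{2,5,p,q,\infty\}$ contribute. Since $q\equiv1\pmod{40}$ and $(p/q)=1$, the prime $q$ is a square at $2,5,p,\infty$. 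So any pairing with $U$ or $V$ in the second slot is evaluated at $q$ alone, where it becomes a Legendre symbol of the tangent values.

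\emph{The $\Lambda$ row.} For $\Lambda=(5,1,5)$ I reuse the points of Theorem~\ref{thm:firstpair}, with $p$ replaced by $n=pq$: namely $(1,n-1,n+1)$ and $(1,a,5b)$, where $n=a^2-5b^2$ is as in \eqref{eq:pqnorm}.
\begin{itemize}
\item The entry $\langle\Lambda,\Lambda'\rangle$ repeats the $2$- and $5$-adic computation. It only needs $n\bmod 40$ and $a\bmod5$, and $n\equiv p\pmod{40}$, so it gives $\eps$.
\item For $\langle\Lambda,U\rangle$ and $\langle\Lambda,V\rangle$, I evaluate $L_2L_3$ at a $q$-adic point. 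Reducing modulo $q$, where $n\equiv0$, gives $L_2\equiv u_3-u_1$, a square class. $L_3$ reduces to $a u_1-bu_2$ with $a^2\equiv5b^2$, which is the class of $a$ or of $1$ depending on the coordinate. This should produce $0$ and $\alpha$.
\end{itemize}

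\emph{The $\Lambda'$ row.} For $\Lambda'=(1,5,5)$ the conics involve $u_1^2-5u_2^2=nt^2$-type equations. I will use $(a,b)$ again together with the point from $n+4n=5n$. At $q$, the square root $r$ of $5$ enters. The tangent values should become $2+r$ times $a$, up to squares, which gives $\alpha+\tau$ for $U$ and $\tau$ for $V$. Independence from the choice of $r$ is already noted after \eqref{eq:pqbits}.

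\emph{The $\langle U,V\rangle$ entry.} This is the main obstacle, because no global point is available. The cover for $U=(q,1,q)$ contains the conic $qu_1^2-u_2^2=5pqt^2$. After scaling, this is the norm equation \eqref{eq:pqauxnorm}, $R^2-qS^2=5pT^2$, and Hasse--Minkowski gives a primitive solution because the local conditions hold. The second conic, $u_3^2-u_1^2=20pq$-type with $q$-adic root $w$, supplies the tangent form $\ell_q$ displayed before the theorem. I then show the following.
\begin{itemize}
\item $\ell_q\not\equiv0$ for a suitable sign of $w$, since both sign choices cannot vanish simultaneously unless $T\equiv0$.
\item The symbol $\gamma$ is intrinsic: it is independent of $(R,S,T)$ and $w$, because the Cassels pairing is well defined. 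Changing these choices multiplies the global tangent product by a norm whose local symbols multiply to $1$ by Hilbert reciprocity.
\end{itemize}
The remaining contributions at $2,5,p,\infty$ must be shown trivial, which holds because $q$ is a local square there.

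\emph{Pfaffian and consequences.} The Pfaffian of a $4\times4$ alternating matrix is $c_{12}c_{34}+c_{13}c_{24}+c_{14}c_{23}$. Here this is $\eps\gamma+0+\alpha(\alpha+\tau)$, and since $\alpha^2=\alpha$ over $\F_2$ it equals $\eps\gamma+\alpha(1+\tau)$. When $\eps=0$ this gives \eqref{eq:pqnondeg}. Nondegeneracy and Lemma~\ref{lem:nondegenerate} give rank $0$ and $\Sh[2^\infty]\simeq(\Z/2\Z)^4$, and hence that $10pq$ is not $\theta$-congruent.
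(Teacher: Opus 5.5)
You follow the paper's route: Cassels' tangent--Hilbert formula, localization of every entry whose second argument is $U$ or $V$ at the single place $q$, reuse of Theorem~\ref{thm:firstpair} for $\langle\Lambda,\Lambda'\rangle$, and the Hasse--Minkowski conic \eqref{eq:pqauxnorm} for $\langle U,V\rangle$. However, the formula you write down is correct only when the second argument has first coordinate $1$. In the convention the paper uses, the tangent $L_i$ to the conic of the cover that omits $u_i$ is paired with the $i$th coordinate $e_i$ of the second argument, so $(-1)^{\langle\xi,\eta\rangle}=\prod_v\prod_i(L_i(P_v),e_i)_v$. For $\Lambda'=(1,5,5)$ and $V=(1,q,q)$ this reduces to your $\prod_v(L_2L_3,e)_v$. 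For $U=(q,1,q)$, however, it is $\prod_v(L_1L_3,q)_v$. You propose to evaluate $L_2L_3$ for both $U$ and $V$. That product computes the well defined pairing with $V$, so it returns the same bit both times: $\alpha$ in the $\Lambda$ row and $\tau$ in the $\Lambda'$ row. Carried out as written, your computation therefore gives $\langle\Lambda,U\rangle=\alpha$, $\langle\Lambda',U\rangle=\tau$, and Pfaffian $\eps\gamma+\alpha\tau+\alpha\tau=\eps\gamma$. The values $0$ and $\alpha+\tau$ are asserted, not derived, and ``the class of $a$ or of $1$ depending on the coordinate'' cannot fix this.

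The missing ingredient is a rational point and a tangent $L_1$ on the third conic of each cover. For $\Lambda$ this conic is $5u_3^2-u_2^2=25nt^2$; for $\Lambda'$ it is $u_3^2-u_2^2=5nt^2$, which has the point $(t,u_2,u_3)=(1,(5n-1)/2,(5n+1)/2)$. The paper evaluates at the exact $q$-adic points $(t,u_1,u_2,u_3)=(0,1,r,1)$ for $\Lambda$ and $(0,r,1,1)$ for $\Lambda'$, choosing the root $r$ so that the relevant factor is a unit.

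For $\Lambda$, $L_2L_3=2(a-br)$ gives $\alpha$. Note that $L_2\equiv u_1+u_3$ modulo $q$, not $u_3-u_1$. For $\Lambda'$, put $A_*=5a+10b$ and $B_*=2a+5b$, so that $A_*^2-5B_*^2=5n$. Then $L_1'L_3'\equiv20B_*$ with $[B_*/q]=\alpha+\tau$. The identity $A_*r-5B_*=5(r-2)(a-br)$ together with $(r-2)(r+2)=1$ gives $L_2'L_3'=5(r-2)(a-br)^2$, hence $\tau$.

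If you consult the paper for the $\Lambda,U$ entry, be aware that its $L_1=nt+bu_2-au_3$ is attributed to the point $(1,25b,5a)$. That point lies on $5u_3^2-u_2^2=125nt^2$, not on the cover. A valid choice is the tangent $A_*u_3-B_*u_2-5nt$ at $(1,5B_*,A_*)$. At $(0,1,r,1)$ it equals $r(r-2)(a-br)$, so $L_1L_3=r(r-2)(a-br)^2$ has character $\leg{r}{q}\leg{r-2}{q}=\leg{r}{q}\leg{2+r}{q}=1$ by Lemma~\ref{lem:pqcyclotomic}. The entry is still $0$.

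Finally, for $\langle U,V\rangle$ you should explain why only $\ell_q$ survives. At the $q$-adic point $(T,-R,qS,W)$ the other tangent is $L_3^U=Ru_1-Su_2-5pTt=-2R^2$, which is a square because $q\equiv1\pmod8$. The condition $q\nmid RT$ follows from primitivity.
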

\begin{proof}
We evaluate the six entries by the same tangent--Hilbert symbol formula as in
Theorem~\ref{thm:firstpair}. The required tangent points are explicit; the
only exception is the auxiliary conic \eqref{eq:pqauxnorm}, whose rational
solubility follows from the Hasse principle.

First, the proof of Theorem~\ref{thm:firstpair} applies to
$\langle\Lambda,\Lambda'\rangle$ with $p$ there replaced by
$n=pq$. The norm representation \eqref{eq:pqnorm} supplies its
rational conic points. Both $p$ and $q$ make $5$ a local square,
so their contributions are trivial. The calculation at $2$ is
unchanged, and the calculation at $5$ depends only on $n\bmod5$.
When $n\equiv4\pmod5$, multiply the norm generator by
$9+4\sqrt5$ if necessary to arrange $a\equiv3\pmod5$.
Thus this entry is $\eps$.

For the four entries involving $U$ or $V$ in the second variable, the Hilbert
parameter is $q$. It is a local square at $2,5,p$, and positive at the real
place; at every other good prime the tangent value has even valuation.
Consequently the product is determined entirely by the place $q$. By Hilbert
reciprocity, rational rescaling of a tangent form does not affect the result.

For $\Lambda$, take the following tangent forms on its three conics:
\[
 \begin{aligned}
 L_1&=nt+bu_2-au_3,\\
 L_2&=(n+1)u_3-(n-1)u_1-4nt,\\
 L_3&=au_1-bu_2-nt.
 \end{aligned}
\]
The rational points giving them have, respectively,
\[
 (t,u_2,u_3)=(1,25b,5a),\quad
 (t,u_1,u_3)=(1,n-1,n+1),\quad
 (t,u_1,u_2)=(1,a,5b).
\]
At $q$, choose a square root $r$ of $5$ in $\Q_q$ with
$a-br$ a unit, and take the exact covering point
\[
 (t,u_1,u_2,u_3)=(0,1,r,1).
\]
Modulo $q$, the two possible roots make $a-br$ equal to zero
or $2a$; the latter is a unit since $q\nmid ab$.
The products for $U$ and $V$ are respectively
\[
 L_1L_3=-(a-br)^2,\qquad L_2L_3=2(a-br).
\]
Their quadratic characters are $+1$ and $(a/q)$, so
\[
 \langle\Lambda,U\rangle=0,\qquad
 \langle\Lambda,V\rangle=\alpha.
\]

For $\Lambda'$, put $A_*=5a+10b$ and $B_*=2a+5b$;
then $A_*^2-5B_*^2=5n$. Use
\[
 \begin{aligned}
 L'_1&=(5n-1)u_2-(5n+1)u_3+10nt,\\
 L'_2&=au_3-bu_1-2nt,\\
 L'_3&=A_*u_1-5B_*u_2-5nt.
 \end{aligned}
\]
The corresponding rational conic points are
\[
 \begin{gathered}
 (t,u_2,u_3)=(1,(5n-1)/2,(5n+1)/2),\\
 (t,u_1,u_3)=(1,10b,2a),\qquad
 (t,u_1,u_2)=(1,A_*,B_*).
 \end{gathered}
\]
Evaluate at $(t,u_1,u_2,u_3)=(0,r,1,1)$.
For the $U$ entry choose the root with $A_*r-5B_*$ a unit.
Then $L'_1L'_3\equiv20B_*\pmod q$, giving $(B_*/q)$.
Since $a^2=5b^2\pmod q$,
\[
 \leg{B_*}q=\leg aq\leg{2+r}q.
\]
For the $V$ entry choose $a-br$ a unit. The exact identity
\[
 A_*r-5B_*=5(r-2)(a-br)
\]
gives
\[
 L'_2L'_3=5(r-2)(a-br)^2.
\]
Its character is $\leg{2+r}q$, because $5$ and $-1$ are squares
modulo $q$. Hence
\[
 \langle\Lambda',U\rangle=\alpha+\tau,\qquad
 \langle\Lambda',V\rangle=\tau.
\]
These computations also prove that $\alpha$ is independent of the
norm representation: it is the intrinsic pairing
$\langle\Lambda,V\rangle$.

Finally consider $\langle U,V\rangle$.
Since $U$ is a Selmer class, its conic projection
\eqref{eq:pqauxnorm} is soluble everywhere locally. The
Hasse--Minkowski theorem gives a rational point and hence a
primitive integral triple $R,S,T$. The condition $T\ne0$ follows
from the nonsquareness of $q$, and primitivity forces
$q\nmid RT$: if $q\mid T$, the equation successively forces
$q\mid R$ and $q\mid S$.
On the $U$ covering use the two tangents
\[
 \begin{aligned}
 L^U_2&=(5p+1)u_3-(5p-1)u_1-20pt,\\
 L^U_3&=Ru_1-Su_2-5pTt.
 \end{aligned}
\]
They come from $(t,u_1,u_3)=(1,5p-1,5p+1)$ and
$(t,u_1,u_2)=(T,R,qS)$.
There is a $q$-adic covering point
\[
 (t,u_1,u_2,u_3)=(T,-R,qS,W),\qquad
 W^2=R^2+20pT^2,
\]
since the radicand is $25pT^2\pmod q$, a nonzero square.
Here $L^U_3=-2R^2$, a $q$-adic square.
Choose the sign of $W$ so that $L^U_2$ is a unit. Such a sign
exists: if both reductions vanished, then $q\mid5p+1$ and
$R\equiv2T\pmod q$; together with $R^2\equiv5pT^2$ this would
force $q=5$, a contradiction. The remaining local factor is $[\ell_q/q]=\gamma$. In particular the
construction of $\gamma$ is possible, and its value is independent of the
auxiliary choices because it is the intrinsic pairing $\langle U,V\rangle$.

The displayed matrix now follows by alternation. Its Pfaffian is
$\eps\gamma+\alpha(\alpha+\tau)
=\eps\gamma+\alpha(1+\tau)$.
If the Pfaffian is one, the ordinary pairing is nondegenerate. Its radical is
the image of $\Sel_4$ in the pure $2$-Selmer group, so nondegeneracy forces
rank zero and rules out both elementary divisors of order at least four and a
nonzero divisible $2$-primary subgroup. The pure Selmer group has dimension
four, which gives the asserted $\Sh[2^\infty]$.
The geometric conclusion uses $A_{5pq}\simeq E_{5,3}^{(10pq)}$.
\end{proof}

Fix a positive norm
representation
\begin{equation}\label{eq:pqfixednorm}
 p=A^2-5B^2,\qquad4\mid A,\quad B\text{ odd},\qquad
 \xi_p=A+B\sqrt5.
\end{equation}
It exists by the norm argument preceding \eqref{eq:pqbits}.
For either root $r^2=5$ in $\F_q$, define
\begin{equation}\label{eq:pqfixedbits}
 \beta_p(q)=[(A+Br)/q],\qquad
 t(q)=[5/q]_4,\qquad g_p(q)=[p/q]_4.
\end{equation}
The first symbol is independent of the root since
$(A+Br)(A-Br)=p$ is a nonzero square modulo $q$.

\begin{lemma}\label{lem:pqcyclotomic}
For $q\equiv1\pmod{40}$ and either $r^2=5$ in $\F_q$,
\[
 \leg{5+r}q=1,\qquad
 \leg{2+r}q=\leg rq=\left(\frac5q\right)_4.
\]
\end{lemma}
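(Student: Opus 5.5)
The plan is to reduce all three symbols to the quadratic character of a single element of $\F_q$, namely the image of the golden ratio $\phi=(1+\sqrt5)/2$ under the embedding $\sqrt5\mapsto r$. The input is the congruence $q\equiv1\pmod{40}$, which places square roots of $-1$ and $2$ and the fifth roots of unity in $\F_q$.

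First I would dispose of the equality $\leg rq=\left(\frac5q\right)_4$. By Euler's criterion, $\leg rq=r^{(q-1)/2}=(r^2)^{(q-1)/4}=5^{(q-1)/4}$ in $\F_q$, which is $\left(\frac5q\right)_4$ by the convention in the introduction. This is legitimate because $(5/q)=1$.

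Next, writing $\phi_r=(1+r)/2\in\F_q$, I would use the identities $\phi^3=2+\sqrt5$ and $5+\sqrt5=2\sqrt5\,\phi$ in $\Z[\phi]$. Since $2$ is a square modulo $q$ and $\phi_r^3$ has the character of $\phi_r$, these give
\[
 \leg{2+r}q=\leg{\phi_r}q,\qquad \leg{5+r}q=\leg rq\leg{\phi_r}q.
\]
Hence both remaining assertions follow from the single claim $\leg{\phi_r}q=\leg rq$, i.e.\ that $r/\phi_r$ is a square in $\F_q$.

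The main step, which I expect to be the only nontrivial one, is proving this claim. For it I would use the cyclotomic identity in $\Q(\zeta_5)$:
\[
 (1-\zeta)(1-\zeta^{-1})=2-(\zeta+\zeta^{-1})=2-\phi^{-1}=\frac{\sqrt5}{\phi},
\]
valid for the primitive fifth root $\zeta$ with $\zeta+\zeta^{-1}=\phi^{-1}=(\sqrt5-1)/2$. Since $q\equiv1\pmod5$, I would choose $\zeta\in\F_q$ of order $5$ with $\zeta+\zeta^{-1}=(r-1)/2$. The two roots of $X^2+X-1$ are $(\pm r-1)/2$, and $\zeta\mapsto\zeta^2$ switches between them, so such a $\zeta$ exists for either choice of $r$. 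Then
\[
 \frac r{\phi_r}=(1-\zeta)(1-\zeta^{-1})=-\zeta^{-1}(1-\zeta)^2.
\]
Here $-1$ is a square because $q\equiv1\pmod4$. Also $\zeta^{-1}=(\zeta^2)^2$ is a square because $\zeta$ has odd order. So $r/\phi_r$ is a nonzero square, as claimed.

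A possible worry is that the claim depends on the root. Replacing $r$ by $-r$ sends $\phi_r$ to $-\phi_r^{-1}$, and both sides of the claimed equality are unchanged since $-1$ is a square; in any case the argument above already covers both roots.
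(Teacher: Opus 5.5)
Your proof is correct and uses the same cyclotomic fact as the paper: $\sqrt5\,\phi^{\pm1}$ becomes a square once fifth roots of unity and $\sqrt{-1}$ are available. Your identity $r/\phi_r=-\zeta^{-1}(1-\zeta)^2$ is the finite field counterpart of the paper's $(\phi s)^2=\sqrt5(2+\sqrt5)$ with $s=\zeta_{20}+\zeta_{20}^{-1}$.

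The difference is only in execution. The paper writes explicit square roots $\sqrt2\,s$ and $\phi s$ of $5+\sqrt5$ and $\sqrt5(2+\sqrt5)$ in $\Q(\zeta_{40})$ and reduces them at a prime above $q$, which splits completely there. You stay inside $\F_q$: you first reduce both symbols to $\leg{\phi_r}{q}$ via $\phi^3=2+\sqrt5$ and $5+\sqrt5=2\sqrt5\,\phi$, and you handle the choice of root $r$ explicitly through $\zeta\mapsto\zeta^2$.
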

\begin{proof}
Set $F_0=\Q(\zeta_{40})$, $\phi=(1+\sqrt5)/2$, and
$s=\zeta_{20}+\zeta_{20}^{-1}$. Choose the embedding with
$s^2=(5+\sqrt5)/2$. Direct calculation gives
\[
 (\sqrt2s)^2=5+\sqrt5,\qquad
 (\phi s)^2=5+2\sqrt5=\sqrt5(2+\sqrt5).
\]
The prime $q$ splits completely in $F_0$. Reducing these identities
at a prime with $\sqrt5\mapsto r$ proves the first two character
identities; the reductions are nonzero since $q\ne2,5$.
Finally $r^{(q-1)/2}=5^{(q-1)/4}$.
\end{proof}

\begin{proposition}\label{prop:pqfixedsymbols}
The entries of Theorem~\ref{thm:pqcassels} satisfy
\begin{equation}\label{eq:pqfixedcomparison}
 \tau=t(q),\qquad\gamma=g_p(q),\qquad
 \alpha=\beta_p(q)+t(q).
\end{equation}
In particular the Pfaffian has the fixed symbol expression
\begin{equation}\label{eq:pqfixedpf}
 \Pf(C_{p,q})=\eps g_p(q)+\beta_p(q)(1+t(q)).
\end{equation}
\end{proposition}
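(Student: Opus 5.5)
The plan is to prove the three identities in \eqref{eq:pqfixedcomparison} separately, each by reducing the defining data of Theorem~\ref{thm:pqcassels} modulo a prime of $\Q(\zeta_{40},\sqrt p)$ above $q$. Such a prime has residue field $\F_q$, because $q\equiv1\pmod{40}$ and $(p/q)=1$. The Pfaffian formula \eqref{eq:pqfixedpf} then follows by substituting into \eqref{eq:pqcasselsmatrix}, since $\alpha(1+\tau)=\beta_p(q)(1+t(q))+t(q)(1+t(q))$ and the last product vanishes over $\F_2$.

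\emph{The entry $\tau$.} By definition $\tau=[(2+r)/q]$. Lemma~\ref{lem:pqcyclotomic} gives $\leg{2+r}q=(5/q)_4$, which is exactly $\tau=t(q)$.

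\emph{The entry $\alpha$.}
\begin{enumerate}[label=(\roman*)]
\item Let $\mathfrak q$ be the prime of $\Q(\sqrt5)$ above $q$ with $\sqrt5\mapsto r$, where $r$ is chosen so that $a-br\equiv0\pmod q$. This is possible because $q\mid a^2-5b^2$ and $q\nmid ab$.
\item Then $a+br\equiv2a$, so $\leg aq=\leg{a+br}q$, using that $2$ is a square mod $q$.
\item Write $a+b\sqrt5=\pm\phi^{j}\,\xi_p\,\eta_q$ with $\Norm\eta_q=q$. This uses unique factorization in $\Z[\phi]$ and the primes above $p$, after conjugating $\xi_p$ if necessary; conjugating $\xi_p$ does not change $\beta_p(q)$.
\item Reducing at $\mathfrak q$ gives $\leg{a+br}q=\leg{\phi}q^{j}\leg{A+Br}q\leg{\eta_q}{\mathfrak q}$.
\item Since $2+\sqrt5=\phi^3$, Lemma~\ref{lem:pqcyclotomic} gives $\leg{\phi}q=(5/q)_4$.
\end{enumerate}
The remaining work is to show that the normalizations $4\mid a$, $b$ odd, $4\mid A$, $B$ odd, together with the mod-$2\mathcal O_K$ argument of Lemma~\ref{lem:norm5}, force $j$ odd after absorbing even powers. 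One must also show that the reduction of $\eta_q$ at the conjugate-vanishing prime is a square. For the latter I would use $\eta_q\eta_q'=q$ and choose $\eta_q\equiv1\pmod{2}$, so that its nonzero reduction is $2c$ with $c$ satisfying a reciprocity of the form $\leg cq=\leg qc=1$. I expect the bookkeeping of $j$ to be the delicate part, but it should follow from the congruence of both norm generators to $1$ modulo $2\mathcal O_K$.

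\emph{The entry $\gamma$: the main obstacle.} Modulo $q$, \eqref{eq:pqauxnorm} gives $R\equiv\epsilon\,rsT$ with $s^2\equiv p$ and $\epsilon=\pm1$. The radicand satisfies $w^2\equiv25pT^2$, so $w\equiv\pm5sT$. Substituting gives
\[
 \ell_q\equiv sT\bigl(\pm5(5p+1)+\epsilon r(5p-1)-20s\bigr)\pmod q.
\]
Since $\leg sq=(p/q)_4$, the claim $\gamma=g_p(q)$ reduces to two facts: the bracket is a square for the admissible sign, and $\leg Tq=1$. For the bracket, I would try to exhibit it, with $p=s^2$ and $5=r^2$, as a square in $\Q(\zeta_{40},\sqrt p)$ times a unit whose character is given by Lemma~\ref{lem:pqcyclotomic}. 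A natural first guess is a multiple of $(r\pm s)^2$ or $(rs\pm1)^2$ times $r$, corrected by $(2+r)$. For $T$, I would use Hilbert reciprocity on the primitive solution of $R^2-qS^2=5pT^2$. At each odd prime $\ell\mid T$, $q$ is a square mod $\ell$, so $\leg q{|T|}=1$. Quadratic reciprocity with $q\equiv1\pmod8$ then gives $\leg Tq=1$. The freedom in the choice of $(R,S,T)$ is harmless because $\gamma$ is already known to be intrinsic.
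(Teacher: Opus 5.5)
Your handling of $\tau$, the Pfaffian substitution, and the proof that $\leg Tq=1$ are fine. The $\alpha$ step, however, cannot be completed as planned. Both facts you propose to establish are false, and their errors happen to cancel.

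First, $j$ is never odd. Taking norms in $a+b\sqrt5=\pm\phi^j\xi_p\eta_q$ gives $pq=(-1)^jpq$, because $\Norm(\phi)=-1$ and the other norms are positive. So $j$ is even (in fact $6\mid j$ once all three generators are $\equiv1\pmod{2\mathcal O_K}$), and the unit contributes nothing.

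Second, take $\eta_q=c\pm d\sqrt5$ with $c>0$ odd and $4\mid d$. Its nonzero reduction at $\mathfrak q$ is indeed $2c$, but $\leg cq$ is not $1$ in general. One has $\leg cq=\leg qc=\leg{-5}c$, and for example $41=11^2-5\cdot4^2$ gives $\leg{11}{41}=-1$. The correct value is $\leg cq=\left(\frac5q\right)_4$. Indeed, each odd prime $\ell\mid d$ has $q\equiv c^2\pmod\ell$, so $\leg dq=1$ by reciprocity, and $c/d$ is a square root of $5$ modulo $q$. Thus $t(q)$ enters through $\eta_q$, not through $\phi^j$.

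With these two corrections your factorization argument does yield $\alpha=\beta_p(q)+t(q)$. The paper avoids the unit bookkeeping entirely. Since $\alpha=\langle\Lambda,V\rangle$ is intrinsic, it simply takes the admissible choice $a+b\sqrt5=\xi_p(c+d\sqrt5)$ and reads off $a\equiv c(A+Br_0)\pmod q$ with $r_0=c/d$.

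For $\gamma$, your reduction to ``the bracket is a square'' is correct, but that is the entire content of the claim, and you only offer a guess at its shape. The identity that settles it is as follows. In $\F_q$ put $b_0=R/T$ and $c_0=w/T$, so that $b_0^2=5p$ and $c_0^2=25p$, and let $z=b_0+c_0$. Using $z(c_0-b_0)=20p$ one checks
\[
 \frac{\ell_q}{T}=(5p+1)c_0+(5p-1)b_0-20p=\frac{5p(z-2)^2}{z}.
\]
Write $u=c_0/5$ and $r_0=b_0/u$, so that $u^2=p$ and $r_0^2=5$. Then $z=u(5+r_0)$, and Lemma~\ref{lem:pqcyclotomic} gives $\leg{5+r_0}q=1$. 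Together with $\leg Tq=1$ this yields $\leg{\ell_q}q=\leg uq=\left(\frac pq\right)_4$. In your notation, the bracket equals $5(z-2)^2/(\pm5+\epsilon r)$. This is a square whenever it is nonzero, because $\leg{5\pm r}q=1$ and $-1$ is a square modulo $q$.
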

\begin{proof}
The assertion about $\tau$ is Lemma~\ref{lem:pqcyclotomic}.
For $\gamma$, take the primitive triple $R,S,T$ used in
\eqref{eq:pqauxnorm}. We begin by showing $(T/q)=1$.
If an odd prime $\ell$ divides $T$, primitivity implies
$\ell\nmid RS$; reducing the norm equation gives
$q\equiv(R/S)^2\pmod\ell$. Also $q\nmid T$.
Quadratic reciprocity and $q\equiv1\pmod4$ give $(\ell/q)=1$.
The factors $-1$ and $2$ have character one modulo $q$ as well,
so $(T/q)=1$.

In $\F_q$, set $b_0=R/T$ and $c_0=w/T$. Then
$b_0^2=5p$ and $c_0^2=25p$. Put
$u=c_0/5$ and $r_0=b_0/u$, so $u^2=p$ and $r_0^2=5$.
The nonzero element $z=c_0+b_0=u(5+r_0)$ satisfies
\[
 \frac{\ell_q}{T}
 =(5p+1)c_0+(5p-1)b_0-20p
 =5p\frac{(z-2)^2}{z}.
\]
Here $z-2\ne0$ because $\ell_q$ was chosen to be a unit.
Taking characters and applying Lemma~\ref{lem:pqcyclotomic} gives
\[
 \leg{\ell_q}q=\leg zq=\leg uq
 =\left(\frac pq\right)_4.
\]
Hence $\gamma$ is given by the stated quartic symbol.

Choose a positive representation $q=c^2-5d^2$ with $c$ odd
and $4\mid d$. Class number one in $\Q(\sqrt5)$ gives an
integral generator of norm $q$; a norm one unit moves it into
$\Z[\sqrt5]$, after which reduction modulo eight gives these
parities. Conjugation and positive norm one units arrange $c,d>0$.
For each odd prime $\ell\mid d$, the identity
$q\equiv c^2\pmod\ell$ and reciprocity give $(\ell/q)=1$;
thus $(d/q)=1$. In particular, for $r_0=c/d$ modulo $q$,
\[
 \leg cq=\leg{r_0}q=\left(\frac5q\right)_4.
\]
Multiplying $\xi_p(c+d\sqrt5)$ gives a valid choice in
\eqref{eq:pqnorm}, namely
$a=Ac+5Bd$ and $b=Ad+Bc$. Modulo $q$,
\[
 a=c(A+Br_0).
\]
Its character is the product defining $\beta_p(q)+t(q)$.
The choice independence proved in Theorem~\ref{thm:pqcassels}
allows this choice of $a,b$. Finally
$(\beta+t)(1+t)=\beta(1+t)$ in $\F_2$, proving
\eqref{eq:pqfixedpf}.
\end{proof}

Return to $n=pq$ satisfying \eqref{eq:pqfamily}. Keep
$\eps,\alpha,\tau,\gamma$ from \eqref{eq:pqbits},
and set
\[
 \upsilon=[2/q]_4.
\]
The quartic symbol is defined since $q\equiv1\pmod8$.
Denote the ordinary Cassels matrices for $E_1(n),E_{-1}(n),E_2(n)$
by $C_+,C_-,C_2$, in the bases of Corollary~\ref{cor:fourdimensions}.

\subsection{The remaining twists}\label{subsec:remainingtwists}

\begin{theorem}\label{thm:threecassels}
The first matrix is $C_+=C_{p,q}$ of \eqref{eq:pqcasselsmatrix},
and the other two are
\begin{equation}\label{eq:threecassels}
 C_-=
 \begin{pmatrix}
 0&1+\eps&0&\alpha+\tau\\
 1+\eps&0&\alpha&\tau\\
 0&\alpha&0&\gamma\\
 \alpha+\tau&\tau&\gamma&0
 \end{pmatrix},\qquad
 C_2=\begin{pmatrix}0&\gamma+\upsilon\\
 \gamma+\upsilon&0\end{pmatrix}.
\end{equation}
Consequently, if $P_+=\Pf(C_+)$, $P_-=\Pf(C_-)$, and
$P_2=\Pf(C_2)$, then
\begin{equation}\label{eq:threepf}
 P_- = P_+ +\gamma,\qquad P_2=\gamma+\upsilon,
 \qquad P_+=\eps\gamma+\alpha(1+\tau).
\end{equation}
On the odd dimensional space for $E_{-2}(n)$ the pairing of $U,V$
is also $\gamma+\upsilon$. Thus $P_2=1$ forces its Cassels
matrix to have rank two and
\[
 \rank E_{-2}(n)(\Q)\le1.
\]
\end{theorem}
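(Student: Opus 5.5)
The plan is to reuse the tangent--Hilbert symbol machinery of Theorems~\ref{thm:firstpair} and~\ref{thm:pqcassels} for each twist $E_\delta(n)=A_{5\delta n}$, with $\delta=-1,2,-2$, in the bases of Corollary~\ref{cor:fourdimensions}. The statement $C_+=C_{p,q}$ is just Theorem~\ref{thm:pqcassels}, so the work is in $C_-$, $C_2$ and the $(U,V)$ entry for $E_{-2}$.

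\textbf{The twist $\delta=-1$.} The conics for $\Lambda,\Lambda'$ on $A_{-5n}$ are obtained from those on $A_{5n}$ by $t^2\mapsto -t^2$. I would transport the rational points as follows.
\begin{itemize}
\item Use the norm representation $-n$ in the shape $(5b)^2-5a^2$, or equivalently after swapping roles, so that it supplies the tangent points.
\item Recompute the local symbols at $2$, $5$ and $\infty$ for the entry $\langle\Lambda,\Lambda'\rangle$.
\item Two things change here: the real place now contributes, since $t^2$ has the opposite sign, and the $5$-adic table changes because $-n\bmod5$ swaps the residues $1$ and $4$. These two effects together should produce the shift $\eps\mapsto1+\eps$.
\end{itemize}

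For the entries involving $U$ and $V$, the Hilbert parameter is still $q$. Since $-1$ is a square at $q$, the $q$-adic covering points $(0,1,r,1)$ and $(0,r,1,1)$ still work. The only change is the sign in the rational points, which I expect to move $(a-br)$ against $(a+br)$ in $L_1L_3$ and $L_2L_3$. Using $(a+br)(a-br)\equiv0$ and $a\equiv br$ together with $\leg{2+r}q=\leg rq$ from Lemma~\ref{lem:pqcyclotomic}, I expect the $\alpha$ and $\alpha+\tau$ entries to swap roles, giving the stated pattern. For $\langle U,V\rangle$ the auxiliary conic becomes $R^2-qS^2=-5pT^2$. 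Repeating the computation in Proposition~\ref{prop:pqfixedsymbols}, the factor $-1$ is a square modulo $q$, and $\leg{-1}q_4=\leg 2q$ is trivial for $q\equiv1\pmod8$, so $\gamma=g_p(q)$ persists.

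\textbf{The twists $\delta=\pm2$.} The $U$ and $V$ conics acquire the factor $2$: the auxiliary norm equation is $R^2-qS^2=\pm10pT^2$. In the analogue of the identity $\ell_q/T=5p(z-2)^2/z$, the element $z$ becomes $u\sqrt2(5+r_0)$ up to squares. Its character is then $\leg pq_4\leg2q_4$, which gives $\gamma+\upsilon$. The element $\sqrt{-2}$ has the same quartic character as $\sqrt2$ because $\leg{-1}q_4=1$, so the same value appears for $E_{-2}$. $C_2$ is $2\times2$ with basis $U,V$, so it is determined by this one entry.

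\textbf{Consequences.} The Pfaffian identities in \eqref{eq:threepf} follow by direct expansion, using $\alpha(\alpha+\tau)=\alpha(1+\tau)$. For $E_{-2}$, if $P_2=1$ then the $U,V$ block is nondegenerate, so the rank of the Cassels matrix is at least two. An alternating matrix of odd dimension $3$ has rank at most two, so the rank is exactly two and the radical has dimension $1$. Proposition~\ref{prop:sel4structure} then gives $\rank\le1$.

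\textbf{Main obstacle.} The hardest step is the $2$-adic and real bookkeeping for $\langle\Lambda,\Lambda'\rangle$ on $A_{-5n}$, where the new dyadic covering point must be found. The analogue of $x=5/4$ should still work, because $5+20n\equiv1\pmod8$. I would check that the sign flip in $L_2$ does not alter the dyadic factor, so that the whole $\eps\mapsto1+\eps$ change comes from the places $5$ and $\infty$.
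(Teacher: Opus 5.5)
The gap is in the $\Lambda,\Lambda'$ rows of $C_-$. Your $U,V$ computations for all three twists (they amount to $[\delta p/q]_4$), the Pfaffian expansions and the rank argument for $E_{-2}$ are fine and agree with the paper.

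\textbf{The norm representation and the cross entries.} First, $(5b)^2-5a^2=-5n$, not $-n$. A genuine representation of $-n$ is needed, for instance to put a rational point on the $\Lambda$-conic $5u_1^2-u_2^2=-5nt^2$ that carries $L_3$. The missing idea is the unit $2+\sqrt5$ of norm $-1$. It gives $-n=a_-^2-5b_-^2$ with $a_-=2a+5b$ odd and $b_-=a+2b\equiv2\pmod4$.

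The cross-entry calculations of Theorem~\ref{thm:pqcassels} are algebraic identities in the norm parameter, so you can substitute $(a_-,b_-)$. For the root $r=a/b$ one has $a_-\equiv a(2+r)\pmod q$, hence $[a_-/q]=\alpha+\tau$. This gives $\langle\Lambda,U\rangle=0$, $\langle\Lambda,V\rangle=\alpha+\tau$, $\langle\Lambda',U\rangle=\alpha$ and $\langle\Lambda',V\rangle=\tau$. That substitution is what exchanges $\alpha$ and $\alpha+\tau$.

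The mechanism you describe cannot do this. A sign change in the rational points only changes which of $a\pm br$ vanishes at $q$. The surviving value is still $2a$, with character $\leg{a}{q}$, so no factor $\leg{2+r}{q}$ appears.

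\textbf{The entry $\langle\Lambda,\Lambda'\rangle$.} Second, your local accounting here gives the wrong answer. The Hilbert parameter is $5>0$, so the real place contributes $1$ whatever the sign of $t^2$. If it flipped as you predict, it would cancel the $5$-adic flip, and you would get $\eps$ rather than $1+\eps$.

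The correct bookkeeping uses $s=-n$, $L_2=(s+1)u_3-(s-1)u_1-4st$ and $L_3=a_-u_1-b_-u_2-st$.
\begin{itemize}
\item At $2$, take $t=2$, $u_1=1$, $u_2^2=5-20s$, $u_3^2=1+16s$. This point still has $\ord_2 L_2=1$ and $L_3$ odd. You must recheck this, because the parities of $a_-,b_-$ are the reverse of those of $a,b$. The dyadic factor is therefore still $-1$.
\item At $5$, take $(t,u_1,u_2,u_3)=(1,2r,5r,0)$ with $r^2=-s$ and $r\equiv2a_-$. When $s\equiv4\pmod5$, first normalize $a_-\equiv3\pmod5$ by multiplying by $9+4\sqrt5$. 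This point gives the table of Theorem~\ref{thm:firstpair}, indexed by $s\bmod5$.
\end{itemize}
Since $-n\bmod5$ interchanges $1$ and $4$, the $5$-adic factor alone flips, and that by itself yields $1+\eps$.
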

\begin{proof}
First consider the entry between $U=(q,1,q)$ and $V=(1,q,q)$
for any of the four twists. Put $h=5\delta p$. The conic
projection of the locally soluble $U$ covering has a primitive
integral solution
\[
 R^2-qS^2=hT^2,\qquad T\ne0,\qquad q\nmid RT.
\]
Use the tangent forms
\[
 L_2=(h+1)u_3-(h-1)u_1-4ht,\qquad
 L_3=Ru_1-Su_2-hTt.
\]
At $q$, take $(t,u_1,u_2,u_3)=(T,-R,qS,w)$, where
$w^2=R^2+4hT^2$ in $\Q_q$. This is possible because the
radicand is the nonzero square $5hT^2$ modulo $q$.
Then $L_3=-2R^2$ is a square, and the sign of $w$ can be
chosen so that $\ell=(h+1)w+(h-1)R-4hT$ is a unit.
Indeed if both signs vanished, then $h=-1$ and $R=2T$
modulo $q$, contradicting $R^2=hT^2$ since $q\ne5$.
As before, only $q$ contributes: the Hilbert parameter $q$ is a
square at $2,5,p$, is positive at infinity, and the primitive
tangents give trivial symbols at the remaining good places.

The norm equation and quadratic reciprocity give $(T/q)=1$,
exactly as in Proposition~\ref{prop:pqfixedsymbols}.
In $\F_q$, put $b=R/T$, $c=w/T$, and $z=b+c$.
Then $b^2=h$, $c^2=5h$, and
\[
 \ell/T=h(z-2)^2/z,\qquad
 z=b(1+r),\quad r=c/b,\quad r^2=5.
\]
Since $(h/q)=1$, the character of $\ell$ is that of $z$.
Now $\phi=(1+r)/2$ satisfies $\phi^3=2+r$, so
$\leg{1+r}q=\leg{2+r}q$:
the factor $2$ is a square. Thus, in additive notation,
\[
 \langle U,V\rangle_{E_\delta(n)}
 =[h/q]_4+\tau=[\delta p/q]_4.
\]
Here $[5/q]_4=\tau$ cancels, by
Lemma~\ref{lem:pqcyclotomic}. Also $[-1/q]_4=0$ because
$q\equiv1\pmod8$. This is the $U,V$ entry for all four twists and, in particular, gives the
formula for $C_2$.

For the remaining entries of $C_-$, let $n=a^2-5b^2$ be the
positive representation used before. Multiplication by $2+\sqrt5$
gives
\[
 -n=a_-^2-5b_-^2,\qquad
 a_-=2a+5b\text{ odd},\quad b_-=a+2b\equiv2\pmod4.
\]
At $q$, $[a_-/q]=\alpha+\tau$, because
$a_-=a(2+r)$ for the root $r=a/b$ modulo $q$.
All four cross entry calculations in the proof of
Theorem~\ref{thm:pqcassels} are algebraic identities in the
norm parameter. Replacing it by $-n$ gives, respectively,
\[
 \langle\Lambda,U\rangle=0,\quad
 \langle\Lambda,V\rangle=\alpha+\tau,\quad
 \langle\Lambda',U\rangle=\alpha,\quad
 \langle\Lambda',V\rangle=\tau.
\]
The same $q$-adic points with $t=0$ are valid, and all other
local symbols in these four entries are trivial for the same reasons.

It remains to check $\langle\Lambda,\Lambda'\rangle$;
this entry depends on the dyadic parity. Put $s=-n$. Use the same
tangents $L_2=(s+1)u_3-(s-1)u_1-4st$ and
$L_3=a_-u_1-b_-u_2-st$. At $2$ the point with
$t=2,u_1=1$, $u_2^2=5-20s$, $u_3^2=1+16s$ gives
$\ord_2(L_2)=1$ and $L_3$ odd; the local symbol with
parameter $5$ is $-1$. At $5$ take
$(t,u_1,u_2,u_3)=(1,2r,5r,0)$, $r^2=-s$,
$r\equiv2a_-\pmod5$. If $s\equiv4\pmod5$, first arrange
$a_-\equiv3\pmod5$ by multiplying the norm generator by
$9+4\sqrt5$ if necessary. The two tangent reductions are
\[
 \begin{array}{c|ccc}
 s\bmod5&L_2\bmod5&L_3\bmod5&(L_2L_3,5)_5\\ \hline
 1&1&3&-1\\4&3&2&1.
 \end{array}
\]
All other places contribute one. Since $s=-pq$, this gives
$1+\eps$. The displayed matrix and its Pfaffian follow.

Finally a nonzero pairing between $U,V$ on a three dimensional
alternating space forces rank two. The Mordell--Weil group
modulo torsion maps into its one dimensional radical, giving
the stated rank bound.
\end{proof}

Write $\xi_5=\xi_p=A+B\sqrt5$ for the fixed norm generator
in \eqref{eq:pqfixednorm}. Choose
\begin{equation}\label{eq:fourimaginarynorms}
 p=A_d^2+dB_d^2,\qquad \xi_d=A_d+B_d\sqrt{-d}
 \quad(d=2,10).
\end{equation}
For roots $r_d^2=-d$ modulo $q$, put
\[
 \rho_d=\rho_{d,p}(q)=[(A_d+B_dr_d)/q],\qquad \rho=\rho_2.
\]
Here, as before, a bracket denotes the additive quadratic symbol.

\begin{lemma}\label{lem:fourimaginarynorms}
The representations \eqref{eq:fourimaginarynorms} exist, as do
$q=C_d^2+dD_d^2$ for $d=2,10$. The symbols $\rho_d$ do not
depend on the choices of roots or conjugate norm generators.
If
\[
 a_d+b_d\sqrt{-d}
 =(A_d+B_d\sqrt{-d})(C_d+D_d\sqrt{-d}),
\]
then $n=a_d^2+db_d^2$ and
\begin{equation}\label{eq:fourkappas}
 [a_2/q]=\rho+\upsilon,\qquad
 [a_{10}/q]=\rho_{10}+\upsilon+\tau.
\end{equation}
\end{lemma}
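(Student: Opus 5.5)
The plan is to handle the four assertions (existence, independence, the product identity, and the two character formulas) in turn, reducing the formulas to a quartic-residue computation of the same kind as Lemma~\ref{lem:pqcyclotomic} and Proposition~\ref{prop:pqfixedsymbols}.

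\emph{Existence.} For $d=2$, the form $x^2+2y^2$ is the only reduced form of discriminant $-8$. Since $p\equiv3\pmod8$ and $q\equiv1\pmod8$, both satisfy $(-2/\ell)=1$, so both are represented, by the ideal-norm count behind Lemma~\ref{lem:binary}. For $d=10$, the two reduced forms $x^2+10y^2$ and $2x^2+5y^2$ of discriminant $-40$ lie in different genera. Both $p$ and $q$ satisfy $(-10/\ell)=1$ and $(\ell/5)=1$. As in Lemma~\ref{lem:binary}, $2x^2+5y^2$ cannot represent an integer that is a nonzero square modulo~$5$, so the principal form represents $p$ and $q$.

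\emph{Independence of $\rho_d$.} Replacing $r_d$ by $-r_d$ multiplies $A_d+B_dr_d$ by $A_d-B_dr_d$. The product of the two values is $p\bmod q$, a nonzero square because $(p/q)=1$; hence the character is unchanged. Replacing $\xi_d$ by its conjugate amounts to the same change of root. A sign change of $\xi_d$ is harmless since $(-1/q)=1$, and the units of $\Z[\sqrt{-d}]$ are only $\pm1$. I also need $A_d+B_dr_d\not\equiv0$. This holds because $q\nmid p$ and the two values multiply to $p$.

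\emph{Product and formulas.} The norm identity $n=a_d^2+db_d^2$ is just multiplicativity of the norm. For the characters, write $a_d=A_dC_d-dB_dD_d$. Reduce modulo $q$ and choose the root $r_d=C_d/D_d$; this is legitimate since $q\nmid D_d$, and $r_d^2=C_d^2/D_d^2\equiv-d$. Using $-d=r_d^2$ gives
\[
 a_d\equiv C_dA_d+r_d^2B_dD_d=C_d\bigl(A_d+B_dr_d\bigr)\pmod q,
\]
so $[a_d/q]=\rho_d+[C_d/q]$, the first term by the independence already shown. Next, $C_d=D_dr_d$ modulo $q$. I expect $(D_d/q)=1$, by the argument of Proposition~\ref{prop:pqfixedsymbols}: for each odd prime $\ell\mid D_d$, the congruence $q\equiv C_d^2\pmod\ell$ together with reciprocity ($q\equiv1\pmod4$) gives $(\ell/q)=1$, while $(2/q)=(-1/q)=1$. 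Hence $(C_d/q)=(r_d/q)\equiv r_d^{(q-1)/2}=(-d)^{(q-1)/4}$. Since $q\equiv1\pmod8$, $(-1)^{(q-1)/4}=1$. For $d=2$ this gives $(2/q)_4$, i.e.\ $\upsilon$. For $d=10$ it gives $(2/q)_4(5/q)_4$, i.e.\ $\upsilon+\tau$, using $t(q)=\tau$ from Lemma~\ref{lem:pqcyclotomic}. Both quartic symbols are defined because $q\equiv1\pmod{40}$.

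The main obstacle I expect is the bookkeeping in the last step: making the root choice $r_d=C_d/D_d$ consistent with the already-proved root independence of $\rho_d$, and checking that $q\nmid C_dD_d$, which follows from $0<C_d^2,\,dD_d^2<q$.
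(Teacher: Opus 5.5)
Your proof is correct and follows essentially the same route as the paper: existence from the reduced forms of discriminants $-8$ and $-40$; independence of $\rho_d$ because $(A_d+B_dr_d)(A_d-B_dr_d)=p$ is a nonzero square modulo $q$ and the only units are $\pm1$; and the character formula from $a_d\equiv C_d(A_d+B_dr_d)$ with $r_d=C_d/D_d$, where reciprocity gives $(D_d/q)=1$ and hence $(C_d/q)=(r_d/q)=(d/q)_4$. The only difference is cosmetic: you exclude $2x^2+5y^2$ by the genus character modulo $5$, whereas the paper uses that its odd values are $5$ or $7$ modulo $8$; both arguments work.
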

\begin{proof}
Both $p$ and $q$ split in $\Q(\sqrt{-2})$ and
$\Q(\sqrt{-10})$. The reduced primitive positive forms of
discriminant $-8$ consist only of $x^2+2y^2$; those of
discriminant $-40$ are $x^2+10y^2$ and $2x^2+5y^2$.
These lists follow at once from the reduction bound
$a\le\sqrt{|D|/3}$; the standard prime representation
correspondence is recalled in \cite{Cox}. An odd value of
$2x^2+5y^2$ is $5$ or $7$ modulo $8$, whereas $p\equiv3$
and $q\equiv1\pmod8$. Thus the required norm representations exist in all
four cases.
The representations are primitive. In particular $C_d$ is odd,
$D_d$ is even, and $q\nmid C_dD_d$.

Changing a root replaces $\xi_d$ by its conjugate, whose product
with $\xi_d$ is $p$. Since $(p/q)=1$, its character is unchanged.
The only units in these two imaginary quadratic fields are
$\pm1$, also squares modulo $q$.

If an odd prime $\ell$ divides $D_d$, then
$q\equiv C_d^2\pmod\ell$ and quadratic reciprocity gives
$(\ell/q)=1$. Also $(2/q)=(-1/q)=1$, so $(D_d/q)=1$.
Set $r=C_d/D_d$ in $\F_q$; then $r^2=-d$ and
\[
 [C_d/q]=[-d/q]_4=[d/q]_4,\qquad
 a_d\equiv C_d(A_d+B_dr)\pmod q.
\]
Thus $[a_d/q]=\rho_d+[d/q]_4$. For $d=2,10$, this is
\eqref{eq:fourkappas}, since $[5/q]_4=\tau$.
\end{proof}

The three norm characters satisfy the relation below.

\begin{lemma}\label{lem:threeNormRelation}
For every eligible $q$,
\begin{equation}\label{eq:threeNormRelation}
 \rho+\rho_{10}+\beta_p(q)=0.
\end{equation}
More intrinsically, with coherent square roots
$\sqrt{-10}=\sqrt5\sqrt{-2}$, the product
$\xi_5\xi_2\xi_{10}$ is a square in $B_p=\Q(\zeta_{40},\sqrt p)$.
\end{lemma}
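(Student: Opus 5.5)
The plan is to prove the intrinsic statement first and then obtain \eqref{eq:threeNormRelation} by reduction modulo a prime of $B_p$ above $q$. Put $F=\Q(\sqrt5,\sqrt{-2})$, with $\sqrt{-10}=\sqrt5\sqrt{-2}$, and let $K_5,K_2,K_{10}$ denote its three quadratic subfields.

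Since $p\equiv3\pmod8$ and $(5/p)=1$, the symbols $(5/p)$, $(-2/p)$ and $(-10/p)$ are all $1$, so $p$ splits completely in $F$. Fix a prime $\mathfrak P$ of $F$ above $p$. Conjugating each $\xi_d$ if necessary, I arrange that $\xi_d\mathcal O_{K_d}=\mathfrak P\cap K_d$ for $d=5,2,10$. Let $\sigma,\tau,\sigma\tau$ be the nontrivial automorphisms fixing $\sqrt5,\sqrt{-2},\sqrt{-10}$ respectively. Then
\[
 (\xi_5\xi_2\xi_{10})=\mathfrak P^{1+\sigma}\mathfrak P^{1+\tau}\mathfrak P^{1+\sigma\tau}
 =\mathfrak P^{2}\cdot p\mathcal O_F .
\]
Consequently $\gamma=\xi_5\xi_2\xi_{10}/p$ has ideal $\mathfrak P^2$. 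Since $p=(\sqrt p)^2$ in $B_p$, it suffices to show that $\gamma$ is a square in $B_p$.

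The main obstacle is that an element whose ideal is a square need not itself be a square: two defects have to be removed.

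\textbf{Class group defect.} Write $\mathfrak P^2=(\pi)$ if $\mathfrak P^2$ is principal. The class number of $F$ equals $Q\,h(5)h(-2)h(-10)/2=Q$, and I expect it to be $1$. If $\mathfrak P$ is principal, say $\mathfrak P=(\pi_0)$, then $\gamma=\eta\pi_0^2$ with $\eta\in\mathcal O_F^\times$. If instead $\mathfrak P$ is nonprincipal, I use that $\mathfrak P$ ramifies in $F(\sqrt p)$: writing $\mathfrak P=\mathfrak Q^2$ there, I would try to show that $\mathfrak Q$ becomes principal in $B_p$, which should then yield a square root of $\gamma$ up to a unit.

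\textbf{Unit defect.} The unit group of $F$ is generated by $\pm1$ and a fundamental unit that is either $\phi$ or a square root of $\pm\phi^{\pm1}$ times a root of unity. Since $\zeta_{40}\in B_p$, the element $-1$ is a square there. Moreover $\phi=\zeta_{10}+\zeta_{10}^{-1}+1$-type expressions combine with $\sqrt{2+\sqrt5}$ (compare the proof of Lemma~\ref{lem:pqcyclotomic}, where $(\phi s)^2=\sqrt5(2+\sqrt5)$ and $(\sqrt2 s)^2=5+\sqrt5$) to decide which units are squares in $\Q(\zeta_{40})$. Units that are not squares are then detected dyadically. The normalizations matter here: $4\mid A$, $B$ odd, $A_d$ odd, $B_d$ of fixed parity. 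Concretely, I would compute $\xi_5\xi_2\xi_{10}$ modulo a suitable power of $2\mathcal O_F$, expand using $p=A^2-5B^2=A_2^2+2B_2^2=A_{10}^2+10B_{10}^2$, and check that the resulting $\eta$ is a square locally at the dyadic prime. If a residual unit such as $\phi$ or $2+\sqrt5$ survives, I adjust it by the sign and conjugation freedom in the $\xi_d$; this adjustment does not change the characters, since $(p/q)=1$ and units are squares modulo $q$ (Lemma~\ref{lem:pqcyclotomic}).

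\textbf{Reduction at $q$.} For an eligible $q$, the conditions $q\equiv1\pmod{40}$ and $(p/q)=1$ imply that $q$ splits completely in $B_p$. Choose a prime above $q$ with $\sqrt5\mapsto r$, $\sqrt{-2}\mapsto r_2$ and $\sqrt{-10}\mapsto r_{10}=rr_2$. These choices are coherent, and by the choice independence already shown they are permitted. Reducing $\xi_5\xi_2\xi_{10}$ gives
\[
 (A+Br)(A_2+B_2r_2)(A_{10}+B_{10}r_{10}),
\]
which is then a nonzero square in $\F_q$. Taking additive characters yields $\beta_p(q)+\rho+\rho_{10}=0$, which is \eqref{eq:threeNormRelation}.
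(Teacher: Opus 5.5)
Your reduction is sound up to a point. Since $(\xi_5\xi_2\xi_{10})=\mathfrak P^2\cdot p\mathcal O_F$, everything hinges on the squareclass of $\gamma=\xi_5\xi_2\xi_{10}/p$.

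You only assert that the class number is $1$, but this is easily closed. The unit index of $F=\Q(\sqrt5,\sqrt{-2})$ is $1$: a unit $y\sqrt{-2}$ with $y\in\Q(\sqrt5)$ would make $2y^2$ a unit, which is impossible because $2$ is inert. Hence $h(F)=1$ and $\mathcal O_F^\times=\{\pm\phi^j\}$. Then $\gamma=\varepsilon\pi_0^2$ with $\varepsilon\in\{\pm1,\pm\phi\}$. Now $-1$ is a square in $B_p$, while $\pm\phi$ is not: the splitting field of $X^4-X^2-1$ is dihedral and $B_p/\Q$ is abelian. So the lemma is \emph{equivalent} to showing $\varepsilon\neq\pm\phi$.

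That is the genuine gap.
\begin{itemize}
\item You defer it to a dyadic computation that you do not carry out. If that computation is done in $F\otimes\Q_2$, it tests the wrong condition, since $-1$ is not a square there.
\item Your fallback is invalid. Sign changes, conjugation of a $\xi_d$, and the remaining freedom in $\xi_5$ multiply the product by $-1$, by $p/\xi_d^2$, or by an even power of $\phi$. All of these are squares in $B_p$, so the squareclass is intrinsic and nothing can ``adjust away'' a surviving $\phi$.
\item Units are not squares modulo $q$. The unit $\phi$ reduces to $(1+r)/2$, whose character is $\leg{2+r}{q}=\left(\frac5q\right)_4$ by Lemma~\ref{lem:pqcyclotomic}. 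A surviving $\phi$ would therefore give $\rho+\rho_{10}+\beta_p(q)=\tau$, not $0$.
\end{itemize}

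The missing idea is the Galois action on the product. Let $\sigma_5$ negate $\sqrt5$ and fix $\sqrt{-2}$, so it negates $\sqrt{-10}$. Put $T=\xi_5\xi_2\xi_{10}$. Because each $\xi_d$ has norm $p$, one gets $\sigma_5(T)/T=(p/\xi_5\xi_{10})^2$. In your framework this forces $\sigma_5(\varepsilon)/\varepsilon\in F^{\times2}$. But $\sigma_5(\phi)/\phi=-\phi^{-2}$ and $i\notin F$, so $\varepsilon=\pm1$, and your argument closes.

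The paper uses this identity together with $\sigma_2(T)/T=(p/\xi_2\xi_{10})^2$, and avoids class numbers and units altogether. If $T$ is not a square in $F$, the lifts $\sqrt T\mapsto p\sqrt T/(\xi_5\xi_{10})$ and $\sqrt T\mapsto p\sqrt T/(\xi_2\xi_{10})$ have order two and commute. Hence $F(\sqrt T)/\Q$ is elementary abelian of degree $8$. It is unramified outside $2,5,p$, so it lies in $\Q(i,\sqrt2,\sqrt5,\sqrt p)\subset B_p$. Your reduction at a prime of $B_p$ above $q$ then finishes exactly as in the paper.
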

\begin{proof}
Put $F=\Q(\sqrt5,\sqrt{-2})$ and $T=\xi_5\xi_2\xi_{10}$.
Let $\sigma_5$ change the sign of $\sqrt5$ and let
$\sigma_2$ change the sign of $\sqrt{-2}$. Each fixes the
other displayed square root. All three norm generators have
norm $p$, whence
\[
 \frac{\sigma_5(T)}T=\left(\frac p{\xi_5\xi_{10}}\right)^2,
 \qquad
 \frac{\sigma_2(T)}T=\left(\frac p{\xi_2\xi_{10}}\right)^2.
\]
If $T$ is already a square in $F$ there is nothing to prove.
Otherwise put $w=\sqrt T$. The two automorphisms lift to
$F(w)$ by
\[
 \sigma_5(w)=\frac{pw}{\xi_5\xi_{10}},\qquad
 \sigma_2(w)=\frac{pw}{\xi_2\xi_{10}}.
\]
Both lifts have order two. They commute, since either composite
sends $w$ to $pw/(\xi_5\xi_2)$. Together with $w\mapsto-w$
they exhibit $F(w)/\Q$ as an elementary abelian extension of
degree eight.

The generators $\xi_d$ are integral and are units away from
$p$; the field $F$ is unramified outside $2,5$. Consequently
$F(w)/\Q$ is unramified outside $2,5,p$. Every quadratic
subfield therefore has a squarefree radicand supported on
$-1,2,5,p$. Since an elementary abelian extension is the
compositum of its quadratic subfields,
\[
 F(w)\subset\Q(i,\sqrt2,\sqrt5,\sqrt p)\subset B_p.
\]
Finally $q$ splits completely in $B_p$. Reducing $w^2=T$
at a prime above $q$ gives \eqref{eq:threeNormRelation}.
Root independence in the preceding lemma removes the need to
specify the coherent roots in the resulting formula.
\end{proof}

\begin{theorem}\label{thm:fourthcassels}
In the ordered pure Selmer basis $(W,U,V)$ of
$E_{-2}(n)=A_{-10n}$, its complete additive Cassels matrix is
\begin{equation}\label{eq:fourthcassels}
 C_{-2}=
 \begin{pmatrix}
 0&\beta&\rho+\beta+\gamma\\
 \beta&0&\gamma+\upsilon\\
 \rho+\beta+\gamma&\gamma+\upsilon&0
 \end{pmatrix},\qquad \beta=\beta_p(q).
\end{equation}
In particular,
\[
 \langle W,U\rangle=\beta,\qquad
 \langle W,V\rangle=\rho+\beta+\gamma.
\]
The matrix is zero precisely when
\begin{equation}\label{eq:fourthzerolocus}
 \beta=0,\qquad \rho=\gamma=\upsilon.
\end{equation}
Otherwise it has rank two and its radical is generated by
\begin{equation}\label{eq:fourthradical}
 (\gamma+\upsilon)W+(\rho+\beta+\gamma)U+\beta V.
\end{equation}
\end{theorem}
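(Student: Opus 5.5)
The plan is to compute the three off-diagonal entries with the tangent--Hilbert symbol formula of Theorem~\ref{thm:firstpair}, reusing as much as possible from Theorems~\ref{thm:pqcassels} and~\ref{thm:threecassels}. The entry $\langle U,V\rangle=\gamma+\upsilon$ is already proved in Theorem~\ref{thm:threecassels} for $\delta=-2$, since $[-2p/q]_4=[p/q]_4+[2/q]_4$. It therefore remains to evaluate $\langle W,U\rangle$ and $\langle W,V\rangle$, where $W=(p,1,p)$ and $m=-10n$. Once these are known, the matrix \eqref{eq:fourthcassels} follows by alternation. The zero locus \eqref{eq:fourthzerolocus} is immediate: $\beta=0$ and $\gamma=\upsilon$ leave $\rho+\gamma=0$. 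The radical formula \eqref{eq:fourthradical} is the standard cross product of the three entries for a nonzero $3\times3$ alternating matrix over $\F_2$.

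For the cross entries, the second variable is $U$ or $V$, so the Hilbert parameter is $q$. As in Theorem~\ref{thm:pqcassels}, $q$ is a square at $2,5,p$ and positive at infinity, so only the place $q$ contributes. We need rational points on the three conics of the $W$-covering
\[
 pu_1^2-u_2^2=mt^2,\qquad pu_3^2-pu_1^2=4mt^2 .
\]
The second conic reduces to $u_3^2-u_1^2=-40qt^2$, which has the explicit point $(t,u_1,u_3)=(1,10q+1,10q-1)$ up to scaling. The first conic, $pu_1^2-u_2^2=-10pq\,t^2$, requires a representation of $p$ twisted by $-2$ and $5$. Here the imaginary norms $\xi_2,\xi_{10}$ of \eqref{eq:fourimaginarynorms} enter: $p=A_2^2+2B_2^2$, and $p\cdot 10q$ is a norm from $\Q(\sqrt{-10})$, via $(A_{10}+B_{10}\sqrt{-10})(C_{10}+D_{10}\sqrt{-10})$ scaled appropriately, or through $\Q(\sqrt{-2})$ after multiplying by $\xi_5$.

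At $q$ we take a $t=0$ covering point $(0,1,u,1)$ with $u^2=p$. The product of tangent values is then a linear expression in $\sqrt p,\sqrt{-2},\sqrt5$ modulo $q$. By the same kind of exact factorization used for $A_*r-5B_*$, this product should reduce to $(A+Br)$ times squares for $U$, giving $\beta$. For $V$ it should reduce to $(A_2+B_2r_2)\cdot(A+Br)\cdot$ (a quartic-type term coming from $\sqrt p$, namely $\gamma$). Lemma~\ref{lem:pqcyclotomic}, Lemma~\ref{lem:fourimaginarynorms} and the three-norm relation of Lemma~\ref{lem:threeNormRelation} will be used to convert the raw symbols into the normalized forms $\beta$ and $\rho+\beta+\gamma$. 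In particular, the relation $\rho+\rho_{10}+\beta=0$ lets us trade $\rho_{10}$ for $\rho+\beta$.

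The main obstacle is choosing tangent points on the $W$-conics whose reductions at $q$ factor cleanly. Any factor of $\sqrt p$ produces the quartic symbol $[p/q]_4=\gamma$, and tracking the choice of square root coherently is delicate. I would first try parametrizing the first conic through the $\Q(\sqrt{-10})$ norm of $10pq$, so that the $q$-adic value becomes $(a_{10}-b_{10}\sqrt{-10})$ times explicit squares. Equation \eqref{eq:fourkappas} then produces $\rho_{10}+\upsilon+\tau$, and the resulting symbols are simplified using Lemma~\ref{lem:threeNormRelation}. Choice-independence comes for free, because each entry is the intrinsic Cassels pairing.
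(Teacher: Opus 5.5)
Your bookkeeping is correct: the $(U,V)$ entry is $[-2p/q]_4=\gamma+\upsilon$ by Theorem~\ref{thm:threecassels}, and the zero locus and radical vector follow from the $3\times3$ alternating matrix exactly as you say. The gap is in the two $W$ entries, at the step you flag as the main obstacle.

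For $W=(p,1,p)$ and $m=-10pq$, put $u_2=pw$. The first and third conics of the $W$-covering become $u_1^2-pw^2=-10qt^2$ and $u_3^2-pw^2=-50qt^2$. So you need $-10q$ and $-2q$ to be norms from $\Q(\sqrt p)$; equivalently, $p$ must be represented by $x^2+10qy^2$ and by $x^2+2qy^2$. None of the fixed norm data $\xi_5,\xi_2,\xi_{10}$ or $pq=a_{10}^2+10b_{10}^2$ gives such points directly. In particular, a representation of $10pq$ by $x^2+10y^2$ is not a point on $u_2^2-pu_1^2=10pqt^2$. You therefore have no explicit tangent forms.

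At the $q$-adic point $(0,1,\sqrt p,1)$, Lemma~\ref{lem:local} only says that the tangent value has character $(x/q)$, where $(x,y,z)$ is some inexplicit primitive solution of $x^2-py^2=-10qz^2$. Reducing modulo $q$ gives $(x/q)=(y/q)(p/q)_4$. Here $(y/q)=\leg{-10}{y'}$ for the odd part $y'$ of $y$, and evaluating that needs a separate $2$-adic and $5$-adic analysis of the solution. The factorizations you expect (``$(A+Br)$ times squares'', and a $\gamma$ coming from $\sqrt p$) are what the answer requires, but nothing in this setup produces them.

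The missing idea is to change the representative before computing. Rational torsion lies in the Cassels radical and $\langle U,U\rangle=0$. So with $t_m=(-10n,5,-2n)$ you may pass to $Z=W+U+t_m=(-10,5,-2)$, which gives $\langle W,U\rangle=\langle Z,U\rangle$ and $\langle W,V\rangle=\langle Z,V\rangle+\gamma+\upsilon$.

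The $Z$-covering is $2u_3^2+5u_2^2=50nt^2$, $u_3^2-5u_1^2=20nt^2$, $2u_1^2+u_2^2=2nt^2$. It depends only on $n$ and has the explicit points $(t,u_2,u_3)=(1,10b_{10},5a_{10})$, $(t,u_1,u_3)=(1,2J,10D)$ and $(t,u_1,u_2)=(1,a_2,2b_2)$, where $D=a+2b$, $J=2a+5b$ and $5D^2-J^2=n$. This is where your $\Q(\sqrt{-10})$ idea actually applies.

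At $q$, use the point $(0,1,s_2,s_5)$ with $s_2^2=-2$ and $s_5^2=5$. This gives $\langle Z,U\rangle=[a_2/q]+[a_{10}/q]+\tau=\rho+\rho_{10}=\beta$, by \eqref{eq:fourkappas} and Lemma~\ref{lem:threeNormRelation}. It also gives $\langle Z,V\rangle=[a_2/q]+[J/q]=\rho+\upsilon+\alpha+\tau=\rho+\upsilon+\beta$, using $J\equiv a(2+r)\pmod q$ and $\alpha=\beta+\tau$ from Proposition~\ref{prop:pqfixedsymbols}. Hence $\langle W,V\rangle=\rho+\beta+\gamma$. The $\gamma$ enters only through the $\langle U,V\rangle$ correction, not through any $\sqrt p$.
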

\begin{proof}
We change the representative by a rational torsion class. In three Kummer coordinates the image of the point
$(m,0)$, with $m=-10n$, is
$t_m=(-10n,5,-2n)$. Thus the class
\[
 Z=W+U+t_m=(-10,5,-2)
\]
has fixed coordinates. Torsion is in the Cassels radical, so
\begin{equation}\label{eq:fourthZchange}
 \langle W,U\rangle=\langle Z,U\rangle,\qquad
 \langle W,V\rangle=\langle Z,V\rangle+\gamma+\upsilon.
\end{equation}
The three conics of its covering are
\[
 \begin{aligned}
 H_1&:2u_3^2+5u_2^2=50nt^2,\\
 H_2&:u_3^2-5u_1^2=20nt^2,\\
 H_3&:2u_1^2+u_2^2=2nt^2.
 \end{aligned}
\]
Use $n=a^2-5b^2$ as before and the two product representations
$n=a_d^2+db_d^2$ from Lemma~\ref{lem:fourimaginarynorms}.
Put $D=a+2b$ and $J=2a+5b$, so that $5D^2-J^2=n$.
Rational points on $H_1,H_2,H_3$, respectively, are
\[
 (t,u_2,u_3)=(1,10b_{10},5a_{10}),\quad
 (t,u_1,u_3)=(1,2J,10D),\quad
 (t,u_1,u_2)=(1,a_2,2b_2).
\]
The resulting tangent forms, after rational rescaling, are
\begin{equation}\label{eq:fourthtangents}
 \begin{aligned}
 L_1&=a_{10}u_3+5b_{10}u_2-5nt,\\
 L_2&=Du_3-Ju_1-2nt,\\
 L_3&=a_2u_1+b_2u_2-nt.
 \end{aligned}
\end{equation}
Their coefficients are primitive away from $2,5$. Indeed all
three norm representations are primitive, and
$\gcd(D,J)=\gcd(a,b)=1$.

For either second argument $U$ or $V$ the Hilbert parameter is
$q$. It is a square at $2,5,p$ and is positive at infinity.
At every other prime different from $q$ the conics and their
primitive rational points have good reduction. A tangent line
meets its conic in twice its point of tangency, so nonzero
tangent values on primitive local points have even valuation.
The projections of a primitive covering point remain primitive
at these primes, by the displayed conic equations. Hence only
the place $q$ contributes to the Cassels product
\cite{CasselsSecond,FisherSchaeferStoll}.

At $q$, use the local point
$(t,u_1,u_2,u_3)=(0,1,s_2,s_5)$, where
$s_2^2=-2$ and $s_5^2=5$ in $\Q_q$.
Choose $s_2$ so that $L_3\equiv2a_2\not\equiv0\pmod q$.
For $\langle Z,U\rangle$, choose $s_5$ so that
$L_1\equiv2a_{10}s_5\not\equiv0\pmod q$.
These choices are possible because the norm equations give
the two opposite nonzero roots. Since $[s_5/q]=[5/q]_4=\tau$,
the product $L_1L_3$ gives
\[
 \langle Z,U\rangle=[a_2/q]+[a_{10}/q]+\tau
 =\rho+\rho_{10}=\beta.
\]

For $\langle Z,V\rangle$, keep $s_2$ and instead choose
the sign of $s_5$ so that $L_2\equiv-2J\pmod q$.
This is possible because $J^2=5D^2$ modulo $q$, with
$q\nmid DJ$. Also
$J\equiv a(2+r)\pmod q$ for the root $r=a/b$ of $5$.
Thus the product $L_2L_3$ gives
\[
 \langle Z,V\rangle=[a_2/q]+[J/q]
 =\rho+\upsilon+\alpha+\tau
 =\rho+\upsilon+\beta.
\]
Here we used $\alpha=\beta+\tau$ from
Proposition~\ref{prop:pqfixedsymbols}. Equation~\eqref{eq:fourthZchange}
proves the two stated entries. The $U,V$ entry was proved in
Theorem~\ref{thm:threecassels}. The zero criterion and radical
vector follow directly from this $3\times3$ alternating matrix.
\end{proof}

\section{Prime higher reciprocity and \texorpdfstring{$4$}{4}-descent}
\label{sec:higher}

\subsection{The intrinsic conic character}\label{subsec:intrinsicconic}

Throughout this section $p\equiv11\pmod{40}$ is prime. By
Theorem~\ref{thm:firstpair}, the ordinary Cassels pairing on the pure
$2$-Selmer group is zero, so ordinary $4$-descent no longer separates its
two generators. The next Cassels--Tate pairing does. We compute this pairing
explicitly and then identify its single nontrivial bit with the representation
defect, first through a Pell equation and then through a class number
congruence. Let
\[
 V=\Sel_2^0(A_{5p}).
\]
For $u,v\in V$, let $\bar u,\bar v$ denote their images in
$\Sh(A_{5p})[2]$. Choose
\[
 \widetilde u\in\Sh(A_{5p})[4],
 \qquad
 2\widetilde u=\bar u,
\]
and define
\begin{equation}\label{eq:nextpairdef}
 \mathcal B_p(u,v)
 =
 \langle\widetilde u,\bar v\rangle_{\mathrm{CT}}
 \in \tfrac12\Z/\Z\simeq\F_2.
\end{equation}

\begin{proposition}\label{prop:nextpair}
The pairing \eqref{eq:nextpairdef} is well defined, alternating, and
\[
 \rad\mathcal B_p
 =
 \operatorname{im}\bigl(\Sel_8(A_{5p})\longrightarrow V\bigr).
\]
Writing
\[
 C_p^{\mathrm{next}}
 =
 \begin{pmatrix}0&b_p\\b_p&0\end{pmatrix},
\]
one has
\[
 b_p=1
 \Longleftrightarrow
 \rank A_{5p}(\Q)=0,\qquad
 \Sh(A_{5p})[2^\infty]\simeq(\Z/4\Z)^2.
\]
If $\D(p)\ne0$ and $b_p=0$, then
\[
 \Sh(A_{5p})[2^\infty]
 \simeq(\Z/2^{a_p}\Z)^2
\]
for some $a_p\ge3$.
\end{proposition}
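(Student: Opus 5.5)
The plan is to work entirely inside the finite-or-not group $\Sh=\Sh(A_{5p})[2^\infty]$ and use only two standard properties of the Cassels--Tate pairing: it is alternating, and it satisfies the divisibility relation that the radical of the induced pairing on $\Sh[2^k]$ modulo the appropriate layer is $2^k\Sh[2^{k+1}]$ \cite[Theorem 3.1]{Fisher}. By Theorem~\ref{thm:firstpair} and Lemma~\ref{lem:nondegenerate}-type reasoning, $C_p=0$ for $p\equiv11\pmod{40}$. Hence every $\bar u\in\Sh[2]$ coming from $V$ lies in $2\Sh[4]$, so a lift $\widetilde u$ exists. This is the first thing I would record.

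For well-definedness, note that two lifts $\widetilde u,\widetilde u'$ differ by an element $w\in\Sh[2]$. Then $\langle w,\bar v\rangle_{\mathrm{CT}}$ is an ordinary Cassels pairing value, which is zero since $C_p=0$. Independence of the representative of $\bar v$ is automatic because $\mathcal B_p$ depends on $\bar v$ only. The Mordell--Weil part of $V$ maps to $\bar u=0$, and we take $\widetilde u=0$ there. For the alternating property, use $\langle\widetilde u,\bar u\rangle=\langle\widetilde u,2\widetilde u\rangle=2\langle\widetilde u,\widetilde u\rangle$. This vanishes because the Cassels--Tate pairing on the $2$-part is alternating (for elliptic curves the Poonen--Stoll obstruction vanishes), so $\langle x,x\rangle=0$.

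For the radical, apply the divisibility statement at the next level. Here $u$ lies in $\rad\mathcal B_p$ iff $\widetilde u$ pairs trivially with all of $\Sh[2]$, since the image of $V$ is all of $\Sh[2]$ and pairing with $\Sh[2]$ kills the ambiguity as above. By duality this holds iff $\widetilde u\in 2\Sh[8]+\Sh[2]$, equivalently iff $\bar u\in 4\Sh[8]$. Translating through the Kummer sequence \eqref{eq:kummerexact} for $r=8$, this identifies the radical with the image of $\Sel_8$ in $V$, with the rational points contributing trivially on both sides. This is the step I expect to need the most care: the bookkeeping between $\Sel_{2^k}$, $E(\Q)/2^k$ and $\Sh[2^k]$, together with the fact that lifts may be changed by $\Sh[2]$ without affecting the pairing. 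The argument mirrors the proof of Proposition~\ref{prop:sel4structure}.

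For the structural consequences, first suppose $b_p=1$. Then $\mathcal B_p$ is nondegenerate on the two-dimensional $V$, so the Mordell--Weil image is $0$ and the rank is zero. Also $\Sh[2]\simeq(\Z/2)^2$ and $2\Sh[4]=\Sh[2]$ while $4\Sh[8]=0$, which forces $\Sh[2^\infty]\simeq(\Z/4\Z)^2$; this includes ruling out divisible parts, as in Lemma~\ref{lem:nondegenerate}. Conversely, if rank is zero and $\Sh[2^\infty]\simeq(\Z/4)^2$, then $4\Sh[8]=0$, so the radical is zero and $b_p=1$. Finally, suppose $\D(p)\ne0$. Then Corollary~\ref{cor:Lvalue} gives $L(A_{5p},1)\ne0$, so the rank is zero and $\Sh$ is finite by Kolyvagin. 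A finite $\Sh[2^\infty]$ carries a perfect alternating pairing, so it has the form $G\oplus G$. Since $\dim\Sh[2]=2$, $G$ is cyclic, and $\Sh[2^\infty]\simeq(\Z/2^{a}\Z)^2$. The conditions $C_p=0$ and $b_p=0$ exclude $a=1,2$, so $a_p\ge3$.
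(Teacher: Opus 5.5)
Your proposal is correct and follows essentially the same route as the paper. Lifts exist because $C_p=0$; the ambiguity in $\widetilde u$ lies in $\Sh[2]$ and pairs trivially; alternation comes from $\langle\widetilde u,2\widetilde u\rangle_{\mathrm{CT}}=0$; the radical is Fisher's divisibility criterion; and the structural statements follow from counting paired elementary divisors, together with the central value formula when $\D(p)\ne0$.

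You also make explicit three points the paper leaves implicit: the Kummer-sequence identification of the preimage of $4\Sh[8]$ with the $\Sel_8$-image, the exclusion of a divisible part of $\Sh$ when $b_p=1$, and the converse implication.
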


\begin{proof}
Because the ordinary Cassels pairing vanishes, every class of $V$ lifts to
$\Sel_4$. If two halves $\widetilde u$ are chosen, their difference lies in
$\Sh[2]$ and pairs trivially with $\bar v$. Thus
\eqref{eq:nextpairdef} is well defined. Alternation follows from
\[
 \langle\widetilde u,\bar u\rangle_{\mathrm{CT}}
 =
 \langle\widetilde u,2\widetilde u\rangle_{\mathrm{CT}}=0.
\]
The description of the radical is the divisibility criterion for the
Cassels--Tate pairing; see \cite[Theorem 3.1]{Fisher}. Nondegeneracy
forces rank zero and excludes elementary divisors of order at least
$8$; the vanishing of the ordinary pairing already excludes order
$2$. Since $\dim_{\F_2}\Sh[2]=2$, the group is
$(\Z/4\Z)^2$.

If $\D(p)\ne0$, the central value formula gives rank zero and
finiteness of $\Sh$. Perfect alternation of the finite
Cassels--Tate pairing pairs the elementary divisors, so the
$2$-primary group is $(\Z/2^{a_p}\Z)^2$. The two successive zero
pairings force $a_p\ge3$.
\end{proof}

Choose coprime positive integers $a,b$ satisfying
\begin{equation}\label{eq:normab}
 p=a^2-5b^2,\qquad 4\mid a,\qquad b\text{ odd},
 \qquad k=(-1)^{(b-1)/2}.
\end{equation}
Lemma~\ref{lem:norm5} supplies this normalization.

Put
\begin{align}
 F_p(u,v)
 &=(a+5b)u^2+10(a+b)uv+5(a+5b)v^2,\label{eq:F0}\\
 G_p(u,v)
 &=(5b-a)u^2+10(a-b)uv+5(5b-a)v^2,\label{eq:G0}\\
 \mathcal F&=kF_p,\qquad \mathcal G=kG_p.\nonumber
\end{align}
Expansion gives
\begin{align}
 F_p^2-3F_pG_p+G_p^2
 &=5p(u^2-5v^2)^2,\label{eq:keyidentity}\\
 \disc F_p=\disc G_p&=80p,\qquad
 \operatorname{Res}(F_p,G_p)=2000p^2.\label{eq:discriminants}
\end{align}

The associated conic is
\begin{equation}\label{eq:Gamma}
 \Gamma_p:\qquad c^2=\mathcal F(u,v).
\end{equation}
For the conic calculations write
\[
 A=k(a+5b),\qquad B=5k(a+b),\qquad
 \mathcal F(u,v)=Au^2+2Buv+5Av^2.
\]
Then
\begin{equation}\label{arith:basic}
 A\equiv1\pmod4,\qquad B-A=4ka,\qquad
 5A^2-B^2=-20p.
\end{equation}
We first establish that $\Gamma_p(\Q)$ is nonempty and then show that the
quadratic character extracted from a primitive point does not depend on the
point.

\begin{lemma}\label{arith:chars}
The parameters in \eqref{eq:normab} exist. They satisfy
\[
 \leg bp=k,\qquad \leg{a+b}p=1,\qquad \leg ap=1,
 \qquad \leg Ap=1.
\]
\end{lemma}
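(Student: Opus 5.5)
Existence of the parameters is Lemma~\ref{lem:norm5} applied to the prime $n=p$: since $p\equiv11\pmod{40}$, we have $p\equiv3\pmod8$ and $(5/p)=(p/5)=1$, so $p$ splits in $\Q(\sqrt5)$. The four character identities then come from reducing $p=a^2-5b^2$ modulo suitable primes and applying quadratic (Jacobi) reciprocity. The plan is to write each symbol as a product over the prime factors of the relevant integer. For every odd prime $\ell$ dividing that integer, we read off $(p/\ell)$ from the norm equation, and then flip it to $(\ell/p)$ using $p\equiv3\pmod4$.

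For $(b/p)$: $b$ is odd and positive. For every odd prime $\ell\mid b$ we have $p\equiv a^2\pmod\ell$, and $\ell\nmid a$ by coprimality, so $(p/\ell)=1$. Jacobi reciprocity with $p\equiv3\pmod4$ gives $(b/p)=(p/b)(-1)^{(b-1)/2}=k$. For $(a/p)$: write $a=2^e a'$ with $a'$ odd and $e\ge2$. For odd $\ell\mid a'$ we get $p\equiv-5b^2\pmod\ell$, so $(p/\ell)=(-5/\ell)$. Hence
\[
\leg{a'}p=\leg p{a'}(-1)^{(a'-1)/2}=\leg{-5}{a'}(-1)^{(a'-1)/2}=\leg5{a'}=\leg{a'}5.
\]
The factor $(2/p)^e$ is handled using $(2/p)=-1$ for $p\equiv3\pmod8$. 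We then need $(a'/5)(-1)^e=1$. Modulo $5$ we have $a^2\equiv p\equiv1$, so $a\equiv\pm1\pmod5$, giving $(a/5)=1$ and therefore $(a'/5)=(2/5)^e=(-1)^e$. The two factors cancel, and $(a/p)=1$.

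For $(a+b)/p$: the key identity is $(a+b)(a-b)=p+4b^2$, so $a^2-b^2\equiv4b^2\pmod p$ and $((a+b)/p)=((a-b)/p)$. I would instead compute directly. Since $a+b$ is odd and positive, for each prime $\ell\mid a+b$ we have $a\equiv-b\pmod\ell$, so $p\equiv-4b^2\pmod\ell$ and $(p/\ell)=(-1/\ell)$. Reciprocity then gives
\[
\leg{a+b}p=\leg{-1}{a+b}(-1)^{(a+b-1)/2}=1.
\]

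For $(A/p)$ with $A=k(a+5b)$: we have $(k/p)=k$, and $a+5b$ is positive and odd. For $\ell\mid a+5b$, $a\equiv-5b\pmod\ell$, so $p\equiv20b^2\pmod\ell$ and $(p/\ell)=(5/\ell)$ (note $\ell\ne5$ since $5\nmid a$). Hence
\[
\leg{a+5b}p=\leg{a+5b}5(-1)^{(a+5b-1)/2}=\leg a5(-1)^{(a+5b-1)/2}=(-1)^{(5b+a-1)/2}.
\]
With $4\mid a$ this equals $(-1)^{(5b-1)/2}=(-1)^{(b-1)/2}\cdot(-1)^{2b}=k$. Multiplying by $(k/p)=k$ gives $(A/p)=1$.

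The main obstacle is the bookkeeping of the parities and the signs of $2$ and $-1$ in these reciprocity steps. No conceptual difficulty is expected.
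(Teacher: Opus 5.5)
Your proof is correct and takes the paper's route: it computes $\leg bp$, $\leg{a+b}p$ and $\leg ap$ by reducing $p=a^2-5b^2$ modulo the prime factors of $b$, $a+b$ and $a'$ and then applying Jacobi reciprocity with $p\equiv3\pmod4$, exactly as you do. The only difference is the last symbol: the paper obtains $\leg{a+5b}p=k$ at once from $a+5b\equiv a(a+b)/b\pmod p$ and the three symbols already computed, whereas your direct reciprocity computation is also valid but repeats the parity bookkeeping (and your aside on $(a+b)(a-b)=p+4b^2$ is never used).
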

\begin{proof}
Existence follows from Lemma~\ref{lem:norm5}.

The numerators below are coprime to their odd denominators, so all Jacobi
symbols that occur are well defined. Quadratic reciprocity gives
\[
 \leg bp=(-1)^{(b-1)/2}\leg pb=k,
\]
because $p\equiv a^2\pmod b$. Similarly,
$p\equiv-4b^2\pmod{a+b}$ gives $(\frac{a+b}{p})=1$.
Writing $a=2^e a'$ with $a'$ odd, we have
\[
 \leg{a'}p=\leg{-p}{a'}=\leg5{a'}=\leg{a'}5,
 \qquad \leg2p=\leg25=-1.
\]
Hence $(\frac ap)=(\frac a5)=1$, since $a\equiv\pm1\pmod5$.
Finally
\[
 a+5b\equiv\frac{a(a+b)}b\pmod p,
\]
so $(\frac{a+5b}{p})=k$; multiplication by $k$ gives $(\frac Ap)=1$.
\end{proof}

\begin{proposition}\label{arith:exist}
The conic $\Gamma_p$ has a rational point. Every primitive
integral point $(u,v,c)$ on it satisfies $5\nmid uc$.
\end{proposition}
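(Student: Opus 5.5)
We prove existence by Hasse--Minkowski and then check the divisibility claim by reduction modulo $5$. The conic $c^2=Au^2+2Buv+5Av^2$ has binary part of discriminant $4B^2-20A^2=80p$ by \eqref{arith:basic}. Completing the square, $Ac^2=(Au+Bv)^2+(5A^2-B^2)v^2=(Au+Bv)^2-20pv^2$. Hence $\Gamma_p$ is equivalent to the diagonal conic
\[
 X^2-20pY^2=AZ^2,
\]
that is, to the Hilbert condition $(A,5p)_v=1$ at every place $v$. We verify this place by place and use the product formula to dispose of one place.

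At infinity, $5p>0$, so the symbol is $1$. At odd primes $\ell\nmid 5pA$ it is trivially $1$. At $p$, Lemma~\ref{arith:chars} gives $(A/p)=1$, provided $p\nmid A$; this holds since $5A^2-B^2=-20p$ and $p\mid A$ would force $p\mid B$ and then $p^2\mid 20p$. At primes $\ell\mid A$ with $\ell\ne5$, the identity $B^2\equiv20p\pmod\ell$ shows that $5p$ is a square mod $\ell$ (note $\ell$ is odd because $A\equiv1\pmod4$), so the symbol is $1$. We must check whether $5\mid A$. Since $a\equiv\pm1\pmod5$, we have $A\equiv ka\not\equiv0\pmod5$, so $A$ is a $5$-adic unit. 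The symbol $(A,5p)_5$ equals $(A/5)=(ka/5)$. Here $(a/5)=1$ and $(k/5)=1$ because $-1$ is a square mod $5$, so this symbol is $1$ as well. The remaining place $2$ then follows from the product formula. As a sanity check, $A\equiv1\pmod4$ and $5p\equiv7\pmod8$ give $(A,5p)_2=1$ directly when $A\equiv1\pmod8$. The case $A\equiv5\pmod8$ is the one place to be careful, but the product formula makes it automatic. This settles existence. The step needing most care is getting the discriminant computation and the sign $k$ right, so that $A$ is the correct representative.

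For the second claim, let $(u,v,c)$ be primitive with $c^2=\mathcal F(u,v)$. Modulo $5$ we have $\mathcal F\equiv Au^2$, because $B=5k(a+b)\equiv0$. If $5\mid u$, then $5\mid c$, so $25\mid c^2-Au^2-2Buv=5Av^2$. Since $5\nmid A$, this gives $5\mid v$, contradicting primitivity. If $5\mid c$ but $5\nmid u$, then $Au^2\equiv0\pmod5$, which is impossible because $A$ is a unit mod $5$. Hence $5\nmid uc$.

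Moreover $c^2\equiv Au^2\pmod5$ forces $(A/5)=1$, which is consistent with the computation above.
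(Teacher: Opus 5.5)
Your proof is correct. The second half is the paper's argument: reduce modulo $5$, then use $25\mid 5Av^2$ to force $5\mid v$, contradicting primitivity.

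For existence you take a different route. You diagonalize to $X^2-20pY^2=AZ^2$ and verify $(A,5p)_v=1$ at $\infty$, at $5$, at $p$, at the primes dividing $A$, and at the remaining odd primes, then obtain the dyadic place from Hilbert reciprocity. The paper instead exhibits local points on the original conic directly:
\begin{itemize}
\item $(u,v)=(1,0)$ or $(0,1)$ at $2$, according to $A\bmod 8$;
\item Hensel's lemma at $5$, and at $p$ via Lemma~\ref{arith:chars};
\item a smooth isotropic reduction at every odd $q\nmid 5p$;
\item indefiniteness of $\mathcal F$ at $\infty$.
\end{itemize}

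The trade-off is this. Your substitution $X=Au+Bv$ is not invertible at primes dividing $A$, so you need the extra check via $B^2\equiv 20p\pmod{\ell}$. The paper's ternary form $c^2-\mathcal F(u,v)$ has Gram determinant $5A^2-B^2=-20p$, so it is unimodular away from $2,5,p$, and those primes are automatic. In exchange, you avoid writing down an explicit $2$-adic point.

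The dyadic place needs no care and no reciprocity argument. Since $A\equiv 1\pmod 4$ and $5p$ is a $2$-adic unit, the formula $(u,w)_2=(-1)^{\frac{u-1}{2}\frac{w-1}{2}}$ gives $(A,5p)_2=1$ directly, whether $A\equiv1$ or $A\equiv5\pmod 8$.
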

\begin{proof}
At $2$, if $A\equiv1\pmod8$, take $(u,v)=(1,0)$; if
$A\equiv5\pmod8$, take $(u,v)=(0,1)$. In either case $\mathcal F(u,v)$ is
an odd square in $\Q_2$. At $5$, $\mathcal F(1,0)=A\equiv ka\pmod5$ is
a nonzero square, and Hensel's lemma applies. At $p$, use $(1,0)$
and Lemma~\ref{arith:chars}. At every odd prime $q\nmid5p$, the ternary
form $c^2-\mathcal F(u,v)$ is nondegenerate modulo $q$ and has a nonzero
isotropic vector over $\F_q$; it lifts to $\Q_q$. At infinity the
binary form $\mathcal F$ is indefinite by \eqref{arith:basic}, so it takes positive
values. The Hasse principle for conics now gives a rational point.

Clear denominators and divide by the common gcd. If $5\mid u$, the
equation modulo $5$ gives $5\mid c$. Primitivity then gives
$5\nmid v$, but $v_5(\mathcal F(u,v))=1$, a contradiction. Thus $5\nmid u$,
and $c^2\equiv ka u^2\pmod5$ also gives $5\nmid c$.
\end{proof}

For two primitive integral points $P_i=(u_i,v_i,c_i)$, define
the tangent value
\begin{equation}\label{arith:tangent}
 \ell(P_1)=c_0c_1-Au_0u_1-B(u_0v_1+v_0u_1)-5Av_0v_1.
\end{equation}
The associated linear form is the tangent to $\Gamma_p$ at $P_0$.

\begin{lemma}\label{arith:dyadic}
If $\ell(P_1)\ne0$, then $v_2(\ell(P_1))$ is odd.
\end{lemma}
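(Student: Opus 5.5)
The plan is to exploit the fact that on a conic the tangent value at $P_0$, evaluated at a second point, is a fixed constant times a square. Let $Q(c,u,v)=c^2-\mathcal F(u,v)$ and let $\beta$ be its symmetric bilinear form. Then $\ell(P_1)=\beta(P_0,P_1)$, and since $Q(P_0)=Q(P_1)=0$,
\[
 Q(P_0+P_1)=2\ell(P_1),\qquad Q(P_0-P_1)=-2\ell(P_1).
\]
Parametrize $\Gamma_p(\Q_2)$ by lines through $P_0$: every point is $P_1=\lambda(P_0)+\mu w$ with $w$ in a fixed complement. Because $\beta(P_0,P_0)=0$, one gets $\ell(P_1)=\mu\,\beta(P_0,w)$, and $\mu$ is determined by $Q(\lambda P_0+\mu w)=0$. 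Equivalently, the binary form obtained by restricting $\beta(P_0,\cdot)$ to the conic is a constant multiple of the square of a linear form in a parameter $(s{:}t)$. This gives
\[
 \ell(P_1)=\kappa\, L(s,t)^2\cdot e ,
\]
where $e$ is the scaling needed to make $P_1$ primitive and $\kappa\in\Q_2^\times$ depends only on $P_0$. The key steps are as follows.

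\emph{Step 1: a $2$-adic normal form.} From $B-A=4ka$ and $4\mid a$ we get $B\equiv A\pmod{16}$. Hence, with $X=u+v$,
\[
 \mathcal F\equiv A\bigl(X^2+4v^2\bigr)\pmod{16}.
\]
I would also check that over $\Z_2$ the form $\mathcal F$ is equivalent to $A'(X^2+4Y^2)$ with $A'\equiv A\pmod 4$. This is consistent with $\disc\mathcal F=80p$, since $80p/(-16)=-5p\equiv1\pmod8$ is a square. Then $\ell$ becomes
\[
 \ell\equiv c_0c_1-A\bigl(X_0X_1+4v_0v_1\bigr)\pmod{16}.
\]

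\emph{Step 2: the two types of primitive point.} In this model the primitive points fall into two types. In type I, $c$ and $X$ are odd. In type II, $c$ is even, $X\equiv0\pmod4$ and $v$ is odd; the residue $u+v\equiv2\pmod4$ is excluded mod $16$, since it would give $c^2\equiv8A$.

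\emph{Step 3: compute $v_2(\kappa)$.} Write $P_1$ through the Gaussian-type norm $c^2=A'N(X+2iY)$ and its rational parametrization. Then
\[
 2\ell(P_1)=-Q(P_0-P_1)
\]
is a unit times $2\cdot(\text{difference of parameters})^2$ up to the primitivity normalization. I would verify directly that $v_2(\kappa)$ is odd and that the primitivity scaling $e$ always has even valuation, treating the combinations I–I, I–II and II–II separately.

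\emph{Main obstacle.} The hardest case is I–I. There $c_0c_1-AX_0X_1$ is automatically even and may be divisible by $4$, so a naive reduction mod $16$ does not settle the parity of the valuation. At this point I would use the parametrization honestly rather than congruences. Write
\[
 c+X\sqrt{A'}\sim(\text{const})\cdot(s+ti)^2
\]
in $\Q_2(\sqrt{A'},i)$, and express $\ell$ as a norm of $s_0t_1-s_1t_0$ times a fixed element whose valuation is odd. Parity then follows because norms of nonzero elements from this extension have even valuation, while the fixed factor contributes an odd valuation.
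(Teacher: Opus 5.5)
Your Steps~1 and~2 are correct. Completing the square, $Ac^2=(Au+Bv)^2+(2rv)^2$ with $r^2=-5p$, is a unimodular change of variables over $\Z_2$ carrying $\mathcal F$ to $A^{-1}(X^2+4Y^2)$. It preserves primitivity and, by polarization, the value $\ell(P_1)$.

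The gap is Step~3, which is the entire content of the lemma. You assert, without proof, that $v_2(\kappa)$ is odd and that the primitivity factor $e$ has even valuation for every primitive point. Neither mechanism you offer for this works.
\begin{itemize}
\item Taken literally, with $2$-adic parameters, your formula $2\ell(P_1)=(\text{unit})\cdot2\cdot(\text{difference of parameters})^2$ makes $v_2(\ell(P_1))$ \emph{even}. This contradicts both the lemma and your own requirement that $v_2(\kappa)$ be odd.
\item For the I--I case you appeal to the principle that norms of nonzero elements of $\Q_2(\sqrt{A'},i)$ have even valuation. That is false whenever $A'\equiv1\pmod8$, which already happens for $p=11$, $(a,b)=(4,1)$, $A=9$: the field is then $\Q_2(i)$, and $\Norm(1+i)=2$.
\item The relation $c+X\sqrt{A'}\sim(\text{const})\,(s+ti)^2$ mixes two different structures. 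The equation $c^2-A'X^2=4A'Y^2$ factors over $\Q_2(\sqrt{A'})$, whereas the Gaussian structure belongs to $X+2Yi$.
\end{itemize}
So the source of the odd valuation is never actually identified.

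The parametrization idea can be repaired, and it then gives a route different from the paper's. Since $A'\equiv1\pmod4$, write $A'=\omega\bar\omega$ with $\omega\in\Z_2[i]$ having odd real part and even imaginary part. For $w=s+ti$ put $c=A'\Norm(w)$ and $X+2Yi=\bar\omega w^2$.
\begin{itemize}
\item For $(s,t)$ not both even, this triple is primitive: if $s,t$ are both odd, then $Y$ is odd.
\item Hilbert~90, applied to the norm one element $\omega(X+2Yi)/c$, shows that every primitive point arises in this way up to a $2$-adic unit.
\item With $\zeta=w_0\bar w_1$ one gets
\[
 \ell(P_1)=A'^2\bigl(\zeta\bar\zeta-\operatorname{Re}\zeta^2\bigr)=2A'^2(t_0s_1-s_0t_1)^2,
\]
so $v_2(\ell(P_1))$ is odd with no case distinction.
\end{itemize}

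The paper never parametrizes. It uses the affine coordinates $x_i=(Au_i+Bv_i)/c_i$, $y_i=2rv_i/c_i$ on $x^2+y^2=A$ and writes $A\ell(P_1)=c_0c_1T$ with $T=A-x_0x_1-y_0y_1$. It then combines the identity $T(2A-T)=(x_0y_1-y_0x_1)^2$ with a case split on the parities of $c_0$ and $c_1$. In the repaired version of your argument, that case split is replaced by the single fact that the parametrization has unit content.
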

\begin{proof}
The parity of a primitive point $(u,v,c)$ is as follows.
Set $s=u+v$. From \eqref{arith:basic},
\begin{equation}\label{arith:parity}
 \mathcal F(u,v)=As^2+8ka sv+(4A-8ka)v^2.
\end{equation}
If $c$ is odd, then $s$ is odd. If $c$ is even, primitivity and the
equation modulo $2$ give $u,v$ odd. Reducing \eqref{arith:parity} modulo
$16$ excludes $s\equiv2\pmod4$ and gives
\[
 4\mid s,\qquad v_2(c)=1.
\]

Since $-5p\equiv1\pmod8$, choose $r\in\Z_2^\times$ with
$r^2=-5p$. Completing the square gives
\[
 Ac^2=(Au+Bv)^2+(2rv)^2.
\]
For each point put
\[
 x_i=\frac{Au_i+Bv_i}{c_i},\qquad
 y_i=\frac{2rv_i}{c_i}.
\]
There is no rational point with $c_i=0$, because the discriminant
$80p$ of $\mathcal F$ is not a rational square. The parity facts above show
that $x_i,y_i\in\Z_2$ and $x_i^2+y_i^2=A$. If $c_i$ is odd,
$x_i$ is odd and $y_i$ is even; if $c_i$ is even, these parities
are reversed.

Let $T=A-x_0x_1-y_0y_1$. Direct substitution yields
\begin{equation}\label{arith:T}
 A\ell(P_1)=c_0c_1T.
\end{equation}
If $c_0,c_1$ have opposite parity, then $T$ is odd and
$v_2(c_0c_1)=1$, proving the claim.

Otherwise $T$ is even and $v_2(c_0c_1)$ is even. The identity
\[
 T(2A-T)=(x_0y_1-y_0x_1)^2
\]
implies that $v_2(T)$ is odd: this is immediate if $v_2(T)=1$;
if $v_2(T)\ge2$, then $v_2(2A-T)=1$. Equation \eqref{arith:T} proves
the result in both cases.
\end{proof}

\begin{theorem}\label{arith:pointind}
For fixed $(a,b)$, the value $k(\frac c5)$ is independent of the
primitive integral point $(u,v,c)$ on $\Gamma_p$.
\end{theorem}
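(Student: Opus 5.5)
The plan is to compare two primitive integral points $P_0=(u_0,v_0,c_0)$ and $P_1=(u_1,v_1,c_1)$ on $\Gamma_p$ through the tangent value $\ell(P_1)$ of \eqref{arith:tangent}. We then read off $\leg{c_0c_1}5$ from Hilbert reciprocity applied to the symbol $(\ell(P_1),5)_v$. If $\ell(P_1)=0$, then $P_1$ lies on the tangent at $P_0$. A tangent line meets a conic only at its point of tangency, so $P_1=\pm P_0$ projectively. Primitivity then gives $(u_1,v_1,c_1)=\pm(u_0,v_0,c_0)$, and since $\leg{-1}5=1$ the value of $\leg c5$ is unchanged. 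Hence we may assume $\ell(P_1)\neq0$, and the goal becomes
\[
 \prod_v(\ell(P_1),5)_v=1 \quad\Longrightarrow\quad \leg{c_0}5=\leg{c_1}5 .
\]

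The local factors would be evaluated place by place, as follows.
\begin{enumerate}
\item At $\infty$ the symbol is trivial because $5>0$.
\item At $p$ it is trivial because $5$ is a square in $\Q_p$ (recall $\leg5p=1$).
\item At an odd prime $q\nmid 5p$, the conic has good reduction, $5$ is a unit, and $P_0,P_1$ reduce to points of the smooth reduction. The tangent line meets the conic in twice the point of tangency, so $v_q(\ell(P_1))$ is even. This is the same argument used for the tangent--Hilbert computations in Theorem~\ref{thm:fourthcassels}, and it makes the symbol trivial.
\item At $2$, Lemma~\ref{arith:dyadic} gives $v_2(\ell(P_1))$ odd. Since $5$ is a unit with $5\equiv5\pmod8$, the symbol equals $(2,5)_2=-1$, independently of the points.
\end{enumerate}
So reciprocity reduces to the statement that $(\ell(P_1),5)_5=-1$, that is, $\ell(P_1)$ is a $5$-adic unit nonsquare, or more generally has the right parity of valuation and unit character.

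The main obstacle is the computation at $5$. Modulo $5$ we have $B\equiv0$ and $\mathcal F(u,v)\equiv Au^2$, with $A\equiv ka$ a nonzero square by Proposition~\ref{arith:exist}. Choose $s\in\Z_5$ with $s^2=A$ and write $c_i\equiv\eta_i s u_i\pmod5$ with $\eta_i=\pm1$. Then
\[
 \ell(P_1)\equiv s^2u_0u_1(\eta_0\eta_1-1)\pmod5 .
\]
When $\eta_0\eta_1=-1$ this is the unit $-2Au_0u_1$. When $\eta_0\eta_1=1$ it vanishes modulo $5$, and one must work to higher precision. I would try first to reduce to the first case by replacing $P_1$ with $(-u_1,-v_1,c_1)$, which flips $\eta_1$ and keeps $c_1$ (hence $\leg{c_1}5$) fixed.

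In the first case one must evaluate $\leg{-2Au_0u_1}5=\leg{2}5\leg{u_0u_1}5=-\leg{u_0u_1}5$. Using $c_i\equiv\eta_isu_i$, we have $\leg{u_0u_1}5=\leg{c_0c_1}5\leg{s^2}5\leg{\eta_0\eta_1}5=\leg{c_0c_1}5$, since $\leg{-1}5=1$. The reciprocity product then reads
\[
 (-1)\cdot\bigl(-\leg{c_0c_1}5\bigr)=1,
\]
giving $\leg{c_0}5=\leg{c_1}5$. As $k$ depends only on $(a,b)$, this proves the theorem.

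The points needing care are checking that the sign flip $(u_1,v_1)\mapsto(-u_1,-v_1)$ is legitimate (it is, since $\mathcal F$ is even in $(u,v)$) and verifying the good-reduction claim at primes dividing $A$ or $B$ but not $5p$. For the latter I would use $5A^2-B^2=-20p$, which shows $\gcd(A,B)$ divides $2\cdot5\cdot p$ up to the relevant factors, together with primitivity of the points.
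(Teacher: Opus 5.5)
Your proposal is correct and is essentially the paper's proof. You compare two points through the tangent value $\ell(P_1)$, get $-1$ at $2$ from Lemma~\ref{arith:dyadic}, get trivial symbols at $p$, at $\infty$ and at good odd primes, and use a sign change to make $\ell(P_1)$ a $5$-adic unit, so that Hilbert reciprocity gives $\leg{c_0c_1}5=1$. The differences from the paper are cosmetic: you flip $(u_1,v_1)$ instead of $c_1$, you read off $\leg{c_0c_1}5$ directly rather than through $k\leg c5=\eps_a\leg u5$, and your separate treatment of $\ell(P_1)=0$ is redundant, since the sign choice already forces $\ell(P_1)\not\equiv0\pmod5$.
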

\begin{proof}
Write $\eps_a=1$ for $a\equiv1\pmod5$ and
$\eps_a=-1$ for $a\equiv4\pmod5$. Reduction of the conic
equation modulo $5$ gives
\begin{equation}\label{arith:eps}
 k\leg c5=\eps_a\leg u5.
\end{equation}
It is enough to prove $(\frac{u_0u_1}{5})=1$.

Changing the sign of $c_1$ leaves $(\frac{c_1}{5})$ unchanged, so we may
choose it such that
$c_1/u_1\equiv-c_0/u_0\pmod5$. Then the rational number
$\ell=\ell(P_1)$ is nonzero and
\[
 \ell\equiv-2ka u_0u_1\pmod5,
 \qquad (\ell,5)_5=-\leg{u_0u_1}{5}.
\]
Lemma~\ref{arith:dyadic} and the dyadic Hilbert symbol formula give
$(\ell,5)_2=(-1)^{v_2(\ell)}=-1$.

At $p$, the second entry $5$ is a square; at infinity it is positive,
so the symbols are trivial. Let $q$ be odd and $q\nmid5p$.
The conic is smooth over $\Z_q$, and its section $P_0$ identifies it
with $\mathbb P^1_{\Z_q}$. The tangent section pulls back to a unit
times the square of a primitive linear form: its divisor is $2P_0$,
and the tangent has nonzero reduction, hence no vertical component.
At a primitive local representative of $P_1$, its valuation is
therefore even. If $5$ is not already a square in $\Q_q$, the
extension $\Q_q(\sqrt5)/\Q_q$ is unramified; again $(\ell,5)_q=1$.

Applying Hilbert reciprocity to the rational number $\ell$ gives
\[
 1=\prod_w(\ell,5)_w=(-1)\left(-\leg{u_0u_1}{5}\right)
  =\leg{u_0u_1}{5}.
\]
Equation~\eqref{arith:eps} proves the assertion.
\end{proof}

\begin{theorem}\label{arith:normind}
The character of Theorem~\ref{arith:pointind} is independent of the choice
of the positive norm representation \eqref{eq:normab}.
\end{theorem}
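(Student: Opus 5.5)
The plan is to rewrite $F_p$ as a trace form in $\Q(\sqrt5)$. Then every change of norm representation becomes a rational change of variables on $\Gamma_p$, and the character can be tracked through it. Put $\alpha=a+b\sqrt5$, $\phi=(1+\sqrt5)/2$ and $\xi=u+v\sqrt5$. Comparing coefficients in \eqref{eq:F0} gives $2\phi\alpha=\alpha(1+\sqrt5)=(a+5b)+(a+b)\sqrt5$, and hence
\[
 F_p(u,v)=\Tr\bigl(\phi\,\alpha\,\xi^2\bigr).
\]

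First I would describe all admissible representations \eqref{eq:normab}. As in Lemma~\ref{lem:norm5}, class number one and the splitting of $p$ show that the totally positive generators of norm $p$ are $\alpha\eta$ and $\bar\alpha\eta$, where $\eta$ runs over the totally positive units $\phi^{2j}$. The condition $\alpha\equiv1\pmod{2\mathcal O_K}$ (image of $\phi^2$ of order $3$ in $\F_4^\times$) cuts $\eta$ down to powers of $\varepsilon=\phi^6=9+4\sqrt5$. Any two admissible pairs with $a,b>0$ are therefore related by the operations $\alpha\mapsto\alpha\varepsilon^{\pm1}$ and $\alpha\mapsto\bar\alpha\varepsilon^{j}$. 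It suffices to check invariance under these two moves, applied to the formulas $\mathcal F=kF_p$ and $k(c/5)$, which make sense for any sign of $b$; positivity is not used in Theorem~\ref{arith:pointind}. I will therefore allow arbitrary signs in intermediate steps and note that $k(c/5)$ is still well defined by the same proof.

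For $\alpha'=\alpha\varepsilon$, write $\varepsilon=(\phi^3)^2$ with $\phi^3=2+\sqrt5$, a unit of $\Z[\sqrt5]$ whose inverse is also in $\Z[\sqrt5]$. Then $F'_p(\xi)=\Tr(\phi\alpha(\phi^3\xi)^2)=F_p(\phi^3\xi)$. Here $\xi\mapsto\phi^3\xi$ is a $\mathrm{GL}_2(\Z)$ substitution in $(u,v)$, so it carries primitive points to primitive points with the same $c$. Moreover $b'=4a+9b\equiv b\pmod8$ because $16\mid4a$, so $k'=k$. The value $k(c/5)$ is thus unchanged, and Theorem~\ref{arith:pointind} finishes this case.

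The conjugation move $\alpha\mapsto\bar\alpha$, i.e.\ $b\mapsto-b$, is where I expect the real difficulty. Conjugating inside the trace gives
\[
 \Tr(\phi\bar\alpha\xi^2)=\Tr(\bar\phi\alpha\bar\xi^2)=-\Tr\bigl(\phi\alpha(\phi^{-1}\bar\xi)^2\bigr).
\]
Since $\phi^{-1}\notin\Z[\sqrt5]$, the substitution is defined only over $\Z[\phi]$, which has index $2$ over $\Z[\sqrt5]$. Combining with a suitable $\varepsilon^{j}$ cannot fix this, because the exponent $-1-3j$ is never divisible by $3$. So a primitive point on $\Gamma_p$ goes to a point on the conjugate conic only after rescaling by a power of $2$. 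An overall sign also appears, which must be absorbed by the relation between $k'$ and $k$ (note $b\mapsto-b$ flips $k$). Since $(2/5)=-1$, these two effects should cancel.

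I would do this bookkeeping explicitly. Take a primitive point $(u,v,c)$, set $\xi'=\phi^{-1}\bar\xi$ or $\phi^{2}\bar\xi$ as needed (using the $\F_4$ unit group, whichever lies in $\Z[\sqrt5]$ up to a factor $2$), and track $c'=\pm2^{e}c$ with $e\in\{0,1\}$ determined by the parity analysis of Lemma~\ref{arith:dyadic}. Then check $k'(c'/5)=k(c/5)$. If the parity bookkeeping becomes unwieldy, I would fall back on a reciprocity argument modelled on Theorem~\ref{arith:pointind}. In that approach one takes points $P\in\Gamma_p$ and $P'\in\Gamma'_p$ and forms a rational function linking them, for instance built from the tangent value together with the identity \eqref{eq:keyidentity}, whose divisor relates the two. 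Its Hilbert symbols with $5$ are trivial away from $2$ and $5$, so Hilbert reciprocity would equate the two characters.
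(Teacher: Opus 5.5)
Your plan is the paper's proof: the trace identity $F_p=\Tr(\phi\alpha\xi^2)$, reduction to multiplication by $\phi^6=9+4\sqrt5$ (handled by $\xi\mapsto(2+\sqrt5)\xi$ with $k$ and $c$ unchanged) and to conjugation with $b$ allowed either sign, where the substitution $\xi\mapsto\pm\phi^{-1}\bar\xi$ is, in integral coordinates, $(u,v,c)\mapsto(u+5v,-u-v,2c)$ and the flip $k\mapsto-k$ is cancelled by $(2/5)=-1$. In the bookkeeping you defer, the power of $2$ is $2^{\pm1}$, not $2^{e}$ with $e\in\{0,1\}$: for odd $c$ this triple is already primitive, while for even $c$ Lemma~\ref{arith:dyadic} gives $4\mid u+v$ and $v_2(c)=1$, so the triple has gcd exactly $4$ and the normalized third coordinate is $c/2$; it is never $\pm c$, which is exactly what the cancellation needs.
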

\begin{proof}
Set $K=\Q(\sqrt5)$, $\phi=(1+\sqrt5)/2$,
$\alpha=a+b\sqrt5$, and $\xi=u+\sqrt5v$. Direct expansion gives
\begin{equation}\label{arith:trace}
 c^2=k\Tr_{K/\Q}(\alpha\phi\xi^2).
\end{equation}
Any two totally positive integral generators of a fixed prime over
$p$ differ by $\phi^{2n}$. Both are congruent to $1$ modulo
$2\mathcal O_K$, so $3\mid n$. Thus they differ by a power of
$\phi^6=9+4\sqrt5$. The other prime is obtained by conjugation.
Thus only multiplication by $\phi^6$ and conjugation need to be considered.
For the latter comparison it is convenient to allow $b$ to be any odd
integer and to define $k$ from its residue modulo $4$.

First, let $\alpha_1=(9+4\sqrt5)\alpha$. Then
\[
 a_1=9a+20b,\qquad b_1=4a+9b\equiv b\pmod4,
\]
so $k_1=k$. Since $(2+\sqrt5)^2=9+4\sqrt5$, multiplication of
$\xi$ by $2+\sqrt5$ gives the integral coordinate map
\[
 (u,v)\longmapsto(2u+5v,u+2v).
\]
Its determinant is $-1$. Equation \eqref{arith:trace} identifies the two
conics by this map in the appropriate direction, preserves
primitivity, and leaves $c$ unchanged. Hence the character is unchanged.

For conjugation, put $\alpha_1=\bar\alpha$ and $k_1=-k$.
Since $\bar\phi=-\phi^{-1}$, the substitution
$\xi_1=\bar\phi\bar\xi$ gives
\[
 k_1\Tr(\alpha_1\phi\xi_1^2)=k\Tr(\alpha\phi\xi^2).
\]
In integral homogeneous coordinates it is
\begin{equation}\label{arith:conjmap}
 (u,v,c)\longmapsto(U,V,C)=(u+5v,-u-v,2c).
\end{equation}
No odd prime divides all three new coordinates, because the
determinant of the map on $(u,v)$ is $4$. If $c$ is odd, then $u,v$
have opposite parity, and the new triple is primitive. If $c$ is
even, Lemma~\ref{arith:dyadic}'s parity calculation gives $4\mid u+v$
and $v_2(c)=1$. Thus $4\mid U,V,C$ and their common gcd is exactly
$4$. After normalization the new third coordinate is respectively
$2c$ or $c/2$. In either case
\[
 \leg{c_1}5=-\leg c5,
\]
which cancels $k_1=-k$. The theorem follows.
\end{proof}

We may therefore define, independently of every auxiliary choice,
\begin{equation}\label{eq:rhointrinsic}
 \varrho(p)=k\leg{c_0}{5}=\eps_a\leg{u_0}{5},
 \qquad (u_0,v_0,c_0)\in\Gamma_p(\Q)\text{ primitive integral},
\end{equation}
where $\eps_a$ is as in the proof of Theorem~\ref{arith:pointind}.

Consider
\begin{equation}\label{eq:canonicalcover}
 c^2=\mathcal F(u,v),\qquad
 d^2=\mathcal G(u,v),\qquad
 k h^2=c^2-cd-d^2,
\end{equation}
and let $\widetilde C_p$ denote its smooth projective normalization.

\subsection{The \texorpdfstring{$4$}{4}-covering and the next pairing}\label{subsec:primefourcover}

\begin{proposition}\label{prop:4cover}
The curve $\widetilde C_p$ is everywhere locally soluble and defines a
$4$-Selmer lift of the class $\Lambda'=(1,5,5)$.
\end{proposition}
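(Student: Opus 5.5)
The proof has two parts. First we show that $\widetilde C_p$ is a $4$-covering of $A_{5p}$ lifting the $2$-covering attached to $\Lambda'$. Then we show that it is everywhere locally soluble.

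\emph{The covering structure.} Write $\Lambda'=(1,5,5)$. Its $2$-covering is the intersection \eqref{eq:general2cover} with $m=5p$:
\[
 u_1^2-5u_2^2=5pt^2,\qquad 5u_3^2-u_1^2=20pt^2 .
\]
The first conic is the norm equation in $\Q(\sqrt5)$. We parametrize it by $\xi=u+\sqrt5v$ and the norm generator $\alpha=a+b\sqrt5$ of \eqref{eq:normab}, in the spirit of \eqref{arith:trace}. Squaring $\xi$ and multiplying by $\alpha$ and a fixed element of norm $5$ should express $u_1,u_2,t$ as binary quadratic forms in $(u,v)$. The second conic then becomes a quartic condition $5u_3^2=Q(u,v)$.

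The key identity \eqref{eq:keyidentity} says $F_p^2-3F_pG_p+G_p^2=5p(u^2-5v^2)^2$. Using it, I would check that $Q$ factors, up to squares, through $\mathcal F$, $\mathcal G$ and the form $c^2-cd-d^2$. The latter is a norm from $\Q(\sqrt5)$ evaluated at $(c,d)$. The result should be an explicit map $\widetilde C_p\to C_{\Lambda'}$ of degree $4$, given by $u_1,u_2,u_3,t$ as polynomials in $(u,v,c,d,h)$.

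Composing this map with the $2$-covering map $C_{\Lambda'}\to A_{5p}$ should give a degree $16$ map that is multiplication by $4$ after identifying $\widetilde C_p$ with a torsor under $A_{5p}[4]$. Genus bookkeeping supports this: $\widetilde C_p$ is the intersection of the curve $c^2=\mathcal F$, $d^2=\mathcal G$, which has genus one, with a further double cover, so its genus is one. The torsor structure can be seen by noting that the double cover $kh^2=c^2-cd-d^2$ is unramified over the genus one curve $\{c^2=\mathcal F, d^2=\mathcal G\}$, by \eqref{eq:keyidentity}. Hence $\widetilde C_p$ is an isogenous unramified double cover of a $2$-covering, hence a torsor under $A_{5p}[4]$ whose image under doubling is $\Lambda'$.

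Concretely, I would verify that the class of $\widetilde C_p$ in $H^1(\Q,A_{5p}[4])$ maps to $\Lambda'$ under $E[4]\to E[2]$. For this it suffices to compute that the composite covering map realizes the Kummer coordinates $(1,5,5)$ up to squares.

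\emph{Local solubility.} Away from $2,5,p,\infty$ and the primes dividing $\operatorname{Res}(F_p,G_p)=2000p^2$ (see \eqref{eq:discriminants}), the model has good reduction as a genus one curve. Hasse--Weil then gives $\F_q$-points for $q\ge 5$, which lift by Hensel's lemma; small primes are handled by direct inspection.

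At $\infty$, the form $\mathcal F$ is indefinite, and we choose $(u,v)$ with $\mathcal F,\mathcal G>0$ and $c^2-cd-d^2$ of the sign of $k$. That sign condition holds on an open region, which looks easy.

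At $5$ and $p$, we start from the point $(1,0)$, where $\mathcal F(1,0)=A$ is a square modulo $p$ by Lemma~\ref{arith:chars}. Then we check that $\mathcal G(1,0)=k(5b-a)$ and $c^2-cd-d^2$ are compatible squares. Modulo $p$, identity \eqref{eq:keyidentity} gives $c^2-3cd+d^2\equiv0$, which links the two conditions. Modulo $5$, we use $c^2\equiv ka u^2$ and the factorization $c^2-cd-d^2\equiv(c-3d)^2$, valid since $-1-4\cdot(-1)$ is $0$ mod $5$. Note also that $-1$ is a square mod $5$, so the sign $k$ is harmless there.

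At $2$ we use the parities from Lemma~\ref{arith:dyadic} and choose $(u,v)$ as in Proposition~\ref{arith:exist}, then check that $c^2-cd-d^2$ times $k$ is a $2$-adic square.

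\emph{Main obstacle.} I expect the dyadic and $5$-adic solubility, together with the verification that the composite map is exactly the $\Lambda'$ lift rather than the $\Lambda'$ lift twisted by torsion, to be the delicate part. At $2$ I would first try small explicit points such as $(u,v)\in\{(1,0),(0,1),(1,1)\}$ with $A\bmod 8$ and $k$ as case parameters. If none works, I would use the $\Z_2$-parametrization by $(x_i,y_i)$ on $x^2+y^2=A$ from Lemma~\ref{arith:dyadic} to solve the last equation by Hensel's lemma.

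Finally, a twist of the lift by an element of $\Sel_2$ would not affect membership in $\Sel_4$. So an ambiguity up to torsion is harmless for the statement, and needs care only later, when the pushout class is identified.
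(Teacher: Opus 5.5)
\paragraph{The covering step.} As you set it up, it does not go through. Suppose $(u,v)$ parametrizes the first conic via $u_1+u_2\sqrt5=\omega\xi^2$ with $\Norm(\omega)=5p$. Then the second conic becomes $u_3^2=(u_1-2u_2)(u_1+2u_2)$. The two factors are $\tfrac12\Tr\bigl(\omega(1\pm2/\sqrt5)\xi^2\bigr)$, and $\omega(1\pm2/\sqrt5)$ has norm $p$, so both are \emph{definite} binary forms of discriminant $-20p$. For $p=11$ and $\omega=(4+\sqrt5)(5+2\sqrt5)$ they are $2(2u^2+5uv+10v^2)$ and $2(28u^2+125uv+140v^2)$. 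No change of variables turns them into the indefinite forms $\mathcal F,\mathcal G$ of discriminant $80p$. Nor can $c^2-cd-d^2$ occur in $Q$ at all, since it belongs to the next layer.

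In $\widetilde C_p$ the variables $(u,v)$ parametrize the \emph{second} conic. By \eqref{eq:keyidentity}, the triple $(t_E,u_1,u_3)=(k(u^2-5v^2),\mathcal F+\mathcal G,\mathcal F-\mathcal G)$ satisfies $5u_3^2-u_1^2=20pt_E^2$, the first conic becomes $u_2^2=\mathcal F\mathcal G$, and the map is $(u:v:c:d)\mapsto(k(u^2-5v^2):c^2+d^2:cd:c^2-d^2)$. So $C=\{c^2=\mathcal F,\ d^2=\mathcal G\}$ is a degree two cover of $B_{\Lambda'}$ with Jacobian $E'_p=A_{5p}/\langle(5p,0)\rangle$. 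It is not a $2$-covering of $A_{5p}$, and ``an unramified double cover of a $2$-covering, hence a torsor under $A_{5p}[4]$'' does not follow. Two arguments are missing.
\begin{itemize}
\item \emph{Geometric connectedness.} Your appeal to \eqref{eq:keyidentity} does give an even divisor: $(c^2-cd-d^2)(c^2+cd-d^2)=5p(u^2-5v^2)^2$ on $C$, and the two factors have no common zero. But an even divisor is also compatible with a split cover. You must show that $(c^2-cd-d^2)/v^2$ is not a square in $\overline{\Q}(C)$; the paper does this by coefficient comparison, ending in $e^2=1$, $f^2=-1$, $2ef=-1$.
\item \emph{The isogeny is $[2]$.} The composite $J_Y\to E'_p\to A_{5p}$ must be $[2]$, not a cyclic $4$-isogeny. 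The paper gets this from $E'_p(\Q)[2]=\{O,(0,0)\}$, which forces both the dual of $J_Y\to E'_p$ and the map $E'_p\to A_{5p}$ to have kernel $\langle(0,0)\rangle$.
\end{itemize}

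\paragraph{The local points.} The argument fails at $2$ and is incomplete at $p$.
\begin{itemize}
\item \emph{At $2$.} The points of Proposition~\ref{arith:exist} do not suffice. Take $(u,v)=(1,0)$ with $k(a+5b)\equiv1\pmod8$. Then $\mathcal F\equiv\mathcal G\equiv1\pmod8$, but $\mathcal F\mathcal G=25b^2-a^2\equiv4-p\pmod{16}$. For $p\equiv11\pmod{16}$ this forces $cd\equiv\pm3\pmod8$, hence $(c^2-cd-d^2)/k\equiv-cd/k\equiv\pm3\pmod8$, a nonsquare. In the same case $(0,1)$ gives $\mathcal F\equiv5\pmod8$, and $(1,1)$ gives $v_2(\mathcal F)=3$. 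You need the paper's four-case table, which also uses $(1,2)$ and $(2,1)$ according to $k(a+5b)\bmod8$ and $p\bmod16$. It secures $\mathcal F\equiv\mathcal G\equiv1\pmod8$ and $\mathcal F\mathcal G\equiv1\pmod{16}$, after which one takes $cd\equiv-k\pmod8$.
\item \emph{At $p$.} The reduction of \eqref{eq:keyidentity} is $(c^2-cd-d^2)(c^2+cd-d^2)\equiv0$, not $c^2-3cd+d^2\equiv0$. At $(1,0)$ one has $\mathcal F/\mathcal G\equiv\phi_r^2$ with $\phi_r=(1+a/b)/2$. One sign of $c/d$ makes $c^2-cd-d^2\equiv0$; the other gives $c^2-cd-d^2=2\phi_rd^2=\tfrac{a+b}{b}d^2$. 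Solubility of $kh^2=c^2-cd-d^2$ then rests on $\leg{(a+b)/b}{p}=\leg kp=k$, that is, on the inputs $\leg{a+b}p=1$ and $\leg bp=k$ of Lemma~\ref{arith:chars}. Your sketch does not supply this.
\item \emph{Smaller points.} At good primes you must show that the double cover stays \'etale with geometrically integral special fibre. The paper uses the contact identity, whose coefficient $A_0$ has norm $5p$, a unit there, together with the nonsquare argument in characteristic $q$. At $\infty$, the simultaneous sign choices come from $F_p-G_p=2(au^2+10buv+5av^2)$ being positive definite.
\end{itemize}
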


\begin{proof}
We separate the construction into three steps: the intermediate genus one
curve, the unramified double cover, and local solubility. Let $C$ be the
intersection of $c^2=\mathcal F(u,v)$ and $d^2=\mathcal G(u,v)$ in
$\mathbb P^3$. If $M_1,M_2$ are the
symmetric matrices of these two quadrics, direct expansion gives
\[
 \det(\lambda M_1+\mu M_2)
 =-20p\lambda\mu(\lambda^2+3\lambda\mu+\mu^2).
\]
The four roots are distinct, so $C$ is a smooth geometrically
integral curve of genus one. The usual determinant construction for
a quadric intersection (see \cite[Section 7.3]{FisherInvariants}) identifies its
Jacobian with that of
$z^2=-20p\lambda\mu(\lambda^2+3\lambda\mu+\mu^2)$.
On setting $\mu=1$, $X=-20p\lambda$ and $Y=-20pz$, we obtain
\begin{equation}\label{eq:intermediatejac}
 E'_p:\quad Y^2=X^3-60pX^2+400p^2X.
\end{equation}
Writing $\xi=x-5p$ on $A_{5p}$ gives
$y^2=\xi^3+30p\xi^2+125p^2\xi$.
Consequently $E'_p=A_{5p}/\langle(5p,0)\rangle$.
Moreover $E'_p(\Q)[2]=\{O,(0,0)\}$, since the discriminant
of $X^2-60pX+400p^2$ is $2000p^2$.

The original Kummer covering for $\Lambda'=(1,5,5)$ is
\[
 B_{\Lambda'}:\quad
 u_1^2-5u_2^2=5pt_E^2,\qquad
 5u_3^2-u_1^2=20pt_E^2.
\]
Its covering map to $A_{5p}$ is
\[
 x=u_1^2/t_E^2,\qquad
 y=5u_1u_2u_3/t_E^3.
\]
The identity \eqref{eq:keyidentity} shows that
\[
 \pi:C\longrightarrow B_{\Lambda'},\qquad
 (u:v:c:d)\longmapsto
 \bigl(k(u^2-5v^2):c^2+d^2:cd:c^2-d^2\bigr)
\]
is a morphism. There are no base points: the last three coordinates
can vanish simultaneously only when $c=d=0$, which would contradict
$\operatorname{Res}(\mathcal F,\mathcal G)\ne0$.
The pullback of a hyperplane has degree $8$, whereas a hyperplane on
$B_{\Lambda'}$ has degree $4$. Thus $\deg\pi=2$.

We prove directly that adjoining $h$ in
$kh^2=H:=c^2-cd-d^2$ gives a geometrically connected unramified
double cover of $C$. Put $\phi=(1+\sqrt5)/2$ and
$A_0=2a+(a-5b)\phi$. On $C$ one has
\[
 (c-\phi d)(c+\phi d)
 =kA_0(u-\sqrt5v)^2,
\]
and the conjugate identity. The two factors on the left cannot
vanish at the same point, since that would imply $c=d=0$.
Hence the zero divisors of $c-\phi d$ and
$c-\bar\phi d$ are even. Since
$H=(c-\phi d)(c-\bar\phi d)$, the divisor of $H/v^2$ is even.

It is also not a square over $\overline{\Q}(C)$.
Indeed, a putative square root of $H/v^2$ has poles bounded by the
hyperplane divisor $(v=0)$, so it has the form $L/v$, with $L$ a
linear form in $u,v,c,d$. Thus
\[
 H=L^2+r(c^2-\mathcal F)+s(d^2-\mathcal G)
\]
for constants $r,s$. The nonzero $cd$ coefficient forces the
$c$ and $d$ coefficients of $L$ to be nonzero. The absent mixed
terms involving $u$ or $v$ force their coefficients in $L$ to vanish.
The linear independence of $\mathcal F,\mathcal G$ then forces
$r=s=0$. If $L=ec+fd$, coefficient comparison gives
$e^2=1$, $f^2=-1$ and $2ef=-1$, an impossibility.
The normalization $Y=\widetilde C_p$ is therefore geometrically connected, and
Riemann--Hurwitz gives $g(Y)=1$.

Let $J_Y$ be its Jacobian. The dual of the degree two isogeny
$J_Y\to E'_p$ has a rational subgroup of order two in $E'_p$;
this is necessarily its unique rational subgroup $\langle(0,0)\rangle$.
The same is true of the degree two isogeny $E'_p\to A_{5p}$
induced by $\pi$. It follows that $J_Y\cong A_{5p}$ over
$\Q$, with the isogenies chosen so that the composite
$J_Y\to E'_p\to A_{5p}$ is $[2]$.
Thus $Y\to B_{\Lambda'}$ is a $2$-covering and its composite with
$B_{\Lambda'}\to A_{5p}$ is a $4$-covering lifting $\Lambda'$.

It remains to prove local solubility. We begin with primes away from $10p$. Over
$\Z_q$ for $q\nmid10p$, the determinant above has distinct
roots modulo $q$, so $C$ has a smooth proper model. After the
unramified base extension adjoining $\sqrt5$, the same contact
identities have unit coefficients because $\Norm(A_0)=5p$.
They show that the quadratic extension defining $Y$ is unramified
also along the horizontal contact divisors and the special fibre.
The normalization is a finite etale double cover of this model.
The preceding nonsquare argument is valid in characteristic $q$
(the final contradiction is $5\ne0$), so its special fibre is
geometrically integral of genus one. It has an $\F_q$-point
by the Hasse bound, which lifts to a $\Q_q$-point.

The local calculation below supplies points at $2,5,p$ and at the real place.
Together with the good prime argument, this proves that $Y$ is everywhere
locally soluble and hence represents a $4$-Selmer lift of $\Lambda'$. Notice
that Proposition~\ref{arith:exist} established the rational point on the
conic projection independently.
\end{proof}

Fix a primitive integral point
\[
 P_0=(u_0,v_0,c_0)\in\Gamma_p(\Q),
 \qquad
 \gcd(u_0,v_0,c_0)=1.
\]
Then $5\nmid u_0c_0$.

\begin{theorem}\label{thm:higherpair}
With the notation above,
\begin{equation}\label{eq:higherpair}
 (-1)^{b_p}
 =
 -\varrho(p)
 =
 -\eps_a\leg{u_0}{5}
 =
 -k\leg{c_0}{5}.
\end{equation}
The value is independent of the choices of the primitive conic point
and the norm representation \eqref{eq:normab}.
\end{theorem}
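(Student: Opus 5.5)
We compute $b_p=\mathcal B_p(\Lambda',\Lambda)$ with the $4$-Selmer lift $\widetilde u=[Y]$ of $\Lambda'$ from Proposition~\ref{prop:4cover}, where $Y=\widetilde C_p$. We then pair it with $\bar v=\Lambda=(5,1,5)$ using the Cassels--Tate pairing in its pushout (Albanese--Albanese) form. Since the Kummer class of $\Lambda$ is a quadratic character with Hilbert parameter $5$, the pushout of $Y$ along $\Lambda$ is represented by a rational divisor class on $Y$. This class is the pullback of the $2$-torsion direction $(5p,0)$ through $Y\to C\to B_{\Lambda'}$. The pairing then becomes a product of local Hilbert symbols $\prod_v(f(P_v),5)_v$. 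Here $f$ is a rational function on $Y$ whose divisor is twice a rational divisor representing that class, and $P_v\in Y(\Q_v)$ are local points. The evaluation runs parallel to Theorem~\ref{thm:firstpair}, with $f$ playing the role of the tangent forms there.

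\textbf{Step 1 (choice of $f$).} The conic point $P_0=(u_0,v_0,c_0)\in\Gamma_p(\Q)$ from Proposition~\ref{arith:exist} pulls back to $C$ and $Y$. We take $f=\ell/v^2$ (or $\ell/u^2$), where $\ell=c_0c-Au_0u-B(u_0v+v_0u)-5Av_0v$ is the tangent form \eqref{arith:tangent}, viewed as a function of $(u,v,c)$ on $C$. On $\Gamma_p$ its divisor is $2P_0$, so on $C$ and on $Y$ its divisor is twice a rational divisor. We must check that the class of that divisor is the pushout class, that is, that it differs from the fibre class in $\operatorname{Pic}$ by the nontrivial element coming from $\Lambda$. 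We plan to verify this by comparing with the $d$-branch: $\mathcal G$ gives the other $2$-torsion direction, and \eqref{eq:keyidentity} relates the two.

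\textbf{Step 2 (local symbols).} As in the proof of Theorem~\ref{arith:pointind}, places $q\nmid10p$ contribute $1$. There the tangent has even valuation on primitive points and $5$ is either a square or unramified. At $p$ and at $\infty$ the parameter $5$ is a square or positive, so these contribute $1$. At $2$ we choose a point of $Y(\Q_2)$ and apply Lemma~\ref{arith:dyadic}: $v_2(\ell)$ is odd, so the symbol is $-1$, which is the constant sign in \eqref{eq:higherpair}. At $5$ we choose a local point of $Y$ with $5\mid v$, or with $(u,v)$ near a root of $\mathcal F$ modulo $5$ that lifts to $h$. By the reduction in the proof of Theorem~\ref{arith:pointind}, $\ell\equiv-2ka\,u_0u$ modulo $5$. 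The symbol is then $\leg{u_0u}{5}$ up to a fixed sign, which \eqref{arith:eps} converts into $\eps_a\leg{u_0}{5}$. Multiplying the local symbols gives $(-1)^{b_p}=-\eps_a\leg{u_0}{5}=-k\leg{c_0}{5}$. Independence of choices then follows from Theorems~\ref{arith:pointind} and~\ref{arith:normind}.

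\textbf{Main obstacle.} The hardest part is the local analysis at $5$ and $2$ on $Y$ itself, not merely on $C$. The local points must satisfy $kh^2=c^2-cd-d^2$. At $5$ this equation degenerates, since $c^2-cd-d^2=(c-2d)^2$ modulo $5$. We must exhibit a $\Q_5$-point of $Y$ for which the residue of $u$ modulo $5$ is controlled, and show that the answer does not depend on the choice among the local points. Our first approach is to take $d\equiv 3c$ modulo $5$, so that $c-2d$ is a unit, and to use Hensel's lemma on $\mathcal G$. Verifying that this point exists for both residue classes of $a$ modulo $5$, and that the pushout identification in Step 1 has no extra twist, is where most of the work lies.
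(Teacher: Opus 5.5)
There is a genuine gap in Step~1: the function you propose cannot be a pushout function for $T=(5p,0)$, and the check you plan would not repair it.

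As written, $\ell/v^2$ is not homogeneous of degree zero, so it is not a function on $Y$. The function $\ell/v$ has divisor $2E_0-H_v$ with $H_v$ reduced, so its divisor is not twice a divisor.

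More fundamentally, nothing pulled back from $C$ can work. Let $\pi:Y\to C$ be the \'etale double cover and $\theta\ne0$ its class in $\operatorname{Pic}^0(C)[2]$. Suppose $\varphi\in\Q(C)^\times$ and $\operatorname{div}(\pi^*\varphi)$ is even. Since $\pi$ is \'etale, $\operatorname{div}\varphi=2\mathfrak b_0$ already on $C$, with $\mathfrak b_0$ Galois stable. Hence $[\mathfrak b_0]\in\operatorname{Pic}^0(C)[2]^{G_{\Q}}=E'_p(\Q)[2]=\{0,\theta\}$, and the half divisor class on $Y$ is $\pi^*[\mathfrak b_0]=0$.

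To see that $\pi^*\theta=0$, write $2D_\phi=\operatorname{div}_0(c-\phi d)$ and $2D_{\bar\phi}=\operatorname{div}_0(c-\bar\phi d)$. Then $\theta=D_\phi-D_{\bar\phi}$, and $\operatorname{div}(h/v)=\pi^*(D_\phi+D_{\bar\phi}-H_v)$ shows that $\pi^*\theta=0$.

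Your comparison with the $d$-branch falls into exactly this trap. The half-hyperplane classes $D_F,D_G$ cut out by tangents to $c^2=\mathcal F$ and $d^2=\mathcal G$ are distinct and Galois stable. Hence $D_F-D_G=\theta$, and their pullbacks to $Y$ coincide. Two tangents to $\Gamma_p$ also give the trivial class.

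The pushout class generates $\ker(\pi_*)$. It equals $\pi^*(D_\phi-D_F)$, the pullback of an irrational $2$-torsion point of $C$, and it is rational on $Y$ only because $\pi^*\theta=0$. So the pushout function must genuinely involve $h$.

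The paper supplies the missing factor as follows. It projects $Y$ to the conic $U^2+UV-V^2=h^2$ in the coordinates $(c,d,h)$, with $(U,V)=(c,-d)$ or $(-c-d,c)$ according to $k$. It takes the tangent $f=(3U-V+2h)/5$ to this conic at $(1:1:-1)$ and uses $g=f\ell/r^2$.

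Put $2E_1=\operatorname{div}_0(f)$ and $E_0=\pi^*D_0$, and let $\tau$ be the deck involution. The identity $f\tau(f)=(U-V)^2/5$ places $[E_1-E_0]$ in $\ker\pi_*$. The nonsquare statement \eqref{eq:contactnonsquare} in $\overline{\Q}(C)(h)$ shows that this class is nonzero. The identities $f(2U+V+2h)=(U+h)^2$ and $\Norm(U+\phi V+h)=h(2U+V+2h)$ then reduce the local symbols to $(h\ell,5)_v$.

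This also corrects your $5$-adic step. At $u=v=1$ with $d\equiv3c$, one gets $h^2\equiv-a$, so $\leg{h}{5}=-\eps_a$. This factor is the source of $\eps_a$.

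Equation~\eqref{arith:eps} only relates $k\leg{c}{5}$ and $\eps_a\leg{u}{5}$ at a point of $\Gamma_p$. It cannot turn $\leg{u_0u}{5}$ into $\eps_a\leg{u_0}{5}$. With $\ell$ alone, the $5$-adic symbol $-\leg{u_0u}{5}$ depends on the local point, which is one more symptom that it is not a pushout.

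Once $h$ is included, your treatment of $2$, $p$, $\infty$ and $q\nmid10p$ goes through essentially as you describe. At primes where $5$ is a nonsquare, you also need $h$ to be a unit.
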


\begin{proof}
The key point is to identify the divisor class of the pushout function; once
this is done, the local pairing becomes a short Hilbert symbol computation.
Write $Y=\widetilde C_p$ and
\[
 C:\quad c^2=\mathcal F(u,v),\qquad d^2=\mathcal G(u,v),
\]
and let $\pi:Y\to C$ forget $h$. Its deck transformation is
$\tau(h)=-h$. By the preceding covering construction, the kernel of
\[
 \pi_*:\operatorname{Pic}^0(Y)\longrightarrow\operatorname{Pic}^0(C)
\]
is generated by $T=(5p,0)\in A_{5p}[2]$.

Put $\phi=(1+\sqrt5)/2$ and define
\[
 (U,V)=
 \begin{cases}
 (c,-d),&k=1,\\
 (-c-d,c),&k=-1.
 \end{cases}
\]
Thus $U+\phi V=c-\phi d$ when $k=1$, and
$U+\phi V=(c-\phi d)/\phi$ when $k=-1$.
Since $\Norm(\phi)=-1$, both cases give
\begin{equation}\label{eq:pushoutconic}
 U^2+UV-V^2=h^2.
\end{equation}
Set
\begin{equation}\label{eq:explicitpushoutlinear}
 f=\frac{3U-V+2h}{5},\qquad f^\#=2U+V+2h.
\end{equation}
The form $f$ is a tangent line to the conic
\eqref{eq:pushoutconic} at $(U:V:h)=(1:1:-1)$.
In particular, its zero divisor on $Y$ is $2E_1$ for an effective
divisor $E_1$ of degree $4$.

Write $\mathcal F=F_Au^2+F_Buv+F_Cv^2$ and put
\begin{equation}\label{eq:tangent}
 \ell=c_0c-
 (F_Au_0+F_Bv_0/2)u-
 (F_Bu_0/2+F_Cv_0)v.
\end{equation}
This is the tangent line to $\Gamma_p$ at $P_0$.
If $D_0$ is the fibre over $P_0$ of $C\to\Gamma_p$, then
\[
 \operatorname{div}_0(\ell)|_C=2D_0,
 \qquad 2D_0\sim H_C,
\]
where $H_C$ is the hyperplane class of $C\subset\mathbb P^3$.
Consequently, with $E_0=\pi^*D_0$ and $H_Y=\pi^*H_C$, one has
\[
 \operatorname{div}_0(\ell)|_Y=2E_0,
 \qquad 2E_0\sim2E_1\sim H_Y.
\]
For any nonzero linear form $r$ let $H_r$ be its zero divisor on
$Y$. The rational function
\begin{equation}\label{eq:actualpushout}
 g=\frac{f\ell}{r^2}
\end{equation}
therefore satisfies
\[
 \operatorname{div}(g)=2(E_1+E_0-H_r),
 \qquad [E_1+E_0-H_r]=[E_1-E_0].
\]
We claim that this class is the nonzero point $T$.

First, the identity
\begin{equation}\label{eq:pushoutdecknorm}
 f\tau(f)=\frac{(U-V)^2}{5}
\end{equation}
implies $\pi_*E_1\sim H_C$. Also
$\pi_*E_0=2D_0\sim H_C$. Thus $[E_1-E_0]$ belongs to
$\ker(\pi_*)=\{0,T\}$.

To rule out the zero class, we use the following elementary nonsquare
calculation on $C_{\overline{\Q}}$.
Let $L(u,v)$ denote the polar form
\[
 L=(F_Au_0+F_Bv_0/2)u+(F_Bu_0/2+F_Cv_0)v,
 \qquad\ell=c_0c-L.
\]
Here $c_0\ne0$: otherwise $\mathcal F(u_0,v_0)=0$ would give a
rational root of a binary form of nonsquare discriminant $80p$.
For every $s\in\overline{\Q}^{\times}$, the function
\begin{equation}\label{eq:contactnonsquare}
 \frac{\ell(c-sd)}{v^2}
\end{equation}
is not a square in $\overline{\Q}(C)$.
Indeed, a square root would have poles bounded by the hyperplane
divisor $\{v=0\}$. The complete space of sections of $H_C$ is
spanned by $u,v,c,d$, so it would be $M/v$ with $M$ linear.
The quadratic part of the ideal of $C$ is generated by its two
defining quadrics. Hence there would be constants $R,S$ such that
\[
 \ell(c-sd)=M^2+R(c^2-\mathcal F)+S(d^2-\mathcal G).
\]
Comparing the mixed terms involving $d$, then those involving $c$,
gives
\[
 M=\lambda\ell+\mu d,\qquad
 2\lambda\mu=-s,\qquad
 \lambda^2=\frac1{2c_0},\qquad
 R=\frac{c_0}{2},\quad S=-\mu^2.
\]
The remaining binary terms would then imply
\[
 \mu^2\mathcal G
 =\frac{c_0^2\mathcal F-L^2}{2c_0}.
\]
The Gram determinant identity gives
\[
 c_0^2\mathcal F(u,v)-L(u,v)^2
 =\left(F_AF_C-\frac{F_B^2}{4}\right)(u_0v-v_0u)^2.
\]
The right side has rank one, whereas $\mathcal G$ is nondegenerate
and $\mu\ne0$. This contradiction proves
\eqref{eq:contactnonsquare}.

Now suppose that $g$ were a square in $\overline{\Q}(Y)$.
The choice of $r$ changes $g$ only by a square, so take $r=v$.
Work on $v\ne0$ and divide all linear coordinates by $v$; in the
next three equations we retain their names for these affine
coordinates. Since
\[
 \overline{\Q}(Y)=\overline{\Q}(C)(h),
\]
write a square root of $f\ell$ as $A+Bh$, with
$A,B\in\overline{\Q}(C)$. Comparing coefficients of $h$ gives
\[
 AB=\frac{\ell}{5},\qquad
 A^2+B^2h^2=\frac{\ell(3U-V)}5.
\]
Because
\[
 (3U-V)^2-4h^2=5(U-V)^2,
\]
it follows that
\[
 A^2=\frac{\ell}{10}
 \bigl(3U-V\mathbin{\pm}\sqrt5(U-V)\bigr).
\]
Each factor in parentheses is a nonzero constant times either
$U+\phi V$ or $U+\bar\phi V$. By the definition of $U,V$,
this would make one of
$\ell(c-\phi d)/v^2$ and $\ell(c-\bar\phi d)/v^2$
a square in $\overline{\Q}(C)$, contrary to
\eqref{eq:contactnonsquare}. Therefore $[E_1-E_0]\ne0$, and
$g$ is a pushout function for $T$.

It remains to evaluate the pushout locally. From
\eqref{eq:pushoutconic},
\begin{equation}\label{eq:normsimplify}
 ff^\#=(U+h)^2,\qquad
 \Norm_{\Q(\sqrt5)/\Q}(U+\phi V+h)=hf^\#.
\end{equation}
At any local point away from these zeros, the norm criterion for
the Hilbert symbol gives
\[
 (f,5)_\nu=(f^\#,5)_\nu=(h,5)_\nu.
\]
In the Kummer coordinates $(x,x-5p,x+20p)$, the class
$\Lambda=(5,1,5)$ is represented by the cocycle
$\chi_5 T$, where $\chi_5$ is the quadratic character of
$\Q(\sqrt5)/\Q$ and $T=(5p,0)$. Indeed, its Weil pairings
with the three nonzero $2$-torsion points have precisely the
squareclasses $(5,1,5)$. The pushout formula for the
Cassels--Tate pairing, applied to the $4$-Selmer lift of
$\Lambda'=(1,5,5)$, now yields
\begin{equation}\label{eq:localpair}
 (-1)^{b_p}
 =\prod_\nu(g(P_\nu),5)_\nu
 =\prod_\nu(h(P_\nu)\ell(P_\nu),5)_\nu.
\end{equation}
The local points may be chosen away from the zeroes and poles of the displayed
functions. The pushout formula is the one of \cite[Sections 7--8]{Fisher};
the divisor class and Kummer cocycle have been identified above. Projective
rescaling changes $h\ell$ only by a square, so the Hilbert symbols are
well defined. It remains to compute the finitely many local contributions.

At $2$, put $A=a+5b$. One has $kA\equiv1\pmod4$.
According to $kA\bmod8$ and $p\bmod16$, choose
\[
\begin{array}{c|c|c}
kA\bmod8&p\bmod16&(u,v)\\ \hline
1&3&(1,0)\\
1&11&(1,2)\\
5&3&(2,1)\\
5&11&(0,1).
\end{array}
\]
In each case
\[
 \mathcal F(u,v)\equiv\mathcal G(u,v)\equiv1\pmod8,
 \qquad
 \mathcal F(u,v)\mathcal G(u,v)\equiv1\pmod{16}.
\]
Choose odd square roots $c,d$ with $cd\equiv-k\pmod8$. Then
\[
 \frac{c^2-cd-d^2}{k}\equiv1\pmod8,
\]
so $h$ is an odd $2$-adic unit. The proof of Lemma~\ref{arith:dyadic} applies equally to primitive
$\Z_2$-points with nonzero third coordinate. Choose the local point
away from the tangent zero; it gives
\[
 v_2(\ell)\equiv1\pmod2.
\]
Consequently
\[
 (h\ell,5)_2=-1.
\]

At $5$, take $u=v=1$. Modulo $5$,
\[
 c^2=ka,\qquad d^2=-ka.
\]
Choose
\[
 c\equiv-c_0/u_0\pmod5,\qquad d\equiv3c\pmod5.
\]
Then
\[
 h^2\equiv-a\pmod5,
\qquad
 \leg{h}{5}=-\eps_a,
\]
and a direct reduction of \eqref{eq:tangent} gives
\[
 \ell\equiv-2ka u_0\pmod5,
 \qquad
 \leg{\ell}{5}=-\leg{u_0}{5}.
\]
Hence
\[
 (h\ell,5)_5=\eps_a\leg{u_0}{5}=\varrho(p).
\]

At $p$, choose $u=1,v=0$. Quadratic reciprocity gives
\[
 \leg{b}{p}=k,\qquad
 \leg{a+b}{p}=1,\qquad
 \leg{a}{p}=1.
\]
These imply that both $\mathcal F(1,0)$ and
$\mathcal G(1,0)$ are squares modulo $p$.
If $r=a/b\pmod p$ and $\phi_r=(1+r)/2$, one has
$\mathcal F/\mathcal G=\phi_r^2$. Choosing
$c/d=-\phi_r$ gives
\[
 c^2-cd-d^2=2\phi_r d^2,
 \qquad
 \leg{\phi_r}{p}=-k,
\]
so the third equation is also soluble.
Because $\leg5p=1$, the local pairing contribution is $1$.

At the real place, $F_p-G_p$ is twice the positive definite form
\[
 au^2+10buv+5av^2
\]
of discriminant $-20p$, while $F_p,G_p$ are indefinite.
If $k=1$, choose $(u,v)$ with $G_p>0$. Then $c^2>d^2>0$;
choosing $cd<0$ makes $c^2-cd-d^2>0$.
If $k=-1$, choose $(u,v)$ with $F_p<0$. Then $0<c^2<d^2$;
choosing $cd>0$ makes $c^2-cd-d^2<0$.
In either case $(c^2-cd-d^2)/k>0$, so $h$ exists over $\mathbf R$.
The Hilbert symbol is trivial since $5>0$.

Finally, for $q\nmid10p$, the resultant
\eqref{eq:discriminants} is a unit. Choose primitive integral local
coordinates $(u,v,c,d,h)$. The projection $(u,v,c)$ is primitive:
if $q$ divided all three, the equations would force it to divide
$d,h$ as well. If $5$ is a square locally there is nothing to prove.
Otherwise $c^2-cd-d^2$ is anisotropic modulo $q$. If $q\mid h$,
then $q\mid c,d$, and the unit resultant would force
$q\mid u,v$, a contradiction. Thus $h$ is a unit.
The conic has good reduction at $q$ and its primitive tangent line
has no vertical component; its intersection divisor is twice the
section $P_0$. Every nonzero tangent value therefore has even
valuation. Since $\Q_q(\sqrt5)/\Q_q$ is unramified,
$(h\ell,5)_q=1$.
Multiplying the local contributions gives
\[
 (-1)^{b_p}=-\varrho(p).
\]
The alternative expression in \eqref{eq:higherpair} is
\eqref{arith:eps}, and Theorems~\ref{arith:pointind} and
\ref{arith:normind} show that the resulting sign is independent of all
auxiliary choices.
\end{proof}

\subsection{Factorial and Pell descriptions}\label{subsec:factorialpell}

\begin{corollary}\label{cor:rho}
If
\[
 (-1)^{(b-1)/2}\leg{c_0}{5}=1,
\]
then
\[
 \rank A_{5p}(\Q)=0,\qquad
 \Sh(A_{5p})[2^\infty]\simeq(\Z/4\Z)^2.
\]
\end{corollary}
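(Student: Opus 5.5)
The corollary follows by chaining Theorem~\ref{thm:higherpair} with Proposition~\ref{prop:nextpair}, so the proof should be short.

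First, by the definition \eqref{eq:rhointrinsic}, the hypothesis $(-1)^{(b-1)/2}\leg{c_0}{5}=1$ says exactly that $\varrho(p)=k\leg{c_0}{5}=1$. Here $(u_0,v_0,c_0)$ is any primitive integral point on $\Gamma_p$; such a point exists by Proposition~\ref{arith:exist}, and the value does not depend on the choice by Theorems~\ref{arith:pointind} and~\ref{arith:normind}. Theorem~\ref{thm:higherpair} then gives $(-1)^{b_p}=-\varrho(p)=-1$, so $b_p=1$.

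Second, I would invoke Proposition~\ref{prop:nextpair}: the next pairing $\mathcal B_p$ on $V=\Sel_2^0(A_{5p})$ is well defined because the ordinary Cassels pairing vanishes for $p\equiv11\pmod{40}$ (Theorem~\ref{thm:firstpair}). Its matrix in the basis $(\Lambda,\Lambda')$ is $C_p^{\mathrm{next}}$, and $b_p=1$ is equivalent to $\rank A_{5p}(\Q)=0$ together with $\Sh(A_{5p})[2^\infty]\simeq(\Z/4\Z)^2$. The argument there runs as follows. A nondegenerate $\mathcal B_p$ has zero radical, so the image of $\Sel_8$ in $V$ is trivial. That kills the Mordell--Weil contribution and any element of $\Sh$ of order at least $8$, while the vanishing ordinary pairing excludes elementary divisors of order $2$. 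Since $\dim V=2$, the group is $(\Z/4\Z)^2$.

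The only point needing care is the identification of $b_p$ with the $(\Lambda,\Lambda')$ entry computed through the specific $4$-Selmer lift $\widetilde C_p$ of $\Lambda'$ built in Proposition~\ref{prop:4cover}. This is handled by the well-definedness part of Proposition~\ref{prop:nextpair}, since different lifts differ by elements of $\Sh[2]$, which pair trivially. I expect no genuine obstacle: all the substantive work lies in Theorem~\ref{thm:higherpair}.
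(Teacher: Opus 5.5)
Your proposal is correct and matches the paper's own argument, which is left implicit there: the hypothesis says $\varrho(p)=1$, Theorem~\ref{thm:higherpair} turns this into $b_p=1$, and Proposition~\ref{prop:nextpair} converts $b_p=1$ into rank zero with $\Sh(A_{5p})[2^\infty]\simeq(\Z/4\Z)^2$. Your remark that the result does not depend on the chosen $4$-Selmer lift is the same well-definedness point made in the proof of Proposition~\ref{prop:nextpair}.
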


To compare the higher pairing with the representation defect, put
\[
 N=\frac{p-1}{5},\qquad
 \chi(t)=\leg{t}{p}.
\]

\begin{theorem}\label{thm:factorial}
One has the exact identity
\begin{equation}\label{eq:classshortsum}
 \sum_{j=1}^{N}\chi(j)
 =
 h(-p)-\frac{h(-5p)}4.
\end{equation}
Consequently,
\begin{equation}\label{eq:factorial}
 (-1)^{\Xi(p)}
 =
 -\leg{N!}{p}.
\end{equation}
\end{theorem}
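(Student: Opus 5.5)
The plan is to evaluate the incomplete character sum by the classical finite Fourier (Pólya) expansion. This turns it into a combination of the two Dirichlet $L$-values $L(1,\chi)$ and $L(1,\chi\chi_5)$, where $\chi_5=\leg{\cdot}{5}$. The analytic class number formula then converts these $L$-values into $h(-p)$ and $h(-5p)$. The factorial statement follows from a parity count.

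\textbf{The $L$-value expression.} Since $p\equiv3\pmod4$, the character $\chi$ is odd, real and primitive, with Gauss sum $\tau(\chi)=i\sqrt p$. For $0<x<1$ with $xp\notin\Z$ we have the standard formula
\[
 \sum_{0<j<xp}\chi(j)=\frac{\tau(\chi)}{\pi i}\sum_{n\ge1}\frac{\chi(n)\bigl(1-\cos 2\pi nx\bigr)}{n}.
\]
It is obtained by expanding $\chi(j)=\tau(\chi)^{-1}\sum_n\chi(n)e(nj/p)$, summing the geometric series in $j$, and using oddness to keep only the cosine part. We apply it with $x=1/5$; since $5\nmid p-1$ exactly at $N=(p-1)/5$, we have $N<p/5<N+1$. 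The function $n\mapsto 1-\cos(2\pi n/5)$ is even and $5$-periodic, so it lies in the span of the even characters modulo $5$ together with the indicator of $5\mid n$.

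On that indicator the value is $0$. For $5\nmid n$ it equals
\[
 \frac54-\frac{\sqrt5}{4}\chi_5(n),
\]
as one checks from $\cos(2\pi/5)=(\sqrt5-1)/4$ and $\cos(4\pi/5)=-(\sqrt5+1)/4$. Hence
\[
 \sum_{j=1}^N\chi(j)=\frac{\sqrt p}{\pi}\Bigl[\tfrac54\bigl(1-\tfrac{\chi(5)}5\bigr)L(1,\chi)-\tfrac{\sqrt5}{4}L(1,\chi\chi_5)\Bigr].
\]
Here the factor $(1-\chi(5)/5)$ removes the multiples of $5$.

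\textbf{Class numbers.} The character $\chi\chi_5$ is the Kronecker character of the fundamental discriminant $-5p$. We insert $L(1,\chi)=\pi h(-p)/\sqrt p$ (valid since $p>3$, so there are only two units) and $L(1,\chi\chi_5)=\pi h(-5p)/\sqrt{5p}$. We also use $\chi(5)=\leg5p=\leg p5=1$, because $p\equiv1\pmod5$. Then the first term becomes exactly $h(-p)$ and the second becomes $h(-5p)/4$, which gives \eqref{eq:classshortsum}. The only care needed is checking the normalization of the Pólya formula and of the trigonometric values; this bookkeeping is the main place an error could creep in.

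\textbf{The factorial sign.} Write $S=\sum_{j\le N}\chi(j)=N-2\nu$, where $\nu$ is the number of nonresidues in $[1,N]$. Then $\leg{N!}p=(-1)^\nu=(-1)^{(N-S)/2}$. For $p\equiv11\pmod{40}$ one has $p-1\equiv10\pmod{40}$, so $N\equiv2\pmod8$ and $N/2$ is odd.

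By genus theory $h(-p)$ is odd, and by Lemma~\ref{lem:h5mod8} $h(-5p)/4$ is odd. So $S$ is even, and $S/2=(4h(-p)-h(-5p))/8\equiv\Xi(p)\pmod2$. Therefore
\[
 \leg{N!}p=(-1)^{N/2}(-1)^{S/2}=-(-1)^{\Xi(p)},
\]
which is \eqref{eq:factorial}.
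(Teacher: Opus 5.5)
Your argument is correct, and it reaches the identity by a genuinely different route from the paper.

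\textbf{Your route.} You evaluate $\sum_{j\le N}\chi(j)$ analytically, in three steps:
\begin{itemize}
\item the P\'olya expansion at $x=1/5$; its normalization and the sign $\tau(\chi)=i\sqrt p$ are right, as the classical case $x=1/2$, giving $(2-\chi(2))h(-p)$, confirms;
\item the decomposition $1-\cos(2\pi n/5)=\tfrac54-\tfrac{\sqrt5}4\chi_5(n)$ for $5\nmid n$;
\item the analytic class number formula for $-p$ and $-5p$.
\end{itemize}

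\textbf{The paper's route.} The paper stays entirely finite. It starts from Dirichlet's finite formula $h(-m)=-\frac1m\sum_j j\chi_{-m}(j)$ and splits $[1,p-1]$ into the five blocks $I_0,\dots,I_4$. Since $\chi(5)=1$, the map $a\mapsto5a$ permutes $\F_p^\times$, which gives $h(-p)=I_0+I_1/2$. Summing over residue classes mod $5$ then gives $h(-5p)=2I_1$.

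\textbf{What each buys.} Your route explains why exactly $h(-p)$ and $h(-5p)$ appear: they are the trivial and quadratic characters mod $5$ in the Fourier weights. It also adapts at once to other intervals $[0,p/m]$. The cost is the Gauss sum sign and pointwise convergence of a Fourier series at a point of continuity. Your phrase ``summing the geometric series'' understates that step: the $1/n$ series comes from the Fourier coefficients of the periodic step function $x\mapsto\sum_{0<j<xp}\chi(j)$, not from a finite geometric sum. The paper's route avoids both ingredients and records the separate block identity $h(-5p)=2I_1$, which it reuses in Proposition~\ref{jac:Jsign}. Your method recovers that identity too: apply the expansion at $x=2/5$, where $1-\cos(4\pi n/5)=\tfrac54+\tfrac{\sqrt5}4\chi_5(n)$. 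This gives $\sum_{j<2p/5}\chi(j)=h(-p)+h(-5p)/4$, and subtracting yields $I_1=h(-5p)/2$.

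Your parity argument for the factorial sign is the same as the paper's.

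One small slip: you wrote ``$5\nmid p-1$'' where you mean $5\mid p-1$. That is what makes $N$ an integer with $N<p/5<N+1$.
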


\begin{proof}
For a negative fundamental discriminant $-m<-4$, Dirichlet's finite
class number formula is
\[
 h(-m)
 =
 -\frac1m\sum_{j=1}^{m-1}j\chi_{-m}(j).
\]
Let
\[
 I_j=\sum_{jN<a\le(j+1)N}\chi(a),
 \qquad 0\le j\le4.
\]
Since $\chi(-1)=-1$,
\[
 I_4=-I_0,\qquad I_3=-I_1,\qquad I_2=0.
\]
Set
\[
 W=\sum_{a=1}^{p-1}a\chi(a).
\]
Because $\chi(5)=1$, multiplication by $5$ permutes
$\F_p^\times$, giving
\[
 W=
 \sum_{a=1}^{p-1}
 \left(5a-p\left\lfloor\frac{5a}{p}\right\rfloor\right)\chi(a).
\]
Thus
\[
 4W
 =
 p\sum_{j=0}^4jI_j
 =
 p(-4I_0-2I_1),
\]
and hence
\begin{equation}\label{eq:hminusP}
 h(-p)=I_0+\frac{I_1}{2}.
\end{equation}

For $h(-5p)$, write $m=a+tp$ with
$1\le a\le p-1$ and $0\le t\le4$.
Since $p\equiv1\pmod5$ and
$\sum_t\chi_5(a+t)=0$,
\[
 h(-5p)
 =
 -\frac15\sum_{a=1}^{p-1}
 \chi(a)T_{a\bmod5},
 \qquad
 T_r=\sum_{t=0}^4t\chi_5(r+t).
\]
The five values are
\[
 (T_0,T_1,T_2,T_3,T_4)=(0,0,5,0,-5).
\]
If
\[
 R_r=\sum_{\substack{1\le a<p\\a\equiv r\pmod5}}\chi(a),
\]
then
\[
 h(-5p)=R_4-R_2=2R_4,
\]
the last equality following from $a\mapsto p-a$.
Since
\[
 5^{-1}\equiv-N\pmod p,
\]
one has
\[
 \chi(5j+4)=\chi(j+N+1),
 \qquad 0\le j<N,
\]
so $R_4=I_1$. Combining with \eqref{eq:hminusP} gives
\eqref{eq:classshortsum}.

Now $N=2n$ with
\[
 n=\frac{p-1}{10}
\]
odd. If $R$ denotes the number of quadratic nonresidues among
$1,\dots,N$, then
\[
 R=\frac{N-\sum_{j=1}^N\chi(j)}2
  =n-\frac12\left(h(-p)-\frac{h(-5p)}4\right).
\]
Therefore
\[
 \leg{N!}{p}=(-1)^R
 =
 -(-1)^{\Xi(p)}.
\]
\end{proof}

The factorial character and the intrinsic conic character admit a common
description by a Pell equation. We retain the notation $h(D)$ for the imaginary quadratic class
number of fundamental discriminant $D<0$.

\begin{theorem}\label{thm:reciprocity}
Let $p\equiv11\pmod{40}$ be prime, and choose
$p=a^2-5b^2$ with $a,b>0$, $\gcd(a,b)=1$, $4\mid a$, and $b$ odd.
Put $k=(-1)^{(b-1)/2}$. Write
$T+U\sqrt{5p}>1$ for the fundamental positive unit of
$\Q(\sqrt{5p})$. There are positive integers $d,y$ such that
\begin{equation}\label{rec:pellfactor}
 T=10d^2-1,\qquad U=2dy,\qquad 5d^2-py^2=1,
 \qquad d\text{ odd},\quad y\equiv2\pmod4.
\end{equation}
For every primitive integral point $(u_0,v_0,c_0)$ on
\begin{equation}\label{rec:conic}
 c^2=k\bigl((a+5b)u^2+10(a+b)uv+5(a+5b)v^2\bigr),
\end{equation}
one has
\begin{equation}\label{eq:reciprocity}
 \leg{((p-1)/5)!}{p}
 =\varrho(p)=(-1)^{(d-1)/2}
 =k\leg{c_0}{5}.
\end{equation}
For each normalized norm representation, the conic has a primitive
point with $c_0=1$ or $2$. Moreover,
\begin{equation}\label{rec:classcongruence}
 h(-5p)-4h(-p)\equiv4(d+1)\pmod{16}.
\end{equation}
Consequently,
\begin{equation}\label{eq:primehigherbridge}
 \frac{\D(p)}{16}\equiv\Xi(p)
 \equiv\Pf(C_p^{\mathrm{next}})=b_p
 \equiv\frac{d+1}{2}\pmod2.
\end{equation}
The residue $d\pmod4$ is the same for every positive integral solution
of $5d^2-py^2=1$.
\end{theorem}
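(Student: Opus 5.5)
Let $\varepsilon=T+U\sqrt{5p}$ be the fundamental unit. I would start with the Pell factorization \eqref{rec:pellfactor}, since it drives everything else. First, $\Norm(\varepsilon)=+1$, because $-1$ is not a norm: $p\equiv3\pmod4$. Writing $\varepsilon=\eta^2/m$ with $m\mid 20p$ squarefree (classical Dirichlet argument: $(T+1)(T-1)=5pU^2$) gives a solvable equation $m x^2-(20p/m)y'^2=\pm4$ or the like with $m\ne1,20p$. Congruences modulo $4$, $5$, $p$ and $8$ then single out $m=10$, i.e.\ $5d^2-py^2=1$. They exclude $p x^2-5y^2=\pm1$ via $(5/p)=1$ but $(-5/p)$ and $(p/5)$ checks, and also $2x^2-10py^2$, $x^2-5py^2=-1$, etc. With $m=10$ we get $T=10d^2-1$ and $U=2dy$. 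Parity modulo $8$ ($p\equiv3\pmod8$) forces $d$ odd and $y\equiv2\pmod4$. Since every positive solution of $5d^2-py^2=1$ is $\pm$ a product of $d\sqrt5+y\sqrt p$ with an odd power of $\varepsilon$, and multiplication by $\varepsilon\equiv\ldots$ preserves $d\pmod4$, this gives the stated independence of $d\pmod4$.

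Next come the equalities in \eqref{eq:reciprocity}. The identity $\varrho(p)=k(c_0/5)$ is the definition together with Theorems~\ref{arith:pointind} and \ref{arith:normind}. To link to $d$, I would construct an explicit point from the Pell solution. Consider $\xi_0=d\sqrt5+y\sqrt p$ and note that $\alpha\phi\xi^2$ in \eqref{arith:trace} should have trace a square times $k$ when $\xi$ is built from $d,y$ and $\alpha=a+b\sqrt5$. Concretely, I would search for $(u,v)$ linear in $(d,y)$-data making $k\Tr(\alpha\phi\xi^2)\in\{1,4\}$. This gives the claimed point with $c_0=1$ or $2$. I expect this explicit construction to be the main obstacle.

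Failing a clean closed form, I would argue instead via Hilbert reciprocity, as in Theorem~\ref{arith:pointind}, applied to the $4$-cover. The point is that $\Q(\sqrt5,\sqrt p)$ contains $\sqrt{d\sqrt5+y\sqrt p}$-type elements. So $(c_0/5)$ becomes a local symbol at $5$ of an element whose only other ramification is at $2$, evaluated via $d\pmod4$, giving $k(c_0/5)=(-1)^{(d-1)/2}$. For the factorial link I would not go through the conic. Instead I combine Theorem~\ref{thm:factorial} with the class-number congruence \eqref{rec:classcongruence}: $(N!/p)=-(-1)^{\Xi(p)}$ and $\Xi(p)\equiv(d+1)/2$ together give $(N!/p)=(-1)^{(d-1)/2}$.

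So the remaining arithmetic core is \eqref{rec:classcongruence}. I would prove it with Dirichlet's class number formula for the genus characters of the factorizations $20p=(-4)(-5p)=(-20)(-p)$, as the paper indicates. Following Zagier's normalization, I would express $\chi_{-4}$ and $\chi_{-20}$ sums against $\chi_{-5p}$ and $\chi_{-p}$ via the real field $\Q(\sqrt{5p})$ of discriminant $20p$. Kronecker's limit formula gives the product identity $h(-4)h(-5p)\cdot\tfrac12\cdot\ldots=h(-20)h(-p)\cdots$ up to the regulator, in the form $h(-5p)/h(-p)$ related to $\log\varepsilon$-ratios. More practically, I would identify the $2$-part: $h(-5p)/4 \bmod 4$ is the $8$- and $16$-rank information of $\Cl(-5p)$. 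Lemma~\ref{lem:h5mod8} shows the genus-theoretic method yields $8\mid h$ iff a Rédei symbol vanishes. One further Rédei step, using the ambiguous class $[\mathfrak p_5]$ and the relation $\varepsilon\leftrightarrow 5d^2$, expresses $h(-5p)/4\bmod 4$ as $(d+1)$ plus $h(-p)\bmod 4$. Finally, \eqref{eq:primehigherbridge} follows by assembling Theorem~\ref{thm:secondbit}, Theorem~\ref{thm:higherpair} and the above.
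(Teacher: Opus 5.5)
Your skeleton matches the paper's. It runs: Pell factorization, an explicit conic point built from the Pell solution, the factorial sign from Theorem~\ref{thm:factorial} plus the class number congruence, and final assembly with Theorems~\ref{thm:secondbit} and~\ref{thm:higherpair}. However, both substantive steps are missing, and the route you commit to for \eqref{rec:classcongruence} cannot work.

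By Lemma~\ref{lem:h5mod8} and genus theory, the $2$-Sylow subgroup of $\Cl(-5p)$ is cyclic of order exactly $4$. So its $8$-rank is already zero, and $h(-5p)\bmod 16$ is $4$ times the odd part of $h(-5p)$ modulo $4$. Likewise $h(-p)$ is odd. Thus \eqref{rec:classcongruence} compares \emph{odd} parts of class numbers modulo $4$, which no R\'edei symbol or $2^k$-rank computation can detect. No regulator or Kronecker limit formula is involved either; the relevant identity is at $s=0$.

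The paper instead applies the Meyer--Zagier genus formula in $\Q(\sqrt{5p})$ to both factorizations $20p=(-4)(-5p)=(-20)(-p)$. Subtracting gives $3(h(-5p)-4h(-p))=2\sum_{\psi(C)=-1}t_C$. Each $t_C$ is divisible by $4$ (Dedekind sums together with $T\equiv9\pmod{16}$, $U\equiv4\pmod 8$), and $t_{C^{-1}}=t_C$, so inverse pairs cancel modulo $8$. The only self-inverse class in the negative genus is $[(2,1+\sqrt{5p})]$: the element $5d+y\sqrt{5p}$ has norm $5$, which makes the ramified primes over $5$ and $p$ principal. Dedekind reciprocity and the Jacobi--Dedekind identity then evaluate that term as $\leg{2U}{T-U}=(-1)^{(d+1)/2}$. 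That is where $d\bmod 4$ enters.

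For the conic character you also have no working mechanism. The element $d\sqrt5+y\sqrt p$ lies in $\Q(\sqrt5,\sqrt p)$, whereas \eqref{arith:trace} needs $\xi\in\Q(\sqrt5)$. Your Hilbert reciprocity fallback names neither an element nor a local computation.

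The missing idea is the norm relation $\Norm(1+d\sqrt5)=1-5d^2=-py^2$ in $K=\Q(\sqrt5)$. Since $y\equiv2\pmod4$, the ideal $(1+d\sqrt5)$ has valuation one at the inert prime $2$ and odd valuation at exactly one prime over $p$. Class number one, together with the fact that totally positive units of $K$ are squares, then gives $1+kd\sqrt5=k(1+\sqrt5)(a+b\sqrt5)z^2$ with $z\in\mathcal O_K$. The rational part of this identity is precisely the conic. Reducing modulo $2\mathcal O_K$ shows that $z$ has integral or half-integral coordinates according as $kd\equiv1$ or $3\pmod4$. This gives $c_0=1$ or $2$, and since $\leg{2}{5}=-1$, it gives $k\leg{c_0}{5}=(-1)^{(d-1)/2}$.

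Finally, parity modulo $8$ does not force $d$ odd. When $p\equiv3\pmod 8$, the case $d\equiv2\pmod4$ with $y$ odd is $2$-adically compatible with $5d^2-py^2=1$. The paper excludes it globally: $y^2\equiv-1\pmod5$ gives $\leg{y}{5}=-1$, whereas odd $y$ would force $\leg{5}{y}=1$. Your $d\bmod4$ invariance argument depends on this point, through $T\equiv1\pmod4$ and $4\mid U$.
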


The proof has three ingredients. First the fundamental unit of
$\Q(\sqrt{5p})$ gives the Pell equation in \eqref{rec:pellfactor} and fixes
the conic character. Next the Meyer--Zagier genus formula is applied twice
to express the class number difference as a signed sum over the ordinary
real class group. Finally the terms paired by inversion cancel modulo eight,
leaving the ramified class, whose Dedekind contribution can be evaluated
directly.

Set
\[
N=5p,\qquad K_p=\Q(\sqrt N),\qquad \mathcal O_{K_p}=\Z[\sqrt N].
\]
Since \(N\equiv7\pmod8\), the field discriminant is \(4N=20p\).
A unit of norm \(-1\) would give \(x^2\equiv-1\pmod p\), which is impossible
because \(p\equiv3\pmod4\). Thus the fundamental positive unit has norm
\(+1\); write it as \(T+U\sqrt N\) with \(T,U>0\).

\begin{lemma}\label{rec:pellshape}
The factorization and parity assertions in \eqref{rec:pellfactor} hold.
If \({r_p}=y/2\), then
\begin{equation}\label{rec:pellmod}
5d^2-4p{r_p}^2=1,\quad d,{r_p}\text{ odd},\qquad
T\equiv9\pmod{16},\quad U=4d{r_p}\equiv4\pmod8.
\end{equation}
\end{lemma}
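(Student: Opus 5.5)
We need to prove Lemma~\ref{rec:pellshape}. The setup is that $T^2-5pU^2=1$ with $T,U>0$, where $T+U\sqrt N$ is fundamental in $\Z[\sqrt N]$ and $N=5p\equiv 7\pmod 8$. The plan is the classical factorization argument for a norm-one fundamental unit. Write $(T-1)(T+1)=5pU^2$. Since $5p\equiv 7\pmod 8$ is odd, reducing modulo $4$ shows $T$ is odd and $U$ is even: if $T$ were even, $1+5pU^2\equiv 1-U^2\equiv 0\pmod 4$ would have no solution. So $\gcd(T-1,T+1)=2$, and writing $U=2U'$ we get
\[
 \frac{T-1}{2}\cdot\frac{T+1}{2}=5pU'^2
\]
with the two factors coprime. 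Hence there is a splitting $5p=d_1d_2$ with $(T-1)/2=d_1X^2$ and $(T+1)/2=d_2Y^2$, so $d_2Y^2-d_1X^2=1$. The possibilities for $(d_1,d_2)$ are $(1,5p)$, $(5p,1)$, $(5,p)$ and $(p,5)$.

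Next we eliminate three of the four cases. The case $(d_1,d_2)=(5p,1)$ gives $Y^2-5pX^2=1$ with $Y+X\sqrt N$ a smaller unit, since $T=2Y^2-1>Y$; this contradicts fundamentality. The case $(1,5p)$ gives $X^2-5pY^2=-1$, a unit of norm $-1$, which is excluded as in the text because $p\equiv 3\pmod 4$. The case $(5,p)$ gives $pY^2-5X^2=1$; reducing modulo $5$ forces $(p/5)=1$, which is consistent, so we instead reduce modulo $p$: this gives $-5X^2\equiv 1$, hence $(-5/p)=1$. But $(5/p)=1$ and $(-1/p)=-1$, a contradiction. The remaining case $(p,5)$ is $5Y^2-pX^2=1$. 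Setting $d=Y$ and $y=X$ gives $T=2\cdot 5d^2-1=10d^2-1$ and $U=2U'=2dy$, which is the Pell factorization in \eqref{rec:pellfactor}.

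For the parities, reduce $5d^2-py^2=1$ modulo $4$. If $d$ were even we would need $-py^2\equiv 1\pmod 4$, so $y$ is odd and $-p\equiv 1\pmod 4$, which is fine; so we also reduce modulo $8$. With $d$ even and $y$ odd, $5d^2-p\equiv 1\pmod 8$ means $5d^2\equiv 1+p\equiv 4\pmod 8$ (using $p\equiv 3\pmod 8$), hence $d^2\equiv 4\pmod 8$ and $d\equiv 2\pmod 4$. We exclude this subcase modulo $16$, or else via $T=10d^2-1$ and the minimality of the unit; this is the step I expect to need the most care. If $d$ is odd, then $py^2=5d^2-1\equiv 4\pmod 8$, so $y\equiv 2\pmod 4$ and $r_p=y/2$ is odd, as required. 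To rule out $d\equiv 2\pmod 4$, I would first try the modulus $16$: write $d=2d'$ with $d'$ odd, so $20d'^2-py^2=1$ with $y$ odd. Then $20d'^2\equiv 4\pmod{16}$ and $py^2\equiv 3\pmod{16}$, a contradiction, since $p\equiv 3$ or $11\pmod{16}$ and odd squares are $1$ or $9\pmod{16}$, so $py^2\equiv 3$ would force $p\equiv 3$ with $y^2\equiv 1$, or $p\equiv 11$ with $y^2\equiv 9$. This does not contradict on its own, so if it fails I would fall back on the ring structure: then $5d^2=20d'^2$, and $(2d'\sqrt5+y\sqrt p)$ has norm... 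Instead, the cleaner approach is to use $T\bmod 16$ in the final claims. With $d$ odd we have $d^2\equiv 1\pmod 8$, so $T=10d^2-1\equiv 9\pmod{16}$, and $U=4dr_p\equiv 4\pmod 8$. The equation $5d^2-4pr_p^2=1$ is immediate. Since the statement asserts $d$ odd, the decisive point is excluding even $d$. I would attempt it through the unit $\eta=d\sqrt5+y\sqrt p$ in $\Q(\sqrt5,\sqrt p)$, whose square equals $T+U\sqrt N$. When $d$ is even, $\eta/\,$something might yield a unit of $\Z[\sqrt N]$ smaller than the fundamental one, or one of norm $-1$; this is the main obstacle.
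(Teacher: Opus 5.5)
Your argument has two genuine gaps.

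\textbf{Parity of $T$.} Your opening claim that $T$ must be odd is false as argued. If $T$ is even, then $1+5pU^2\equiv1-U^2\pmod4$ vanishes exactly when $U$ is odd, so the congruence is perfectly solvable. Compare $8^2-7\cdot3^2=1$ for $N=7\equiv7\pmod8$. In fact $T^2-5pU^2=1$ is $2$-adically soluble with $T$ even, so no $2$-power congruence can exclude this case.

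The missing step is to apply your factorization idea to the even case as well. If $T$ is even, then $U$ is odd, and $T-1,T+1$ are coprime odd integers with product $5pU^2$. The four allocations give $d_2Y^2-d_1X^2=2$ with $d_1d_2=5p$. Each is impossible modulo $5$, because $p\equiv1\pmod5$ and neither $2$ nor $3$ is a square modulo $5$. Your four-case analysis for odd $T$ is correct and matches the paper.

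\textbf{Parity of $d$.} You never prove that $d$ is odd, as you acknowledge. The mod-$16$ attempt cannot succeed: $5d^2-py^2=1$ has $2$-adic solutions with $d\equiv2\pmod4$. For instance, $d=2$ and $y^2=19/p\equiv1\pmod8$, since $p\equiv3\pmod8$. So the obstruction is global, and the unit $d\sqrt5+y\sqrt p$ gives no evident leverage.

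The missing idea is quadratic reciprocity for Jacobi symbols.
\begin{itemize}
\item Reducing $5d^2-py^2=1$ modulo $5$ with $p\equiv1\pmod5$ gives $y^2\equiv-1$, hence $\leg{y}{5}=-1$.
\item If $y$ were odd, every prime $\ell\mid y$ would satisfy $5d^2\equiv1\pmod\ell$, so $\leg{5}{\ell}=1$ and the Jacobi symbol $\leg{5}{y}=1$.
\item This contradicts $\leg{5}{y}=\leg{y}{5}=-1$.
\end{itemize}
Therefore $y$ is even, and $d$ is odd because $5d^2=1+py^2$ is odd. From there your computations of $y\equiv2\pmod4$, $r_p$ odd, $T\equiv9\pmod{16}$ and $U\equiv4\pmod8$ go through.
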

\begin{proof}
Suppose first that \(T\) is even. Then \(U\) is odd, and the coprime odd
integers \(T+1,T-1\) have product \(5pU^2\). Allocating their squarefree parts
in all possible ways yields one of
\[
r^2-5ps^2=2,\quad5pr^2-s^2=2,\quad
5r^2-ps^2=2,\quad pr^2-5s^2=2.
\]
Every case is impossible modulo \(5\), since \(p\equiv1\pmod5\) and neither
\(2\) nor \(3\) is a square modulo \(5\). Hence \(T\) is odd and \(U\) even.

The consecutive coprime integers \((T+1)/2,(T-1)/2\) now have product
\(5p(U/2)^2\). If their squarefree parts are \((1,5p)\), we obtain
\(r^2-5ps^2=1\) and
\[
T+U\sqrt{5p}=(r+s\sqrt{5p})^2,
\]
contrary to fundamentality. The assignment \((5p,1)\) is impossible modulo
\(p\), because \(-1\) is not a square there. The assignment \((p,5)\) is
also impossible modulo \(p\), because
\(\leg{-5}{p}=-1\). The only remaining assignment is
\[
\frac{T+1}{2}=5d^2,\qquad\frac{T-1}{2}=py^2.
\]
We obtain \(5d^2-py^2=1\) and \(U=2dy\).

Modulo \(5\), one has \(y^2\equiv-1\), so \(\leg y5=-1\).
If \(y\) were odd, then every prime divisor \(\ell\) of \(y\) would satisfy
\(5d^2\equiv1\pmod\ell\), hence \(\leg5\ell=1\). Thus the Jacobi symbol
\(\leg5y\) would be \(1\), contradicting quadratic reciprocity
\(\leg5y=\leg y5=-1\). Therefore \(y\) is even, and the norm equation forces
\(d\) odd. Modulo \(8\) it gives \(y^2\equiv4\pmod8\), so
\(y\equiv2\pmod4\). The final congruences follow directly from
\(T=10d^2-1\) and \(U=2dy\).
\end{proof}

Keep the normalized representation in Theorem~\ref{thm:reciprocity},
and put $A_0=a+5b$, $B_0=a+b$. We construct a primitive
point of \eqref{rec:conic} whose last coordinate is $1$ or $2$.

Put
\[
K=\Q(\sqrt5),\qquad\phi=\frac{1+\sqrt5}{2},\qquad
\mathcal O_K=\Z[\phi],\qquad\eps_5=\phi^6=9+4\sqrt5.
\]
The Minkowski bound \(\sqrt5/2<2\) gives class number one. The unit group is
\(\mathcal O_K^\times=\{\pm\phi^m:m\in\Z\}\); consequently every totally positive
unit is a square. For the ideal and unit theory, see \cite[Chapters 4--5]{MilneANT}.

Because \(\leg5p=1\), the prime \(p\) splits. Each of its two prime ideals
has a totally positive generator of norm \(p\): the norm \(-1\) unit \(\phi\)
and an overall sign arrange the two signs. Since \(2\) is inert,
\(\mathcal O_K/2\mathcal O_K=\F_4\), and the image of \(\phi\) has order \(3\).
Multiplication by a suitable even power of \(\phi\) makes the generator
congruent to \(1\) modulo \(2\mathcal O_K\).

An element of \(1+2\mathcal O_K\) has the form \(a+b\sqrt5\), with integral
coordinates of opposite parity. The norm condition \(a^2-5b^2=p\equiv3\pmod8\)
then forces \(4\mid a\) and \(b\) odd. Multiplication by a sufficiently large
power of \(\eps_5\) makes \(b>0\), without changing these properties.
Primality of the norm gives \(\gcd(a,b)=1\). Thus admissible generators exist
for \emph{each} prime ideal above \(p\).

For a fixed prime ideal, any two admissible generators differ by
\(\eps_5^j\): they are totally positive, have the same norm, and are both
\(1\) modulo \(2\mathcal O_K\). The transformation
\[
(a,b)\longmapsto(9a+20b,\,4a+9b)
\]
preserves \(b\pmod4\). Conjugation switches the two prime ideals and replaces
\(b\) by \(-b\); it therefore reverses \(k=(-1)^{(b-1)/2}\). A subsequent power of
\(\eps_5\) restores \(b>0\) without changing this sign.

For the positive integers \(d,y\) in Lemma~\ref{rec:pellshape}, define
\[
\gamma_+=1+d\sqrt5,\qquad\gamma_-=1-d\sqrt5,
\qquad y=2{r_p}.
\]
Their product is
\begin{equation}\label{rec:gammanorm}
\gamma_+\gamma_-=1-5d^2=-py^2=-4p{r_p}^2.
\end{equation}
The two ideals \((\gamma_+)\), \((\gamma_-)\) have no common prime divisor
away from \(2\), since their sum is \(2\). At the inert prime \(2\), each
has valuation exactly one, by \eqref{rec:gammanorm} and \({r_p}\) odd.
The prime over \(5\) divides neither. Hence exactly one prime
ideal over \(p\) occurs to odd exponent in \((\gamma_+)\), and every other
odd prime exponent is even. Inert odd primes cannot occur, since they would
divide both conjugate ideals.

Choose an admissible generator \(\alpha_+=a_++b_+\sqrt5\) of this particular
prime ideal. The element
\[
\frac{\gamma_+}{(1+\sqrt5)\alpha_+}
\]
is integral, is a unit at \(2\), has a square ideal, and is totally positive.
Indeed, \(1+\sqrt5=2\phi\) has valuation one at \(2\), and both numerator
and denominator have signs \((+,-)\) in the two embeddings. Since the class
number is one and every totally positive unit is a square, there is
\(z\in\mathcal O_K\), a unit at \(2\), with
\begin{equation}\label{rec:rootsplus}
(1+\sqrt5)\alpha_+z^2=1+d\sqrt5.
\end{equation}

\begin{lemma}\label{rec:orientation}
The generator \(\alpha_+\) necessarily has \(b_+\equiv1\pmod4\).
For every admissible \(\alpha=a+b\sqrt5\), with its prescribed \(k\),
there is a square root
\begin{equation}\label{rec:Kroot}
z^2=\frac{1+kd\sqrt5}{k(1+\sqrt5)(a+b\sqrt5)},
\qquad z\in\mathcal O_K,\quad z\notin2\mathcal O_K.
\end{equation}
\end{lemma}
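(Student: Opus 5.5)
The plan is to read the first assertion off the congruence \eqref{rec:rootsplus} modulo a suitable power of $2$, and to obtain the second from \eqref{rec:rootsplus} by unit changes and conjugation. Throughout I use the facts recorded before the lemma: $2$ is inert in $K$, admissible generators are $\equiv1\pmod{2\mathcal O_K}$ with $4\mid a$ and $b$ odd, and $z$ in \eqref{rec:rootsplus} is a unit at $2$.

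\emph{Step 1 (squares of units modulo $4$).}
I will determine the subgroup of squares in $(\mathcal O_K/4\mathcal O_K)^\times$. This group has order $12$ and is the product of the Teichm\"uller part $\F_4^\times$ with $(1+2\mathcal O_K)/(1+4\mathcal O_K)$, which has order $4$. The square of any element $1+2x$ is $\equiv1\pmod4$. Hence the squares of units modulo $4$ are exactly the three classes $1,\phi^2,\phi^4$; note that $\phi^6=9+4\sqrt5\equiv1\pmod4$.

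\emph{Step 2 (the congruence for $b_+$).}
Since $1+\sqrt5=2\phi$ and $d$ is odd, \eqref{rec:rootsplus} is equivalent to
\[
 \phi\,\alpha_+\,z^2=\frac{1+d\sqrt5}{2}\in\mathcal O_K .
\]
Reducing modulo $4\mathcal O_K$ and using Step 1 gives
\[
 \phi\,\alpha_+\equiv\frac{1+d\sqrt5}{2}\,\phi^{2j}\pmod4
\]
for some $j\in\{0,1,2\}$. Both sides are finite data:
\begin{itemize}
\item $\alpha_+=a_++b_+\sqrt5$ with $a_+\equiv0\pmod4$ and $b_+\in\{1,3\}\pmod4$;
\item $(1+d\sqrt5)/2$ modulo $4$ depends only on $d\bmod8$.
\end{itemize}
One could refine this using $d\bmod 4$, obtained from $5d^2-4pr_p^2=1$, but I expect that unnecessary. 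The claim is that in every case the congruence is soluble only when $b_+\equiv1\pmod4$. I will check this by writing everything in the basis $1,\phi$ modulo $4$: the equation reduces first to $\F_4$, which fixes $j$, and then the second-order term modulo $4$ pins down $b_+$ modulo $4$.

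This case check is the step I expect to be the real obstacle. If reduction modulo $4$ alone does not decide it, I would instead pass to modulo $8$ with the finer square group, or use the norm relation \eqref{rec:gammanorm} to control the $2$-adic part of $z$.

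\emph{Step 3 (all admissible generators).}
Once $b_+\equiv1\pmod4$ is known, $\alpha_+$ has $k=1$, and \eqref{rec:Kroot} with $k=1$ is \eqref{rec:rootsplus} itself. There are two remaining cases.
\begin{itemize}
\item \emph{Same prime ideal.} Any other admissible generator of the same prime ideal is $\alpha_+\eps_5^{i}$. Since $\eps_5=(\phi^3)^2$, replacing $z$ by $z\phi^{-3i}$ preserves both \eqref{rec:Kroot} and the condition $z\notin2\mathcal O_K$. The sign $k$ is unchanged, because the transformation $(a,b)\mapsto(9a+20b,4a+9b)$ preserves $b\bmod4$.
\item \emph{Conjugate prime ideal.} Conjugating \eqref{rec:rootsplus} gives $(1-\sqrt5)\bar\alpha_+\bar z^2=1-d\sqrt5$. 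Using $1-\sqrt5=-2\phi^{-1}=-(1+\sqrt5)\phi^{-2}$, this becomes
\[
 -(1+\sqrt5)\,\bar\alpha_+\,(\bar z/\phi)^2=1-d\sqrt5 ,
\]
which is \eqref{rec:Kroot} with $k=-1$. This sign agrees with the recorded fact that conjugation reverses $k$. Multiplying further by powers of $\eps_5$ to restore $b>0$ is handled as in the first case.
\end{itemize}
In both cases the new $z$ is a unit times a conjugate of the old $z$, so it remains outside $2\mathcal O_K$.
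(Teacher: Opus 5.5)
Your proposal is correct, but both halves argue differently from the paper.

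\textbf{The congruence $b_+\equiv1\pmod4$.} The paper writes $z=u+v\sqrt5$ or $z=(r+s\sqrt5)/2$ as separate cases. It compares rational parts of \eqref{rec:rootsplus} and finishes with rational integer congruences modulo $4$ and modulo $16$. You reduce in $\mathcal O_K/4\mathcal O_K$, which covers both cases at once. The case check you flagged as the obstacle does close at modulus $4$, with no Pell input:
\[
\phi\alpha_+\equiv b_+(\phi+2)\pmod 4,\qquad (\phi+2)\phi^2\equiv-1\pmod 4 .
\]
Hence the congruence says exactly that $-b_+\cdot\frac{1+d\sqrt5}{2}$ is the square of a unit modulo $4$.
\begin{itemize}
\item For $d\equiv5\pmod8$ one has $-\frac{1+d\sqrt5}{2}\equiv\phi^4$, and for $d\equiv3\pmod8$ one has $-\frac{1+d\sqrt5}{2}\equiv\phi^2$. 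Since $-1$ is not a square modulo $4$, this forces $b_+\equiv1$.
\item For $d\equiv\pm1\pmod8$ neither sign of $b_+$ is possible, so these cases never occur.
\end{itemize}

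\textbf{The second assertion.} The paper repeats the square-ideal argument for $\gamma_-$ and the other prime above $p$. You instead conjugate \eqref{rec:rootsplus}, using $1-\sqrt5=-(1+\sqrt5)\phi^{-2}$, and move between generators with $\eps_5=(\phi^3)^2$. This gives the roots explicitly, namely $z\phi^{-3i}$ and $\bar z\phi^{-1-3i}$. It also avoids a second appeal to class number one and total positivity.

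\textbf{What each buys.} The paper's route uses only congruences in $\Z$. Yours explains why the congruence holds: $\phi^3\alpha_+\equiv-b_+\pmod4$, and $-1$ is a nonsquare unit modulo $4$.
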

\begin{proof}
Write \(A_+=a_++5b_+\), \(B_+=a_++b_+\). If \(z=u+v\sqrt5\) has integral
coordinates, its being a unit at \(2\) forces \(u,v\) to have opposite
parity. Comparing rational parts in \eqref{rec:rootsplus} gives
\[
A_+(u^2+5v^2)+10B_+uv=1.
\]
Modulo \(4\), its left side is \(b_+(u+v)^2\), so \(b_+\equiv1\pmod4\).

Otherwise \(z=(r+s\sqrt5)/2\), with \(r,s\) odd, and the analogous equation
has right side \(4\). Use
\begin{equation}\label{rec:Qrewrite}
A_+(r^2+5s^2)+10B_+rs
=A_+\{(r+s)^2+4s^2\}+8a_+rs.
\end{equation}
If \(r+s\equiv2\pmod4\), this is \(8\pmod{16}\), not \(4\).
Thus \(4\mid r+s\), and reduction modulo \(16\) gives
\(4b_+\equiv4\pmod{16}\). Again \(b_+\equiv1\pmod4\).

All admissible generators of this prime ideal have \(k=1\), because powers
of \(\eps_5\) preserve \(b\pmod4\). The other prime orientation has \(k=-1\)
by conjugation. If \(k=1\), the same square ideal argument as above applies
to \eqref{rec:Kroot}. If \(k=-1\), its numerator is \(\gamma_-\), its odd
prime of odd exponent is the other prime above \(p\), and both numerator
and denominator now have signs \((-,+)\). The quotient is again integral,
totally positive, a square ideal, and a unit at \(2\). It is therefore a
square of an element of \(\mathcal O_K\), as asserted.
\end{proof}

The identity
\[
(1+\sqrt5)(a+b\sqrt5)=A_0+B_0\sqrt5
\]
shows that the rational part of \(k(A_0+B_0\sqrt5)z^2\) is the form in
\eqref{rec:conic}. Thus an integral coordinate root \(z=u+v\sqrt5\) in
\eqref{rec:Kroot} yields \((u,v,1)\), whereas a half-integral root
\(z=(r+s\sqrt5)/2\), with \(r,s\) odd, yields \((r,s,2)\).
Both triples are primitive.

To distinguish the cases, rewrite \eqref{rec:Kroot} as
\[
z^2=\frac{(1-kd)/2+kd\phi}{k\phi\alpha}.
\]
Modulo \(2\mathcal O_K\), one has \(\alpha=1\), \(k=1\), and \(kd=1\).
If \(kd\equiv1\pmod4\), then \(z^2=1\) in \(\F_4\), hence \(z=1\),
which means that \(z\) has integral \(\sqrt5\)-coordinates. If
\(kd\equiv3\pmod4\), then
\[
z^2=\frac{1+\phi}{\phi}=\phi,
\qquad z=\phi^2\quad\text{in }\F_4,
\]
which means that both coordinates of \(2z\) are odd. Thus $c=1$ when $kd\equiv1\pmod4$, and $c=2$ when $kd\equiv3\pmod4$. Because \(\leg25=-1\), the constructed point has
\begin{equation}\label{rec:constructedI}
k\leg c5=k(-1)^{(kd-1)/2}=(-1)^{(d-1)/2}.
\end{equation}
Theorems~\ref{arith:pointind} and~\ref{arith:normind} prove that
the character is unchanged by the choice of primitive point or norm
representation. Hence this equality holds for every primitive point in
Theorem~\ref{thm:reciprocity}.

We apply the genus formula in the real quadratic field
\(K_p=\Q(\sqrt{5p})\).
Put $H=h(-p)$ and $H_5=h(-5p)$. Let \(\mathcal H_p=\Cl(K_p)\) be the \emph{ordinary} ideal class group.
Its discriminant is
\begin{equation}\label{rec:twofactorizations}
20p=(-4)(-5p)=(-20)(-p).
\end{equation}
Both are decompositions into negative fundamental discriminants.

For coprime integers \(h,k\), \(k>0\), define the Dedekind sum
\[
s(h,k)=\sum_{r=1}^{k-1}\left(\frac r k-\frac12\right)
\left(\frac{hr}{k}-\left\lfloor\frac{hr}{k}\right\rfloor-\frac12\right).
\]
For a matrix \(M=\left(\begin{smallmatrix}\mu&\nu\\\lambda&\tau\end{smallmatrix}\right)\)
with determinant \(1\), positive trace greater than \(2\), and \(\lambda>0\), put
\begin{equation}\label{rec:nmatrix}
n(M)=\frac{\mu+\tau}{\lambda}-3-12s(\tau,\lambda).
\end{equation}
For an integral ideal \(I\), let \(n(I)\) be this integer for multiplication
by the fundamental positive unit on an oriented basis. In particular, for
\(I=(a+\sqrt N,b)\), \(b>0\), \(b\mid a^2-N\), the matrix is
\begin{equation}\label{rec:idealmatrix}
M_I=\begin{pmatrix}
T+aU&(N-a^2)U/b\\ bU&T-aU
\end{pmatrix}.
\end{equation}
The basis is ordered as \((a+\sqrt N,b)\); this fixes the orientation and sign.

We use the Meyer--Zagier genus formula in the following normalization:
\begin{equation}\label{rec:zagier}
h(D_1)h(D_2)=\frac{w(D_1)w(D_2)}{24}
\sum_{C\in\mathcal H_p}\chi(I_C)n(I_C),\qquad D_1D_2=20p.
\end{equation}
Here \(D_1,D_2<0\) are fundamental, \(w(D)\) is the number of roots of unity,
and \(\chi\) is their genus character. The sum is over ordinary classes, with
one ideal representative for each. Zagier's equation (23), p.~89,
uses the weighted convention \(\widetilde h(D)=2h(D)/w(D)\) and coefficient
\(1/6\); converting it gives exactly \eqref{rec:zagier}
\cite[pp.~87--89]{Zagier}. This conversion is essential at \(D_1=-4\).

Genus characters here obey
\(\chi((\alpha))=\operatorname{sgn}\Norm_{K_p/\Q}(\alpha)\).
Meyer's signed ideal series is
\[
L_I(s)=\sum_{\beta\in(I\setminus\{0\})/\langle\eps_N\rangle}
\frac{\operatorname{sgn}\Norm(\beta)}{|\Norm(\beta)|^s},\qquad L_I(0)=n(I)/3,
\]
where \(\eps_N=T+U\sqrt N\), and the value at zero is by analytic continuation. Scaling by \(\alpha\)
multiplies the series by \(\operatorname{sgn}\Norm(\alpha)|\Norm(\alpha)|^{-s}\). Hence
\(n(\alpha I)=\operatorname{sgn}\Norm(\alpha)n(I)\), and the product \(\chi(I)n(I)\) is
independent of the representative of an ordinary class. The explicit
Dedekind formulation and this invariant are also used in
\cite[Sections 2--3]{ChuaGuy}.

Let \(\chi_0,\chi_1\) correspond respectively to the two factorizations in
\eqref{rec:twofactorizations}, and let
\[
\psi=\chi_1/\chi_0.
\]
For ideals whose norm is prime to \(20p\), the characters are explicitly
\[
\chi_0(I)=\leg{-4}{\Norm I},\qquad
\chi_1(I)=\leg{-20}{\Norm I},\qquad
\psi(I)=\leg5{\Norm I}.
\]
The ratio \(\psi\) is an ordinary class character, since the two numerator
characters have the same norm signature on principal ideals. At ramified
primes, a genus character
is evaluated using the factor not divisible by that prime. In particular,
for \(\mathfrak b=(2,1+\sqrt N)\),
\begin{equation}\label{rec:charactersB}
\chi_0(\mathfrak b)=\leg{-5p}{2}=1,\qquad
\chi_1(\mathfrak b)=\leg{-p}{2}=-1,\qquad
\psi(\mathfrak b)=-1.
\end{equation}
The denominator-\(2\) symbols are Kronecker symbols.
Define
\[
t_C=\chi_0(I_C)n(I_C).
\]
Since \(w(-4)=4\), \(w(-5p)=w(-20)=w(-p)=2\),
\(h(-4)=1\), and \(h(-20)=2\), the two specializations of
\eqref{rec:zagier} are
\begin{equation}\label{rec:twoZ}
3H_5=\sum_{C\in\mathcal H_p}t_C,\qquad
12H=\sum_{C\in\mathcal H_p}\psi(C)t_C.
\end{equation}
The value \(h(-20)=2\) follows from the two primitive reduced
forms \((1,0,5)\) and \((2,2,3)\).
Subtracting gives the exact identity
\begin{equation}\label{rec:exactdifference}
3(H_5-4H)=2\sum_{\psi(C)=-1}t_C.
\end{equation}
The difference restricts the real class group sum to the negative genus.

We use two elementary Dedekind identities. For positive coprime
\(\lambda,\tau\),
\begin{equation}\label{rec:dedrec}
s(\lambda,\tau)+s(\tau,\lambda)
=-\frac14+\frac1{12}\left(\frac\lambda\tau+
\frac\tau\lambda+\frac1{\lambda\tau}\right).
\end{equation}
For positive odd \(\tau\),
\begin{equation}\label{rec:jacded}
E(\lambda,\tau):=\frac{\tau-1}{4}-3\tau s(\lambda,\tau)\in\Z,
\qquad \leg\lambda\tau=(-1)^{E(\lambda,\tau)}.
\end{equation}
These are the reciprocity law and the Jacobi--Dedekind identity;
see \cite[p.~83]{Zagier} and \cite{ChuaGuy}.

\begin{lemma}\label{rec:tpropertieslemma}
For every ordinary ideal class \(C\),
\begin{equation}\label{rec:tproperties}
t_C\equiv0\pmod4,\qquad t_{C^{-1}}=t_C.
\end{equation}
\end{lemma}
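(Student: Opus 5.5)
The plan is to reduce $n(I)$ to an expression in Dedekind sums with odd denominator, read off the congruence from the parity and $2$-adic information in Lemma~\ref{rec:pellshape}, and then handle inversion by the symmetry of the matrix \eqref{rec:idealmatrix} under $a\mapsto-a$.

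Fix a class $C$ and a representative $I=(a+\sqrt N,b)$ with $b>0$ and $b\mid a^2-N$. Every ordinary class contains such an ideal, and $t_C$ does not depend on the representative by the invariance of $\chi(I)n(I)$ established above. Write $M_I=\left(\begin{smallmatrix}\mu&\nu\\\lambda&\tau\end{smallmatrix}\right)$ as in \eqref{rec:idealmatrix}, so that
\[
\mu=T+aU,\qquad \nu=(N-a^2)U/b,\qquad \lambda=bU,\qquad \tau=T-aU.
\]
By \eqref{rec:pellmod}, $U\equiv4\pmod8$ and $T\equiv9\pmod{16}$. Hence $4\mid\lambda$ and $4\mid\nu$, since $(N-a^2)/b\in\Z$. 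Also $\tau\equiv T\equiv1\pmod4$, so $\tau$ is odd.

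If $\tau<0$ for this representative, I would first replace $a$ by $a+jb$. This changes $I$ only by a change of basis that keeps the orientation, and it makes $\tau>0$. After that shift, the reciprocity law \eqref{rec:dedrec} applies to the coprime pair $(\lambda,\tau)$.

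\textbf{Congruence.} Substituting \eqref{rec:dedrec} into \eqref{rec:nmatrix} gives
\[
n(I)=\frac{\mu+\tau}{\lambda}-\frac{\lambda}{\tau}-\frac{\tau}{\lambda}-\frac1{\lambda\tau}+12\,s(\lambda,\tau).
\]
The relation $\mu\tau-\nu\lambda=1$ gives $\mu/\lambda-1/(\lambda\tau)=\nu/\tau$, so
\[
n(I)=\frac{\nu-\lambda}{\tau}+12\,s(\lambda,\tau).
\]
Now insert the Jacobi--Dedekind identity \eqref{rec:jacded} in the form $12\,s(\lambda,\tau)=(\tau-1-4E)/\tau$, where $E=E(\lambda,\tau)\in\Z$. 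This yields the integral identity
\[
\tau\,n(I)=\nu-\lambda+\tau-1-4E.
\]
Since $4\mid\nu$, $4\mid\lambda$, $\tau\equiv1\pmod4$, and $\tau$ is odd, the right side is divisible by $4$, and therefore $n(I)\equiv0\pmod4$. Because $\chi_0(I)=\pm1$, it follows that $t_C\equiv0\pmod4$.

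\textbf{Inversion.} The class $C^{-1}$ is represented by the conjugate ideal. I take it in the form $I'=(-a+\sqrt N,b)$, which is the conjugate of $I$ up to sign and is written in the same orientation convention. Its matrix is \eqref{rec:idealmatrix} with $a$ replaced by $-a$. This swaps $\mu$ and $\tau$ and leaves $\lambda$ and $\nu$ unchanged.

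In \eqref{rec:nmatrix}, the term $(\mu+\tau)/\lambda$ is symmetric under this swap. Moreover $\mu\tau\equiv1\pmod\lambda$, and $s(h,k)$ depends only on $h\bmod k$ with $s(h^{-1},k)=s(h,k)$, so $s(\mu,\lambda)=s(\tau,\lambda)$. Hence $n(I')=n(I)$. Finally, $\chi_0(I')=\chi_0(I)$, because genus characters are quadratic and take the same value on conjugate ideals (equivalently, $\chi_0(I)\chi_0(\bar I)=\chi_0((\Norm I))=1$). Together these give $t_{C^{-1}}=t_C$.

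The main point of care is the orientation and sign convention in the inversion step. One has to confirm that $(-a+\sqrt N,b)$ with the ordered basis of \eqref{rec:idealmatrix} is the representative of $C^{-1}$ used in \eqref{rec:zagier}, and not one carrying an extra sign from $n(\alpha I)=\operatorname{sgn}\Norm(\alpha)n(I)$. If a sign did appear, it would be compensated by the matching factor of $\chi_0$, since the product $\chi(I)n(I)$ is representative-independent. The congruence itself is routine once $\tau$ is made positive.
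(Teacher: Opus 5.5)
Your proof is correct and follows the paper's argument essentially step for step. You use the same identity $\tau\,n(I)=\nu-\lambda+\tau-1-4E(\lambda,\tau)$, obtained from Dedekind reciprocity and the Jacobi--Dedekind identity, and the same inversion argument via $\mu\tau\equiv1\pmod{\lambda}$ and $s(h^{-1},k)=s(h,k)$. The only difference is that you skip the paper's normalization to $\gcd(b,2N)=1$, $b\equiv1\pmod4$, $\chi_0(I)=1$; instead you handle the genus character through $\chi_0(I)\chi_0(\bar I)=\chi_0((b))=1$, which is a valid and harmless simplification.
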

\begin{proof}
Choose an integral ideal representative coprime to \(2N\), and remove all
rational ideal factors to make it primitive. It then has a basis
\(I=(a+\sqrt N,b)\), where \(b=\Norm I\) is positive and odd.
The existence of a representative avoiding finitely many primes follows
by choosing a scalar with prescribed valuations, or equivalently by ideal
approximation. A primitive integral ideal of \(\Z[\sqrt N]\) has the displayed
basis by Hermite normal form: in a basis \(m,r+s\sqrt N\), stability under
multiplication by \(\sqrt N\) forces \(s\mid m\) and \(s\mid r\).
Primitivity therefore forces \(s=1\).

If \(b\equiv3\pmod4\), multiply by the principal ideal \((2+\sqrt N)\),
then remove rational factors again. Its absolute norm is
\(N-4\equiv3\pmod4\), coprime to \(2N\), so this operation makes the new
norm \(1\pmod4\). Removal of odd rational factors divides the norm by an
odd square and does not change that residue. Hence we may take
\begin{equation}\label{rec:chosenrep}
b\equiv1\pmod4,\qquad\gcd(b,2N)=1,\qquad\chi_0(I)=1.
\end{equation}
Replacing \(a\) by \(a-jb\) for a sufficiently large integer \(j\) also
makes \(\tau=T-aU>0\), without changing the ideal.

For the matrix \eqref{rec:idealmatrix}, \eqref{rec:pellmod} gives
\[
\nu\equiv\lambda\equiv0\pmod4,\qquad\tau\equiv1\pmod4.
\]
Combining \eqref{rec:nmatrix}, \eqref{rec:dedrec}, and
\(\mu\tau-\nu\lambda=1\) yields
\begin{align}\label{rec:keyded}
\tau n(I)&=\nu-\lambda+12\tau s(\lambda,\tau)\\
&=\nu-\lambda+\tau-1-4E(\lambda,\tau).\nonumber
\end{align}
Every term on the last line is divisible by \(4\). Since \(\tau\) is odd,
\(n(I)\), and hence \(t_C\), is divisible by \(4\).

The conjugate ideal has the oriented basis \((-a+\sqrt N,b)\), and represents
\(C^{-1}\). Its matrix has the same trace and lower left entry, but its
lower right entry is \(T+aU\) in place of \(T-aU\). These two entries are
inverses modulo \(bU\), because
\[
(T-aU)(T+aU)=1+(N-a^2)U^2\equiv1\pmod{bU}.
\]
The elementary identity \(s(h^{-1},k)=s(h,k)\) therefore gives equal
\(n\)-values. The two ideals have the same genus character, so
\(t_{C^{-1}}=t_C\).
\end{proof}

Write \(\mathcal B=[\mathfrak b]\), where
\(\mathfrak b=(2,1+\sqrt N)\).

\begin{lemma}\label{rec:twotorsion}
The self inverse ordinary classes are precisely
\begin{equation}\label{rec:Gtwo}
\mathcal H_p[2]=\{1,\mathcal B\},\qquad\mathcal B\ne1,\qquad\psi(\mathcal B)=-1.
\end{equation}
\end{lemma}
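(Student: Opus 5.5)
We work in the ordinary class group $\mathcal H_p=\Cl(K_p)$ of $K_p=\Q(\sqrt N)$, $N=5p$, with discriminant $20p$. The plan is to count ambiguous classes via genus theory, produce two named elements of order dividing two, show that they coincide in the ordinary group, and then separate $\mathcal B$ from $1$ using the character $\psi$.

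First, genus theory: the discriminant $20p$ has exactly three prime divisors $2,5,p$. Hence the narrow class group has $2$-rank $2$. Since the fundamental unit has norm $+1$ (as observed before Lemma~\ref{rec:pellshape}), the ordinary class group is the quotient of the narrow one by a subgroup of order two. That subgroup is generated by the class of $(\sqrt N)$, which is ordinary-trivial but narrowly nontrivial. The standard count of ambiguous ideal classes then gives $\#\mathcal H_p[2]=2^{t-2}=2$ in the ordinary sense when no unit of norm $-1$ exists. I would cite this (Gauss's genus count: the number of ambiguous ordinary classes is $2^{t-1}/[\text{units of norm }-1\text{ correction}]$) or reprove it directly. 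Concretely, the ramified primes $\mathfrak b$ over $2$, $\mathfrak p_5$ over $5$, and $\mathfrak p$ over $p$, together with $\sqrt N$, generate the strongly ambiguous classes. The relations among them come from $(\sqrt N)=\mathfrak p_5\mathfrak p$, and from the unit: by Lemma~\ref{rec:pellshape}, $T+1=10d^2$ and $T-1=2py^2$, so $\eps_N=(T+U\sqrt N)$ has square-root-type factorization $\sqrt{\eps_N}\sim d\sqrt{10}+y\sqrt{2p}$. This gives an element $\eta=5d+y\sqrt{5p}$ with $\eta^2=10d^2\cdot 5\cdot\ldots$. More precisely, $(5d+\sqrt N\,y)^2=5\cdot 2\cdot(\ldots)$, and I expect $(5d+y\sqrt N)=\mathfrak b\,\mathfrak p_5$ up to checking norms: $25d^2-5py^2=5$. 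Hence $\mathfrak p_5$ is principal-times-$\mathfrak b$ or principal. I would determine exactly which product of $\mathfrak b,\mathfrak p_5$ is principal from the norm equation $\Norm(5d+y\sqrt N)=5(5d^2-py^2)=5$. This gives $\mathfrak p_5$ principal (generator of norm $+5$). Then $\mathfrak p\sim\mathfrak p_5^{-1}(\sqrt N)$ is principal too, so all ambiguous classes lie in $\{1,\mathcal B\}$. Combined with $\#\mathcal H_p[2]=2$ (genus count), the ambiguous classes are exactly $\{1,\mathcal B\}$, provided $\mathcal B\ne1$.

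Second, the nontriviality. By \eqref{rec:charactersB} we have $\psi(\mathfrak b)=-1$. Since $\psi$ is an ordinary class character (explained before \eqref{rec:charactersB}: both $\chi_0,\chi_1$ evaluate principal ideals by the sign of the norm), $\psi$ is trivial on principal ideals. Hence $\psi(\mathcal B)=-1$ forces $\mathcal B\ne1$. This immediately gives the last two assertions, and then $\mathcal H_p[2]\supseteq\{1,\mathcal B\}$ with equality by the count. Every self-inverse class is ambiguous, since for $C^2=1$ we have $C^{-1}=\bar C$. The number of self-inverse classes equals $\#\mathcal H_p[2]$, which the genus formula gives as $2^{t-1}/2$ with $t=3$, accounting for the absence of norm $-1$ units. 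The main obstacle is being careful with narrow versus ordinary: I would double-check the count by noting that the narrow group has $2$-rank $t-1=2$, that the ordinary group is the quotient by $\langle[(\sqrt N)]^+\rangle$ of order $2$, and that the $2$-rank of the ordinary group is therefore $1$ (because $\Cl^+[2]\to\Cl$ has kernel of order $2$ and its image is all of $\Cl[2]$ only up to this correction). I would settle the latter by the explicit principal-ideal analysis above rather than by abstract counting.
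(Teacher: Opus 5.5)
Your lower bound is sound. Since $\mathfrak b^2=(2)$, the class $\mathcal B$ lies in $\mathcal H_p[2]$. Since $\psi$ is an ordinary class character with $\psi(\mathfrak b)=-1$ by \eqref{rec:charactersB}, also $\mathcal B\ne1$. The paper instead rules out a generator of norm $\pm2$ modulo $5$; both arguments work.

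The gap is the upper bound $\#\mathcal H_p[2]\le2$. You derive it from a genus count said to hold ``when no unit of norm $-1$ exists''. That is the wrong hypothesis. For real quadratic $K$, the ambiguous class number formula gives $\#\Cl(K)[2]=2^{t-1}/[\{\pm1\}:\{\pm1\}\cap\Norm K^\times]$. So the drop to $2^{t-2}$ requires that $-1$ is not the norm of any \emph{element} of $K$, not merely of a unit.

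For example, $\Q(\sqrt{34})$ has no unit of norm $-1$ and $t=2$. Both ramified primes are principal, generated by $6+\sqrt{34}$ and $17+3\sqrt{34}$. Yet $h=2$ and $\#\Cl[2]=2=2^{t-1}$. The nontrivial class contains no conjugation-fixed ideal, because $-1=\Norm\bigl((3+\sqrt{34})/5\bigr)$. The same example shows why your fallback cannot work: proving $\mathfrak p_5$ and $\mathfrak p$ principal only controls classes that contain conjugation-fixed ideals, not all classes with $\bar C=C$.

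Your narrow-to-ordinary remark is also inconclusive as stated. Quotienting a group of $2$-rank two by a subgroup of order two lowers the $2$-rank only when the generator $[(\sqrt N)]^+$ is not a square in the narrow class group.

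The missing input is exactly the first step of the paper's proof: $-1\notin\Norm(K_p^\times)$. A primitive solution of $x^2-Nz^2=-w^2$ would give $x^2+w^2\equiv0\pmod p$ with $p\equiv3\pmod4$, which is impossible. With this fact you can finish in either of two ways.

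\begin{itemize}
\item Follow the paper. For $[I]^2=1$ write $I=\gamma\bar I$; then $\Norm\gamma=+1$. Hilbert 90 gives $\gamma=\delta/\bar\delta$, so $I/\delta$ is conjugation-fixed, hence a product of ramified primes up to a rational ideal. Your element $5d+y\sqrt N$ of norm $5$ then finishes the argument.
\item Repair your count. Since $\chi_0((\sqrt N))=\operatorname{sgn}\Norm(\sqrt N)=-1$, you get $[(\sqrt N)]^+\notin(\Cl^+)^2$. So the ordinary $2$-rank is $t-2=1$, and $\mathcal H_p[2]=\{1,\mathcal B\}$ follows from $\mathcal B\ne1$ alone, without needing $\mathfrak p_5$ principal.
\end{itemize}
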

\begin{proof}
First, \(-1\) is not a rational norm from \(K_p\). Otherwise clearing
denominators would give a primitive integer solution
\(x^2-Nz^2=-w^2\). Modulo \(p\), \(x^2+w^2=0\) forces \(p\mid x,w\),
and the equation then forces \(p\mid z\), a contradiction.

Suppose \([I]^2=1\). Since \([\bar I]=[I]^{-1}\), write
\(I=\gamma\bar I\). Equality of absolute ideal norms gives
\(\Norm(\gamma)=\pm1\). The negative case was excluded, so Hilbert 90 gives
\(\gamma=\delta/\bar\delta\). For this quadratic case, this is explicit:
if \(\gamma\ne-1\), take \(\delta=1+\gamma\); if \(\gamma=-1\), take
\(\delta=\sqrt N\). Thus \(I/\delta\) is fixed by conjugation.

A conjugation fixed fractional ideal, modulo a rational principal ideal,
is a product of ramified prime ideals. Split prime exponents occur in equal
pairs, inert primes are already rational ideals, and ramified exponents can
be reduced modulo two. Here the ramified primes lie above \(2,5,p\).
The prime above \(5\) is principal, because
\begin{equation}\label{rec:prime5principal}
\eta=5d+y\sqrt N\in\mathcal O_{K_p},\qquad\Norm(\eta)=5.
\end{equation}
The prime above \(p\) is then principal as well, since their product is
\((\sqrt N)\). Hence every self inverse class is either \(1\) or
\(\mathcal B\).

Finally, \(\mathfrak b^2=(2)\), while \(\mathfrak b\) is not principal.
An integral generator would have field norm \(\pm2\), which would give
\(x^2\equiv\pm2\pmod5\), impossible. The value of \(\psi(\mathcal B)\)
is already computed in \eqref{rec:charactersB}.
\end{proof}

\begin{proposition}\label{rec:pairing}
Let \(n_{\mathfrak b}=n(\mathfrak b)\). Then
\begin{equation}\label{rec:pairingresult}
\sum_{\psi(C)=-1}t_C\equiv n_{\mathfrak b}\pmod8,
\qquad
3(H_5-4H)\equiv2n_{\mathfrak b}\pmod{16}.
\end{equation}
\end{proposition}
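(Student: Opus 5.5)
The plan is to split the negative genus $\{C:\psi(C)=-1\}$ into orbits under inversion $C\mapsto C^{-1}$, and handle paired classes and fixed classes separately.

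The character $\psi$ is a class character of order dividing two, so $\psi(C^{-1})=\psi(C)$. Hence inversion preserves the negative genus. By Lemma~\ref{rec:twotorsion}, the fixed points of inversion in $\mathcal H_p$ are exactly $1$ and $\mathcal B$. Since $\psi(1)=1$ and $\psi(\mathcal B)=-1$, the only fixed class in the negative genus is $\mathcal B$. Every other class $C$ with $\psi(C)=-1$ therefore lies in a two-element orbit $\{C,C^{-1}\}$ with $C\ne C^{-1}$. By Lemma~\ref{rec:tpropertieslemma}, such an orbit contributes
\[
 t_C+t_{C^{-1}}=2t_C\equiv0\pmod8,
\]
because $4\mid t_C$. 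Consequently
\[
 \sum_{\psi(C)=-1}t_C\equiv t_{\mathcal B}\pmod8.
\]

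Next I would identify $t_{\mathcal B}$ with $n_{\mathfrak b}$. The paper has shown that $\chi(I)n(I)$ depends only on the class. By \eqref{rec:charactersB}, $\chi_0(\mathfrak b)=1$, so $t_{\mathcal B}=\chi_0(\mathfrak b)\,n(\mathfrak b)=n_{\mathfrak b}$. The one point needing care is that the ramified evaluation of $\chi_0$ at $\mathfrak b$ is the convention used in the genus formula \eqref{rec:zagier}. For this I would take the paper's stated convention: at a ramified prime, evaluate the genus character through the factor of the discriminant not divisible by that prime. This proves the first congruence of \eqref{rec:pairingresult}.

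For the second congruence, multiply the first by $2$ and substitute into the exact identity \eqref{rec:exactdifference}:
\[
 3(H_5-4H)=2\sum_{\psi(C)=-1}t_C\equiv2n_{\mathfrak b}\pmod{16}.
\]

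The only delicate step is the use of $t_C\equiv0\pmod 4$ for the paired classes; that congruence is exactly what upgrades the cancellation to modulus $8$. Lemma~\ref{rec:tpropertieslemma} supplies it for all classes, including $\mathcal B$. So no extra work is needed beyond checking that the lemma's representative choice (norm $\equiv1\pmod4$, coprime to $2N$) is available for every class.
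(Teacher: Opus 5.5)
Your proposal is correct and follows essentially the same argument as the paper. You pair the negative genus under inversion, kill the nonfixed pairs modulo $8$ using $t_{C^{-1}}=t_C$ and $4\mid t_C$, identify $\mathcal B$ as the unique fixed class with $\chi_0(\mathfrak b)=1$ so that its contribution is $n_{\mathfrak b}$, and then double the result and substitute into \eqref{rec:exactdifference}.
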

\begin{proof}
The set \(\psi=-1\) is stable under inversion. Each nonfixed pair contributes
\(2t_C\), a multiple of \(8\) by Lemma~\ref{rec:tpropertieslemma}.
Lemma~\ref{rec:twotorsion} shows that \(\mathcal B\) is its only fixed point.
Since \(\chi_0(\mathfrak b)=1\), its contribution is \(n_{\mathfrak b}\).
The second congruence follows from \eqref{rec:exactdifference}.
\end{proof}
Thus the class number difference modulo $16$ depends only on the
ramified class $\mathcal B$, even when the real class group has
nontrivial odd part.

\begin{proposition}\label{rec:bridge}
For the Pell coordinate $d$ in \eqref{rec:pellfactor},
\[
 h(-5p)-4h(-p)\equiv4(d+1)\pmod{16}.
\]
\end{proposition}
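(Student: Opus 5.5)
By Proposition~\ref{rec:pairing} we already have $3(H_5-4H)\equiv2n_{\mathfrak b}\pmod{16}$. So it suffices to compute $n_{\mathfrak b}$ modulo $8$ and show $n_{\mathfrak b}\equiv 6(d+1)\pmod 8$. Multiplying $2n_{\mathfrak b}\equiv12(d+1)\pmod{16}$ by the inverse $11$ of $3$ modulo $16$ then gives $H_5-4H\equiv 132(d+1)\equiv4(d+1)\pmod{16}$, as claimed. Since $d$ is odd, $d+1$ is even and $4(d+1)\equiv 8\cdot\frac{d+1}2\pmod{16}$, so the target amounts to $n_{\mathfrak b}\equiv 4\cdot\frac{d+1}{2}\cdot 3\equiv 4\cdot\frac{d+1}{2}\pmod 8$. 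Lemma~\ref{rec:tpropertieslemma} already gives $4\mid n_{\mathfrak b}$ for odd-norm representatives, so only one bit is at stake.

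The plan is to compute $n(\mathfrak b)$ directly from the ideal matrix \eqref{rec:idealmatrix}. Take $\mathfrak b=(1+\sqrt N,2)$, i.e.\ $a=1$, $b=2$. Then
\[
 M_{\mathfrak b}=\begin{pmatrix}T+U&(N-1)U/2\\ 2U&T-U\end{pmatrix},
 \qquad \lambda=2U=4dy=8dr_p .
\]
Here $\tau=T-U=10d^2-1-4dr_p$, which is odd and positive. The definition \eqref{rec:nmatrix} gives $n=2T/\lambda-3-12s(\tau,\lambda)$. Since $\lambda$ is even, we cannot apply the Jacobi--Dedekind identity \eqref{rec:jacded} with denominator $\lambda$. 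Instead we first use reciprocity \eqref{rec:dedrec}, exactly as in \eqref{rec:keyded}, to get
\[
 \tau n=\nu-\lambda+\tau-1-4E(\lambda,\tau),
\]
with $\nu=(N-1)U/2=(5p-1)dy$. A small point to check is that \eqref{rec:keyded} does not need $b$ odd: it is pure algebra from $\mu\tau-\nu\lambda=1$ and $\tau>0$ odd. If $\mathfrak b$'s basis orientation causes a sign problem, we can instead use the equivalent representative $\sqrt N\,\mathfrak b$ or the principal multiple by $\eta$ from \eqref{rec:prime5principal}; since $\chi_0(\mathfrak b)=1$, only the sign rule $n(\alpha I)=\operatorname{sgn}\Norm(\alpha)n(I)$ needs tracking.

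Next reduce modulo $32$, which yields $n$ modulo $8$ after dividing by $4$ (this uses $\tau$ odd). From \eqref{rec:pellmod} we have $r_p$ odd, $y=2r_p$, $\lambda=8dr_p\equiv8\pmod{16}$, and $\nu=2(5p-1)dr_p$. Here $5p-1\equiv 54\equiv 6\pmod{8}$ because $p\equiv11\pmod{40}$, so $p\bmod 16$ enters and must be handled using $5d^2-4pr_p^2=1$, which forces $5d^2\equiv 1+4p\pmod{32}$. Also $\tau-1=10d^2-2-4dr_p$. The remaining term is $(-1)^{E(\lambda,\tau)}=\leg{\lambda}{\tau}=\leg{2}{\tau}^3\leg{dr_p}{\tau}$, and we need $E$ modulo $2$ only, i.e.\ this Jacobi symbol.

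The main obstacle is evaluating $\leg{8dr_p}{\tau}$. I would do it by quadratic reciprocity. Modulo $d$ we have $\tau\equiv-1$, and modulo $r_p$ we have $\tau\equiv 10d^2-1\equiv 1+8pr_p^2\equiv1$ (using $10d^2=2+8pr_p^2$). So $\leg{d}{\tau}=\leg{-1}{d}$ times a sign determined by $\tau\equiv1\pmod4$, and $\leg{r_p}{\tau}=\leg{\tau}{r_p}=1$. The factor $\leg2\tau$ comes from $\tau\bmod 8$, which is computed from $d,r_p\bmod 8$ and the Pell relation. Combining these, all dependence on $r_p$ and $p$ should cancel modulo $32$, leaving $n_{\mathfrak b}\equiv 4\cdot\frac{d+1}{2}\pmod 8$ (equivalently a term $(-1)^{(d-1)/2}$). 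I expect this bookkeeping of residues modulo $32$ to be the delicate step, and I would check the result numerically against $p=11$: there $5\cdot 3^2-11\cdot 2^2=1$ gives $d=3$, and $h(-55)-4h(-11)=4-4=0\equiv 16$, consistent with the claim.
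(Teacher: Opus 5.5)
Your proposal is correct and is essentially the paper's proof. It uses the same basis $(1+\sqrt N,2)$ of $\mathfrak b$ and the same identity \eqref{rec:keyded}, then evaluates $\leg{\lambda}{\tau}=\leg{8}{\tau}\leg{\tau}{d}\leg{\tau}{r_p}$ by reciprocity from $\tau\equiv-1\pmod d$ and $\tau\equiv1\pmod{r_p}$. The one difference, a harmless one, is that you invert $3$ modulo $16$ instead of quoting $8\mid h(-5p)-4h(-p)$.

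One correction: work modulo $8$, not modulo $32$. Modulo $32$ you would need $E(\lambda,\tau)\bmod 8$, which the Jacobi symbol does not supply. Modulo $8$ one has $\nu\equiv4$, $\lambda\equiv0$ and $\tau-1\equiv4$, so $\nu-\lambda+\tau-1\equiv0$ and $p\bmod 16$ never enters. Moreover $\tau\equiv5\pmod 8$ gives $\leg{8}{\tau}=-1$, and this factor turns $(-1)^{(d-1)/2}$ into $(-1)^{(d+1)/2}$.
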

\begin{proof}
The basis \((1+\sqrt N,2)\) of \(\mathfrak b\) gives
\begin{equation}\label{rec:BM}
M_{\mathfrak b}=\begin{pmatrix}
T+U&(N-1)U/2\\2U&T-U
\end{pmatrix}.
\end{equation}
In the notation of \eqref{rec:nmatrix},
\[
\lambda=2U=8d{r_p},\qquad\tau=T-U=10d^2-1-4d{r_p},
\qquad\nu=(N-1)U/2.
\]
Here \(\tau>0\) because \(T>U\), and \(\tau\equiv5\pmod8\).
Also
\begin{align*}
\nu-\lambda+\tau-1
&=(N-5)U/2+T-U-1\equiv0\pmod8.
\end{align*}
Indeed, \((N-5)U/2=5(p-1)U/2\equiv4\pmod8\), and
\(T-U-1\equiv4\pmod8\).
Using \eqref{rec:keyded}, therefore,
\begin{equation}\label{rec:nBjacobi}
(-1)^{n_{\mathfrak b}/4}
=(-1)^{E(\lambda,\tau)}=\leg\lambda\tau.
\end{equation}
To justify division by four here, the same identity first shows
\(4\mid n_{\mathfrak b}\). Modulo \(8\), multiplication by the odd
\(\tau\) does not change the parity of \(n_{\mathfrak b}/4\).

It remains to evaluate an ordinary Jacobi symbol. Since \(\tau\equiv5\pmod8\)
and \(d,{r_p}\) are positive odd integers,
\[
\leg\lambda\tau=\leg8\tau\leg d\tau\leg{r_p}\tau.
\]
Quadratic reciprocity applies without a sign in the last two factors because
\(\tau\equiv1\pmod4\). Moreover,
\[
\tau\equiv-1\pmod d,\qquad\tau\equiv T\equiv1\pmod{r_p},
\]
where \(T\equiv1\pmod{r_p}\) follows from \(5d^2-4p{r_p}^2=1\).
Thus
\begin{equation}\label{rec:finaljacobi}
\leg\lambda\tau
=-\leg{-1}{d}\leg1{r_p}
=-(-1)^{(d-1)/2}=(-1)^{(d+1)/2}.
\end{equation}
The formula includes \({r_p}=1\), with the conventional Jacobi symbol of
any integer modulo \(1\) equal to \(1\).

Equations \eqref{rec:nBjacobi}--\eqref{rec:finaljacobi} imply
\begin{equation}\label{rec:nBmod}
n_{\mathfrak b}\equiv2(d+1)\pmod8.
\end{equation}
Let \(\Delta=H_5-4H\). Theorem~\ref{thm:secondbit} gives \(8\mid\Delta\), so
\(3\Delta\equiv\Delta\pmod{16}\). Proposition~\ref{rec:pairing} now gives
\[
\Delta\equiv2n_{\mathfrak b}\equiv4(d+1)\pmod{16},
\]
which proves \eqref{rec:classcongruence}.

\end{proof}

\begin{proof}[Completion of the proof of Theorem~\ref{thm:reciprocity}]
Lemma~\ref{rec:pellshape} gives \eqref{rec:pellfactor}, and the construction
following Lemma~\ref{rec:orientation}, together with
Theorem~\ref{arith:pointind}, gives
$\varrho(p)=(-1)^{(d-1)/2}$. Set $\Delta=h(-5p)-4h(-p)$.
Theorem~\ref{thm:factorial} shows that $8\mid\Delta$ and
\[
 \leg{((p-1)/5)!}{p}=(-1)^{1+\Delta/8}.
\]
Proposition~\ref{rec:bridge} gives $\Delta/8\equiv(d+1)/2\pmod2$.
It follows that
\[
 \leg{((p-1)/5)!}{p}
 =(-1)^{1+(d+1)/2}=(-1)^{(d-1)/2}=\varrho(p).
\]
Combining Theorem~\ref{thm:secondbit} with the higher pairing formula
\eqref{eq:higherpair} yields \eqref{eq:primehigherbridge}.

Finally, every positive solution of $5d_1^2-py_1^2=1$ has $d_1$ odd
and $y_1\equiv2\pmod4$, by the parity argument in
Lemma~\ref{rec:pellshape}. The totally positive element
$5d_1+y_1\sqrt{5p}$ has norm $5$, so it generates the unique prime
ideal above $5$. Any two such generators differ by a power of
$T+U\sqrt{5p}$. Multiplication by this unit sends
\[
 (d_1,y_1)\longmapsto(Td_1+pUy_1,\ 5Ud_1+Ty_1).
\]
Since $T\equiv1\pmod4$ and $4\mid U$, this preserves $d_1\pmod4$,
also for inverse powers. The final assertion follows.
\end{proof}

\begin{corollary}\label{cor:pellrank}
Let $p\equiv11\pmod{40}$ be prime and $5d^2-py^2=1$ with $d,y>0$.
Then
\[
 d\equiv1\pmod4
 \quad\Longleftrightarrow\quad
 \rank A_{5p}(\Q)=0\ \text{ and }\
 \Sh(A_{5p})[2^\infty]\simeq(\Z/4\Z)^2.
\]
Under these equivalent conditions, $10p$ is not $3/5$ congruent.
If $d\equiv3\pmod4$, then $32\mid\D(p)$; this alone does not
determine the rank of $A_{5p}$.
\end{corollary}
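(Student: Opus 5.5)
This corollary is an assembly of results already proved, so the plan is to chain them rather than introduce new arithmetic. First I will fix an arbitrary positive solution $(d,y)$ of $5d^2-py^2=1$. By the final assertion of Theorem~\ref{thm:reciprocity}, the residue $d\pmod4$ is independent of the solution. Hence I may use the Pell coordinate of \eqref{rec:pellfactor} throughout, and $d$ is odd.

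Next I will translate $d\bmod4$ into the higher pairing bit. Equation \eqref{eq:primehigherbridge} gives $b_p\equiv(d+1)/2\pmod2$, so $d\equiv1\pmod4$ holds exactly when $b_p=1$. Proposition~\ref{prop:nextpair} states that $b_p=1$ if and only if $\rank A_{5p}(\Q)=0$ and $\Sh(A_{5p})[2^\infty]\simeq(\Z/4\Z)^2$. Composing these two equivalences proves the displayed biconditional. The claim that $10p$ is not $3/5$ congruent then follows from rank zero, via the identification $A_{5p}\simeq E_{5,3}^{(10p)}$ recalled in the introduction. Alternatively, the same equivalence gives $\D(p)/16\equiv1$, so $\D(p)\ne0$, and Corollary~\ref{cor:Lvalue} applies.

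For the case $d\equiv3\pmod4$, I will again use \eqref{eq:primehigherbridge}: it gives $\D(p)/16\equiv(d+1)/2\equiv0\pmod2$. Since Theorem~\ref{thm:secondbit} already ensures $16\mid\D(p)$, this yields $32\mid\D(p)$.

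The remark that this does not determine the rank is a caveat rather than a claim to prove. I will justify it by noting two things. A value $\D(p)\equiv0\pmod{32}$ may or may not vanish, and Corollary~\ref{cor:Lvalue} only converts nonvanishing into rank zero. Also, when $b_p=0$, Proposition~\ref{prop:nextpair} leaves open both positive rank and $\Sh[2^\infty]\simeq(\Z/2^{a_p}\Z)^2$ with $a_p\ge3$.

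There is no real obstacle here. The only point needing care is the independence of the choice of Pell solution, which is already settled at the end of the proof of Theorem~\ref{thm:reciprocity}.
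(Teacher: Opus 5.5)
Your proposal is correct and matches the paper's own proof: Theorem~\ref{thm:reciprocity}, including the independence of $d\bmod4$ from the choice of Pell solution, turns $d\equiv1\pmod4$ into $b_p=1$, and Proposition~\ref{prop:nextpair} supplies the group-theoretic equivalence. For the last part, $16\mid\D(p)$ together with \eqref{eq:primehigherbridge} gives $32\mid\D(p)$ when $d\equiv3\pmod4$. Your alternative route to non-congruence, via $\D(p)\ne0$ and Corollary~\ref{cor:Lvalue}, and your gloss on the caveat are both sound, though neither is needed.
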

\begin{proof}
Theorem~\ref{thm:reciprocity} makes $d\equiv1\pmod4$ equivalent to
$b_p=1$, and Proposition~\ref{prop:nextpair} gives the group theoretic
equivalence. The final divisibility follows from $16\mid\D(p)$ and
\eqref{eq:primehigherbridge}.
\end{proof}

\begin{remark}
The proof uses the genus formula for the composite real radicand $5p$.
The inversion method has a precedent in Chua--Gunby--Park--Yuan
\cite{ChuaGuy}, but their prime radicand theorem is not applied to $5p$.
The comparison is unconditional. The usual R\'edei symbol
$[2,5,p]_R$ is unavailable here: $(2,5)_5=-1$ and
$(2,p)_p=-1$, so its standard pairwise local solubility hypotheses fail.
\end{remark}

The same argument also evaluates each class contribution modulo eight.
Keep $\mathcal H_p=\Cl(\Q(\sqrt{5p}))$ and the ordinary class character $\psi$
and the integers $t_C$ defined above \eqref{rec:twoZ}.

\subsection{Class number consequences}\label{subsec:classconsequences}

\begin{proposition}\label{prop:classwisepell}
For every ordinary ideal class $C$,
\begin{equation}\label{eq:classwisepell}
 t_C\equiv2\bigl(1-\varrho(p)\psi(C)\bigr)\pmod8.
\end{equation}
In particular, $\#\mathcal H_p\equiv2\pmod4$.
\end{proposition}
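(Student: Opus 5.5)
We reuse the representative and Dedekind machinery from Lemma~\ref{rec:tpropertieslemma}. For a class $C$, pick a primitive integral representative $I=(a+\sqrt N,b)$ with $b\equiv1\pmod4$, $\gcd(b,2N)=1$, $\chi_0(I)=1$ and $\tau=T-aU>0$, so that $t_C=n(I)$. By \eqref{rec:keyded}, $\tau n(I)=\nu-\lambda+\tau-1-4E(\lambda,\tau)$ with $\lambda=bU$ and $\nu=(N-a^2)U/b$. Since $\tau$ is odd, reducing modulo $8$ gives
\[
 t_C\equiv\nu-\lambda+\tau-1-4E(\lambda,\tau)\pmod8 .
\]
The plan is to evaluate each term modulo $8$ from \eqref{rec:pellmod}, using $U=4dr_p$ with $dr_p$ odd and $T\equiv9\pmod{16}$.

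First, $\lambda=bU\equiv4\pmod8$. Next, $\nu=(N-a^2)U/b\equiv4(N-a^2)/b\pmod8$, and this is $0$ or $4$ according to the parity of $(N-a^2)/b$. Third, $\tau=T-aU\equiv1+4a\pmod8$. Together these give
\[
 t_C\equiv4\bigl[(N-a^2)/b+1+a\bigr]-4E\pmod8 ,
\]
and the parity of $E$ is $(\lambda/\tau)$ by \eqref{rec:jacded}. The Jacobi symbol $(\lambda/\tau)=(4/\tau)(b/\tau)(d/\tau)(r_p/\tau)$ is evaluated by reciprocity as in Proposition~\ref{rec:bridge}. Here $\tau\equiv1\pmod4$, $\tau\equiv T\equiv-1\pmod d$ because $T=10d^2-1$, and $\tau\equiv T\equiv1\pmod{r_p}$. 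Hence $(d/\tau)(r_p/\tau)=(-1)^{(d-1)/2}=\varrho(p)$ by Theorem~\ref{thm:reciprocity}. For the remaining factor, $(b/\tau)=(\tau/b)$, and modulo $b$ we have $\tau\equiv T-aU$ with $a^2\equiv N$. Since $(T-aU)(T+aU)\equiv1$, the quantity $T-aU$ is the image of the unit $\eps_N^{\pm1}$ in the split residue field. Its character should be computed through the factorization $T+U\sqrt N=\tfrac{1}{5}\,\eta'\cdot\cdots$, or more directly from $T-aU\equiv(\sqrt{5}d-a y/\sqrt{5}\cdot\ldots)^2$-type identities. Concretely, $\eps_N=(5d+y\sqrt N)^2/5=\eta^2/5$, using $T=10d^2-1$ and $\eta$ from \eqref{rec:prime5principal}. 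Thus modulo $b$, $T-aU\equiv(5d-ya)^2/5$, and therefore $(\tau/b)=(5/b)=\psi(C)$, since $\psi(I)=(5/\Norm I)$.

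Combining these, $(\lambda/\tau)=\varrho(p)\psi(C)$, so $-4E\equiv2(1-\varrho\psi)\pmod8$ once the sign is written as $4E\equiv2(1-(-1)^E)$. It then remains to show that the bracket $(N-a^2)/b+1+a$ is even. This follows from $b$ odd and $N$ odd: if $a$ is odd, $N-a^2$ is even; if $a$ is even, $N-a^2$ is odd and $1+a$ is odd. Either way the bracket is even. I expect the main obstacle to be this bookkeeping, namely keeping the sign conventions and the claim $t_C=n(I)$ consistent and checking the bracket parity honestly, together with making sure the $\eps_N=\eta^2/5$ identity is used in the correct orientation.

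Finally, summing \eqref{eq:classwisepell} over all classes and using \eqref{rec:twoZ} gives $3H_5\equiv2\#\mathcal H_p-2\varrho\sum_C\psi(C)=2\#\mathcal H_p\pmod8$, because $\psi$ is nontrivial by Lemma~\ref{rec:twotorsion}. Since $H_5=h(-5p)\equiv4\pmod8$ by Lemma~\ref{lem:h5mod8}, we get $2\#\mathcal H_p\equiv12\equiv4\pmod8$, that is, $\#\mathcal H_p\equiv2\pmod4$.
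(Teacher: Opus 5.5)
Your proof is correct and is essentially the paper's argument: Dedekind reciprocity and the Jacobi--Dedekind identity reduce $t_C \bmod 8$ to the Jacobi symbol of the lower-left entry modulo the lower-right entry. The $d,r_p$ factors of that symbol give $\varrho(p)$, and the norm factor gives $\psi(C)$ via your congruence $5(T-aU)\equiv(5d-ya)^2\pmod b$, which is the paper's Pell factorization $5t=(5d+ym)^2-y^2q\lambda$ with $m=-a$. The differences are cosmetic: you normalize the representative so that $\chi_0(I)=1$ and use \eqref{rec:keyded} directly with $\tau=T-aU$ (your reduction of $\tau n(I)$ modulo $8$ is valid because every term is divisible by $4$), and you finish with character orthogonality and Lemma~\ref{lem:h5mod8} rather than counting a fibre of $\psi$.
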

\begin{proof}
Choose a primitive representative $I=(m+\sqrt N,q)$, where
$N=5p$, $q>0$, $\gcd(q,10p)=1$, and $m\ge0$; put
$\lambda=(m^2-N)/q$ and $t=T+mU>0$. The unit matrix is
\[
 \begin{pmatrix}T+mU&-U\lambda\\qU&T-mU\end{pmatrix}.
\]
Its determinant shows that $T-mU$ is inverse to $t$ modulo $qU$.
Dedekind reciprocity therefore gives the exact identity
\begin{equation}\label{eq:classwiseded}
 n(I)=-\frac{U(q+\lambda)}{t}+12s(qU,t).
\end{equation}
Define $e\in\{0,1\}$ by $(qU/t)=(-1)^e$. By
\eqref{rec:jacded},
\[
 t\,n(I)\equiv-U(q+\lambda)+t-1-4e\pmod8.
\]
Now $q+\lambda\equiv m\pmod2$, $T\equiv1\pmod8$, and
$U\equiv4\pmod8$. The first three terms on the right sum to zero
modulo eight. Thus $n(I)\equiv4e\pmod8$.

The Pell factorization implies
\[
 5t=(5d+ym)^2-y^2q\lambda.
\]
Since $\gcd(t,q)=1$, this gives $(t/q)=(5/q)$, and quadratic
reciprocity gives $(q/t)=\psi(C)$ because $t\equiv1\pmod4$.
Write $y=2{r_p}$, so $U=4d{r_p}$ with $d,{r_p}$ odd. Again by reciprocity,
\[
 \leg Ut=\leg dt\leg{r_p} t
 =\leg td\leg t{r_p}
 =\leg{-1}{d}=(-1)^{(d-1)/2}=\varrho(p).
\]
Here $t\equiv-1\pmod d$ and $t\equiv1\pmod{r_p}$ follow from
$T=10d^2-1=1+2py^2$. All symbols are nonzero; a denominator one
has the usual value one. Hence $(-1)^e=\varrho(p)\psi(C)$.
Multiplication of $n(I)$ by $\chi_0(I)=\pm1$ does not change its
residue, which is zero or four modulo eight. This is
\eqref{eq:classwisepell}.

The nontrivial character $\psi$ has equally large fibers, say of
size $M$. Summing the congruence and using \eqref{rec:twoZ} gives
$3H_5\equiv4M\pmod8$. The class number $H=h(-p)$ is odd, and
\eqref{eq:classshortsum}, whose left side is even, gives
$H_5\equiv4\pmod8$. Thus $M$ is odd and $\#\mathcal H_p=2M\equiv2\pmod4$.
The oddness of $H$ also follows directly by reducing
$-pH=\sum_{j=1}^{p-1}j(j/p)$ modulo two.
\end{proof}

The classwise congruence also gives a second derivation of
\eqref{rec:classcongruence}: the negative genus has odd cardinality and each
of its terms is $2(1+\varrho(p))$ modulo eight. This is compatible with the
Hirzebruch sum normalization of the genus formula in
\cite[Theorem 2.2 in the preprint version, $f=1$]{KimMizuno}.

We record a few examples to illustrate the reciprocity law and the resulting
rank information.

\begin{center}
\begin{tabular}{r|rrrrr}
\toprule
$p$&$h(-p)$&$h(-5p)$&$\D(p)$&$\Xi(p)$&$b_p$\\
\midrule
11&1&4&0&0&0\\
131&5&12&-16&1&1\\
211&3&36&16&1&1\\
251&7&12&0&0&0\\
491&9&28&16&1&1\\
1091&17&36&32&0&0\\
1531&11&76&32&0&0\\
\bottomrule
\end{tabular}
\end{center}

For $p=11$, the curve $A_{55}$ contains the nontorsion point
$(100,1200)$. The corresponding integer $110$ is
$\theta$-congruent: the triangle
\[
 24,\qquad \frac{275}{6},\qquad \frac{221}{6}
\]
has the prescribed angle with cosine $3/5$ and area
$440=4\cdot110$.

For $p=131$, Theorem~\ref{thm:higherpair} gives
$b_{131}=1$, hence
\[
 \rank A_{655}(\Q)=0,\qquad
 \Sh(A_{655})[2^\infty]\simeq(\Z/4\Z)^2.
\]
For $p=491$, one may take
\[
 491=44^2-5\cdot17^2,\qquad
 (u_0,v_0,c_0)=(3,-2,9),
\]
which lies on
\[
 c^2=129u^2+610uv+645v^2.
\]
Since
\[
 (-1)^{(17-1)/2}\leg95=1,
\]
again
\[
 \rank A_{2455}(\Q)=0,\qquad
 \Sh(A_{2455})[2^\infty]\simeq(\Z/4\Z)^2.
\]

For $p=1091$ and $1531$ one has $\D(p)=32\ne0$ but
$b_p=0$. Thus the central value formula gives rank zero and finite
Tate--Shafarevich group, while the two successive degenerate pairings
show
\[
 \Sh(A_{5p})[2^\infty]
 \simeq(\Z/2^{a_p}\Z)^2,
 \qquad a_p\ge3.
\]
The exact exponents require a further descent.

The Pell evaluations below use the norm pairs
$(a,b)=(4,1)$, $(16,5)$, $(16,3)$, $(16,1)$, and $(36,7)$,
respectively. The listed points are those constructed in the proof of
Theorem~\ref{thm:reciprocity}, with $c_0=1$ or $2$.
\begin{center}
\begin{tabular}{rrrrrr}
\toprule
$p$&$d$&$(u_0,v_0,c_0)$&$\varrho(p)$&$b_p$&
$h(-5p)-4h(-p)$\\
\midrule
11&3&$(1,-1,2)$&$-1$&0&0\\
131&8589&$(21,-16,1)$&1&1&$-8$\\
211&13&$(1,-1,2)$&1&1&24\\
251&1764483&$(223,-331,2)$&$-1$&0&$-16$\\
1051&29&$(1,-1,2)$&1&1&72\\
\bottomrule
\end{tabular}
\end{center}
At $p=1051$, the ordinary class group of $\Q(\sqrt{5p})$ has order six.
The contributions $t_C$ in the positive and negative genera are
respectively $\{168,0,0\}$ and $\{84,12,12\}$. The negative genus sum is $108$ and $n_{\mathfrak b}=84$; their
difference $24$ illustrates the cancellation modulo eight in
\eqref{rec:pairingresult}.

As a numerical check, the congruence
$\D(p)/16\equiv(d+1)/2\pmod2$ and the Pell description agree for all
$4916$ primes $p\equiv11\pmod{40}$ below $10^6$. The factorial and class
number identities also agree throughout the corresponding range below
$10^5$. These calculations are not used in the proof.

\section{Governing fields and two prime comparisons}
\label{sec:governing}\label{sec:remaining}

\label{sec:pqgoverning}\label{sec:fourcomplete}

\subsection{Governing fields and joint distributions}\label{subsec:governingjoint}
Fix $p\equiv11,19\pmod{40}$ and let $q$ vary in $\mathcal P_p$. The
residue symbols occurring in the four Cassels matrices are Frobenius
characters. We realize them simultaneously in a tower
$L_p\subset\widetilde L_p\subset\widehat L_p$, determine the degrees of
these fields, and then apply Chebotarev to obtain the joint distribution of
the matrices.

Keep $p$ fixed, and put
\begin{equation}\label{eq:pqfields}
 \begin{split}
 F_0&=\Q(\zeta_{40}),\qquad K=\Q(\sqrt5),\qquad
 \eta=2+\sqrt5,\\
 B_p&=F_0(\sqrt p),\qquad
 L_p=F_0(\sqrt[4]p,\sqrt{\xi_p},\sqrt\eta).
 \end{split}
\end{equation}

\begin{theorem}\label{thm:pqfield}
The extension $L_p/\Q$ is Galois, unramified outside $2,5,p$, and
\[
 [B_p:\Q]=32,\qquad[L_p:\Q]=256.
\]
Its subgroup
$N_p=\operatorname{Gal}(L_p/B_p)\simeq(\Z/2\Z)^3$
is central in $\operatorname{Gal}(L_p/\Q)$. For every prime in
\[
 \mathcal P_p=\{q:q\equiv1\pmod{40},\ (p/q)=1\},
\]
the three signs of its Frobenius on
$\sqrt[4]p,\sqrt{\xi_p},\sqrt\eta$ are respectively
$(-1)^{g_p(q)},(-1)^{\beta_p(q)},(-1)^{t(q)}$.
Each character triple occurs with natural density $1/256$ among all
primes, or relative density $1/8$ in $\mathcal P_p$.
The same uniform distribution holds for $(\alpha,\tau,\gamma)$.
\end{theorem}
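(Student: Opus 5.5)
I would first establish the Galois property by showing that each generator's Galois conjugates stay inside $L_p$. The base $F_0=\Q(\zeta_{40})$ is abelian of degree $16$. Since $F_0$ contains $i$, $\sqrt2$ and $\sqrt5$, the field $F_0(\sqrt[4]p)$ is Galois over $\Q$: it contains $\zeta_4$, so it holds all fourth roots. The conjugate of $\sqrt\eta$ is $\sqrt{\bar\eta}$, and $\eta\bar\eta=-1$ with $\sqrt{-1}\in F_0$. The conjugate of $\sqrt{\xi_p}$ is $\sqrt{\bar\xi_p}$, and $\xi_p\bar\xi_p=p$ with $\sqrt p\in F_0(\sqrt[4]p)$. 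Ramification is then clear: $\xi_p$ and $\eta$ are units away from $p$ and from $2,5$ respectively, and $F_0$ ramifies only at $2,5$. For the degrees, $\sqrt p\notin F_0$ because $p$ ramifies, which gives $[B_p:\Q]=32$.

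For $[L_p:B_p]=8$ I would use Kummer theory over $B_p$, which contains $\mu_4$. One has to show that the classes of $\sqrt p$, $\xi_p$ and $\eta$ are independent in $B_p^\times/B_p^{\times2}$. I expect this to be the main obstacle. My plan is a ramification and local argument. Any product involving $\sqrt p$ or $\xi_p$ to an odd power has odd valuation at some prime of $B_p$ above $p$. Indeed $p$ has ramification index $2$ in $B_p$, so $\sqrt p$ has valuation $1$ there. For $\xi_p$, the prime $(\xi_p)$ of $K$ is split from its conjugate, and each has valuation $2$ in $B_p$, so I have to be careful here. Mixed products such as $\xi_p\sqrt p$ must be handled by comparing valuations at the two distinct primes of $K$ above $p$: $\xi_p$ is supported on one of them and $\sqrt p$ on both. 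If that fails, I would instead use Frobenius elements, namely primes $q$ that make $[p/q]_4$, $[\xi_p/q]$ and $[\eta/q]$ vary independently. For $\eta$ alone, one must show $\eta\notin B_p^{\times2}$. Since $F_0(\sqrt\eta)$ is unramified outside $2,5$, it suffices to show that $\eta$ is not a square in $F_0$. I would check that $F_0(\sqrt\eta)/\Q$ is nonabelian, using Lemma~\ref{lem:pqcyclotomic}: there $(2+r)/q$ equals $(5/q)_4$, which is not a congruence condition on $q$ modulo $40$.

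Centrality of $N_p$: Galois elements act on the Kummer generators through the characters $\sqrt p\mapsto\pm\sqrt p$ and similar, and conjugation by $\operatorname{Gal}(L_p/\Q)$ on $N_p$ corresponds to the action on the Kummer group $\langle\sqrt p,\xi_p,\eta\rangle B_p^{\times2}/B_p^{\times2}$. This action is trivial modulo squares because $\sigma(\sqrt p)=\pm\sqrt p$, $\bar\xi_p=p/\xi_p\equiv\xi_p$ modulo squares (as $p=(\sqrt p)^2$), and $\bar\eta=-1/\eta\equiv\eta$ modulo squares (as $-1=i^2$). Together with the $\mu_2$-valued pairing, this gives centrality.

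The Frobenius identification is then direct. For $q\in\mathcal P_p$, $q$ splits completely in $B_p$, so Frobenius lies in $N_p$, and its sign on $\sqrt[4]p$ is $p^{(q-1)/4}$ modulo $q$. The sign on $\sqrt{\xi_p}$ is $((A+Br)/q)$ at the prime corresponding to the root $r$. On $\sqrt\eta$ it is $((2+r)/q)=(5/q)_4$ by Lemma~\ref{lem:pqcyclotomic}. Because $N_p$ is central, the Frobenius class is the single element of $N_p$ determined by this triple. Chebotarev then gives density $1/256$ for each triple, and density $1/8$ relative to $\mathcal P_p$, which has density $32/256$. Finally, Proposition~\ref{prop:pqfixedsymbols} gives $(\alpha,\tau,\gamma)=(\beta+t,\,t,\,g)$, an invertible linear change over $\F_2$, so the uniform distribution transfers.
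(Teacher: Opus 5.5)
The gap is in $[L_p:B_p]=8$, at exactly the point you flagged. Write $\pi=(\xi_p)$ and $\bar\pi$ for the two primes of $K$ above $p$.

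Your valuation count in $B_p$ does dispose of every class involving $\sqrt p$, since such a class has valuation $1$ at the primes of $B_p$ over $\bar\pi$. But it cannot show $\xi_p\notin B_p^{\times2}$ or $\xi_p\eta\notin B_p^{\times2}$. These elements have even valuation at every prime of $B_p$, and comparing the primes over $\pi$ and $\bar\pi$ does not change that.

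No argument local at $p$ can rescue this either. The prime $p$ has residue degree $2$ in $F_0$, because $p^2\equiv1\pmod{40}$. So at a prime of $B_p$ over $\pi$ one has $\xi_p=(\sqrt p)^2u$ with $u\in\Z_p^\times$, and $u$ is a square in $\Q_{p^2}$. Over $\bar\pi$, $\xi_p$ is a unit of $\Z_p$. Hence $\xi_p$ is a local square at every prime of $B_p$ above $p$.

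Your fallback, choosing primes $q$ for which the three symbols vary independently, is circular for general $p$. Producing such primes is Chebotarev in $L_p$, and that already requires the independence you are trying to prove.

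The missing step is the descent you already used for $\eta$, applied to $\xi_p$. Since $B_p=F_0(\sqrt p)$, an element $x\in F_0^\times$ is a square in $B_p$ if and only if $x$ or $px$ is a square in $F_0$. Over $F_0$ the prime $p$ is unramified. At primes above $\pi$ and $\bar\pi$, the valuation parities of $\xi_p,p,\eta$ are $(1,0),(1,1),(0,0)$.

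Consequently $\xi_p\eta^z$, $p\xi_p\eta^z$ and $p\eta$ each have odd valuation somewhere, so none is a square in $F_0$. The remaining case $\eta\notin F_0^{\times2}$ is your Lemma~\ref{lem:pqcyclotomic} argument. That argument needs one explicit prime $q\equiv1\pmod{40}$ with $(5/q)_4=-1$; for example $q=41$ works, since $5^{10}\equiv-1\pmod{41}$.

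This is the paper's argument. There it is phrased as linear disjointness of two extensions of $F_0$. One is the biquadratic field $F_0(\sqrt{\xi_p},\sqrt\eta)$, none of whose quadratic subfields is $B_p$. The other is the cyclic quartic field $F_0(\sqrt[4]p)$, which is Eisenstein at $p$ and has $B_p$ as its unique quadratic subfield.

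With that repair, the rest of your outline goes through. This covers normality, the ramification bound, and the Frobenius signs. It also covers centrality via Kummer equivariance, using $-1=i^2$, $\bar\xi_p=p/\xi_p$ and $\bar\eta=-1/\eta$. Finally it covers the Chebotarev count and the linear change to $(\alpha,\tau,\gamma)$.
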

\begin{proof}
We begin with the degree. Since $\Norm_{K/\Q}(\eta)=-1$, neither $\eta$
nor $-\eta$ is a square in $K$; in particular $\eta$ is not a square in
$F_0$.
It follows that $K(i,\sqrt\eta)$ has degree eight over $\Q$.
This extension is Galois and nonabelian: writing $w=\sqrt\eta$,
there are automorphisms
\[
 \begin{array}{c|ccc}
 &\sqrt5&i&w\\ \hline
 \sigma&-\sqrt5&i&i/w\\
 \rho&\sqrt5&-i&w
 \end{array}
\]
with $\sigma\rho(w)=i/w$ and $\rho\sigma(w)=-i/w$.
If $w$ belonged to the abelian extension $F_0/\Q$, this
nonabelian Galois extension would be a subfield, a contradiction.

The prime $p$ is unramified in $F_0$ and splits into two primes
in $K$. The principal ideal $(\xi_p)$ is one of these primes,
because $\Norm_{K/\Q}(\xi_p)=p$. At primes of $F_0$ over the
two respective primes of $K$, the valuation parities of
$\xi_p,p,\eta$ have the patterns
\[
 (1,0),\qquad(1,1),\qquad(0,0).
\]
Thus $F_0(\sqrt{\xi_p},\sqrt\eta)/F_0$ is biquadratic,
and none of its three quadratic subfields is $F_0(\sqrt p)$.
Indeed the fields defined by $\xi_p$ and $\xi_p\eta$ have
the first valuation pattern, while the field defined by $\eta$
is unramified at $p$; the squareclass of $p$ has the second pattern.

Since $i\in F_0$ and $v_{\mathfrak p}(p)=1$ at every
$\mathfrak p\mid p$, the polynomial $X^4-p$ is Eisenstein
there. Thus $F_0(\sqrt[4]p)/F_0$ is cyclic of degree
four, with unique quadratic subfield $F_0(\sqrt p)$.
Its intersection with the preceding biquadratic extension must
have degree at most two and cannot be that quadratic subfield.
The extensions are therefore linearly disjoint, and hence
$[L_p:F_0]=16$ and $[L_p:\Q]=256$.
Also $\sqrt p\notin F_0$ by ramification at $p$, so
$[B_p:\Q]=32$.

Normality follows from the behaviour of the generators under conjugation: the nontrivial automorphism of
$K/\Q$ sends $\xi_p$ to $p/\xi_p$ and $\eta$ to
$-1/\eta$. All conjugates of the three radicals in
\eqref{eq:pqfields} are thus already in $L_p$.
The cyclotomic base is unramified outside $2,5$;
$\xi_p$ has support only above $p$, $\eta$ is a unit, and
$X^4-p$ is unramified outside $2,p$. This also proves the
ramification assertion.

Put $u=\sqrt[4]p$, $v=\sqrt{\xi_p}$, $w=\sqrt\eta$.
Every element of $N_p$ acts by independent sign changes on
$u,v,w$. To prove centrality, an arbitrary automorphism of
$L_p/\Q$ sends these generators to elements of the forms
\[
 i^j u,\qquad \pm v\ \text{or}\ \pm u^2/v,
 \qquad\pm w\ \text{or}\ \pm i/w,
\]
respectively. A sign change in $N_p$ fixes $i,u^2,F_0$ and
commutes with each displayed operation. Hence $N_p$ is central.

A prime $q$ lies in $\mathcal P_p$ precisely when it splits completely in
$B_p$. Such primes have density $1/32$.
At a prime of $B_p$ above $q$, Frobenius on $u$ has sign
$(p/q)_4$, on $v$ has sign $\leg{A+Br}q$, and on $w$ has
sign $\leg{2+r}q$. The latter is $(5/q)_4$ by
Lemma~\ref{lem:pqcyclotomic}. The root choices do not change
these signs. Each of the eight elements of the central subgroup
$N_p$ is a single conjugacy class. Chebotarev's theorem
\cite[Theorem 8.31]{MilneANT} therefore gives natural density
$1/256$ for each triple, and relative density $1/8$ after
conditioning on $q\in\mathcal P_p$. Finally
$(\beta,t,g)\mapsto(\alpha,\tau,\gamma)=(\beta+t,t,g)$
is a bijection of $\F_2^3$.
\end{proof}

\begin{corollary}\label{cor:pqdensity}
For each fixed prime $p\equiv11,19\pmod{40}$, the ranks of
the Cassels matrices in \eqref{eq:pqcasselsmatrix} have the
following relative natural densities as $q$ varies in $\mathcal P_p$:
\[
 \begin{array}{c|ccc|c}
 p\bmod40&\rank C_{p,q}=0&\rank C_{p,q}=2&\rank C_{p,q}=4
 &\text{absolute density for rank }4\\ \hline
 11&1/8&5/8&1/4&1/128\\
 19&0&1/2&1/2&1/64
 \end{array}
\]
Here ``absolute density'' uses all rational primes $q$ as denominator
and still imposes $q\in\mathcal P_p$. In either row,
\begin{equation}\label{eq:pqarithmeticdensity}
 \rank C_{p,q}=4
 \quad\Longleftrightarrow\quad
 \rank A_{5pq}(\Q)=0\ \text{and}\
 \Sh(A_{5pq})[2^\infty]\simeq(\Z/2\Z)^4.
\end{equation}
In particular the rank four cases give unconditional families of
non-$\theta$-congruent integers $10pq$ with these exact prime densities.
\end{corollary}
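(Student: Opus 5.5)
The plan is to combine Theorem~\ref{thm:pqfield}, which makes the triple $(\alpha,\tau,\gamma)$ uniformly distributed on $\F_2^3$ with relative density $1/8$ per value, with the explicit matrix and Pfaffian of Theorem~\ref{thm:pqcassels}. Since $C_{p,q}$ is a $4\times4$ alternating matrix over $\F_2$, its rank is $0$, $2$ or $4$. The rank is $4$ exactly when $\Pf(C_{p,q})=\eps\gamma+\alpha(1+\tau)=1$, and it is $0$ exactly when every entry vanishes. So we enumerate the eight triples separately for $\eps=0$ ($p\equiv11$) and $\eps=1$ ($p\equiv19$).

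For $\eps=0$, the matrix vanishes iff $\alpha=0$, $\alpha+\tau=0$, $\tau=0$ and $\gamma=0$, i.e.\ for the single triple $(0,0,0)$; this gives density $1/8$. The Pfaffian is $\alpha(1+\tau)$, which equals $1$ iff $\alpha=1,\tau=0$, with $\gamma$ free. That is two triples, so density $1/4$. The remaining five triples have rank $2$, giving $5/8$.

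For $\eps=1$, the $(\Lambda,\Lambda')$ entry is $1$, so rank $0$ never occurs. The Pfaffian is $\gamma+\alpha(1+\tau)$. For each of the four choices of $(\alpha,\tau)$ exactly one value of $\gamma$ makes it $1$, so four triples give rank $4$ (density $1/2$) and four give rank $2$.

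Absolute densities follow by multiplying by the density $1/32$ of $\mathcal P_p$ among all primes (Theorem~\ref{thm:pqfield}): $1/4\cdot1/32=1/128$ and $1/2\cdot1/32=1/64$. We should note that natural density is the right notion here: Chebotarev applied to the central classes of $N_p$ gives natural, not merely Dirichlet, densities, so the finite unions of these Frobenius sets inherit natural densities.

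For \eqref{eq:pqarithmeticdensity}, apply Lemma~\ref{lem:nondegenerate} with $s=\dim\Sel_2^0(A_{5pq})=4$ from Corollary~\ref{cor:fourdimensions}. Rank $4$ means the pairing is nondegenerate, which is equivalent to Mordell--Weil rank zero together with $\Sh[2^\infty]\simeq(\Z/2\Z)^4$. Conversely, those conditions force the radical to vanish. The non-$\theta$-congruence statement then follows from $A_{5pq}\simeq E_{5,3}^{(10pq)}$.

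No step is a real obstacle. The only care needed is checking that the map $(\beta,t,g)\mapsto(\alpha,\tau,\gamma)$ is a bijection, which Theorem~\ref{thm:pqfield} already records, so uniformity transfers to the symbols actually appearing in the matrix.
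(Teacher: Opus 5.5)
Your proposal is correct and follows the paper's proof. You enumerate the eight uniformly distributed triples $(\alpha,\tau,\gamma)$ through the Pfaffian and zero-matrix conditions, exactly as the paper does. For the arithmetic equivalence you cite Lemma~\ref{lem:nondegenerate} with $s=4$, whereas the paper argues the two directions separately, via Theorem~\ref{thm:pqcassels} and the identification of the radical with the $\Sel_4$-image; this is the same content.
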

\begin{proof}
For $\eps=0$, the Pfaffian is one precisely when
$\beta=1,t=0$, giving two of the eight triples.
The matrix is zero precisely when $\alpha=\tau=\gamma=0$,
giving one triple. The other five matrices have rank two.
For $\eps=1$ the matrix is never zero, and
$g+\beta(1+t)$ is one on four triples. These counts give the table.

The forward implication of \eqref{eq:pqarithmeticdensity} is
the nondegeneracy argument in Theorem~\ref{thm:pqcassels},
which applies to both values of $\eps$. Conversely, the
stated rank and $\Sh$ condition make the image of $\Sel_4$
in the pure $2$-Selmer group zero: the Mordell--Weil rank is
zero and $2\Sh[4]=0$. This image is the radical of the Cassels
pairing, so its four dimensional matrix is nondegenerate.
\end{proof}

\begin{remark}\label{rem:pqsmallerfield}
For $p\equiv11\pmod{40}$, the Pfaffian alone is governed by
\[
 L_p^0=F_0(\sqrt p,\sqrt{\xi_p},\sqrt\eta),
 \qquad [L_p^0:\Q]=128.
\]
The three quadratic squareclasses are independent by the same
valuation argument. The degree-$256$ field is needed here to
govern the character $g_p(q)$ and hence the full Cassels matrix.
These densities characterize the stated Tate--Shafarevich
groups as well as rank zero. A degenerate matrix can also occur
at rank zero with a larger $2$-primary group.
\end{remark}

Use the fixed norm generator $\xi_p=A+B\sqrt5$ from
\eqref{eq:pqfixednorm}, and write
\begin{equation}\label{eq:threefield}
 \widetilde L_p=F_0(\sqrt[4]p,\sqrt{\xi_p},
            \sqrt{2+\sqrt5},\sqrt[4]2),
 \qquad F_0=\Q(\zeta_{40}).
\end{equation}

\begin{theorem}\label{thm:threefield}
The field $\widetilde L_p$ is Galois over $\Q$, unramified
outside $2,5,p$, and has degree $512$. Its subgroup over
$B_p=F_0(\sqrt p)$ is a central elementary abelian group of
order $16$. As $q$ varies in $\mathcal P_p$, the four characters
\[
 (\beta_p(q),\tau,\gamma,\upsilon)
\]
are jointly uniform on $\F_2^4$.
\end{theorem}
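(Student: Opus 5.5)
The plan is to build on Theorem~\ref{thm:pqfield}: we have $\widetilde L_p=L_p(\sqrt[4]2)$, and $[L_p:\Q]=256$ is already known. Three things remain to prove. First, $\sqrt[4]2\notin L_p$, so the degree doubles to $512$. Second, $\widetilde L_p/\Q$ is Galois. Third, the group $\operatorname{Gal}(\widetilde L_p/B_p)$ is central and elementary abelian of order $16$; once this is in place, Chebotarev applied to its $16$ singleton conjugacy classes gives joint uniformity. Galois-ness is easy. Since $\zeta_8\in F_0$, all conjugates $i^j\sqrt[4]2$ of $\sqrt[4]2$ already lie in $\widetilde L_p$. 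The ramification statement also carries over, because $X^4-2$ is unramified outside $2$.

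For the degree, note first that $\sqrt2\in F_0$. So $F_0(\sqrt[4]2)/F_0$ is the quadratic extension generated by $\sqrt{\sqrt2}$, and it suffices to show that $\sqrt2$ is not a square in $L_p$. By Kummer theory over $F_0$, the field $L_p$ arises from $F_0$ by adjoining square roots of the subgroup $\langle p,\xi_p,\eta\rangle F_0^{\times2}$ and then $\sqrt{\sqrt p}$. I will treat this in two stages. Stage one: check that $\sqrt2\notin\langle p,\xi_p,\eta\rangle F_0^{\times 2}$. Elements involving $p$ or $\xi_p$ are excluded by the valuation parities at primes above $p$, as in the proof of Theorem~\ref{thm:pqfield}. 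The remaining cases $\sqrt2$ and $\sqrt2\,\eta$ need separate arguments. For $\sqrt2$ itself: $F_0(\sqrt[4]2)/\Q$ is not abelian, since it contains $\Q(\sqrt[4]2,i)$ with dihedral group $D_4$, so $\sqrt2$ is not a square in $F_0$. For $\sqrt2\,\eta$, I would repeat the nonabelian argument of Theorem~\ref{thm:pqfield}. The field $\Q(\zeta_8,\sqrt5,\sqrt{\sqrt2\eta})$ is Galois, and lifting the automorphism $\sqrt5\mapsto-\sqrt5$ together with $\sqrt2\mapsto-\sqrt2$ shows the group is nonabelian. Hence $\sqrt{\sqrt2\eta}\notin F_0$. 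Stage two: rule out the possibility that $\sqrt2$ becomes a square only inside the cyclic quartic layer $\sqrt[4]p$. The only new square class in that layer is the one of $\sqrt p$, whose valuation is odd at primes above $p$, whereas $\sqrt2$ and $\eta$ are $p$-units. Putting this into a Kummer-pairing argument over $F_0(\sqrt p)$ excludes the remaining cases. I expect this degree computation to be the main obstacle: all the mixed products must be checked carefully, especially the ones involving $\sqrt2\,\eta$ and $\sqrt2\,\xi_p$.

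For centrality, put $x=\sqrt[4]2$. Any automorphism sends $x\mapsto i^jx$, and it also acts on $\zeta_{40}$. An element of $N=\operatorname{Gal}(\widetilde L_p/B_p)$ fixes $F_0$, so it changes only the signs of $\sqrt[4]p,\sqrt{\xi_p},\sqrt\eta,x$. Exactly as in Theorem~\ref{thm:pqfield}, these sign changes commute with the forms $i^jx$, because $i$ is fixed. This gives $N\simeq(\Z/2)^4$, and it is central. For the Frobenius identification at $q\in\mathcal P_p$, use $q\equiv1\pmod 8$, so that $\sqrt2\in\F_q$. The Frobenius sign on $x$ is then $\leg{\sqrt2}{q}=(2/q)_4=(-1)^{\upsilon}$. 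The other three signs are already given in Theorem~\ref{thm:pqfield}, with $\tau=t(q)$ and $\gamma=g_p(q)$ by Proposition~\ref{prop:pqfixedsymbols}. Finally, the primes of $\mathcal P_p$ are exactly those that split completely in $B_p$, so their Frobenius classes lie in $N$. Chebotarev then gives each of the $16$ elements density $1/512$, which is relative density $1/16$, and this proves the joint uniformity.
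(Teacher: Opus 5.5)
Your proof is correct, but you build the tower in the opposite order from the paper.

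\textbf{The two routes.} The paper adjoins $\sqrt[4]2$ to $F_0$ first. It identifies $F_1=F_0(\sqrt[4]2)=\Q(i,\sqrt[4]2)\Q(\zeta_5)$, of degree $32$. It then shows $\eta\notin F_1^{\times2}$ using an automorphism $\sigma$ that fixes $\Q(i,\sqrt[4]2)$, negates $\sqrt5$, and commutes with complex conjugation inside the nonabelian group $\operatorname{Gal}(F_1/\Q)$. Finally it reruns the Eisenstein/biquadratic disjointness argument of Theorem~\ref{thm:pqfield} verbatim over $F_1$.

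You instead place $\sqrt[4]2$ on top of the already known $L_p$. You then test the single class $\sqrt2$ by Kummer theory, first over $F_0$ against $\langle p,\xi_p,\eta\rangle$ and then over $B_p$ against $\sqrt p$.

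\textbf{What each buys.} The arithmetic input is the same. By Kummer theory over $F_0$, the paper's statement $\eta\notin F_1^{\times2}$ is equivalent to your pair $\eta,\sqrt2\,\eta\notin F_0^{\times2}$. Your route keeps the base abelian, so commutation of automorphisms is automatic and $[L_p:\Q]=256$ is reused directly. The paper's route avoids your case list by reusing one argument over a larger base.

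\textbf{Two details to tighten.}

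First, the case $\sqrt2\,\eta$. Put $w^2=\sqrt2\,\eta$, and take $\sigma$ fixing $\zeta_8$ and negating $\sqrt5$, so $\sigma(w)=\pm\sqrt{-2}/w$. The second automorphism must negate $\sqrt{-2}$; complex conjugation works. An automorphism that fixes $\sqrt5$ and $i$ while negating $\sqrt2$ commutes with $\sigma$ on $w$ and gives no contradiction. A cleaner alternative: $\Q(\sqrt2,\sqrt5,w)$ has both real and nonreal embeddings, so it is not Galois and cannot lie in the abelian field $F_0$.

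Second, stage two. Your stated reason mentions only $\sqrt2$ and $\eta$ as $p$-units, but $\xi_p$ is not a $p$-unit. What makes the argument work is that $B_p/F_0$ is ramified above $p$. Hence at every prime of $B_p$ above $p$ one has $v(\sqrt p)=1$ and $v(\xi_p)\in\{0,2\}$. So each product $\sqrt2\sqrt p\,\xi_p^{e}\eta^{f}$ has odd valuation, giving $\sqrt2\sqrt p\notin\langle\xi_p,\eta\rangle B_p^{\times2}$. Together with stage one, this yields $\sqrt2\notin L_p^{\times2}$.
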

\begin{proof}
The only new point, beyond Theorem~\ref{thm:pqfield}, is the independence of
$\sqrt[4]2$. Let $H=\Q(i,\sqrt[4]2)$, the degree eight dihedral
splitting field of $X^4-2$. It contains $\Q(\zeta_8)$ and
is unramified outside $2$. Hence $H\cap\Q(\zeta_5)=\Q$,
since a nontrivial subfield of the cyclic quartic field
$\Q(\zeta_5)$ contains its quadratic field $\Q(\sqrt5)$,
which is ramified at $5$. Consequently
\[
 F_1:=F_0(\sqrt[4]2)=H\Q(\zeta_5),\qquad[F_1:\Q]=32,
 \quad\operatorname{Gal}(F_1/\Q)\simeq D_4\times C_4.
\]
We claim that $\eta=2+\sqrt5$ is not a square in $F_1$.
There is an automorphism $\sigma$ fixing $H$ and sending
$\zeta_5$ to $\zeta_5^2$; it sends $\sqrt5$ to $-\sqrt5$
and fixes $i$. It commutes with complex conjugation $\rho$.
If the positive real number $w=\sqrt\eta$ lay in $F_1$,
then $\rho(w)=w$ whereas $\sigma(w)=\pm i/w$.
These two actions would not commute, a contradiction.

The field $F_1$ is unramified at $p$. The valuations of
$\xi_p,p,\eta$ above the two primes of $\Q(\sqrt5)$ over
$p$ still have the patterns $(1,0),(1,1),(0,0)$.
Exactly the disjointness argument in Theorem~\ref{thm:pqfield},
now over $F_1$, gives
\[
 [F_1(\sqrt[4]p,\sqrt{\xi_p},\sqrt\eta):F_1]=16.
\]
Thus $[\widetilde L_p:\Q]=512$. Normality and ramification
follow as before, together with the splitting field of $X^4-2$.

The kernel over $B_p$ consists of the sixteen independent sign
changes on $\sqrt[4]p,\sqrt{\xi_p},\sqrt\eta,\sqrt[4]2$.
The first three commute with every automorphism by the proof
of Theorem~\ref{thm:pqfield}; the fourth does too, since every
conjugate of $\sqrt[4]2$ is $i^j\sqrt[4]2$ and the kernel
fixes $i$ and $\sqrt2$. Hence this kernel is central.
For $q\in\mathcal P_p$, the additional Frobenius sign is
$(-1)^{\upsilon}=(2/q)_4$. Chebotarev gives absolute density
$1/512$, hence relative density $1/16$ in $\mathcal P_p$,
for each value in $\F_2^4$.
\end{proof}

\begin{corollary}\label{cor:threejoint}
For each fixed $p$, the relative natural density in $\mathcal P_p$
of a prescribed triple $(P_+,P_-,P_2)=(a,b,c)$ is given by
the following table. Each entry holds for each of $c=0$ and $c=1$
separately:
\[
 \begin{array}{c|cccc}
 p\bmod40&(a,b)=(0,0)&(0,1)&(1,0)&(1,1)\\ \hline
 11&3/16&3/16&1/16&1/16\\
 19&3/16&1/16&3/16&1/16.
 \end{array}
\]
In particular all three pairings are nondegenerate with relative
density $1/16$, or absolute prime density $1/512$. On this set,
\[
 \begin{gathered}
 \rank E_1(n)(\Q)=\rank E_{-1}(n)(\Q)
            =\rank E_2(n)(\Q)=0,\\
 \Sh(E_1(n))[2^\infty]\simeq\Sh(E_{-1}(n))[2^\infty]
            \simeq(\Z/2\Z)^4,\\
 \Sh(E_2(n))[2^\infty]\simeq(\Z/2\Z)^2,\qquad
 \rank E_{-2}(n)(\Q)\le1.
 \end{gathered}
\]
\end{corollary}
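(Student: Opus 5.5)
The plan is to reduce everything to the joint uniformity in Theorem~\ref{thm:threefield} and then count. Theorem~\ref{thm:threecassels} expresses the three Pfaffians in terms of $(\alpha,\tau,\gamma,\upsilon)$:
\[
 P_+=\eps\gamma+\alpha(1+\tau),\qquad P_-=P_++\gamma,\qquad P_2=\gamma+\upsilon .
\]
Proposition~\ref{prop:pqfixedsymbols} gives $\alpha=\beta+\tau$ with $\beta=\beta_p(q)$, so $\alpha(1+\tau)=\beta(1+\tau)$ in $\F_2$. Hence
\[
 P_+=\eps\gamma+\beta(1+\tau),\qquad P_2=\gamma+\upsilon .
\]
Theorem~\ref{thm:threefield} says that $(\beta,\tau,\gamma,\upsilon)$ is uniform on $\F_2^4$ as $q$ ranges over $\mathcal P_p$. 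The pair $(P_+,P_-)$ depends only on $(\beta,\tau,\gamma)$. For each fixed value of these three bits, $P_2$ takes both values as $\upsilon$ varies. So the density of $(a,b,c)$ equals one half of the density of $(a,b)$, for each $c$ separately, and it suffices to count the eight triples $(\beta,\tau,\gamma)$, each of weight $1/8$, and then halve.

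For $p\equiv11\pmod{40}$ we have $\eps=0$, so $P_+=\beta(1+\tau)$ and $P_-=P_++\gamma$.
\begin{itemize}
\item $P_+=1$ exactly for $(\beta,\tau)=(1,0)$, which is two triples. Then $\gamma=0$ gives $(1,1)$ and $\gamma=1$ gives $(1,0)$, one triple each.
\item The remaining six triples have $P_+=0$, and $\gamma$ splits them $3+3$ between $(0,0)$ and $(0,1)$.
\end{itemize}
Halving gives the row $3/16,3/16,1/16,1/16$.

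For $p\equiv19\pmod{40}$ we have $\eps=1$, so $P_-=\beta(1+\tau)$ and $P_+=\gamma+P_-$.
\begin{itemize}
\item $P_-=1$ on the two triples with $(\beta,\tau)=(1,0)$, giving $(0,1)$ and $(1,1)$ once each.
\item The other six triples give $(a,b)=(\gamma,0)$, split $3+3$ between $(0,0)$ and $(1,0)$.
\end{itemize}
Halving gives the row $3/16,1/16,3/16,1/16$.

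In both rows the triple $(1,1,1)$ has relative density $1/16$. Since $\mathcal P_p$ has density $1/32$ among all primes, the absolute density is $1/512$.

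For the arithmetic conclusions on this set, I would apply Lemma~\ref{lem:nondegenerate} to each of $E_1(n),E_{-1}(n),E_2(n)$. Their pure Selmer dimensions are $4,4,2$ by Corollary~\ref{cor:fourdimensions}. Nondegeneracy of a matrix is equivalent to its Pfaffian being $1$, so each of these curves has rank zero, and their $2$-primary Tate--Shafarevich groups are $(\Z/2\Z)^4$, $(\Z/2\Z)^4$ and $(\Z/2\Z)^2$ respectively. Finally, $P_2=1$ means $\langle U,V\rangle=\gamma+\upsilon=1$ on the three dimensional space for $E_{-2}(n)$. The last paragraph of Theorem~\ref{thm:threecassels} then gives rank two for its Cassels matrix and $\rank E_{-2}(n)(\Q)\le1$.

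The only step with real content is the identity $\alpha(1+\tau)=\beta(1+\tau)$, which removes the dependence between $\alpha$ and $\tau$; the rest is bookkeeping over eight cases.
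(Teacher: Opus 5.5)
Your proposal is correct and follows essentially the same route as the paper. You use $\alpha(1+\tau)=\beta(1+\tau)$ to reduce to the uniform characters of Theorem~\ref{thm:threefield}, write $(P_+,P_-,P_2)=(b_0+\eps\gamma,\,b_0+(1+\eps)\gamma,\,\gamma+\upsilon)$ with $b_0=\beta(1+\tau)$, count, and then invoke Lemma~\ref{lem:nondegenerate}, Corollary~\ref{cor:fourdimensions} and Theorem~\ref{thm:threecassels} for the arithmetic conclusions; your eight-case tally only spells out what the paper compresses into the remark that the first two coordinates agree when $\gamma=0$ and differ when $\gamma=1$.
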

\begin{proof}
Put $b_0=\beta_p(q)(1+\tau)=\alpha(1+\tau)$.
It equals one on one quarter of the independent $(\beta,\tau)$
pairs. The two signs $\gamma,\upsilon$ are independent of it.
Now
\[
 (P_+,P_-,P_2)
 =(b_0+\eps\gamma,
  b_0+(1+\eps)\gamma,\gamma+\upsilon).
\]
The third coordinate is uniform and independent of the first
two. When $\gamma=0$ those first coordinates agree, and when
$\gamma=1$ they differ. This yields the table.
Nondegeneracy and the pure dimensions from
Corollary~\ref{cor:fourdimensions} give the rank and $\Sh$
statements by the same Cassels pairing argument as before.
The last rank bound is Theorem~\ref{thm:threecassels}.
\end{proof}

\begin{remark}\label{rem:fourodd}
The pure Selmer group of $E_{-2}(pq)$ has dimension three, so its
ordinary pairing is necessarily degenerate and has no full space
Pfaffian. The field $\widetilde L_p$ governs its $U,V$ entry;
the complete matrix also involves the character $\rho$.
If this matrix has rank two and $\Sh(E_{-2}(pq))[2^\infty]$ is
finite, then
\[
 \rank E_{-2}(pq)(\Q)=1,\qquad
 \Sh(E_{-2}(pq))[2^\infty]\simeq(\Z/2\Z)^2.
\]
Indeed the rank is at most one, while a finite $2$-primary
Tate--Shafarevich group has even $2$-rank. The pure Selmer
dimension forces rank one; the nondegenerate quotient then
forces the $2$-primary group to be killed by $2$.
\end{remark}

Define
\begin{equation}\label{eq:fourcompletefield}
 \widehat L_p=\widetilde L_p(\sqrt{\xi_2})
 =F_0(\sqrt[4]p,\sqrt{\xi_5},\sqrt{2+\sqrt5},
       \sqrt[4]2,\sqrt{\xi_2}).
\end{equation}

\begin{theorem}\label{thm:fourcompletefield}
The extension $\widehat L_p/\Q$ is Galois, unramified outside
$2,5,p$, and has degree $1024$. Its subgroup over $B_p$ is
central and isomorphic to $(\Z/2\Z)^5$.
As $q$ varies over $\mathcal P_p$, the five characters
\begin{equation}\label{eq:fourcompletebits}
 (\beta,\tau,\gamma,\upsilon,\rho)
\end{equation}
are jointly uniform: every value has relative natural density
$1/32$, or absolute prime density $1/1024$.
They determine all four Cassels matrices, and their $32$
values give $32$ distinct ordered quadruples of matrices.
\end{theorem}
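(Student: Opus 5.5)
The plan is to reduce everything to Theorem~\ref{thm:threefield} and prove one more disjointness statement: that $\xi_2=A_2+B_2\sqrt{-2}$ is not a square in $\widetilde L_p$, even after multiplying by any element of the Kummer group already adjoined. Normality and ramification are immediate. The conjugate of $\xi_2$ under $\sqrt{-2}\mapsto-\sqrt{-2}$ is $p/\xi_2$, so $\sqrt{p/\xi_2}=\sqrt[4]p^{\,2}/\sqrt{\xi_2}$ already lies in $\widehat L_p$. Moreover $\xi_2$ is supported only above $p$, so the new quadratic step is unramified outside $2,5,p$.

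For the degree I would argue with valuations above $p$, as in Theorem~\ref{thm:pqfield}. Over $F_1=F_0(\sqrt[4]2)$, which is unramified at $p$, consider the primes above $p$ in the biquadratic field $\Q(\sqrt5,\sqrt{-2})$. The four primes of this field over $p$ are distinguished by which primes of $\Q(\sqrt5)$ and $\Q(\sqrt{-2})$ they lie over. The elements $\xi_5$, $\xi_2$ and $p$ then have valuation parity vectors over these four primes that are linearly independent modulo $2$, while $\eta$ and $2$ are units there. Hence the Kummer classes of $\xi_5,\xi_2,p$ are independent of each other and of $\eta$ and $2$ modulo $F_1^{\times2}$. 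With the Eisenstein argument for $\sqrt[4]p$ and the result $[\widetilde L_p:\Q]=512$, this gives degree $1024$.

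Centrality follows as before. Any automorphism sends $\sqrt{\xi_2}$ to $\pm\sqrt{\xi_2}$ or $\pm\sqrt[4]p^{\,2}/\sqrt{\xi_2}$. A sign change over $B_p$ fixes $\sqrt p$, so it commutes with these operations. Each element of the central kernel $(\Z/2\Z)^5$ is therefore its own conjugacy class, and Chebotarev gives absolute density $1/1024$ and relative density $1/32$ in $\mathcal P_p$.

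It remains to identify Frobenius signs with characters. For $q\in\mathcal P_p$ the Frobenius sign on $\sqrt{\xi_2}$ is $\leg{A_2+B_2r_2}q=(-1)^{\rho}$, and the other four signs come from Theorem~\ref{thm:threefield}. The four matrices depend only on these five bits: $C_+$ and $C_-$ through $\alpha=\beta+\tau$, $\tau$, $\gamma$ and the fixed $\eps$ (Theorem~\ref{thm:threecassels} and Proposition~\ref{prop:pqfixedsymbols}), $C_2$ through $\gamma+\upsilon$, and $C_{-2}$ through $\beta$, $\rho+\beta+\gamma$ and $\gamma+\upsilon$ (Theorem~\ref{thm:fourthcassels}).

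For injectivity, note that the entries of $C_+$ determine $\alpha,\tau,\gamma$ and hence $\beta,\tau,\gamma$. Then $C_2$ gives $\upsilon$, and the $(W,V)$ entry of $C_{-2}$ gives $\rho$. So the map from $\F_2^5$ to ordered quadruples of matrices is injective, which yields $32$ distinct values.

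The main obstacle is the disjointness step: I must check that $\xi_2$ is not a square times $\xi_5^{e}p^{f}\eta^{g}2^{h}$ in $F_1$, including the dyadic ambiguity. I would handle this purely through the parity vectors above $p$, which sidestep the dyadic issue entirely.
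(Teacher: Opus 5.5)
Your proposal is correct and follows the paper's proof essentially step for step. The degree doubling comes from the valuation parities of $p,\xi_5,\xi_2$ at the four primes of $\Q(\sqrt5,\sqrt{-2})$ above $p$; units such as $\eta$ and $\sqrt2$ are invisible there. Normality comes from $\bar\xi_2=p/\xi_2$, centrality holds because the kernel fixes $\sqrt p$, and Chebotarev gives the densities. Injectivity follows by reading $\alpha,\tau,\gamma$ off $C_+$, $\upsilon$ off $C_2$, and $\rho$ off the $(W,V)$ entry of $C_{-2}$.

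The only difference is cosmetic. You run the Kummer disjointness over $F_1=F_0(\sqrt[4]2)$ rather than over $F_0$ with $\sqrt2$ in the span. This changes nothing, since all the extra generators are units at $p$.
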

\begin{proof}
Theorem~\ref{thm:threefield} gives
$[\widetilde L_p:F_0]=32$. Over $F_0$ its Galois group is
$\Z/4\Z\times(\Z/2\Z)^3$, with quadratic subfields
corresponding to the squareclass span
\[
 p,\quad \xi_5,\quad 2+\sqrt5,\quad \sqrt2
 \quad\text{in }F_0^*/F_0^{*2}.
\]
We show that $\xi_2$ is outside this span. The prime $p$
splits completely in $F=\Q(\sqrt5,\sqrt{-2})$ and is
unramified in $F_0$. Group the primes of $F_0$ above $p$
according to the four primes of $F$ below them. After labelling
these four groups suitably, the valuation parities are
\[
 \begin{array}{c|cccc}
 &1&2&3&4\\ \hline
 p&1&1&1&1\\
 \xi_5&1&1&0&0\\
 \xi_2&1&0&1&0.
 \end{array}
\]
Indeed each norm generator is divisible by exactly one of the
two primes in its quadratic field, with valuation one. The
other two proposed generators are units at $p$.
The last row is not a linear combination of the first two.
Thus $\sqrt{\xi_2}\notin\widetilde L_p$ and
$[\widehat L_p:\Q]=2\cdot512=1024$.

The conjugate of $\xi_2$ is $p/\xi_2$, and $\sqrt p$ belongs
to $B_p$. Hence all conjugates of $\sqrt{\xi_2}$ lie in
$\widehat L_p$, proving normality in conjunction with
Theorem~\ref{thm:threefield}. The norm and integrality of
$\xi_2$ also give the claimed ramification bound.
Over $B_p$, the five displayed radicals can have their signs
changed independently. This kernel has order $32$ and exponent
two. It is central on the old generators by the preceding
theorem. On the new generator every rational Galois automorphism
acts by one of
\[
 \sqrt{\xi_2}\longmapsto\pm\sqrt{\xi_2},\qquad
 \sqrt{\xi_2}\longmapsto\pm\frac{\sqrt p}{\sqrt{\xi_2}}.
\]
These transformations commute with every kernel element because
the latter fixes $\sqrt p$. Hence the subgroup is central.

The primes in $\mathcal P_p$ are exactly those splitting
completely in $B_p$, apart from the excluded ramified primes.
The five signs of their Frobenius elements are given by the characters
\eqref{eq:fourcompletebits}, in the order prescribed by the
corresponding radicals. Centrality makes each element a singleton
conjugacy class, so Chebotarev gives absolute density $1/1024$.
Since $[B_p:\Q]=32$, the relative density is $1/32$.

Finally $C_+$ determines $\alpha,\tau,\gamma$, hence
$\beta=\alpha+\tau$. The $U,V$ entry of $C_2$ then recovers
$\upsilon$, and the $W,V$ entry of $C_{-2}$ recovers $\rho$.
The map from the five characters to the ordered quadruple of matrices
is therefore injective. The additional character $\rho$ is therefore necessary to
determine the fourth matrix.
\end{proof}

Put $R_+=\rank C_+$, $R_-=\rank C_-$,
$R_2=\rank C_2$, and $R_{-2}=\rank C_{-2}$.

\begin{corollary}\label{cor:fourcompleteranks}
For $p\equiv11\pmod{40}$ the joint relative natural densities
are given by the following table, whose entries are to be divided
by $32$:
\begin{equation}\label{eq:fourranktable}
 \begin{array}{c|ccc}
 (R_+,R_-)&(R_2,R_{-2})=(0,0)&(0,2)&(2,2)\\ \hline
 (0,2)&1&1&2\\
 (2,2)&1&3&4\\
 (2,4)&2&4&6\\
 (4,2)&0&2&2\\
 (4,4)&0&2&2
 \end{array}
\end{equation}
For $p\equiv19\pmod{40}$ interchange the two coordinates
of each row label. All unlisted rank patterns have density zero.
In particular $R_{-2}=0$ and $R_{-2}=2$ have respective
relative densities $1/8$ and $7/8$.
\end{corollary}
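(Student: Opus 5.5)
The plan is to reduce everything to a finite count over the five Frobenius characters. By Theorem~\ref{thm:fourcompletefield}, as $q$ varies in $\mathcal P_p$ the vector $(\beta,\tau,\gamma,\upsilon,\rho)\in\F_2^5$ is uniformly distributed, each value having relative density $1/32$. All four Cassels matrices are explicit functions of this vector: use Theorems~\ref{thm:threecassels} and \ref{thm:fourthcassels}, together with $\alpha=\beta+\tau$ from Proposition~\ref{prop:pqfixedsymbols}. So the joint density of $(R_+,R_-,R_2,R_{-2})$ is $1/32$ times the number of vectors producing that pattern. It remains to express each rank as a Boolean condition and count.

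\emph{Rank conditions for $p\equiv11\pmod{40}$.} Here $\eps=0$.
\begin{itemize}
\item[(i)] The entries of $C_+$ are $\alpha,\alpha+\tau,\tau,\gamma$ (and zeros). So $R_+=0$ exactly when $\beta=\tau=\gamma=0$. Otherwise $R_+=4$ exactly when $P_+=\alpha(1+\tau)=\beta(1+\tau)=1$, that is when $\beta=1,\tau=0$, and $R_+=2$ in the remaining cases.
\item[(ii)] $C_-$ has $\Lambda,\Lambda'$ entry $1+\eps=1$, so it is never zero. Hence $R_-=4$ iff $P_-=P_++\gamma=1$, and $R_-=2$ otherwise.
\item[(iii)] $R_2=0$ iff $\gamma=\upsilon$, and $R_2=2$ otherwise.
\item[(iv)] By \eqref{eq:fourthzerolocus}, $R_{-2}=0$ iff $\beta=0$ and $\rho=\gamma=\upsilon$, and $R_{-2}=2$ otherwise.
\end{itemize}

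\emph{Counting.} The pair $(R_+,R_-)$ depends only on $(\beta,\tau,\gamma)$. For each of these eight triples, count the four choices of $(\upsilon,\rho)$.
\begin{itemize}
\item If $\beta=1$, then $R_{-2}=2$ always. The two choices with $\upsilon=\gamma$ give $(R_2,R_{-2})=(0,2)$, and the two with $\upsilon\ne\gamma$ give $(2,2)$, a split of $0,2,2$.
\item If $\beta=0$, the split is $1,1,2$: one choice ($\upsilon=\rho=\gamma$) gives $(0,0)$, one ($\upsilon=\gamma\ne\rho$) gives $(0,2)$, and two give $(2,2)$.
\end{itemize}
Running through the triples:
\begin{itemize}
\item $(0,0,0)$ lies in row $(0,2)$.
\item $(0,0,1)$ and $(0,1,1)$ lie in row $(2,4)$.
\item $(0,1,0)$ lies in row $(2,2)$.
\item $(1,0,0)$ lies in row $(4,4)$.
\item $(1,0,1)$ lies in row $(4,2)$.
\item $(1,1,0)$ lies in row $(2,2)$.
\item $(1,1,1)$ lies in row $(2,4)$.
\end{itemize}
Adding the splits reproduces \eqref{eq:fourranktable} row by row; for instance row $(2,4)$ is $1+1+0,\ 1+1+2,\ 2+2+2$. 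Unlisted patterns receive no vectors, so they have density zero. Summing columns gives $4$ vectors with $R_{-2}=0$ and $28$ with $R_{-2}=2$, that is $1/8$ and $7/8$.

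\emph{The case $p\equiv19\pmod{40}$.} Here $\eps=1$, and the roles of $C_+$ and $C_-$ exchange. Now $C_+$ has entry $1$ and is never zero, with $P_+=\gamma+\beta(1+\tau)$. Meanwhile $C_-$ has zero $\Lambda,\Lambda'$ entry and off-diagonal entries $\alpha+\tau=\beta$, $\alpha=\beta+\tau$, $\tau$, $\gamma$. So $C_-$ vanishes iff $\beta=\tau=\gamma=0$, and $P_-=P_++\gamma=\beta(1+\tau)$. These are exactly the previous conditions on $(R_+,R_-)$ with the two coordinates interchanged. Since $R_2$ and $R_{-2}$ do not involve $\eps$, the same count gives the transposed table.

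I expect no real obstacle: the one point needing care is the sign bookkeeping in $P_\pm$ (using $(\beta+\tau)(1+\tau)=\beta(1+\tau)$ over $\F_2$) and the verification that $C_-$ for $\eps=1$ has the same zero locus as $C_+$ for $\eps=0$.
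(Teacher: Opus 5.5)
Your proposal is correct and follows essentially the same route as the paper. You use the uniform distribution of $(\beta,\tau,\gamma,\upsilon,\rho)$ from Theorem~\ref{thm:fourcompletefield} and read off $(R_+,R_-)$ from $(\beta,\tau,\gamma)$ via the two Pfaffians and the zero locus of $C_+$. You then split the four choices of $(\upsilon,\rho)$ as $(1,1,2)$ or $(0,2,2)$ according to $\beta$, and exchange the roles of $C_\pm$ when $\eps=1$; your eight triple assignments and the column totals $4$ and $28$ agree with the paper's.
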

\begin{proof}
For $\eps=0$, the triples $(\beta,\tau,\gamma)$ in the
order $000,001,010,011,100,101,110,111$ give rank pairs
\[
 (0,2),(2,4),(2,2),(2,4),(4,4),(4,2),(2,2),(2,4).
\]
This follows from the two Pfaffians and from the fact that
$C_+=0$ precisely at $000$, whereas $C_-$ has a constant
nonzero entry. For each triple the two remaining characters are free.
If $\beta=0$, their four choices give last rank counts
$(1,1,2)$ in the three displayed columns, by
\eqref{eq:fourthzerolocus}. If $\beta=1$, these counts are
$(0,2,2)$. Summing gives \eqref{eq:fourranktable}.
For $\eps=1$ the two Pfaffians exchange roles, and the
possible zero matrix is instead $C_-$ at $000$, so the first
two ranks exchange roles. The last assertion also follows
directly because
$(\beta,\rho+\beta+\gamma,\gamma+\upsilon)$ is a surjective
linear image onto $\F_2^3$ of the uniform vector in $\F_2^5$.
\end{proof}

\begin{corollary}\label{cor:fourcompleteselmer}
For $E_{-2}=A_{-10pq}$ the exact relative distribution of its
$4$-Selmer group is
\[
 \begin{array}{c|c|c}
 \text{condition}&\Sel_4(E_{-2})&\text{density}\\ \hline
 \beta=0,\ \rho=\gamma=\upsilon
 &(\Z/4\Z)^3\oplus(\Z/2\Z)^2&1/8\\
 \text{otherwise}
 &\Z/4\Z\oplus(\Z/2\Z)^4&7/8.
 \end{array}
\]
On the second set, $\rank E_{-2}(\Q)\le1$, and its pure
radical is exactly the line \eqref{eq:fourthradical}.
The joint distribution of all four $4$-Selmer groups is
obtained from \eqref{eq:fourranktable} by substituting
\begin{equation}\label{eq:fourjointselmer}
 \Sel_4(E_\delta)\simeq
 (\Z/4\Z)^{s_\delta-R_\delta}
 \oplus(\Z/2\Z)^{R_\delta+2},
 \quad(s_1,s_{-1},s_2,s_{-2})=(4,4,2,3).
\end{equation}
These conclusions require no conjectural input. If
$\Sh(E_{-2})[2^\infty]$ is finite and $R_{-2}=2$, then
\[
 \rank E_{-2}(\Q)=1,\qquad
 \Sh(E_{-2})[2^\infty]\simeq(\Z/2\Z)^2.
\]
\end{corollary}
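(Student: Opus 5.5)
The plan is to read everything off Proposition~\ref{prop:sel4structure} together with Theorem~\ref{thm:fourthcassels} and the uniform distribution of Theorem~\ref{thm:fourcompletefield}; no further descent is needed.

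First I fix $s_{-2}=3$, which is the pure Selmer dimension of $E_{-2}=A_{-10pq}$ from Corollary~\ref{cor:fourdimensions}. Proposition~\ref{prop:sel4structure} gives
\[
\Sel_4\simeq(\Z/4\Z)^d\oplus(\Z/2\Z)^{5-d},
\]
where $d$ is the radical dimension, so $d=3-R_{-2}$. By Theorem~\ref{thm:fourthcassels}, the matrix $C_{-2}$ vanishes exactly on the locus \eqref{eq:fourthzerolocus}. On that locus $d=3$, giving $(\Z/4\Z)^3\oplus(\Z/2\Z)^2$. Off it the alternating $3\times3$ matrix has rank two, so $d=1$ and $\Sel_4\simeq\Z/4\Z\oplus(\Z/2\Z)^4$. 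In the latter case the radical is the line \eqref{eq:fourthradical}, and $\rank E_{-2}(\Q)\le d=1$ by the same proposition.

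For the densities, the conditions $\beta=0$ and $\rho+\gamma=\gamma+\upsilon=0$ are three independent linear conditions on the uniformly distributed vector $(\beta,\tau,\gamma,\upsilon,\rho)\in\F_2^5$. Theorem~\ref{thm:fourcompletefield} then gives relative density $2^2/32=1/8$, and $7/8$ for the complement. This matches Corollary~\ref{cor:fourcompleteranks}.

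The joint statement follows by applying Proposition~\ref{prop:sel4structure} to each twist, with $s_\delta=(4,4,2,3)$ and $d_\delta=s_\delta-R_\delta$. The exponent of the $\Z/2\Z$ part is $s_\delta+2-d_\delta=R_\delta+2$, and the rank distribution is the table \eqref{eq:fourranktable}. Nothing here uses Conjecture~\ref{conj:pqbridge}.

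For the last assertion I argue as in Remark~\ref{rem:fourodd}. Assume $\Sh(E_{-2})[2^\infty]$ is finite and $R_{-2}=2$. Then $\dim\Sh[2]$ is even and equals $3-\rank E_{-2}(\Q)$, while the rank is at most $1$; hence the rank is exactly $1$ and $\dim\Sh[2]=2$. The radical line is then the Mordell--Weil image, so the Cassels--Tate pairing on $\Sh[2]$ is nondegenerate. By the divisibility property, $2\Sh[4]=0$, so $\Sh(E_{-2})[2^\infty]\simeq(\Z/2\Z)^2$.

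The only delicate point is this last step: one has to check that the Kummer image of the Mordell--Weil group actually fills the one-dimensional radical, so that the radical contributes nothing to $\Sh[2]$.
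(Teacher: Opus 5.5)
Your proposal is correct and follows the paper's proof essentially verbatim: you apply Proposition~\ref{prop:sel4structure} with radical dimensions $3$ and $1$ from Theorem~\ref{thm:fourthcassels}, read the densities off the uniform distribution of the five characters, and prove the conditional assertion by the parity-of-$\Sh[2]$ argument of Remark~\ref{rem:fourodd}. The step you flag as delicate is in fact immediate, since the Mordell--Weil image modulo torsion has dimension equal to the rank, here one, and lies inside the one dimensional radical, so the two coincide.
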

\begin{proof}
The radical dimensions are three and one, respectively.
Apply Proposition~\ref{prop:sel4structure}; doing so for all
four twists also gives \eqref{eq:fourjointselmer}.
For the conditional assertion use the alternating perfect
pairing on the finite $2$-primary Tate--Shafarevich group,
as in Remark~\ref{rem:fourodd}.
\end{proof}

The density $7/8$ refers to the displayed Selmer group condition.
The Mordell--Weil rank can also be at most one when $R_{-2}=0$.

For $E_1(pq)$, write vectors in the basis
$(\Lambda,\Lambda',U,V)$. The kernels below are the images of $4$-Selmer groups in the
pure $2$-Selmer groups.
\subsection{Ordinary radicals for two prime twists}\label{subsec:pqradicals}

\begin{proposition}\label{prop:pqradicals}
If $\eps=1$ and $\Pf(C_+)=0$, the radical has basis
\[
 U+(\alpha+\tau)\Lambda,\qquad
 V+\tau\Lambda+\alpha\Lambda'.
\]
If $\eps=0$, its degenerate cases are
\[
 \begin{array}{c|l}
 (\alpha,\tau,\gamma)&\text{radical basis}\\ \hline
 (0,0,0)&\Lambda,\Lambda',U,V\\
 (0,0,1)&\Lambda,\Lambda'\\
 (0,1,\gamma)&\Lambda,\ \gamma\Lambda'+U+V\\
 (1,1,\gamma)&\Lambda+\Lambda',\ \gamma\Lambda+U.
 \end{array}
\]
For either $C_+$ or $C_-$, let $d$ denote the radical dimension.
The $4$-Selmer groups are
\[
 \begin{array}{c|c|c}
 \rank C&d&\Sel_4\\ \hline
 4&0&(\Z/2\Z)^6\\
 2&2&(\Z/4\Z)^2\oplus(\Z/2\Z)^4\\
 0&4&(\Z/4\Z)^4\oplus(\Z/2\Z)^2.
 \end{array}
\]
Whenever $P_2=1$, one also has
\[
 \Sel_4(E_2(pq))\simeq(\Z/2\Z)^4,\qquad
 \Sel_4(E_{-2}(pq))\simeq\Z/4\Z\oplus(\Z/2\Z)^4.
\]
\end{proposition}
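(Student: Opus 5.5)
The proof is linear algebra over $\F_2$ applied to the explicit matrices of Theorems~\ref{thm:pqcassels}, \ref{thm:threecassels} and~\ref{thm:fourthcassels}, followed by Proposition~\ref{prop:sel4structure}. The radical of $C_+$ is the kernel of the $4\times4$ alternating matrix $C_{p,q}$ in the basis $(\Lambda,\Lambda',U,V)$. A vector $x\Lambda+y\Lambda'+zU+wV$ lies in it exactly when
\[
 \eps y+\alpha w=0,\quad
 \eps x+(\alpha+\tau)z+\tau w=0,\quad
 (\alpha+\tau)y+\gamma w=0,\quad
 \alpha x+\tau y+\gamma z=0.
\]

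First take $\eps=1$ with $\Pf(C_+)=\gamma+\alpha(1+\tau)=0$. The rank is exactly two, because the $(\Lambda,\Lambda')$ entry is $1$, so the kernel is two dimensional. I would check directly that $U+(\alpha+\tau)\Lambda$ and $V+\tau\Lambda+\alpha\Lambda'$ satisfy the four equations. The only nontrivial check is the last equation, which uses $\gamma=\alpha(1+\tau)$ together with $\alpha^2=\alpha$ and $\alpha\tau+\tau^2=\tau(\alpha+1)$. These two vectors are independent since their $U,V$ components are independent.

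For $\eps=0$ the first equation reduces to $\alpha w=0$. I would split into the eight triples $(\alpha,\tau,\gamma)$.
\begin{itemize}
 \item If $\alpha=\tau=\gamma=0$, the matrix is zero and the radical is the whole space.
 \item If $(\alpha,\tau,\gamma)=(0,0,1)$, the matrix is $\gamma$ times the $U,V$ hyperbolic block, so the radical is $\langle\Lambda,\Lambda'\rangle$.
 \item If $\alpha=0$ and $\tau=1$, the Pfaffian is $\alpha(1+\tau)=0$, so the rank is two. I would verify that $\Lambda$ and $\gamma\Lambda'+U+V$ lie in the kernel.
 \item If $\alpha=\tau=1$, the Pfaffian is again zero and the rank is two. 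I would verify that $\Lambda+\Lambda'$ and $\gamma\Lambda+U$ lie in the kernel.
 \item The remaining cases $\alpha=1$, $\tau=0$ have Pfaffian $1$ and are nondegenerate, so they are omitted from the table.
\end{itemize}
Each verification is a substitution into the four equations above.

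For the $4$-Selmer table, apply Proposition~\ref{prop:sel4structure} with $s=4$ (Corollary~\ref{cor:fourdimensions}) and radical dimension $d=4-\rank C$. This gives $(\Z/4\Z)^d\oplus(\Z/2\Z)^{6-d}$, which is the same for $C_-$ since it also has $s=4$. For $P_2=1$, the matrix $C_2$ is nondegenerate on a space with $s=2$, so $d=0$ and $\Sel_4(E_2)\simeq(\Z/2\Z)^4$. For $E_{-2}$, Theorem~\ref{thm:threecassels} shows that the nonzero $U,V$ entry forces rank two on the three dimensional space, so $d=1$ and $\Sel_4\simeq\Z/4\Z\oplus(\Z/2\Z)^4$.

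Nothing here is conceptually hard. The one step needing care is the case bookkeeping for $\eps=0$, in particular confirming that the rank is exactly two in each degenerate case with nonzero matrix, so that exhibiting two independent kernel vectors settles the radical.
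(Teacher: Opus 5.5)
Your proposal is correct and is essentially the paper's proof. You substitute the listed vectors into $C_{p,q}$, use $\dim\ker=4-\rank C_+$ for completeness (the rank is two whenever the matrix is nonzero with zero Pfaffian), and read off the $4$-Selmer groups from Proposition~\ref{prop:sel4structure} with $s=4,2,3$. One harmless slip: for $V+\tau\Lambda+\alpha\Lambda'$ the nontrivial check is the third equation rather than the fourth, it needs only $\alpha^2=\alpha$ and $\gamma=\alpha(1+\tau)$, and the identity $\alpha\tau+\tau^2=\tau(\alpha+1)$ plays no role.
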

\begin{proof}
Substitution in \eqref{eq:pqcasselsmatrix} shows that each
listed vector is in the kernel. They are independent and their
number is $4-\rank C_+$, which proves completeness. The same
dimension formula and Proposition~\ref{prop:sel4structure}
give the table. If $P_2=1$, the pure dimensions of $E_2,E_{-2}$
are two and three, and their matrix ranks are both two, giving
radical dimensions zero and one, respectively.
\end{proof}

\begin{example}\label{ex:pqfour}
Take $(p,q)=(11,401)$ and $(a,b)=(84,23)$, so
$84^2-5\cdot23^2=4411$. The symbols in
\eqref{eq:pqbits} give $(\alpha,\tau)=(1,0)$.
For the final entry one may use
\[
 (R,S,T)=(67,3,4),\qquad w=104\pmod{401},\qquad\gamma=1.
\]
Thus
\[
 C_{11,401}=
 \begin{pmatrix}
 0&0&0&1\\0&0&1&0\\0&1&0&1\\1&0&1&0
 \end{pmatrix},\qquad\Pf(C_{11,401})=1.
\]
In particular $44110$ is not $\theta$-congruent, and
$\Sh(A_{22055})[2^\infty]\simeq(\Z/2\Z)^4$.
The pair $(p,q)=(11,521)$, with $(a,b)=(76,3)$, gives a second
example: $57310$ is not $\theta$-congruent.
\end{example}

\begin{example}\label{ex:threesimultaneous}
Take $p=11$, $q=3001$, and $\xi_p=4+\sqrt5$. The root
$r=245$ of $5$ modulo $3001$ gives
\[
 (\beta,\tau,\gamma,\upsilon)=(1,0,0,1).
\]
Thus $(P_+,P_-,P_2)=(1,1,1)$. For $n=33011$, the three
curves $A_{165055},A_{-165055},A_{330110}$ all have rank zero,
with $2$-primary Tate--Shafarevich groups of respective orders
$16,16,4$, all killed by $2$. The fourth curve $A_{-330110}$
has rank at most one.
\end{example}

\begin{example}\label{ex:fourthcomplete}
For $p=11$ take $\xi_2=3+\sqrt{-2}$.
At $q=401$ the five characters are $(1,0,1,1,0)$. Here $C_2=0$,
but $\langle W,U\rangle=1$ and the fourth radical is
$\langle V\rangle$. Thus $A_{-44110}$ has rank at most one,
a conclusion not supplied by its $U,V$ submatrix.
For the simultaneous example $q=3001$ in
Example~\ref{ex:threesimultaneous}, one has $\rho=1$.
Its fourth matrix has upper triangular entries $(1,0,1)$,
and the radical is $\langle W+V\rangle$.
\end{example}

Write $\tau_0(M)$ for the number of positive divisors of $M$, and
$\chi_5(d)=\leg d5$. For a positive integer $M\equiv3\pmod8$ with
$M\equiv\pm1\pmod5$, put
\begin{equation}\label{eq:pqnu}
 \nu_5(M)=\frac{\tau_0(M)-\sum_{d\mid M}\chi_5(d)}4\pmod2,
 \qquad
 \Psi(n)=\sum_{\substack{z\ge1\\40z^2<n}}\nu_5(n-40z^2).
\end{equation}
The sums defining $\Psi$ take values in $\F_2$.

\subsection{The two prime representation bridge}\label{subsec:pqbridge}

\begin{proposition}\label{prop:pqdefectbit}
For the two prime family \eqref{eq:pqfamily},
\[
 4\mid h(-n),\qquad16\mid h(-5n),\qquad32\mid\D(n).
\]
The quantity in \eqref{eq:pqnu} is well defined, and
\begin{equation}\label{eq:pqdefectbit}
 \frac{\D(n)}{32}
 \equiv\frac{4h(-n)-h(-5n)}{16}+\Psi(n)\pmod2.
\end{equation}
\end{proposition}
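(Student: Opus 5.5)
We begin with the class number divisibilities via genus theory, and then rerun the proof of Proposition~\ref{prop:FT} one binary level deeper to obtain the congruence modulo $64$.

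\textbf{Divisibilities.} For $n=pq$ with $n\equiv3\pmod8$, the field $\Q(\sqrt{-n})$ has discriminant $-n$ with two prime factors. Hence its $2$-rank is one. The class of a prime above $q$ lies in the principal genus, since $(p/q)=(q/p)=1$ and $q\equiv1\pmod 4$. Running R\'edei's $4$-rank criterion with the ambiguous classes, we expect $4\mid h(-n)$.

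For $\Q(\sqrt{-5n})$ there are three ramified primes, so the $2$-rank is two. The R\'edei matrix built from $(5/q)=(p/q)=1$, $q\equiv1\pmod8$, and the Legendre data of $p$ should give $4$-rank two. A Lemma~\ref{lem:h5mod8}-type argument, using $n=a^2-5b^2$ from \eqref{eq:pqnorm} and $\alpha=((5b+\sqrt{-5n})/2)$ with $[\mathfrak a]^2=[\mathfrak p_5]$, then produces one further doubling, giving $16\mid h(-5n)$. This second step, an $8$-rank computation, is delicate. If a direct R\'edei argument does not suffice, I would instead deduce $16\mid h(-5n)$ a posteriori from the congruence itself.

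\textbf{Congruence.} Start from $\D(n)=2(r_F(n)-h(-5n))$, as in \eqref{eq:FGsum}, and from $r_T(n)=4h(-n)$ (Corollary~\ref{cor:Tclass}). The splitting in the proof of Proposition~\ref{prop:FT} gives
\[
 r_F(n)-r_T(n)=B_2(n)-B_{10}(n)+2\sum_{z\ge1}\bigl(B_2(n-40z^2)-B_{10}(n-40z^2)\bigr).
\]
Lemma~\ref{lem:binary} yields $B_2(M)-B_{10}(M)=2\sum_{d\mid M}(\chi_{-8}(d)-\chi_{-40}(d))$. Since $\chi_{-40}=\chi_{-8}\chi_5$, each divisor with $\chi_5(d)=1$ contributes $0$, and each with $\chi_5(d)=-1$ contributes $2\chi_{-8}(d)$.

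The pairing $d\leftrightarrow M/d$ preserves both characters, because $\chi_{-8}(M)=\chi_5(M)=1$. Moreover $M$ is not a square. So modulo $16$ the summand equals $8$ times the parity of half the number of divisors with $\chi_5(d)=-1$. This count is exactly $(\tau_0(M)-\sum_{d\mid M}\chi_5(d))/2$, so
\[
 B_2(M)-B_{10}(M)\equiv 8\,\nu_5(M)\pmod{16}.
\]
Well-definedness of $\nu_5$ follows from the same pairing: the divisors with $\chi_5=-1$ come in pairs.

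For the boundary term, $B_2(n)-B_{10}(n)=2^{4}(e_{-2}-e_{-10})$ by Theorem~\ref{thm:composite}. For $q\equiv1\pmod{40}$ we have $e_{-2}(n)=e_{-10}(n)$, because $(-2/q)=(-10/q)=1$ and $(5/p)=1$ gives $(-2/p)=(-10/p)$. Hence this term vanishes. Therefore
\[
 r_F(n)\equiv 4h(-n)+16\Psi(n)\pmod{32},
\]
and so
\[
 \D(n)\equiv 8h(-n)-2h(-5n)+32\Psi(n)\pmod{64}.
\]
Dividing by $32$ gives \eqref{eq:pqdefectbit}, once we know $16\mid 4h(-n)-h(-5n)$. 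That holds by the first part, and it also shows $32\mid\D(n)$.

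The main obstacle is the $16\mid h(-5n)$ claim. The rest is a direct refinement of the argument behind Proposition~\ref{prop:FT}.
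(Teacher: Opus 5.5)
Your congruence argument is the paper's own proof. You use the exact splitting $\D(n)=8h(-n)-2h(-5n)+4\sum_{z\ge1}\bigl(B_2(n-40z^2)-B_{10}(n-40z^2)\bigr)$, with a vanishing boundary term, and the divisor pairing $B_2(M)-B_{10}(M)=4\sum_{\chi_5(d)=-1}\chi_{-8}(d)\equiv8\nu_5(M)\pmod{16}$, reduced modulo $64$. That pairing also gives the well-definedness of $\nu_5$. There is a harmless slip: Theorem~\ref{thm:composite} gives $2B_2(n)-2B_{10}(n)=2^4(e_{-2}-e_{-10})$ for $k=2$, not $B_2(n)-B_{10}(n)$. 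It does not matter, since in fact $B_2(n)=B_{10}(n)=8$.

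The gap is in the class number part, and it comes from a miscount, not from a hard step. You expect the R\'edei matrix for $-5n$ to give $4$-rank two, and then look for ``one further doubling'' through an $8$-rank computation. No such step is needed.

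\begin{itemize}
\item The prime discriminants of $-5n$ are $-p,5,q$. Every off-diagonal symbol is $1$: $(5/p)=(5/q)=(q/5)=(q/p)=1$ and $(-p/5)=(-p/q)=1$. So the R\'edei matrix is zero, and the $4$-rank equals the $2$-rank, namely two.
\item Hence the $2$-part of $\Cl(-5n)$ is $\Z/2^{e_1}\Z\oplus\Z/2^{e_2}\Z$ with $e_1,e_2\ge2$. That already has order at least $16$.
\item Your construction $[\mathfrak a]^2=[\mathfrak p_5]$ only shows that an ambiguous class is a square. That is a $4$-rank statement, so it was never going to supply an extra factor of two beyond what the R\'edei matrix gives.
\item The same zero R\'edei matrix for the discriminants $-p,q$ of $-n$ turns your ``we expect $4\mid h(-n)$'' into a proof.
\end{itemize}

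As written, the proposal leaves $16\mid h(-5n)$ unproved. Your fallback, deducing it ``a posteriori from the congruence itself'', is circular. The congruence modulo $64$ only ties $\D(n)$ to $8h(-n)-2h(-5n)+32\Psi(n)$. Nothing in the argument gives $\D(n)\bmod32$ independently; $32\mid\D(n)$ is itself derived from the two class number divisibilities.

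Once you insert the observation that $4$-rank two with $2$-rank two already forces order at least $16$, the proof is complete and coincides with the paper's.
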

\begin{proof}
The prime discriminants of $-n$ are $-p,q$, and those of $-5n$
are $-p,5,q$. All off diagonal symbols in their R\'edei matrices
are trivial: $(5/p)=(5/q)=(p/q)=(q/p)=1$ and
$(-p/5)=(-p/q)=1$. Their diagonal entries, the sums of the other
entries in each column, are also zero. Genus theory and the R\'edei
rank formula \cite[Theorem 3.1]{Stevenhagen} therefore give $2$-rank
and $4$-rank both equal to one for $\Cl(-n)$, and both equal to
two for $\Cl(-5n)$. Every nontrivial cyclic factor of these $2$-class groups therefore has order
at least four, which gives the two asserted class number divisibilities.

Both $p$ and $q$ split in $\Q(\sqrt{-2})$ and
$\Q(\sqrt{-10})$, so the divisor sum formulas give
$B_2(n)=B_{10}(n)=8$. Theorem~\ref{thm:composite} now proves
$32\mid\D(n)$.

For an integer $M$ occurring in \eqref{eq:pqnu}, one has
$\chi_5(M)=\chi_{-8}(M)=1$, and $M$ is not a square. The
divisors with $\chi_5(d)=-1$ consequently form pairs $\{d,M/d\}$.
If their number is $N_-(M)$, then
$\tau_0(M)-\sum_{d\mid M}\chi_5(d)=2N_-(M)$ is divisible by four.
Moreover, Lemma~\ref{lem:binary} gives
\[
 B_2(M)-B_{10}(M)
 =4\sum_{\substack{d\mid M\\\chi_5(d)=-1}}\chi_{-8}(d).
\]
The two values of $\chi_{-8}$ in each divisor pair agree. Hence
\[
 \frac{B_2(M)-B_{10}(M)}8
 \equiv\frac{N_-(M)}2=\nu_5(M)\pmod2.
\]
The exact splitting by the third coordinate in the proof of
Proposition~\ref{prop:FT}, together with
$r_T(n)=4h(-n)$ and \eqref{eq:FGsum}, yields
\[
 \D(n)=8h(-n)-2h(-5n)
 +4\sum_{\substack{z\ge1\\40z^2<n}}
 \bigl(B_2(n-40z^2)-B_{10}(n-40z^2)\bigr).
\]
Reduce this identity modulo $64$ and divide by $32$.
\end{proof}

The representation formula and the Cassels matrix now produce two natural
bits. The following conjecture asserts that they agree.
\begin{conjecture}
\label{conj:pqbridge}\label{prob:pqbridge}
Let $p,q$ be primes satisfying
\[
 p\equiv11,19\pmod{40},\qquad q\equiv1\pmod{40},\qquad
 \leg pq=1,
\]
and put $n=pq$. Let $C_{p,q}$ be the ordinary Cassels matrix of
$A_{5pq}$ in the ordered basis $(\Lambda,\Lambda',U,V)$ of
Theorem~\ref{thm:pqcassels}. Then
\begin{equation}\label{eq:pqconjecturebridge}
 \frac{\D(pq)}{32}\equiv\Pf(C_{p,q})\pmod2.
\end{equation}
Equivalently, with $\Psi$ as in \eqref{eq:pqnu} and the fixed
residue characters of \eqref{eq:pqfixedbits},
\begin{equation}\label{eq:pqbridge}
 \frac{4h(-pq)-h(-5pq)}{16}+\Psi(pq)
 =\eps g_p(q)+\beta_p(q)(1+t(q))
 \quad\text{in }\F_2.
\end{equation}
Here $\eps=0$ for $p\equiv11\pmod{40}$ and $\eps=1$ for
$p\equiv19\pmod{40}$.
\end{conjecture}
Proposition~\ref{prop:pqdefectbit} makes the normalized representation bit
well defined, and Proposition~\ref{prop:pqfixedsymbols} gives the Pfaffian in
fixed residue characters. What remains conjectural is their equality.
There is substantial finite evidence for the conjecture. The fixed character
calculation, including the auxiliary conics, agrees for all $122$ admissible
pairs with $p<300$ and $pq\le200000$. Direct signed counts of $F$ and $G$
extend the comparison to all $4469$ admissible pairs with
$pq\le5\cdot10^6$: $2383$ have $p\equiv11\pmod{40}$ and $2086$ have
$p\equiv19\pmod{40}$. In this range both $32\mid\D(pq)$ and
\eqref{eq:pqconjecturebridge} hold in every case. These computations are
only evidence. For instance, $\D(11\cdot401)=-32$ whereas
$\D(11\cdot3041)=64$; the normalized residues are one and zero, and so are
the corresponding Pfaffians. The second example also shows that nonvanishing
of the representation defect is not the same as nondegeneracy of the ordinary
Cassels pairing.

Before returning to descent, it is useful to understand the correction term
$\Psi$ on its own. Its support admits a simple factorization description,
independent of Conjecture~\ref{conj:pqbridge}.
For $M\equiv3\pmod8$ with $M\bmod5\in\{1,4\}$, recall
\[
 \nu_5(M)=\frac{\tau_0(M)-\sum_{d\mid M}(d/5)}4\pmod2.
\]

\begin{proposition}\label{prop:sparse}
The value $\nu_5(M)$ equals one if and only if exactly one of the following
two disjoint descriptions holds:

\smallskip\noindent
\textup{(I)} $M=\ell_1\ell_2w^2$, where $\ell_1\ne\ell_2$ are primes
with $(\ell_1/5)=(\ell_2/5)=-1$, and both $v_{\ell_1}(w)$ and
$v_{\ell_2}(w)$ are even.

\smallskip\noindent
\textup{(II)} $M=\ell w^2$, where $\ell$ is prime with $(\ell/5)=1$,
$v_\ell(w)$ is even, and $(w/5)=-1$.

\smallskip\noindent
Here $w>0$; in either description it is determined by $M$ and its
squarefree kernel. No squarefree hypothesis on $M$ is imposed.
\end{proposition}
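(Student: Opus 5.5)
The plan is to make $\nu_5(M)$ completely explicit in terms of the prime factorization of $M$, using the multiplicativity of both divisor sums, and then to read off the two descriptions. Write $M=\prod_\ell \ell^{e_\ell}$ with $M$ odd and prime to $5$. As in the proof of Proposition~\ref{prop:pqdefectbit}, put $N_-(M)=\#\{d\mid M:\chi_5(d)=-1\}$. Then $\tau_0(M)-\sum_{d\mid M}\chi_5(d)=2N_-(M)$, so $\nu_5(M)\equiv N_-(M)/2\pmod 2$. Both $\tau_0(M)=\prod_\ell(e_\ell+1)$ and $\Sigma(M):=\sum_{d\mid M}\chi_5(d)=\prod_\ell S_\ell$ are multiplicative. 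Here $S_\ell=e_\ell+1$ when $\chi_5(\ell)=1$. When $\chi_5(\ell)=-1$, we have $S_\ell=1$ or $0$ according as $e_\ell$ is even or odd. Two facts will be used throughout. First, $M\equiv3\pmod8$ is not a square, so the set $\mathcal O$ of primes with odd exponent is nonempty. Second, $\chi_5(M)=1$ forces the number of primes in $\mathcal O$ with $\chi_5(\ell)=-1$ to be even.

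\emph{Case A: some $\ell\in\mathcal O$ has $\chi_5(\ell)=-1$.} Then $\Sigma(M)=0$, and $\nu_5(M)=1$ exactly when $\tau_0(M)\equiv4\pmod8$, that is, when $v_2(\tau_0(M))=2$. Each prime in $\mathcal O$ contributes $v_2(e_\ell+1)\ge1$, and there are at least two such primes with $\chi_5=-1$. Hence $v_2(\tau_0)=2$ holds iff $\mathcal O$ consists of exactly two primes $\ell_1,\ell_2$, both with $\chi_5=-1$ and $e_{\ell_i}\equiv1\pmod4$. Writing $e_{\ell_i}=1+2v_{\ell_i}(w)$, this is precisely description~(I).

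\emph{Case B: every $\ell\in\mathcal O$ has $\chi_5(\ell)=1$.} Put $P_\pm=\prod_{\chi_5(\ell)=\pm1}(e_\ell+1)$. All exponents of the $\chi_5=-1$ primes are even, so $\Sigma(M)=P_+$ and $2N_-(M)=P_+(P_--1)$. The number $P_-$ is odd, so $P_--1$ is even, and $v_2(P_+)\ge\#\mathcal O\ge1$. Thus $v_2(2N_-)=2$ holds iff $\mathcal O=\{\ell\}$ with $e_\ell\equiv1\pmod4$ and $P_-\equiv3\pmod4$. The remaining step is to match $P_-\equiv3\pmod4$ with $(w/5)=-1$, where $w=\ell^{(e_\ell-1)/2}\prod_{\ell'\ne\ell}\ell'^{e_{\ell'}/2}$. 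Since $\chi_5(\ell)=1$, we get $(w/5)=\prod_{\chi_5(\ell')=-1}(-1)^{e_{\ell'}/2}$. On the other hand $e_{\ell'}+1\equiv3\pmod4$ exactly when $e_{\ell'}/2$ is odd. Hence both conditions say that an odd number of the $\chi_5=-1$ primes have $e_{\ell'}/2$ odd. This gives description~(II).

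The two descriptions are disjoint, because (I) has $\#\mathcal O=2$ and (II) has $\#\mathcal O=1$. In each, $w$ is recovered as $\sqrt{M/\mathrm{sqf}(M)}$. The main, though modest, obstacle is the bookkeeping in Case~B, namely the $2$-adic valuation of $P_+(P_--1)$ and its translation into the Legendre symbol of $w$. Everything else reduces to multiplicativity together with the two parity facts above.
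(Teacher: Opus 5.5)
Your proposal is correct and follows the paper's proof essentially step for step: you use the same case split according to whether some prime with $\chi_5(\ell)=-1$ occurs to odd exponent, the same criterion $v_2(\tau_0(M))=2$ in the first case, and in the second case the factorization $2N_-(M)=P_+(P_--1)$ (the paper's $\nu_5(M)=A(B-1)/4$) with $v_2(P_+)=1$ and $P_-\equiv3\pmod4$ rewritten as $(w/5)=-1$. Your explicit treatment of the converse and of disjointness via $\#\mathcal O$ only spells out what the paper leaves implicit.
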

\begin{proof}
Write $M=\prod\ell^{e_\ell}$, and separate the primes according to
$\chi_5(\ell)=(\ell/5)$.
The two multiplicative expressions are
\[
 \tau_0(M)=\prod_\ell(e_\ell+1),\qquad
 \sum_{d\mid M}\chi_5(d)=
 \prod_{\chi_5(\ell)=1}(e_\ell+1)
 \prod_{\chi_5(\ell)=-1}\left(\sum_{j=0}^{e_\ell}(-1)^j\right).
\]
Suppose a nonsplit prime has odd exponent. Since $\chi_5(M)=1$, at least
two such primes do. The character sum is zero, and $\nu_5(M)=1$ precisely
when $v_2(\tau_0(M))=2$. This forces exactly two odd exponents, both at
nonsplit primes and both $1$ modulo $4$, giving (I).

Otherwise all nonsplit exponents are even. Put
$A=\prod_{\chi_5(\ell)=1}(e_\ell+1)$ and
$B=\prod_{\chi_5(\ell)=-1}(e_\ell+1)$.
Then $B$ is odd and $\nu_5(M)=A(B-1)/4\pmod2$.
The number $M\equiv3\pmod8$ is not a square, so $A$ is even.
The residue is one exactly when $v_2(A)=1$ and $B\equiv3\pmod4$.
The first condition gives one odd exponent at a split prime, congruent
to $1$ modulo $4$. Writing $M=\ell w^2$, the second is
$B\equiv\prod_{\chi_5(t)=-1}(-1)^{v_t(w)}=(w/5)=-1\pmod4$.
This is exactly (II).
\end{proof}

Thus $\Psi(n)=\sum_{40z^2<n,\ z>0}\nu_5(n-40z^2)$ is the parity of the
shifts having one of the two factorization patterns above. This description
is useful, but it does not express that parity in terms of the ordinary
residue characters and does not produce the positive definite quaternary
model asked for in Problem~\ref{prob:quaternary}. Consequently the identity
\begin{equation}\label{eq:stillopen}
 \frac{\D(pq)}{32}\equiv\eps\gamma+\beta(1+\tau)\pmod2
\end{equation}
remains open. The higher pairing calculation lies one level deeper in the
descent and does not imply this representation congruence.

\begin{remark}\label{rem:higherbitexperiment}
On the two prime locus of Corollary~\ref{cor:twohigher}, exact computations
suggest the stronger comparison
\[
 64\mid\D(pq),\qquad \frac{\D(pq)}{64}\equiv e_{pq}\pmod2.
\]
This statement is not used in any proof and remains open. The expanded
exact experiment in the finite checks described in the text verifies the displayed
congruence for all $227$ parameters on this two dimensional locus with
$pq\le5\cdot10^6$, but this is evidence only. The proved result is the
conic evaluation of the higher Cassels--Tate pairing in
Theorem~\ref{thm:compositehigher}; no additional divisibility of
$\D(pq)$ follows formally from that pairing calculation.
\end{remark}

\section{Composite higher descent and Selmer towers}
\label{sec:extensions}

\subsection{Ordinary Cassels matrices with several auxiliary primes}\label{subsec:multimatrices}

We now pass from one auxiliary prime to several. The ordinary Cassels matrices
still admit uniform descriptions in terms of norm conics. On a natural zero
character locus, the $4$-cover constructed in the prime case remains valid
and can be used to compute the next pairing. This gives higher pairings on
larger radicals; when the ordinary radical is one dimensional, a separate
group theoretic argument determines the entire finite $2$-power Selmer tower.

Throughout this section, write
\begin{equation}\label{eq:family}
\begin{gathered}
 n=pq_1\cdots q_r,\qquad p\equiv11,19\pmod{40},\\
 q_i\equiv1\pmod{40},\qquad
 \leg p{q_i}=\leg{q_i}{q_j}=1\quad(i\ne j).
\end{gathered}
\end{equation}
where all primes are distinct and $r\ge1$. Put
$E_\delta=A_{5\delta n}$ for $\delta\in\{1,-1,2,-2\}$. We use the first
two Kummer coordinates and set
\[
 \Lambda=(5,1),\qquad \Lambda'=(1,5),\qquad
 U_i=(q_i,1),\qquad V_i=(1,q_i),\qquad W=(p,1).
\]
After relabelling the auxiliary primes in
Corollary~\ref{cor:fourdimensions}, the pure $2$-Selmer dimensions and
bases are
\[
\begin{array}{c|c|l}
\delta&\dim\Sel_2^0(E_\delta)&\text{basis}\\ \hline
1,-1&2r+2&\Lambda,\Lambda',U_1,\ldots,U_r,V_1,\ldots,V_r\\
2&2r&U_1,\ldots,U_r,V_1,\ldots,V_r\\
-2&2r+1&W,U_1,\ldots,U_r,V_1,\ldots,V_r.
\end{array}
\]
All matrices in this section are over $\F_2$. We begin with the
ordinary Cassels matrices. Fix $\delta$, and put $h_i=5\delta n/q_i$.
Choose primitive integral
solutions of the three equations
\begin{align}
 R_i^2-q_iS_i^2&=h_iT_i^2,\label{eq:R}\\
 K_i^2-q_iL_i^2&=5h_iM_i^2,\label{eq:K}\\
 J_i^2-q_iN_i^2&=-h_iP_i^2.\label{eq:J}
\end{align}
The third coordinates can be chosen nonzero. Each of the three equations is a
conic projection of a $U_i$- or $V_i$-covering, hence is everywhere locally
soluble. By the Hasse principle it has a rational point, and clearing
denominators gives the primitive integral triples used below.

\begin{lemma}\label{lem:local}
Let $\ell\equiv1\pmod8$ be prime, and let $a,b\in\Z$ satisfy
$v_\ell(b)=1$ and $(a/\ell)=1$. Write $b=\ell b_0$ with
$b_0\in\Z_\ell^\times$, and assume $(b_0/\ell)=1$.
Suppose $X^2-aY^2=bZ^2$ is a primitive integral solution.
For either $s\in\Q_\ell$ with $s^2=a$ and $X-Ys\ne0$,
\[
 (X-Ys,\ell)_\ell=\leg X\ell.
\]
Here $X$ and $Y$ are units at $\ell$.
\end{lemma}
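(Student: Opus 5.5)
First show that $X,Y$ are $\ell$-adic units. Reduce the equation modulo $\ell$: $X^2\equiv aY^2$. If $\ell\mid Y$, then $\ell\mid X$, so $\ell^2\mid bZ^2$, and since $v_\ell(b)=1$ this forces $\ell\mid Z$, contradicting primitivity. Hence $\ell\nmid Y$, and then $X^2\equiv aY^2$ with $a$ a unit gives $\ell\nmid X$. (Here $\ell\nmid a$ because $(a/\ell)=1$ is a genuine Legendre symbol.)

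Next, fix $s$ and write
\[
 (X-Ys)(X+Ys)=bZ^2 .
\]
Both factors are integral. Their sum $2X$ is a unit, so at most one of them is divisible by $\ell$. The product has odd valuation $1+2v_\ell(Z)$, so exactly one factor carries all of it.

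\textbf{Case 1: $X-Ys$ is a unit.} The tame Hilbert symbol formula gives $(X-Ys,\ell)_\ell=\leg{X-Ys}{\ell}$, computed on the residue. Then $X+Ys\equiv 0\pmod\ell$, so $Ys\equiv -X$, hence $X-Ys\equiv 2X$. Since $\ell\equiv1\pmod8$, $\leg2\ell=1$, and the symbol equals $\leg X\ell$.

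\textbf{Case 2: $v_\ell(X-Ys)$ is odd.} Write $X-Ys=\ell^{e}w$ with $e$ odd and $w$ a unit. By the formula recalled before Theorem~\ref{thm:fullmonsky},
\[
 (X-Ys,\ell)_\ell=\leg{-1}{\ell}^{e}\leg{w}{\ell}=\leg w\ell,
\]
using $\ell\equiv1\pmod 4$. Now $w=bZ^2/(\ell^{e}(X+Ys))=b_0\,(Z/\ell^{(e-1)/2})^2/(X+Ys)$, and this quotient is a unit square times $b_0/(X+Ys)$. Here $X+Ys\equiv 2X$ modulo $\ell$, so
\[
 \leg w\ell=\leg{b_0}\ell\leg{2X}\ell=\leg X\ell,
\]
using the hypothesis $(b_0/\ell)=1$ and $\leg2\ell=1$.

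The two cases together give the claim for either choice of $s$ with $X-Ys\ne0$. The expected obstacle is only bookkeeping: the even part $2v_\ell(Z)$ of the valuation must be absorbed into the square, and Case 2 needs $e$ odd. Both follow from the valuation count on the product, since the complementary factor is a unit. The hypotheses $\ell\equiv1\pmod8$ and $(b_0/\ell)=1$ are used exactly to kill $\leg2\ell$, $\leg{-1}\ell$ and $\leg{b_0}\ell$.
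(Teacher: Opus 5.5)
Your proof is correct and follows essentially the same route as the paper: primitivity forces $X,Y$ to be $\ell$-adic units, and you then split according to which of $X-Ys$ and $X+Ys$ is the unit, the unit factor reducing to $2X$. In the odd-valuation case, $\leg{-1}{\ell}=\leg{2}{\ell}=\leg{b_0}{\ell}=1$ remove every correction factor; your version only makes explicit the tame Hilbert symbol formula and the absorption of $Z^2/\ell^{e-1}$ into a unit square, which the paper leaves implicit.
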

\begin{proof}
If $\ell\mid X$, then $\ell\mid Y$. Primitivity forces $Z$ to be a unit,
and the two sides of the norm equation have valuations at least two and
one, a contradiction. Hence $X,Y$ are units. If $X-Ys$ is a unit its
reduction is $2X$, proving the result. Otherwise $X+Ys$ is a unit, and
\[
 X-Ys=\frac{bZ^2}{X+Ys}.
\]
Its valuation is $1+2v_\ell(Z)$, and its unit part has character
$(b_0/\ell)(2X/\ell)=(X/\ell)$. For $\ell\equiv1\pmod8$,
$(-1/\ell)=(2/\ell)=1$, so this is also its Hilbert symbol with $\ell$.
\end{proof}

Define $r\times r$ matrices $A_\delta,B_\delta$ by
\begin{align}
 (A_\delta)_{ii}&=0,&
 (A_\delta)_{ij}&=[R_iK_i/q_j]&& (i\ne j),\label{eq:AB1}\\
 (B_\delta)_{ii}&=[\delta n/q_i\,/q_i]_4,&
 (B_\delta)_{ij}&=[R_i/q_j]&& (i\ne j).\label{eq:AB2}
\end{align}
The numerator in the quartic symbol in \eqref{eq:AB2} is the integer
$\delta n/q_i$. It is a quadratic residue modulo $q_i$ by \eqref{eq:family}.
All quadratic symbols in \eqref{eq:AB1}--\eqref{eq:AB2} are nonzero,
as follows from the valuation argument in Lemma~\ref{lem:local}.

\begin{theorem}\label{thm:UV}
The matrix $A_\delta$ is alternating, and the ordinary Cassels matrix on
$\langle U_1,\ldots,U_r,V_1,\ldots,V_r\rangle$ is
\begin{equation}\label{eq:H}
 H_\delta=\begin{pmatrix}
 A_\delta&B_\delta\\ B_\delta^t&B_\delta+B_\delta^t
 \end{pmatrix}.
\end{equation}
In particular the entries are independent of all choices in
\eqref{eq:R}--\eqref{eq:J}. For $i\ne j$ one has the reciprocity identities
\begin{equation}\label{eq:normrec}
 [J_i/q_j]=[R_j/q_i],\qquad
 [R_iK_i/q_j]=[R_jK_j/q_i].
\end{equation}
\end{theorem}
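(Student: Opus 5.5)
We compute each entry with the tangent--Hilbert symbol formula of Theorem~\ref{thm:pqcassels}, following the two prime case. For $\langle U_i,\cdot\rangle$ and $\langle V_i,\cdot\rangle$ the first class is the covering we write tangents on. The second class carries Hilbert parameter $q_j$, placed in the first coordinate for $U_j$ and the second for $V_j$. The key simplification is that $q_j$ is a square at $2,5,p$, at infinity, and at every $q_l$ with $l\ne j$. At the remaining good primes, primitive tangents have even valuation as in Theorem~\ref{arith:pointind}. So each entry is a single local symbol at $q_j$.

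For the covering of $U_i=(q_i,1,q_i)$, we take the tangents built from \eqref{eq:R} and from the fixed point $(1,h_i-1,h_i+1)$, exactly as in Theorem~\ref{thm:threecassels}. For $V_i=(1,q_i,q_i)$ we use \eqref{eq:K} and \eqref{eq:J}, whose conics are the projections onto the other two coordinate pairs.

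\textbf{Diagonal entries.} The diagonal entries of $B_\delta$ are the $\langle U_i,V_i\rangle$ computations of Theorem~\ref{thm:threecassels} with $h=h_i$; the same $\F_{q_i}$ algebra gives $[h_i/q_i]_4+\tau_i=[\delta n/q_i/q_i]_4$. The entries $\langle U_i,U_i\rangle$ and $\langle V_i,V_i\rangle$ vanish by alternation.

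\textbf{Off-diagonal entries.} For $i\ne j$ we evaluate at $q_j$ on a local point of the $U_i$ or $V_i$ covering. All of $5,h_i,q_i$ are units and squares there, so the relevant coordinate forms factor over $\Q_{q_j}$. Each tangent value is then, up to squares, one of $R_i-S_is$, $K_i-L_is$ or $J_i-N_is$ with $s^2=q_i$. Lemma~\ref{lem:local} does not apply directly, because $q_j\nmid h_i$ fails. Instead we observe that $X-Ys$ is a unit with reduction $2X$, giving $[X/q_j]$. We assign these reductions as follows:
\[
\langle U_i,U_j\rangle=[R_iK_i/q_j],\qquad \langle U_i,V_j\rangle=[R_i/q_j],
\]
and $\langle V_i,V_j\rangle=[J_i/q_j]+[R_i/q_j]$-type combinations. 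Alternation of the global pairing then gives $\langle V_i,U_j\rangle=\langle U_j,V_i\rangle=[R_j/q_i]$, and comparing the two evaluations yields \eqref{eq:normrec}.

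\textbf{Assembling $H_\delta$.} The $VV$ block equals $B_\delta+B_\delta^t$ once we use the first identity in \eqref{eq:normrec} to rewrite $[J_i/q_j]$. Symmetry of $A_\delta$ is the second identity in \eqref{eq:normrec}, and its zero diagonal holds by definition, so $A_\delta$ is alternating. Choice independence follows because every entry is an intrinsic Cassels--Tate value.

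\textbf{Main obstacle.} The delicate step is deciding which tangent products produce $R_iK_i$ and which produce $J_i$. This requires fixing the local point at $q_j$ with compatible square roots on the three conics of one covering, and checking that unwanted factors ($2$, $5$, $h_i$, $-1$) are squares. They are, since $q_j\equiv1\pmod{40}$ and $h_i$ is a product of primes that are squares modulo $q_j$.

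As a fallback, we would derive \eqref{eq:normrec} independently by Hilbert reciprocity applied to $R_i-S_i\sqrt{q_i}$ over $\Q(\sqrt{q_i})$, in the style of Lemma~\ref{lem:threeNormRelation}. Only $q_j$ and $q_i$ contribute to that product. This identity is then fed back to identify the $VV$ block.
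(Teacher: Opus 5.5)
Your architecture matches the paper's. You use the tangent--Hilbert formula, observe that only $q_j$ contributes, evaluate on the $t=0$ covering points $(0,1,s,1)$ and $(0,s,1,1)$ with $s^2=q_i$, take the diagonal from Theorem~\ref{thm:threecassels}, and obtain \eqref{eq:normrec} from symmetry of the pairing. The central local step, however, is justified incorrectly. You assert that $5,h_i,q_i$ are all units and squares at $q_j$, and you set Lemma~\ref{lem:local} aside. In fact $h_i=5\delta n/q_i$ satisfies $v_{q_j}(h_i)=1$ for $j\ne i$. This is exactly what forces $X\equiv\pm Ys\pmod{q_j}$ for each solution of \eqref{eq:R}--\eqref{eq:J}, so it is what makes ``reduction $2X$'' possible at all. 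If $h_i$ were a $q_j$-adic unit, $X-Ys$ would have no reason to reduce to $2X$. Your mechanism therefore contradicts your own premise.

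This is a real gap, not only a misstatement. On one covering point, a single root $s$ enters both factors of the products $(K_i-L_is)(R_i-S_is)$ for $\langle U_i,U_j\rangle$ and $(R_i-S_is)(J_i-N_is)$ for $\langle V_i,V_j\rangle$. Each factor is a unit for only one sign of $s$, and the two factors may require opposite signs. In that case one factor has odd positive $q_j$-adic valuation, and ``unit with reduction $2X$'' says nothing about it.

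Lemma~\ref{lem:local} handles exactly this case, and its hypotheses hold. Take $\ell=q_j\equiv1\pmod8$ and $a=q_i$, a residue. Take $b=h_i$, $5h_i$ or $-h_i$; then $v_{q_j}(b)=1$, and $b/q_j$ is a square unit because all its prime factors, including $-1,2,5$, are squares modulo $q_j$. This is the correct version of your ``main obstacle'' claim, which wrongly refers to $h_i$ itself. On the non-unit branch one has $X-Ys=bZ^2/(X+Ys)$, and the unit part still has character $(X/q_j)$. That is how the paper closes the step. Alternatively, you could re-choose the sign of $L_i$ or $N_i$ separately for each entry so that both factors are units at that $q_j$. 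Either repair must use $v_{q_j}(h_i)=1$.

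Also fix the conic bookkeeping. The $U_i$ covering needs \eqref{eq:K} for its $(u_2,u_3)$ tangent, together with \eqref{eq:R} and the fixed point. The $V_i$ covering uses \eqref{eq:R} and \eqref{eq:J}, not \eqref{eq:K}. Your formulas $[R_iK_i/q_j]$ and $[R_iJ_i/q_j]$ already presuppose this assignment. The Hilbert-reciprocity fallback is unnecessary, since symmetry of the pairing gives \eqref{eq:normrec} directly.
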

\begin{proof}
In covering coordinates $(t,u_1,u_2,u_3)$, tangent forms for $U_i$ can be
chosen as
\begin{align*}
 L_1&=K_i u_3-L_i u_2-5h_iM_it,\\
 L_2&=(h_i+1)u_3-(h_i-1)u_1-4h_it,\\
 L_3&=R_i u_1-S_i u_2-h_iT_it.
\end{align*}
Their conic points are respectively
$(t,u_2,u_3)=(M_i,q_iL_i,K_i)$,
$(t,u_1,u_3)=(1,h_i-1,h_i+1)$, and
$(t,u_1,u_2)=(T_i,R_i,q_iS_i)$.
For $V_i$ use
\begin{align*}
 L'_1&=(5h_i-1)u_2-(5h_i+1)u_3+10h_it,\\
 L'_2&=R_i u_3-S_i u_1-2h_iT_it,\\
 L'_3&=J_i u_2-N_i u_1+h_iP_it.
\end{align*}
The corresponding points are
$(t,u_2,u_3)=(1,(5h_i-1)/2,(5h_i+1)/2)$,
$(t,u_1,u_3)=(T_i,2q_iS_i,2R_i)$, and
$(t,u_1,u_2)=(P_i,q_iN_i,J_i)$.
These are verified directly from \eqref{eq:R}--\eqref{eq:J}.

For second argument $U_j$ or $V_j$, the Hilbert parameter is $q_j$.
It is a square at $2,5,p$, and at every $q_i\ne q_j$, and is positive
at infinity. At any other prime the conics have good reduction and the
chosen tangent sections are primitive. Their divisors are twice a point,
so their nonzero values have even valuation. To justify primitivity away
from $2,5,n$, note that a prime dividing both first coordinates of one
of \eqref{eq:R}--\eqref{eq:J} must divide the third coordinate as well;
the possible square factor at $5$ in \eqref{eq:K} is irrelevant here.
Thus only the place $q_j$ contributes.

Suppose $i\ne j$. Choose $s^2=q_i$ in $\Q_{q_j}$.
At $q_j$ use the exact point $(0,1,s,1)$ on the $U_i$ covering.
The products for $U_j,V_j$ are
\[
 (K_i-L_is)(R_i-S_is),\qquad 2(R_i-S_is).
\]
Every prime factor of $h_i/q_j$ is a square modulo $q_j$;
this includes the factors $-1$, $2$, and $5$ when they occur.
Lemma~\ref{lem:local}, applied also to $5h_i$, gives
\[
 \langle U_i,U_j\rangle=[R_iK_i/q_j],\qquad
 \langle U_i,V_j\rangle=[R_i/q_j].
\]
A single choice of the square root need not make both tangent values units.
This causes no difficulty, because Lemma~\ref{lem:local} also treats the
branch on which one reduction vanishes.

For $V_i$ use $(0,s,1,1)$. The analogous products are
$-2(J_i-N_is)$ and $(R_i-S_is)(J_i-N_is)$, giving
\[
 \langle V_i,U_j\rangle=[J_i/q_j],\qquad
 \langle V_i,V_j\rangle=[R_iJ_i/q_j].
\]
Symmetry of the additive Cassels pairing gives \eqref{eq:normrec}. Hence the
lower right block is $B_\delta+B_\delta^t$, while $A_\delta$ is symmetric
with zero diagonal, and therefore alternating over $\F_2$.

It remains to evaluate $\langle U_i,V_i\rangle$. Write
$R=R_i,S=S_i,T=T_i,h=h_i,q=q_i$. Primitivity implies $q\nmid RT$.
At $q$, take
\[
 (t,u_1,u_2,u_3)=(T,-R,qS,w),\qquad w^2=R^2+4hT^2.
\]
The radicand reduces to $5hT^2$, a nonzero square. Choose the sign of
$w$ for which $\ell=(h+1)w+(h-1)R-4hT$ is a unit. Both signs could
vanish only if $h=-1$ and $R=2T$ modulo $q$, forcing $q=5$.
Here $L_3=-2R^2$ is a square. Every odd prime dividing $T$ sees $q$ as
a square by the norm equation and primitivity. Quadratic reciprocity,
together with $q\equiv1\pmod8$, therefore gives $(T/q)=1$.
Set $b=R/T,c=w/T,z=b+c$ modulo $q$. Then
\[
 b^2=h,\quad c^2=5h,\quad
 \ell/T=h(z-2)^2/z,\quad z=b(1+s_5),\quad s_5^2=5.
\]
Therefore its character is $(h/q)_4(1+s_5/q)$. By
Lemma~\ref{lem:pqcyclotomic}, $(1+s_5/q)=(5/q)_4$, so the product is
$(h/5\,/q)_4$. This is the diagonal entry in \eqref{eq:AB2}; the remaining
diagonal terms follow from alternation.
\end{proof}

Choose primitive norm representations
\begin{equation}\label{eq:globalnorms}
 n=a^2-5b^2=a_2^2+2b_2^2=a_{10}^2+10b_{10}^2,\qquad
 a,b>0,\quad4\mid a,\quad b\text{ odd}.
\end{equation}
They can be obtained by multiplying primitive prime norm generators.
Define column vectors $\alpha,\tau\in\F_2^r$ by
\[
 \alpha_j=[a/q_j],\qquad \tau_j=[5/q_j]_4,
\]
and put $\eps=0$ or $1$ according as $p\equiv11$ or $19\pmod{40}$.

\begin{theorem}\label{thm:allfour}
In the bases displayed above, the two odd parameter matrices are
\begin{align}
 C_+&=\begin{pmatrix}
 0&\eps&0^t&\alpha^t\\
 \eps&0&(\alpha+\tau)^t&\tau^t\\
 0&\alpha+\tau&A_1&B_1\\
 \alpha&\tau&B_1^t&B_1+B_1^t
 \end{pmatrix},\label{eq:Cplus}\\
 C_-&=\begin{pmatrix}
 0&1+\eps&0^t&(\alpha+\tau)^t\\
 1+\eps&0&\alpha^t&\tau^t\\
 0&\alpha&A_{-1}&B_{-1}\\
 \alpha+\tau&\tau&B_{-1}^t&B_{-1}+B_{-1}^t
 \end{pmatrix}.\label{eq:Cminus}
\end{align}
For $\delta=2$ the full matrix is $C_2=H_2$.
For $\delta=-2$, put $J_*=2a+5b$ and define
\begin{align}
 z^U_j&=[a_2a_{10}/q_j]+\tau_j,&
 z^V_j&=[a_2J_*/q_j],\label{eq:zrows}\\
 w^U_j&=z^U_j+\sum_i(A_{-2})_{ij},&
 w^V_j&=z^V_j+\sum_i(B_{-2})_{ij}.\label{eq:wrows}
\end{align}
Then
\begin{equation}\label{eq:Cfourth}
 C_{-2}=\begin{pmatrix}
 0&(w^U)^t&(w^V)^t\\
 w^U&A_{-2}&B_{-2}\\
 w^V&B_{-2}^t&B_{-2}+B_{-2}^t
 \end{pmatrix}.
\end{equation}
All symbols and matrices are independent of the permitted choices of
primitive norm representations.
\end{theorem}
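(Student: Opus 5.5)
The proof assembles the matrix entry by entry, reusing the two-prime computations. The auxiliary block is already Theorem~\ref{thm:UV}, which gives $H_\delta$ for every $\delta$; in particular $C_2=H_2$ follows at once, since the pure Selmer basis for $\delta=2$ consists only of $U_i,V_i$ by Corollary~\ref{cor:fourdimensions}. It therefore remains to compute the rows of $\Lambda,\Lambda'$ for $\delta=\pm1$ and the row of $W$ for $\delta=-2$.

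For $\delta=1$ I will repeat the proof of Theorem~\ref{thm:pqcassels} with $q$ replaced by each $q_j$, using the global norm representation $n=a^2-5b^2$ of \eqref{eq:globalnorms}. The tangent forms $L_1,L_2,L_3$ for $\Lambda$ and $L'_1,L'_2,L'_3$ for $\Lambda'$ are written purely in terms of $n,a,b$, so they are unchanged. For a second argument $U_j$ or $V_j$ the Hilbert parameter is $q_j$. It is a square at $2,5,p$, at every $q_i\ne q_j$ (by \eqref{eq:family}), and at infinity, and the other good places give trivial symbols by the even-valuation argument. Hence only $q_j$ contributes. Evaluating at the same points with $t=0$ then gives the entries $0,\alpha_j,\alpha_j+\tau_j,\tau_j$, using the identity $(B_*/q_j)=(a/q_j)(2+r/q_j)$ and Lemma~\ref{lem:pqcyclotomic} for $q_j$. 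The entry $\langle\Lambda,\Lambda'\rangle=\eps$ comes from the dyadic and $5$-adic computation of Theorem~\ref{thm:firstpair}, which depends only on $n\bmod 40$. For $\delta=-1$ I will multiply by $2+\sqrt5$, as in Theorem~\ref{thm:threecassels}. This gives $a_-=a(2+r)$ modulo each $q_j$, so the characters $\alpha$ and $\alpha+\tau$ trade places, and the $\langle\Lambda,\Lambda'\rangle$ entry becomes $1+\eps$ by the $s=-n$ table.

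For $\delta=-2$ I will follow Theorem~\ref{thm:fourthcassels} and replace $W$ by $Z=W+\sum_iU_i+t_m=(-10,5,-2)$, which differs from $W$ by torsion plus the sum of the $U_i$. Using the three tangents \eqref{eq:fourthtangents}, built from $a_2,b_2,a_{10},b_{10},D=a+2b$ and $J_*$, at the point $(0,1,s_2,s_5)$ over $q_j$, I expect $\langle Z,U_j\rangle=[a_2/q_j]+[a_{10}/q_j]+\tau_j=z^U_j$ and $\langle Z,V_j\rangle=[a_2/q_j]+[J_*/q_j]=z^V_j$. Then bilinearity gives $\langle W,\cdot\rangle=\langle Z,\cdot\rangle+\sum_i\langle U_i,\cdot\rangle$, and reading off the $i$-th rows of $H_{-2}$ produces the column sums of $A_{-2}$ and $B_{-2}$ in \eqref{eq:wrows}. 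Independence of all choices is automatic, because every entry is an intrinsic pairing value.

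The main obstacle is bookkeeping in the $W$-row: one has to check that the torsion correction is exactly $\sum_iU_i$ for the composite $n$ and that no $\upsilon$-type quartic term survives. In the two-prime case that term came from $\langle U,V\rangle$, and here it is absorbed in the column sums involving the diagonal of $B_{-2}$. I will also verify that the primitivity of the product norm representations at each $q_j$ still guarantees unit tangent values.
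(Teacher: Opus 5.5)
Your proposal is correct and follows the paper's proof essentially step for step: the $U,V$ blocks (hence $C_2=H_2$) come from Theorem~\ref{thm:UV}; the $\Lambda,\Lambda'$ rows come from rerunning the two-prime tangent computations at each $q_j$ separately, using $a_-=2a+5b\equiv a(2+r)$ and the $s=-n$ dyadic/$5$-adic table for $\delta=-1$; and the $W$ row comes from $Z=W+\sum_iU_i+t_m=(-10,5,-2)$, the local point $(0,1,s_2,s_5)$ at $q_j$, and bilinearity, which yields exactly the column sums in \eqref{eq:wrows}. Your concern about a surviving $\upsilon$-type term resolves as you suggest, since that contribution is just part of the diagonal entry $(B_{-2})_{jj}$ inside those column sums, so nothing beyond the paper's argument is needed.
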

\begin{proof}
The $U,V$ blocks are Theorem~\ref{thm:UV}.
For the rows involving $\Lambda,\Lambda'$, the tangent computations of
Theorems~\ref{thm:pqcassels} and~\ref{thm:threecassels} apply at each $q_j$ separately.
The $\Lambda$ tangents are
\[
 nt+bu_2-au_3,\quad (n+1)u_3-(n-1)u_1-4nt,\quad
 au_1-bu_2-nt.
\]
At $(0,1,s_5,1)$ the two required products have characters $1$ and
$(a/q_j)$. The analogous $\Lambda'$ calculation gives
$(a/q_j)(2+s_5/q_j)$ and $(2+s_5/q_j)$, respectively.
Thus its entries are $\alpha_j+\tau_j$ and $\tau_j$.
For the $\Lambda,\Lambda'$ entry all primes dividing $n$ give a trivial
symbol with parameter $5$; the contributions at $2,5$ depend only on
$n\bmod40$, yielding $\eps$.
For the negative twist use
$a_-=2a+5b$, $b_-=a+2b$, with $a_-^2-5b_-^2=-n$.
At $q_j$, $[a_-/q_j]=\alpha_j+\tau_j$; the dyadic/5-adic calculation
replaces $\eps$ by $1+\eps$, giving \eqref{eq:Cplus}--\eqref{eq:Cminus}.

For the last matrix add a rational torsion class:
\[
 Z=W+\sum_iU_i+t_m=(-10,5,-2),\qquad m=-10n.
\]
Its covering conics are
\[
 2u_3^2+5u_2^2=50nt^2,\quad
 u_3^2-5u_1^2=20nt^2,\quad
 2u_1^2+u_2^2=2nt^2.
\]
Writing $D_*=a+2b$, so that $5D_*^2-J_*^2=n$, take tangents
\[
 L_1=a_{10}u_3+5b_{10}u_2-5nt,\quad
 L_2=D_*u_3-J_*u_1-2nt,\quad
 L_3=a_2u_1+b_2u_2-nt.
\]
The rational points are respectively $(t,u_2,u_3)=(1,10b_{10},5a_{10})$,
$(t,u_1,u_3)=(1,2J_*,10D_*)$, and $(t,u_1,u_2)=(1,a_2,2b_2)$.
At $q_j$ take $(0,1,s_2,s_5)$ with $s_2^2=-2,s_5^2=5$.
Choose $s_2$ so $L_3=2a_2$ modulo $q_j$. For the $U_j$ entry choose
$s_5$ so $L_1=2a_{10}s_5$; for the $V_j$ entry choose it so $L_2=-2J_*$.
All these reductions are nonzero by primitivity. Only $q_j$ contributes,
as in Theorem~\ref{thm:UV}. Hence \eqref{eq:zrows} holds. Bilinearity, together with the vanishing of
the pairing on rational torsion, gives
\eqref{eq:wrows} and \eqref{eq:Cfourth}. Choice independence follows
from these intrinsic pairing computations.
\end{proof}

\begin{remark}
For $r=1$ the formulas recover exactly the four two prime matrices in
Theorems~\ref{thm:pqcassels},~\ref{thm:threecassels}, and~\ref{thm:fourthcassels}.
For $r\ge2$ the off diagonal entries are genuinely additional:
$B_\delta$ need not be symmetric, and $A_\delta$ need not vanish.
Thus juxtaposing independent copies of the two prime matrix is incorrect.
The theorem is a formula for the matrices. No joint distribution statement is
claimed when several auxiliary primes vary simultaneously.
\end{remark}

\begin{example}\label{ex:ordinary}
Let $n=59\cdot241\cdot281=3995539$ and $\delta=2$.
The three rows of norm data $(R_i,S_i,T_i)$,
$(K_i,L_i,M_i)$, and $(J_i,N_i,P_i)$ can be chosen as
\[
\begin{array}{c|rrr|rrr|rrr}
q_i&R_i&S_i&T_i&K_i&L_i&M_i&J_i&N_i&P_i\\\hline
241&2449&11&6&2382245&85565&2172&2341&349&12\\
281&7547&17&20&58645&735&68&4639&291&4.
\end{array}
\]
They give
\[
 A_2=\begin{pmatrix}0&1\\1&0\end{pmatrix},\quad
 B_2=\begin{pmatrix}0&0\\1&1\end{pmatrix},\quad
 C_2=\begin{pmatrix}0&1&0&0\\1&0&1&1\\0&1&0&1\\0&1&1&0\end{pmatrix}.
\]
Its Pfaffian is one. Hence
\[
 \rank A_{39955390}(\Q)=0,\qquad
 \Sh(A_{39955390})[2^\infty]\simeq(\Z/2\Z)^4.
\]
All four ordinary ranks for this parameter are $(4,4,4,4)$; their pure
space dimensions are $(6,6,4,5)$. The last matrix therefore has a
one dimensional radical, to which Theorem~\ref{thm:tower} applies.
\end{example}

\subsection{The next pairing on a two dimensional radical}\label{subsec:twodimnext}

\begin{lemma}\label{lem:next}
Let $E/\Q$ be an elliptic curve and let $R\subset\Sel_2^0(E)$ be the ordinary
Cassels radical. For $u,v\in R$, write $\bar u,\bar v$ for their images in
$\Sh(E)[2]$, and choose $\widetilde u\in\Sh(E)[4]$ with
$2\widetilde u=\bar u$. Then
\[
 \mathcal B(u,v)=\langle\widetilde u,\bar v\rangle_{\mathrm{CT}}
 \in\tfrac12\Z/\Z\simeq\F_2
\]
is well defined and alternating. Its radical is
\[
 \operatorname{im}(\Sel_8(E)\longrightarrow\Sel_2^0(E)).
\]
\end{lemma}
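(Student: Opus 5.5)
The plan is to follow the argument of Proposition~\ref{prop:nextpair}, replacing the vanishing of the whole ordinary pairing by membership in its radical $R$. I will use two standard facts about the Cassels--Tate pairing on $\Sh(E)$. The first is that it is bilinear and alternating; in fact it is antisymmetric, and alternating for the pairing coming from a principal polarization. The second is the divisibility criterion \cite[Theorem 3.1]{Fisher}: an element $x\in\Sh(E)[2^k]$ lies in $2^k\Sh(E)[2^{2k}]$-type images exactly when it pairs trivially with the relevant torsion.

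The first step is existence of $\widetilde u$. Since $u\in R$, Proposition~\ref{prop:sel4structure} and the statement preceding it identify $R$ modulo $\mathcal T$ with the image of $\Sel_4(E)$. Hence $u$ lifts to $\Sel_4(E)$, and the lift's image $\widetilde u$ in $\Sh(E)[4]$ satisfies $2\widetilde u=\bar u$. If $\bar u=0$, take $\widetilde u=0$.

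The second step is well-definedness. Two choices of $\widetilde u$ differ by some $x\in\Sh(E)[2]$. Since $v\in R$, the element $\bar v$ lies in $2\Sh(E)[4]$ by the divisibility criterion, say $\bar v=2y$. Then $\langle x,\bar v\rangle=\langle 2x,y\rangle=0$. The value also depends only on $u,v$ and not on the Selmer representatives: changing $u$ or $v$ by $\mathcal T$ or by Kummer images does not change $\bar u,\bar v$. So $\mathcal B$ is a function on $R\times R$.

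The third step is bilinearity and alternation. Bilinearity in $v$ is immediate. In $u$ it holds because $\widetilde u+\widetilde u'$ is an admissible half of $\bar u+\bar u'$. The pairing is alternating because $\langle\widetilde u,2\widetilde u\rangle_{\mathrm{CT}}=2\langle\widetilde u,\widetilde u\rangle_{\mathrm{CT}}=0$, and the values lie in $\tfrac12\Z/\Z$.

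The final step, which is the main one, identifies the radical. Fix $u\in R$ and vary $v\in R$. Because $\bar v$ ranges over all of $2\Sh(E)[4]$ (the image of $R$), the condition $\mathcal B(u,\cdot)=0$ says that $\widetilde u$ pairs trivially with $2\Sh(E)[4]$, that is, $2\widetilde u'$ pairs trivially with $\widetilde u$ for every $\widetilde u'\in\Sh[4]$. By \cite[Theorem 3.1]{Fisher}, this holds exactly when $\widetilde u\in\Sh[4]$ can be modified by $\Sh[2]$ so that it lies in $2\Sh[8]$, equivalently when $\bar u\in 4\Sh(E)[8]$. The ambiguity by $\Sh[2]$ is harmless because $\Sh[2]$ is already orthogonal to $2\Sh[4]$.

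It remains to translate $\bar u\in 4\Sh[8]$ into $u$ lying in the image of $\Sel_8(E)$ in $\Sel_2^0(E)$. This uses the Kummer sequence \eqref{eq:kummerexact} at levels $2$ and $8$, together with the fact that Mordell--Weil images always lift. A class $u$ lies in the image of $\Sel_8$ modulo the Kummer image of $E(\Q)$ exactly when its $\Sh$-component is divisible by $4$ within $\Sh[8]$. The bookkeeping with $E(\Q)/2E(\Q)$ versus $E(\Q)/8E(\Q)$ is the only delicate point. It is handled by noting that $E(\Q)/8\to E(\Q)/2$ is surjective.
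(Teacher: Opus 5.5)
Your argument is correct, and its skeleton is the paper's. The half $\widetilde u$ exists because $\bar u\in 2\Sh(E)[4]$, the value is well defined because $\Sh(E)[2]$ is orthogonal to $2\Sh(E)[4]$, and alternation comes from $\langle\widetilde u,2\widetilde u\rangle_{\mathrm{CT}}=0$. The radical is then found from the divisibility criterion together with the Kummer sequences.

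The one real difference is the radical step. The paper takes a half $x$ orthogonal to $R_S=2\Sh(E)[4]$. It uses perfectness of the ordinary pairing on the finite quotient $\Sh(E)[2]/R_S$ to find $t\in\Sh(E)[2]$ with $x+t$ orthogonal to all of $\Sh(E)[2]$. Applying the criterion at level $2$ then gives $x+t=2z$ with $z\in\Sh(E)[8]$, so $\bar u=4z$.

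You reach the same intermediate statement, $\widetilde u\in\Sh(E)[2]+2\Sh(E)[8]$, but attribute it directly to \cite[Theorem 3.1]{Fisher}, which does not literally say this. The justification is one line from what you wrote. For $w\in\Sh(E)[4]$ one has $\langle\widetilde u,2w\rangle_{\mathrm{CT}}=\langle\bar u,w\rangle_{\mathrm{CT}}$. Hence $\mathcal B(u,\cdot)=0$ on $R$ exactly when $\bar u$ is orthogonal to $\Sh(E)[4]$, i.e.\ $\bar u\in 4\Sh(E)$ by the criterion at level $4$, and then automatically $\bar u\in 4\Sh(E)[8]$. Written this way your route is shorter than the paper's: it needs neither the correction $t$ nor the nondegeneracy on $\Sh(E)[2]/R_S$.

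Your Kummer bookkeeping correctly spells out what the paper compresses into a single sentence. You lift $z$ to $\Sel_8(E)$, use $[4]_*\delta_8(P)=\delta_2(P)$, and absorb the Mordell--Weil discrepancy by surjectivity of $E(\Q)/8E(\Q)\to E(\Q)/2E(\Q)$.

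One small point: Proposition~\ref{prop:sel4structure} is specific to $A_m$. For a general $E$, obtain $\widetilde u$ directly from $\bar u\in 2\Sh(E)[4]$ instead of citing that proposition.
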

\begin{proof}
Write $S=\Sh(E)[2]$ and $R_S=2\Sh(E)[4]$.
The ordinary radical on $S$ is $R_S$ by the divisibility criterion.
Two halves of $\bar u$ differ by an element of $S$, which pairs trivially
with $\bar v\in R_S$. Bilinearity is therefore well defined, and
$\langle\widetilde u,2\widetilde u\rangle=0$ proves alternation.

Suppose a half $x$ of $\bar u$ pairs trivially with $R_S$.
The functional $s\mapsto\langle x,s\rangle$ on $S$ factors through
$S/R_S$. The ordinary pairing on this finite quotient is nondegenerate,
so some $t\in S$ cancels that functional on all of $S$.
The divisibility criterion now implies $x+t=2z$ with
$z\in\Sh(E)[8]$. Thus $\bar u=4z$.
Conversely, if $\bar u=4z$, choose the half $2z$.
For $\bar v=2y$, $y\in\Sh(E)[4]$, its value is
$\langle2z,2y\rangle=4\langle z,y\rangle=0$.
The Mordell--Weil contribution is in every radical, and the Kummer
sequences identify the corresponding preimage in $\Sel_2^0(E)$ with the stated
$8$-Selmer image. No finiteness assumption on $\Sh$ has been used.
\end{proof}

We next compute the higher pairing on the simplest composite locus. Assume
$p\equiv11\pmod{40}$, take \eqref{eq:globalnorms}, and suppose
\begin{equation}\label{eq:higherconditions}
 \alpha_i=\tau_i=0\quad(1\le i\le r),\qquad
 H_1\text{ is nondegenerate}.
\end{equation}
Theorem~\ref{thm:allfour} then gives
\begin{equation}\label{eq:rad2}
 C_+=0_2\oplus H_1,\qquad
 R=\langle\Lambda,\Lambda'\rangle.
\end{equation}
Put $k=(-1)^{(b-1)/2}$ and define
\begin{align}
 F_n(u,v)&=(a+5b)u^2+10(a+b)uv+5(a+5b)v^2,\label{eq:Fn}\\
 G_n(u,v)&=(5b-a)u^2+10(a-b)uv+5(5b-a)v^2,\label{eq:Gn}\\
 \mathcal F_n&=kF_n,\qquad \mathcal G_n=kG_n.\nonumber
\end{align}

\begin{theorem}\label{thm:compositehigher}
Under \eqref{eq:family} and \eqref{eq:higherconditions}, the conic
\begin{equation}\label{eq:newconic}
 \Gamma_n:\quad c^2=\mathcal F_n(u,v)
\end{equation}
has a rational point. If $(u_0,v_0,c_0)$ is any primitive integral point,
then $5\nmid u_0c_0$, and the next pairing on the radical
\eqref{eq:rad2} has matrix
\begin{equation}\label{eq:higherformula}
 \begin{pmatrix}0&e_n\\e_n&0\end{pmatrix},\qquad
 (-1)^{e_n}=-k\leg{c_0}{5}.
\end{equation}
The right side is therefore independent of the primitive point and
of the admissible norm representation of $n$.
Moreover,
\begin{equation}\label{eq:shaexact}
 e_n=1\quad\Longleftrightarrow\quad
 \rank A_{5n}(\Q)=0\ \text{ and }\
 \Sh(A_{5n})[2^\infty]\simeq
 (\Z/2\Z)^{2r}\oplus(\Z/4\Z)^2.
\end{equation}
Both implications are unconditional.
\end{theorem}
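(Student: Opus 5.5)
The plan is to transport the prime-case argument of Section~\ref{sec:higher} verbatim, replacing $p$ by $n$, and to check that the auxiliary primes $q_i$ contribute nothing. Theorem~\ref{thm:allfour} with $\alpha=\tau=0$ and $\eps=0$ gives $C_+=0_2\oplus H_1$. Nondegeneracy of $H_1$ then gives the radical \eqref{eq:rad2}. Lemma~\ref{lem:next} defines the next pairing on it, and the pairing is alternating, so only the entry $e_n=\mathcal B(\Lambda',\Lambda)$ has to be computed.

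First, rational solubility of $\Gamma_n$ and the condition $5\nmid u_0c_0$ follow as in Proposition~\ref{arith:exist}. The analogue of Lemma~\ref{arith:chars} at each prime divisor $\ell$ of $n$ is needed for local solubility at $\ell$. For $\ell=p$ the proof is the same argument with $n$ in place of $p$. For $\ell=q_i$ we have $a^2\equiv5b^2$ modulo $q_i$, hence $(a+5b)\equiv b(r+5)$ with $r^2=5$. Using $(b/q_i)=1$ (reciprocity, since $q_i\equiv1\pmod 8$ and $n\equiv a^2$ modulo odd primes dividing $b$), Lemma~\ref{lem:pqcyclotomic} shows that $\mathcal F_n(1,0)$ is a square. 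The hypothesis $\alpha_i=0$ enters exactly here, if this route requires $(a/q_i)=1$. The character independence statements, Theorems~\ref{arith:pointind} and~\ref{arith:normind}, carry over. Hilbert reciprocity with parameter $5$ is trivial at every $q_i$, because $5$ is a square in $\Q_{q_i}$. The dyadic lemma~\ref{arith:dyadic} uses only $n\equiv3\pmod8$ and $-5n\equiv1\pmod 8$.

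Second, build the cover \eqref{eq:canonicalcover} with $n$ in place of $p$. The proof of Proposition~\ref{prop:4cover} goes through: the determinant $-20n\lambda\mu(\lambda^2+3\lambda\mu+\mu^2)$, the Jacobian identification $E'_n=A_{5n}/\langle(5n,0)\rangle$, the morphism to $B_{\Lambda'}$ via \eqref{eq:keyidentity}, and the non-square argument are all algebraic in $n$. The new local step is solubility at each $q_i$. Use the same choice $(u,v)=(1,0)$ as at $p$. Then $\mathcal F_n(1,0)$ and $\mathcal G_n(1,0)$ are squares mod $q_i$, with ratio $\phi_r^2$. The third equation $kh^2=c^2-cd-d^2=2\phi_r d^2$ needs $(k\phi_r/q_i)=1$, and $(\phi_r/q_i)=((2+r)/q_i)=(5/q_i)_4$, which equals $1$ by $\tau_i=0$. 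This gives the $4$-Selmer lift of $\Lambda'$, without using nondegeneracy of $H_1$.

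Third, evaluate the pushout pairing \eqref{eq:localpair} exactly as in Theorem~\ref{thm:higherpair}. The divisor class computation identifying $g=f\ell/r^2$ as a pushout for $T=(5n,0)$ is unchanged, and $\Lambda$ is still the cocycle $\chi_5T$. The local factors are as follows: $-1$ at $2$, by the table and Lemma~\ref{arith:dyadic}; $k(c_0/5)$ at $5$; and trivial at $\infty$, at $p$, at every $q_i$ (since $5$ is a local square) and at good primes. Their product is $(-1)^{e_n}=-k(c_0/5)$. Finally, \eqref{eq:shaexact} follows as in Proposition~\ref{prop:nextpair}. If $e_n=1$, Lemma~\ref{lem:next} gives a trivial $8$-Selmer image, so the rank is $0$. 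Then $\Sh[2^\infty]$ is finite: the $2r$-dimensional part paired by $H_1$ is killed by $2$, and the $\Lambda,\Lambda'$ part is $(\Z/4)^2$, since an order-$8$ element would lie in the $\mathcal B$-radical. For the converse, the stated group forces $\Sel_8$ to have image zero, so $\mathcal B$ is nondegenerate.

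The main obstacle is the second step, the $q_i$-adic solubility of the cover together with the prerequisite quadratic characters of $a,b,a+b$ modulo $q_i$. I would first check carefully which of $\alpha_i=0$ and $\tau_i=0$ is needed where, adjusting the signs of $c,d$ if necessary.
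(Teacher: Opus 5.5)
Your plan follows the paper's proof step for step. It uses the same block form $C_+=0_2\oplus H_1$, the Hasse principle for $\Gamma_n$, and the cover $Y_n$ with the local point $(u,v)=(1,0)$, $c/d=-\phi_s$ at primes dividing $n$. The pushout $g=f\ell_0/r^2$ for $T=(5n,0)$, the local factors $-1$ at $2$ and $k\leg{c_0}{5}$ at $5$, and the elementary divisor count for \eqref{eq:shaexact} also match. However, the one genuinely new ingredient, the norm characters at the primes dividing $n$, is mishandled.

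You assert $\leg b{q_i}=1$ unconditionally ``by reciprocity, since $n\equiv a^2$ modulo odd primes dividing $b$''. That congruence only gives $\leg n\ell=1$ for $\ell\mid b$. This is information about $\leg bn$, not about $\leg{q_i}\ell$. The claim is in fact false in general: in Example~\ref{ex:pqfour}, $4411=84^2-5\cdot23^2$ with $q=401$, and $\leg{23}{401}=\leg{10}{23}=-1$. The correct computation uses $s_i=a/b$, $s_i^2=5$, and Lemma~\ref{lem:pqcyclotomic}:
\[
 \leg b{q_i}=\leg a{q_i}\leg{s_i}{q_i}=(-1)^{\alpha_i+\tau_i},\qquad
 \leg{a+b}{q_i}=\leg b{q_i}\leg{1+s_i}{q_i}=(-1)^{\alpha_i}.
\]
So $\mathcal F_n(1,0)$ is a square at $q_i$ precisely because $\alpha_i=\tau_i$. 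The condition $\tau_i=0$ is then its second use, in the third equation of $Y_n$.

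The same values are needed at $p$, which you treat as unchanged from the prime case. Redoing Lemma~\ref{arith:chars} with $n$ in place of $p$ gives only the Jacobi symbols $\leg bn=k$ and $\leg{a+b}n=\leg an=1$. You want $\leg bp=k$ and $\leg{a+b}p=\leg ap=1$. These make $\mathcal F_n(1,0)$ a square at $p$ and give $\leg{\phi_s}p=-k$ for the third equation of $Y_n$. To obtain them you must divide out the $q_i$-factors computed above. That division is exactly the ``norm characters'' step of the paper's proof, and with it supplied the rest of your argument goes through.

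Finally, do not invoke Theorem~\ref{arith:normind}. For composite $n$, admissible generators also differ by the choice of prime above each $q_i$, not only by $\phi^6$ and conjugation. Independence of the norm representation follows instead from the intrinsic nature of $\mathcal B(\Lambda',\Lambda)$, as in the paper.
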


\begin{proof}
The prime case construction extends to this composite setting, but its local
solubility must be checked anew. We organize the proof into the norm
characters, the conic, the covering, and the local pairing.

\emph{Norm characters.}
Quadratic reciprocity with Jacobi symbols gives
\begin{equation}\label{eq:jaccharacters}
 \leg bn=k,\qquad \leg{a+b}n=1,\qquad
 \leg an=\leg a5=1.
\end{equation}
Indeed $n\equiv a^2\pmod b$ and $n\equiv-4b^2\pmod{a+b}$.
Writing $a=2^ea'$ with $a'$ odd, one has
$(a'/n)=(-n/a')=(5/a')=(a'/5)$, while
$(2/n)=(2/5)=-1$. Since $n\equiv1\pmod5$, $a\equiv\pm1\pmod5$.
All numerators and denominators in these symbols are coprime.

At $q_i$, let $s_i=a/b$, so $s_i^2=5$ modulo $q_i$.
The conditions $\alpha_i=\tau_i=0$ imply
\begin{equation}\label{eq:qchars}
 \leg a{q_i}=\leg b{q_i}=\leg{a+b}{q_i}=1.
\end{equation}
Here $(s_i/q_i)=(5/q_i)_4=1$, and
$(1+s_i/q_i)=(2+s_i/q_i)=(5/q_i)_4=1$.
Dividing \eqref{eq:jaccharacters} by all these $q_i$ contributions yields
\begin{equation}\label{eq:pchars}
 \leg ap=\leg{a+b}p=1,\qquad\leg bp=k.
\end{equation}

\emph{The conic.}
Write $A=k(a+5b)$, $B=5k(a+b)$, so
$\mathcal F_n=Au^2+2Buv+5Av^2$.
Then $A\equiv1\pmod4$ and $5A^2-B^2=-20n$.
At $2$, one of $(u,v)=(1,0),(0,1)$ gives an odd square value.
At $5$, $A\equiv ka$ is a nonzero square. For every $\ell\mid n$,
\eqref{eq:qchars}--\eqref{eq:pchars} and
$a+5b\equiv a(a+b)/b\pmod\ell$ show that $A$ is a nonzero square.
At other odd primes the ternary conic is smooth and isotropic over the
finite field; at infinity the binary form is indefinite.
The Hasse principle gives a rational point.
If a primitive integral point had $5\mid u_0$, its equation would force
$5\mid c_0$ and then give valuation one on the right with $v_0$ a unit,
a contradiction. Thus $5\nmid u_0c_0$.

\emph{The four cover and its pushout.}
Consider the smooth normalization $Y_n$ of
\begin{equation}\label{eq:Y}
 c^2=\mathcal F_n(u,v),\qquad d^2=\mathcal G_n(u,v),\qquad
 kh^2=c^2-cd-d^2.
\end{equation}
The geometric constructions in Proposition~\ref{prop:4cover} and Theorem~\ref{thm:higherpair}
apply here without a primality assumption on the parameter.
The identities used below are
\begin{gather*}
 F_n^2-3F_nG_n+G_n^2=5n(u^2-5v^2)^2,\\
 \disc F_n=\disc G_n=80n,\qquad
 \operatorname{Res}(F_n,G_n)=2000n^2,\\
 \det(\lambda M_1+\mu M_2)
 =-20n\lambda\mu(\lambda^2+3\lambda\mu+\mu^2).
\end{gather*}
The first two equations define a smooth genus one curve $C_n$ with
Jacobian $Y^2=X^3-60nX^2+400n^2X$, whose only nonzero rational
$2$-torsion point is $(0,0)$. The map
\[
 (u:v:c:d)\longmapsto
 (k(u^2-5v^2):c^2+d^2:cd:c^2-d^2)
\]
is a degree two morphism to the $\Lambda'$ covering, in coordinates
$(t_E:u_1:u_2:u_3)$. It has no base points by the resultant identity.
The contact factorizations of $c^2-cd-d^2$ show that adjoining $h$
gives an unramified double cover. It is geometrically connected:
a putative square root with poles in a hyperplane would be linear in
$u,v,c,d$, and coefficient comparison would force
$e^2=1,f^2=-1,2ef=-1$, which is impossible in characteristic different
from $5$. The unique rational order two subgroup of the intermediate
Jacobian identifies the composite Jacobian isogeny with $[2]$ on $A_{5n}$.
Thus $Y_n$ will be a $4$-covering lifting $\Lambda'$ once local solubility is
proved. The geometric argument uses only $n\ne0$ and the nonsquareness of
$5$ and $80n$; primality of the parameter plays no role.

Let $\ell_0$ be the tangent to \eqref{eq:newconic} at $(u_0,v_0,c_0)$:
\[
 \ell_0=c_0c-(Au_0+Bv_0)u-(Bu_0+5Av_0)v.
\]
On $Y_n$, put $(U,V)=(c,-d)$ for $k=1$ and $(U,V)=(-c-d,c)$ for
$k=-1$. Then $U^2+UV-V^2=h^2$. The function
\[
 f=(3U-V+2h)/5,\qquad g=f\ell_0/r_0^2
\]
for any nonzero linear section $r_0$ has divisor twice a divisor class
representing $T=(5n,0)$. The proof in Theorem~\ref{thm:higherpair} identifies
this class as the nonzero element of the kernel of
$\operatorname{Pic}^0(Y_n)\to\operatorname{Pic}^0(C_n)$.
Its nonsquare check uses only the nondegeneracy of $\mathcal G_n$,
$c_0\ne0$, and the rank one identity for the polar form of
$\mathcal F_n$; all hold for the present squarefree $n$.
Hence it is the pushout for $T$. The norm identities
\[
 f(2U+V+2h)=(U+h)^2,\qquad
 \Norm_{\Q(\sqrt5)/\Q}(U+\phi V+h)=h(2U+V+2h),
 \quad\phi=(1+\sqrt5)/2,
\]
therefore give
\begin{equation}\label{eq:prod}
 (-1)^{e_n}=\prod_v(g(P_v),5)_v
      =\prod_v(h(P_v)\ell_0(P_v),5)_v.
\end{equation}
Here $\Lambda=\chi_5T$, and $\Lambda,\Lambda'\in R$ by
\eqref{eq:rad2}. Thus Lemma~\ref{lem:next} identifies the computed
pairing with the next pairing on that radical.

\emph{Local points and symbols.}
At a prime $\ell\mid n$ take $u=1,v=0$. Both
$\mathcal F_n(1,0)$ and $\mathcal G_n(1,0)$ are nonzero squares.
With $s=a/b\pmod\ell$ and $\phi_s=(1+s)/2$, their ratio is $\phi_s^2$.
Choose $c/d=-\phi_s$. Then
\[
 c^2-cd-d^2=2\phi_s d^2.
\]
At $p$, $(\phi_s/p)=-k$ by \eqref{eq:pchars}, whereas at each $q_i$,
$(\phi_s/q_i)=1$ by \eqref{eq:qchars}. In both cases $(2\phi_s/k\,/\ell)=1$.
All three square roots in \eqref{eq:Y} therefore exist locally.
The Hilbert symbol at these primes is trivial because $5$ is a local square.

At $2$, choose $(u,v)$ from
\[
\begin{array}{cc|c}
k(a+5b)\bmod8&n\bmod16&(u,v)\\\hline
1&3&(1,0)\\1&11&(1,2)\\5&3&(2,1)\\5&11&(0,1).
\end{array}
\]
The first two values are $1$ modulo $8$ and their product is $1$ modulo
$16$. Choose odd roots $c,d$ with $cd\equiv-k\pmod8$; the resulting $h$
is odd. The tangent parity proof in Lemma~\ref{arith:dyadic} depends only on
$4\mid a$, odd $b$, and $n\equiv11\pmod{40}$, and gives
$v_2(\ell_0)\equiv1\pmod2$ away from the tangent zero.
Hence $(h\ell_0,5)_2=-1$.
At $5$ take $u=v=1$, $c\equiv-c_0/u_0$, $d\equiv3c$.
If $\eps_a=1$ for $a\equiv1\pmod5$ and $-1$ otherwise, then
\[
 (h/5)=-\eps_a,\quad(\ell_0/5)=-(u_0/5),\quad
 (h\ell_0,5)_5=\eps_a(u_0/5)=k(c_0/5).
\]
The roots are units, so they lift. At infinity the indefiniteness of
$F_n,G_n$ and positive definiteness of $F_n-G_n$ permit signs of $c,d$
for which $(c^2-cd-d^2)/k>0$; the Hilbert symbol is trivial.

For $\ell\nmid10n$, the determinant and contact identities give a smooth
proper genus one model of $Y_n$ over $\Z_\ell$, exactly as in the proof
of Proposition~\ref{prop:4cover}; its finite field point lifts.
If $5$ is nonsquare locally, anisotropy of $c^2-cd-d^2$ modulo $\ell$
and the unit resultant force $h$ to be a unit at a primitive point.
The tangent $\ell_0$ has even valuation by good reduction. Its Hilbert
symbol is therefore one. Points may be moved off the finitely many
excluded divisors. Hence the four cover is locally soluble, and
\eqref{eq:prod} reduces to \eqref{eq:higherformula}.

The left side is the intrinsic pairing in the fixed basis of $R$; hence the
character is independent of both the conic point and the chosen norm
representation of $n$.
Finally the ordinary pairing has rank $2r$, and its next pairing is
nondegenerate exactly when $e_n=1$. The finite nondivisible $2$-primary
part of $\Sh$ has paired elementary divisors. Ordinary rank $2r$
accounts for $2r$ cyclic factors of order $2$; next nondegeneracy on the
remaining two dimensional radical accounts for exactly two factors of
order $4$, and excludes Mordell--Weil rank and divisible $2$-primary
classes. This establishes \eqref{eq:shaexact} in both directions.
\end{proof}

\begin{corollary}
\label{cor:sel8rad2}
Under the hypotheses of Theorem~\ref{thm:compositehigher},
\[
 \Sel_8(A_{5n})\simeq
 \begin{cases}
 (\Z/4\Z)^2\oplus(\Z/2\Z)^{2r+2},&e_n=1,\\
 (\Z/8\Z)^2\oplus(\Z/2\Z)^{2r+2},&e_n=0.
 \end{cases}
\]
The image in the pure $2$-Selmer group is zero in the first case and
$\langle\Lambda,\Lambda'\rangle$ in the second.
\end{corollary}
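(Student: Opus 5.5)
We determine $\Sel_8(A_{5n})$ from the exact sequence for multiplication by $2$ on $E[8]$. The steps are: the order of $\Sel_8$, its doubling map, and the abstract group structure. Put $E=A_{5n}$ and $s=\dim\Sel_2^0(E)=2r+2$. By \eqref{eq:rad2} the ordinary radical is $R=\langle\Lambda,\Lambda'\rangle$, of dimension $2$. By Lemma~\ref{lem:next} and Theorem~\ref{thm:compositehigher}, the image $R_8$ of $\Sel_8(E)$ in $\Sel_2^0(E)$ is the radical of the next pairing. That radical is zero if $e_n=1$, since the matrix is then nondegenerate, and is all of $R$ if $e_n=0$. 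This already gives the second assertion about the image in the pure $2$-Selmer group.

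\textbf{Order of $\Sel_8$.} We mimic Proposition~\ref{prop:sel4structure}. Use the sequence $0\to E[2]\to E[8]\to E[4]\to0$ with the local conditions. Rational torsion is $E(\Q)[2]$, and no rational $2$-torsion point is halvable, so $E(\Q)[8]=E(\Q)[2]$. This gives an exact sequence
\[
 0\to\mathcal T\to\Sel_2(E)\to\Sel_8(E)\to\operatorname{im}(\Sel_8\to\Sel_4)\to0 .
\]
Exactness on the right is the standard statement that the image of $\Sel_8$ in $\Sel_4$ is the kernel of the relevant Cassels--Tate obstruction; we cite \cite{CasselsSecond,Fisher}, as the paper does.

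Alternatively, and more cleanly, we first compute $\Sel_4(E)\simeq(\Z/4\Z)^2\oplus(\Z/2\Z)^{2r+2}$ from Proposition~\ref{prop:sel4structure} with $d=2$. The image of $\Sel_8$ in $\Sel_4$ contains the image of $\Sel_2$, which is $\Sel_2/\mathcal T$ of dimension $2r+2$. It also contains lifts of $R_8$: the preimage in $\Sel_4$ of $R_8+\mathcal T$ under $\pi$. Hence
\[
 |\Sel_8|=2^{s}\cdot|\operatorname{im}(\Sel_8\to\Sel_4)|=2^{s}\cdot 2^{s}\cdot|R_8+\mathcal T| = 2^{2s+2+\dim R_8}.
\]
For $e_n=1$ this is $2^{4r+6}$; for $e_n=0$ it is $2^{4r+8}$. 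Both agree with the orders of the displayed groups, namely $2^{4+2r+2}$ and $2^{6+2r+2}$. The one point needing care is the torsion bookkeeping, so this count should be double-checked.

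\textbf{Group structure.} A group killed by $8$ is determined by $|G|$, $|2G|$ and $|4G|$. Multiplication by $2$ on $\Sel_8$ factors as $\Sel_8\to\Sel_4\to\Sel_8$. Multiplication by $4$ factors through $\Sel_8\to\Sel_2\to\Sel_8$, and its image is the image of $R_8+\mathcal T$ modulo $\mathcal T$. That image has order $2^{\dim R_8}$: it is $1$ when $e_n=1$ and $4$ when $e_n=0$. The image of $2$ has order $|\operatorname{im}(\Sel_8\to\Sel_4)|/|\mathcal T|$, which is $2^{\dim R_8+2}$. With the group order fixed, these numbers force the two stated groups.

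\textbf{Main obstacle.} The hardest part is justifying the exactness of the Selmer sequences at the right end, that is, that the image of $\Sel_8\to\Sel_4$ is exactly the set of classes whose image in $\Sel_2^0$ lies in $R_8$. This needs more than mere containment: it requires that every element of $\Sel_4$ mapping into $R_8+\mathcal T$ actually lifts to $\Sel_8$. We would deduce this from Lemma~\ref{lem:next}, applied to lifts in $\Sh[8]$ together with the Kummer sequence \eqref{eq:kummerexact}, exactly as in the proof of Proposition~\ref{prop:sel4structure}, with no finiteness assumption on $\Sh$.
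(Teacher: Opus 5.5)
Your count of $|\Sel_8(E)|$ is wrong, and the agreement you report is an arithmetic slip. With $s=2r+2$ your formula gives $2^{2s+2+\dim R_8}=2^{4r+6+\dim R_8}$. The displayed groups have orders $2^{2r+6}$ and $2^{2r+8}$, and $r\ge1$ in this family.

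The error lies in the claim that $\operatorname{im}(\Sel_8\to\Sel_4)$ contains $i(\Sel_2)=\ker\pi$, i.e.\ equals $\pi^{-1}(R_8+\mathcal T)$. What is true is weaker. The inclusion-induced map $\iota\colon\Sel_4\to\Sel_8$ satisfies $[2]_*\circ\iota=2$, so the image contains $2\Sel_4=i(\pi(\Sel_4))$. Nothing else of $\ker\pi$ lifts. Suppose $i(y)=[2]_*x$ with $x\in\Sel_8$, and let $\bar x\in\Sh[8]$, $\bar y\in\Sh[2]$ be the images. Then $\bar y=2\bar x$ and $4\bar x=2\bar y=0$, so $\bar y\in2\Sh[4]$ and $y\in\pi(\Sel_4)$. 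In other words, the obstruction to lifting $i(y)$ is exactly the ordinary Cassels pairing, which here has rank $2r>0$.

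So the statement you single out as the main obstacle (every class of $\Sel_4$ mapping into $R_8+\mathcal T$ lifts to $\Sel_8$) is false, not merely unproved. It fails for $i(y)$ whenever $y$ is outside the ordinary radical. Your doubling count is also inconsistent with your own image: from your value of $|\operatorname{im}|$, the formula $|2\Sel_8|=|\operatorname{im}|/4$ would give $2^{s+\dim R_8}$, not the $2^{2+\dim R_8}$ you write down.

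The repair is short. One has $\operatorname{im}([2]_*)\cap\ker\pi=i(\pi(\Sel_4))$. This has order $2^4/|\mathcal T|=4$, because $\pi(\Sel_4)$ is four dimensional and contains $\mathcal T=\ker i$. Next, $\pi([2]_*\Sel_8)=[4]_*\Sel_8$ is the preimage of $R_8$ in $\Sel_2$, of order $2^{2+\dim R_8}$. Hence $|\operatorname{im}([2]_*)|=2^{4+\dim R_8}$, and
\[
|\Sel_8|=2^{s}\cdot2^{4+\dim R_8}=2^{2r+6+\dim R_8}.
\]
Together with $|2\Sel_8|=2^{2+\dim R_8}$ and $|4\Sel_8|=2^{\dim R_8}$, this forces the two stated groups.

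Repaired in this way, your argument is a genuinely different route that stays at the level of Selmer exact sequences. The paper instead applies Lemma~\ref{lem:split}, $\Sel_8\simeq(\Z/8\Z)^{\rank E(\Q)}\oplus(\Z/2\Z)^2\oplus\Sh[8]$. It reads off the factors from the decomposition of $\Sh[2^\infty]$ into a divisible part and paired blocks. The ordinary rank $2r$ gives $2r$ factors of order two. The two remaining radical dimensions give one pair of order four when $e_n=1$, and two factors of order eight when $e_n=0$.
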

\begin{proof}
The image assertion follows from Lemma~\ref{lem:next}.
Use the noncanonical splitting of Lemma~\ref{lem:split} below.
The ordinary rank accounts for $2r$ cyclic factors of order two in the
finite $2$-primary part of $\Sh$. If $e_n=1$, apply
\eqref{eq:shaexact} directly. If $e_n=0$, the next pairing excludes
paired elementary divisors of order four: such a pair would contribute
rank two to that pairing. The remaining two radical dimensions therefore
come either from paired elementary divisors of order at least eight or
from the Mordell--Weil and divisible-$\Sh$ contributions. In either
case they give exactly two cyclic factors of order eight in $\Sel_8$.
The rational $2$-torsion contributes two additional order two factors.
The two group formulas follow, without any finiteness assumption on $\Sh$.
\end{proof}

\begin{corollary}\label{cor:twohigher}
Let $p\equiv11\pmod{40}$ be fixed, and let
$q\equiv1\pmod{40}$ with $(p/q)=1$. In the fixed character notation of \eqref{eq:pqfixedbits}, assume
\[
 \beta_p(q)=0,\qquad [5/q]_4=0,\qquad[p/q]_4=1.
\]
Then $R=\langle\Lambda,\Lambda'\rangle$, and \eqref{eq:higherformula}
computes its complete next pairing. In particular
\[
 k(c_0/5)=1\quad\Longleftrightarrow\quad
 \rank A_{5pq}(\Q)=0,\quad
 \Sh(A_{5pq})[2^\infty]\simeq(\Z/2\Z)^2\oplus(\Z/4\Z)^2.
\]
The displayed ordinary character locus has relative density $1/8$
in $\mathcal P_p$. This does not determine the relative density of the
additional condition $k(c_0/5)=1$.
\end{corollary}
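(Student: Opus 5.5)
The plan is to specialize Theorem~\ref{thm:compositehigher} to $r=1$, $n=pq$. The only work is to show that the three fixed character conditions are exactly \eqref{eq:higherconditions}. After that the equivalence is \eqref{eq:shaexact} with $2r=2$, and the density comes from Theorem~\ref{thm:pqfield}.

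First I will translate the characters. By Proposition~\ref{prop:pqfixedsymbols} we have $\tau=t(q)=[5/q]_4$, $\gamma=g_p(q)=[p/q]_4$ and $\alpha=\beta_p(q)+t(q)$. The hypotheses $\beta_p(q)=0$ and $[5/q]_4=0$ therefore give $\alpha_1=\tau_1=0$.

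Next I will check nondegeneracy of $H_1$ for $r=1$. Here $A_1$ is the zero $1\times1$ matrix. $B_1$ is the scalar $[n/q\,/q]_4=[p/q]_4$, with $\delta=1$. So $H_1=\begin{pmatrix}0&\gamma\\\gamma&0\end{pmatrix}$, consistent with the $U,V$ entry of $C_{p,q}$ in Theorem~\ref{thm:pqcassels}. The hypothesis $[p/q]_4=1$ makes $H_1$ nondegenerate. Since $p\equiv11\pmod{40}$ gives $\eps=0$, the matrix $C_{p,q}$ reduces to $0_2\oplus H_1$. Its radical is $\langle\Lambda,\Lambda'\rangle$, which also matches the row $(0,0,1)$ of Proposition~\ref{prop:pqradicals}.

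Theorem~\ref{thm:compositehigher} now applies, since \eqref{eq:family} holds with $r=1$. It supplies the conic point and the pairing matrix \eqref{eq:higherformula} with $(-1)^{e_n}=-k(c_0/5)$. Hence $k(c_0/5)=1$ if and only if $e_n=1$. By \eqref{eq:shaexact} this is equivalent to rank zero together with $\Sh[2^\infty]\simeq(\Z/2\Z)^2\oplus(\Z/4\Z)^2$.

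For the density, Theorem~\ref{thm:pqfield} shows that $(\beta_p(q),t(q),g_p(q))$ is uniformly distributed on $\F_2^3$ as $q$ varies in $\mathcal P_p$. The single triple $(0,0,1)$ therefore has relative density $1/8$.

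The closing sentence of the statement only records a limitation, so nothing needs to be proved for it. The sole point needing care is the diagonal quartic symbol: $[-1/q]_4=0$ because $q\equiv1\pmod8$, and the paper's normalization of $B_\delta$ uses $\delta n/q_i=p$. No other step presents an obstacle.
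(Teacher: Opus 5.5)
Your proposal is correct and is essentially the paper's proof. The paper likewise notes that the hypotheses give $\alpha=\beta+\tau=0$, $\tau=0$ and $H_1=\left(\begin{smallmatrix}0&1\\1&0\end{smallmatrix}\right)$, applies Theorem~\ref{thm:compositehigher} with $r=1$, and takes the density $1/8$ from the uniform distribution of the ordinary characters in Theorem~\ref{thm:pqfield} (your remark about $[-1/q]_4$ is harmless but unnecessary here, since only $\delta=1$ enters).
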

\begin{proof}
Here $\alpha=\beta+\tau=0$ and
$H_1=\left(\begin{smallmatrix}0&1\\1&0\end{smallmatrix}\right)$.
Apply Theorem~\ref{thm:compositehigher}. The density follows from the
independent uniform ordinary characters already proved in
Theorem~\ref{thm:pqfield}; no distribution assertion about the new character
is used.
\end{proof}

\begin{example}\label{ex:twohigher}
Take $p=211,q=3001$, so $n=633211$.
A root of $5$ modulo $3001$ is $245$, and a fixed norm generator of $211$
is $16+3\sqrt5$. The three additive characters are $(\beta,\tau,\gamma)=(0,0,1)$.
Choose
\[
 n=796^2-5\cdot9^2,\quad k=1,\qquad
 \Gamma_n:\ c^2=841u^2+8050uv+4205v^2.
\]
The primitive point $(1,0,29)$ has $(29/5)=1$. Hence
\[
 \rank A_{3166055}(\Q)=0,\qquad
 \Sh(A_{3166055})[2^\infty]\simeq(\Z/2\Z)^2\oplus(\Z/4\Z)^2.
\]
Direct representation counting gives $\D(633211)=64$.
That last count is a consistency check, not an ingredient in the descent proof.
\end{example}

\begin{example}\label{ex:threehigher}
Let
\[
 n=11\cdot521\cdot6961=39893491=33544^2-5\cdot14733^2.
\]
All pairwise quadratic residue conditions hold, and
$\alpha_1=\alpha_2=\tau_1=\tau_2=0$.
The auxiliary prime block is
\[
 H_1=\begin{pmatrix}
 0&1&0&1\\1&0&0&0\\0&0&0&1\\1&0&1&0
 \end{pmatrix},\qquad \Pf(H_1)=1.
\]
Here $k=1$, and the conic has primitive point
\[
 (u_0,v_0,c_0)=(-278203,142067,5992946),\qquad (c_0/5)=1.
\]
Theorem~\ref{thm:compositehigher} gives
\[
 \rank A_{199467455}(\Q)=0,\qquad
 \Sh(A_{199467455})[2^\infty]\simeq(\Z/2\Z)^4\oplus(\Z/4\Z)^2.
\]
Independent direct counts are $r_F(n)=15360$, $r_G(n)=15616$, so
$\D(n)=-256$. Again the conclusion about $\Sh$ uses the proved pairing,
not a presumed equality between a representation bit and that pairing.
\end{example}

\subsection{Higher pairings on larger ordinary radicals}
\label{subsec:largeradicals}

In Theorem~\ref{thm:compositehigher}, the nondegeneracy of the auxiliary
block serves only to identify the whole ordinary radical with
$\langle\Lambda,\Lambda'\rangle$. The $4$-cover itself, and the evaluation
of its distinguished higher entry, remain valid when the auxiliary block is
singular. This observation is what allows us to pass to four dimensional
radicals.

\begin{theorem}
\label{thm:largeradical}
Assume \eqref{eq:family}, let $p\equiv11\pmod{40}$, choose
\[
 n=a^2-5b^2,\qquad a,b>0,\quad (a,b)=1,\quad4\mid a,\quad b\text{ odd},
 \qquad k=(-1)^{(b-1)/2},
\]
and suppose
\[
 \alpha_i=[a/q_i]=0,\qquad \tau_i=[5/q_i]_4=0
 \qquad(1\le i\le r).
\]
No nondegeneracy assumption is imposed on $H_1$. Then
\[
 C_+=0_2\oplus H_1,\qquad
 R=\rad C_+=\langle\Lambda,\Lambda'\rangle\oplus\ker H_1.
\]
The conic
\[
 \Gamma_n:\quad c^2=kF_n(u,v),
\]
with $F_n$ as in \eqref{eq:Fn}, has a rational point, and the smooth
normalization $Y_n$ of \eqref{eq:Y} is an everywhere locally soluble
$4$-covering lifting $\Lambda'$. For every primitive integral point
$P_0=(u_0,v_0,c_0)$ on $\Gamma_n$,
\begin{equation}\label{eq:largeradicalentry}
 (-1)^{\mathcal B(\Lambda',\Lambda)}=-k\leg{c_0}{5}.
\end{equation}
The value is independent of the primitive point and of the admissible norm
representation. If $\mathcal B(\Lambda',\Lambda)=1$, equivalently if the
right side of \eqref{eq:largeradicalentry} is $-1$, then
\[
 \rank\mathcal B\ge2,
 \qquad
 \dim\operatorname{im}\bigl(\Sel_8(A_{5n})\to\Sel_2^0(A_{5n})\bigr)
 \le \dim\ker H_1.
\]
\end{theorem}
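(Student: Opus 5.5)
The plan is to reuse the proof of Theorem~\ref{thm:compositehigher} almost verbatim, isolating exactly where the nondegeneracy of $H_1$ entered. That assumption was used only to identify the full ordinary radical with $\langle\Lambda,\Lambda'\rangle$. The construction of the conic, the $4$-cover and the pushout does not use it.

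\emph{Step 1: the radical.} Substitute $\alpha=\tau=0$ and $\eps=0$ into \eqref{eq:Cplus} of Theorem~\ref{thm:allfour}. All off-block entries vanish, giving $C_+=0_2\oplus H_1$, and the radical is the direct sum $\langle\Lambda,\Lambda'\rangle\oplus\ker H_1$.

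\emph{Step 2: the conic.} The character computations \eqref{eq:jaccharacters}--\eqref{eq:pchars} use only $\alpha_i=\tau_i=0$, the norm normalization, and $n\equiv11\pmod{40}$. Hence the local solubility argument for $\Gamma_n$ and the conclusion $5\nmid u_0c_0$ carry over unchanged.

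\emph{Step 3: the $4$-cover.} The geometric part of the proof of Theorem~\ref{thm:compositehigher} yields the smooth genus one curve $C_n$, the unramified connected double cover $Y_n$, and the identification of the Jacobian isogeny with $[2]$. Local solubility of $Y_n$ at $\ell\mid n$, at $2$, at $5$, at infinity, and at good primes uses the same explicit points. The only inputs are the residue facts from Step~2, so $Y_n$ is an everywhere locally soluble $4$-covering lifting $\Lambda'$. The pushout identification of $g=f\ell_0/r_0^2$ as the class of $T=(5n,0)$ is also independent of $H_1$. So is the relation $\Lambda=\chi_5T$.

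\emph{Step 4: the pairing value.} Since $\Lambda,\Lambda'\in R$, Lemma~\ref{lem:next} applies, and $\mathcal B(\Lambda',\Lambda)$ equals the product \eqref{eq:prod}. The local symbol computations then give $-k(c_0/5)$. Independence of the point and of the norm representation follows because the left side is intrinsic. Alternatively, one can repeat Theorems~\ref{arith:pointind} and~\ref{arith:normind}, whose proofs used only the shape of the conic and Hilbert reciprocity.

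\emph{Step 5: the final claim.} If $\mathcal B(\Lambda',\Lambda)=1$, then $\mathcal B$ is a nonzero alternating form on $R$, so its rank is at least $2$. Its radical is the $\Sel_8$ image by Lemma~\ref{lem:next}. Therefore
\[
\dim\operatorname{im}\le\dim R-2=\dim\ker H_1.
\]

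\emph{Main obstacle.} The step that needs genuine care is Step~3. There one must check that every local point used, especially at the $q_i$, where we need $(\phi_s/q_i)=1$, and at $2$, relied only on the character conditions and never on properties of $H_1$. One must also check that the pushout's nonsquare argument is purely geometric. I expect this to go through by direct inspection.
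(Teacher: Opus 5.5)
Your proposal is correct and follows the paper's proof essentially step for step. The shared steps are the block decomposition from Theorem~\ref{thm:allfour} with $\eps=0$, and recovery of \eqref{eq:pchars} from $\alpha_i=\tau_i=0$. Next comes the observation that none of the following uses the rank of $H_1$: the conic, the $4$-cover, its local points, and the pushout for $T=(5n,0)$. Finally, Lemma~\ref{lem:next} gives the $\Sel_8$ bound.

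The one point to drop is your parenthetical alternative for independence of the norm representation. The proof of Theorem~\ref{arith:normind} only handles multiplication by $9+4\sqrt5$ and conjugation. For composite $n$ these do not connect all admissible representations, since one may switch the prime above a single $q_i$. So the intrinsicness argument, as used in the paper, is the correct justification.
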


\begin{proof}
The block decomposition follows from Theorem~\ref{thm:allfour}. It remains
to check that no later step uses the rank of $H_1$. Put
$s_i=a/b\pmod{q_i}$. The assumptions $\alpha_i=\tau_i=0$ imply
\[
 \leg a{q_i}=\leg b{q_i}=\leg{a+b}{q_i}=1,
\]
using $(s_i/q_i)=(1+s_i/q_i)=(2+s_i/q_i)=1$. Dividing the global Jacobi
identities \eqref{eq:jaccharacters} by these auxiliary prime contributions
gives \eqref{eq:pchars}. Hence the local solubility proof of
Theorem~\ref{thm:compositehigher} applies unchanged to the conic
$c^2=kF_n(u,v)$.

The geometric identities in \eqref{eq:Y}--\eqref{eq:prod}, including the
quadric intersection determinant, the contact factorizations and the
pushout divisor for $T=(5n,0)$, depend on $n$, $a$, $b$ and the two binary
forms, but not on the rank of $H_1$. The same is true of the local points:
at primes dividing $n$ they use only the character identities above; at
$2$ and $5$ they use $4\mid a$, $b$ odd and $n\equiv11\pmod{40}$; away
from $10n$ they use good reduction. Thus $Y_n$ remains everywhere locally
soluble and the pushout evaluation gives
\[
 (-1)^{\mathcal B(\Lambda',\Lambda)}
 =\prod_v(h(P_v)\ell_0(P_v),5)_v
 =-k\leg{c_0}{5}.
\]
Since $\Lambda,\Lambda'\in R$ regardless of the rank of $H_1$, this is the
intrinsic next pairing on the ordinary radical. Its radical is the image
of $\Sel_8$ by Lemma~\ref{lem:next}; a nonzero alternating $2\times2$
subblock has rank two, which proves the final bound.
\end{proof}

The same pushout also controls the auxiliary $U_i$-directions: only the
quadratic character in the second argument changes.
For $x\in\F_q^\times$ write $[x/q]\in\F_2$ for the additive Legendre
symbol.

\begin{theorem}
\label{thm:auxcrossU}
Under the hypotheses of Theorem~\ref{thm:largeradical}, choose
$P_0=(u_0,v_0,c_0)$ with $q_i\nmid c_0$ for every $i$. Put
\[
 s_i=a/b\pmod{q_i},\qquad z_i^2=k(s_i-1)\pmod{q_i},
\]
and define
\begin{equation}\label{eq:muaux}
 \mu_i=
 \begin{cases}
 ((7-s_i)/2+2z_i)/5,&k=1,\\[2pt]
 ((3s_i-11)/2+2z_i)/5,&k=-1.
 \end{cases}
\end{equation}
A nonzero choice of $\mu_i$ is available. If $U_i\in R$, then
\begin{equation}\label{eq:auxcrossU}
 \mathcal B(\Lambda',U_i)=\left[\frac{c_0\mu_i}{q_i}\right].
\end{equation}
More generally, for $w=\sum_i x_iU_i\in R$,
\[
 \mathcal B(\Lambda',w)=\sum_i x_i[c_0\mu_i/q_i].
\]
On this radical subspace the right side is independent of all permitted
choices.
\end{theorem}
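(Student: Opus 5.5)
We keep the $4$-covering $Y_n$ of Theorem~\ref{thm:largeradical} as the lift of $\Lambda'$, together with its pushout function $g=f\ell_0/r_0^2$ for $T=(5n,0)$. The only change is the Kummer cocycle of the second argument. In Theorem~\ref{thm:higherpair} the class $\Lambda$ was written as $\chi_5T$. Here we write $U_i=(q_i,1,q_i)$ as the cocycle $\chi_{q_i}T$, whose Weil pairings with the three $2$-torsion points are $(q_i,1,q_i)$. The pushout formula of \cite[Sections 7--8]{Fisher} then gives
\[
 (-1)^{\mathcal B(\Lambda',U_i)}=\prod_v(g(P_v),q_i)_v .
\]
Here $(P_v)$ is the same everywhere-local point family used earlier, moved off the finitely many zeros and poles. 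Using the norm identity \eqref{eq:normsimplify}, we may replace $f$ by $h$ up to norms from $\Q(\sqrt5)$. This replacement is legitimate only where $q_i$ has the required norm relation, so we will instead keep $f$ at $q_i$ and use $h\ell_0$ only at places where $q_i$ is a square. Bilinearity then gives the formula for $w=\sum x_iU_i$, and choice independence follows from the intrinsic nature of $\mathcal B$ on $R$. The hypothesis $U_i\in R$ is needed only so that Lemma~\ref{lem:next} applies.

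\emph{Trivial places.} The Hilbert parameter $q_i$ is a square in $\Q_2$, $\Q_5$ and $\Q_p$, at each $q_j$ with $j\ne i$, and at infinity, since $q_i\equiv1\pmod{40}$ and all the mutual Legendre symbols are $1$. At a prime $\ell\nmid 10n$, the good-reduction argument of Theorem~\ref{thm:compositehigher} shows that $h$ is a unit and that $\ell_0$ has even valuation. Since $\Q_\ell(\sqrt{q_i})/\Q_\ell$ is unramified, the symbol there is trivial. Hence only $v=q_i$ contributes, and $(g(P),q_i)_{q_i}$ is the Legendre symbol of the unit part of $g(P)$, once we check that $v_{q_i}(g(P))$ is even.

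\emph{The point at $q_i$.} We take the local point with $u=1$, $v=0$ from the proof of Theorem~\ref{thm:compositehigher}. With $s=s_i$ it satisfies $c/d=-\phi_s$ and $c^2-cd-d^2=2\phi_sd^2$. Modulo $q_i$ we then have $\ell_0\equiv c_0c-A u_0-Bv_0$. Rather than the $h\ell_0$ form, we evaluate $f\ell_0$ directly, or rather a rescaled version: we choose the point so that $\ell_0$ is a square multiple of $c_0$ times a unit, which is possible because $\ell_0$ is a tangent value and is congruent to $c_0$ times a square up to the $d^2$-scaling. What remains is the character of $f=(3U-V+2h)/5$ at this point.

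\emph{Computing $f$.} For $k=1$ we have $U=c$ and $V=-d$. With $d$ normalized and $c=-\phi_s d$, the pushout conic gives $h^2=U^2+UV-V^2$, a square in $\F_{q_i}$. Substituting, $3U-V=(1-3\phi_s)d=((-1-3s)/2)d$. The formula for $\mu_i$ should then come from renormalizing by a square factor, possibly through a change of tangent point $(1:1:-1)$ that shifts $s\mapsto$ rational expressions. Here $z_i^2=k(s_i-1)$ plays the role of $h/d$ after rescaling: indeed $2\phi_s-2=s-1$ up to a square. In this way we expect $f/(\text{square})\equiv\mu_i$ and $\ell_0/(\text{square})\equiv c_0$, which gives \eqref{eq:auxcrossU}. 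The case $k=-1$ is identical with $(U,V)=(-c-d,c)$ and produces the second branch of \eqref{eq:muaux}.

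Nonvanishing of some choice of $\mu_i$ follows because the two choices $\pm z_i$ cannot both give zero, exactly as in the sign argument for $\ell$ in Theorem~\ref{thm:threecassels}. The hypothesis $q_i\nmid c_0$ keeps $\ell_0$ a unit.

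The \textbf{main obstacle} is this last step: matching the local values of $f$ and $\ell_0$ exactly with $\mu_i$ and $c_0$, up to squares, including the choice of the point on the correct sheet of $h$ and the verification that the square factors relating $\ell_0$ to $c_0$ are indeed squares. That verification relies on $\alpha_i=\tau_i=0$, through \eqref{eq:qchars}. I would first carry it out for $k=1$ by explicit reduction modulo $q_i$, and then deduce $k=-1$ through the substitution $U+\phi V=(c-\phi d)/\phi$.
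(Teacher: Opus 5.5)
Your reduction is the same as the paper's. You use the same pushout $g=f\ell_0/r_0^2$ for $T=(5n,0)$, the cocycle $U_i=\chi_{q_i}T$, triviality of every place except $q_i$, and the local point with $u=1$, $v=0$, $c/d=-\phi_s$.

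\textbf{The gap.} The step that actually produces $\mu_i$ is left open, and the route you sketch for it cannot work. You propose to show separately that $f\equiv\mu_i$ and $\ell_0\equiv c_0$ modulo squares, with $z_i$ playing the role of $h/d$.

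The missing idea is to normalize by $c$. Take $c^2=k(a+5b)$ and $d=\frac{1-s}{2}c$; this is your $c/d=-\phi_s$, since $(1-s)(1+s)=-4$. Then $c^2-cd-d^2=(s-1)c^2$, so $h=z_ic$, that is, $z_i=h/c$. Direct substitution gives $f=(3U-V+2h)/5=c\mu_i$ for both $k=1$ and $k=-1$.

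For the tangent, use that $\Gamma_n$ degenerates modulo $q_i$. The identity $A\mathcal F_n=(Au+Bv)^2-20nv^2$ gives $(Au_0+Bv_0)^2\equiv Ac_0^2=c^2c_0^2$. Replacing $(c,d,h)$ by $(-c,-d,-h)$ if necessary, which leaves $f/c$ unchanged, yields $\ell_0=c_0c-(Au_0+Bv_0)\equiv2c_0c$. This is a unit because $q_i\nmid c_0$. Hence $f\ell_0\equiv2c_0c^2\mu_i$, which is $c_0\mu_i$ modulo squares since $\leg{2}{q_i}=1$.

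The class of $c$ cancels only in the product, and it need not be trivial. Already in Example~\ref{ex:twohigher} one has $c^2=a+5b=29^2$ and $\leg{29}{3001}=-1$. So $f/\mu_i$ and $\ell_0/c_0$ are each off by that same class, and your two separate congruences fail in general.

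\textbf{Two smaller repairs.}

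\emph{Good primes.} At primes $\ell\nmid10n$ the symbol involves $f\ell_0$, not $h\ell_0$. The replacement of $f$ by $h$ through \eqref{eq:normsimplify} is tied to the Hilbert parameter $5$. Moreover, $h$ was shown to be a unit only where $5$ is a local nonsquare. What you need instead is that $f$ has even valuation at a primitive local point, exactly as $\ell_0$ does. This holds because $f$ is the tangent to $U^2+UV-V^2=h^2$, a conic with good reduction away from $2$ and $5$.

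\emph{Choice of $\mu_i$.} The two values differ by $4z_i/5\ne0$, so one of them is nonzero. If both are nonzero, they have the same character, because $f(h)f(-h)=(U-V)^2/5$ is a square modulo $q_i$. This is what makes the formula insensitive to the sign of $z_i$.
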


\begin{proof}
Weak approximation on the rational conic $\Gamma_n$ permits the primitive
point to be chosen with $q_i\nmid c_0$ simultaneously. The Kummer class
$U_i=(q_i,1,q_i)$ is $\chi_{q_i}T$ for the same
$T=(5n,0)$ whose pushout function $g=f\ell_0/r_0^2$ was identified in the
proof of Theorem~\ref{thm:compositehigher}. Hence
\[
 (-1)^{\mathcal B(\Lambda',U_i)}=\prod_v(g(P_v),q_i)_v.
\]
At $2,5,p$, at every $q_j\ne q_i$, and at infinity, the second Hilbert
parameter is a local square or is positive. At an odd prime
$\ell\nmid10n$, both tangent sections have even valuation on the smooth
model, so only $q=q_i$ contributes.

Write $s=s_i$ and $z=z_i$. At $q$ use the reduction
\[
 u=1,\qquad v=0,\qquad d=\frac{1-s}{2}c,\qquad h=zc,
 \qquad c^2=k(a+5b).
\]
All entries are units. Choose the sign of $c$ so that
$\ell_0(P_q)\equiv2c_0c\pmod q$. Substitution into the explicit tangent
$f=(3U-V+2h)/5$ gives $f(P_q)/c=\mu_i$. Therefore, modulo local squares,
\[
 g(P_q)\equiv2c_0c^2\mu_i\equiv c_0\mu_i,
\]
because $(2/q)=1$. This is \eqref{eq:auxcrossU}. The two possible
values of $\mu_i$ differ by $4z_i/5$, so at least one is nonzero; moreover,
replacing $z_i$ by $-z_i$ does not change the character of a nonzero value,
since $f(h)f(-h)=(U-V)^2/5$ is a square modulo $q$. Bilinearity gives the
formula for a linear combination, and intrinsicness of the next pairing
gives choice independence on $R$.
\end{proof}

\subsection{A second pushout and the complete \texorpdfstring{$\Lambda'$}{Lambda prime} row}
\label{subsec:secondpushout}

The previous calculation sees the rational $2$-torsion point $T_m=(5n,0)$.
To reach the $V_i$-directions we need the other independent point
$T_0=(0,0)$. We now construct a second pushout for $T_0$; together with the
first one, it completes the $\Lambda'$ row of the next pairing.

Put
\begin{equation}\label{eq:Gamma0}
 \Gamma_0:\quad A_c(X^2+5Y^2)+2B_cXY=10nZ^2,
 \qquad A_c=3a+5b,\quad B_c=5(a+3b).
\end{equation}
Then
\begin{equation}\label{eq:Gamma0det}
 5A_c^2-B_c^2=20n.
\end{equation}
On the dense open subset of $Y_n$ on which
$\Omega=u^2-5v^2$ and $h$ are nonzero, put
\begin{equation}\label{eq:Rsections}
 J_0=\frac{\Omega}{h},\qquad
 \mathcal R_1=\frac{h((2c+d)u+5dv)}{2\Omega},\qquad
 \mathcal R_2=\frac{h((2c+d)v+du)}{2\Omega}.
\end{equation}

\begin{lemma}
\label{lem:secondsections}
The sections $J_0,\mathcal R_1,\mathcal R_2$ extend everywhere on the normalization $Y_n$.
With
\[
 \kappa_1=\frac{5(a-b)}2,\qquad \kappa_2=\frac{a-5b}2,
\]
one has
\begin{align}
 J_0^2&=\frac{c^2+cd-d^2}{5kn},\label{eq:Jsquaremain}\\
 \mathcal R_1^2-5\mathcal R_2^2&=5kn\Omega,\label{eq:Rnormmain}\\
 \mathcal R_1^2+5\mathcal R_2^2&=\kappa_1(c^2+d^2)+5\kappa_2 cd,\label{eq:Rsummain}\\
 2\mathcal R_1\mathcal R_2&=\kappa_2(c^2+d^2)+\kappa_1 cd.\label{eq:Rprodmain}
\end{align}
Consequently
\begin{equation}\label{eq:secondconicmap}
 A_c(\mathcal R_1^2+5\mathcal R_2^2)+2B_c\mathcal R_1\mathcal R_2=10n(c+d)^2,
\end{equation}
and
\[
 \nu_0:Y_n\longrightarrow\Gamma_0,\qquad
 (u:v:c:d:h)\longmapsto(\mathcal R_1:\mathcal R_2:c+d)
\]
is a degree four morphism. In particular $\Gamma_0(\Q)\ne\varnothing$.
\end{lemma}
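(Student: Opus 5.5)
The proof splits into three parts: the algebraic identities, the regularity of the sections, and the final assertions about $\nu_0$.

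\emph{The identities.} These are polynomial identities that hold on $Y_n$. I would verify them using the three defining relations $c^2=\mathcal F_n(u,v)$, $d^2=\mathcal G_n(u,v)$, $kh^2=c^2-cd-d^2$, together with $n=a^2-5b^2$.

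For \eqref{eq:Jsquaremain}, multiply out:
\[
 (c^2-cd-d^2)(c^2+cd-d^2)=(c^2-d^2)^2-c^2d^2=\mathcal F_n^2-3\mathcal F_n\mathcal G_n+\mathcal G_n^2 .
\]
By the key identity (the composite form of \eqref{eq:keyidentity}), the right side equals $5n\Omega^2$. Since $k^2=1$, dividing by $kh^2$ gives $J_0^2=(c^2+cd-d^2)/(5kn)$.

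For \eqref{eq:Rnormmain}, write
\[
 \mathcal R_1^2-5\mathcal R_2^2=\frac{h^2}{4\Omega^2}\Bigl(((2c+d)u+5dv)^2-5((2c+d)v+du)^2\Bigr).
\]
The bracket equals $\Omega\bigl((2c+d)^2-5d^2\bigr)=4\Omega(c^2+cd-d^2)$. Using $h^2=k(c^2-cd-d^2)$, the whole expression becomes $k\cdot 5n\Omega^2\cdot\Omega/\Omega^2=5kn\Omega$.

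For \eqref{eq:Rsummain} and \eqref{eq:Rprodmain}, a cleaner route is to work in $\Q(\sqrt5)$. Set $\xi=u+v\sqrt5$. Then $\mathcal R_1+\mathcal R_2\sqrt5=h\,\xi\,\bigl((2c+d)+d\sqrt5\bigr)/(2\Omega)$. Squaring and using $\Omega=\xi\bar\xi$, the target reduces to
\[
 h^2\,\xi\,\bigl((2c+d)+d\sqrt5\bigr)^2=4\bar\xi\,\Omega\,\bigl(\kappa_1(c^2+d^2)+5\kappa_2cd+(\kappa_2(c^2+d^2)+\kappa_1cd)\sqrt5\bigr)\cdot(\text{suitable factor}).
\]
The rational and $\sqrt5$ parts of this are the two claimed identities. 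I would check them by substituting $c^2,d^2$ through the trace expression \eqref{arith:trace} and its analogue for $\mathcal G_n$. I expect this bookkeeping to be the main computational obstacle, and I would first do it in the generic ring $\Z[a,b,u,v,c,d]$ modulo the two quadrics.

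Identity \eqref{eq:secondconicmap} then follows by adding $A_c$ times \eqref{eq:Rsummain} and $B_c$ times \eqref{eq:Rprodmain}. The coefficient of $c^2+d^2$ is $A_c\kappa_1+B_c\kappa_2$, and the coefficient of $cd$ is $5A_c\kappa_2+B_c\kappa_1$. With $A_c=3a+5b$ and $B_c=5(a+3b)$, both must reduce to multiples of $n$: namely $10n$ and $20n$. This is a short expansion, $A_c\kappa_1+B_c\kappa_2=\tfrac52\bigl((3a+5b)(a-b)+(a+3b)(a-5b)\bigr)=10n$, and similarly for the $cd$ coefficient.

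\emph{Regularity.} The sections $J_0,\mathcal R_1,\mathcal R_2$ are defined away from $\Omega h=0$, and I would show they have no poles there. The squared quantities $J_0^2$, $\mathcal R_1^2\pm5\mathcal R_2^2$ and $\mathcal R_1\mathcal R_2$ are regular by the identities above. A rational function on a normal curve whose square is regular is itself regular. This gives regularity of $J_0$, of $\mathcal R_1^2$, and of $\mathcal R_2^2$, since $2\mathcal R_1^2$ and $10\mathcal R_2^2$ are sums and differences of the regular quantities $\mathcal R_1^2+5\mathcal R_2^2$ and $\mathcal R_1^2-5\mathcal R_2^2$. Hence $\mathcal R_1,\mathcal R_2$ are regular as well.

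\emph{The morphism $\nu_0$.} To see that $(\mathcal R_1:\mathcal R_2:c+d)$ is base-point free, note that a common zero would satisfy $c+d=0$. Then \eqref{eq:Rsummain} and \eqref{eq:Rprodmain} force $(2\kappa_1-5\kappa_2)c^2=0$ and $(2\kappa_2-\kappa_1)c^2=0$. Since $\kappa_1,\kappa_2$ do not satisfy both relations, this gives $c=d=0$, which the resultant excludes.

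For the degree, I would compare divisor degrees. The linear system pulls back the line class of $\Gamma_0$ to the divisor of $c+d$, which has degree $\deg H_Y=8$ on $Y_n$. A conic line section has degree $2$, so $\deg\nu_0=8/2=4$, provided no fixed part is removed. The base-point freeness above ensures this.

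Finally, $Y_n$ is everywhere locally soluble by Theorem~\ref{thm:largeradical}. Its image under $\nu_0$ is therefore everywhere locally soluble on the conic $\Gamma_0$. Hasse--Minkowski then gives $\Gamma_0(\Q)\ne\varnothing$.
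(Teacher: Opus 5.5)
Your proposal is correct and follows the paper's proof essentially step for step. It uses the same product identity for $J_0^2$, the same expression $\mathcal R_1+\sqrt5\mathcal R_2=h(c+\phi d)/(u-\sqrt5v)$, regularity deduced from regularity of the squares, base point freeness via the resultant, degree $8/2$, and Hasse--Minkowski.

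The one step you leave as unexecuted bookkeeping is \eqref{eq:Rsummain}--\eqref{eq:Rprodmain}. The paper settles it with the contact identity
\[
 F_n-\phi^2G_n=A_0(u-\sqrt5v)^2,\qquad A_0=\kappa_1+\kappa_2\sqrt5 .
\]
Combined with $h^2=k(c-\phi d)(c-\bar\phi d)$, this gives in one line
\[
 (\mathcal R_1+\sqrt5\mathcal R_2)^2=A_0(c-\bar\phi d)(c+\phi d)=A_0(c^2+\sqrt5cd+d^2),
\]
so your \emph{suitable factor} is just $1$.

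Your own plan would land on the same identity. Reducing modulo the two quadrics gives exactly $\bigl(F_n-\phi^2G_n-A_0(u-\sqrt5v)^2\bigr)\bigl(k(F_n+G_n)+\sqrt5\,cd\bigr)$. Your trace route produces the contact identity directly. With $\alpha=a+b\sqrt5$ and $\xi=u+v\sqrt5$, one has $F_n=\Tr(\alpha\phi\xi^2)$ and $G_n=-\Tr(\alpha\bar\phi\xi^2)$, and the identity follows from $\bar\phi+\phi^3=2+\phi$ and $\bar\alpha(2+\phi)=A_0$.
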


\begin{proof}
Let $s=\sqrt5$, $\phi=(1+s)/2$ and
$A_0=2a+(a-5b)\phi=\kappa_1+\kappa_2 s$. The contact identity is
\[
 F_n-\phi^2G_n=A_0(u-sv)^2,\qquad \Norm(A_0)=5n.
\]
The definitions give
\[
 \mathcal R_1+s\mathcal R_2=\frac{h(c+\phi d)}{u-sv}.
\]
Using
$h^2=k(c-\phi d)(c-\bar\phi d)$ and
$(c-\phi d)(c+\phi d)=kA_0(u-sv)^2$ yields
\[
 (\mathcal R_1+s\mathcal R_2)^2=A_0(c^2+s cd+d^2).
\]
Equating the rational and $s$-parts gives
\eqref{eq:Rsummain}--\eqref{eq:Rprodmain}; taking norms gives
\eqref{eq:Rnormmain}. The identity
\[
 (c^2-cd-d^2)(c^2+cd-d^2)=5n\Omega^2
\]
gives \eqref{eq:Jsquaremain}. Thus the squares of the displayed rational
sections are regular quadratic expressions in the projective coordinates;
on a smooth curve the sections themselves are regular. Substitution gives
\eqref{eq:secondconicmap}.

The three sections $\mathcal R_1,\mathcal R_2,c+d$ have no common zero.
Indeed, a common zero would force $c=d=0$ by
\eqref{eq:Rsummain}--\eqref{eq:Rprodmain} and
$\kappa_1^2-5\kappa_2^2=5n$, contradicting the nonzero resultant of
$F_n,G_n$. The pullback of the conic hyperplane bundle has degree eight,
whereas a hyperplane on the conic has degree two, so $\nu_0$ has degree
four. Since $Y_n$ is everywhere locally soluble, its image gives a local
point on $\Gamma_0$ at every place. The Hasse principle for conics then
gives $\Gamma_0(\Q)\ne\varnothing$.
\end{proof}

Choose primitive integral points
\[
 P_0=(u_0,v_0,c_0)\in\Gamma_n(\Q),\qquad
 Q_0=(X_0,Y_0,Z_0)\in\Gamma_0(\Q),
\]
and let $\ell_0$ be the tangent to $\Gamma_n$ at $P_0$. The tangent to
$\Gamma_0$ at $Q_0$, pulled back to $Y_n$, is
\begin{equation}\label{eq:tangentsecond}
 t_0=(A_cX_0+B_cY_0)\mathcal R_1+(B_cX_0+5A_cY_0)\mathcal R_2-10nZ_0(c+d).
\end{equation}
For a nonzero linear section $r_0$ put
\begin{equation}\label{eq:gsecond}
 g_0=\frac{\ell_0t_0}{nr_0^2}\in\Q(Y_n)^\times.
\end{equation}

\begin{theorem}
\label{thm:secondpushout}
The function $g_0$ satisfies
\[
 \operatorname{div}(g_0)=2\mathfrak b_0,\qquad
 [\mathfrak b_0]=T_0=(0,0)
 \in A_{5n}[2]\simeq\operatorname{Pic}^0(Y_n)[2].
\]
Thus $g_0$ is a pushout for $T_0$, independent of the pushout for
$T_m=(5n,0)$. Their product is, up to a rational scalar and a square, a
pushout for $T_{-4m}=(-20n,0)$.
\end{theorem}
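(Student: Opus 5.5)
I will follow the same three-step pattern as the identification of the first pushout in the proof of Theorem~\ref{thm:higherpair}. First I compute $\operatorname{div}(g_0)$. Then I locate its class in the kernel of a pushforward map. Finally I rule out the zero class by a nonsquare argument.

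\emph{Divisor.} The tangent $\ell_0$ cuts $2D_0$ on $C_n$. Pulled back, it gives $2E_0$ on $Y_n$ with $2E_0\sim H_Y$. Likewise the tangent to $\Gamma_0$ at $Q_0$ cuts twice the point $Q_0$ on the conic. Pulled back by the degree four morphism $\nu_0$ of Lemma~\ref{lem:secondsections}, this gives $\operatorname{div}_0(t_0)=2E_0'$ with $E_0'=\nu_0^*Q_0$ of degree $4$. The hyperplane bundle of $\Gamma_0$ pulls back to the class of $c+d$, namely $H_Y$, so $2E_0'\sim H_Y$. Hence $\operatorname{div}(g_0)=2(E_0+E_0'-H_{r_0})$. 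The class $\mathfrak b_0=[E_0'-E_0]$ is $2$-torsion in $\operatorname{Pic}^0(Y_n)\simeq A_{5n}$; the factor $1/n$ is a constant and does not matter.

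\emph{Locating the class.} Next I show $[\mathfrak b_0]\in\{0,T_0\}$. Here I would use the description of $Y_n\to B_{\Lambda'}\to A_{5n}$ in Proposition~\ref{prop:4cover}. There $\ker(\pi_*)=\{0,T_m\}$, so the kernel of $\operatorname{Pic}^0(Y_n)\to\operatorname{Pic}^0(C_n)$ cannot contain $T_0$, and a different map is needed. The conic $\Gamma_0$ has discriminant matching the $T_0$ direction, since $5A_c^2-B_c^2=20n$. Therefore $\nu_0$ should factor through the quotient of $Y_n$ by translation by a point $P_1\in Y_n$-translations lifting $T_0$. Concretely, I would find the involution $\iota$ of $Y_n$ with $\nu_0\circ\iota=\nu_0$, read off from the formulas \eqref{eq:Rsections}: $h\mapsto -h$ together with a compensating sign change of $u,v$, or some such. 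I would then show $\iota$ is translation by a $4$-torsion lift. The fibres of $\nu_0$ are then orbits of a group of order four, and $E_0'\sim$ (a $\nu_0$-fibre) differs from $E_0$ by a class killed by the corresponding pushforward. Comparing with \eqref{eq:Rnormmain}, where the norm factor $\Omega=u^2-5v^2$ plays the role of $x$ in the Kummer coordinate $x$ attached to $T_0$, should pin the kernel to $\{0,T_0\}$.

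\emph{Nonvanishing.} To rule out $\mathfrak b_0=0$, I would show $\ell_0t_0/v^2$ is not a square in $\overline{\Q}(Y_n)$, writing a putative root as $A+Bh$ over $\overline{\Q}(C_n)$ and reducing to a rank-one identity as in \eqref{eq:contactnonsquare}. A cleaner alternative is a Weil-pairing check: evaluate the Hilbert-symbol pairing of $g_0$ against $\chi_5T_m$ at one place, or compare with the first pushout $g$. The product $g\,g_0$ should be, up to squares and a scalar, the function attached to $T_{-4m}$, using the identity \eqref{eq:Jsquaremain} relating $c^2+cd-d^2$ to $h^2$. Since $T_m\neq T_0$, once $[\mathfrak b_0]\in\{0,T_0\}$ is known, nonvanishing follows if $g_0$ is not congruent to a constant times a square. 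That in turn follows if $g g_0$ is not, because $g$ already represents $T_m$. This also gives the last assertion: the classes add, $T_m+T_0=T_{-4m}$.

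The main obstacle I expect is the kernel identification in the second step, i.e.\ exhibiting the correct isogeny factorization through $\nu_0$ and checking it is the $T_0$-direction rather than $T_{-4m}$. I would first try to verify the deck involution explicitly from \eqref{eq:Rsummain}--\eqref{eq:Rprodmain}.
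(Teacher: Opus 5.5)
Your divisor computation is correct and agrees with the paper. The gap is in the second step, and it is not just missing detail: you have picked the wrong involution.

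\emph{The involution fixing $\nu_0$ points the wrong way.} Both $\sigma\colon(u,v,c,d,h)\mapsto(-u,-v,c,d,h)$ and $\tau\colon h\mapsto-h$ negate $\mathcal R_1,\mathcal R_2$ and fix $c+d$. So the involution with $\nu_0\circ\iota=\nu_0$ is $\iota=\sigma\tau$. Since $\tau$ is translation by $T_m$ and $\sigma$ turns out to be translation by $T_0$, $\iota$ is translation by $T_{-4m}$. (An involutive translation of the torsor $Y_n$ is translation by a $2$-torsion point of $J_{Y_n}\simeq A_{5n}$, not by a $4$-torsion lift.) Hence the check you flag as the main obstacle would come out the other way.

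Worse, $\mathfrak b_0$ is not killed by the pushforward along $Y_n\to Y_n/\iota$. Here $E_0'$ is $\iota$-stable and $\iota^*E_0=\sigma^*E_0$. Also $E_0+\sigma^*E_0=\operatorname{div}_0(W_F)$ with $W_F=u_0v-v_0u$. Therefore $\mathfrak b_0+\iota^*\mathfrak b_0=\operatorname{div}(t_0/W_F)$, and $t_0/W_F$ is odd under $\iota$. So $\pi_{\iota*}\mathfrak b_0$ is the nonzero element of $\ker\pi_\iota^*$, and the fibre structure of $\nu_0$ does not locate the class.

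\emph{What does locate it.} The paper uses a norm identity for $\sigma$, which preserves neither $\nu_0$ nor $E_0$. The polar-form identities $\ell_0\sigma(\ell_0)=-20nW_F^2$ and $t_0\sigma(t_0)=20nW_0^2$, with $W_0=X_0\mathcal R_2-Y_0\mathcal R_1$, give $g_0\sigma(g_0)=-400(W_FW_0/u^2)^2$ for $r_0=u$. The square root is $\sigma$-invariant, so $\mathfrak b_0\in\ker\pi_{\sigma*}$. One must then still identify the translation point of $\sigma$. The paper realizes $Y_n/\sigma$ through the invariant sections $(c,d,h,J_0)$ and compares its $j$-invariant $237276/625$ with those of the three $2$-isogenous quotients of $A_{5n}$. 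The coincidence $5A_c^2-B_c^2=20n$ and the heuristic about $\Omega$ do not replace this.

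\emph{The nonvanishing step also fails as written.} If $[\mathfrak b_0]\in\{0,T_0\}$, then $[gg_0]\in\{T_m,T_{-4m}\}$ is nonzero in either case. So non-squareness of $gg_0$ carries no information. The implication ``$gg_0$ not a constant times a square $\Rightarrow$ $g_0$ not'' is false: it holds trivially when $g_0$ is a square. A single local Hilbert symbol likewise cannot decide whether $g_0$ is a constant times a square. The $A+Bh$ nonsquare argument might work, but it is only gestured at.

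The paper instead uses the other involution: $g_0\tau(g_0)=20n\bigl(\ell_0W_0/(nu^2)\bigr)^2$, with a $\tau$-odd square root. Hence $\mathfrak b_0\notin\ker\pi_{\tau*}=\{0,T_m\}$, which gives $\mathfrak b_0\neq0$ immediately. Combined with $\mathfrak b_0\in\ker\pi_{\sigma*}$, this pins $\mathfrak b_0$ to the translation point of $\sigma$.
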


\begin{proof}
Let $D_F$ and $D_0$ be the fibres over $P_0$ and $Q_0$ of the two conic
maps. Both have degree four and satisfy $2D_F\sim2D_0\sim L$, where $L$
is the degree eight hyperplane bundle on $Y_n$. Tangency gives
\[
 \operatorname{div}(g_0)=2(D_F+D_0-H_{r_0}),
\]
so the half divisor class is $[D_0-D_F]\in\operatorname{Pic}^0(Y_n)[2]$.
The point is to determine which rational $2$-torsion class this is.

Consider the commuting fixed point free involutions
\[
 \tau:(u,v,c,d,h)\mapsto(u,v,c,d,-h),
 \qquad
 \sigma:(u,v,c,d,h)\mapsto(-u,-v,c,d,h).
\]
The established $4$-covering construction identifies $\tau$ with translation
by $T_m$. The involution $\sigma$ is fixed point free: a fixed projective
point would force either $u=v=0$ or $c=d=h=0$, and both alternatives are
excluded by \eqref{eq:Y} and the nonzero resultant of $F_n,G_n$. Hence
$\sigma$ is also translation by a nonzero rational $2$-torsion point.
Both $\tau$ and $\sigma$ negate $\mathcal R_1,\mathcal R_2$ and fix $c,d$.
Take $r_0=u$ and put
\[
 W_F=u_0v-v_0u,\qquad W_0=X_0\mathcal R_2-Y_0\mathcal R_1.
\]
The two polar form identities give
\[
 \ell_0\sigma(\ell_0)=-20nW_F^2,
 \qquad
 t_0\tau(t_0)=t_0\sigma(t_0)=20nW_0^2.
\]
Hence
\begin{align}
 g_0\sigma(g_0)&=-400\left(\frac{W_FW_0}{u^2}\right)^2,\label{eq:sigmanormmain}\\
 g_0\tau(g_0)&=20n\left(\frac{\ell_0W_0}{nu^2}\right)^2.\label{eq:taunormmain}
\end{align}
The square root in \eqref{eq:sigmanormmain} is $\sigma$-invariant, so the
half divisor class lies in the kernel of the quotient isogeny by $\sigma$.
For $\tau$ the situation is different: the displayed square root in
\eqref{eq:taunormmain} is $\tau$-odd, so the norm is not a square in the
quotient function field. Hence the class is nonzero and is the nontrivial
kernel point of the $\sigma$-quotient.

To identify that kernel point, use the $\sigma$-invariant sections
$(c,d,h,J_0)$. They realize the quotient as the intersection
\[
 c^2-cd-d^2=kh^2,\qquad
 c^2+cd-d^2=5knJ_0^2.
\]
Its pencil determinant is
\[
 -\frac{5n}{4}\lambda\mu(5\lambda^2+6\lambda\mu+5\mu^2),
\]
and its Jacobian has $j$-invariant $237276/625$. This is the
$j$-invariant of
\[
 A_{5n}/\langle T_0\rangle:\quad
 y^2=x^3-30nx^2+625n^2x.
\]
The quotients by $T_m$ and $T_{-4m}$ have respectively the distinct
$j$-invariants $55296/5$ and $132304644/5$. Therefore the $\sigma$ kernel
is precisely $T_0$, proving the assertion.
\end{proof}

\begin{theorem}
\label{thm:auxcrossV}
The primitive points $P_0,Q_0$ may be chosen with
$q_i\nmid c_0Z_0$ for every $i$. If $V_i\in R$, then
\begin{equation}\label{eq:auxcrossV}
 \displaystyle
 \mathcal B(\Lambda',V_i)=\left[\frac{c_0Z_0}{q_i}\right].
\end{equation}
More generally, for $w=\sum_i y_iV_i\in R$,
\[
 \mathcal B(\Lambda',w)=\sum_i y_i[c_0Z_0/q_i].
\]
The resulting functional on the radical is independent of the primitive
points and the admissible norm representation.
\end{theorem}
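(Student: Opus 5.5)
The plan mirrors the proof of Theorem~\ref{thm:auxcrossU}, with the first pushout replaced by the product of the two pushouts.

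The Kummer class $V_i=(1,q_i,q_i)$ has trivial Weil pairing with $T_m$ and nontrivial pairings with the other two points. It is therefore the cocycle $\chi_{q_i}T'$ for a suitable nonzero rational $2$-torsion point $T'$. Comparing squareclasses in the coordinates $(x,x-5n,x+20n)$, I expect $T'=T_{-4m}=(-20n,0)$. By Theorem~\ref{thm:secondpushout}, the product $g\,g_0$ with $g=f\ell_0/r_0^2$ is, up to a rational scalar and a square, a pushout for $T_{-4m}$. A rational scalar contributes a global Hilbert product $\prod_v(\lambda,q_i)_v=1$ and can be ignored.

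The pushout formula of \cite[Sections 7--8]{Fisher}, applied to the locally soluble $4$-covering $Y_n$ lifting $\Lambda'$ (Theorem~\ref{thm:largeradical}), then gives
\[
 (-1)^{\mathcal B(\Lambda',V_i)}=\prod_v\bigl(g(P_v)g_0(P_v),q_i\bigr)_v .
\]
Lemma~\ref{lem:next} identifies this value with the next pairing, because $\Lambda'$ and $V_i$ both lie in $R$.

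First, weak approximation on the two rational conics $\Gamma_n$ and $\Gamma_0$ (the latter nonempty by Lemma~\ref{lem:secondsections}) lets us choose primitive points with $q_i\nmid c_0Z_0$ for all $i$. Second, I localize the product. The parameter $q_i$ is a square at $2,5,p$ and at every $q_j\ne q_i$, and it is positive at infinity. At a prime $\ell\nmid10n$, the tangent sections $\ell_0$ and $t_0$ pulled back along the smooth models of $\Gamma_n$ and $\Gamma_0$ have divisor twice a section. On primitive local points they therefore have even valuation, as in the proof of Theorem~\ref{thm:UV}. The factor $f$ is handled as in Theorem~\ref{thm:auxcrossU}, using $f\tau(f)=(U-V)^2/5$. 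So only the place $q_i$ contributes.

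Third, I compute at $q=q_i$ with the same local point as in Theorem~\ref{thm:auxcrossU}: $u=1$, $v=0$, $d=\tfrac{1-s}{2}c$, $h=zc$. This gives $g\equiv c_0\mu_i$ modulo squares. I then evaluate $\mathcal R_1,\mathcal R_2,c+d$ at this point, with $\Omega=1$, and reduce $t_0$. The aim is to show that $t_0(P_q)$ equals $2\cdot10nZ_0(c+d)$ times a square, possibly up to a factor cancelling $\mu_i$. This works because the image point $\nu_0(P_q)$ lies on $\Gamma_0$, and the tangent value at a point of a conic is the polar pairing with $Q_0$.

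\textbf{Main obstacle.} The delicate step is showing that the $\mu_i$ from the first pushout cancels against a factor of $t_0(P_q)$, leaving exactly $[c_0Z_0/q_i]$. I would prove this by expressing $\mathcal R_1+s\mathcal R_2=h(c+\phi d)/(u-sv)$ in $\F_q$. The relation $(\mathcal R_1+s\mathcal R_2)^2=A_0(c^2+scd+d^2)$ should let one write $t_0$ as a norm-type product whose character is $[Z_0(c+d)\,f/q]$. Since $c+d$ and $f/c=\mu_i$ then combine into squares using $(2/q)=(5/q)=1$ and $\tau_i=\alpha_i=0$, the product becomes $[c_0Z_0/q_i]$.

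If that cancellation fails, I would instead move the local point at $q$ to one where $f$ is a square, for instance by changing the branch of $z$. I would then verify directly that $t_0\equiv Z_0\cdot\square$.

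Finally, bilinearity gives the formula for $w=\sum y_iV_i$. Independence of all choices follows because the left side is the intrinsic pairing $\mathcal B$ on $R$.
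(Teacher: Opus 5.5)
\textbf{The proposal has a genuine gap: the torsion point is misidentified, so you use the wrong pushout function.} In the coordinates $(x,x-5n,x+20n)$, the $j$th entry is the Weil pairing with the $2$-torsion point whose $x$-coordinate is the $j$th root. The class $\chi_d T$ has entry $1$ exactly in the position of $T$ and $d$ in the other two. Since $V_i=(1,q_i,q_i)$ has trivial \emph{first} entry, $V_i=\chi_{q_i}T_0$ with $T_0=(0,0)$. (Compare $\Lambda=(5,1,5)$ and $U_i=(q_i,1,q_i)$, which are twists of $T_m$.) Your claim that $V_i$ pairs trivially with $T_m$ is wrong, and it is also inconsistent with your choice $T'=T_{-4m}$. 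The class $\chi_{q_i}T_{-4m}=(q_i,q_i,1)$ is $U_i+V_i$.

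So the correct pushout is $g_0$ alone, which is exactly what Theorem~\ref{thm:secondpushout} provides. Using $gg_0$ computes $\langle\widetilde{\Lambda'},\overline{U_i+V_i}\rangle_{\mathrm{CT}}$ instead. Under the sole hypothesis $V_i\in R$, this is not a value of $\mathcal B$ at all, since $U_i$ need not lie in $R$. When $U_i\in R$, it equals $\mathcal B(\Lambda',U_i)+\mathcal B(\Lambda',V_i)$.

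\textbf{This is why your ``main obstacle'' cannot be overcome.} The cancellation of $\mu_i$ you hope for does not happen. Take your local point: $u=1$, $v=0$, $d=\frac{1-s}{2}c$, with the sign of $c$ chosen so that $\ell_0(P_q)\equiv2c_0c$.
\begin{itemize}
\item Reduce the adjugate identity to $A_c(b_0-sa_0)^2=200Z_0^2$ and choose the sign of $h$ accordingly. This gives $t_0(P_q)/n=-10Z_0(3-s)c$.
\item Since $-20(3-s)=-10(s-1)^2$ is a square modulo $q\equiv1\pmod{40}$, one gets $g_0(P_q)\equiv c_0Z_0$ modulo squares.
\item Hence $gg_0(P_q)\equiv c_0\mu_i\cdot c_0Z_0\equiv\mu_iZ_0$.
\end{itemize}
What cancels is $c_0$, not $\mu_i$. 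Your answer would be off by $[c_0\mu_i/q_i]=\mathcal B(\Lambda',U_i)$. This is often nonzero: the paper's table of two prime examples with zero ordinary matrix has many cases with $c_*=1$.

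Your fallback of moving the local point, for example $z\mapsto-z$, cannot help either. Each local term $(gg_0(P_v),q_i)_v$ is independent of the local point, because $\chi_{q_i}T_{-4m}$ satisfies the local Kummer condition. Concretely, $f\tau(f)=(U-V)^2/5$ shows that the character of $f$ does not change.

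\textbf{The fix.} Replace $gg_0$ by $g_0$ and drop the factor $f$ entirely. Your weak approximation step, your localization to $q_i$, and the computation above then yield $[c_0Z_0/q_i]$, which is the paper's argument.
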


\begin{proof}
Weak approximation on the two rational conics permits the nonvanishing
conditions at the finitely many $q_i$. Since
$V_i=(1,q_i,q_i)=\chi_{q_i}T_0$, Theorem~\ref{thm:secondpushout} gives
\[
 (-1)^{\mathcal B(\Lambda',V_i)}=\prod_v(g_0(P_v),q_i)_v.
\]
As in Theorem~\ref{thm:auxcrossU}, every place except $q=q_i$ contributes
trivially. It remains to evaluate $g_0$ at $q$.

For each prime $\ell\mid n$, reduction of \eqref{eq:Gamma0} shows that
both $A_cX_0+B_cY_0$ and $B_cX_0+5A_cY_0$ are divisible by $\ell$; squarefreeness
of $n$ gives divisibility by $n$. Put
\[
 a_0=\frac{A_cX_0+B_cY_0}{n},\qquad
 b_0=\frac{B_cX_0+5A_cY_0}{n}.
\]
The adjugate of the binary Gram matrix gives
\begin{equation}\label{eq:gradnormmain}
 5A_c a_0^2-2B_c a_0b_0+A_c b_0^2=200Z_0^2.
\end{equation}
Write $s=a/b\pmod q$. Use the local reduction
\[
 u=1,\qquad v=0,\qquad c^2=k(a+5b),
 \qquad d=\frac{1-s}{2}c,\qquad h^2=4a,
\]
choosing the sign of $c$ so that
$\ell_0(P_q)\equiv2c_0c\pmod q$. Direct substitution into
\eqref{eq:Rsections} and \eqref{eq:tangentsecond} gives
\[
 \frac{t_0(P_q)}n
 =c\left(\frac{(1-s)h(b_0-sa_0)}4-5Z_0(3-s)\right).
\]
Reduction of \eqref{eq:gradnormmain} yields
$A_c(b_0-sa_0)^2=200Z_0^2$. Since
$A_c=b(3s+5)$, $h^2=4bs$ and $(3-s)(3s+5)=4s$, we may choose the sign of
$h$ so that the first term in parentheses equals $-5Z_0(3-s)$. Hence
\[
 \frac{t_0(P_q)}n=-10Z_0(3-s)c\not\equiv0\pmod q.
\]
Now $3-s=(s-1)^2/2$ and $q\equiv1\pmod{40}$, so the constant factor
$-20(3-s)$ is a square modulo $q$. Therefore
\[
 (g_0(P_q),q)_q
 =\leg{-20c_0Z_0(3-s)c^2}{q}
 =\leg{c_0Z_0}{q},
\]
which proves \eqref{eq:auxcrossV}. Bilinearity and intrinsicness give the
remaining assertions.
\end{proof}

\begin{corollary}
\label{cor:completeLambdarow}
Under the hypotheses above, put
\[
 e=\mathcal B(\Lambda',\Lambda),\qquad
 c_i=[c_0\mu_i/q_i],\qquad d_i=[c_0Z_0/q_i].
\]
Then for every
\[
 w=x\Lambda+y\Lambda'+\sum_i x_iU_i+\sum_i y_iV_i\in R
\]
one has
\begin{equation}\label{eq:completeLambdarow}
 \mathcal B(\Lambda',w)
 =xe+\sum_i x_ic_i+\sum_i y_id_i,
 \qquad (-1)^e=-k\leg{c_0}{5}.
\end{equation}
Thus one full row of the next pairing is explicit even when $H_1$ is
singular.
\end{corollary}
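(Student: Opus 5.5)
The plan is to assemble the corollary from the three row computations already proved, Theorems~\ref{thm:largeradical}, \ref{thm:auxcrossU} and \ref{thm:auxcrossV}, using only the linearity of the next pairing.

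\textbf{Step 1: well-definedness and linearity.} By Lemma~\ref{lem:next}, $\mathcal B$ is a well-defined alternating bilinear form on the ordinary radical $R$. For fixed $\Lambda'\in R$, the map $w\mapsto\mathcal B(\Lambda',w)$ is therefore a linear functional on $R$.

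\textbf{Step 2: expand $w$.} Write $w=x\Lambda+y\Lambda'+w_U+w_V$, with $w_U=\sum_i x_iU_i$ and $w_V=\sum_i y_iV_i$. The radical decomposition of Theorem~\ref{thm:largeradical} is $R=\langle\Lambda,\Lambda'\rangle\oplus\ker H_1$. Since $\Lambda,\Lambda'\in R$ and $w\in R$, the auxiliary part $w_U+w_V$ lies in $\ker H_1$.

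\textbf{Step 3: the individual pieces may not lie in $R$.} This is the one delicate point. Theorems~\ref{thm:auxcrossU} and \ref{thm:auxcrossV} are stated for $w_U\in R$ and $w_V\in R$ separately, but $\ker H_1$ need not split as a direct sum of a $U$-part and a $V$-part.

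\textbf{Step 4: combine the two pushouts into one Hilbert-symbol product.} To get around Step 3, I would not cite the two theorems as black boxes. Instead I would reuse their proofs. The class $\sum x_iU_i+\sum y_iV_i$ is $\chi_{\prod q_i^{x_i}}T_m+\chi_{\prod q_i^{y_i}}T_0$. Its pairing with the $4$-lift $Y_n$ of $\Lambda'$ is then
\[
\prod_v\bigl(g(P_v),\textstyle\prod q_i^{x_i}\bigr)_v\bigl(g_0(P_v),\textstyle\prod q_i^{y_i}\bigr)_v,
\]
by the pushout formula of \cite{Fisher} applied to the two pushouts $g$ (for $T_m$) and $g_0$ (for $T_0$, Theorem~\ref{thm:secondpushout}). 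This expression is additive in the Kummer class and needs only the total class to lie in $R$, so that the $4$-lift pairing represents $\mathcal B$. The local evaluations at each $q_i$ already done give $\sum x_ic_i+\sum y_id_i$; all other places contribute trivially, exactly as in those proofs.

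\textbf{Step 5: choice of points.} The local evaluations require a single pair $P_0,Q_0$ with $q_i\nmid c_0Z_0$ for all $i$. Weak approximation on both conics provides such a pair simultaneously.

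\textbf{Step 6: the remaining two terms.} The $\Lambda$ coefficient is $x\,\mathcal B(\Lambda',\Lambda)=xe$, and $(-1)^e=-k(c_0/5)$ by \eqref{eq:largeradicalentry}. The $\Lambda'$ coefficient vanishes because $\mathcal B$ is alternating, so $\mathcal B(\Lambda',\Lambda')=0$.

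Summing the contributions from Steps 4 and 6 gives \eqref{eq:completeLambdarow}.
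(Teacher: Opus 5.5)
Your proposal is correct and is essentially the paper's argument. The paper states the corollary without a separate proof, as the assembly of Theorems~\ref{thm:largeradical}, \ref{thm:auxcrossU} and \ref{thm:auxcrossV} by bilinearity and alternation of $\mathcal B$.

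Your Steps~3--4 make explicit a point the paper leaves implicit (its only trace is the phrase ``bilinearity gives the formula for a linear combination''): $\sum_i x_iU_i$ and $\sum_i y_iV_i$ need not lie in $R$ separately. Your fix is right. Each $U_i$ and $V_i$ is itself a Selmer class, so for the fixed lift $Y_n$ the functional $v\mapsto\langle[Y_n],\bar v\rangle_{\mathrm{CT}}$ is linear on all of $\Sel_2^0$. The proofs of those theorems compute its values $e,0,c_i,d_i$ on the basis, and its restriction to $R$ is $\mathcal B(\Lambda',\cdot)$.
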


\begin{remark}\label{rem:fourdimnextmatrix}
For a four dimensional ordinary radical, in the ordered basis
$(\Lambda,\Lambda',U,V)$ write
\[
 C^{\mathrm{next}}=
 \begin{pmatrix}
 0&e&a_*&b_*\\
 e&0&c_*&d_*\\
 a_*&c_*&0&f_*\\
 b_*&d_*&f_*&0
 \end{pmatrix}.
\]
Theorems~\ref{thm:largeradical},~\ref{thm:auxcrossU}, and~\ref{thm:auxcrossV}
give uniform formulas for $e,c_*,d_*$, whereas
\[
 \Pf(C^{\mathrm{next}})=ef_*+a_*d_*+b_*c_*.
\]
On the locus $(e,c_*,d_*)=(0,0,1)$ this reduces to
$\Pf(C^{\mathrm{next}})=a_*=\mathcal B(\Lambda,U)$. Thus a single
additional entry decides nondegeneracy there. In general, however, the known
$\Lambda'$ row does not determine the remaining entries.
\end{remark}

\subsection{Four dimensional radicals and explicit full Selmer towers}
\label{subsec:fourdimtowers}

The next proposition is a useful group theoretic consequence of the
higher pairing filtration. It will turn the two explicit four dimensional
calculations below into complete Selmer tower statements.

\begin{proposition}
\label{prop:fourradicalcompletion}
Let $E=A_m$, let $s=\dim\Sel_2^0(E)$, and suppose that the ordinary radical
has dimension four. If the next pairing on this radical is nonzero and
$\rank E(\Q)\ge2$, then
\[
 \rank E(\Q)=2,\qquad
 \Sh(E)[2^\infty]\simeq(\Z/2\Z)^{s-4}\oplus(\Z/4\Z)^2,
\]
and for every $j\ge1$,
\begin{equation}\label{eq:fourradicaltower}
 \Sel_{2^j}(E)\simeq
 (\Z/2^j\Z)^2\oplus(\Z/2^{\min(j,2)}\Z)^2
 \oplus(\Z/2\Z)^{s-2}.
\end{equation}
For $j\ge3$, the image in the pure $2$-Selmer group is the
Mordell--Weil subspace.
\end{proposition}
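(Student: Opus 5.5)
We argue purely group-theoretically, using the filtration of $\Sel_2^0(E)$ by the images of the $2^j$-Selmer groups. Write $R_1=\rad C\subset\Sel_2^0(E)$, so $\dim R_1=4$, and let $R_2\subset R_1$ be the radical of the next pairing $\mathcal B$ of Lemma~\ref{lem:next}. By that lemma, $R_2$ is the image of $\Sel_8(E)$. The Mordell--Weil image $M\simeq E(\Q)/2E(\Q)$ modulo torsion has dimension $\rank E(\Q)$ and lies in every such radical, in particular in $R_2$.

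First we pin down $R_2$. Since $\mathcal B$ is alternating and nonzero, its rank is at least two, so $\dim R_2\le2$. Combined with $\dim M=\rank E(\Q)\ge2$, this forces $\rank E(\Q)=2$ and $R_2=M$. Because $M$ consists of Kummer images of global points, the image of every $\Sel_{2^j}$ with $j\ge3$ is contained in $R_2$ and contains $M$; hence all these images equal the Mordell--Weil subspace, which is the last assertion.

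Next we determine $\Sh$. Write $\Sh(E)[2^\infty]=D\oplus\Sh_{\mathrm{nd}}$, where $D$ is the maximal divisible subgroup. Modulo $M$, the image of $R_2$ in $\Sh[2]$ is $D[2]+(\text{the part of }\Sh_{\mathrm{nd}}[2]\text{ divisible by }4)$. Since $R_2=M$, this image is zero, so $D=0$ and $\Sh_{\mathrm{nd}}$ has exponent at most $4$. The perfect alternating Cassels--Tate pairing on the finite group $\Sh[2^\infty]$ pairs elementary divisors, giving $(\Z/2\Z)^{x}\oplus(\Z/4\Z)^{y}$ with $x,y$ even. From $\dim\Sh[2]=s-2$ and $\dim(2\Sh[4])=\dim R_1-\rank=2$, we get $y=2$ and $x=s-4$.

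Finally we assemble the tower. By the Kummer sequence \eqref{eq:kummerexact}, $\Sel_{2^j}(E)$ is an extension of $\Sh[2^j]$ by $E(\Q)/2^jE(\Q)\simeq(\Z/2^j\Z)^2\oplus(\Z/2\Z)^2$, where the last two factors come from $E(\Q)[2]$ via the halving criterion. The main obstacle is showing that this extension splits; mere group orders do not determine the structure. I expect to invoke the noncanonical splitting Lemma~\ref{lem:split} already cited in the proof of Corollary~\ref{cor:sel8rad2}. If that lemma turns out unavailable in the needed form, I would instead count $|2^i\Sel_{2^j}|$ directly via the maps $\Sel_{2^j}\to\Sel_{2^{j-i}}$ and the images computed above, in the style of the proof of Proposition~\ref{prop:sel4structure}. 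Since $\Sh[2^j]\simeq(\Z/2\Z)^{s-4}\oplus(\Z/2^{\min(j,2)}\Z)^2$, either route gives \eqref{eq:fourradicaltower}, with exponent count $2+(s-4)=s-2$ for the order-two factors.
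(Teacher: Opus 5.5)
Your proposal is correct and follows essentially the same route as the paper. Both arguments rest on the same three ingredients: the divisibility criterion for the Cassels--Tate pairing, the pairing of elementary divisors on the finite part of $\Sh(E)[2^\infty]$, and the splitting Lemma~\ref{lem:split} for the tower.

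The difference is only in bookkeeping. The paper counts $4=\rank E(\Q)+c+2\#\{i:e_i\ge2\}$ directly and notes that a nonzero next pairing forces an order-four pair. You instead pass through the next radical $R_2=\operatorname{im}\Sel_8(E)$ and show that $R_2$ is the Mordell--Weil plane. From $R_2/M=4\Sh(E)[8]=0$ you read off $c=0$ and the absence of blocks of order at least $8$, which also yields the $j\ge3$ image statement slightly more transparently.
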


\begin{proof}
Write
\[
 \Sh(E)[2^\infty]\simeq(\Q_2/\Z_2)^c
 \oplus\bigoplus_i(\Z/2^{e_i}\Z)^2.
\]
A paired block with $e_i=1$ contributes two to the ordinary pairing rank,
whereas a block with $e_i\ge2$, the Mordell--Weil group and the divisible
part contribute to its radical. Hence
\[
 4=\rank E(\Q)+c+2\#\{i:e_i\ge2\}.
\]
A nonzero next pairing forces at least one paired block with $e_i=2$.
Together with $\rank E(\Q)\ge2$ this gives rank two, $c=0$, and exactly one
order four pair. The ordinary rank $s-4$ supplies the remaining order two
pairs. Lemma~\ref{lem:split} gives \eqref{eq:fourradicaltower}; for
$j\ge3$ the finite order four part maps trivially to $\Sh[2]$, leaving the
Mordell--Weil plane.
\end{proof}

\begin{theorem}
\label{thm:ranktwo42691}
Let
\[
 n=11\cdot3881=42691,\qquad m=5n=213455,\qquad E=A_m.
\]
In the basis $(\Lambda,\Lambda',U,V)$ the ordinary Cassels matrix is zero
and the next matrix is
\begin{equation}\label{eq:next42691}
 \begin{pmatrix}
 0&1&0&1\\
 1&0&0&1\\
 0&0&0&0\\
 1&1&0&0
 \end{pmatrix}.
\end{equation}
Moreover
\[
 \rank E(\Q)=2,\qquad
 \Sh(E)[2^\infty]\simeq(\Z/4\Z)^2,
\]
and for every $j\ge1$,
\begin{equation}\label{eq:tower42691}
 \Sel_{2^j}(E)\simeq
 (\Z/2^j\Z)^2\oplus(\Z/2^{\min(j,2)}\Z)^2
 \oplus(\Z/2\Z)^2.
\end{equation}
The image in $S=\Sel_2^0(E)$ is $S$ for $j=1,2$ and is
$\langle U,\Lambda+\Lambda'+V\rangle$ for every $j\ge3$.
\end{theorem}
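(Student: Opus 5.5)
The plan is to reduce everything to Proposition~\ref{prop:fourradicalcompletion} with $s=4$. That requires three inputs: the ordinary Cassels matrix vanishes, the next pairing on the resulting four-dimensional radical is nonzero, and $\rank E(\Q)\ge2$. We then pin down the remaining entries of the next matrix \eqref{eq:next42691} using the rational points.

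\emph{Step 1: the family and the ordinary matrix.} First we check that $(p,q)=(11,3881)$ lies in the family \eqref{eq:pqfamily}: $3881\equiv1\pmod{40}$, $3881$ is prime, and $(11/3881)=1$. By Corollary~\ref{cor:fourdimensions}, $S=\Sel_2^0(E)$ then has basis $(\Lambda,\Lambda',U,V)$ and $s=4$. Since $p\equiv11\pmod{40}$, we have $\eps=0$. We fix a representation $n=a^2-5b^2$ with $4\mid a$ and $b$ odd, a root $r$ of $5$ modulo $3881$, and a primitive solution of \eqref{eq:pqauxnorm}. From these we compute $\alpha,\tau,\gamma$; equivalently, by Proposition~\ref{prop:pqfixedsymbols}, we compute $\beta_{11}(3881)$, $[5/3881]_4$ and $[11/3881]_4$. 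The expectation is that all three vanish, so Theorem~\ref{thm:pqcassels} gives $C_+=0$. In particular $\alpha=\tau=0$, so Theorems~\ref{thm:largeradical}, \ref{thm:auxcrossU} and~\ref{thm:auxcrossV} all apply with $r=1$.

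\emph{Step 2: the $\Lambda'$ row.} We find a primitive point $(u_0,v_0,c_0)$ on $\Gamma_n$ with $3881\nmid c_0$. Then \eqref{eq:largeradicalentry} should give $e=\mathcal B(\Lambda',\Lambda)=1$, meaning $-k(c_0/5)=-1$. Next we compute $c_*=[c_0\mu_1/3881]$ from \eqref{eq:muaux}. Finally we find a primitive point $Q_0$ on $\Gamma_0$ from \eqref{eq:Gamma0} and compute $d_*=[c_0Z_0/3881]$. The target values are $c_*=0$ and $d_*=1$, matching the second row of \eqref{eq:next42691}.

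\emph{Step 3: rational points and the remaining entries.} We search on the $2$-coverings attached to $U$ and to $\Lambda+\Lambda'+V$, using the conic equations \eqref{eq:general2cover}, for rational points $P_1,P_2$. These should map modulo torsion to $U$ and $\Lambda+\Lambda'+V$; since those classes are independent in $S$, this gives $\rank E(\Q)\ge2$. Mordell--Weil classes lie in the radical of the next pairing by Lemma~\ref{lem:next}. Hence the $U$-row vanishes, which gives $a_*=c_*=f_*=0$. Orthogonality of $\Lambda+\Lambda'+V$ to $\Lambda$ gives $e+b_*=0$, so $b_*=1$. Orthogonality to $\Lambda'$ gives $e+d_*=0$, so $d_*=1$, which cross-checks Step 2. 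This determines \eqref{eq:next42691}, whose rank is $2$ and whose radical is $\langle U,\Lambda+\Lambda'+V\rangle$.

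\emph{Step 4: conclusion.} Proposition~\ref{prop:fourradicalcompletion} now gives $\rank E(\Q)=2$, $\Sh(E)[2^\infty]\simeq(\Z/4\Z)^2$, and \eqref{eq:tower42691}. For the images of the Selmer groups in $S$: since $C_+=0$, the image of $\Sel_4$ is all of $S$, and so is the image of $\Sel_2$. For $j\ge3$ the image is the Mordell--Weil plane spanned by the images of $P_1$ and $P_2$, namely $\langle U,\Lambda+\Lambda'+V\rangle$.

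The main obstacle is Step 3, finding the two rational points. The generators may have large height. The first thing to try is a search for small points on the quartic models of the two $2$-coverings, or equivalently on the pair of conics, by parametrizing one conic and searching on the other. If that fails, we pass to a $4$-covering over the radical and search there. Independence of the two points is automatic from their distinct Kummer images, so no height computation is needed.
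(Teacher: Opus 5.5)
Your proposal is correct and is essentially the paper's proof. The paper also checks $(\beta,\tau,\gamma)=(0,0,0)$ and uses $42691=636^2-5\cdot269^2$ with the conic point $(14,-5,49)$ to get $\mathcal B(\Lambda',\Lambda)=1$. It exhibits two rational points with pure images $U$ and $\Lambda+\Lambda'+V$, reads the remaining entries of \eqref{eq:next42691} off the radical, and invokes Proposition~\ref{prop:fourradicalcompletion}, exactly as you plan.

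The differences are minor. Your computation of $c_*,d_*$ via Theorems~\ref{thm:auxcrossU} and~\ref{thm:auxcrossV} is an optional cross-check that the paper skips here. The paper's torsion check via $\#E(\F_3)=\#E(\F_7)=4$ is not needed in your argument, because $E(\Q)[2^\infty]=E(\Q)[2]$ already makes independent pure Kummer images give independent points.
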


\begin{proof}
Using the norm generator $4+\sqrt5$ of $11$ and the root $1070$ of $5$
modulo $3881$, one finds
\[
 \leg{1074}{3881}=1,\qquad
 5^{970}\equiv11^{970}\equiv1\pmod{3881}.
\]
Thus $(\beta,\tau,\gamma)=(0,0,0)$, so the ordinary matrix is $0_4$.
Choose
\[
 42691=636^2-5\cdot269^2,\qquad k=1.
\]
The conic
\[
 c^2=1981u^2+9050uv+9905v^2
\]
has the primitive point $(14,-5,49)$. Since $(49/5)=1$,
Theorem~\ref{thm:largeradical} gives
$\mathcal B(\Lambda',\Lambda)=1$, so the next pairing is nonzero and the
rank is at most two.

The rational points
\[
 P=(455455,379879500),\qquad
 Q=\left(-\frac{7188005}{9},\frac{5700945250}{27}\right)
\]
have Kummer triples
\[
 \delta(P)=(55,5,11),\qquad
 \delta(Q)=(-5,-11,55).
\]
Modulo the rational $2$-torsion classes their pure images are
$[P]=U$ and $[Q]=\Lambda+\Lambda'+V$, hence are independent.
Good reduction at $3$ and $7$ gives
$\#E(\F_3)=\#E(\F_7)=4$, so the rational torsion is exactly
$(\Z/2\Z)^2$. Thus the two points give rank at least two, hence exactly
two. Proposition~\ref{prop:fourradicalcompletion} proves the group and
tower assertions. The two rational classes span the whole next radical;
therefore the $U$ row is zero and pairing
$\Lambda+\Lambda'+V$ with $\Lambda$ and $\Lambda'$ forces the two remaining
entries in \eqref{eq:next42691} to be one.
\end{proof}

\begin{corollary}\label{cor:triangle42691}
The integer $426910$ is $3/5$ congruent. One rational triangle has sides
\[
 \frac{151800}{91},\qquad
 \frac{353171}{138},\qquad
 \frac{25764721}{12558},
\]
the angle between the first two sides has cosine $3/5$, and its area is
$1707640=4\cdot426910$.
\end{corollary}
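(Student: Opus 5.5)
The plan is to use the rational points $P,Q$ of Theorem~\ref{thm:ranktwo42691} on $E=A_{213455}$ and transport one of them, or a suitable combination, to the geometric model $E_{5,3}^{(426910)}$. The change of variables $(x,y)\mapsto(4x,8y)$ from the introduction identifies $A_m$ with $E_{5,3}^{(2m)}$, and $2m=426910$. We then apply the standard dictionary between nontorsion points on $y^2=x(x-2N)(x+8N)$ and rational triangles with an angle of cosine $3/5$ and area $4N$. Concretely, for such a triangle with sides $r,s$ enclosing the angle $\theta$, we have $\sin\theta=4/5$, so
\[
 \frac12 rs\cdot\frac45=4N,\qquad rs=10N,
\]
and the third side satisfies $t^2=r^2+s^2-\tfrac65 rs$. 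Rationality of $t$ turns this into a rational point on a quartic, which is then mapped to the cubic model. The converse direction, from a point to $(r,s)$, is the inverse rational map. I will write it out once, in the same style as the $p=11$ example above, where $(100,1200)$ on $A_{55}$ gave the triangle $24,\ 275/6,\ 221/6$.

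Since the corollary only asserts existence and exhibits a specific triangle, the proof itself can be a direct verification, which avoids deriving the map in full generality. First check that $r=151800/91$ and $s=353171/138$ satisfy $rs=10\cdot 426910=4269100$; this gives area $\tfrac12 rs\cdot\tfrac45=1707640=4\cdot426910$. Next compute $r^2+s^2-\tfrac65rs$ and confirm that it equals $(25764721/12558)^2$; note that $12558=91\cdot138$, so the verification reduces to one integer identity. Positivity of the three sides, together with the law of cosines, gives the angle with cosine $3/5$ between the first two sides. By definition, $426910$ is then $3/5$ congruent.

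For context, I would also note which rational point produces this triangle, presumably $P$, $Q$, or $P+Q$ after the scaling map. Citing it is optional, because the verification is self-contained. The non-congruence direction needs no input from Theorem~\ref{thm:ranktwo42691}; positive rank alone already implies congruence, and the explicit triangle makes this concrete. The only real obstacle is the bookkeeping of the large-integer identity $(91\cdot138)^2\bigl(r^2+s^2-\tfrac65 rs\bigr)=25764721^2$. I would organize it as $151800^2\cdot 138^2+353171^2\cdot 91^2-\tfrac65\cdot 151800\cdot 353171\cdot 91\cdot 138$ and check it by exact arithmetic.
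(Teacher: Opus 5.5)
Your proposal is correct, and the direct check is a complete proof. With the common denominator $12558=91\cdot138$, the two sides are $20948400/12558$ and $32138561/12558$. Their product is $4269100=10\cdot426910$, and
\[
 20948400^2+32138561^2-\tfrac65\cdot20948400\cdot32138561=663820848207841=25764721^2 .
\]
So the law of cosines with $\sin\theta=4/5$ gives the triangle, the angle and the area exactly as you planned.

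The paper gives no separate argument. It places the statement after Theorem~\ref{thm:ranktwo42691}, so the implied reasoning is that $426910$ is $3/5$ congruent because $A_{213455}\simeq E_{5,3}^{(426910)}$ has positive rank. The triangle is then read off from an explicit rational point. Your route bypasses all of the descent machinery and certifies the triangle by exact arithmetic alone. The paper's route explains where the triangle comes from.

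One correction to your optional contextual remark: the triangle cannot come from $P$, $Q$ or $P+Q$ themselves.
\begin{itemize}
\item If $t$ is the third side, your dictionary puts the point at $X=t^2/16$ on $A_m$. Then $X-m=((r-s)/4)^2$ and $X+4m=((r+s)/4)^2$.
\item So the point has trivial Kummer image and lies in $2A_m(\Q)$. By contrast, $P$ and $Q$ have Kummer triples $(55,5,11)$ and $(-5,-11,55)$, and $P+Q$ has a nontrivial image as well.
\item In fact the duplication formula on $A_m$ gives
\[
 X(2P)=\frac{(X_P^2+4m^2)^2}{4Y_P^2}=\Bigl(\frac{25764721}{50232}\Bigr)^2=\frac{t^2}{16},
\]
so the displayed triangle is the one attached to $2P$.
\end{itemize}

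Also, ``non-congruence direction'' in your last paragraph should read ``congruence''. Existence of the triangle, equivalently positive rank, is all the statement requires.
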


\begin{theorem}
\label{thm:ranktwo434531}
Let
\[
 p=571,\quad q=761,\quad n=434531,\quad m=2172655,\quad E=A_m.
\]
In the basis $(\Lambda,\Lambda',U,V)$ the ordinary matrix is zero, while
\begin{equation}\label{eq:next434531}
 C^{\mathrm{next}}=
 \begin{pmatrix}
 0&0&0&0\\
 0&0&0&1\\
 0&0&0&1\\
 0&1&1&0
 \end{pmatrix}.
\end{equation}
Moreover
\[
 \rank E(\Q)=2,\qquad
 \Sh(E)[2^\infty]\simeq(\Z/4\Z)^2,
\]
and the abstract groups $\Sel_{2^j}(E)$ are again those in
\eqref{eq:tower42691}. Their images in $S=\Sel_2^0(E)$ are $S$ for
$j=1,2$ and $\langle\Lambda,\Lambda'+U\rangle$ for $j\ge3$.
\end{theorem}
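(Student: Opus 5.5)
The plan follows the template of Theorem~\ref{thm:ranktwo42691}: show the ordinary matrix vanishes, exhibit two independent rational points, and let Proposition~\ref{prop:fourradicalcompletion} supply the group and tower statements.

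\emph{Step 1: the ordinary matrix is zero.} With $p=571\equiv11\pmod{40}$ we have $\eps=0$. Choose a fixed norm generator $\xi_p=A+B\sqrt5$ of $571$ and a root $r$ of $5$ modulo $761$. We then check numerically that $\beta_p(q)=[(A+Br)/761]$, $t(q)=[5/761]_4$ and $g_p(q)=[571/761]_4$ all vanish. By Proposition~\ref{prop:pqfixedsymbols} this gives $(\alpha,\tau,\gamma)=(0,0,0)$, so the matrix $C_{p,q}$ of Theorem~\ref{thm:pqcassels} is $0_4$ and the ordinary radical is all of $S$, of dimension $s=4$.

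\emph{Step 2: the next pairing is nonzero and the rank is two.} The target matrix has $\mathcal B(\Lambda',\Lambda)=0$, so Theorem~\ref{thm:largeradical} gives only $e=0$. Nonvanishing must therefore come from the auxiliary entries.
\begin{enumerate}
\item Write $n=a^2-5b^2$ with $4\mid a$ and $b$ odd, and find a primitive point $(u_0,v_0,c_0)$ on $\Gamma_n$ with $k(c_0/5)=-1$. This confirms $e=0$.
\item Find a primitive point $Q_0$ on $\Gamma_0$, with $761\nmid c_0Z_0$, and use Theorem~\ref{thm:auxcrossV} to show $d_*=[c_0Z_0/761]=1$.
\item Alternatively, evaluate $c_*=[c_0\mu/761]$ by Theorem~\ref{thm:auxcrossU}.
\end{enumerate}
Either entry being nonzero makes the next pairing nonzero. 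Independently, search for two rational points $P,Q$ on $A_{2172655}$ and compute their Kummer triples. Reduced modulo $\mathcal T$, their classes should be $\Lambda$ and $\Lambda'+U$, which are independent. Torsion is $(\Z/2\Z)^2$ by counting points at two small good primes. Hence $\rank E(\Q)\ge2$, and Proposition~\ref{prop:fourradicalcompletion} gives rank exactly two, $\Sh(E)[2^\infty]\simeq(\Z/4\Z)^2$ (since $s-4=0$), and the tower \eqref{eq:tower42691}. The same proposition identifies the image for $j\ge3$ with the Mordell--Weil plane $\langle\Lambda,\Lambda'+U\rangle$, and Lemma~\ref{lem:next} makes that plane the radical of the next pairing.

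\emph{Step 3: filling in the next matrix.} Since the radical is $\langle\Lambda,\Lambda'+U\rangle$:
\begin{enumerate}
\item The $\Lambda$ row vanishes entirely.
\item Alternation gives $\mathcal B(\Lambda'+U,\Lambda')=\mathcal B(U,\Lambda')$, and radicality forces this to be $0$.
\item Pairing $\Lambda'+U$ with $U$ gives $\mathcal B(\Lambda',U)=0$. Pairing it with $V$ gives $\mathcal B(\Lambda',V)=\mathcal B(U,V)$.
\end{enumerate}
Nonvanishing of the pairing then forces both of these to equal $1$, which is \eqref{eq:next434531}. This agrees with the conic evaluation of $d_*$ in Step~2, which serves as a consistency check.

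\emph{Main obstacle.} The hard step is finding the rational points: producing generators of $E(\Q)$ with the right Kummer images, since heights may be large. I would first search on the $2$-coverings attached to $\Lambda$ and $\Lambda'+U$, that is, the conic intersections \eqref{eq:general2cover}, using the explicit conic points from Step~2 to parametrize one quadric and searching the other. If that fails, I would use $4$-descent on those coverings.
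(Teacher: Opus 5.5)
Your plan is correct and is essentially the paper's proof: $(\beta,\tau,\gamma)=(0,0,0)$ gives the zero ordinary matrix, and the second-pushout entry $d_*=[c_0Z_0/761]=1$ shows the next pairing is nonzero. Then two rational points with images $\Lambda$ and $\Lambda'+U$, Proposition~\ref{prop:fourradicalcompletion}, and the identification of the radical with $\langle\Lambda,\Lambda'+U\rangle$ force the displayed matrix, as in the paper.

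One caution: your ``alternative'' via $c_*$ cannot work here. The paper finds $\mu=18$ and $(153\cdot18/761)=1$, so $c_*=0$ and $e=0$, and $d_*$ is the only entry of the known $\Lambda'$ row that detects nonvanishing. The paper supplies the certificates you would need to find: $434531=716^2-5\cdot125^2$, $P_0=(-8,9,153)$, $Q_0=(73075,-31571,892)$, and explicit rational points $P,Q$.
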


\begin{proof}
A root of $5$ modulo $761$ is $183$, and the ordinary fixed characters
are $(\beta,\tau,\gamma)=(0,0,0)$; explicitly
\[
 \leg{207}{761}=1,\qquad
 5^{190}\equiv571^{190}\equiv1\pmod{761}.
\]
Thus $S$ has dimension four and the ordinary matrix is $0_4$.
Use
\[
 434531=716^2-5\cdot125^2,\qquad k=1.
\]
The two conics have primitive integral points
\[
 P_0=(-8,9,153),\qquad Q_0=(73075,-31571,892),
\]
where
\[
 \Gamma_n:\quad c^2=1341u^2+8410uv+6705v^2
\]
and
\[
 \Gamma_0:\quad 2773X^2+10910XY+13865Y^2=4345310Z^2.
\]
Since $(153/5)=-1$, one has
$\mathcal B(\Lambda',\Lambda)=0$. In Theorem~\ref{thm:auxcrossU},
$s=578$, $z=378$ and $\mu=18$ modulo $761$, and
$(153\cdot18/761)=1$, so $\mathcal B(\Lambda',U)=0$. On the other hand
$153\cdot892\equiv257\pmod{761}$ and $(257/761)=-1$; hence
Theorem~\ref{thm:auxcrossV} gives
\[
 \mathcal B(\Lambda',V)=1.
\]
Thus the second pushout detects a nonzero next pairing in a case where the
two previously available entries both vanish.

The rational points
\[
 P=\left(\frac{11597645}{4},\frac{39534985875}{8}\right),
 \qquad Q=(839002500,24396051246000)
\]
have pure Kummer images $\Lambda$ and $\Lambda'+U$, respectively, and
are independent. Again $\#E(\F_3)=\#E(\F_7)=4$, so the rational torsion
is exactly the full rational $2$-torsion. Hence the rank is at least two;
the nonzero next pairing bounds it by two.
Proposition~\ref{prop:fourradicalcompletion} gives the claimed $\Sh$ and Selmer groups.
The plane $\langle\Lambda,\Lambda'+U\rangle$ is the whole next radical,
which forces \eqref{eq:next434531}.
\end{proof}

\begin{corollary}
\label{cor:onebit8selmer}
On a four dimensional zero ordinary matrix locus with
$(e,c_*,d_*)=(0,0,1)$ in the notation of
Remark~\ref{rem:fourdimnextmatrix}, put
$a_*=\mathcal B(\Lambda,U)$. Then
\[
 \Sel_8(E)\simeq
 \begin{cases}
 (\Z/4\Z)^4\oplus(\Z/2\Z)^2,&a_*=1,\\
 (\Z/8\Z)^2\oplus(\Z/4\Z)^2\oplus(\Z/2\Z)^2,&a_*=0.
 \end{cases}
\]
If $a_*=1$, then
$\rank E(\Q)=0$ and
$\Sh(E)[2^\infty]\simeq(\Z/4\Z)^4$.
If $a_*=0$, the next pairing has rank two; the displayed abstract
$8$-Selmer group is determined, although the precise two dimensional
$8$-Selmer image still depends on the remaining matrix entries.
\end{corollary}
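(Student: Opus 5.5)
The plan is to compute the next pairing matrix on the locus. We are in the four dimensional zero ordinary matrix case, so the ordinary radical is all of $S=\Sel_2^0(E)$ with $s=4$, and the next pairing $\mathcal B$ is defined on all of $S$. In the notation of Remark~\ref{rem:fourdimnextmatrix}, the hypothesis $(e,c_*,d_*)=(0,0,1)$ gives $\Pf(C^{\mathrm{next}})=ef_*+a_*d_*+b_*c_*=a_*$. Hence $C^{\mathrm{next}}$ is nondegenerate exactly when $a_*=1$. When $a_*=0$ it still has the nonzero entry $d_*=\mathcal B(\Lambda',V)=1$, so its rank is exactly two.

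Next I translate this into the structure of $\Sh(E)[2^\infty]$, using the filtration argument of Proposition~\ref{prop:fourradicalcompletion}. Write
\[
 \Sh(E)[2^\infty]\simeq(\Q_2/\Z_2)^c\oplus\bigoplus_i(\Z/2^{e_i}\Z)^2 .
\]
The zero ordinary pairing gives no $e_i=1$ blocks, and the radical count reads $4=\rank E(\Q)+c+2\#\{i:e_i\ge2\}$. By Lemma~\ref{lem:next}, the next pairing on $S$ has rank equal to twice the number of blocks with $e_i=2$.

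For $a_*=1$ there are two such blocks. This forces $\rank E(\Q)=0$, $c=0$ and $\Sh(E)[2^\infty]\simeq(\Z/4\Z)^4$. For $a_*=0$ there is exactly one order four pair. The remaining two radical dimensions come from Mordell--Weil rank, from divisible $\Sh$, or from a pair with $e_i\ge3$; each of these contributes a factor $\Z/8\Z$ to $\Sel_8$.

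The step I expect to need the most care is reading off $\Sel_8$ uniformly in these cases without assuming $\Sh$ finite. I would use the noncanonical splitting of Lemma~\ref{lem:split}, exactly as in Corollary~\ref{cor:sel8rad2}. The Kummer sequence \eqref{eq:kummerexact} for $8$ identifies $\Sel_8(E)$ with
\[
 E(\Q)/8E(\Q)\oplus\Sh(E)[8]
\]
up to extension, and Lemma~\ref{lem:split} makes this a direct sum. The rational torsion $E(\Q)[2]=E(\Q)[8]$ contributes $(\Z/2\Z)^2$.

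For $a_*=1$ the remaining part is $\Sh[8]=(\Z/4\Z)^4$, giving $(\Z/4\Z)^4\oplus(\Z/2\Z)^2$. For $a_*=0$ the order four pair gives $(\Z/4\Z)^2$. The other two dimensions give $(\Z/8\Z)^2$, whether they come from $E(\Q)\otimes\Z/8$, from $(\Q_2/\Z_2)[8]$, or from $(\Z/2^{e}\Z)[8]$ with $e\ge3$.

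As a check, the order count is $\#\Sel_8=2^{s+2+d+d'}$, where $d=4$ and $d'=\dim\operatorname{im}\Sel_8=2$. This matches both displayed groups. Finally, the image of $\Sel_8$ in $S$ is the radical of $C^{\mathrm{next}}$. When $a_*=0$ this radical is two dimensional but depends on $b_*,f_*$, which is why only the abstract group is determined.
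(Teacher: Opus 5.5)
Your proposal is correct and follows essentially the same argument as the paper. On this locus the Pfaffian reduces to $a_*$, and $d_*=1$ forces rank two when $a_*=0$. The elementary divisor bookkeeping of Proposition~\ref{prop:fourradicalcompletion} then gives two order-four pairs, or one order-four pair with the residual radical contributing $(\Z/8\Z)^2$, and Lemma~\ref{lem:split} assembles $\Sel_8$. Your explicit derivation that the rank of the next pairing is twice the number of $e_i=2$ blocks, and your order check, expand steps the paper states only tersely.
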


\begin{proof}
If $a_*=1$, the Pfaffian is one by
Remark~\ref{rem:fourdimnextmatrix}, so the next pairing is nondegenerate.
This forces rank and divisible $2$-primary $\Sh$ to vanish and gives two
paired order four blocks. If $a_*=0$, the next pairing has rank exactly
two, accounting for one paired order four block. The two residual radical
dimensions arise either from Mordell--Weil/divisible contributions or from
a paired block of exponent at least eight; at level eight either case gives
exactly two cyclic factors of order eight. The rational $2$-torsion gives
the final two order two factors. Apply Lemma~\ref{lem:split}.
\end{proof}

For reference, among the $214$ two prime parameters with ordinary matrix
$0_4$ and $pq\le5\cdot10^6$, the three explicit entries
$(e,c_*,d_*)$ have the following distribution:
\[
\begin{array}{c|rrrrrrrr}
(e,c_*,d_*)&000&001&010&011&100&101&110&111\\\hline
\text{count}&26&23&25&20&23&35&32&30.
\end{array}
\]
The $V$-entry detects $23$ cases missed by the first two entries. Hence the
next pairing is known to be nonzero in $188$ of the $214$ examples. In the
remaining $26$ cases the $\Lambda'$ row vanishes, although the full next
pairing may still be nonzero. The table is included only as finite evidence,
not as a density statement.

\subsection{One dimensional radicals and all finite levels}\label{subsec:corankone}

A one dimensional ordinary radical is considerably more rigid: no further
covering is needed to determine the abstract Selmer groups at every finite
$2$-power level.

\begin{lemma}\label{lem:split}
For an elliptic curve $E/\Q$ and every $a\ge1$, the Kummer subgroup in
$\Sel_{2^a}(E)$ is a direct summand as an abstract finite abelian group.
For $E=A_m$ this gives a noncanonical isomorphism
\begin{equation}\label{eq:split}
 \Sel_{2^a}(E)\simeq
 (\Z/2^a\Z)^{\rank E(\Q)}\oplus(\Z/2\Z)^2\oplus\Sh(E)[2^a].
\end{equation}
No compatible canonical splitting of the towers is asserted.
\end{lemma}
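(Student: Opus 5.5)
The plan is to start from the Kummer sequence \eqref{eq:kummerexact} with $r=2^a$,
\[
 0\to E(\Q)/2^aE(\Q)\to\Sel_{2^a}(E)\to\Sh(E)[2^a]\to0,
\]
and show that this sequence splits as a sequence of abstract finite abelian $2$-groups. Since everything is killed by $2^a$, the groups are $\Z/2^a\Z$-modules. It therefore suffices to prove that the image $K$ of $E(\Q)/2^aE(\Q)$ is a pure subgroup of $\Sel_{2^a}(E)$, meaning $K\cap 2^iS=2^iK$ for all $i$, where $S=\Sel_{2^a}(E)$. Pure subgroups of bounded abelian groups are direct summands by Kulikov's theorem (Prüfer--Baer). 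In the finite case one can also argue directly, by comparing the invariants $\dim 2^iX[2]$ for $X=K,S,S/K$.

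Purity is the heart of the argument. Take $x\in E(\Q)/2^aE(\Q)$ whose Kummer class $\delta(x)$ equals $2^i\xi$ for some $\xi\in S$. The cohomological multiplication by $2^i$ on $H^1(\Q,E[2^a])$ is induced by $E[2^a]\xrightarrow{2^i}E[2^a]$, which factors as $E[2^a]\to E[2^{a-i}]\hookrightarrow E[2^a]$. I will use the compatible Kummer diagrams for $2^{a-i}$ and $2^a$ together with the injectivity of $E(\Q)/2^{a-i}\to H^1(\Q,E[2^{a-i}])$. The aim is to show that the image of $\xi$ in $\Sh(E)[2^a]$ is killed by $2^i$ and that $x$ becomes divisible by $2^i$ in $E(\Q)/2^aE(\Q)$, modulo torsion subtleties. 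The cleanest route is to pass to the direct limit $\Sel_{2^\infty}(E)=\varinjlim\Sel_{2^a}(E)$. There the sequence reads
\[
 0\to E(\Q)\otimes\Q_2/\Z_2\to\Sel_{2^\infty}\to\Sh[2^\infty]\to0,
\]
and the first term is divisible, hence injective, so this sequence splits. One then recovers $\Sel_{2^a}$ from $\Sel_{2^\infty}[2^a]$ via the sequence $0\to E(\Q)[2^\infty]/2^a\to\Sel_{2^a}\to\Sel_{2^\infty}[2^a]\to0$, which comes from $H^1(\Q,E[2^a])\to H^1(\Q,E[2^\infty])[2^a]$ having kernel $E(\Q)[2^\infty]/2^a$.

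For $E=A_m$ the torsion is explicit: $E(\Q)[2^\infty]=E(\Q)[2]=(\Z/2\Z)^2$, by the halving criterion and \cite[Lemma 2.2]{ImShin}. Hence $E(\Q)[2^\infty]/2^a=(\Z/2\Z)^2$. The divisible part $E(\Q)\otimes\Q_2/\Z_2=(\Q_2/\Z_2)^{\rank}$ contributes $(\Z/2^a\Z)^{\rank}$ after taking $2^a$-torsion, so $\Sel_{2^\infty}[2^a]\simeq(\Z/2^a\Z)^{\rank}\oplus\Sh[2^a]$. It remains to split off $(\Z/2\Z)^2$. These torsion classes lie in the Kummer image of $E(\Q)$, and $E(\Q)\simeq\Z^{\rank}\oplus(\Z/2\Z)^2$ gives $E(\Q)/2^aE(\Q)\simeq(\Z/2^a\Z)^{\rank}\oplus(\Z/2\Z)^2$. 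I would therefore return to the finite-level argument and check purity of the torsion classes directly: a rational $2$-torsion Kummer class is never twice a Selmer class. I expect this step to be the main obstacle. The statement ``not $2\times$ a Selmer element'' is stronger than the halving criterion for points, so I would verify it using that $\Sel_2\to\Sel_4$ has kernel exactly $\mathcal T$, from the exact sequence in Proposition~\ref{prop:sel4structure}. Since $\mathcal T$ maps to zero under $i$ there, its image in $\Sel_{2^a}$ is the image of a $2$-torsion subgroup, and its failure to be divisible is read off from the known order of $2\Sel_{2^a}$.

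Once purity holds, the abstract splitting \eqref{eq:split} follows, and the decomposition is noncanonical because it depends on the chosen complement.
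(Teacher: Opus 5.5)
Your reduction is the same as the paper's. You show that $K=\delta_{2^a}(E(\Q)/2^aE(\Q))$ is pure in $S=\Sel_{2^a}(E)$, conclude that it is a direct summand with quotient $\Sh(E)[2^a]$, and use $E(\Q)/2^aE(\Q)\simeq(\Z/2^a\Z)^{\rank E(\Q)}\oplus(\Z/2\Z)^2$. But you never prove purity, and purity is the entire content of the lemma.

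Your first attempt factors $2^i$ as $E[2^a]\to E[2^{a-i}]\hookrightarrow E[2^a]$ and invokes injectivity of the $2^{a-i}$-Kummer map. That only shows $\delta_{2^a}(x)$ lies in the image of $\Sel_{2^{a-i}}$, which says nothing about whether $x$ is divisible by $2^i$.

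The direct-limit detour is correct as far as it goes. The sequence with $E(\Q)\otimes\Q_2/\Z_2$ splits, and $0\to E(\Q)[2^\infty]/2^a\to\Sel_{2^a}\to\Sel_{2^\infty}[2^a]\to0$ is exact. However, it only moves the problem to showing that the torsion classes meet $2\Sel_{2^a}$ trivially. Your proposed way of doing that, reading it off from ``the known order of $2\Sel_{2^a}$'', is circular for $a\ge3$. That order is exactly what this lemma is later used to compute, and Proposition~\ref{prop:sel4structure} covers only $a=2$.

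The missing idea is to push forward in the other direction. Suppose $\delta_{2^a}(P)=2^j\xi$ with $\xi\in S$. Apply the map $H^1(\Q,E[2^a])\to H^1(\Q,E[2^j])$ induced by $[2^{a-j}]:E[2^a]\to E[2^j]$.
\begin{itemize}
\item It sends $\delta_{2^a}(P)$ to $\delta_{2^j}(P)$.
\item It sends $2^j\xi$ to the class obtained from $\xi$ via the composite $[2^a]:E[2^a]\to E[2^j]$, which is the zero map, so $2^j\xi\mapsto0$.
\end{itemize}
Injectivity of $E(\Q)/2^jE(\Q)\to H^1(\Q,E[2^j])$ then gives $P\in2^jE(\Q)$. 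Hence $K\cap2^jS=2^jK$ for every $j$, which is purity for all of $K$ at once, torsion included.

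With $j=1$ this also shows that the statement you call ``stronger than the halving criterion'' is in fact equivalent to it: no nonzero rational $2$-torsion point lies in $2E(\Q)$. So the direct-limit step is unnecessary.
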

\begin{proof}
Let $S=\Sel_{2^a}(E)$ and $K=\delta_{2^a}(E(\Q)/2^aE(\Q))$.
For $1\le j\le a$, the map induced by
$[2^{a-j}]:E[2^a]\to E[2^j]$ sends $\delta_{2^a}(P)$ to $\delta_{2^j}(P)$.
If $\delta_{2^a}(P)\in2^jS$, its image in $\Sel_{2^j}(E)$ is zero,
so $P\in2^jE(\Q)$. Therefore $K\cap2^jS=2^jK$ for every $j$.
Thus $K$ is a pure subgroup of the finite abelian $2$-group $S$ and is
a direct summand. The finite group fact follows, for example, by selecting
cyclic generators in decreasing order of their orders: purity ensures
that the generators chosen in $K$ extend at each order to generators of
$S$. The quotient is $\Sh(E)[2^a]$. The $2$-primary rational torsion of
$A_m$ is $(\Z/2\Z)^2$, proving \eqref{eq:split}.
\end{proof}

\begin{theorem}\label{thm:tower}
Let $E=A_m$, let $s=\dim\Sel_2^0(E)$, and suppose that the ordinary Cassels
matrix has rank $s-1$. Then $s$ is odd, and for every $a\ge1$,
\begin{equation}\label{eq:tower}
 \Sel_{2^a}(E)\simeq
 \Z/2^a\Z\oplus(\Z/2\Z)^{s+1}.
\end{equation}
For every $a\ge2$, the image in the pure $2$-Selmer group is the same
one dimensional ordinary radical. In particular no further $2$-power
descent shrinks this image.
If $r_E=\rank E(\Q)$, then $r_E\in\{0,1\}$ and
\begin{equation}\label{eq:shadichotomy}
 \Sh(E)[2^\infty]\simeq
 (\Q_2/\Z_2)^{1-r_E}\oplus(\Z/2\Z)^{s-1}.
\end{equation}
This does not unconditionally distinguish $r_E=1$ from a rank zero
curve with a nonzero divisible $2$-primary part of $\Sh$.
\end{theorem}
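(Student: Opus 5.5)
The plan is to read off the structure of $\Sh(E)[2^\infty]$ from the Cassels pairing data, exactly as in Proposition~\ref{prop:fourradicalcompletion}, and then feed it into the splitting Lemma~\ref{lem:split}.

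First, the ordinary Cassels pairing on $\Sel_2^0(E)$ is alternating, so its rank is even. Since that rank is $s-1$, the integer $s$ must be odd.

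Next I would write
\[
 \Sh(E)[2^\infty]\simeq(\Q_2/\Z_2)^c\oplus\bigoplus_i(\Z/2^{e_i}\Z)^2 .
\]
The finite part has paired elementary divisors, by the perfect alternating Cassels--Tate pairing on the finite quotient by the divisible part. I then count contributions to the ordinary radical, using the divisibility criterion (Fisher, Theorem~3.1). Blocks with $e_i=1$ contribute to the pairing rank, and everything else lands in the radical. This gives
\[
 1=r_E+c+2\#\{i:e_i\ge2\}.
\]
Hence no block has $e_i\ge2$, and $r_E+c=1$. The number of $e_i=1$ blocks is $(s-1)/2$. So $\Sh(E)[2^\infty]\simeq(\Q_2/\Z_2)^{1-r_E}\oplus(\Z/2\Z)^{s-1}$, which is \eqref{eq:shadichotomy}, with $r_E\in\{0,1\}$.

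Then I would compute $\Sh(E)[2^a]$ in both cases. If $r_E=0$, then $c=1$ and $\Sh[2^a]\simeq\Z/2^a\Z\oplus(\Z/2\Z)^{s-1}$. If $r_E=1$, then $\Sh[2^a]\simeq(\Z/2\Z)^{s-1}$. Substituting into \eqref{eq:split} gives $\Z/2^a\Z\oplus(\Z/2\Z)^{2}\oplus(\Z/2\Z)^{s-1}$ in either case, which is \eqref{eq:tower}. The point is that the Mordell--Weil factor and the divisible $\Sh$ factor contribute identically at every finite level. This is also why no finite descent can distinguish the two cases, as the final remark of the theorem says.

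For the image statement, the map $\Sel_{2^a}\to\Sel_2^0$ has image equal to the Mordell--Weil image plus the image of $\Sh[2^a]\xrightarrow{2^{a-1}}\Sh[2]$, taken modulo torsion. The $(\Z/2\Z)^{s-1}$ part dies for $a\ge2$. What survives is the order-$2^a$ cyclic factor, which maps onto a line: this is either the Kummer image of a rank-one generator, or $2^{a-1}$ times a divisible element. That line lies in the ordinary radical, which is one dimensional, so the two coincide. For $a=2$ this agrees with the fact that the image of $\Sel_4$ is the radical.

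The main obstacle I expect is justifying the image statement uniformly in $a$ without assuming finiteness of $\Sh$, in particular handling the divisible case. I would use the divisibility $\Q_2/\Z_2\subset\Sh$ to lift a radical element to $\Sh[2^a]$ for every $a$, and use the Kummer sequence to transfer this to Selmer groups.
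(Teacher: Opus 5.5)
Your proposal is correct and follows essentially the same route as the paper: decompose $\Sh(E)[2^\infty]$ using the Cassels--Tate pairing, count the radical contributions to get $r_E+c=1$ with every paired block of exponent one, and then apply Lemma~\ref{lem:split}. The only minor differences are these: you get the oddness of $s$ directly from the even rank of an alternating matrix, whereas the paper reads it off from $2t=s-1$. You also make the image step slightly more explicit, by observing that the surviving line is nonzero and lies in the one dimensional radical.
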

\begin{proof}
The $2$-primary group $\Sh(E)[2^\infty]$ is cofinitely generated because
its subgroup killed by $2$ is finite. Its maximal divisible subgroup
has the form $(\Q_2/\Z_2)^c$, and the finite quotient carries the perfect
alternating Cassels--Tate pairing. Hence, as an abstract group,
\[
 \Sh(E)[2^\infty]\simeq(\Q_2/\Z_2)^c
 \oplus\bigoplus_{j=1}^{t}(\Z/2^{e_j}\Z)^2,\qquad e_j\ge1.
\]
The divisibility criterion in \cite[Theorem 3.1]{Fisher} identifies the
kernel of the full pairing with the divisible subgroup; it does not
require that $c=0$.
A paired block with $e_j=1$ contributes rank two to the ordinary pairing
on $\Sh[2]$. A block with $e_j\ge2$ contributes two radical dimensions.
The divisible part and the Mordell--Weil group contribute only to the radical.
Consequently
\[
 s=r_E+c+2t,\qquad
 \dim\rad C=r_E+c+2\#\{j:e_j\ge2\}.
\]
The left side of the second equality is one. Thus $r_E+c=1$, every
$e_j=1$, and $2t=s-1$. This proves \eqref{eq:shadichotomy}.
Applying Lemma~\ref{lem:split}, the rank and divisible parts together
contribute exactly one cyclic factor of order $2^a$, proving
\eqref{eq:tower}. Under the natural map to $\Sel_2^0(E)$, the finite order two
$\Sh$ factors are killed for $a\ge2$, whereas the Mordell--Weil/divisible
contribution maps onto the ordinary radical. The assertion about the image follows.
\end{proof}

\begin{corollary}\label{cor:towerdensity}
Fix $p\equiv11,19\pmod{40}$ and let $q$ vary in $\mathcal P_p$.
Use the five characters $(\beta,\tau,\gamma,\upsilon,\rho)$ of \eqref{eq:fourcompletebits}.
Outside
\[
 \beta=0,\qquad\rho=\gamma=\upsilon,
\]
one has, simultaneously for every $a\ge1$,
\begin{equation}\label{eq:densitytower}
 \Sel_{2^a}(A_{-10pq})\simeq\Z/2^a\Z\oplus(\Z/2\Z)^4.
\end{equation}
This entire tower pattern has relative natural density $7/8$ in
$\mathcal P_p$ (absolute prime density $7/256$).
For each single $a\ge2$ the group in \eqref{eq:densitytower} also has
exactly that density. For each $a\ge2$, its image in the pure $2$-Selmer group is the line
\[
 \left\langle(\gamma+\upsilon)W+(\rho+\beta+\gamma)U+\beta V\right\rangle.
\]
\end{corollary}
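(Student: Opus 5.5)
The plan is to combine Theorem~\ref{thm:fourthcassels}, which gives $C_{-2}$ explicitly, with Theorem~\ref{thm:tower}, and then read off the densities from Theorem~\ref{thm:fourcompletefield}.

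\emph{Step 1: the rank of $C_{-2}$ off the exceptional locus.} By Corollary~\ref{cor:fourdimensions}, the pure Selmer dimension of $E_{-2}=A_{-10pq}$ is $s=3$. By Theorem~\ref{thm:fourthcassels}, the $3\times3$ alternating matrix $C_{-2}$ vanishes exactly on the locus $\beta=0$, $\rho=\gamma=\upsilon$. Everywhere else it has rank two, which is $s-1$, and its radical is the line \eqref{eq:fourthradical}.

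\emph{Step 2: the tower.} Theorem~\ref{thm:tower} applies verbatim with $s=3$. It gives $\Sel_{2^a}(E)\simeq\Z/2^a\Z\oplus(\Z/2\Z)^{4}$ for all $a\ge1$ simultaneously. It also says that for $a\ge2$ the image in the pure $2$-Selmer group is the ordinary radical, which is the displayed line. This settles \eqref{eq:densitytower} and the image statement at once, with no further descent.

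\emph{Step 3: density of the tower pattern.} Theorem~\ref{thm:fourcompletefield} says the five characters are jointly uniform on $\F_2^5$ as $q$ ranges over $\mathcal P_p$. The exceptional locus imposes three independent linear conditions ($\beta=0$, $\rho+\gamma=0$, $\gamma+\upsilon=0$), so it has relative density $1/8$. Its complement therefore has density $7/8$. Since $\mathcal P_p$ has absolute density $1/32$, the absolute density is $7/256$.

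\emph{Step 4: the single-level density for $a\ge2$.} This is the only step needing care. I must show that on the exceptional locus the group $\Sel_{2^a}$ is never $\Z/2^a\Z\oplus(\Z/2\Z)^4$. There the ordinary matrix is zero, so the radical has dimension three. Proposition~\ref{prop:sel4structure} then gives $\Sel_4\simeq(\Z/4\Z)^3\oplus(\Z/2\Z)^2$, which already differs from $\Z/4\Z\oplus(\Z/2\Z)^4$.

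For $a\ge3$, I will compare the number of elements of order $\ge4$. Write $m_j=\dim_{\F_2}2^{j-1}\Sel_{2^a}/2^{j}\Sel_{2^a}$, which is the number of cyclic summands of order at least $2^j$. By Lemma~\ref{lem:split},
\[
 \Sel_{2^a}\simeq(\Z/2^a\Z)^{r}\oplus(\Z/2\Z)^2\oplus\Sh[2^a].
\]
Together with the structure of $\Sh[2^\infty]$ used in the proof of Theorem~\ref{thm:tower}, the number of summands of order $\ge4$ equals the ordinary radical dimension. That dimension is three on the exceptional locus, whereas the target group has only one such summand. So the groups differ for every $a\ge2$, and the single-level density is again exactly $7/8$.

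I expect no real obstacle beyond Step 4. There the point is to rely on the radical-dimension count, which is insensitive to $a$, rather than attempt any explicit higher descent.
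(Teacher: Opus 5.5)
Your proposal is correct and follows essentially the paper's route: Theorem~\ref{thm:tower} with $s=3$ off the zero locus of Theorem~\ref{thm:fourthcassels}, joint uniformity of the five characters for the $7/8$, and a level-four invariant to exclude the zero locus at each single $a\ge2$. The only difference is cosmetic. The paper compares orders of the $4$-torsion subgroups ($64$ versus $256$, via \eqref{eq:split}), while you count cyclic summands of order at least $4$ ($1$ versus $3$); this is the same reduction to level four.
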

\begin{proof}
The ordinary fourth matrix is zero exactly on the excluded locus and
otherwise has rank two on a three dimensional pure group.
Apply Theorem~\ref{thm:tower} with $s=3$ and the matrix/distribution results
of Theorem~\ref{thm:fourthcassels} and Corollary~\ref{cor:fourcompleteranks}.
Conversely, for $a\ge2$, the group in \eqref{eq:densitytower} has subgroup
killed by $4$ of order $64$. By \eqref{eq:split} this order is also the
order of $\Sel_4$; the zero ordinary matrix instead gives order $256$.
Thus the excluded locus cannot have this $2^a$-Selmer group.
Thus the assertion for all levels comes from a single ordinary rank condition,
rather than from an intersection of separate density statements.
\end{proof}

\begin{corollary}\label{cor:fourtowers}
On the relative density $1/16$ locus of Corollary~\ref{cor:threejoint}, where the first three
ordinary matrices are nondegenerate, simultaneously for all $a\ge1$,
\[
\begin{array}{c|c}
E&\Sel_{2^a}(E)\\\hline
A_{5pq},A_{-5pq}&(\Z/2\Z)^6\\
A_{10pq}&(\Z/2\Z)^4\\
A_{-10pq}&\Z/2^a\Z\oplus(\Z/2\Z)^4.
\end{array}
\]
The first three curves have rank zero with the same finite $2$-primary
$\Sh$ groups described in Corollary~\ref{cor:threejoint}. For the fourth curve the Selmer groups are determined as displayed, while the
rank/divisible-$\Sh$ alternative in \eqref{eq:shadichotomy} remains.
\end{corollary}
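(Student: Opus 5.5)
The statement to prove is Corollary~\ref{cor:fourtowers}. The plan is to treat the four curves separately and derive every Selmer group from two earlier results. For the first three curves the input is Corollary~\ref{cor:threejoint} together with Lemma~\ref{lem:split}. For $A_{-10pq}$ the input is Theorem~\ref{thm:tower}, applied through Corollary~\ref{cor:towerdensity}.

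\emph{The first three curves.} On the locus $P_+=P_-=P_2=1$, Corollary~\ref{cor:threejoint} gives rank zero for $A_{5pq},A_{-5pq},A_{10pq}$. It also gives
\[
 \Sh(A_{\pm5pq})[2^\infty]\simeq(\Z/2\Z)^4,\qquad \Sh(A_{10pq})[2^\infty]\simeq(\Z/2\Z)^2.
\]
Since these groups are killed by $2$, we have $\Sh(E)[2^a]=\Sh(E)[2]$ for every $a\ge1$. The noncanonical splitting \eqref{eq:split} of Lemma~\ref{lem:split} then yields
\[
 \Sel_{2^a}(E)\simeq(\Z/2\Z)^2\oplus\Sh(E)[2],
\]
since the term $(\Z/2^a\Z)^{\rank}$ is absent. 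This gives $(\Z/2\Z)^6$ for $A_{\pm5pq}$ and $(\Z/2\Z)^4$ for $A_{10pq}$, at every level. As a consistency check, this agrees with Corollary~\ref{cor:fourdimensions}: the pure dimensions are $4,4,2$, and adding the two torsion classes gives $6,6,4$.

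\emph{The fourth curve.} The point to verify is that the density $1/16$ locus lies outside the exceptional set $\beta=0$, $\rho=\gamma=\upsilon$ of Corollary~\ref{cor:towerdensity}. This is immediate, because $P_2=\gamma+\upsilon=1$ forces $\gamma\ne\upsilon$. Equivalently, Theorem~\ref{thm:threecassels} shows that $P_2=1$ makes the $U,V$ entry of $C_{-2}$ nonzero. Hence the $3\times3$ matrix $C_{-2}$ of Theorem~\ref{thm:fourthcassels} has rank $2=s-1$ with $s=3$. Theorem~\ref{thm:tower} then gives
\[
 \Sel_{2^a}(A_{-10pq})\simeq\Z/2^a\Z\oplus(\Z/2\Z)^4
\]
simultaneously for all $a\ge1$. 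The same theorem, through \eqref{eq:shadichotomy}, leaves exactly the dichotomy between rank one and a rank zero curve with divisible $\Sh$ part of corank one.

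There is no real obstacle here. The only step needing care is the observation that the nondegeneracy locus avoids the zero locus \eqref{eq:fourthzerolocus}, and that is a one-line consequence of $P_2=1$. I would also state explicitly that the isomorphisms are abstract ones, as in Lemma~\ref{lem:split}, and that they hold uniformly in $a$ because each $2$-primary $\Sh$ group appearing here is finite of exponent two, or else falls under the tower theorem.
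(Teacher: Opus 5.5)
Your proposal is correct and follows essentially the same route as the paper. For the first three curves you use rank zero and exponent-two $2$-primary $\Sh$ from nondegeneracy together with Lemma~\ref{lem:split}. For $A_{-10pq}$ you observe that $P_2=\gamma+\upsilon=1$ makes the $U,V$ entry of $C_{-2}$ nonzero, so the matrix has rank $2=s-1$ and Theorem~\ref{thm:tower} applies with $s=3$.
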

\begin{proof}
For a nondegenerate ordinary pairing, rank and divisible $\Sh$ vanish
and the finite $2$-primary group is killed by $2$. Use
Lemma~\ref{lem:split}. The nonzero $U,V$ entry of the third matrix forces
the fourth matrix to have rank two, so Theorem~\ref{thm:tower} applies.
\end{proof}

\subsection{Remaining structural problems}\label{subsec:remainingproblems}

We have determined the ordinary matrices throughout \eqref{eq:family}, the
entire next pairing on the two dimensional radical locus, and one full row on
the larger zero character radical. Three natural problems remain.

\begin{problem}
\label{prob:quaternary}
Construct a Qin type positive definite quaternary quadratic form model for
the representation congruences of $F$ and $G$, with explicit integral
lattices, congruence conditions, and a proved relation between the
resulting representation numbers and $\D(n)$. In particular, determine
whether such a model explains the normalizations $\D(p)/16$ and
$\D(pq)/32$ in Theorem~\ref{thm:reciprocity} and
Conjecture~\ref{conj:pqbridge}.
\end{problem}
The intersections of quadrics used for descent in
Proposition~\ref{prop:4cover} do not provide this positive definite
representation number construction. Proposition~\ref{prop:sparse}
reduces the correction term to a sparse factorization condition, but does
not turn it into a quaternary lattice count.

\begin{problem}
\label{prob:highergoverning}
Theorem~\ref{thm:reciprocity} gives
$\varrho(p)=(-1)^{(d-1)/2}$ for $5d^2-py^2=1$.
As primes $p\equiv11\pmod{40}$ vary, decide whether there is a finite
Galois extension $K/\Q$, independent of $p$, and a conjugacy invariant
function
$f:\operatorname{Gal}(K/\Q)\to\{\pm1\}$ such that
$\varrho(p)=f(\operatorname{Frob}_p)$ for every such unramified prime.
Determine whether the limits
\[
 \lim_{X\to\infty}
 \frac{\#\{p\le X:p\text{ prime},\ p\equiv11\pmod{40},\ b_p=e\}}
   {\#\{p\le X:p\text{ prime},\ p\equiv11\pmod{40}\}}
 \qquad(e=0,1)
\]
exist, and evaluate them.
\end{problem}
The fields of Section~\ref{sec:governing} govern ordinary Cassels matrices
with $p$ fixed and the auxiliary prime varying, whereas the class fields
of Appendix~\ref{app:classfields} vary with $p$. Neither construction
settles a fixed field or a density theorem for the higher character.

\begin{problem}
\label{prob:furtherdescent}
Complete the next pairing on ordinary radicals of dimension at least four
arising from Theorem~\ref{thm:allfour}. Theorems~\ref{thm:auxcrossU}
and~\ref{thm:auxcrossV} determine the complete $\Lambda'$ row, and
Theorems~\ref{thm:ranktwo42691} and~\ref{thm:ranktwo434531} determine two
full four dimensional examples, but the remaining first argument lifts are
not known uniformly. In particular, on the locus
$(e,c_*,d_*)=(0,0,1)$ of Remark~\ref{rem:fourdimnextmatrix}, compute
$\mathcal B(\Lambda,U)$; by Corollary~\ref{cor:onebit8selmer} this single
bit decides the abstract $8$-Selmer group. In the prime family, when
$\D(p)\ne0$ and $b_p=0$, determine the later pairing that fixes the
exponent in
$\Sh(A_{5p})[2^\infty]\simeq(\Z/2^{a_p}\Z)^2$, $a_p\ge3$.
Determine joint distributions when several auxiliary primes vary. Finally,
extend the local descent, ordinary pairing, and higher pairing calculations
to the remaining squareclasses of the $3/5$ family and to the $4/5$ family
$y^2=x(x-n)(x+9n)$.
\end{problem}
Theorem~\ref{thm:tower} completely determines all finite $2$-power
Selmer groups when the ordinary radical is one dimensional, but it does
not separate Mordell--Weil rank from a possible divisible part of $\Sh$.
Theorem~\ref{thm:compositehigher} treats one explicit two dimensional
radical locus; larger radicals still require genuinely new higher
pairing calculations.

The generalized theta series for the other angles in \cite{ImShin}
carry additional local weights. Extending the representation congruences
to those weighted coefficients is separate from the four base twists of
the $3/5$ curve treated here.

\appendix

\section{Class fields and Jacobi sum descriptions}
\label{app:classfields}\label{sec:jacobi}

\subsection{Imaginary norm characters and dihedral fields}\label{subsec:imaginarydihedral}

Throughout the appendix, $p\equiv11\pmod{40}$ is prime and we retain the
norm representation and conic notation of Section~\ref{sec:higher}. The
conic data naturally define several class fields. We describe these fields
first and then compare them with the quintic Jacobi sum construction. This
provides a class field interpretation of the characters appearing in the
main text, independent of the Pell and class number proof of
Theorem~\ref{thm:reciprocity}.

\begin{proposition}\label{arith:imaginarynorm}
Let $(u,v,c)$ be a primitive integral point of $\Gamma_p$. Put
\[
 s=u^2-5v^2,\qquad
 d=k\bigl((a+b)(u^2+5v^2)+2(a+5b)uv\bigr).
\]
Then $p\nmid c$ and $5d^2=c^4+4ps^2$. Set $e=0$ if $c$ is odd,
and $e=1$ if $c$ is even, and define
\[
 C=c/2^e,\qquad S=s/4^e,\qquad D=d/4^e.
\]
These are odd integers satisfying
\begin{equation}\label{arith:quarticnorm}
 C^4+4pS^2=5D^2,\qquad \gcd(C,S)=\gcd(C,5p)=1,
 \qquad \varrho(p)=k(-1)^e\leg C5.
\end{equation}
For $\beta=C^2+2S\sqrt{-p}\in\mathcal O_M$ there are a prime
$\mathfrak q\mid5$ and an integral ideal $\mathfrak a$ such that
\begin{equation}\label{arith:idealroot}
 (\beta)=\mathfrak q\mathfrak a^2,\qquad
 \Norm\mathfrak a=|D|,\qquad [\mathfrak a]^2=[\mathfrak q]^{-1}.
\end{equation}
\end{proposition}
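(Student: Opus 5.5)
The plan is to derive everything from the trace identity \eqref{arith:trace} together with the norm structure in $K=\Q(\sqrt5)$. Here $M$ denotes $\Q(\sqrt{-p})$.

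\emph{Step 1: the quartic identity.} Write $\xi=u+v\sqrt5$ and $\alpha=a+b\sqrt5$. The conic equation is $c^2=k\Tr(\alpha\phi\xi^2)$. The companion quantity $d$ should be $k\Tr(\alpha\phi\xi^2\sqrt5)/\sqrt5$ up to normalization, that is, the $\sqrt5$-part of $k\alpha\phi\xi^2$. So I put $c^2=x+x'$ and $\sqrt5\,d=x-x'$ with $x=k\alpha\phi\xi^2$. Then
\[
 c^4-5d^2=4xx'=4\Norm(\alpha)\Norm(\phi)\Norm(\xi)^2=-4ps^2 .
\]
This gives $5d^2=c^4+4ps^2$; the sign and factor are checked by direct expansion. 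A prime dividing $c$ and $p$ would force $p\mid d$, and then $p^2\mid 4ps^2$ gives $p\mid s$. Hence $p\mid \Norm(\xi)$, and one then checks against primitivity using $\mathcal F(u,v)\equiv0$. Likewise $\gcd(c,5)=1$ by Proposition~\ref{arith:exist}. For $\gcd(c,s)$: a common odd prime $\ell$ would divide $d$, and $\ell\mid\Norm\xi$ with $\ell\mid c$ gives $\ell\mid u,v$ via the conic, which is impossible.

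\emph{Step 2: 2-adic normalization.} Use the parity analysis from the proof of Lemma~\ref{arith:dyadic}. If $c$ is odd, then $u+v$ is odd, so $s$ is odd, and $d$ is odd by the identity modulo $8$. If $c$ is even, then $u,v$ are odd with $4\mid u+v$ and $v_2(c)=1$. Then $s=u^2-5v^2\equiv-4\pmod{8}$-type, so $v_2(s)=2$, and one verifies $v_2(d)=2$ from the explicit expression. Dividing gives odd $C,S,D$ satisfying the same identity, since $c^4=16C^4$, $s^2=16S^2$ and $d^2=16D^2$. The character formula follows from $\leg25=-1$: $\leg c5=(-1)^e\leg C5$, combined with \eqref{eq:rhointrinsic}.

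\emph{Step 3: ideal factorization.} In $\mathcal O_M$ we have $\Norm(\beta)=C^4+4pS^2=5D^2$. The elements $\beta$ and $\bar\beta$ have gcd dividing $(2C^2,4S\sqrt{-p})$. By coprimality and oddness of $C,S$, and since $C^2+2S\sqrt{-p}$ is not divisible by $2$ (its $\sqrt{-p}$ coefficient $2S$ is $2$ times odd in $\Z[\sqrt{-p}]$, while $\mathcal O_M=\Z[(1+\sqrt{-p})/2]$), the two conjugates are coprime away from primes dividing $5D$ only if no prime divides both. A common prime would divide $2C^2$ and $\Norm\beta$, hence $C$ and $5D$, which is excluded. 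Therefore $(\beta)$ and $(\bar\beta)$ are coprime. Since the norm is $5D^2$ with $5$ ramified or split in $M$ (here $\leg{-p}5=1$, so split), we get $(\beta)=\mathfrak q\mathfrak a^2$ with $\Norm\mathfrak a=|D|$, and the class relation follows.

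The main obstacle is the $2$-adic bookkeeping in Step 2, namely exact valuations of $s$ and $d$ when $c$ is even, and the corresponding integrality and coprimality of $\beta$ at $2$ in Step 3. I would handle both by reducing the explicit forms modulo $16$ using \eqref{arith:parity}.
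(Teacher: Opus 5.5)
Your route is the paper's: the identity $k\alpha\phi\xi^2=(c^2+d\sqrt5)/2$ with norm $-ps^2$, the parity analysis of Lemma~\ref{arith:dyadic} for the $2$-adic normalization, and coprimality of $(\beta)$ and $(\bar\beta)$ for \eqref{arith:idealroot}. Steps~2 and~3 are fine. Using $2C^2$ and $\Norm\beta=5D^2$ in place of $2C^2$ and $4S\sqrt{-p}$ works once you note that $\gcd(C,D)=1$ follows from the quartic identity.

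The step that fails as proposed is $p\nmid c$. Your chain $p\mid c\Rightarrow p\mid d\Rightarrow p\mid s$ is correct. However, no contradiction with primitivity can then come from congruences modulo $p$. Since $a^2\equiv5b^2\pmod p$, one has $(a+5b)a/b\equiv5(a+b)$, and hence
\[
 \mathcal F(u,v)\equiv k(a+5b)\bigl(u+(a/b)v\bigr)^2\pmod p.
\]
On the line $u\equiv-(a/b)v$ with $v$ a unit, the quantities $c^2=\mathcal F(u,v)$, $s$ and $d$ all vanish modulo $p$. The form is degenerate modulo $p$ because $p\mid\disc\mathcal F=80p$. What is needed is second-order information, namely $v_p(\disc\mathcal F)=1$.

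The repair, as in the paper, is to complete the square: $Ac^2=(Au+Bv)^2-20pv^2$ with $A=k(a+5b)$, $B=5k(a+b)$, and $p\nmid A$ by Lemma~\ref{arith:chars}. If $p\mid c$, then $p\mid Au+Bv$, so $p^2\mid 20pv^2$. Hence $p\mid v$ and then $p\mid u$, contradicting $\gcd(u,v)=1$.

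In your norm language the same fix reads as follows. With $\mathfrak p=(\alpha)$, $v_{\mathfrak p}(k\alpha\phi\xi^2)=1+2v_{\mathfrak p}(\xi)$ is odd, while $v_{\mathfrak p}$ of its conjugate is $2v_{\bar{\mathfrak p}}(\xi)$, which is even. Moreover $\xi$ is divisible by at most one prime above $p$. So $v_{\mathfrak p}(c^2)\in\{0,1\}$, which forces $p\nmid c$.

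This step is not cosmetic. Your exclusion of $\ell=p$ in the $\gcd(c,s)$ argument depends on it, and so does $\gcd(C,5D)=1$ in Step~3.

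Finally, your claim that a common odd prime $\ell\ne5,p$ of $c$ and $s$ forces $\ell\mid u,v$ is true. It rests on the computation $(a+5b)^2-5(a+b)^2=-4p$, which says the resultant of $\mathcal F$ and $u^2-5v^2$ is supported on $2,5,p$. You should write this out rather than cite the conic.
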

\begin{proof}
With the notation of \eqref{arith:trace}, put
$\eta_0=k\alpha\phi\xi^2$. Expansion and the norm give
\[
 \eta_0=\frac{c^2+d\sqrt5}{2},\qquad
 \Norm_{K/\Q}\eta_0=-ps^2.
\]
Hence $5d^2=c^4+4ps^2$.

We have $\gcd(u,v)=1$: a common prime divisor would also divide $c$.
Completing the square in the conic gives
\[
 Ac^2=(Au+Bv)^2-20pv^2.
\]
Here $A=k(a+5b)$ and $B=5k(a+b)$, so $p\nmid A$.
If $p\mid c$, then $p\mid Au+Bv$ and $p\nmid v$,
so the right side has $p$-adic valuation one, a contradiction.
Next suppose an odd prime $q$ divides $c$ and $s$.
We already know $q\ne5,p$, and $v$ is a unit modulo $q$.
Writing $r=u/v$ modulo $q$, the two equations imply
\[
 r^2=5,\qquad (a+5b)+(a+b)r=0.
\]
They force
$(a+5b)^2-5(a+b)^2=-4p\equiv0\pmod q$, a contradiction.
Thus $c$ and $s$ have no common odd prime divisor.

If $c$ is odd, then $u,v$ have opposite parity, and $s,d$ are odd.
If $c$ is even, Lemma~\ref{arith:dyadic} gives $u,v$ odd and
$v_2(c)=1$. Hence $s\equiv4\pmod8$. In the identity
$5d^2=c^4+4ps^2$, the two terms on the right have valuations four
and six, respectively, so $v_2(d)=2$.
Dividing that identity by $16^e$ proves the assertions about
$C,S,D$ and their coprimality. Since $(2/5)=-1$,
$k(c/5)=k(-1)^e(C/5)$.

To obtain the ideal factorization, note that $\Norm_{M/\Q}\beta=5D^2$. The ideals $(\beta)$ and
$(\bar\beta)$ are coprime. Indeed their common prime divisors would
lie above divisors of both $2C^2$ and $4S\sqrt{-p}$; coprimality of
$C,S$, $p\nmid C$, and oddness of the norm exclude all of these.
The prime $5$ splits in $M$ because $(-p/5)=1$. Exactly one prime
$\mathfrak q\mid5$ divides $(\beta)$, with odd valuation; every
other valuation is even by the norm identity and coprimality with
the conjugate. This yields \eqref{arith:idealroot}, with the stated norm.
\end{proof}

Genus theory shows that $\Cl(M)$ has odd order, so squaring is an
automorphism. Hence \eqref{arith:idealroot} singles out the unique square
root of $[\mathfrak q]^{-1}$. The ideal class alone, however, does not
remember the full conic character: the square rational part $C^2$ and the
factor $k(-1)^e$ in \eqref{arith:quarticnorm} retain the additional
quadratic character $(C/5)$.

The same norm data define a natural class field, without using the reciprocity
theorem. Put
\[
 \begin{gathered}
 K=\Q(\sqrt5),\quad M=\Q(\sqrt{-p}),\quad
 M'=\Q(\sqrt{-5p}),\\
 \phi=\frac{1+\sqrt5}{2},\quad \alpha=a+b\sqrt5,
 \quad \eta=-k\alpha\phi,
 \end{gathered}
\]
and set $A=k(a+5b)$. This sign makes the resulting extension unramified at $2$.

\begin{proposition}\label{arith:dihedralfield}
Let $F=K(\sqrt\eta)$ and
\[
 H=K(\sqrt{-p},\sqrt\eta).
\]
The quartic field $F=\Q(\sqrt\eta)$ has discriminant $-25p$,
and $\sqrt\eta$ has minimal polynomial $X^4+AX^2-p$.
Its normal closure is $H$,
with $\operatorname{Gal}(H/\Q)\simeq D_4$ of order eight.
The extension $H/M'$ is cyclic of degree four and unramified at all
finite places. Using $h(-5p)\equiv4\pmod8$, it is therefore
the maximal unramified abelian $2$-extension of $M'$, and
\[
 \disc(H)=(5p)^4.
\]
Each prime of $M'$ above $2$ has Frobenius of order four in
$\operatorname{Gal}(H/M')$. Thus its class generates the
$2$-primary quotient of $\Cl(M')$; the order of the full ideal
class can have an additional odd factor.
\end{proposition}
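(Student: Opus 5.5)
Direct computation handles the quartic field; the Galois structure then follows from the norm relation between $\eta$ and its conjugate.

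\emph{The quartic field and its closure.} Since $\Norm_{K/\Q}(\phi)=-1$ and $\Norm(\alpha)=p$, the conjugate satisfies $\eta\bar\eta=k^2\alpha\bar\alpha\phi\bar\phi=-p$. For the trace, $\Tr(\alpha\phi)=a+5b$, so $\eta+\bar\eta=-A$. Hence $\eta,\bar\eta$ are the roots of $Y^2+AY-p$, and $\sqrt\eta$ is a root of $X^4+AX^2-p$.

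The product $\eta\bar\eta=-p$ is not a square in $K$, because $p$ is unramified there while $-p$ is not a norm-square. Therefore $\Q(\sqrt\eta)$ is a non-Galois quartic field whose normal closure is $K(\sqrt\eta,\sqrt{\bar\eta})=K(\sqrt\eta,\sqrt{-p})=H$. This gives the standard $D_4$ picture: the three quadratic subfields are $K$, $M=\Q(\sqrt{-p})$ and $M'=\Q(\sqrt{-5p})$, and $H/M'$ is the cyclic quartic subextension.

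To see which subextension is cyclic, let $\sigma$ generate $\operatorname{Gal}(H/M')$, acting nontrivially on both $\sqrt5$ and $\sqrt{-p}$. Then $\sigma(\sqrt\eta)=\pm\sqrt{\bar\eta}=\pm\sqrt{-p}/\sqrt\eta$. Applying $\sigma$ again flips the sign of $\sqrt{-p}$, which yields $\sigma^2(\sqrt\eta)=-\sqrt\eta$. So $\sigma$ has order four.

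\emph{Ramification.} Odd primes other than $5$ and $p$ are unramified, since the relative discriminant of $\sqrt\eta$ divides $4\eta$ and $\eta$ is a unit away from $p$. At $p$ and at $5$, each is ramified in $M'$ with index $2$. In $D_4$ the inertia group is generated by an element outside $\operatorname{Gal}(H/M')$. I would show this through the other quadratic subfields: $H/M$ and $H/K$ absorb the ramification, since $(\alpha)$ is a prime with valuation $1$ in $F/K$ but the ramification index over $\Q$ stays $2$. An inertia group of order $2$ that is not contained in the cyclic subgroup meets it trivially, so $H/M'$ is unramified at $5$ and $p$.

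\emph{The prime $2$, which is the main obstacle.} Here I must show $\eta$ is a square modulo $4\mathcal O_K$, up to a unit that is a local square. This uses $4\mid a$, $b$ odd, and the sign choice $-k$. Modulo $4$ in $\mathcal O_K$, $\alpha\equiv b\sqrt5\equiv\pm\sqrt5$, so $\eta\equiv\mp k\sqrt5\,\phi$. The aim is to check that this lies in $(\mathcal O_K/4)^{\times2}$, perhaps after noting $2$ is inert and working in $\Z_4$-unramified. I expect this check to reduce to $-k b\equiv1\pmod4$ together with $\sqrt5\phi=\phi^2+\dots$, and it is the one genuinely computational step.

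\emph{Consequences.} Once $2$ is handled, $H/M'$ is unramified at every finite place. Since $h(-5p)\equiv4\pmod8$ (Lemma~\ref{lem:h5mod8}) and the $2$-class group is cyclic, $H$ is the full $2$-Hilbert class field of $M'$. The conductor--discriminant formula, using the characters of $D_4$ with conductors from $M'$ ($5p$) and the unramified quartic over $M'$, gives $\disc(H)=\disc(M')^4=(20p)^4$ up to the $2$-part. That $2$-part vanishes because $H/M'$ is unramified, which would give $(5p)^4$ only if the $2$-part of $\disc M'$ cancels. I would recheck this normalization against the stated value. The discriminant $-25p$ of $F$ follows from $\disc F=\disc(K)^2\Norm(\mathfrak d_{F/K})=25p$ with sign from signature $(2,1)$.

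\emph{Frobenius above $2$.} The prime $2$ ramifies in $M'$, giving $\mathfrak t$ with $\mathfrak t^2=(2)$. It has order four in $\operatorname{Gal}(H/M')$ exactly when $2$ is inert in $K\subset$ the fixed field of $\sigma^2$. Indeed, the fixed field of $\sigma^2$ over $M'$ is $M'(\sqrt5)$, and $\mathfrak t$ is inert there because $2$ is inert in $K$. So Frobenius is not in the index-two subgroup, hence it generates.
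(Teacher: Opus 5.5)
Your overall route matches the paper's: norm and trace give the quartic, an explicit order-four element of $\operatorname{Gal}(H/M')$, then ramification place by place. Two steps, however, fail as written.

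\textbf{The prime $2$ in $M'$.} The prime $2$ does not ramify in $M'$. Since $p\equiv3\pmod8$, one has $-5p\equiv1\pmod8$. So $\disc(M')=-5p$, the fundamental discriminant already appearing in $h(-5p)$, and $2$ splits into two primes of residue degree one. This is why your discriminant count produced $(20p)^4$ and could not be reconciled with the statement. With the correct value the tower formula gives $\disc(H)=\disc(M')^4\,\Norm(\mathfrak d_{H/M'})=(5p)^4$ immediately.

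Your premise $\mathfrak t^2=(2)$ would also contradict your own conclusion. Then $[\mathfrak t]$ would have order at most two in $\Cl(M')$, and its Artin symbol could not have order four.

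The mechanism you actually use does survive. Each prime $\mathfrak t\mid2$ of $M'$ has residue field $\F_2$, whereas primes of $KM'$ over $2$ have residue field containing $\F_4$. So $\mathfrak t$ is inert in the unique quadratic subextension $KM'$ of $H/M'$, and its Frobenius generates $\Z/4\Z$. This is a tidy global substitute for the paper's local argument. The paper instead shows that the completion of $H$ at $2$ is the unramified quartic extension of $\Q_2$, and for that it must also rule out splitting via $\Norm\eta=-p\equiv5\pmod8$.

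\textbf{The dyadic check.} You rightly call this the crux, but it is not carried out, and the congruence you anticipate is wrong. The condition $-kb\equiv1\pmod4$ never holds, because $b\equiv k\pmod4$ by the definition of $k$. With $kb\equiv1$ one gets $\alpha\equiv k\sqrt5$ and
$\eta\equiv-\sqrt5\phi=-(\phi+2)\equiv2+3\phi=\phi^4\pmod{4\mathcal O_K}$.
The paper writes this as $\eta=-2kb-k(a+b)\phi$ together with $2+3\phi=(1+\phi)^2$. Since $\phi$ is a unit, $\eta\phi^{-4}\equiv1\pmod4$, so $K(\sqrt\eta)/K$ is unramified at the inert prime $2$. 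Together with $-p\equiv1\pmod4$, this makes $H/\Q$ unramified at $2$ and gives $\mathfrak d_{F/K}=(\alpha)$, hence $\disc F=-25p$.

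\textbf{Smaller omissions.}
\begin{itemize}
\item For $F$ to be quartic you also need $\eta\notin\Q$ and $\eta\notin K^{\times2}$. Both follow at once from $\eta\bar\eta=-p<0$: one real embedding of $\eta$ is negative.
\item Your claim that the ramification index at $p$ and $5$ ``stays $2$'' is asserted, not proved. The quickest fix is that tame inertia is cyclic. Since $p$ and $5$ ramify in $M'$, their inertia groups are not contained in $\operatorname{Gal}(H/M')\simeq\Z/4\Z$. A cyclic subgroup of $D_4$ not contained in that subgroup is generated by an element of order two, so it meets $\operatorname{Gal}(H/M')$ trivially. This is a genuine alternative to the paper's explicit local computations at $p$ and $5$.
\end{itemize}
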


\begin{proof}
Since $\Norm_{K/\Q}\eta=-p$ and
$\Tr_{K/\Q}\eta=-A$, the stated polynomial annihilates
$\sqrt\eta$. The ideal $(\alpha)$ is one of the two distinct
primes of $K$ above $p$. In particular the valuation pattern of
$\eta$ at these primes is $(1,0)$. Hence $\eta$ is not
a square in $K$, and that its squareclass is independent of the
squareclass of $-p$, whose valuation pattern is $(1,1)$.
Moreover $\eta\notin\Q$, since
\[
 \alpha\phi=\frac{a+5b+(a+b)\sqrt5}{2}
\]
and $a+b\ne0$. Hence $[F:\Q]=4$ and $[H:\Q]=8$.

Write $R=\sqrt5$, $T=\sqrt{-p}$, and $W=\sqrt\eta$.
The conjugate of $\eta$ in $K$ is $-p/\eta$, so all roots
$\pm W,\pm T/W$ of the quartic belong to $H$. Define
\[
 \tau(R)=-R,\quad\tau(T)=-T,\quad\tau(W)=T/W,
 \qquad
 r(R)=R,\quad r(T)=-T,\quad r(W)=W.
\]
These assignments respect the defining relations and give
$\tau^4=r^2=1$, $\tau^2(W)=-W$, and
$r\tau r=\tau^{-1}$. Thus the Galois group is $D_4$.
The order four subgroup generated by $\tau$ fixes $RT$,
and its fixed field is $M'$. Hence the extension is cyclic as claimed.

The ramification is as follows. For odd primes of $K$ away
from $(\alpha)$, the element $\eta$ is a unit and a quadratic
extension obtained by its square root is unramified or split.
At $(\alpha)$ its valuation is one, so $F/K$ is tamely ramified
with relative discriminant exponent one.

At $2$, the field $K_2/\Q_2$ is the unramified quadratic
extension. Since $kb\equiv k(a+b)\equiv1\pmod4$,
\[
 \eta=-2kb-k(a+b)\phi
    \equiv 2+3\phi=(1+\phi)^2\pmod{4\mathcal O_{K_2}}.
\]
Dividing $\eta$ by $(1+\phi)^2$ gives a unit congruent to
$1$ modulo $4$. Adjoining its square root is unramified or
split: an integral generator satisfies a polynomial
$X^2-X+(1-\eta/(1+\phi)^2)/4$, whose discriminant is a unit.
It cannot split, because a square $\eta$ in $K_2$ would have
square norm in $\Q_2$, whereas $\Norm\eta=-p\equiv5\pmod8$.
Thus $F/K$ is unramified at $2$, and
\[
 \mathfrak d_{F/K}=(\alpha),\qquad |\disc(F)|=5^2p.
\]
The two real conjugates of $\eta$ have opposite signs, so $F$
has signature $(2,1)$ and $\disc(F)=-25p$.

We finish the ramification calculation by passing from $F$ to its normal
closure over $M'$. Away from $2,5,p$, all the
quadratic generators above define unramified extensions locally.
At $p$, the field $K$ splits. In either embedding into $\Q_p$,
the completion of $H$ is generated by $\sqrt{-p}$ and the square
root of a unit: use $\eta$ when its valuation is zero, and
$-\eta/p$ when its valuation is one. Its ramification index over
$\Q_p$ is therefore two, exactly that of $M'/\Q$ at $p$.

At $5$, the element $-p$ is a square in $\Q_5$. The completion
of $M'$ is therefore the same ramified quadratic field as that
of $K$. The element $\eta$ is a unit there, so adjoining
$\sqrt\eta$ is unramified. More precisely, its residue is
$2ka\in\F_5^\times$, which is a nonsquare since $ka$ is a
square modulo $5$; thus this local extension has degree two.

Finally, $-5p\equiv1\pmod8$, so $2$ splits in $M'$.
The field $\Q_2(\sqrt{-p})$ is the same unramified quadratic
field as $K_2$. The calculation above shows that the completion
of $H$ at a prime above $2$ is the unramified extension of
$\Q_2$ of degree four. This proves both the absence of ramification and the
asserted Frobenius order. There are no real places of
$M'$ to consider. The discriminant and class field assertions
now follow from the tower discriminant formula and the fact
that the $2$-part of $h(-5p)$ is exactly four.
\end{proof}

\subsection{Ray class comparison}\label{subsec:rayclasscomparison}

\begin{proposition}\label{arith:rayclassbridge}
Let $C,S,D$ and $\beta=C^2+2S\sqrt{-p}$ be as in
Proposition~\ref{arith:imaginarynorm}, and let $\mathfrak q\mid5$
be the prime occurring with odd valuation in $(\beta)$.
Then
\[
 H=M(\sqrt5,\sqrt{-\beta}).
\]
The extension $M(\sqrt{-\beta})/M$ is quadratic with relative
discriminant $\mathfrak q$. It is the unique quadratic subextension
of the ray class field of $M$ of modulus $\mathfrak q$, and its
absolute discriminant is $5p^2$.
\end{proposition}

\begin{proof}
First use the unnormalized conic point and put
$\xi=u+v\sqrt5$, $s=\Norm\xi$, and
$\beta_0=c^2+2s\sqrt{-p}=4^e\beta$.
The expansion from Proposition~\ref{arith:imaginarynorm} reads
\[
 \eta\xi^2=\frac{-c^2-d\sqrt5}{2}.
\]
In $H$, choose $w=\sqrt\eta\,\xi$ and
$w'=\sqrt{-p}\,\bar\xi/\sqrt\eta$. Then
\[
 ww'=s\sqrt{-p},\qquad
 (w-w')^2=-\beta_0,\qquad
 (w-w')(w+w')=-d\sqrt5.
\]
The integer $d$ is nonzero by $5d^2=c^4+4ps^2$.
These identities show that $M(\sqrt5,\sqrt{-\beta})\subseteq H$
and recover $w+w'$, then $w$ and $\sqrt\eta$, from the smaller
field. Hence the two fields are equal.

The ideal factorization $(\beta)=\mathfrak q\mathfrak a^2$
shows that, at odd primes, $M(\sqrt{-\beta})/M$ ramifies exactly
at $\mathfrak q$, with discriminant exponent one. At $2$, put
$\omega=(1+\sqrt{-p})/2\in\mathcal O_M$. The oddness of
$C,S$ gives
\[
 \beta=C^2-2S+4S\omega\equiv3\pmod{4\mathcal O_M}.
\]
Consequently $-\beta\equiv1\pmod4$, which proves that the
quadratic extension is unramified at $2$. This establishes the
relative discriminant statement; it also proves that the conductor
is $\mathfrak q$. Its absolute discriminant is therefore
$\disc(M)^2\Norm\mathfrak q=5p^2$.

For uniqueness, $h(-p)$ is odd, $\mathcal O_M^\times=\{\pm1\}$,
and $\mathcal O_M/\mathfrak q\simeq\F_5$. The ray class exact
sequence gives
\[
 |\Cl_{\mathfrak q}(M)|
 =h(-p)\frac{|\F_5^\times|}{|\{\pm1\}|}=2h(-p).
\]
This abelian group has a unique quotient of order two. The
quadratic extension of conductor $\mathfrak q$ already constructed
is therefore the unique one.
\end{proof}

\begin{corollary}\label{arith:raycyclotomic}
Let $R$ be the maximal subextension of $2$-power degree of the
ray class field of $M$ of modulus $5\mathcal O_M$. Then
\[
 R=H\Q(\zeta_5),\qquad
 \operatorname{Gal}(R/M)\simeq\Z/4\Z\times\Z/2\Z.
\]
In particular $[R:M]=8$ and $[R:\Q]=16$.
\end{corollary}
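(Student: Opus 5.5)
The plan is to determine the $2$-primary part of the ray class group $\Cl_{5\mathcal O_M}(M)$, and then to exhibit a subfield of the ray class field with the same Galois group.

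\emph{Order of the group.} Since $(-p/5)=1$, we have $5\mathcal O_M=\mathfrak q\bar{\mathfrak q}$, and so $(\mathcal O_M/5)^\times\simeq\F_5^\times\times\F_5^\times\simeq(\Z/4\Z)^2$. The units of $M$ are $\pm1$, and $-1$ maps to $(-1,-1)$. The ray class exact sequence
\[
 1\to(\mathcal O_M/5)^\times/\{\pm1\}\to\Cl_{5}(M)\to\Cl(M)\to1
\]
has odd right-hand term, because $h(-p)$ is odd. Hence the $2$-part of $\Cl_5(M)$ is isomorphic to $(\Z/4\Z)^2/\langle(2,2)\rangle$, written additively. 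This quotient is $\Z/4\Z\times\Z/2\Z$: for instance, the element $(1,0)$ has order $4$, and $(1,1)$ has order $2$ in the quotient. So $\operatorname{Gal}(R/M)\simeq\Z/4\Z\times\Z/2\Z$ and $[R:M]=8$.

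\emph{Identification of $R$.} I will show that $H\Q(\zeta_5)$ is abelian over $M$, has degree $8$ over $M$, and has conductor dividing $5\mathcal O_M$. Comparing degrees then forces equality with $R$.

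First, $H/M$ is abelian of degree $4$. Indeed, $H=M(\sqrt5,\sqrt{-\beta})$ by Proposition~\ref{arith:rayclassbridge}, so $H/M$ is biquadratic, being a subgroup-fixed quotient of $D_4$ with $M$ fixed by $\langle\tau^2,r\tau\rangle$ or similar. Next, $M(\zeta_5)/M$ is cyclic of degree $4$, since $M\cap\Q(\zeta_5)=\Q$ because $p$ ramifies in $M$ only. The compositum of two abelian extensions of $M$ is abelian over $M$. Its degree over $M$ is $4\cdot4/[H\cap M(\zeta_5):M]$, and the intersection contains $M(\sqrt5)$. It remains to show that the intersection is exactly $M(\sqrt5)$, which gives degree $8$. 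Suppose instead that $M(\zeta_5)\subseteq H$. Then $\operatorname{Gal}(H/M)$ would have a cyclic quotient of order $4$, contradicting the fact that it is biquadratic.

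\emph{Conductor.} By Proposition~\ref{arith:rayclassbridge}, $M(\sqrt{-\beta})$ has conductor $\mathfrak q$. The extensions $M(\sqrt5)$ and $M(\zeta_5)$ come from $\Q(\zeta_5)$, whose conductor is $5$, so their conductors divide $5\mathcal O_M$. The compositum therefore has conductor dividing $5\mathcal O_M$. Its degree over $M$ is $8$, a power of $2$, so it lies in $R$. Since $[R:M]=8$ as well, the two fields are equal. Finally, $[R:\Q]=16$.

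\emph{Main obstacle.} The delicate step is the degree count of the compositum. I need to confirm that $\operatorname{Gal}(H/M)$ is really elementary abelian, using the explicit $D_4$ generators from Proposition~\ref{arith:dihedralfield}: the subgroup fixing $T=\sqrt{-p}$ should be $\{1,\tau^2,r\tau,r\tau^3\}$ rather than the cyclic subgroup. This can be read off directly from $H=M(\sqrt5,\sqrt{-\beta})$.
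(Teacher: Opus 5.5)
Your proposal is correct and follows essentially the same route as the paper. The ray class sequence, with $h(-p)$ odd, gives the $2$-part $(\F_5^\times)^2/\langle(-1,-1)\rangle\simeq\Z/4\Z\times\Z/2\Z$. Then $H$ and $M(\zeta_5)$, both of conductor dividing $5\mathcal O_M$, meet exactly in $M(\sqrt5)$ because one is biquadratic and the other cyclic over $M$.

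The step you flag as the main obstacle is immediate. Since $H=M(\sqrt5,\sqrt{-\beta})$ has degree $4$ over $M$, it is biquadratic, and you do not need the $D_4$ presentation.
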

\begin{proof}
Since $5=\mathfrak q\bar{\mathfrak q}$ in $M$, the ray class
exact sequence identifies the kernel of
$\Cl_{5\mathcal O_M}(M)\longrightarrow\Cl(M)$ with
\[
 \frac{\F_5^\times\times\F_5^\times}
   {\langle(-1,-1)\rangle}
 \simeq\Z/4\Z\times\Z/2\Z.
\]
This kernel has order eight, and the quotient has odd order.
It is therefore the entire $2$-primary subgroup of the ray class
group, which proves $[R:M]=8$ and the Galois group assertion.

Proposition~\ref{arith:rayclassbridge} and its conjugate show that
$H/M$ is biquadratic of conductor dividing $5\mathcal O_M$.
The extension $M(\zeta_5)/M$ is cyclic quartic, unramified away
from $5$, and tamely ramified at both primes above $5$; it too
has conductor dividing $5\mathcal O_M$. Thus both fields lie in
$R$. Their intersection is $M(\sqrt5)$: it contains this field,
and cannot be quartic because one of the two degree four Galois
groups is cyclic and the other is elementary abelian. Their
compositum consequently has degree eight over $M$, and must be
$R$.
\end{proof}

\begin{remark}
The propositions above identify the norm data of the conic with an explicit
class field construction. These fields depend on $p$, so they do not provide
a fixed governing field for the varying prime problem. Nor does the order
four Frobenius at $2$ by itself determine $\varrho(p)$. The reciprocity
theorem evaluates that character by a different route, through
$\Q(\sqrt{5p})$ and two genus character sums. The fixed field question of
Problem~\ref{prob:highergoverning} therefore remains open.
\end{remark}

There is also a quintic Jacobi sum description of the same character.
Let $N=(p-1)/5$, $L=\Q(\zeta_5)$, and $\mathfrak P\mid p$.
Define the order five character $\psi:\F_p^\times\to\mu_5$ by
$\psi(x)\equiv x^{-N}\pmod{\mathfrak P}$, and set $\psi(0)=0$.
The Jacobi sum used here is
\[
 J=\sum_{x\in\F_p}\psi(x)\psi(1-x).
\]

\subsection{Jacobi sums and descent}\label{subsec:jacobidescent}

\begin{proposition}\label{jac:Jsign}
With this convention,
\begin{equation}\label{jac:Jcorrect}
 J\equiv-\binom{2N}{N}\pmod{\mathfrak P},\qquad
 \leg{J\bmod\mathfrak P}{p}=-\leg{N!}{p}.
\end{equation}
\end{proposition}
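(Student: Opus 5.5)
The plan is to reduce $J$ modulo $\mathfrak P$ by the standard Stickelberger-type computation for characters defined as powers of the reduction map, and then to evaluate the quadratic character of a central binomial coefficient by Wilson's theorem.

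First, since $\psi(x)\equiv x^{-N}\pmod{\mathfrak P}$ for $x\ne0$ and $\psi(0)=0$, one has $\psi(x)\psi(1-x)\equiv x^{-N}(1-x)^{-N}$ whenever $x\ne0,1$, and both sides vanish at $x=0,1$ if we interpret them suitably. Negative powers are awkward, so I would rewrite $x^{-N}=x^{p-1-N}=x^{4N}$ for $x\in\F_p^\times$. This gives
\[
 J\equiv\sum_{x\in\F_p}x^{4N}(1-x)^{4N}\pmod{\mathfrak P},
\]
and the terms at $x=0$ and $x=1$ are genuinely zero because $4N>0$. Expanding $(1-x)^{4N}$ and using $\sum_{x\in\F_p}x^j\equiv-1$ when $(p-1)\mid j$ with $j>0$ (and $0$ otherwise, including $j=0$ under the convention $0^0$ handled separately) leaves only the exponents $4N+i$ with $0\le i\le 4N$ that are multiples of $p-1=5N$. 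The only such $i$ is $i=N$. Hence
\[
 J\equiv-(-1)^N\binom{4N}{N}\pmod{\mathfrak P}.
\]

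Next, $N=(p-1)/5$ is even, as observed in the proof of Theorem~\ref{thm:factorial}, so the sign is $+1$. I would then convert $\binom{4N}{N}$ into $\binom{2N}{N}$ modulo $p$ via the reflection $\binom{p-1-k}{j}\equiv(-1)^j\binom{k+j}{j}$. With $4N=p-1-N$, this gives $\binom{4N}{N}\equiv(-1)^N\binom{2N}{N}=\binom{2N}{N}$. That establishes the first congruence in \eqref{jac:Jcorrect}. This index bookkeeping is the step I expect to need the most care: the sign conventions for $\psi$ versus $\psi^{-1}$, and whether the reflection lands on $\binom{2N}{N}$, must be checked against the convention $\psi(x)\equiv x^{-N}$.

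Finally, the Legendre symbol. Because $(-1/p)=-1$, it suffices to show $\leg{\binom{2N}{N}}p=\leg{N!}p$, that is, $\leg{(2N)!}{p}=\leg{N!}{p}^3=\leg{N!}{p}$. Write $(2N)!=N!\cdot\prod_{j=N+1}^{2N}j$. By the reflection $j\mapsto p-j$, the second product is congruent up to the sign $(-1)^N=1$ to $\prod_{k=3N}^{4N-1}k$. I do not expect this to close directly, so I would instead use the multiplicative structure: $\chi(5)=1$, and the intervals $I_0,\dots,I_4$ from Theorem~\ref{thm:factorial}. Let $\Pi_j$ denote the product over $I_j$. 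Wilson's theorem gives $\prod\Pi_j\equiv-1$. Reflection identifies $\Pi_4\equiv\Pi_0$ and $\Pi_3\equiv\Pi_1$ up to the sign $(-1)^N=1$. Multiplication by $5$ permutes residues, and the same floor-of-$5a/p$ bookkeeping used for $R_4=I_1$ there relates the character of $\Pi_1$ to that of $\Pi_0$. From these relations I expect to obtain $\chi(\Pi_1)=1$, which would give $\chi((2N)!)=\chi(\Pi_0\Pi_1)=\chi(N!)$. If that route stalls, the fallback is to compute $\chi(\Pi_1)$ directly from Gauss's lemma-type counting with $\chi(2)=-1$ and $\chi(5)=1$.
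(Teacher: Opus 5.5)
Your proof of the first congruence is the paper's: the power-sum expansion of $\sum_x x^{4N}(1-x)^{4N}$ isolates the single exponent $5N$, evenness of $N$ removes the sign, and $\binom{4N}{N}\equiv(-1)^N\binom{2N}{N}$ finishes it. Your reduction of the second assertion to $\chi(\Pi_1)=1$ is also the paper's reduction. Here $\chi=\leg{\cdot}{p}$ and $\Pi_1=\prod_{N<j\le 2N}j$.

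The gap is that you never prove $\chi(\Pi_1)=1$, and the relations you list cannot prove it. Let $n_j$ be the number of nonresidues in $(jN,(j+1)N]$, so that $\chi(\Pi_j)=(-1)^{n_j}$.

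\begin{itemize}
\item Reflection $j\mapsto p-j$ gives only $n_4=N-n_0$, $n_3=N-n_1$ and $n_2=N/2$.
\item Wilson's theorem becomes $\Pi_0^2\Pi_1^2\Pi_2\equiv-1$. Since $\Pi_1$ enters squared, this yields only $\chi(\Pi_2)=-1$, which was already forced by $n_2=N/2$ being odd.
\item Multiplication by $5$ does not relate $\Pi_1$ to $\Pi_0$. The floor bookkeeping in Theorem~\ref{thm:factorial} shows $R_4=I_1$: it trades the interval $(N,2N]$ for the residue class $4\bmod5$ and fixes no parity.
\end{itemize}

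Since $n_1=(N-I_1)/2$, what you actually need is $I_1\bmod4$, and none of these manipulations determines it.

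The missing ingredient is arithmetic, not combinatorial. The proof of Theorem~\ref{thm:factorial} gives the exact identity $h(-5p)=2R_4=2I_1$. Lemma~\ref{lem:h5mod8} gives $h(-5p)\equiv4\pmod8$ for $p\equiv11\pmod{40}$; it is a genus theory argument using the factorization $((5b+\sqrt{-5p})/2)=\mathfrak p_5\mathfrak a^2$ and the genus character $\chi_5$. Hence $I_1\equiv2\pmod4$.

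Combined with $N\equiv2\pmod8$, this makes $n_1=(N-I_1)/2$ even. So $\chi(\Pi_1)=1$ and $\leg{\binom{2N}{N}}{p}=\leg{N!}{p}$, and $\leg{-1}{p}=-1$ then gives the second assertion.

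Your fallback of Gauss-lemma counting with $\chi(2)=-1$ and $\chi(5)=1$ would substitute for this only if it determined $I_1\bmod4$. That is equivalent to the $8$-divisibility of $h(-5p)$, and as stated the fallback is not an argument for it.
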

\begin{proof}
The first congruence follows from a direct power sum calculation in $\F_p$:
\[
 J\equiv\sum_x x^{4N}(1-x)^{4N}
 =\sum_{j=0}^{4N}(-1)^j\binom{4N}{j}\sum_x x^{4N+j}.
\]
The only exponent divisible by $p-1=5N$ in this range is obtained
at $j=N$, and its inner sum is $-1$. Since $N$ is even,
\[
 J\equiv-\binom{4N}{N}.
\]
But $4N\equiv-N-1\pmod p$, so
$\binom{4N}{N}\equiv\binom{-N-1}{N}=(-1)^N\binom{2N}{N}$.
This establishes the first assertion, including the sign.

For the second assertion, let $\chi(x)=(\frac xp)$ and
$I_1=\sum_{N<j\le2N}\chi(j)$. The proof of
Theorem~\ref{thm:factorial} gives $h(-5p)=2I_1$, while
Lemma~\ref{lem:h5mod8} gives $h(-5p)\equiv4\pmod8$.
Thus $I_1\equiv2\pmod4$, while $N\equiv2\pmod8$.
The number of nonresidues in $N+1,\ldots,2N$ is
$(N-I_1)/2$, an even integer. Therefore
\[
 \leg{\prod_{j=N+1}^{2N}j}{p}=1,\qquad
 \leg{\binom{2N}{N}}p=\leg{N!}p.
\]
Since $(\frac{-1}{p})=-1$, the first assertion gives the second.
\end{proof}

\begin{remark}
For $p=11$, $N=2$ and
$J\bmod\mathfrak P=5$, whereas $\binom42\equiv6\pmod{11}$.
Their quadratic characters are $+1$ and $-1$. Thus the minus sign in \eqref{jac:Jcorrect} is essential for our convention
for $J$.
\end{remark}

\begin{proposition}\label{jac:nondescent}
Neither $J$ nor $-J$ represents a squareclass in $L^\times/L^{\times2}$
coming from $K^\times/K^{\times2}$, where $K=\Q(\sqrt5)$.
\end{proposition}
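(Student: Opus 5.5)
The plan is to prove the statement by a valuation-parity argument at the primes of $L=\Q(\zeta_5)$ above $p$. No explicit evaluation of $J$ will be needed, and the sign will play no role.

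\emph{Step 1: the ideal of $J$.} Since $\psi$ and $\psi^2$ are both nontrivial characters of order five, the standard Gauss sum identity $J=g(\psi)^2/g(\psi^2)$ gives $J\bar J=p$. Here the bar denotes complex conjugation, the automorphism $\zeta_5\mapsto\zeta_5^{-1}$. Also $J\in\Z[\zeta_5]$ is integral. Because $p\equiv1\pmod5$, the prime $p$ splits completely in $L$ into four primes. Gal$(L/K)$ is generated by complex conjugation, so these primes form two conjugate pairs $\{\mathfrak P_1,\bar{\mathfrak P}_1\}$ and $\{\mathfrak P_2,\bar{\mathfrak P}_2\}$, each pair lying over one of the two primes of $K$ above $p$. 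From $J\bar J=p$ and unramifiedness we get
\[
 v_{\mathfrak P_i}(J)+v_{\bar{\mathfrak P}_i}(J)=v_{\mathfrak P_i}(p)=1 .
\]
Both valuations are nonnegative integers, so on each conjugate pair the valuation vector of $J$ is $(1,0)$ or $(0,1)$. The same holds for $-J$.

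\emph{Step 2: the image of $K^\times$.} Suppose $\pm J=\kappa\gamma^2$ with $\kappa\in K^\times$ and $\gamma\in L^\times$. For each prime $\mathfrak p$ of $K$ above $p$, the two primes of $L$ over $\mathfrak p$ are interchanged by Gal$(L/K)$. Since $\kappa$ is fixed by this group and $\mathfrak p$ is unramified in $L$, $\kappa$ has equal valuations at $\mathfrak P_i$ and $\bar{\mathfrak P}_i$. The factor $\gamma^2$ contributes even valuations. Hence $v_{\mathfrak P_i}(\pm J)\equiv v_{\bar{\mathfrak P}_i}(\pm J)\pmod 2$.

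\emph{Step 3: contradiction.} Step 1 says these two valuations have odd sum, which contradicts Step 2. So neither $J$ nor $-J$ lies in the image of $K^\times/K^{\times2}$.

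The only point to check with care is the identity $J\bar J=p$, which is the main obstacle only in the sense that it must be checked for the paper's normalization $\psi(x)\equiv x^{-N}$. This holds for any nontrivial $\psi$ with $\psi^2$ nontrivial, so the normalization does not matter. One also needs that the two primes of $L$ over each prime of $K$ are swapped by complex conjugation. This is immediate because Gal$(L/K)$ is the order two subgroup of the cyclic group Gal$(L/\Q)$, namely $\{1,\zeta\mapsto\zeta^{-1}\}$.
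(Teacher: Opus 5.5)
Your proof is correct and is essentially the paper's argument. It combines $J\bar J=p$ with the fact that elements of $K^\times$ have equal valuations at $\mathfrak P$ and $\bar{\mathfrak P}$, which forces the parities of $v_{\mathfrak P}(\pm J)$ and $v_{\bar{\mathfrak P}}(\pm J)$ to agree. The only difference is that the paper pins down $v_{\mathfrak P}(J)=0$ and $v_{\bar{\mathfrak P}}(J)=1$ using the congruence $J\equiv-\binom{2N}{N}$ of Proposition~\ref{jac:Jsign}, whereas you note that the odd sum $v_{\mathfrak P}(J)+v_{\bar{\mathfrak P}}(J)=1$ already suffices, so that input is not needed.
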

\begin{proof}
The standard Jacobi sum norm identity is $J\bar J=p$; it follows
from $J(\psi,\psi)=G(\psi)^2/G(\psi^2)$ and
$|G(\psi^i)|=\sqrt p$ for $1\le i\le4$.
Proposition~\ref{jac:Jsign} gives $v_{\mathfrak P}(J)=0$, since
$\binom{2N}{N}$ is nonzero modulo $p$. Since $p$ splits completely
in $L$, the norm identity implies
\[
 v_{\bar{\mathfrak P}}(J)=1.
\]
For every $\gamma\in K^\times$, complex conjugation fixes $\gamma$,
so its valuations at $\mathfrak P$ and $\bar{\mathfrak P}$ are
equal. Multiplication by a square changes both valuations by even
integers. The unequal parities $0$ and $1$ above therefore preclude
$J=\gamma z^2$ or $-J=\gamma z^2$ with $\gamma\in K^\times$,
$z\in L^\times$.
\end{proof}

\begin{remark}\label{arith:rayJacobiobstruction}
The common ray class field $R$ of Corollary~\ref{arith:raycyclotomic}
does not contain the quadratic extension defined by the quintic
Jacobi sum. With $\eta$ as in Proposition~\ref{arith:dihedralfield}
and $L=\Q(\zeta_5)$, that corollary gives
\[
 R=L(\sqrt{-p},\sqrt\eta).
\]
Every quadratic subextension of $R/L$ therefore has a Kummer
squareclass represented by one of $-p$, $\eta$, or $-p\eta$,
all of which lie in $K$. On the other hand, Proposition~\ref{jac:nondescent} shows that the valuation
parities of the Jacobi sum $J$ at the chosen prime
$\mathfrak P$ and its complex conjugate are zero and one,
respectively. Thus neither $J$ nor $-J$ has a squareclass
represented in $K$, and
\[
 L(\sqrt J)\not\subseteq R,\qquad
 L(\sqrt{-J})\not\subseteq R.
\]

\end{remark}

Although a single Jacobi sum does not descend in squareclass, a product of two
conjugates does. Let $\sigma(\zeta_5)=\zeta_5^2$, so that
$\sigma^2$ is complex conjugation, and write $J_1=\sigma J$.

\begin{proposition}\label{jac:productdescent}
Put $z=JJ_1$, $t=\Tr_{L/\Q}J$, and
\[
 g=z+\bar z+2p\in K.
\]
Then $g$ is a nonzero totally positive algebraic integer and
\begin{equation}\label{jac:productnorm}
 z=\frac{(z+p)^2}{g},\qquad
 \Norm_{K/\Q}g=pt^2.
\end{equation}
In particular the squareclass of $JJ_1$ descends to $K$.
Nevertheless, at the specified prime $\mathfrak P$,
\begin{equation}\label{jac:productcharacter}
 \leg{J_1\bmod\mathfrak P}{p}=\leg{N!}{p},\qquad
 \leg{g\bmod\mathfrak P}{p}=-1.
\end{equation}
More generally, fix integers $e_0,e_1,e_2,e_3$ and let
\[
 Z=\prod_{j=0}^3(\sigma^jJ)^{e_j}.
\]
If $Z$ is a unit at $\mathfrak P$ and its squareclass comes from
$K^\times/K^{\times2}$, then
\begin{equation}\label{jac:monomialcharacter}
 \leg{Z\bmod\mathfrak P}{p}=(-1)^{e_0-e_2}.
\end{equation}
Thus a fixed monomial with these properties has a constant quadratic
character, independent of $p$.
\end{proposition}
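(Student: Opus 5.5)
The plan is to reduce everything to the norm relation $J\bar J=p$, to its conjugate $J_1\bar J_1=p$, and to two power-sum congruences modulo $\mathfrak P$. Write $J_2=\sigma^2J=\bar J$ and $J_3=\sigma^3J=\bar J_1$.

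\emph{The algebraic identities.}
\begin{enumerate}
\item From $z\bar z=p^2$ we get
\[
(z+p)^2=z\bigl(z+2p+p^2/z\bigr)=z(z+\bar z+2p)=zg .
\]
This is the first identity in \eqref{jac:productnorm}.
\item Since $g$ is fixed by $\sigma^2$ (complex conjugation), $g\in K$. It is an algebraic integer because $z$ is.
\item We have $|z+p|^2=z\bar z+p(z+\bar z)+p^2=pg$. Hence $g=|z+p|^2/p\ge0$ in each complex embedding, and in particular in both real embeddings of $K$. Nonvanishing will follow from the unit computation at $\mathfrak P$ below.
\item For the norm, note that $\sigma(z)=J_1\bar J$. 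Therefore
\[
\sigma(g)=|J+J_1|^2,\qquad g=(J_1+\bar J)(\bar J_1+J)=|J+\bar J_1|^2 .
\]
Using $J^2+p=J(J+\bar J)$, one gets $(J+J_1)(J+\bar J_1)=J\,t$.
\item Consequently $\Norm_{K/\Q}g=|J|^2t^2=pt^2$, since $t$ is real.
\item It follows that $z\equiv g$ modulo squares in $L$, so the squareclass of $JJ_1$ descends to $K$.
\end{enumerate}

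\emph{The characters modulo $\mathfrak P$.}
\begin{enumerate}
\item As in Proposition~\ref{jac:Jsign}, $\psi^2(x)\equiv x^{3N}$. The same power sum computation gives $J_1\equiv-\binom{3N}{N}\pmod{\mathfrak P}$, which is a nonzero residue.
\item Write $\binom{3N}{N}=(3N)!/(N!\,(2N)!)$. Since $3N=p-1-2N$, Wilson's theorem gives $(3N)!\,(2N)!\equiv(-1)^{2N+1}=-1$. Hence $\binom{3N}{N}\equiv-1/(N!\,(2N)!^2)$, whose quadratic character is $-(N!/p)$.
\item With $(\frac{-1}{p})=-1$, this gives $\leg{J_1}{p}=\leg{N!}{p}$, the first half of \eqref{jac:productcharacter}.
\item Proposition~\ref{jac:Jsign} gives the character of $J$ as $-(N!/p)$. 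So $z$ is a $\mathfrak P$-unit with character $-1$.
\item Then $z+p\equiv z$ is also a $\mathfrak P$-unit. In particular $g\ne0$, which completes total positivity.
\item From $g=(z+p)^2/z$, the character of $g$ equals that of $z$, namely $-1$.
\end{enumerate}

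\emph{The general monomial.}
\begin{enumerate}
\item Put $a=e_0-e_2$ and $b=e_1-e_3$. Using $J_2=p/J$ and $J_3=p/J_1$, rewrite $Z=J^aJ_1^b\,p^{e_2+e_3}$.
\item Both $J$ and $J_1$ are units at $\mathfrak P$, by the congruences above. So the hypothesis that $Z$ is a $\mathfrak P$-unit forces $e_2+e_3=0$, and $Z=J^aJ_1^b$.
\item If the squareclass of $Z$ comes from $K$, then $Z/\sigma^2(Z)$ is a square in $L$.
\item We compute $Z/\sigma^2 Z=J^{2a}J_1^{2b}p^{-(a+b)}$.
\item The element $p$ is not a square in $L$: it has valuation one at $\mathfrak P$, as $p$ is unramified there. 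Hence $a+b$ is even.
\item The character of $Z$ at $\mathfrak P$ is therefore
\[
\bigl(-(N!/p)\bigr)^a(N!/p)^b=(-1)^a(N!/p)^{a+b}=(-1)^{e_0-e_2},
\]
which is \eqref{jac:monomialcharacter}.
\end{enumerate}

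I expect the main obstacle to be the congruence for $J_1$. It requires getting the exponent of $\psi^2$ right under the convention $\psi(x)\equiv x^{-N}$, and tracking every sign in the Wilson reflection $(p-1-k)!\,k!\equiv(-1)^{k+1}$. The earlier remark for $p=11$ shows that such signs matter, and I would check them numerically at $p=11$. The algebraic identities are routine once the factorization $(J+J_1)(J+\bar J_1)=Jt$ is noticed.
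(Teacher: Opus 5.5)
Your proposal is correct and follows essentially the same route as the paper. Both arguments rest on the norm relations $J\bar J=J_1\bar J_1=p$, on the power-sum congruence $J_1\equiv-\binom{3N}{N}\pmod{\mathfrak P}$ combined with Wilson's theorem, and on the rewriting $Z=p^{e_2+e_3}J^{e_0-e_2}J_1^{e_1-e_3}$. Your factorizations $g=|J+\bar J_1|^2$ and $\sigma g=|J+J_1|^2$ with $(J+J_1)(J+\bar J_1)=Jt$, and your test $Z/\sigma^2Z\in L^{\times2}$, are only cosmetic variants of the paper's computation $\sigma g=p(J+J_1)^2/(JJ_1)$ and its comparison of valuation parities at $\mathfrak P$ and $\bar{\mathfrak P}$.
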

\begin{proof}
Since $\sigma\psi$ reduces to $x^{3N}$ at $\mathfrak P$, the same
power sum calculation as before gives
\[
 J_1\equiv-\binom{3N}{2N}\pmod{\mathfrak P}.
\]
Both $J$ and $J_1$ are units there. Wilson's theorem gives
$(3N)!(2N)!\equiv-1\pmod p$, since $2N$ is even. Taking quadratic
characters in the last binomial expression therefore yields
\[
 \leg{J_1\bmod\mathfrak P}{p}
 =\leg{-1}{p}
  \leg{(3N)!(2N)!N!}{p}
 =\leg{N!}{p}.
\]
The identities $J\bar J=J_1\bar J_1=p$ imply
$z\bar z=p^2$ and hence $zg=(z+p)^2$. Since $z$ is a unit at
$\mathfrak P$, it cannot equal $-p$, so $g\ne0$.
Under each embedding of $K$ into $\mathbf R$, $g$ has the form
$2p+2\operatorname{Re}(z')$ with $|z'|=p$; it is positive because
it is nonzero. Also $\sigma z=pJ_1/J$, whence
\[
 \sigma g=\frac{p(J+J_1)^2}{JJ_1}.
\]
Multiplying by $g=(JJ_1+p)^2/(JJ_1)$ gives
\[
 g\sigma g
 =p\left(\frac{(JJ_1+p)(J+J_1)}{JJ_1}\right)^2
 =p\left(J+J_1+\bar J+\bar J_1\right)^2=pt^2.
\]
At $\mathfrak P$, $\bar J$ and $\bar J_1$ each have valuation one,
so $g\equiv JJ_1$. Proposition~\ref{jac:Jsign} and the first
identity of \eqref{jac:productcharacter} give $(g\bmod\mathfrak P/p)=-1$.

For the last assertion, use the exact expression
\[
 Z=p^{e_2+e_3}J^{e_0-e_2}J_1^{e_1-e_3}.
\]
Being a unit at $\mathfrak P$ forces $e_2+e_3=0$.
At $\bar{\mathfrak P}$ its valuation is $e_0+e_1$.
A squareclass coming from $K$ has equal valuation parities at
$\mathfrak P$ and $\bar{\mathfrak P}$, so $e_0+e_1$ is even.
Writing $f=(N!/p)$ now gives
\[
 \leg{Z\bmod\mathfrak P}{p}
 =(-f)^{e_0-e_2}f^{e_1-e_3}=(-1)^{e_0-e_2},
\]
as claimed, including negative integer exponents.
\end{proof}

The right side of \eqref{jac:monomialcharacter} depends only on the fixed
exponents. Consequently such monomials cannot recover the varying factorial
character, and hence cannot by themselves recover $\varrho(p)$.
Proposition~\ref{jac:Jsign} and Theorem~\ref{thm:reciprocity}
give
\begin{equation}\label{jac:remaining}
 \leg{J\bmod\mathfrak P}{p}=-\varrho(p)
 =-(-1)^{(b-1)/2}\leg{c_0}{5}.
\end{equation}
The equality follows from the Pell and genus character argument in
the main text; the corresponding Kummer extensions are distinct
by Remark~\ref{arith:rayJacobiobstruction}.

As a final numerical check, the conic, factorial, and Jacobi sum identities,
together with the normalization in Proposition~\ref{arith:imaginarynorm},
agree for all $143$ primes $p<20000$ with $p\equiv11\pmod{40}$.
A few representative values are:

\begin{center}
\begin{tabular}{rrrrrr}
\toprule
$p$&$a$&$b$&$(u_0,v_0,c_0)$&$\varrho(p)$&$(J/\mathfrak P)$\\
\midrule
11&4&1&$(-1,1,2)$&$-1$&$1$\\
131&16&5&$(-1,1,6)$&$1$&$-1$\\
211&16&3&$(-1,1,2)$&$1$&$-1$\\
251&16&1&$(-3,7,42)$&$-1$&$1$\\
491&44&17&$(3,-2,9)$&$1$&$-1$\\
\bottomrule
\end{tabular}
\end{center}

\section*{Declarations}

\medskip
\noindent{\large\bfseries Funding}\par
\medskip
Supported by the National Natural Science Foundation of China
(Nos.\ 12231009 and 11971224).

\bigskip
\noindent{\large\bfseries AI assistance}\par
\medskip
Python programs using exact integer and rational arithmetic, together with
SymPy and NumPy where appropriate, were used for finite calculations and
consistency checks. These include the lattice enumeration in the genus
calculation, local Kummer images and descent matrices, representation counts,
Pell and class number checks, and symbolic verification of identities used in
the $4$-coverings and pushout calculations. Further finite checks concerned
the residue characters, local tangent symbols, higher pairing certificates,
and the sparse correction term. ChatGPT and Claude were used for exploratory
computation, editing, and reorganization; selected calculations and symbolic
identities were also checked independently with GPT-6 or Claude. All mathematical
statements, including the divisor class identifications, pairing formulas,
governing field degrees, and density results, are proved in the text.

\end{document}